\title{Actions, length functions, and non-archemedian words}
\author{\textsf{Olga Kharlampovich}\thanks{Hunter College, CUNY, Supported by NSF grant DMS-1201379 and PSC-CUNY enhanced award}
\and \textsf{Alexei Myasnikov}\thanks{Stevens Institute of Technology, Supported by NSF grant DMS-1201379}
\and \textsf{Denis Serbin}\thanks{Stevens Institute of Technology}}
\date{}
\newtheorem{theorem}{Theorem}
\newtheorem{definition}{Definition}
\newtheorem{example}{Example}
\newtheorem{lemma}{Lemma}
\newtheorem{cor}{Corollary}
\newtheorem{remark}{Remark}
\newtheorem{prop}{Proposition}
\def\N{{\mathbb{N}}}
\def\Z{{\mathbb{Z}}}
\def\R{{\mathbb{R}}}
\def\Zt{{\mathbb{Z}[t]}}
\def\FZt{{F^{\mathbb{Z}[t]}}}
\begin{document}
\maketitle

\begin{abstract}
In this paper we survey recent developments in the theory of groups acting on $\Lambda$-trees.
We are trying to unify all significant methods and techniques, both classical and recently developed, 
in an attempt to present various faces of the theory and to show how these methods can be used
to solve major problems about finitely presented $\Lambda$-free groups. Besides surveying results
known up to date we draw many new corollaries concerning structural and algorithmic properties 
of such groups.
\end{abstract}

\tableofcontents

\section{Introduction}
\label{sec:intro}

In 2013 the theory of group actions on $\Lambda$-trees will be half a century old. There are three
stages in the development of the theory: the initial period when the basic concepts, methods and
open problem were laid down; the period of concentration on actions on $\R$-trees; and the recent
phase when the focus is mostly on non-Archimedean actions.

The initial stage takes its roots in several, seemingly independent, areas of group theory and
topology: the study of abstract length functions in groups, Bass-Serre theory of actions on simplicial
trees, Stallings pregroups and free constructions, and the theory of $\R$-trees with its connections
with Thurston's Geometrisation Theorem.

In 1963 Lyndon introduced groups $G$ equipped with an abstract length function $l: G \to \Z$ as a 
tool to carry over Nielsen cancellation theory from free groups and free products, to a much more 
general setting \cite{Lyndon:1963}. The main idea was to measure the amount of cancellation that
occurs in passing to the reduced form of a product of two reduced forms in  a free group (or a free 
product of groups) and describe it in an abstract axiomatic way. To this end Lyndon introduced a 
new quantity $c(g,h) = \frac{1}{2}(l(g) + l(h) - l(g^{-1}h)$ for elements $g,h \in G$ (to be precise, 
he considered $(c(h^{-1},g^{-1})$, which gives equivalent axioms), thus anticipating the Gromov product
in metric spaces, and gave some rather natural and simple axioms on the length function $l$, which
now could be interpreted as axioms of $0$-hyperbolic $\Lambda$-metric spaces. He completely
described groups with {\em free} $\Z$-valued length functions (such length functions correspond to
actions on simplicial trees without fixed points and edge inversion), as subgroups of ambient free 
groups with induced length. Some initial  results for groups with length functions in $\Z$ or $\R$ 
were obtained in 1970s (see \cite{Harrison:1972, Chiswell:1976, Hoare:1979, Hoare:1988}), which 
led to  the general notion of a Lyndon length function $l:G \to \Lambda$  with values in an ordered 
abelian group $\Lambda$. We refer to the book \cite{Lyndon_Schupp:2001} on the early history of 
the subject.

At about the same time Serre laid down fundamentals of the theory of groups acting on simplicial
trees. Serre and Bass  described  the structure of such groups via fundamental groups of graphs of groups. In the
following decade their approach unified several geometric and combinatorial methods of group
theory into a unique tool, known today as Bass-Serre theory. This was one of the major achievements
of combinatorial group theory in 1970s. We refer here to Serre's seminal book \cite{Serre:1980} and
more recent books (for example, in \cite{DunwoodyDicks:1989}).

On the other hand, in 1960s Stallings introduced a notion of a {\em pregroup} $P$ and its {\em
universal group} $U(P)$ \cite{Stallings:1971}. Pregroups $P$ provide a very convenient tool to
describe reduced forms of elements of their universal groups $U(P)$. Precise connections between
pregroups and free constructions (the fundamental groups of graphs of groups)  were
established by Rimlinger in \cite{Rimlinger:1987, Rimlinger:1987(2)}. Recently this technique was
invaluable in dealing with infinite non-Archimedean words (see below).

The first definition of an $\R$-tree appeared in the work of Tits \cite{Tits:1977} in connection with
Bruhat-Tits buildings. One year earlier Chiswell \cite{Chiswell:1976} came up with a crucial
construction that shows that a group $G$ with a Lyndon length function $l : G \to \R$  has a natural
action by isometries on an $\R$-tree (even though he did not refer to the space as an $\R$-tree),
and vice versa. This was an important result which showed that free (no fixed points) group actions
and free Lyndon length functions are just two equivalent languages describing essentially the same
objects. We refer to the book \cite{Chiswell:2001} for a detailed discussion on the subject. In 1984,
in their very influential paper \cite{Morgan_Shalen:1984}, Morgan and Shalen linked group actions
on $\R$-trees with Thurston's Geometrization Theorem. In the same paper they introduced
$\Lambda$-trees for an arbitrary ordered abelian group $\Lambda$ and established a general form of
Chiswell's construction. Thus, it became clear that abstract length functions with values in $\Lambda$
and group actions on $\Lambda$-trees are just two equivalent approaches to the same realm of group
theory questions. In the subsequent papers Morgan and Shalen, and Culler and Morgan further developed
the theory of $\R$-trees and group actions \cite{Morgan_Shalen:1984, Culler_Morgan:1987,
Morgan_Shalen:1988, Morgan_Shalen:1988b}, we refer to surveys \cite{Shalen:1987, Shalen:1991,
Morgan:1992} for more details and a discussion on these developments.

Alperin and Bass \cite{Alperin_Bass:1987} developed much of the initial framework of the  theory of
group actions on $\Lambda$-trees and stated the fundamental research goal (see also \cite{Bass:1991}):

\medskip

{\bf Fundamental Problem}: {\em Find the group-theoretic information carried by a $\Lambda$-tree
action, analogous to the Bass-Serre theory for the case $\Lambda = \Z$.}

\medskip

It is not surprising, from the view-point of Bass-Serre theory, that the following problem (from
\cite{Alperin_Bass:1987}) became of crucial importance:

\medskip

{\bf The Main Problem:} {\em  Describe groups acting freely on $\Lambda$-trees}.

\medskip

Following Bass \cite{Bass:1991} we refer to such groups as $\Lambda$-free groups. They are the main
object of this survey.

Recall that Lyndon himself completely characterized arbitrary $\Z$-free groups \cite{Lyndon:1963}.
He showed also that free products of subgroups of $\R$ are $\R$-free and conjectured (in the
language of length functions) that the converse is also true. It was shown by Aperin and Moss
\cite{Alperin_Moss:1985}, and by Promislow \cite{Promislow:1985} that there are infinitely
generated $\R$-free groups whose length functions are not induced by the natural ones on free products of subgroups of $\R$. Furthermore, examples by Dunwoody\cite{Dunwoody:1997}  and Zastrow \cite{Zastrow:1998} show that there are infinitely generated  $\mathbb{R}$-free groups that  cannot be decomposed  as free products of surface groups and subgroups of
$\mathbb{R}$ (see Section \ref{subs:rips_th}).

It became clear that
infinitely generated $\Lambda$-free groups are so diverse that any reasonable description of them
was (and still is) out of reach. The problem was modified by imposing some natural restrictions:
\medskip

{\bf The Main Problem (Restricted):} {\em  Describe finitely generated (finitely presented) groups
acting freely on $\Lambda$-trees}.

\medskip

This ends the first and starts the second stage of the development of the theory of group actions,
where the main effort is focused on group actions on $\R$-trees.

\medskip

In 1991 Morgan and Shalen \cite{Morgan_Shalen:1991} proved that all surface groups (with exception
of non-orientable ones of genus $1, 2$ and $3$) are $\R$-free. In the same paper they conjectured
the following, thus enlarging the original Lyndon's statement.

\medskip

{\bf The Morgan-Shalen Conjecture:} {\em  Show that finitely generated $\R$-free groups are free
products of free abelian groups of finite rank and non-exceptional surface groups}.

\medskip

This conjecture turned out to be extremely influential and in the years after much of the research
in this area was devoted to proving it. In 1991 Rips came out with the idea of a solution of the
conjecture in the affirmative. Gaboriau, Levitt and Paulin \cite{GLP:1994}, and independently
Bestvina and Feighn \cite{Bestvina_Feighn:1995} published the solution. The final result that
completely characterizes finitely generated $\R$-free groups is now known as Rips Theorem. Notice,
that free actions on $\R$-trees cover all free Archimedean actions, since every group acting freely
on a $\Lambda$-tree for an Archimedean ordered abelian group $\Lambda$ also acts freely on an
$\R$-tree.

In fact, Bestvina and Feighn proved much more, they showed that if a finitely presented group $G$
admits a nontrivial, stable and minimal action on an $\R$-tree then it splits into a free construction
of a special type. The key ingredient of  their proof is what they called a  "Rips machine", which is a
geometric version of the Makanin process (algorithm) developed for solving equations in free groups
\cite{Makanin:1982, Razborov:1985}. The machine (either Rips or Makanin) takes a band complex or a
generalized equation related to the action as an input and then outputs again a band complex or a
generalized equation but in a very specific form, which allows one to see the splitting of the group.
Nowadays, as we will see below, there are many different variations of the Makanin-Razborov processes
(besides the Rips machine), we will refer to them in all their incarnations as {\em Machines} or
{\em Elimination Processes}.  In fact, there is no any published description of the original Rips 
machine, and the machine described by Bestvina and Feighn in \cite{Bestvina_Feighn:1995} (we refer 
to it as Bestvina-Feighn machine) seems much more powerful then was envisioned by Rips. The range 
of applications of Bestvina-Feighn machines is quite broad due to a particular construction introduced 
by Bestvina \cite{Bestvina:1988} and Paulin \cite{Paulin:1988}.
This construction shows how  a sequence of isometric actions of a group $G$ on some ``negatively
curved spaces'' gives rise to an isometric action of $G$ on the ``limit space'' (in the Gromov-Hausdorff
topology) of the sequence, which is an $\R$-tree, so the machine applies. We refer to Bestvina's
preprint \cite{Bestvina:1999} for various applications of Rips-Bestvina-Feighn machine.

There are other interesting results on non-free actions on $\R$-trees which we do not discuss here,
since our main focus is on $\Lambda$-free groups, but there is one which we  would like to mention
now. In \cite{Gromov:1987} Gromov introduced a notion of an asymptotic cone of a metric space. He
showed that an asymptotic cone of a hyperbolic group $G$ is an $\R$-tree, on which $G$ acts
non-trivially. This gives a powerful tool to study hyperbolic groups.

In 1991 Bass in his pioneering work \cite{Bass:1991} made inroads into the realm of non-Archimedean
trees ($\Lambda$-trees with non-Archimedean $\Lambda$) and obtained some structural results on
$\lambda$-free groups for specific $\Lambda$, thus began the third stage in study of $\Lambda$-trees.
To explain, let $\Lambda_0$ be an ordered abelian group and $\Lambda = \Lambda_0 \oplus \Z$ with
the right lexicographical order. It was shown in \cite{Bass:1991} that if $G$ is $\Lambda$-free then
$G$ is the fundamental group of a graph of groups where the vertex groups are $\Lambda_0$-free, the
edge groups are either trivial or maximal abelian in the adjacent vertex groups, and some other
``compatibility conditions'' are satisfied.  In particular, since $\Z^n = \Z^{n-1} \oplus \Z$ one
gets by induction on $n$ structural results on $\Z^n$-free groups. This result gave a standard on
how to describe algebraic structure of groups acting freely on $\Lambda$-trees.

In 1995 the first two authors of this survey together with Remeslennikov began a detailed study
of $\Z^n$-free groups. The starting point was to show that every finitely generated fully residually
free group $G$ is $\Z^n$-free for a suitable $n$. These groups play an important part in several
areas of group theory, where they occur under different names: freely discriminated groups (see,
for example, \cite{Newmann:1967, Myasnikov_Remeslennikov:2000, Baumslag_Miasnikov_Remeslennikov:1999})
$E$-free groups (that is, groups with the same existential theory as free non-abelian groups
\cite{Remeslennikov:1989}), limit groups in terms of Sela \cite{Sela:2001}, and limits of free groups in
the Gromov-Hausdorff topology (see \cite{Champetier_Guirardel:2005}). Though the first two names (for the
same definition) occurred much earlier, we use here the term ``limit'' as a much shorter one. We
describe the main points of our approach to limit groups here since these techniques  became the
corner stones for the further development of the general theory of groups acting on
$\Lambda$-trees and hyperbolic $\Lambda$-metric spaces.

In 1960 Lyndon introduced a free $\Z[t]$-exponential group $F^{\Z[t]}$, where $F$ is a free
non-abelian group, and showed that $F^{\Z[t]}$ is freely discriminated by $F$, so all finitely
generated subgroups of $F^{\Z[t]}$ are limit groups \cite{Lyndon:1960}.  In 1995 Myasnikov and
Remeslennikov constructed a free Lyndon length function on $F^{\Z[t]}$ with values in $\Z^\omega$
(the direct sum of countably many copies of $\Z$ with the lexicographical order)
\cite{Myasnikov_Remeslennikov:1995}. To do this we  proved first that $F^{\Z[t]}$ is the union of a
countable chain of subgroups $F = G_1 < G_2 \cdots < G_n < \cdots$, where each group $G_{n+1}$ is
obtained from $G_n$ by an extension of a centralizer \cite{Myasnikov_Remeslennikov:1994,
Myasnikov_Remeslennikov:1996}; and then showed that if $G_n$ is a $\Z^{n}$-free group then $G_{n+1}$
is $\Z^{n+1}$-free \cite{Myasnikov_Remeslennikov:1995}. Two years later Kharlampovich and Myasnikov
showed that every finitely generated fully residually free group embeds into $G_n$ (from the sequence
above) for a suitable $n$, hence every limit groups is a $\Z^n$-free group for a suitable $n$
\cite{Kharlampovich_Myasnikov:1998(1), Kharlampovich_Myasnikov:1998(2)}. These results became
crucial in the study of limit groups and their solution to the Tarski problems
\cite{Kharlampovich_Myasnikov:2006}. A direct application of Bass-Serre theory to subgroups of the
groups $G_n$ gives an algebraic structure of limits groups as the fundamental groups of graphs of
groups induced from the corresponding graphs for $G_n$. In \cite{Kharlampovich_Myasnikov:1998(1),
Kharlampovich_Myasnikov:1998(2)} a new version of Makanin-Razborov process was introduced,
termed the {\em Elimination process}, a.k.a. a general $\Z$-machine (it works over arbitrary finite
sets of usual words in a finite alphabet, that is, $\Z$-words). Notice that we will further denote
systems of equations by $S = 1$ meaning that $S$ is a set of words, and each of these words is equal
to the identity. Application of such a machine to a finite system $S = 1$ of equations (with constants)
over $F$ results in finitely many systems $U_1 = 1, \ldots, U_n = 1$ over $F$ such that the solution
set of $S = 1$ is the union (up to a change of coordinates) of the solution sets of $U_i, i \in [1,n]$
and each $U_i$ is given in the standard NTQ (non-degenerate quasi-quadratic) form
\cite{Kharlampovich_Myasnikov:1998(2)}. This is a precise analog of the elimination and parametrization
theorems from the classical algebraic geometry, thus $\Z$-machines play the role of the classical
resolution method in the case of algebraic geometry over free groups \cite{Myasnikov_Remeslennikov:2000,
Baumslag_Miasnikov_Remeslennikov:1999, Kharlampovich_Myasnikov:1998(1), Kharlampovich_Myasnikov:1998(2)}.
At the level of groups (here limit groups occur as the coordinate groups of systems of equations in
$F$) $\Z$-machines give embedings of limit groups into the coordinate groups of NTQs, which were
later also called fully residually free towers \cite{Sela:2001}. There are very interesting corollaries
from this result along for limit groups (see Section \ref{subs:alg_prob_limit_gps} for details).
Observe, that these $\Z$-machines (accordingly modified) were successfully used for other classes
of groups in similar situations. For example, Kazachkov and Casals-Ruiz used it to solve equations
in free product of groups \cite{CK1} and right-angled Artin groups \cite{CK2}.

To study algorithmic properties of limit groups Kharlampovich and Myasnikov developed an
elimination process (a $\Z^n$-machine) which works for arbitrary free Lyndon length functions with
values in $\Z^n$ (see \cite{Kharlampovich_Myasnikov:2005(2)}), thus giving a further generalization
of the original $\Z$-machines mentioned above. These $\Z^n$-machines can be described in several
(equivalent) forms: algorithmic, geometric, group-theoretic, or as dynamical systems. In the
group-theoretic language the machines provide various splittings of a group as the fundamental group
of a graph of groups with abelian edge groups (abelian splittings). Since $\Z^n$-machines (unlike
$\R$-machines mentioned above) are algorithms - one can use them in solving algorithmic problems.
A direct application of such machines (precisely like in the case of free groups)  allows one to
solve arbitrary equations and describe their solution sets in arbitrary limit groups
\cite{Kharlampovich_Myasnikov:2005(2)}. In \cite{Kharlampovich_Myasnikov:2005(2)} Kharlampovich and
Myasnikov used these machines to effectively construct JSJ-decompositions of limit groups given by
their finite presentations (or almost any other effective way), which allowed them together with
Bumagin to solve the isomorphism problem for limit groups (see \cite{Bumagin_Kharlampovich_Myasnikov:2007}).
Later these results (following basically the same line of argument) were generalized to arbitrary toral relatively
hyperbolic groups \cite{Dahmani_Groves:2008}.

$\Z^n$-machines, while working with $\Z^n$-free groups in fact manipulate with the so-called {\em
infinite non-Archimedean words}, in this case $\Z^n$-words (so the ordinary words are just $\Z$-words).
Discovery of non-Archimedean words turned out to be one of the major recent developments in the
theory of group actions, it is worthwhile to say a few words about it here. In
\cite{Myasnikov_Remeslennikov_Serbin:2005} Myasnikov, Remeslennikov and Serbin introduced infinite
$\Lambda$-words for arbitrary $\Lambda$ and showed that groups admitting faithful representations
by $\Lambda$-words act freely on $\Lambda$-trees, while Chiswell proved the converse \cite{Chiswell:2005}.
This gives another, equivalent, approach to free actions on $\Lambda$-trees, so now one can replace the
axiomatic viewpoint of length functions along with many geometric arguments coming from
$\Lambda$-trees by the standard combinatorics on $\Lambda$-words. In particular, this approach allows
one to naturally generalize powerful techniques of  Nielsen's method, Stallings' graph approach to
subgroups, and Makanin-Razborov type of elimination processes (the machines) from free groups to
$\Lambda$-free groups (see \cite{Myasnikov_Remeslennikov_Serbin:2005, Myasnikov_Remeslennikov_Serbin:2006,
Kharlampovich_Myasnikov:2005(2), Kharlampovich_Myasnikov:2006, Kharlampovich_Myasnikov:2010,
KMRS:2012, DM, KMS:2011, KMS:2011(3), Nikolaev_Serbin:2011, Nikolaev_Serbin:2011(2),
Serbin_Ushakov:2011(1)}). In the case when $\Lambda$ is equal to either $\Z^n$ or $\Z^\infty$ all
these techniques are effective, so, for example, one can solve many algorithmic problems for limit
groups or $\Z^n$-free groups using these methods.

The technique of infinite $Z^n$-words allowed us to solve many algorithmic problems for limit groups
precisely in the same manner as it was done for free groups by the standard Stallings graph techniques
(we refer to \cite{Kapovich_Myasnikov:2002} for a survey on the related results in free groups). We
discuss these results in Section \ref{subs:alg_prob_limit_gps}.

In our approach to limit groups (or arbitrary $\Z^n$-groups) one more important concept transpired.
To carry over Nielsen cancellation argument or apply the machines  the length function (or the action)
has to satisfy some natural ``completeness'' conditions. To this end,  given a group $G$ acting
on a $\Lambda$-tree $\Gamma$, we say that the action is {\em regular with respect to $x \in \Gamma$}
(see \cite{KMRS:2012} for details) if for any $g,h \in G$ there exists $f \in G$ such that $[x, f x]
= [x, g x] \cap [x, h x]$. In fact, the definition above does not depend on $x$ and there exist
equivalent formulations for length functions and $\Lambda$-words (see \cite{Promislow:1985,
Myasnikov_Remeslennikov_Serbin:2005}). Roughly speaking, regularity of action implies that all
branch-points of $\Gamma$ belong to the same $G$-orbit and it tells a lot about the structure of
$G$ in the case of free actions (see \cite{KMS:2011(3), KMRS:2012} or Section \ref{subse:regact}).
In the language of $\Lambda$-words this condition means that for any given two infinite words
representing elements in the group $G$ their common longest initial segment (which always exists)
represents an element in $G$. The importance of the regularity condition was not recognized earlier
simply because in the case when $\Lambda$ is $\Z$, or $\R$ every $\Lambda$-free group has a regular
action. The regularity axiom appeared first in \cite{Myasnikov_Remeslennikov:1995,
Myasnikov_Remeslennikov_Serbin:2005} as a tool to deal with length functions in $\mathbb{Z}^n$ (with
respect to the lexicographic order). The outcome of this research is that if a finitely generated
group $G$ has a regular free action on a $\Z^n$-tree, then the Nielsen method and $\Z^n$-machines
work in $G$. This, as expected, implies a lot of interesting results for $\Z^n$-free groups.

In the paper \cite{KMRS:2012} we described finitely generated groups which admit free regular actions
on $\Z^n$-trees, we call such groups finitely generated {\em complete $\Z^n$-free groups}.

Suppose $G_i$ is an $\Z^{n_i}$-free group, $i = 1, 2$  with a  maximal abelian subgroups  $A
\leqslant G_1,\ B \leqslant G_2$ such that
\begin{enumerate}
\item[(a)] $A$ and $B$ are cyclically reduced with respect to the corresponding embeddings of $G_1$
and $G_2$ into infinite words,
\item[(b)] there exists an isomorphism $\phi : A \rightarrow B$ such that $|\phi(a)| = |a|$ for any
$a \in A$.
\end{enumerate}
Then we call the amalgamated free product
$$\langle G_1, G_2 \mid A \stackrel{\phi}{=} B \rangle$$
the {\em length-preserving amalgam} of $G_1$ and $G_2$.

Given a $\Z^n$-free group $H$ and non-conjugate maximal abelian subgroups $A, B \leqslant H$ such
that
\begin{enumerate}
\item[(a)] $A$ and $B$ are cyclically reduced with respect to the embedding of $H$ into infinite words,
\item[(b)] there exists an isomorphism $\phi : A \rightarrow B$ such that $|\phi(a)| = |a|$ and
$a$ is not conjugate to $\phi(a)^{-1}$ in $H$ for any $a \in A$.
\end{enumerate}
then we call the HNN extension
$$\langle H, t \mid t^{-1} A t = B \rangle$$
the {\em length-preserving separated HNN extension} of $H$.

Given a $\Z^n$-free group $H$ and a maximal abelian subgroup $A$, we call the HNN extension
$$\langle H, t \mid t^{-1} A t = A \rangle$$ (here the isomorphism $\phi:A\rightarrow A$ is identical)
the {\em centralizer extension} of $H$.

The following result describes complete $\Z^n$-free groups.

\medskip

{\bf Theorem \ref{th:complete_desrc}.}
{\em A finitely generated group $G$ is complete $\Z^n$-free if and only if it can be obtained from
free groups by finitely many length-preserving separated HNN extensions and centralizer extensions.}

\medskip

Notice that this is "if and only if" characterization of finitely generated complete $\Z^n$-free groups.

The following principle theorem  allows one to transfer results from complete $\Z^n$-free groups to
arbitrary $\Z^n$-free groups.

\medskip{\bf Theorem \ref{th:embed_complete}.} \cite{KMS:2011(2)}
{\em Every finitely generated $\Z^n$-free  group $G$ has a length-preserving embedding into a
finitely generated complete $\Z^n$-free group $H$.}

\medskip

As in the case of limit groups,  Bass-Serre theory, applied to subgroups of complete $\Z^n$-free
groups, immediately gives algebraic structure of arbitrary  finitely generated $\Z^n$-free groups:

\medskip

{\bf Theorem \ref{th:Z^n_desrc}.}
{\em A finitely generated group $G$ is $\Z^n$-free if and only if it can be obtained from free groups
by a finite sequence of length-preserving amalgams, length-preserving separated HNN extensions,
and centralizer extensions.}

\medskip

Again, we would like to emphasize here that the description above is ``if and only if''.

Using these techniques we generalized many algorithmic results from limit groups to arbitrary finitely
generated $\Z^n$-free groups. We refer to the Section \ref{subs:alg_prob_limit_gps} for details.

In 2004 Guirardel described the structure of finitely generated $\R^n$-free groups in a similar
fashion \cite{Guirardel:2004} (see Theorem \ref{th:Guirardel} in Section \ref{se:R^n-free}). Since
$\Z^n$-free groups are also $\R^n$-free this, of course, generalizes the Bass' result on $\Z^n$-free
groups. Observe that given description of the algebraic structure of finitely generated  $\R^n$-free
groups does not ``characterize'' such groups completely, that is, the converse of the theorem does
not hold. Nevertheless, the result is strong, it implies several important corollaries: firstly, it
shows that finitely generated $\R^n$-free groups are finitely presented; and secondly, taken together
with Dahmani's combination theorem \cite{Dahmani:2003}, it implies that every finitely generated
$\R^n$-free group is hyperbolic relative to its non-cyclic abelian subgroups.

{\bf New techniques}. The recent developments in the theory of groups acting freely on
$\Lambda$-trees are based on several new techniques that occurred after 1995: infinite
$\Lambda$-words as an equivalent language for free $\Lambda$-actions
\cite{Myasnikov_Remeslennikov_Serbin:2005}; regular actions and regular completions; general
Machines (Elimination Processes) for regular $\Lambda$-actions that generalize Rips and
Bestvina-Feighn Machines.

The elimination process techniques developed in Section \ref{sec:proc} allow one to prove the
following  theorems.

\medskip

\noindent
{\bf Theorem \ref{th:main1}.} [The Main Structure Theorem \cite{KMS:2011(3)}]
{\em Any finitely presented group $G$ with a regular free length function in an ordered abelian
group $\Lambda$ can be represented as a union of a finite series of groups
$$G_1 < G_2 < \cdots < G_n = G,$$
where
\begin{enumerate}
\item $G_1$ is a free group,
\item $G_{i+1}$ is obtained from $G_i$ by finitely many HNN-extensions in which associated subgroups
are maximal abelian, finitely generated, and length isomorphic as subgroups of $\Lambda$.
\end{enumerate}}

\medskip

\noindent
{\bf Theorem \ref{th:main3}.} \cite{KMS:2011(3)}
{\em Any finitely presented $\Lambda$-free group is  $\R^n$-free.}

In his book \cite{Chiswell:2001} Chiswell (see also \cite{Remeslennikov:1989})  asked the following
very important question (Question 1, p. 250): If $G$ is a finitely generated $\Lambda$-free group,
is $G$ $\Lambda_0$-free for some finitely generated abelian ordered group $\Lambda_0$? The
following result answers this question in the affirmative in the strongest form. It comes from the
proof of Theorem \ref{th:main3} (not the statement of the theorem itself).

\medskip

\noindent
{\bf Theorem \ref{co:main1}}
{\em Let $G$ be a finitely presented group with a free Lyndon length function $l : G \to \Lambda$.
Then the subgroup $\Lambda_0$ generated by $l(G)$ in $\Lambda$ is finitely generated.}

\medskip

\noindent
{\bf Theorem \ref{th:main4}.} \cite{KMS:2011(3)}
{\em Any finitely presented group $\widetilde G$ with a free length function in an ordered abelian
group $\Lambda$ can be isometrically embedded into a finitely presented group $G$ that has a free
regular length function in $\Lambda$.}

The following result automatically follows from Theorem \ref{th:main1} and Theorem \ref{th:main4}
by simple application of Bass-Serre Theory.

\medskip

\noindent
{\bf Theorem \ref{co:main5}.}
{\em Any finitely presented $\Lambda$-free group $G$ can be obtained from free groups by a finite
sequence of amalgamated free products and HNN extensions with maximal abelian associated
subgroups, which are free abelain groups of finite rank.}

The following result is about abelian subgroups of $\Lambda$-free groups. For $\Lambda = \Z^n$ it
follows from the main structural result for $\Z^n$-free groups and \cite{KMRS:2008}, for $\Lambda =
\R^n$ it was proved in \cite{Guirardel:2004}. The statement (1) below answers Question 2, p. 250
in \cite{Chiswell:2001} in the affirmative for finitely presented $\Lambda$-free groups.

\medskip

\noindent
{\bf Theorem \ref{co:main1b}.}
{\em Let $G$ be a finitely presented $\Lambda$-free group. Then
\begin{itemize}
\item[(1)] every abelian subgroup of $G$ is a free abelian group of finite rank uniformly bounded
from above by the rank of the abelianization of $G$.
\item[(2)] $G$ has only finitely many conjugacy classes of maximal non-cyclic abelian subgroups,
\item[(3)] $G$  has a finite classifying space and the cohomological dimension of $G$  is at most
$\max \{2, r\}$ where $r$  is the maximal rank of an abelian subgroup of $G$.
\end{itemize}}

\medskip

\noindent
{\bf Theorem \ref{co:Lambda_Guirardel_2}.}
{\em Every finitely presented  $\Lambda$-free group is hyperbolic relative to its non-cyclic abelian
subgroups.}

The following results answers affirmatively in the strongest form to the Problem (GO3) from the
Magnus list of open problems \cite{BaumMyasShpil:2002} in the case of finitely presented groups.

\medskip

\noindent
{\bf Corollary \ref{co:Lambda_Guirardel_3}.}
{\em Every finitely presented  $\Lambda$-free group  is biautomatic.}

\medskip

\noindent
{\bf Theorem \ref{th:Lambda_quasi-convex_hierarchy}.}
{\em Every finitely presented $\Lambda$-free group $G$ has a quasi-convex hierarchy.}

As a corollary one gets the following result.

\medskip
\noindent
{\bf Theorem \ref{co:Lambda_quasi-convex_hierarchy_1}.}
{\em Every finitely presented  $\Lambda$-free group $G$  is locally undistorted, that is, every 
finitely generated subgroup of $G$ is quasi-isometrically embedded into $G$.}

Since a finitely generated $\R^n$-free group $G$ is hyperbolic relative to to its non-cyclic abelian
subgroups and $G$ admits a quasi-convex hierarchy then recent results of Wise \cite{Wise:2011}
imply the following.

\medskip

\noindent
{\bf Corollary \ref{co:Lambda_special}.}
{\em Every finitely presented $\Lambda$-free group $G$ is  virtually special, that is, some subgroup
of finite index in $G$ embeds into a right-angled Artin group.}

In his book \cite{Chiswell:2001} Chiswell posted Question 3 (p. 250): Is every $\Lambda$-free group
orderable, or at least right-orderable? The following result answers this question in the affirmative
in the case of finitely presented groups.

\medskip

\noindent
{\bf Theorem \ref{co:Lambda_special_0}.}
{\em Every finitely presented $\Lambda$-free group is right orderable.}

The following addresses Chiswell's question whether $\Lambda$-free groups are orderable or not.

\medskip

\noindent
{\bf Theorem \ref{co:Lambda_special_1}.}
{\em Every finitely presented $\Lambda$-free group is virtually orderable, that is, it contains an
orderable subgroup of finite index.}

Since right-angled Artin groups are linear (see \cite{Humphries:1994, Hsu_Wise:1999, DJ:2000} and
the class of linear groups is closed under finite extension we get the following

\medskip

\noindent
{\bf Theorem \ref{co:Lambda_quasi-convex_hierarchy_2}.}
{\em Every finitely presented $\Lambda$-free group is linear.}

Since every linear group is residually finite we get the following.

\medskip

\noindent
{\bf Corollary \ref{co:Lambda_quasi-convex_hierarchy_3}.}
{\em Every finitely presented  $\Lambda$-free group is residually finite.}

It is known that linear groups are equationally Noetherian (see \cite{Baumslag_Miasnikov_Remeslennikov:1999}
for discussion on equationally Noetherian groups), therefore the following result holds.

\medskip

\noindent
{\bf Corollary \ref{co:Lambda_quasi-convex_hierarchy_4}.}
{\em Every finitely presented $\Lambda$-free group is equationally Noetherian.}

The structural results of the previous section give solution to many algorithmic problems on
finitely presented $\Lambda$-free groups.

\medskip

\noindent
{\bf Theorem \ref{th:word_conjugacy_lambda}.} \cite{KMS:2011(3)}
{\em Let $G$ be a finitely presented $\Lambda$-free group. Then the following algorithmic problems
are decidable in $G$:
\begin{itemize}
\item the Word Problem;
\item the Conjugacy Problems;
\item the Diophantine Problem (decidability of arbitrary equations in $G$).
\end{itemize}
}

Theorem \ref{co:Lambda_Guirardel_2} combined with results of Dahmani and Groves
\cite{Dahmani_Groves:2008} immediately  implies the following two corollaries.

\medskip

\noindent
{\bf Corollary \ref{co:algorithm_lambda_1}.}
{\em Let $G$ be a finitely presented  $\Lambda$-free group. Then:
\begin{itemize}
\item $G$ has a non-trivial abelian splitting and one can find such a splitting effectively,
\item $G$ has a non-trivial abelian JSJ-decomposition and one can find such a decomposition
effectively.
\end{itemize}
}

\medskip

\noindent
{\bf Corollary \ref{co:algorithm_lambda_2}.}
{\em The Isomorphism Problem is decidable in the class of finitely presented groups that act freely
on some $\Lambda$-tree.}

\medskip

\noindent
{\bf Theorem \ref{co:membership_lambda}.}
{\em The Subgroup Membership Problem is decidable in every finitely presented $\Lambda$-free group.}

\section{Bass-Serre Theory}
\label{sec:bass-serre}

In his seminal book \cite{Serre:1980} J.~P. Serre laid down fundamentals of the theory of groups acting
on simplicial trees. In the following decade Serre's novel approach unified several geometric,
algebraic, and combinatorial methods of group theory into a unique powerful tool, known today as
Bass-Serre Theory. This tool allows one to obtain a lot of structural information about the group
from its action on a simplicial tree in terms of stabilizers of vertices and edges. One of the most
important consequences of this approach is the structure of subgroups of the initial group which
inherit the action on the tree and hence can be dealt with in the same manner as the ambient group.

\smallskip

In this section, following \cite{DunwoodyDicks:1989} we give basic treatment of Bass-Serre Theory.
The original ideas and results can be found in \cite{Serre:1980}, a more topological treatment of the theory - in \cite{Scott_Wall:1979}.

\subsection{$G$-sets and $G$-graphs}
\label{subs:G-set}

Let $G$ be a group. $X$ is called a {\em $G$-set} if there is a function $G \times X \rightarrow X$
({\em left action of $G$ on $X$}) given by $(g,x) \rightarrow gx$ such that $1 x = x$ for all $x
\in X$ and $f(gx) = (fg)x$ for any $f,g \in G,\ x \in X$.

A function $\alpha : X \rightarrow Y$ between $G$-sets is a {\em $G$-map} if $\alpha(gx) = g
\alpha(x)$ for any $g \in G,\ x \in X$. $X$ and $Y$ are {\em $G$-isomorphic} if $\alpha$ is a
bijection.

If $X$ is a $G$-set, then the {\em $G$-stabilizer} of $x \in X$ is a subgroup $G_x = \{g \in G \mid
gx = x\}$ of $G$. $X$ is {\em $G$-free} if $G_x = 1$ for any $x \in X$. For $x \in X$, the {\em
$G$-orbit} of $x$ is $Gx = \{gx \mid g \in G\}$, a $G$-subset of $X$ which is $G$-isomorphic to $G /
G_x$ with $gx \in Gx$ corresponding to $gG_x \in G / G_x$. The {\em quotient set} for the $G$-set
$X$ is defined as $G \backslash X = \{Gx \mid x \in X\}$, the set of $G$-orbits. A {\em
$G$-transversal} in $X$ is a subset $S$ of $X$ which meets each $G$-orbit exactly once, so $S
\rightarrow G \backslash X$ is bijective.

A {\em $G$-graph} $(X,V(X),E(X),\sigma,\tau)$ is a non-empty $G$-set $X$ with a non-empty $G$-subset
$V(X)$, its complement $E(X) = X - V(X)$, and three maps
$$\sigma : E(X) \rightarrow V(X), \ \ \tau : E(X) \rightarrow V(X), \ \ ^- : E(X) \rightarrow E(X),$$
which satisfy the following conditions:
$$\sigma(\bar{e}) = \tau(e),\ \tau(\bar{e}) = \sigma(e),\ \bar{\bar{e}}= e,\ \bar{e} \neq e.$$
$\sigma$ and $\tau$ are called {\em incidence maps}.

For $G$-graphs $X,Y$, a {\em $G$-graph map} $\alpha : X \rightarrow Y$ is a $G$-map such that
$\alpha(V(X)) \subseteq V(Y),\ \alpha(E(X)) \subseteq E(Y)$, and $\alpha(\sigma(e)) =
\sigma(\alpha(e)),\ \alpha(\tau(e)) = \tau(\alpha(e))$ for any $e \in E(X)$.

A {\em path} $p$ in a $G$-graph $X$ is a sequence $e_1 \cdots e_n$ of edges such that $\tau(e_i) =
\sigma(e_{i+1}),\ i \in [1,n-1]$. In this case, $\sigma(e_1)$ is the origin of $p$ and $\tau(e_n)$
is its terminus. $p$ is {\em closed} if $\sigma(e_1) = \tau(e_n)$.

A $G$-graph $X$ is a {\em $G$-tree} if for any $x,y \in V(X)$ there exists a unique path from $x$
to $y$, this path is called in this case the {\em $X$-geodesic} from $x$ to $y$.

\begin{prop} [\cite{DunwoodyDicks:1989}, Prop 2.6]
\label{pr:transversal}
If $X$ is a $G$-graph and $G \backslash X$ is connected then there exist subsets $Y_0 \subseteq Y
\subseteq X$ such that $Y$ is a $G$-transversal in $X$, $Y_0$ is a subtree of $X$, $V(Y) = V(Y_0)$,
and for each $e \in E(Y)$, $\sigma(e) \in V(Y) = V(Y_0)$.
\end{prop}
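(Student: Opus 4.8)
The plan is to build $Y_0$ as an isomorphic lift of a spanning tree of the quotient graph $G\backslash X$, and then to enlarge it to a full $G$-transversal $Y$ by adjoining one carefully chosen edge from each remaining $G$-orbit. Write $\pi : X \to G\backslash X$ for the quotient map; since the incidence maps $\sigma,\tau$ and the involution $\overline{\cdot}$ are $G$-equivariant, $\pi$ is a graph map and $G\backslash X$ is a genuine graph, connected by hypothesis. The whole difficulty is concentrated in producing $Y_0$, so I would handle the lift and the choice of spanning tree together in a single maximality argument.

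I would consider the poset of pairs $(\bar S, S)$, where $\bar S$ is a subtree of $G\backslash X$, $S$ is a subtree of $X$, and $\pi|_S : S \to \bar S$ is an isomorphism of graphs, ordered by coordinatewise inclusion. This poset is nonempty (take a single vertex together with one of its lifts), and every chain has an upper bound, since a nested union of subtrees is again a subtree and $\pi$ stays injective on the union (a cycle or a coincidence of $\pi$-values would already occur in some member of the chain). Zorn's lemma then yields a maximal pair $(\bar T, Y_0)$. The crux is to show $\bar T$ is spanning, i.e. $V(\bar T) = V(G\backslash X)$. If some vertex were missing, connectedness of $G\backslash X$ supplies an edge $\bar e$ with $\sigma(\bar e)=\bar u\in V(\bar T)$ and $\tau(\bar e)=\bar v\notin V(\bar T)$. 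Any edge over $\bar e$ has origin in the same $G$-orbit as the lift $u\in V(Y_0)$ of $\bar u$, so a suitable $g\in G$ produces an edge $e=g\tilde e$ with $\sigma(e)=u$ and a new endpoint $\tau(e)$ lying over $\bar v$, hence outside $V(Y_0)$. Adjoining $e,\overline{e},\tau(e)$ to $Y_0$ (and the corresponding edges and vertex to $\bar T$) attaches a pendant edge to a tree without creating a cycle and keeps $\pi$ injective, contradicting maximality. Thus $\pi|_{Y_0}:Y_0\to \bar T$ is an isomorphism onto a spanning tree, and in particular $V(Y_0)$ meets each vertex $G$-orbit exactly once.

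It then remains to choose $E(Y)$ so that $Y := V(Y_0)\cup E(Y)$ is a transversal with the required origin property. Because $\pi|_{Y_0}$ is injective, distinct edges of $Y_0$ lie in distinct $G$-orbits, and no edge shares a $G$-orbit with its reverse (as $\overline{\pi(e)}\neq\pi(e)$). I would therefore take every edge of $Y_0$ as the chosen representative of its orbit; each such $e$ automatically satisfies $\sigma(e)\in V(Y_0)$. For every remaining edge orbit I pick any edge $e'$, note that $\sigma(e')$ lies in the vertex orbit of a unique $w\in V(Y_0)$, choose $g\in G$ with $\sigma(ge')=w$, and take $e:=ge'$ as the representative, again with $\sigma(e)\in V(Y_0)$. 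Then $E(Y)$ meets each edge orbit once and $V(Y_0)$ meets each vertex orbit once, so $Y$ is a $G$-transversal; $E(Y_0)\subseteq E(Y)$ gives $Y_0\subseteq Y$; and $V(Y)=V(Y_0)$ with $\sigma(e)\in V(Y_0)$ for all $e\in E(Y)$ by construction.

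The \emph{main obstacle} is the lifting step: ensuring that the maximal lift $Y_0$ is simultaneously a tree (no cycles, $\pi$ injective) and spanning over the vertex orbits. The pendant-edge argument is exactly what forces the quotient image $\bar T$ to be spanning, while injectivity of $\pi|_{Y_0}$ is what rules out cycles and what later makes the edge bookkeeping unambiguous. In the infinite case this step relies essentially on Zorn's lemma; once $Y_0$ is in hand, the selection of edge representatives is routine orbit bookkeeping.
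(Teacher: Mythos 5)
Your proof is correct, and since the paper itself gives no argument for this proposition --- it is quoted directly from \cite{DunwoodyDicks:1989} --- the right comparison is with the proof there, which yours essentially reproduces: a Zorn's-lemma-maximal subtree of $X$ projecting injectively into $G\backslash X$, forced to be spanning by the pendant-edge contradiction, then completed to a transversal by translating one representative edge per remaining orbit so that its origin lies in $V(Y_0)$. The only point you gloss over is that $\overline{\pi(e)}\neq\pi(e)$ (i.e.\ the action has no inversions) is needed for $G\backslash X$ to be a genuine graph, but this is implicit in the hypothesis that $G\backslash X$ is a connected graph, so it is harmless.
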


The subset $Y$ from Proposition \ref{pr:transversal} is called a {\em fundamental $G$-transversal
in $X$, with subtree $Y_0$}.

\subsection{Graphs of groups}
\label{subs:graphs}

Let ${\cal G}$ be a class of groups. A {\em graph of groups} $({\cal G}, X)$ consists of a connected
graph $X$ and an assignment of $G(x) \in {\cal G}$ to every $x \in V(X) \cup E(X)$, such that for
every $e \in E(X)$, $G(e) = G(\bar{e})$, and there exists a boundary monomorphism $i_e : G(e)
\rightarrow G(\sigma(e))$. $G(v),\ v \in V(X)$ and $G(e),\ e \in E(X)$ are called respectively
{\em vertex} and {\em edge groups}.

Let $({\cal G}, X)$ be a graph of groups with a maximal subtree $Y_0$. The {\em fundamental group
$\pi({\cal G},X, Y_0)$ of the graph of groups $({\cal G}, X)$ with respect to $Y_0$} is the group
with the following presentation:
$$\langle G(v)\ (v \in V(X)),\ t_e\ (e \in E(X)) \mid rel(G(v)),\ t_e i_e(g) t^{-1}_e =
i_{\bar{e}}(g)\ (g \in G(e)),\ $$
$$t_e t_{\bar{e}} = 1,\ (e \in E(X)),\ t_e = 1\ (e \in Y_0)\rangle.$$

Let $X$ be a $G$-graph such that $G \backslash X$ is connected, and let $Y$ be a fundamental
$G$-transversal for $X$ with subtree $Y_0$. For each $e \in E(Y)$ there are unique
$\tilde{\sigma}(e), \tilde{\tau}(e) \in V(Y)$ which belong to the same $G$-orbits as $\sigma(e)$
and $\tau(e)$ respectively, and we can assume $\tilde{\sigma}(e) = \sigma(e)$. $Y$ equipped with
incidence functions $\tilde{\sigma}, \tilde{\tau} : E(Y) \rightarrow V(Y)$ becomes a graph
$G$-isomorphic to $G \backslash X$, and $Y_0$ is its maximal subtree. Observe that for each $e \in
E(Y)$, $\tau(e)$ and $\tilde{\tau}(e)$ are in the same $G$-orbit, so we can choose $t_e \in G$ such
that $t_e \tilde{\tau}(e) = \tau(e)$. It is easy to see that $t_e = 1$ if $e \in E(Y_0)$ since $Y_0$
is a subtree of $X$ and $\tilde{\tau}(e) = \tau(e)$. The set $\{t_e \mid e \in E(Y)\}$ is called a
{\em family of connecting elements}. Now, $G_e \subseteq G_{\sigma(e)}$ and $G_e \subseteq
G_{\tau(e)} = t_e G_{\tilde{\tau}(e)} t_e^{-1}$, so there is an embedding $i_e : G_e \rightarrow
G_{\tilde{\tau}(e)}$ defined by $g \rightarrow t_e g t_e^{-1}$. This data defines the graph of
groups {\em associated to $X$} with respect to the fundamental $G$-transversal $Y$, the maximal
subtree $Y_0$, and the family of connecting elements $t_e$. Denote ${\cal G} = \{ G_v \mid v \in
V(Y)\} \cup \{ G_e \mid e \in E(Y)\}$.

\begin{theorem}\cite[Theorem I.13]{Serre:1980}
\label{pr:structure}
If $X$ is a $G$-tree then $G$ is naturally isomorphic to $\pi({\cal G}, Y, Y_0)$.
\end{theorem}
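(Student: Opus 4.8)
The plan is to exhibit the natural homomorphism $\Phi:\pi({\cal G},Y,Y_0)\to G$ suggested by the construction and to prove it is an isomorphism by establishing surjectivity and injectivity separately, the latter being the crux. I would define $\Phi$ on generators by sending each $g\in G_v$ ($v\in V(Y)$) to itself under the inclusion $G_v\leqslant G$, and each edge letter $t_e$ ($e\in E(Y)$) to the corresponding connecting element $t_e\in G$ chosen so that $t_e\tilde{\tau}(e)=\tau(e)$. To see that $\Phi$ is well defined one checks that every defining relation of $\pi({\cal G},Y,Y_0)$ holds in $G$: the relations $rel(G_v)$ hold because each $G_v$ is literally a subgroup of $G$; the relations $t_e=1$ for $e\in E(Y_0)$ hold because $\tilde{\tau}(e)=\tau(e)$ on the subtree $Y_0$ forces the connecting element to be trivial; the relations $t_et_{\bar{e}}=1$ hold under the standard compatible normalization $t_{\bar{e}}=t_e^{-1}$; and the edge relations $t_e i_e(g)t_e^{-1}=i_{\bar{e}}(g)$ become, after unwinding the boundary maps and $\Phi(t_e)=t_e$, identities among stabilizer elements that hold because $G_{\tau(e)}=t_eG_{\tilde{\tau}(e)}t_e^{-1}$ and $G_e\leqslant G_{\sigma(e)}\cap G_{\tau(e)}$.

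Next I would prove surjectivity. Since $X$ is a $G$-tree it is connected, hence so is $G\backslash X$, and Proposition \ref{pr:transversal} supplies the fundamental transversal $Y$ with subtree $Y_0$. The claim is that $G$ is generated by $\bigcup_{v\in V(Y)}G_v$ together with $\{t_e\mid e\in E(Y)\}$. Given $g\in G$, fix a base vertex $v_0\in V(Y_0)$ and join $v_0$ to $gv_0$ by the unique $X$-geodesic; traversing this path edge by edge and using repeatedly that each traversed edge lies in a $G$-translate of an edge of $Y$, one peels off a stabilizer element or a connecting element at each step and expresses $g$ as a word in the listed generators. Thus $\Phi$ is onto.

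The hard part is injectivity, and this is where I expect the main obstacle to lie. Here I would invoke the normal form theory for fundamental groups of graphs of groups. Every element of $\pi({\cal G},Y,Y_0)$ is represented by an alternating word $w=g_0t_{e_1}^{\varepsilon_1}g_1\cdots t_{e_n}^{\varepsilon_n}g_n$ with each $g_i$ in a vertex group (edges of $Y_0$ contributing trivially since $t_e=1$ there), and $w$ is \emph{reduced} if it contains no subword of the form $t_e\,g\,t_{\bar{e}}$ with $g$ in the image of the edge group. The goal is to show that a reduced $w$ with $n\geq 1$ is nontrivial and, crucially, that $\Phi(w)\neq 1$ in $G$. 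I would argue geometrically: applying the successive partial products of $\Phi(w)$ to $v_0$ records a path in $X$, and the reducedness condition translates precisely into this path being non-backtracking, i.e. into it being an $X$-geodesic. Since $X$ is a tree, a nondegenerate reduced path cannot be closed, so $\Phi(w)v_0\neq v_0$ whenever the edge-length is positive, forcing $\Phi(w)\neq 1$; the case $n=0$ reduces to a single injective inclusion of a stabilizer.

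The step requiring the most care, and the genuine obstacle, is verifying this dictionary between algebraic reducedness in $\pi({\cal G},Y,Y_0)$ and geometric non-backtracking in $X$ — in particular, correctly accounting for the collapse of $Y_0$, where $t_e=1$, so that edge letters running along $Y_0$ neither contribute spuriously to the traced path nor create phantom cancellations, and simultaneously tracking how each vertex-group factor $g_i$ determines which coset at a branch vertex the geodesic enters. Once this correspondence is pinned down, the normal form theorem gives injectivity of $\Phi$, and combined with surjectivity this yields the natural isomorphism $G\cong\pi({\cal G},Y,Y_0)$.
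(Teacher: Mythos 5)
The paper gives no proof of this theorem: it is quoted directly from Serre (\cite[Theorem I.13]{Serre:1980}), and the surrounding text only sets up the fundamental transversal $Y$, the subtree $Y_0$, the connecting elements $t_e$, and the boundary monomorphisms that the statement refers to. Your outline is the standard proof of the Bass--Serre structure theorem as it appears in the cited source (and in Dicks--Dunwoody, which this survey follows): define the natural homomorphism $\Phi$, get surjectivity by peeling along $X$-geodesics using connectedness, and get injectivity from the dictionary between algebraically reduced words and non-backtracking edge paths in the tree $X$; the delicate point you flag --- bookkeeping at the junctions where the path enters and leaves translates of the collapsed subtree $Y_0$ --- is exactly where the real work lies in that proof, so your approach matches the canonical one. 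One small tightening: you need not invoke normal form theory as an external input, since the only part of it you use in one direction is that every element admits a reduced expression (immediate by repeated reduction), while the substantive half (that a reduced word of positive edge length is nontrivial) is precisely what your geometric argument establishes, making the proof self-contained.
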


\begin{remark}
\label{co:free}
From Theorem \ref{pr:structure} it follows that if $X$ is $G$-free then $G$ is isomorphic to a free
group.
\end{remark}

On the other hand, given a graph of groups $({\cal G}, X)$ with the fundamental group $G = \pi({\cal G},
X, X_0)$, one can construct a $G$-tree $Y$ which is a universal cover of $X$.

\begin{example}
\label{ex:amalgam}
Let $G = A \ast_C B$ be a free product of groups $A$ and $B$ with amalgamation along a
subgroup $C$. Observe, that $G$ is isomorphic to the fundamental group $\pi({\cal G},X, X_0)$ of the
graph of groups $X$ (see Figure \ref{amalgam}), where $X_0 = X$.
\begin{figure}[h]
\centering{\mbox{\psfig{figure=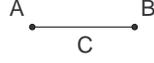,height=0.6in}}}
\caption{The graph of groups for $G = A \ast_C B$}
\label{amalgam}
\end{figure}
Define $Y$ as follows: $V(Y)$ consists of all cosets $gA$ and $gB$ ($g \in G$), $E(Y)$ consists of
all cosets $gC$ ($g \in G$), the maps $\sigma$ and $\tau$ which give the endpoints of the edge are
defined as $\sigma(gC) = gA, \ \tau(gC) = gB$. It is easy to check that $Y$ is a tree and that $G$
acts on $Y$ without inversions by the left multiplication (see Figure \ref{amalgam_tree}).
\begin{figure}[h]
\centering{\mbox{\psfig{figure=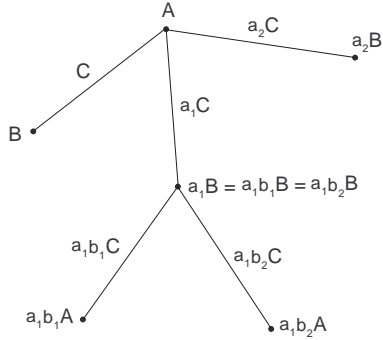,height=2in}}}
\caption{The tree $Y$ for $A \ast_C B:\ a_1, a_2 \in A \smallsetminus B,\ b_1, b_2 \in B \smallsetminus A$}
\label{amalgam_tree}
\end{figure}
All vertices $gA,\ g \in G$ are in the same orbit, the same is true about all vertices $gB,\ g \in G$
and edges $gC,\ g \in G$, and $G \backslash Y = X$.
\end{example}

\begin{example}
\label{ex:hnn}
Let $G = A \ast_C = \langle A, t | t^{-1} c t = \phi(c) \rangle$ be an HNN extension of
a group $A$ with associated subgroups $C$ and $\phi(C)$. Here $G$ is isomorphic to to the fundamental
group $\pi({\cal G},X, X_0)$ of the graph of groups $X$ (see Figure \ref{hnn}), where $X_0$ consists
of a single vertex with no edges.
\begin{figure}[h]
\centering{\mbox{\psfig{figure=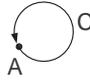,height=0.8in}}}
\caption{The graph of groups for $G = A \ast_C$}
\label{hnn}
\end{figure}
Define $Y$ as follows: $V(Y) = \{gA | g \in G\},\ E(Y) = \{ gC | g \in G\}$, and $\sigma(gC) = gA, \
\tau(gC) = (gt)A$. Again, it is easy to check that $Y$ is a tree, $G$ acts on $Y$ without inversions
by the left multiplication, and $G \backslash Y = X$ (see Figure \ref{hnn_tree}).
\begin{figure}[h]
\centering{\mbox{\psfig{figure=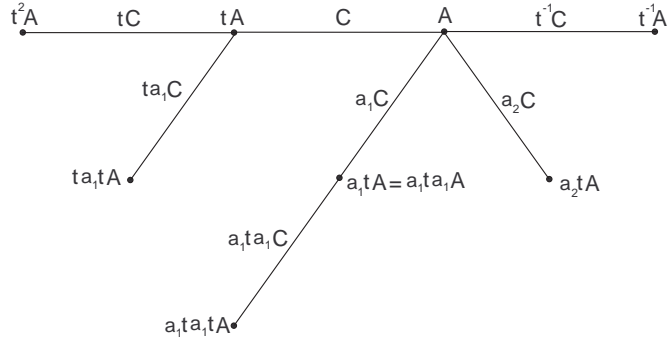,height=2in}}}
\caption{The tree $Y$ for $A \ast_C:\ a_1, a_2 \in A \smallsetminus C$}
\label{hnn_tree}
\end{figure}
\end{example}

The idea of constructing a covering tree for a given graph of groups given in the examples above can
be generalized as follows.

Given a graph of groups $({\cal G}, X)$, A {\em $({\cal G}, X)$-path of length $k \geqslant 0$} from
$v \in V(X)$ to $v' \in V(X)$ is a sequence
$$p = g_0, e_1, g_1, \ldots, e_k, g_k,$$
where $k \geqslant 0$ is an integer, $e_1 \cdots e_k$ is a path in $X$ from $v \in V(X)$ to $v' \in V(X)$,
$g_0 \in G(v), g_k \in G(v')$ and $g_i \in G(\tau(e_i)) = G(\sigma(e_{i+1})),\ i \in [1,k-1]$. If $p$
is a $({\cal G}, X)$-path from $v$ to $v'$ and $q$ is a $({\cal G}, X)$-path from $v'$ to $v''$
then one can define the concatenation $p q$ of $p$ and $q$ in the obvious way.

One can introduce the equivalence relation on the set of all $({\cal G}, X)$-paths generated by
$$g, e, i_{\bar{e}}(c), \bar{e}, f\ \sim\ g\ i_e(c)\ f,$$
where $e \in E(X),\ c \in G(e),\ g,f \in G(\sigma(e))$. Observe that if $p \sim q$ then $p$ and $q$
have the same initial and terminal vertices in $V(X)$.

\begin{lemma} \cite{KMW:2005}
\label{le:path_fund}
Let $({\cal G}, X)$ be a graph of groups and let $v_0 \in V(X)$. Then
\begin{enumerate}
\item the set $P({\cal G},X, v_0)$ of ``$\sim$''-equivalence classes of $({\cal G}, X)$-loops at $v_0$
is a group with respect to concatenation of paths,
\item for any spanning tree $T$ of $X$, $P({\cal G}, X, v_0)$ is naturally isomorphic to
$\pi({\cal G}, X, T)$.
\end{enumerate}
\end{lemma}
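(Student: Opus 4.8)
The statement is the graph-of-groups analogue of computing the fundamental group of a graph from a spanning tree, so I would model the argument on that classical case, treating the vertex groups and the boundary monomorphisms as extra ``coefficients'' carried along the path.

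For part (1) the plan is to verify the group axioms directly on $\sim$-classes. First I would check that concatenation descends to classes: since $\sim$ is generated by the elementary move $g,e,i_{\bar{e}}(c),\bar{e},f \sim g\,i_e(c)\,f$, which alters a path only inside a length-two subsegment, applying it to a factor of a concatenation $pq$ yields a path still $\sim$-equivalent to $pq$, so $[p][q]:=[pq]$ is well defined. Associativity is immediate from associativity of multiplication in the vertex groups at the junction vertices, where the trailing group element of one factor is merged with the leading one of the next. The class of the length-$0$ loop $1_{v_0}$ is a two-sided identity, since concatenating it merely multiplies $g_0$ (or $g_k$) by $1$. For inverses, given a loop $p=g_0,e_1,g_1,\dots,e_k,g_k$ I would take its formal reverse $\bar{p}=g_k^{-1},\bar{e}_k,g_{k-1}^{-1},\dots,\bar{e}_1,g_0^{-1}$ and show $p\bar{p}\sim 1_{v_0}$ by repeatedly applying the elementary move with $c=1$: the innermost subsegment $g_{k-1},e_k,1,\bar{e}_k,g_{k-1}^{-1}$ collapses to $g_{k-1}\,i_{e_k}(1)\,g_{k-1}^{-1}=1$, and one peels off backtracks from the inside out; symmetrically $\bar{p}p\sim 1_{v_0}$. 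This makes $P(\mathcal{G},X,v_0)$ a group.

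For part (2) I would fix the spanning tree $T$ and, for each $v\in V(X)$, let $\gamma_v$ be the $T$-geodesic from $v_0$ to $v$, carried as a $(\mathcal{G},X)$-path with all group labels equal to $1$ (so $\gamma_{v_0}=1_{v_0}$). I would first build a homomorphism $\Phi:P(\mathcal{G},X,v_0)\to\pi(\mathcal{G},X,T)$ by reading a loop off as a word in the presentation, $\Phi([g_0,e_1,\dots,e_k,g_k])=g_0\,t_{\bar{e}_1}\,g_1\,t_{\bar{e}_2}\cdots t_{\bar{e}_k}\,g_k$, each $g_i$ read inside its vertex group. That $\Phi$ respects $\sim$ reduces to the identity $t_{\bar{e}}\,i_{\bar{e}}(c)\,t_e=i_e(c)$ in $\pi(\mathcal{G},X,T)$, which follows from $t_e t_{\bar{e}}=1$ together with the relation $t_{\bar{e}}\,i_{\bar{e}}(c)\,t_{\bar{e}}^{-1}=i_e(c)$ (the defining relation applied to the edge $\bar{e}$, using $\bar{\bar{e}}=e$); here it is essential to orient the stable letters as $t_{\bar{e}_i}$ rather than $t_{e_i}$, this being forced by the form of the presentation's relation. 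That $\Phi$ is a homomorphism is clear, since concatenation merges $g_k$ with the leading group element of the next loop exactly as the word product multiplies them. Next I would define a candidate inverse $\Psi:\pi(\mathcal{G},X,T)\to P(\mathcal{G},X,v_0)$ on generators by $a\mapsto[\gamma_v\,a\,\gamma_v^{-1}]$ for $a\in G(v)$ and $t_e\mapsto[\ell_e]$ with $\ell_e=\gamma_{\tau(e)}\,(1,\bar{e},1)\,\gamma_{\sigma(e)}^{-1}$, and verify that $\Psi$ kills every defining relation: the relations $\mathrm{rel}(G(v))$ hold because $a\mapsto[\gamma_v a\gamma_v^{-1}]$ is a homomorphism on $G(v)$; the relation $t_e=1$ for $e\in T$ holds because $\ell_e$ is then a closed path inside $T$ and collapses by backtrack cancellation; and $t_e t_{\bar{e}}=1$ and $t_e\,i_e(g)\,t_e^{-1}=i_{\bar{e}}(g)$ each follow from a single application of the elementary move (with $c=1$ and $c=g$) after the conjugating copies of $\gamma_{\sigma(e)}$ cancel. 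Finally I would check $\Phi\Psi=\mathrm{id}$ on generators ($\Phi(\gamma_v a\gamma_v^{-1})=a$ and $\Phi(\ell_e)=t_e$, since stable letters of tree edges are trivial) and $\Psi\Phi=\mathrm{id}$ on a loop $p$ by a telescoping computation in which $\gamma_{\tau(e_{i-1})}^{-1}\gamma_{\sigma(e_i)}=\gamma_{\sigma(e_i)}^{-1}\gamma_{\sigma(e_i)}\sim 1$ cancels and the surviving product is exactly $[p]$; mutually inverse homomorphisms then give the asserted isomorphism.

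I expect the main obstacle to be the well-definedness of $\Psi$, that is, checking that the two stable-letter relations genuinely hold in $P(\mathcal{G},X,v_0)$, together with keeping the orientation convention for the $t_e$ consistent between $\Phi$ and $\Psi$: reversing one orientation makes $\Phi$ fail to respect $\sim$. The telescoping verification of $\Psi\Phi=\mathrm{id}$ is bookkeeping-heavy but routine once the conventions are fixed.
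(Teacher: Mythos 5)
Your proof is correct, and the paper itself offers no proof to compare against: it states this lemma as a citation to \cite{KMW:2005}, where the argument given is essentially the one you reconstructed (the classical spanning-tree computation of the fundamental group, with vertex-group elements carried along and the two mutually inverse homomorphisms $\Phi$, $\Psi$ checked on generators and relations). Your attention to the orientation convention $t_{\bar{e}_i}$ versus $t_{e_i}$, which is indeed forced by the relation $t_e i_e(g) t_e^{-1} = i_{\bar{e}}(g)$ as written in this paper, is exactly the point where such arguments most often go wrong, and you handled it correctly.
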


Let $({\cal G}, X)$ be a graph of groups and let $v_0 \in V(X)$. For $({\cal G}, X)$-paths $p,q$
originating at $v_0$ we write $p \approx q$ if
\begin{enumerate}
\item $t(p) = t(q)$,
\item $p \sim q a$ for some $a \in G(t(p))$.
\end{enumerate}

For $({\cal G}, X)$-path $p$ from $v_0$ to $v$ we denote the ``$\approx$''-equivalence class of $p$ by
$\overline{p}\ G(v)$.

Now one can define the {\em universal Bass-Serre covering tree} $T_X$ associated with $({\cal G},
X)$ as follows. The vertices of $Y$ are ``$\approx$''-equivalence classes of $({\cal G}, X)$-paths
originating at $v_0$. Two vertices $x,x' \in T_X$ are connected by an edge if and only if $x =
\overline{p}\ G(v),\ x' = \overline{p a e}\ G(v')$, where $p$ is a $({\cal G}, X)$-path from $v_0$
to $v$ and $a \in G(v),\ e \in E(X)$ with $\sigma(e) = v,\ \tau(e) = v'$.

It is easy to see that $T_X$ is indeed a tree with a natural base vertex $x_0 = \overline{1}\ G(v_0)$
and $G = P({\cal G}, X, v_0)$ has a natural simplicial action on $T_X$ defined as follows: if $g =
\overline{q}\ \in G$, where $q$ is a $({\cal G}, X)$-loop at $v_0$ and $u = \overline{p}\ G(v)$,
where $p$ is a $({\cal G}, X)$-path from $v_0$ to $v$, then $g \cdot u = \overline{q p}\ G(v)$.

\subsection{Induced splittings}
\label{subs:effective}

Let $({\cal G}, X)$ be a graph of groups with a base-vertex $v_0 \in V(X)$. Let $G = P({\cal G}, X,
v_0)$ and $T_X$ be the universal Bass-Serre covering tree associated with $({\cal G}, X)$. If $H
\leqslant G$ then the action of $G$ on $T_X$ induces an action of $H$ on $T_X$ and $H$ can be
represented as the fundamental group of a graph of groups by Theorem \ref{pr:structure}. More
precisely, the following result holds.

\begin{theorem} \cite{Serre:1980}
\label{induced_split}
Let $x_0$ be the base-vertex of $T_X$ mapping to $v_0$ under the natural quotient map and let $T_H
\subset T_X$ be an $H$-invariant subtree containing $x_0$. Then $H$ is isomorphic to the fundamental
group of a graph of groups $({\cal H}, Y)$, where $Y = H \backslash T_H$ and ${\cal H} = \{ K \cap H
\mid K \in {\cal G} \}$.
\end{theorem}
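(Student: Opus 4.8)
The plan is to obtain the splitting of $H$ by restricting the $G$-action on the universal covering tree to $H$ and then invoking the structure theorem, Theorem~\ref{pr:structure}. Recall that $G = P({\cal G}, X, v_0)$ acts simplicially and without inversions on $T_X$ via $\overline{q} \cdot \overline{p}\,G(v) = \overline{qp}\,G(v)$, as defined just above the statement. Restricting this action along the inclusion $H \leqslant G$ yields a simplicial action of $H$ on $T_X$, and since $T_H$ is an $H$-invariant subtree it inherits a simplicial action of $H$ without inversions. As $T_H$ is a tree and $H$ acts on it without inversions, the quotient $Y = H \backslash T_H$ is a connected graph, so $T_H$ is an $H$-tree in the sense required by Theorem~\ref{pr:structure}.

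Applying Theorem~\ref{pr:structure} to the $H$-tree $T_H$ immediately gives a natural isomorphism $H \cong \pi({\cal H}, Y, Y_0)$, where $Y = H \backslash T_H$, the subtree $Y_0$ comes from a choice of fundamental $H$-transversal in $T_H$, and the vertex and edge groups of $({\cal H}, Y)$ are precisely the $H$-stabilizers $H_x$ of the simplices $x$ of $T_H$ lying in that transversal. Thus the only remaining task is to identify these stabilizers with the groups $K \cap H$ appearing in the statement.

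This identification is the routine but essential bookkeeping step. Since the $H$-action on $T_H$ is the restriction of the $G$-action, for every simplex $x$ of $T_X$ we have
$$H_x = \{ h \in H \mid h x = x \} = H \cap \{ g \in G \mid g x = x \} = H \cap G_x,$$
so each vertex or edge group of $({\cal H}, Y)$ is of the form $G_x \cap H$. The collection ${\cal G}$ in the statement is exactly the family of $G$-stabilizers $\{G_x \mid x \in T_X\}$ of the simplices of $T_X$; hence the vertex and edge groups of $({\cal H}, Y)$ range over $\{ K \cap H \mid K \in {\cal G}\}$, as claimed.

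The main obstacle is not the restriction argument itself, which is formal, but the verification underlying Theorem~\ref{pr:structure} that the $G$-stabilizers $G_x$ are genuinely the conjugates of the vertex and edge groups $G(v), G(e)$ of $({\cal G}, X)$, so that ${\cal G}$ really is the family appearing in the hypothesis. Concretely, one unwinds the definition of the action to see that $\overline{q}$ fixes $\overline{p}\,G(v)$ precisely when $qp \sim p\,a$ for some $a \in G(v)$, i.e. when $\overline{q}$ lies in the conjugate $\overline{p}\,G(v)\,\overline{p}^{\,-1}$, together with the analogous computation for edges. Once this is in place, and given that the $G$-action (hence the $H$-action) is without inversions, everything follows from Theorem~\ref{pr:structure} with no further work.
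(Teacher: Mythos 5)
Your proposal is correct and follows exactly the route the paper indicates: restrict the $G$-action on $T_X$ to $H$ acting on the invariant subtree $T_H$, apply Theorem~\ref{pr:structure}, and identify the resulting vertex and edge groups via $H_x = H \cap G_x$ with the $G$-stabilizers (conjugates of the $G(v)$, $G(e)$), which is also how the paper uses ${\cal G}$ in Example~\ref{kurosh}. The paper itself gives only the one-line reduction to Theorem~\ref{pr:structure} and cites Serre; your write-up is a faithful elaboration of that same argument.
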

The graph of groups $({\cal H}, Y)$ from the theorem above is called the {\em induced splitting of
$H$ with respect to $T_H$}.

\begin{remark}
If $T_X$ is $G$-free and $H \leqslant G$, then $H$ also acts on $T_X$ and $T_xX$ is $H$-free. Now
it follows that any subgroup of a free group is free, which is a well-known result (see
\cite{Nielsen:1921, Schreier:1928})
\end{remark}

\begin{example}
\label{kurosh}
Let $G = A \ast B$ be a free product of $A$ and $B$. The amalgamated subgroup is trivial and $G$ is
isomorphic to the fundamental graph of groups $X$ (see Figure \ref{free_prod}).
\begin{figure}[h]
\centering{\mbox{\psfig{figure=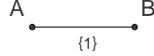,height=0.6in}}}
\caption{The graph of groups for $G = A \ast B$}
\label{free_prod}
\end{figure}
As in Example \ref{ex:amalgam}, one can construct a tree $Y$ on which $G$ acts so that $X = G
\backslash Y$. Since the amalgamated subgroup is trivial, $G(e)$ is trivial for every $e \in E(Y)$
and for each $v \in V(Y),\ G(v)$ is either $g^{-1} A g$, or $g^{-1} B g$ for some $g \in G$. Now,
if $H \leqslant G$ then by Theorem \ref{induced_split}, $H$ is isomorphic to the fundamental
group of a graph of groups $({\cal H}, X')$, where $X' = H \backslash Y_H$, $Y_H$ is an $H$-invariant
subtree of $Y$, and ${\cal H} = \{ K \cap H \mid K \in {\cal G} \}$. In other words, for each
$e \in E(Y_H),\ H(e) = H \cap G(e)$ is trivial and for each $v \in V(Y_H),\ H(v) = H \cap G(v)$ is
either $H \cap g^{-1} A g$, or $H \cap g^{-1} B g$ for some $g \in G$. It follows that if $H$ is
finitely generated then
$$H \simeq H_1 \ast \cdots \ast H_k \ast F,$$
where each $H_i$ is conjugate into either $A$, or $B$, and $F$ is a finitely generated free group.
This result is known as the Kurosh subgroup theorem (see \cite{Kurosh:1934}.
\end{example}

In some situations it is possible to construct an induced splitting of $H$ effectively. Below is an
algorithmic version of Theorem \ref{induced_split} which can be applied when the splitting of the
ambient group is ``nice''.

\begin{theorem} \cite[Theorem 1.1]{KMW:2005}
\label{th:eff}
Let $({\cal G}, X)$ be a graph of groups such that
\begin{enumerate}
\item $G(v)$ is either locally quasiconvex word-hyperbolic or polycyclic-by-finite for every $v \in
V(X)$,
\item $G(e)$ is polycyclic-by-finite for every $e \in E(X)$.
\end{enumerate}
Then there is an algorithm which, given a finite subset $S \subset G$, constructs the induced
splitting and a finite presentation for $H = \langle S \rangle \leqslant G = \pi({\cal G}, X, T)$, where
$T$ is a maximal subtree of $X$.
\end{theorem}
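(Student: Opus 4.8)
The plan is to prove this by a generalized Stallings folding procedure for graphs of groups, making effective the abstract induced splitting of Theorem \ref{induced_split}. The non-effective content is already in hand: $H$ acts on the universal Bass-Serre covering tree $T_X$, and by Theorem \ref{induced_split} the quotient of any $H$-invariant subtree $T_H \ni x_0$ yields the induced splitting $({\cal H}, Y)$ with $Y = H \backslash T_H$ and vertex/edge groups of the form $K \cap H$, $K \in {\cal G}$. The whole difficulty is to produce, starting only from the finite generating set $S$, a \emph{finite} model of the relevant $H$-invariant subtree together with effective descriptions of the intersection groups $K \cap H$.

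First I would encode the data combinatorially. By Lemma \ref{le:path_fund}, each $s \in S$ is a ``$\sim$''-class of $({\cal G}, X)$-loops at $v_0$; choose a representative loop. Wedging these loops at a single vertex produces a finite auxiliary graph of groups $({\cal A}, B)$ together with a morphism $({\cal A}, B) \to ({\cal G}, X)$ (a graph-of-groups map recording for each vertex and edge of $B$ its image in $X$ and the relevant coset data) whose image on fundamental groups is exactly $H = \langle S \rangle$. The target is to modify $({\cal A}, B)$ by moves that preserve this image but turn the morphism into an \emph{immersion} ($\pi_1$-injective locally). Once the morphism is an immersion, $({\cal A}, B)$ realizes the induced splitting of Theorem \ref{induced_split}, and the standard presentation coming from the definition of $\pi({\cal G}, X, T)$ is the desired finite presentation of $H$.

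The core of the argument is a sequence of folds. Following the Kapovich--Weidmann--Myasnikov folding calculus, I would repeatedly apply, whenever the morphism fails to be locally injective, moves of the following kinds: identify two edges of $B$ with the same image edge and coinciding initial coset data; when two such edges emanate from a common vertex $u$ but their edge-subgroup data differ by an element of the image vertex group, enlarge the vertex group $A(u)$ by adjoining that element (a vertex-group fold); and auxiliary subdivision/conjugation moves bringing edges into comparable position. Each move preserves the property that the morphism induces $H$, and every individual move must be carried out algorithmically. This is where the hypotheses enter: a fold requires deciding, for elements of a vertex group $G(v)$, membership in a coset of an edge subgroup $i_e(G(e))$, and computing the subgroup generated by such an element together with incident edge images. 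When $G(v)$ is polycyclic-by-finite these are classical decidable problems and the class is closed under passage to subgroups. When $G(v)$ is locally quasiconvex word-hyperbolic, every finitely generated subgroup is quasiconvex, the edge subgroups (polycyclic-by-finite subgroups of a hyperbolic group, hence virtually cyclic) are quasiconvex, membership is decidable, and generating sets of the relevant intersections and joins of quasiconvex subgroups can be computed; crucially the class of quasiconvex subgroups is again closed under these operations, so the effective descriptions propagate to the next fold.

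The main obstacle is termination. Edge-identifying and auxiliary folds strictly reduce the number of edges, but vertex-group folds enlarge vertex groups and could in principle trigger unbounded cascades. To close this I would bound the process by a Noetherian-type argument on a suitable complexity (number of edges together with an invariant controlled by the ambient vertex group): polycyclic-by-finite groups satisfy the ascending chain condition on subgroups, while in a locally quasiconvex hyperbolic vertex group the ascending chains of quasiconvex subgroups produced by successive folds stabilize, since the incident edge subgroups are virtually cyclic and hence force the enlargements to terminate. Combining the two cases gives a strictly decreasing complexity that cannot descend forever, so after finitely many effective folds the morphism becomes an immersion. Reading the resulting $({\cal A}, B)$ off via Theorem \ref{induced_split} then yields the induced splitting, and its fundamental-group presentation yields the finite presentation of $H = \langle S \rangle$.
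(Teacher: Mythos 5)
The survey you are being compared against does not actually prove this statement: Theorem \ref{th:eff} is imported verbatim from \cite{KMW:2005}, and the only thing the paper says about its proof is that it ``involves the notion of folding'' on graphs of groups. Your proposal reconstructs precisely that strategy --- encode $S$ as a wedge of $({\cal G},X)$-loops, view it as a morphism of graphs of groups onto $H$, fold until the morphism is an immersion, then read off the induced splitting via Theorem \ref{induced_split} --- so in approach you coincide with the cited source, and your list of effectiveness ingredients (coset membership and subgroup computations in polycyclic-by-finite vertex groups; quasiconvexity of finitely generated subgroups, and virtual cyclicity of the edge subgroups, in the locally quasiconvex word-hyperbolic case) is the right one.

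There is, however, a genuine gap in your termination argument. You propose a Noetherian complexity partly ``controlled by the ambient vertex group'', asserting that in a locally quasiconvex hyperbolic vertex group the ascending chains of quasiconvex subgroups produced by successive folds must stabilize. No such chain condition exists: a free group of rank two is locally quasiconvex word-hyperbolic, yet $H_n=\langle a,\ bab^{-1},\ \dots,\ b^nab^{-n}\rangle$ is a strictly ascending chain of finitely generated (hence quasiconvex) subgroups. So no ascending chain condition inside the hyperbolic vertex groups can close the argument. The chain condition that actually terminates the process lives in the \emph{edge} groups: every edge group of an approximating graph of groups embeds into a conjugate of an edge group of $({\cal G},X)$, which is polycyclic-by-finite by hypothesis (and virtually cyclic when it sits inside a hyperbolic vertex group), hence satisfies the maximal condition on subgroups. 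One must then organize the moves so that each fold either decreases an edge count or strictly enlarges some approximating edge group, with vertex-group enlargements occurring only as side effects of these two kinds of moves; that bookkeeping, not a chain condition in the vertex groups, is what makes the folding sequence finite in \cite{KMW:2005}. Your parenthetical remark that the virtually cyclic edge subgroups ``force the enlargements to terminate'' points at the right mechanism, but as written your complexity is anchored in the wrong place and the termination step would fail.
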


Recall that the {\em Uniform Membership Problem} if solvable in a group $G$ with a finite generating
set $S$ if there is an algorithm which, for any finite family of words $u, w_1, w_2, \ldots, w_n$
over $S^{\pm1 }$ decides whether or not the element of $G$ represented by $u$ belongs to the subgroup
of $G$ generated by the elements of $G$ corresponding to $w_1, w_2, \ldots, w_n$. The definition does
not depend on the choice of a finite generating set for $G$.

In particular, Theorem \ref{th:eff} implies the following result.

\begin{cor} \cite[Theorem 1.1]{KMW:2005}
\label{le:member}
Let $({\cal G}, X)$ be a graph of groups with the properties listed in Theorem \ref{th:eff}, and
let $G = \pi({\cal G}, X, T)$, where $T$ is a maximal subtree of $X$. Then the uniform membership
problem for $G$ is solvable.
\end{cor}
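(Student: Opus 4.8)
The plan is to read the answer off the constructive induced splitting supplied by Theorem~\ref{th:eff}. Given an instance of the uniform membership problem, that is, words $u, w_1, \ldots, w_n$ over $S^{\pm 1}$, I would set $H = \langle w_1, \ldots, w_n \rangle$ and feed the finite set $\{w_1, \ldots, w_n\}$ into the algorithm of Theorem~\ref{th:eff}. Its output is the induced splitting $({\cal H}, Y)$ of $H$, where $Y = H \backslash T_H$ for an $H$-invariant subtree $T_H \subseteq T_X$ containing the base vertex, together with a finite presentation of $H$. The essential point is that this output is not merely an abstract presentation: the construction realizes $H$ as a finite folded labeled graph (a Stallings-type subgroup graph for the graph of groups), which encodes the embedding $T_H \hookrightarrow T_X$ concretely.

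With this graph in hand the membership test becomes a reading procedure. First I would rewrite $u$ as a reduced $({\cal G}, X)$-loop based at $v_0$, i.e.\ compute a normal form $g_0, e_1, g_1, \ldots, e_k, g_k$ representing $u$ in $G = P({\cal G}, X, v_0)$; this is possible because $G$ is given by a finite presentation and each vertex group has solvable word problem. Then I would attempt to trace this loop through the folded graph, starting and ending at the base vertex: at each edge one follows the corresponding edge of $Y$, and at each vertex one must decide whether the accumulated vertex-group element lies in the appropriate coset of the relevant edge group. By the fundamental property of the induced splitting (Theorem~\ref{induced_split}), the loop closes up in $T_H$ exactly when $u \in H$, so the procedure terminates with the correct answer.

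The main obstacle, and the reason the hypotheses of Theorem~\ref{th:eff} are indispensable, is that reading a word in a graph of groups is not the purely combinatorial matching available for free groups: every time the path passes through a vertex one must solve a coset-membership problem inside the vertex group, and one must account for possible backtracking of the reduced form through vertex and edge groups. These local problems are decidable precisely because each vertex group is either locally quasiconvex word-hyperbolic or polycyclic-by-finite, both classes with uniformly solvable membership problem, and each edge group is polycyclic-by-finite. The hard part is thus already subsumed in Theorem~\ref{th:eff}: the termination of the folding and the faithfulness of the finite labeled graph as a model of $T_H$ are exactly what that theorem provides, and the corollary follows by combining the effective construction of the subgroup graph with the local decidability of vertex-group membership.
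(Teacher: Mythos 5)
Your proposal is correct and follows essentially the same route as the paper, which gives no independent argument: it simply notes that the corollary is part of \cite[Theorem 1.1]{KMW:2005} and follows from Theorem~\ref{th:eff}, whose proof proceeds by foldings of graphs of groups. Your reconstruction --- run the folding algorithm on $\{w_1,\ldots,w_n\}$ to get a concrete folded labeled graph realizing the induced splitting of $H$, then trace a normal form of $u$ through it, resolving the local coset-membership questions inside the vertex groups using the hypotheses of Theorem~\ref{th:eff} --- is exactly the mechanism behind the cited result, including your correct observation that one needs the constructive folded-graph content of the proof rather than merely the abstract statement of Theorem~\ref{th:eff}.
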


The proof of Theorem \ref{th:eff} given in \cite{KMW:2005} involves the notion of {\em folding},
which are particular transformations of graphs of groups.

\section{Stallings' pregroups and their universal groups}
\label{se:pregroups}

The notions of a {\em pregroup} and its {\em universal group} were first introduced by J. Stallings
in \cite{Stallings:1971}, but the ideas behind these notions go back to B.~L. van der Waerden
\cite{VanDerWaerden:1948} and R. Baer \cite{Baer:1950}. A pregroup $P$ provides a convenient tool to
introduce reduced forms for elements of $U(P)$ and in some cases gives rise to an integer-valued
length function on $U(P)$ which can be connected with an action of $U(P)$ on a simplicial tree.
Connections between pregroups and free constructions in groups were established by
F. Rimlinger in \cite{Rimlinger:1987, Rimlinger:1987(2)}. Some generalizations of pregroups were
obtained in \cite{Kushner_Lipschutz:1988, Lipschutz:1989, Kushner_Lipschutz:1993}.

\subsection{Definitions and examples}
\label{subse:pregr_defn}

A {\em pregroup} $P$ is a set $P$, with a distinguished element $1$, equipped with a partial
multiplication, that is, a function $D \rightarrow P$, $(x,y) \rightarrow xy$, where $D \subset P
\times P$, and an inversion, that is, a function $^{-1} : P \rightarrow P$, $x \rightarrow x^{-1}$,
satisfying the following axioms (below $xy$ is {\em defined} if $(x,y) \in D$):
\begin{enumerate}
\item[(P1)] for all $u \in P$, the products $u 1$ and $1 u$ are defined and $u 1 = 1 u = u$,
\item[(P2)] for all $u \in P$, the products $u^{-1} u$ and $u u^{-1}$ are defined and $u^{-1} u =
u u^{-1} = 1$,
\item[(P3)] for all $u, v \in P$, if $u v$ is defined, then so is $v^{-1} u^{-1}$ and $(u  v)^{-1}
= v^{-1} u^{-1}$,
\item[(P4)] for all $u, v, w \in P$, if $u v$ and $v w$ are defined, then $(u v) w$ is defined if
and only if $u (v w)$ is defined, in which case
$$ (u v) w =  u (v w),$$
\item[(P5)] for all $u, v, w, z \in P$, if $u v, v w,$ and $w z$ are all defined then either $u v w$,
or $v w z$ is defined.
\end{enumerate}

It was noticed (see \cite{Hoare:1988}) that (P3) follows from (P1), (P2), and (P4), hence, it can
be omitted.

A finite sequence $u_1, \ldots, u_n$ of elements from $P$ is termed a {\em $P$-product} and it is
denoted by $u_1 \cdots u_n$ (one may view it as a word in the alphabet $P$). The {\em $P$-length}
of $u_1 \cdots u_n$ is equal to $n$. A $P$-product $u_1 \cdots u_n$ is called {\em reduced} if for
every $i \in [1,n-1]$ the product $u_i u_{i+1}$ is not defined in $P$.

The following lemma lists some simple implications from the axioms (P1) -- (P5).
\begin{lemma}\cite{Stallings:1971}
\label{le:pregr_properties}
Let $P$ be a pregroup. Then
\begin{enumerate}
\item[(1)] $(x^{-1})^{-1} = x$ for every $x \in P$,
\item[(2)] if $ax$ is defined, then $a^{-1} (ax)$ is defined and $a^{-1} (ax) = x$,
\item[(3)] if $xa$ is defined, then $(xa) a^{-1}$ is defined and $(xa) a^{-1} = x$,
\item[(4)] if $ax$ and $a^{-1} y$ are defined, then $xy$ is defined if and only if $(xa) (a^{-1}y)$
is defined, in which case $xy = (xa)(a^{-1}y)$,
\item[(5)] if $xa$ and $a^{-1}y$ are defined, then $xyz$ is a reduced $P$-product if and only if
$(xa)(a^{-1}y)z$ is reduced; similarly, $zxy$ is reduced if and only if $z(xa)(a^{-1}y)$ is reduced,
\item[(6)] if $xy$ is a reduced $P$-product and if $xa, a^{-1}y, yb$ are defined then $(a^{-1}y)b$
is defined,
\item[(7)] if $xy$ is a reduced $P$-product and $xa, a^{-1}y, (xa)b$ and $b^{-1} (a^{-1}y)$ are
defined then $ab$ is defined.
\end{enumerate}
\end{lemma}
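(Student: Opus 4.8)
The plan is to prove the seven items in order, each feeding the next, extracting from the axioms exactly the cancellation and re-association facts one expects in a group, but now tracking definedness at every step. First I would dispose of (1)--(3), which are pure cancellation. For (1), apply (P4) to the triple $x,\ x^{-1},\ (x^{-1})^{-1}$: both $xx^{-1}=1$ and $x^{-1}(x^{-1})^{-1}=1$ are defined by (P2), so $(xx^{-1})(x^{-1})^{-1}=(x^{-1})^{-1}$ is defined iff $x(x^{-1}(x^{-1})^{-1})=x$ is, and the two values coincide, giving $(x^{-1})^{-1}=x$. Items (2) and (3) are the same move with the defining products of (P2) inserted on the appropriate side: for (2) apply (P4) to $a^{-1},\ a,\ x$ (using $a^{-1}a=1$ and the hypothesis that $ax$ is defined) to obtain $a^{-1}(ax)=(a^{-1}a)x=x$, and (3) is the mirror image using $aa^{-1}=1$ on the right.

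Item (4) is the basic shift-of-cancellation identity and follows from a single application of (P4) to $x,\ a,\ a^{-1}y$: by (2) the product $a(a^{-1}y)=y$ is defined, and $xa$ is defined by hypothesis, so (P4) asserts that $(xa)(a^{-1}y)$ is defined iff $x(a(a^{-1}y))=xy$ is, with equal values. This is the engine that interchanges the length-two product $xy$ with its $a$-shifted form $(xa)(a^{-1}y)$.

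The real content is in (5)--(7), and here the plan is uniform: rewrite each ``reduced'' hypothesis as the assertion that a specific product is undefined, then apply the interleaving axiom (P5) to a carefully chosen quadruple whose three consecutive products are all defined; the branch of the conclusion of (P5) that would contradict reducedness is killed via (4), leaving the branch that yields the claim after a reassociation by (P4) and the cancellations (2), (3). Concretely, for (6) I would run (P5) on $(xa,\ a^{-1},\ y,\ b)$: the consecutive products $(xa)a^{-1}=x$, $a^{-1}y$, and $yb$ are all defined, the branch $``(xa)\,a^{-1}\,y\text{''}$ reassociates to $((xa)a^{-1})y=xy$, which is excluded because $xy$ is reduced, so (P5) forces $``a^{-1}\,y\,b\text{''}=(a^{-1}y)b$ to be defined. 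For (7) the right quadruple is $(x^{-1},\ xa,\ b,\ b^{-1}(a^{-1}y))$: using (2) its three consecutive products are $x^{-1}(xa)=a$, the hypothesis $(xa)b$, and $b\big(b^{-1}(a^{-1}y)\big)=a^{-1}y$, all defined; the branch $``(xa)\,b\,\big(b^{-1}(a^{-1}y)\big)\text{''}$ reassociates to $(xa)(a^{-1}y)$, which is undefined since $xy$ is reduced (by (4)), so (P5) forces the other branch $``x^{-1}\,(xa)\,b\text{''}$, which reassociates via (P4) to $(x^{-1}(xa))b=ab$, proving $ab$ defined. Item (5) is the same pattern applied to the quadruples $(x,\ a,\ a^{-1}y,\ z)$ and $(xa,\ a^{-1},\ y,\ z)$, which interchange the definedness of $yz$ with that of $(a^{-1}y)z$ once the first reducedness condition $xy$ undefined is in force; the mirror statement about $zxy$ is obtained by applying the result to inverses via (P3) and (1).

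The step I expect to be the main obstacle is precisely the bookkeeping of definedness in (5)--(7): (P5) and (P4) only license a rewrite when the relevant two-element products are already known to be defined, so the whole argument hinges on choosing the quadruple so that all three consecutive products are visibly defined and exactly one of the two output triples is ruled out by reducedness. Locating the quadruple $(x^{-1},\ xa,\ b,\ b^{-1}(a^{-1}y))$ for (7) --- where $a$ and $b$ are deliberately kept non-adjacent, so that $ab$ can emerge only after cancelling $x^{-1}(xa)=a$ --- is the one genuinely non-obvious choice; everything else is routine verification against the axioms.
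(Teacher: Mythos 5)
Your proof is correct in all seven items; the paper itself states this lemma only as a citation to Stallings without reproducing a proof, and your argument is the standard one --- (P4) combined with the defining products of (P2) for items (1)--(4), and for items (5)--(7) the (P5) dichotomy applied to quadruples whose forbidden branch is excluded by reducedness through item (4). One remark: in item (4) the paper's hypothesis reads ``$ax$ defined'' where it should read ``$xa$ defined'' (a typo, as the conclusion $(xa)(a^{-1}y)$ and the hypotheses of items (5)--(7) confirm); you silently and correctly worked with the latter reading.
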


Now, one can define the universal group $U(P)$ of a pregroup $P$ as follows. Consider all reduced
$P$-products. Observe that if a product $u_1 \cdots u_n$ is not reduced, that is, the product $u_i
u_{i+1}$ is defined in $P$ for some $i$ then $u_i u_{i+1} = v \in P$ and one can {\em reduce} $u_1
\cdots u_n$ by replacing the pair $u_i u_{i+1}$ by $v$. Now, given two reduced $P$-products $u_1
\cdots u_n$ and $v_1 \cdots v_m$, we write
$$u_1 \cdots u_n \sim v_1 \cdots v_m$$
if and only if $m = n$ and there exist elements $a_1, \ldots, a_{n-1} \in P$ such that $a_{i-1}^{-1}
u_i a_i$ are defined and $v_i = a_{i-1}^{-1} u_i a_i$ for $i \in [1,n]$ (here $a_0 = a_n = 1$). In
this case we also say that $v_1 \cdots v_m$ can be obtained from $u_1 \cdots u_n$ by {\em interleaving}.
From Lemma \ref{le:pregr_properties} it follows that ``$\sim$'' is an equivalence relation on the set
of all reduced $P$-products. Now, the group $U(P)$ can be described as the set $U(P)$ of
``$\sim$''-equivalence classes of reduced $P$-products, where multiplication is given by concatenation
of representatives and consecutive reduction of the resulting product. Obviously, $P$ embeds into
$U(P)$ via the canonical map $u \rightarrow u$.

\smallskip

Recall that a mapping $\phi: P \rightarrow Q$ of pregroups is a {\em morphism} if for any $x, y \in
P$ whenever $x y$ is defined in $P$, $\phi(x) \phi(y)$ is defined in $Q$ and equal to $\phi(xy)$.
Now the group $U(P)$ can be characterized by the following universal property: there is a morphism
of pregroups $\lambda: P \rightarrow U(P)$, such that, for any morphism  $\phi: P \rightarrow G$ of
$P$ into a group $G$, there is a unique group homomorphism $\psi: U(P) \rightarrow G$ for which
$\psi \lambda  = \phi$. This shows that $U(P)$ is a group with a  generating set $P$ and a set of
relations $x y = z$, where $x, y \in P$, $xy$ is defined  in $P$, and equal to $z$. If the map
$\psi: U(P) \rightarrow G$ above is an isomorphism then we say that $P$ is a {\em pregroup structure}
for $G$.

\smallskip

Given a group $G$ one can try to find a pregroup structure for $G$ as a subset of $G$. In this case
the following lemma helps (this result was used implicitly in \cite{Myasnikov_Remeslennikov_Serbin:2005,
KMRS:2012} to find certain pregroup structures).

\begin{lemma}
\label{le:pregr_P5}
Let $G$ be a group and let $P \subseteq G$ be a generating set for $G$ such that $P^{-1} = P$. Let
$D \subseteq (P, P)$ be such that $(x,y) \in D$ implies $xy \in P$, and assume that multiplication
and inversion are induced on $P$ from $G$. Then $P$ is a pregroup structure for $G$ if $P$ satisfies
(P5).
\end{lemma}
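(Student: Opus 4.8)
The plan is to check first that the hypotheses force axioms (P1)--(P4) for free, so that adjoining the assumed axiom (P5) already makes $P$ a genuine pregroup, and then to show that the canonical homomorphism $\psi : U(P) \to G$ furnished by the universal property is an isomorphism. Throughout I read the hypothesis as saying that $1 \in P$, that the inversion on $P$ is the restriction of inversion in $G$, and that the partial product is the one induced from $G$ with maximal domain $D = \{(x,y) \in P \times P : xy \in P\}$, the product being computed in $G$. This maximality is essential: it is what makes ``$u_i u_{i+1}$ undefined'' equivalent to ``$u_i u_{i+1} \notin P$'', a fact used repeatedly below.

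First I would verify (P1)--(P4). For $u \in P$ the $G$-products $u\cdot 1 = 1 \cdot u = u$ lie in $P$, so $(u,1),(1,u) \in D$ and (P1) holds; since $P^{-1}=P$ we have $u^{-1}\in P$ and $u^{-1}u = uu^{-1} = 1 \in P$, giving (P2). For (P4), if $uv, vw \in P$ then associativity in $G$ gives $(uv)w = u(vw)$, so one side lies in $P$ exactly when the other does, i.e. $(uv)w$ is defined iff $u(vw)$ is, and then they agree. As remarked after the axioms, (P3) now follows from (P1), (P2), (P4) (alternatively it is immediate: if $uv\in P$ then $v^{-1}u^{-1}=(uv)^{-1}\in P$). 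Hence, under the standing assumption that (P5) holds, $P$ is a pregroup, and the whole theory of Lemma~\ref{le:pregr_properties} together with the construction of $U(P)$ applies.

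Next, the inclusion $\phi : P \hookrightarrow G$ is a pregroup morphism, so the universal property yields a group homomorphism $\psi : U(P) \to G$ with $\psi\lambda = \phi$; since $P$ generates $G$ and $\mathrm{im}\,\psi \supseteq P$, the map $\psi$ is onto, and it remains to prove injectivity. By the reduced-form theorem for pregroups \cite{Stallings:1971}, every element of $U(P)$ is represented by a reduced $P$-product, unique up to interleaving, and the length of such a representative is well defined; in particular a nontrivial element is represented either by a single $u_1 \neq 1$ or by a reduced product $u_1 \cdots u_n$ with $n \geq 2$. In the first case $\psi$ sends it to $u_1 \neq 1$ in $G$, so the entire content is the claim that \emph{no reduced $P$-product of length $n \geq 2$ equals $1$ in $G$} (equivalently, that two reduced products with the same value in $G$ are related by interleaving); this is precisely the statement $\ker\psi = 1$. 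The cases $n=2,3$ are immediate: $u_1u_2 = 1$ forces $u_1u_2 = 1 \in P$, and $u_1u_2u_3 = 1$ forces $u_1u_2 = u_3^{-1} \in P$, in either case contradicting reducedness.

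The crux, and the step I expect to be the main obstacle, is the general inductive step $n \geq 4$. Here I would induct on the total length, reducing a hypothetical relation $u_1\cdots u_n = 1$ in $G$ by multiplying reduced subwords and simplifying via Lemma~\ref{le:pregr_properties}; the danger is that under such a multiplication several adjacent cancellations could in principle occur simultaneously and shorten the word by more than one, which is exactly what would let a length-$\geq 2$ reduced product collapse to $1$. Axiom (P5) is precisely the device that rules this out: it guarantees that the reduction/interleaving process is confluent and length-preserving, so no such multiple collapse occurs. Because the operations on $P$ are induced from $G$, every defined product appearing in the argument has the same value whether computed in $P$, in $U(P)$, or in $G$, so Stallings' cancellation analysis can be carried out verbatim inside $G$; it shows that a reduced product of length $\geq 2$ never becomes trivial in $G$, whence $\ker\psi = 1$ and $\psi$ is the desired isomorphism, i.e. $P$ is a pregroup structure for $G$.
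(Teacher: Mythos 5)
Your verification of (P1)--(P4) is correct and coincides with the paper's own proof: the paper's one-sentence argument is exactly that these axioms hold automatically for a subset of a group with induced operations, so that (P5) is all that is needed for $P$ to be a pregroup. That is where the paper stops.

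The second half of your proposal, where you try to upgrade ``$P$ is a pregroup'' to ``$P$ is a pregroup structure for $G$'' (i.e., $\psi : U(P) \to G$ is an isomorphism), contains a genuine gap, and it cannot be repaired. The step you yourself flag as the crux --- that for $n \geq 4$ a reduced $P$-product cannot equal $1$ in $G$, justified by saying that (P5) makes the reduction process ``confluent and length-preserving'' so that Stallings' cancellation analysis ``can be carried out verbatim inside $G$'' --- is not an argument, and the assertion is false under the stated hypotheses. Stallings' theory shows that a reduced $P$-product of length $\geq 2$ is nontrivial in $U(P)$; it says nothing about its image in $G$, and the difference between the two is precisely $\ker\psi$, the very thing you are trying to show is trivial. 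Concretely, take $G = \Z/4\Z = \langle a \rangle$ and $P = \{1, a, a^{-1}\}$ with the maximal induced $D$. Since $a^2 \notin P$, this $(P,D)$ is exactly the free pregroup on $\{a\}$ of Example~\ref{ex:pregr_1}; in particular it satisfies (P5) and $U(P) \simeq \Z$. Yet $P$ generates $G$, $P^{-1} = P$, and the reduced product $a \cdot a \cdot a \cdot a$ equals $1$ in $G$, so $\psi : \Z \to \Z/4\Z$ is onto but not injective. (Your $n = 2, 3$ cases are fine; the induction collapses exactly at $n = 4$.) So the isomorphism conclusion does not follow from (P5) together with the lemma's hypotheses; read literally, with the paper's definition of ``pregroup structure'', the statement is false, and what the paper's proof actually establishes (and what is used downstream) is only pregroupness. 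In the intended applications the isomorphism $U(P) \simeq G$ is obtained from a different criterion, namely Rimlinger's reduced word structure theorem (Theorem~\ref{th:red_word_str}), which requires the additional hypothesis that all reduced $P$-products representing the same element of $G$ have the same $P$-length --- a hypothesis the cyclic-group example above visibly violates.
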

\begin{proof}
The result follows immediately from the fact that $P$ is a subset of $G$, because in this case the
axioms (P1) - (P4) are satisfied automatically for $P$.
\end{proof}

There is another way to check if $P \subseteq G$ is a pregroup structure. Again, we assume that $P$
is a generating set for $G$ such that $P^{-1} = P$, $D \subseteq (P, P)$ is such that $(x,y) \in D$
implies $xy \in P$, and the multiplication and inversion are induced on $P$ from $G$. If all reduced
$P$-products representing the same group element have equal $P$-length then we say that $(P,D)$ is
a {\em reduced word structure} for $G$.

\begin{theorem} \cite{Rimlinger:1987(2)}
\label{th:red_word_str}
If $P$ is a reduced word structure for $G$ then $P$ is a pregroup structure for $G$.
\end{theorem}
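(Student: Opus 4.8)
The plan is to reduce the whole statement to the verification of a single pregroup axiom and then to extract that axiom from the equal-length hypothesis. A reduced word structure $(P,D)$ satisfies exactly the standing hypotheses of Lemma~\ref{le:pregr_P5}: namely $P$ generates $G$, $P^{-1}=P$, the domain $D$ satisfies $(x,y)\in D \Rightarrow xy\in P$, and multiplication and inversion are induced from $G$. Hence the axioms (P1)--(P4) hold automatically for $P$, and by that lemma it suffices to prove that $P$ satisfies (P5). So the entire argument comes down to verifying (P5).

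To verify (P5) I argue by contradiction. Suppose $u,v,w,z\in P$ are such that $uv$, $vw$, and $wz$ are all defined, yet neither $uvw$ nor $vwz$ is defined; write $a=uv$, $b=vw$, $c=wz$, all elements of $P$, and set $g=uvwz\in G$. The first step is to produce a reduced $P$-product of length $3$ representing $g$, namely $u\,b\,z$. This is a genuine $P$-product since $u,b,z\in P$, and it represents $g$ because $u\cdot b\cdot z=uvwz=g$ in $G$. To see that it is reduced I invoke (P4): since $uv$ and $vw$ are defined, $(uv)w$ is defined iff $u(vw)$ is defined, so the assumption that $uvw$ is undefined forces $u\,b=u(vw)$ to be undefined; likewise, since $vw$ and $wz$ are defined, the assumption that $vwz$ is undefined forces $b\,z=(vw)z$ to be undefined. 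Thus both consecutive products in $u\,b\,z$ are undefined, so it is reduced of length $3$.

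The second step is to produce a reduced $P$-product of strictly smaller length representing the same $g$. Consider the $P$-product $a\,c=(uv)(wz)$, which again represents $g$ since $a\cdot c=g$ in $G$. If $ac$ is undefined, then $a\,c$ is already reduced and has length $2$; if $ac$ is defined, then $ac\in P$ is a single element, i.e.\ a reduced $P$-product of length $1$ representing $g$. In either case $g$ admits a reduced $P$-product of length at most $2$, whereas $u\,b\,z$ is a reduced $P$-product of length $3$ representing $g$. This violates the equal-length property of the reduced word structure, a contradiction. Hence (P5) holds, and the theorem follows from Lemma~\ref{le:pregr_P5}.

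I expect the only delicate point to be the bookkeeping of definedness in the first step: translating the hypotheses ``$uvw$ undefined'' and ``$vwz$ undefined'', which concern triple products, into the concrete pairwise statements ``$u\,b$ undefined'' and ``$b\,z$ undefined'', which is precisely where (P4) enters. Everything else—that $u\,b\,z$ and $a\,c$ both evaluate to $g$, and that a single element is trivially a reduced product of length $1$—is immediate from the fact that $P\subseteq G$ with induced operations.
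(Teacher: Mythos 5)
Your verification of (P5) is correct, and it is the right core computation: from $uv$, $vw$, $wz$ defined and $uvw$, $vwz$ both undefined, you legitimately produce (using the automatic (P4)) the reduced $P$-product $u\,(vw)\,z$ of length $3$ and the $P$-product $(uv)(wz)$ of length at most $2$, both representing $uvwz$ in $G$, contradicting the equal-length hypothesis. (For the record, the paper itself gives no proof of this theorem --- it is quoted from Rimlinger --- so I am judging your argument on its own merits; also, like the paper, you implicitly read ``multiplication induced from $G$'' as meaning $xy$ is defined exactly when $xy\in P$, which is what makes (P1)--(P4) automatic and your use of (P4) legal.)

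There is, however, a genuine gap in your reduction step ``the entire argument comes down to verifying (P5)''. By the paper's definition, a \emph{pregroup structure} for $G$ requires not only that $P$ be a pregroup but also that the canonical homomorphism $\psi : U(P) \to G$ be an isomorphism. You delegate this second half entirely to Lemma \ref{le:pregr_P5}, but that lemma cannot deliver it: its proof only checks that (P1)--(P4) hold automatically, and in fact the pregroup axioms together with ``$P$ generates $G$, $P^{-1}=P$, operations induced from $G$'' do \emph{not} imply that $\psi$ is injective. Concretely, take $G = \Z/5\Z$ and $P = \{1, a, a^{-1}\}$ with the induced partial multiplication: one checks directly that (P1)--(P5) all hold (in any problematic configuration for (P5) one of $v,w$ equals $1$ or $w=v^{-1}$, and then one of the triple products is defined), yet $P$ is just the free pregroup on one generator, so $U(P) \cong \Z$ and $\psi$ is the non-injective map $\Z \to \Z/5\Z$. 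This $P$ is of course not a reduced word structure ($a \cdot a$ and $a^{-1}\cdot a^{-1}\cdot a^{-1}$ are reduced $P$-products of lengths $2$ and $3$ representing the same element), which shows that the isomorphism half of the theorem genuinely needs the equal-length hypothesis a second time --- and your proof never supplies it. The repair is short: $\psi$ is onto because $P$ generates $G$; for injectivity, every element of $U(P)$ is the class of some reduced $P$-product $u_1\cdots u_n$, and if its product in $G$ equals $1$, then comparing with the length-one reduced $P$-product consisting of the single element $1 \in P$, the equal-length property forces $n=1$ and $u_1=1$, so $\ker\psi$ is trivial. With that paragraph added, and your (P5) argument kept as is, the proof is complete.
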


The principal examples of pregroups and their universal groups are shown below.

\begin{example}
\label{ex:pregr_0}
For any group $G$ define $P = G,\ D = (P, P)$, and let the multiplication and inversion on $P$ be
induced from $G$. Then $U(P) \simeq G$.
\end{example}

\begin{example}
\label{ex:pregr_1}
Let $X$ be a set. Define $P = X \cup \overline{X} \cup \{1\}$, where $\overline{X} = \{\overline{x}
| x \in X\}$ and $1 \notin X$. Define the inversion function $^{-1}: P \to P$ as follows: $x^{-1} =
\overline{x},\ \overline{x}^{-1} = x$ for every $x \in X$ and the corresponding $\overline{x} \in
\overline{X}$, and $1^{-1} = 1$. Without loss of generality we identify $\overline{X}$ with $X^{-1}$,
the image of $X$ under $^{-1}$. Next, $(x, y) \in D$ if either $y = x^{-1}$ (hence, $xy = 1 \in P$),
or either $x = 1$ (hence, $xy = y \in P$), or $y = 1$ (so, $xy = x \in P$). It is easy to check that
$P$ with the inversion and the set $D$ is a pregroup, and $U(P) \simeq F(X)$, a free group on $X$.
The pregroup $P$ is called the {\em free pregroup on $X$}.
\end{example}

\begin{example}
\label{ex:pregr_2} \cite{Stallings:1971}
Let $A, B$ and $C$ be groups, and $\phi : C \to A,\ \psi : C \to B$ monomorphisms. Identify $\phi(C)$
with $\psi(C)$, then $A \cap B = C$. Let $P = A \cup B$. The identity $1$ and inversion function are
obvious. For, $x,y \in P$, the product $xy$ is defined only if $x$ and $y$ both belong either to $A$,
or to $B$. One can verify that $P$ is a pregroup and $U(P) \simeq A \ast_C B$.
\end{example}

\begin{example}
\label{ex:pregr_3} \cite{Stallings:1971}
Consider $A \ast_C B$. Let $P$ be the subset of all elements that can be written as the product $b
a b'$ for some $b, b' \in B,\ a \in A$. In particular, $P$ contains $A$ and $B$.  For, $x,y \in P$,
the product $xy$ is defined if $xy \in P$. Using the structure of $A \ast_C B$, one can prove that
$P$ is a pregroup. The universal group $U(P)$ is isomorphic to $A \ast_C B$, but observe that $P$
is not the same as in Example \ref{ex:pregr_2}.
\end{example}

\begin{example}
\label{ex:pregr_4} \cite{Stallings:1971}
Let $G$ be a group, $H$ a subgroup of $G$, and $\phi : H \to G$ a monomorphism. For $t \notin G$
construct four sets in one-to-one correspondence with $G$:
$$G,\ t^{-1} G,\ G t, t^{-1} G t.$$
Identify $h \in H \subseteq G$, with $t^{-1} \phi(h) t \in t^{-1} G t$. The multiplication is
naturally defined between $G$ and $G,\ G$ and $G t,\ t^{-1} G$ and $G,\ t^{-1} G$ and $G t,\ G t$
and $t^{-1} G,\ G t$ and $t^{-1} G t,\ t^{-1} G t$ and $t^{-1} G,\ t^{-1} G t$ and $t^{-1} G t$, by
cancelation of $t t^{-1}$ and multiplication in $G$. By the formulas $h t^{-1} = t^{-1} \phi(h)$
and $t h = \phi(h) t$, the multiplication is defined in all cases when one factor belongs to $H$.
Hence,
$$P = G \cup t^{-1} G \cup G t \cup t^{-1} G t$$
is a pregroup and $U(P) \simeq \langle G, t | t^{-1} h t = \phi(h),\ h \in H \rangle$.
\end{example}

From the examples above it follows that a group which splits into an amalgamated free product or an
HNN-extension has a non-trivial pregroup structure. The same holds in general: it was proved in
\cite[Theorem B]{Rimlinger:1987} that a group isomorphic to the fundamental group of a finite graph
of groups has a pregroup structure which arises from the graph of groups.
The converse, namely, that the universal group of a pregroup $P$ is isomorphic to the fundamental
group of a graph of groups also holds provided $P$ is of {\em finite height}. This property is
explained below.

For $x, y \in P$ we write $x \preceq y$ if and only if for any $z \in P$, $zx$ is defined whenever
$zy$ is defined. The relation ``$\preceq$'' is a {\em tree ordering} on $P$ (see \cite{Stallings:1971}),
that is, there exists a smallest element $1$ and
$$\forall\ x,y,z \in P: (x \preceq z\ {\rm and}\ y \preceq z)\ \Rightarrow\ (x \preceq y\
{\rm or}\ y \preceq x)$$
Elements $x, y \in P$ are comparable if either $x \preceq y$, or $y \preceq x$, or both. If both
$x \preceq y$ and $y \preceq x$ then we write $x \approx y$, and if $x \preceq y$ but not $y \preceq
x$ then we write $x \prec y$.

\begin{lemma}\cite[Lemma I.2.6]{Rimlinger:1987}
\label{le:pregr_order}
$x, y \in P$ are comparable if and only if $x^{-1} y \in P$.
\end{lemma}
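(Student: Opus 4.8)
The plan is to unwind the definition of $\preceq$ and handle the two implications separately: the forward implication is immediate from the axioms, while the converse is where axiom (P5) does the real work. For $w\in P$ write $S_w=\{z\in P: zw \text{ is defined}\}$, so that by definition $x\preceq y$ iff $S_y\subseteq S_x$.

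First I would dispatch the easy direction. Suppose $x,y$ are comparable, say $y\preceq x$ (the case $x\preceq y$ is symmetric), so $S_x\subseteq S_y$. Since $x^{-1}x=1$ is defined by (P2), we have $x^{-1}\in S_x\subseteq S_y$, that is, $x^{-1}y$ is defined, as required; in the symmetric case one takes $y^{-1}\in S_y\subseteq S_x$, gets $y^{-1}x$ defined, and applies (P3). This direction uses only (P1)--(P3).

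Conversely, assume $x^{-1}y$ is defined and set $p=x^{-1}y\in P$. By Lemma \ref{le:pregr_properties}(2),(3) the products $xp=y$ and $yp^{-1}=x$ are defined, and by (P3) $y^{-1}x=p^{-1}$ is defined as well. Arguing by contradiction, suppose $x$ and $y$ are \emph{in}comparable, so $S_y\not\subseteq S_x$ and $S_x\not\subseteq S_y$; pick witnesses $z_1,z_2\in P$ with
$$z_1y \text{ defined},\quad z_1x \text{ undefined},\qquad z_2x \text{ defined},\quad z_2y \text{ undefined}.$$
Feeding $x=yp^{-1}$ and $y=xp$ through (P4) converts these into statements about $p$: since $z_1y$ and $yp^{-1}$ are defined, $(z_1y)p^{-1}$ is defined iff $z_1(yp^{-1})=z_1x$ is, so $(z_1y)p^{-1}$ is undefined; symmetrically $(z_2x)p$ is undefined. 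Equivalently, $z_1\cdot x$ and $z_2\cdot y$ are reduced $P$-products in the sense of Section \ref{subse:pregr_defn}.

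The idea now is to propagate these two undefinednesses with (P5), and this is where the main obstacle lies. Applying (P5) to a chain $z_2,x,p,a$ with $pa$ defined rules out the disjunct $(z_2x)p$ and forces $(xp)a=ya$ to be defined; thus every $a$ right-multipliable by $p$ is right-multipliable by $y$, and dually (via the chain $z_1,y,p^{-1},a$) every $a$ with $p^{-1}a$ defined has $xa$ defined. These one-sided containments show that no single application of (P5) can promote either witness by itself: the surviving disjunct always rescues the axiom. The heart of the argument is therefore to \emph{combine} the two witnesses, which constrain opposite sides — $z_1$ governs $y$ while $z_2$ governs $x$, and $x,y$ differ only by the defined element $p$. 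Concretely, I would manufacture from $z_2$ an element (using Lemma \ref{le:pregr_properties}(4),(7) to certify that the requisite two-letter products are defined) that can be appended to the reduced product $z_1\cdot x$ rewritten in its alternate form $(z_1y)\cdot p^{-1}$, and then apply (P5) — equivalently Lemma \ref{le:pregr_properties}(5)--(7) on reduced products — to this four-term sequence so that \emph{both} disjuncts are blocked unless $z_1x$ is defined, contradicting the choice of $z_1$. The delicate point throughout is to arrange the sequence so as to avoid a cancelling pair $aa^{-1}$, since such a pair makes one disjunct of (P5) vacuously true and destroys the intended contradiction.
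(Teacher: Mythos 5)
Your forward direction is complete and correct, and your preparation for the converse is sound as far as it goes: setting $p=x^{-1}y$, taking incomparability witnesses $z_1$ (with $z_1y$ defined, $z_1x$ undefined) and $z_2$ (with $z_2x$ defined, $z_2y$ undefined), converting these via (P4) into the undefinedness of $(z_1y)p^{-1}$ and $(z_2x)p$, and recognizing that both witnesses must enter a \emph{single} application of (P5) are all correct. The genuine gap is that the proof stops exactly at the decisive step: you write that you ``would manufacture from $z_2$ an element'' and apply (P5) to a four-term sequence with both disjuncts blocked, but you never name that element, never write down the sequence, and never verify that its three consecutive products are defined while both conclusions of (P5) fail. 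Everything before that point merely rearranges the hypotheses, so the unexecuted step \emph{is} the hard direction; what you have is a strategy, not a proof. (The paper itself gives no proof — it cites Rimlinger — so your argument has to stand on its own, and it does not yet.)

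The step can be completed exactly along the lines you sketch, and more simply than your discussion suggests. Take the manufactured element to be $t=x^{-1}z_2^{-1}$, which is defined by (P3) since $z_2x$ is defined, and apply (P5) to the sequence
$$u=z_1y,\qquad v=y^{-1},\qquad w=x,\qquad t=x^{-1}z_2^{-1}.$$
The consecutive products are defined: $uv=(z_1y)y^{-1}=z_1$ by Lemma \ref{le:pregr_properties}(3); $vw=y^{-1}x=p^{-1}$ by the hypothesis together with (P3) and Lemma \ref{le:pregr_properties}(1); and $wt=x(x^{-1}z_2^{-1})=z_2^{-1}$ by Lemma \ref{le:pregr_properties}(2). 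So (P5) forces $uvw$ or $vwt$ to be defined. But $(uv)w=z_1x$ is undefined by the choice of $z_1$, and $v(wt)=y^{-1}z_2^{-1}$ is undefined because its inverse is $z_2y$, undefined by the choice of $z_2$ (by (P4) each disjunct is unambiguous, so testing either bracketing suffices). This contradiction proves comparability — note that with this chain you do not even need the preliminary facts about $(z_1y)p^{-1}$ and $(z_2x)p$: the two disjuncts of (P5) are, up to inversion, exactly the two products your witnesses declare undefined. Note also that your closing ``delicate point'' is backwards: the sequence that works is precisely the telescoping one, in which $uv$ and $wt$ collapse to $z_1$ and $z_2^{-1}$; the only thing to avoid is two consecutive entries that are literally mutually inverse, and that is automatic here since $z_1,z_2\neq 1$ (otherwise $z_1x$ or $z_2y$ would be defined by (P1)) and $x\neq y$ (otherwise comparability is trivial).
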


An element $x \in P$ is of finite height $n \in \mathbb{N}$ if there exist $x_0, x_1, \ldots, x_n
\in P$ such that $1 = x_0 \prec x_1 \prec \cdots \prec x_n = x$ and for each $0 \leqslant i
\leqslant n - 1$, if $z \in P$ and $x_i \preceq z \preceq x_{i+1}$ then $z \approx x_i$ and $z
\approx x_{i+1}$. $P$ is of finite height if there exists a natural number $N$ which bounds heights
of all elements of $P$.

\smallskip

Now, if $P$ is of finite height then $U(P)$ is isomorphic to the fundamental group of a graph of
groups whose vertex and edge groups can be obtained as the universal groups of certain subpregroups
of $P$ (see \cite[Theorem A]{Rimlinger:1987}).

\subsection{Connection with length functions}
\label{subse:pregr_length}

Following the notation from \cite{Chiswell:1987} for a pregroup $P$ define
$$B = \{a \in P \mid za\ {\rm and}\ az\ {\rm are\ defined\ for\ all}\ z \in P\}.$$
Obviously, $B$ is a subgroup of $P$. Furthermore, if a  reduced $P$-product  contains an element
from $B$ then it  consists of a single element.

Below we use the following notation:
\begin{enumerate}
\item[(1)] if $x, y \in P$ then we write $x y = x \circ y$ if $xy$ is not defined,
\item[(2)] if a $P$-product $u = u_1 \cdots u_n$ is reduced then we put $|u| = n$. Notice, that the
function $u \to |u|$ induces a well-defined function on $U(P)$.
\end{enumerate}

\begin{theorem} \cite{Chiswell:1987}
Let $P$ be a pregroup and let $| \cdot |: U(P) \to \mathbb{Z}$ be defined by $u \to |u|$ for each
$u \in U(P)$. Then ``$| \cdot |$'' is a Lyndon length function (see Section \ref{sec:length_func})
if and only if $P$ satisfies an additional axiom (P6):
\begin{enumerate}
\item[(P6)] for any $x,y \in P$, if $x y$ is not defined but $x a$ and $a^{-1}y$ are both defined
for some $a \in P$ then $a \in B$.
\end{enumerate}
\end{theorem}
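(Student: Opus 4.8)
The plan is to verify directly that $|\cdot|$ satisfies the defining axioms of a Lyndon length function from Section~\ref{sec:length_func}, and to isolate the one axiom into which (P6) actually feeds. First I would dispose of the cheap axioms. The value $|1|=0$ is immediate, since the identity of $U(P)$ is represented by the empty reduced $P$-product. For $|u^{-1}|=|u|$, note that if $u=u_1\cdots u_n$ is reduced then, by (P3) (equivalently Lemma~\ref{le:pregr_properties}), the product $u^{-1}=u_n^{-1}\cdots u_1^{-1}$ is again reduced of the same $P$-length, since $u_iu_{i+1}$ undefined forces $u_{i+1}^{-1}u_i^{-1}$ undefined; so this axiom holds unconditionally, with no appeal to (P6). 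The entire content therefore sits in the remaining ``ultrametric'' axiom governing the Gromov products $c(g,h)=\frac12(|g|+|h|-|g^{-1}h|)$, and the main tool for attacking it is a careful analysis of how the reduced form of $g^{-1}h$ is produced from reduced forms of $g$ and $h$ by iterated reduction at the junction.

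For the direction (P6) $\Rightarrow$ Lyndon, I would first establish a cancellation (``zipper'') lemma: writing $g$ and $h$ in reduced form and reducing $g^{-1}h$ from the junction outward, every step is either a \emph{full cancellation} (the two meeting letters multiply to $1$) or a \emph{partial amalgamation} (they multiply to a nontrivial element of $P$). This identifies a well-defined maximal common initial segment of $g$ and $h$ and expresses $c(g,h)$ as its length. The role of (P6) is precisely to tame the partial amalgamations: whenever the meeting letters $x,y$ fail to cancel yet a product becomes available through an intermediate element $a$ (so that $xa$ and $a^{-1}y$ are defined), (P6) forces $a\in B$, and by Lemma~\ref{le:pregr_properties} such a $B$-element can be slid through the product by interleaving without changing any lengths. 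This is what makes the ``meeting point'' of $g$ and $h$ unambiguous and yields the inequality. I expect this partial-amalgamation bookkeeping---tracking full cancellations and $B$-absorptions simultaneously for the three elements $g,h,k$---to be the main obstacle, and the precise place where (P6) is genuinely needed.

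For the converse I would argue by contraposition: assuming (P6) fails, I would build three elements that violate the ultrametric axiom. A failure of (P6) supplies $x,y,a\in P$ with $xy$ undefined, both $xa$ and $a^{-1}y$ defined, and $a\notin B$; since $a\notin B$ there is a witness $z\in P$ with $za$ (or $az$) undefined. The plan is to package $x,y,a$ into a triple $g,h,k$ whose pairwise Gromov products are forced to be incompatible: the defined products $xa$ and $a^{-1}y$ create two sizeable overlaps, while $xy$ undefined keeps the third overlap small, and the witness $z$ is used to certify that the relevant junctions do not collapse by full cancellation (this is exactly what separates a genuine failure of (P6) from the harmless case $a\in B$). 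Assembling these elements and computing the $P$-lengths of the requisite products is then routine once the cancellation lemma from the first direction is in hand.
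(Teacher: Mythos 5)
The paper states this theorem as a citation to \cite{Chiswell:1987} and gives no proof, so your argument has to stand on its own --- and its forward direction cannot be completed, because with the length function exactly as defined in the statement (where $|u|$ is the number of letters in a reduced $P$-product, so a nontrivial element of $B$ has length $1$) the implication (P6) $\Rightarrow$ Lyndon is \emph{false} whenever $B \neq \{1\}$. Take the amalgam pregroup of Example \ref{ex:pregr_2}: $P = A \cup B'$ with $C = A \cap B' \neq 1$ (I write $B'$ for the group factor to avoid clashing with the subset $B$ of universal elements, which here equals $C$). Axiom (P6) holds: if $xy$ is undefined, say $x \in A \setminus C$ and $y \in B' \setminus C$, then $xa$ defined forces $a \in A$, and $a^{-1}y$ defined forces $a \in B'$, so $a \in A \cap B' = C = B$. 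Yet for $x \in A \setminus C$, $c \in C \setminus \{1\}$, $y \in B' \setminus C$, the triple $f = x^{-1}$, $g = c$, $h = y$ gives $c(f,g) = \frac{1}{2}(1+1-|xc|) = \frac{1}{2}$ and $c(g,h) = \frac{1}{2}(1+1-|c^{-1}y|) = \frac{1}{2}$, but $c(f,h) = \frac{1}{2}(1+1-|xy|) = 0$, since $x \cdot y$ is a reduced product of length $2$; this violates (L3). The step of your sketch where this error hides is precisely the claim that a $B$-element arising at a junction ``can be slid through the product by interleaving without changing any lengths'': interleaving does preserve the lengths of words, but the $B$-element itself has length $1$ rather than $0$ under the stated definition, and that discrepancy is fatal to the isosceles axiom. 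The theorem can only be true if $|\cdot|$ is normalized so that elements of $B$ have length $0$ (equivalently, under the additional hypothesis $B = \{1\}$), which is presumably the form in which \cite{Chiswell:1987} proves it; any correct proof of the forward direction must confront this normalization, and your plan never does.

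Your own converse should have tipped you off. The violating triple is simply $f = x^{-1}$, $g = a$, $h = y$: since $xa \neq 1$ and $a^{-1}y \neq 1$ (either equality would make $xy$ defined) and $x \cdot y$ is reduced, one computes $c(f,g) = c(g,h) = \frac{1}{2}$ and $c(f,h) = 0$ directly --- no zipper lemma and no witness $z$ certifying $a \notin B$ is needed anywhere. But an argument for the converse that never uses $a \notin B$ proves too much: it shows the raw word length fails to be Lyndon in \emph{every} pregroup admitting a triple $x, y, a$ with $a \neq 1$, $xy$ undefined, and $xa$, $a^{-1}y$ defined --- including the (P6)-compliant amalgam above. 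That contradiction is the signal that the statement as transcribed and your forward direction cannot both survive; the fix lies in the definition of the length function (killing $B$), not in finer bookkeeping of cancellations and amalgamations.
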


It is known (Theorem 2.7 in \cite{Nesayef:1983}) that the axiom (P6) is equivalent in a pregroup
$P$ to the following one:
\begin{enumerate}
\label{pr:P6}
\item[(P6')] for any $x, y \in P$, if $x y$ is not defined and $(a x) y$ is defined for some $a \in
P$ then $a x \in B$.
\end{enumerate}

\begin{remark}
Suppose $P$ satisfies (P6). If a reduced $P$-product $v_1 \cdots v_n$ is obtained from $u_1 \cdots
u_n$ by interleaving $v_i = a_{i-1}^{-1} u_i a_i$ for $i \in [1,n]$, where $a_0 = a_n = 1$ then
$a_1, \ldots, a_{n-1} \in B$.
\end{remark}

More on the connection of pregroups with length functions and Bass-Serre theory can be found in
\cite{Chiswell:1987}, \cite{Hoare:1988} and \cite{Rimlinger:1987}.

\section{$\Lambda$-trees}
\label{sec:lambda}

The theory of $\Lambda$-trees (where $\Lambda = \mathbb{R}$) has its origins in the papers by
I. Chiswell \cite{Chiswell:1976} and J. Tits \cite{Tits:1977}. The first paper contains a construction
of a $\mathbb{Z}$-tree starting from a Lyndon length function on a group (see Section
\ref{sec:length_func}), an idea considered earlier by R. Lyndon in \cite{Lyndon:1963}.

\smallskip

Later, in their very influential paper \cite{Morgan_Shalen:1991} J. Morgan and P. Shalen linked group
actions on $\mathbb{R}$-trees with topology and generalized parts of Thurston's Geometrization
Theorem. Next, they introduced $\Lambda$-trees for an arbitrary ordered abelian group $\Lambda$ and
the general form of Chiswell's construction. Thus, it became clear that abstract length functions with
values in $\Lambda$ and group actions on $\Lambda$-trees are just two equivalent approaches to the
same realm of group theory questions (more on this equivalence can be found in Section
\ref{sec:equiv}). The unified theory was further developed in the important paper by R. Alperin and
H. Bass \cite{Alperin_Bass:1987}, where authors state a fundamental problem in the theory of group
actions on $\Lambda$-trees: find the group theoretic information carried by a $\Lambda$-tree action
(analogous to Bass-Serre theory), in particular, describe finitely generated groups acting freely
on $\Lambda$-trees ($\Lambda$-free groups).

\smallskip

Here we introduce basics of the theory of $\Lambda$-trees, which can be found in more detail in
\cite{Alperin_Bass:1987} and \cite{Chiswell:2001}.

\subsection{Ordered abelian groups}
\label{subs:ord_ab}

In this section some well-known results on ordered abelian groups are collected. For proofs and
details we refer to the books \cite{Glass:1999} and \cite{Kopytov_Medvedev:1996}.

A set $A$ equipped with addition ``$+$'' and a partial order ``$\leqslant$'' is called a {\em
partially ordered} abelian group if:
\begin{enumerate}
\item[(1)] $\langle A, + \rangle$ is an abelian group,
\item[(2)] $\langle A, \leqslant \rangle$ is a partially ordered set,
\item[(3)] for all $a,b,c \in A,\ a \leqslant b$ implies $a + c \leqslant b + c$.
\end{enumerate}

An abelian group $A$ is called {\em orderable} if there exists a linear order ``$\leqslant$'' on $A$,
satisfying the condition (3) above. In general, the ordering on $A$ is not unique.

Let $A$ and $B$ be ordered abelian groups. Then the direct sum  $A \oplus B$ is orderable with
respect to the {\em right lexicographic order}, defined as follows:
$$(a_1,b_1) < (a_2,b_2) \Leftrightarrow b_1 < b_2 \ \mbox{or} \ b_1 = b_2 \ \mbox{and} \ a_1 < a_2.$$

Similarly, one can define the  right lexicographic order on finite direct sums of ordered abelian
groups or even on infinite direct sums if the set of indices is linearly ordered.

For elements $a,b$ of an ordered group $A$  the {\em closed segment} $[a,b]$ is defined by
$$[a,b] = \{c \in A \mid a \leqslant c \leqslant b\}.$$

A  subset $C \subset A$ is called {\em convex}, if for every $a, b \in C$ the set $C$ contains
$[a,b]$. In particular, a subgroup $B$ of $A$ is convex if $[0,b] \subset B$ for every positive $b
\in B$. In this event, the quotient $A / B$ is an ordered abelian group with respect to the order
induced from $A$.

A group $A$ is called {\em archimedean} if it has no non-trivial proper convex subgroups. It is
known  that $A$ is archimedean if and only if $A$ can be embedded into the ordered abelian group
of real numbers $\mathbb{R}_+$, or equivalently, for any $0 < a \in A$ and any $b \in A$ there
exists an integer $n$ such that $na > b$.

It is not hard to see that the set of convex subgroups of an ordered abelian group $A$ is linearly
ordered by inclusion (see, for example, \cite{Glass:1999}), it is called  {\em the complete chain of
convex subgroups} in $A$. Notice that
$$E_n = \{f(t) \in \Zt \mid {\rm deg}(f(t)) \leqslant n\}$$
is a convex subgroup of $\Zt$ (here ${\rm deg}(f(t))$ is the degree of $f(t)$) and
$$0 < E_0 < E_1 < \cdots < E_n < \cdots$$
is the complete chain of convex subgroups of $\Zt$.

If $A$ is finitely generated  then the complete chain of convex subgroups of $A$
$$0 = A_0 < A_1 < \cdots < A_n = A$$
is finite. The following result (see, for example, \cite{Chiswell:2001}) shows that this chain completely
determines the order on $A$, as well as the structure of $A$. Namely, the groups  $A_i / A_{i-1}$
are archimedean (with respect to the induced order) and $A$ is isomorphic (as an ordered group) to
the direct sum
\begin{equation}
\label{eq:order-convex}
A_1 \oplus A_2 / A_1 \oplus \cdots \oplus A_n / A_{n-1}
\end{equation}
with the right lexicographic order.

An ordered abelian group $A$ is called {\em discretely ordered} if $A$ has a non-trivial minimal
positive element (we denote it by $1_A$). In this event, for any $a \in A$ the following hold:

\begin{enumerate}
\item[(1)] $a + 1_A = \min\{b \mid b > a\}$,
\item[(2)] $a - 1_A = \max\{b \mid b < a\}$.
\end{enumerate}

For example, $A = \mathbb{Z}^n$ with the right lexicographic order is discretely ordered with
$1_{\mathbb{Z}^n} = (1, 0, \ldots, 0)$. The additive group of integer polynomials $\Zt$
is discretely ordered with $1_{\Zt} = 1$.

\begin{lemma} \cite{Myasnikov_Remeslennikov_Serbin:2005}
\label{le:discrete}
A finitely generated discretely ordered archimedean abelian group is infinite cyclic.
\end{lemma}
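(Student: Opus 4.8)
The plan is to produce a generator directly. The discrete order supplies a minimal positive element $1_A$, and I claim this single element generates all of $A$; since every ordered abelian group is torsion-free and $1_A \neq 0$, this will immediately identify $A$ with the infinite cyclic group. It is worth noting that finite generation is not actually needed for the conclusion --- the archimedean and discreteness hypotheses alone force cyclicity --- but I will phrase the argument so that it applies verbatim.

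Write $\varepsilon = 1_A$ for the minimal positive element. Fix an arbitrary $a \in A$; replacing $a$ by $-a$ if necessary (and noting the case $a = 0$ is trivial) I may assume $a > 0$. Consider the set $S = \{ n \in \mathbb{N} \mid n\varepsilon \leqslant a \}$. It contains $0$, and by the archimedean characterization recalled above, applied to the positive element $\varepsilon$ and to $b = a$, there is an integer $n$ with $n\varepsilon > a$; hence $S$ is bounded above. Therefore $S$ has a largest element $m$, and by maximality $m\varepsilon \leqslant a < (m+1)\varepsilon$, which rearranges to $0 \leqslant a - m\varepsilon < \varepsilon$.

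The key step is now the minimality of $\varepsilon$: the element $a - m\varepsilon$ is non-negative and strictly smaller than the minimal positive element $\varepsilon$, so it cannot itself be positive, forcing $a - m\varepsilon = 0$, i.e.\ $a = m\varepsilon \in \langle \varepsilon \rangle$. As $a$ was arbitrary, $A = \langle \varepsilon \rangle$ is cyclic, and since $\varepsilon \neq 0$ in the torsion-free group $A$ this means $A \cong \mathbb{Z}$. The only point demanding any care --- and the place I would be most explicit --- is the existence of the maximum $m$, which rests on the archimedean axiom to guarantee that $S$ is a bounded, hence finite, nonempty subset of $\mathbb{N}$; everything else is a one-line consequence of the minimality of $1_A$.
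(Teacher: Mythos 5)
Your proof is correct. Note that the survey itself gives no argument for this lemma --- it is stated with a citation to \cite{Myasnikov_Remeslennikov_Serbin:2005} only --- so there is no in-paper proof to compare against; your Euclidean-division argument (bound the set $S=\{n\in\mathbb{N}\mid n\varepsilon\leqslant a\}$ using exactly the archimedean characterization ``for any $0<a$ and any $b$ there is $n$ with $na>b$'', take the maximal $m\in S$, and use minimality of $1_A$ to force the remainder $a-m\varepsilon$ to vanish) is the standard elementary route to this fact. Your side remark is also accurate: finite generation is never used, and the same is true of the other common proof, which invokes H\"older's theorem to embed an archimedean ordered abelian group into $\mathbb{R}$ and then observes that a subgroup of $\mathbb{R}$ with a least positive element cannot be dense and hence is cyclic. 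Your version has the advantage of avoiding H\"older's embedding altogether; the finite generation hypothesis appears in the statement only because that is the form in which the lemma is applied (for instance in Corollary \ref{co:discrete}, where finite generation of the quotients genuinely matters).
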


Recall that an ordered abelian group $A$ is {\em hereditary discrete} if for any convex subgroup
$E \leqslant A$ the quotient $A / E$ is discrete with respect to the induced order.

\begin{cor} \cite{Myasnikov_Remeslennikov_Serbin:2005}
\label{co:discrete}
Let $A$ be a finitely generated hereditary discrete ordered abelian group. Then $A$ is isomorphic
to the direct product of finitely many copies of $\mathbb{Z}$ with the lexicographic order.
\end{cor}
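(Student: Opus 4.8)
The plan is to reduce the statement to Lemma \ref{le:discrete} by means of the structure theorem for finitely generated ordered abelian groups recorded above. Since $A$ is finitely generated, its complete chain of convex subgroups
$$0 = A_0 < A_1 < \cdots < A_n = A$$
is finite, each quotient $A_i/A_{i-1}$ is archimedean, and $A$ is isomorphic, as an ordered group, to the right lexicographic direct sum $A_1 \oplus A_2/A_1 \oplus \cdots \oplus A_n/A_{n-1}$ (see \ref{eq:order-convex}). Thus it suffices to prove that each factor $A_i/A_{i-1}$ is infinite cyclic; granting this, $A \cong \mathbb{Z}^n$ with the lexicographic order, which is exactly the assertion.

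First I would verify that each $A_i/A_{i-1}$ is finitely generated: $A/A_{i-1}$ is a quotient of the finitely generated group $A$, hence finitely generated, and $A_i/A_{i-1}$ is a subgroup of it, hence finitely generated since $\mathbb{Z}$ is Noetherian. Each $A_i/A_{i-1}$ is archimedean by the structure theorem. So, by Lemma \ref{le:discrete}, it remains only to check that each $A_i/A_{i-1}$ is discretely ordered with respect to the induced order.

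This is the crux, and it is where the hereditary discreteness hypothesis enters. Since $A_{i-1}$ is a convex subgroup of $A$, the quotient $A/A_{i-1}$ is discretely ordered; let $\delta$ denote its minimal positive element. The subgroup $A_i/A_{i-1}$ is convex in $A/A_{i-1}$ (convex subgroups of the quotient correspond to convex subgroups of $A$ lying between $A_{i-1}$ and $A$), and it is nontrivial because the chain is strict. Choosing any positive $h \in A_i/A_{i-1}$, convexity gives $[0,h] \subseteq A_i/A_{i-1}$, while minimality forces $0 < \delta \leqslant h$, so $\delta \in A_i/A_{i-1}$. As $\delta$ is already minimal positive in the ambient group $A/A_{i-1}$, it stays minimal positive in $A_i/A_{i-1}$, which is therefore discretely ordered.

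With discreteness established, Lemma \ref{le:discrete} yields $A_i/A_{i-1} \cong \mathbb{Z}$ for every $i$, and the structure theorem then gives $A \cong \mathbb{Z}^n$ with the right lexicographic order. The only point demanding care—and the main obstacle—is the third paragraph: one must genuinely use the convexity of $A_i/A_{i-1}$ inside $A/A_{i-1}$ to transport the minimal positive element downward, rather than naively asserting that a subgroup of a discretely ordered group is discrete, which is false in general (for example, a rank-two subgroup of $\mathbb{R}$ is dense and so not discretely ordered).
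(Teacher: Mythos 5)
Your proof is correct and follows exactly the route the paper intends: the corollary is placed as an immediate consequence of the structure theorem (the lexicographic decomposition $A \cong A_1 \oplus A_2/A_1 \oplus \cdots \oplus A_n/A_{n-1}$ along the finite chain of convex subgroups) together with Lemma \ref{le:discrete}, with hereditary discreteness used---as you do---to make each archimedean factor $A_i/A_{i-1}$ discretely ordered via its convexity inside the discrete quotient $A/A_{i-1}$. One cosmetic remark: your parenthetical example does not quite witness the failure you name, since $\mathbb{R}$ is not itself discretely ordered; a correct witness is, e.g., the subgroup $\{0\}\oplus(\mathbb{Z}+\sqrt{2}\,\mathbb{Z})$ of $\mathbb{Z}\oplus(\mathbb{Z}+\sqrt{2}\,\mathbb{Z})$ with the right lexicographic order, though this aside has no bearing on the validity of your argument.
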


\subsection{$\Lambda$-metric spaces}
\label{subs:lambda-def}

Let $X$ be a non-empty set, $\Lambda$ an ordered abelian group. A {\em $\Lambda$-metric on $X$} is
a mapping $d: X \times X \longrightarrow \Lambda$ such that for all $x, y, z \in X$:
\begin{enumerate}
\item[(M1)] $d(x,y) \geqslant 0$,
\item[(M2)] $d(x,y) = 0$ if and only if $x = y$,
\item[(M3)] $d(x,y) = d(y,x)$,
\item[(M4)] $d(x,y) \leqslant d(x,z) + d(y,z)$.
\end{enumerate}

So a {\em $\Lambda$-metric space} is a pair $(X,d)$, where $X$ is a non-empty set and $d$ is a
$\Lambda$-metric on $X$. If $(X,d)$ and $(X',d')$ are $\Lambda$-metric spaces, an {\em isometry}
from $(X,d)$ to $(X',d')$ is a mapping $f: X \rightarrow X'$ such that $d(x,y) = d'(f(x),f(y))$ for
all $x, y \in X$.

A {\em segment} in a $\Lambda$-metric space is the image of an isometry $\alpha: [a,b]_\Lambda
\rightarrow X$ for some $a, b \in \Lambda$ and $[a,b]_\Lambda$ is a segment in $\Lambda$. The
endpoints of the segment are $\alpha(a), \alpha(b)$.

We call a $\Lambda$-metric space $(X,d)$ {\em geodesic} if for all $x, y \in X$, there is a segment
in $X$ with endpoints $x, y$ and $(X,d)$ is {\em geodesically linear} if for all $x, y \in X$,
there is a unique segment in $X$ whose set of endpoints is $\{x, y\}$.

It is not hard to see, for example, that $(\Lambda, d)$ is a geodesically linear $\Lambda$-metric
space, where $d(a,b) = |a - b|$, and the segment with endpoints $a,b$ is $[a,b]_\Lambda$.

Let $(X,d)$ be a $\Lambda$-metric space. Choose a point $v \in X$, and for $x,y \in X$, define
$$(x \cdot y)_v = \frac{1}{2} (d(x,v) + d(y,v) - d(x,y)).$$

Observe, that in general $(x \cdot y)_v \in \frac{1}{2} \Lambda$.

The following simple result follows immediately

\begin{lemma} \cite{Chiswell:2001}
If $(X,d)$ is a $\Lambda$-metric space then the following are equivalent:
\begin{enumerate}
\item for some $v \in X$ and all $x, y \in X, (x \cdot y)_v \in \Lambda$,
\item for all $v, x, y \in X, (x \cdot y)_v \in \Lambda$.
\end{enumerate}
\end{lemma}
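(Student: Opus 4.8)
The implication (2) $\Rightarrow$ (1) is immediate, so the whole content lies in (1) $\Rightarrow$ (2). The plan is to fix the distinguished point $v$ supplied by (1) and to express an arbitrary Gromov product $(x \cdot y)_w$, based at an arbitrary point $w \in X$, purely in terms of products based at $v$ together with a single metric value. Once such an identity is in hand, hypothesis (1) guarantees that every term on the right-hand side lies in $\Lambda$, and the conclusion (2) follows at once.

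Concretely, first I would record the elementary identity
\[
(x \cdot y)_w = (x \cdot y)_v + d(v,w) - (x \cdot w)_v - (y \cdot w)_v,
\]
valid in any $\Lambda$-metric space for all $x,y,w,v \in X$. To obtain it I would substitute the defining formula $(a \cdot b)_v = \tfrac12(d(a,v)+d(b,v)-d(a,b))$ into each term: solve the definitions of $(x \cdot w)_v$ and $(y \cdot w)_v$ for $d(x,w)$ and $d(y,w)$ respectively, and then plug these into the definition of $(x \cdot y)_w$. The key bookkeeping point is that, after this substitution, every surviving expression carries a factor of $2$, so the overall factor $\tfrac12$ cancels and one is left with a genuine $\Lambda$-combination rather than a $\tfrac12\Lambda$-combination.

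With the identity established the argument closes immediately. Under hypothesis (1) the three Gromov products $(x \cdot y)_v$, $(x \cdot w)_v$ and $(y \cdot w)_v$ all lie in $\Lambda$, and $d(v,w) \in \Lambda$ because it is a value of the $\Lambda$-metric $d$; hence the right-hand side, and therefore $(x \cdot y)_w$, lies in $\Lambda$. Since $w, x, y$ were arbitrary, this is exactly statement (2). There is no serious obstacle here: the only place demanding care is the verification that the $\tfrac12$ factors cancel exactly, which is why I would isolate the displayed identity as the single computational core of the proof rather than manipulating half-integer quantities throughout.
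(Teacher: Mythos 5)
Your proof is correct: the displayed identity
\[
(x \cdot y)_w = (x \cdot y)_v + d(v,w) - (x \cdot w)_v - (y \cdot w)_v
\]
checks out by direct substitution, and the membership conclusion follows since $\Lambda$ is closed under sums and differences. The paper itself states this lemma without proof, citing Chiswell's book, and your identity-based computation is essentially the same standard argument used there, so there is nothing further to reconcile.
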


Let $\delta \in \Lambda$ with $\delta \geqslant 0$. Then $(X,p)$ is {\em $\delta$-hyperbolic with
respect to $v$} if, for all $x, y, z \in X$,
$$(x \cdot y)_v \geqslant min \{(x \cdot z)_v, (z \cdot y)_v\} - \delta.$$

\begin{lemma}
\cite{Chiswell:2001}
If $(X, d)$ is $\delta$-hyperbolic with respect to $v$, and $t$ is any other point of $X$, then
$(X, d)$ is $2 \delta$-hyperbolic with respect to $t$.
\end{lemma}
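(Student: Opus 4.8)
The plan is to reduce the assertion to the fixed basepoint $v$ by means of an exact change-of-basepoint identity for the Gromov product, and then to verify the resulting basepoint-free four-point inequality directly from the defining $\delta$-inequality at $v$.

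First I would record the identity. From the definition of $(x\cdot t)_v$ one has $d(x,t)=d(x,v)+d(t,v)-2(x\cdot t)_v$, and similarly for $d(y,t)$. Substituting these into $(x\cdot y)_t=\frac12(d(x,t)+d(y,t)-d(x,y))$ and comparing with $(x\cdot y)_v$ yields the exact identity
\[
(x\cdot y)_t=(x\cdot y)_v+d(t,v)-(x\cdot t)_v-(y\cdot t)_v,
\]
valid for all $x,y$. Writing the analogous expressions for $(x\cdot z)_t$ and $(z\cdot y)_t$ and substituting into the desired inequality $(x\cdot y)_t\ge\min\{(x\cdot z)_t,(z\cdot y)_t\}-2\delta$, the term $d(t,v)$ and one of the $(\cdot\,\cdot)_v$ terms cancel in each of the two cases (according to which of $(x\cdot z)_t,(z\cdot y)_t$ realizes the minimum). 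A short check shows that in both cases the target reduces to the single basepoint-free inequality
\[
(x\cdot y)_v+(z\cdot t)_v\ \ge\ \min\{(x\cdot z)_v+(y\cdot t)_v,\ (x\cdot t)_v+(y\cdot z)_v\}-2\delta .
\]

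To establish this last inequality I would apply the defining $\delta$-inequality at $v$ twice. Routing the pair $\{x,y\}$ through $z$ and through $t$ gives $(x\cdot y)_v\ge\max\{\min\{(x\cdot z)_v,(y\cdot z)_v\},\,\min\{(x\cdot t)_v,(y\cdot t)_v\}\}-\delta$, and routing $\{z,t\}$ through $x$ and through $y$ gives $(z\cdot t)_v\ge\max\{\min\{(x\cdot z)_v,(x\cdot t)_v\},\,\min\{(y\cdot z)_v,(y\cdot t)_v\}\}-\delta$. Adding these two bounds produces a $2\delta$ on the right, which is the source of the doubling of the hyperbolicity constant. It then remains to prove the purely combinatorial fact that, for any four reals $a=(x\cdot z)_v$, $b=(y\cdot t)_v$, $c=(x\cdot t)_v$, $d=(y\cdot z)_v$, one has $\max\{\min(a,d),\min(c,b)\}+\max\{\min(a,c),\min(d,b)\}\ge\min\{a+b,\ c+d\}$; this is verified by a case analysis on the smallest of $a,b,c,d$, the remaining cases following from the evident symmetries of the expression. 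Combining this with the two displayed lower bounds gives the inequality above, and hence the lemma.

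The routine parts are the identity and the cancellation; the only genuine obstacle is the final combinatorial inequality, namely organizing the case analysis so that the two applications of the basic $\delta$-inequality can always be routed to match the correct sum on the right-hand side. This is exactly where the constant $2\delta$ (rather than $\delta$) is forced, reflecting the fact that changing the basepoint costs one extra application of hyperbolicity.
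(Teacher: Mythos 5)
Your proof is correct and follows essentially the same route as the proof in Chiswell's book, which is what the paper cites for this lemma without reproducing an argument: the exact change-of-basepoint identity for the Gromov product reduces the claim to the basepoint-free four-point inequality $(x\cdot y)_v+(z\cdot t)_v\ \ge\ \min\{(x\cdot z)_v+(y\cdot t)_v,\ (x\cdot t)_v+(y\cdot z)_v\}-2\delta$, which follows from two applications of $\delta$-hyperbolicity at $v$ together with the purely order-theoretic max--min inequality you state (and which does hold, by the case analysis on the smallest of the four quantities and the symmetries $(a,b,c,d)\mapsto(b,a,d,c)$ and $(a,b,c,d)\mapsto(c,d,a,b)$). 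One minor point: in this setting the Gromov products lie in $\frac{1}{2}\Lambda$ rather than in $\mathbb{R}$, but your combinatorial argument uses only the order and addition, so it is valid verbatim in any ordered abelian group.
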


A {\em $\Lambda$-tree} is a $\Lambda$-metric space $(X,d)$ such that:
\begin{enumerate}
\item[(T1)] $(X,d)$ is geodesic,
\item[(T2)] if two segments of $(X,d)$ intersect in a single point, which is an endpoint of both,
then their union is a segment,
\item[(T3)] the intersection of two segments with a common endpoint is also a segment.
\end{enumerate}

\begin{example}
$\Lambda$ together with the usual metric $d(a,b) = |a - b|$ is a $\Lambda$-tree. Moreover, any
convex set of $\Lambda$ is a $\Lambda$-tree.
\end{example}

\begin{example}
\label{examp:1}
A $\mathbb{Z}$-metric space $(X,d)$ is a $\mathbb{Z}$-tree if and only if there is a simplicial
tree $\Gamma$ such that $X = V(\Gamma)$ and $p$ is the path metric of $\Gamma$.
\end{example}

Observe  that in general a $\Lambda$-tree can not be viewed as a simplicial tree with the path
metric like in Example \ref{examp:1}.

\begin{lemma}
\cite{Chiswell:2001}
Let $(X,d)$ be $\Lambda$-tree. Then $(X,d)$ is $0$-hyperbolic, and for all $x, y, v \in X$ we have
$(x \cdot y)_v \in \Lambda$.
\end{lemma}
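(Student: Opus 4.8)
The plan is to prove both assertions at once by producing, for each pair of points viewed from the basepoint $v$, a single ``branch point'' that realizes the Gromov product. I would first invoke geodesic linearity of $\Lambda$-trees (segments between two points are unique, a standard consequence of (T1)--(T3)), so that I may speak of \emph{the} segment $[p,q]$. Fix $x,y,v \in X$. By (T3) the intersection $[v,x]\cap[v,y]$ is a segment; it is contained in $[v,x]$ and contains the endpoint $v$, and since every other point of $[v,x]$ is at positive distance from $v$, the point $v$ must be an endpoint of this sub-segment, which therefore has the form $[v,w]$ for a unique $w$. Next I would check the ``Y-shape'': the segments $[w,x]\subseteq[v,x]$ and $[w,y]\subseteq[v,y]$ meet only in $w$, because any common point $u$ lies in $[v,x]\cap[v,y]=[v,w]$, and a point lying in both $[v,w]$ and $[w,x]$ must be $w$. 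Thus $[w,x]$ and $[w,y]$ intersect in the single point $w$, an endpoint of each, so by (T2) their union is a segment; being a segment joining $x$ and $y$ it equals $[x,y]$, whence $w\in[x,y]$ and $d(x,y)=d(x,w)+d(w,y)$.

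With the branch point in hand the value of the Gromov product is immediate. Since $w\in[v,x]$ and $w\in[v,y]$ we have $d(v,x)=d(v,w)+d(w,x)$ and $d(v,y)=d(v,w)+d(w,y)$, so substituting into the definition yields
\[(x\cdot y)_v=\tfrac12\bigl(d(v,x)+d(v,y)-d(x,y)\bigr)=\tfrac12\cdot 2\,d(v,w)=d(v,w).\]
As $d$ takes values in $\Lambda$, this gives $(x\cdot y)_v=d(v,w)\in\Lambda$ for all $x,y$ and every basepoint $v$, which is the second assertion (and explains why the product never lands strictly in $\tfrac12\Lambda$).

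For $0$-hyperbolicity I would apply the construction to all three pairs, obtaining branch points $w_1,w_2,w_3$ with $w_1\in[v,x]\cap[v,y]$, $w_2\in[v,x]\cap[v,z]$, $w_3\in[v,y]\cap[v,z]$ and $(x\cdot y)_v=d(v,w_1)$, $(x\cdot z)_v=d(v,w_2)$, $(y\cdot z)_v=d(v,w_3)$. I must show $d(v,w_1)\ge\min\{d(v,w_2),d(v,w_3)\}$. Since interchanging $x$ and $y$ swaps $w_2$ and $w_3$ while fixing $w_1$ and the right-hand side, I may assume $d(v,w_2)\le d(v,w_3)$. Both $w_2,w_3$ lie on $[v,z]$, where points are linearly ordered by distance to $v$, so $w_2\in[v,w_3]$; and since $w_3\in[v,y]$, uniqueness of segments gives $[v,w_3]\subseteq[v,y]$, hence $w_2\in[v,y]$. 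As $w_2\in[v,x]$ by construction, we get $w_2\in[v,x]\cap[v,y]=[v,w_1]$, so $d(v,w_2)\le d(v,w_1)$. This is exactly $(x\cdot y)_v=d(v,w_1)\ge d(v,w_2)=\min\{(x\cdot z)_v,(y\cdot z)_v\}$, so $(X,d)$ is $0$-hyperbolic with respect to $v$; and $v$ was arbitrary.

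The hard part will be the structural groundwork rather than the arithmetic: establishing geodesic linearity and verifying that $[v,x]\cap[v,y]$ is genuinely an initial segment $[v,w]$ together with the clean tripod relation $[w,x]\cap[w,y]=\{w\}$. These facts rest squarely on (T2) and (T3) and are the technical heart of the argument; once they are secured, the length-function identity $(x\cdot y)_v=d(v,w)$ and the comparison of the three branch points along shared geodesics are routine.
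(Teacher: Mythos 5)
Your proof is correct. Note first that the paper itself gives no argument for this lemma: it is quoted directly from Chiswell's book \cite{Chiswell:2001}, so there is no ``paper proof'' to compare against. What you have written is essentially the textbook argument: use (T3) plus uniqueness of geodesics to show that $[v,x]\cap[v,y]$ is an initial segment $[v,w]$ (this $w$ is the median, written $Y(v,x,y)$ elsewhere in the paper, e.g.\ in the lemmas on elliptic isometries and on universal trees), verify the tripod relation so that $d(x,y)=d(x,w)+d(w,y)$, conclude $(x\cdot y)_v=d(v,w)\in\Lambda$, and then obtain $0$-hyperbolicity by comparing the three medians along a shared geodesic. All the steps check out, including the reduction by symmetry to $d(v,w_2)\leqslant d(v,w_3)$ and the chain $w_2\in[v,w_3]\subseteq[v,y]$, hence $w_2\in[v,w_1]$.

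One point deserves slightly more care than your phrasing gives it: from (T2) you get that $[w,x]\cup[w,y]$ is \emph{a} segment, but a segment containing $x$ and $y$ need not a priori have $x$ and $y$ as its endpoints. To conclude that the union equals $[x,y]$ you should pull the two arcs back along the isometry $\alpha:[0,L]_\Lambda\to[w,x]\cup[w,y]$: by uniqueness of geodesics they correspond to two intervals whose union is $[0,L]$ and whose intersection is the single point $\alpha^{-1}(w)$, which forces $\{\alpha^{-1}(x),\alpha^{-1}(y)\}=\{0,L\}$ in the non-degenerate case. You flag this as ``structural groundwork,'' which is fair, but it is exactly the step where the endpoint issue must be addressed; once it is, the rest of your argument is airtight.
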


\smallskip

Eventually, we say that a group $G$ acts on a $\Lambda$-tree $X$ if any element $g \in G$ defines an
isometry $g : X \rightarrow X$. An action on $X$ is {\em non-trivial} if there is no point in $X$
fixed by all elements of $G$. Note, that every group has a {\em trivial action} on any $\Lambda$-tree,
when all group elements act as identity. An action of $G$ on $X$ is {\em minimal} if $X$ does not
contain a non-trivial $G$-invariant subtree $X_0$.

Let a group $G$ act as isometries on a $\Lambda$-tree $X$. $g \in G$ is called {\em elliptic} if
it has a fixed point. $g \in G$ is called an {\rm inversion} if it does not have a fixed point, but
$g^2$ does. If $g$ is not elliptic and not an inversion then it is called {\em hyperbolic}.

A group $G$  acts {\em freely} and {\em without inversions} on a $\Lambda$-tree $X$ if for all $1
\neq g \in G$, $g$ acts as a hyperbolic isometry. In this case we also say that $G$ is
{\em $\Lambda$-free}.

\subsection{Theory of a single isometry}
\label{subs:single_isom}

Let $(X,d)$ be a $\Lambda$-tree, where $\Lambda$ is an arbitrary ordered abelian group. Recall that
an isometry $g$ of $X$ is called {\em elliptic} if it has a fixed point.

\begin{lemma} \cite[Lemma 3.1.1]{Chiswell:2001}
\label{le:elliptic}
Let $g$ be an elliptic isometry of $X$ and let $X^g$ denote the set of fixed points of $g$. Then
$X^g$ is a closed non-empty $\langle g \rangle$-invariant subtree of $X$. If $x \in X$ and $[x,p]$
is the bridge between $x$ and $X^g$, then for any $a \in X^g,\ p = Y(x,a,g a)$ is the midpoint of
$[x, gx]$.
\end{lemma}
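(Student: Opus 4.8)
The plan is to dispatch the three structural claims about $X^g$ first, and then to identify the foot $p$ of the bridge by a single median computation. Non-emptiness of $X^g$ is precisely the definition of $g$ being elliptic. For $\langle g\rangle$-invariance, note that $a\in X^g$ gives $ga=a\in X^g$, and applying $g^{-1}$ gives $g^{-1}a=a$. For the subtree property I would take $a,b\in X^g$ and use that $g$ is an isometry: by uniqueness of segments in a $\Lambda$-tree, $g([a,b])=[ga,gb]=[a,b]$, so $g$ restricts to a self-isometry of $[a,b]\cong[0,d(a,b)]_\Lambda$ fixing both endpoints. An isometry $f$ of an interval of $\Lambda$ with $f(0)=0$ and $f$ preserving the endpoint satisfies $|f(t)-f(0)|=|t|$ and $f(t)\geqslant 0$, hence $f(t)=t$; thus $g$ fixes $[a,b]$ pointwise and $[a,b]\subseteq X^g$. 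So $X^g$ is a subtree.

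Next I would fix $x\in X$ and $a\in X^g$ and set $p=Y(x,gx,a)$, the median of the three points, characterised (in a $\Lambda$-tree) by $p\in[x,gx]\cap[x,a]\cap[gx,a]$ and $d(x,p)=(gx\cdot a)_x$. Since $g$ is an isometry fixing $a$, one has $d(gx,a)=d(gx,ga)=d(x,a)$, so
$$(gx\cdot a)_x=\tfrac{1}{2}\bigl(d(x,gx)+d(x,a)-d(gx,a)\bigr)=\tfrac{1}{2}d(x,gx).$$
Hence $d(x,p)=\tfrac{1}{2}d(x,gx)$ and $p\in[x,gx]$, so $p$ is the midpoint of $[x,gx]$; in particular the point $p$ is independent of the chosen $a\in X^g$.

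It remains to show $p\in X^g$ and that $p$ is the point of $X^g$ nearest $x$. For the first, observe that $p\in[x,a]$ gives $gp\in g[x,a]=[gx,a]$, while also $p\in[gx,a]$; and $d(gp,a)=d(gp,ga)=d(p,a)$. Thus $p$ and $gp$ both lie on the segment $[gx,a]$ at the same distance from the endpoint $a$, and uniqueness of such a point on a segment forces $gp=p$, i.e. $p\in X^g$. For the second, note that by the previous paragraph $Y(x,gx,b)=p$ for every $b\in X^g$, and $Y(x,gx,b)\in[x,b]$, so $p\in[x,b]$ for all $b\in X^g$. If some $c\in X^g$ lay on $[x,p]$, then $c\in[x,p]$ and $p\in[x,c]$ together force $[x,c]=[x,p]$, hence $c=p$. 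Therefore $[x,p]\cap X^g=\{p\}$, so $[x,p]$ is the bridge with foot $p$, and $p$ equals the claimed midpoint. Exhibiting a bridge foot for every $x$ also gives that $X^g$ is closed.

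The main obstacle is the step $gp=p$: showing that the midpoint of $[x,gx]$ is genuinely fixed by $g$ rather than merely permuted. This is where the isometry hypothesis and the fixed point $a$ must be combined, via locating $p$ on the translated segment $[gx,a]$ and invoking uniqueness of a point at prescribed distance along a segment. Throughout I rely on two standard consequences of the tree axioms (T1)--(T3): uniqueness of segments between points, and existence and uniqueness of the median $Y(x,y,z)$ with $d(x,Y(x,y,z))=(y\cdot z)_x$; both are available for $\Lambda$-trees as developed earlier.
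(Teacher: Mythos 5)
Your proof is correct, but there is nothing in the paper to compare it against: the paper does not prove this lemma, it simply quotes it (with citation) from Chiswell's book, Lemma 3.1.1. Your argument is in substance the standard one from that source -- fixed endpoints force an isometry of a $\Lambda$-interval to be the identity (hence $X^g$ is a subtree), the Gromov-product computation $(gx\cdot a)_x=\tfrac{1}{2}d(x,gx)$ identifies the median as the midpoint, and the ``same distance from $a$ along $[gx,a]$'' trick gives $gp=p$. Two small remarks. First, you tacitly corrected a typo in the statement as printed in the paper: for $a\in X^g$ one has $ga=a$, so $Y(x,a,ga)=a$ is degenerate; the intended median is $Y(x,a,gx)$, which is exactly what you computed, so your reading is the right one. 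Second, your closing sentence (``exhibiting a bridge foot for every $x$ also gives that $X^g$ is closed'') leans on the equivalence between ``every point of $X$ admits a projection to the subtree'' and the segment-intersection definition of a closed subtree; this is standard, but if you want it self-contained, note that if $a\in X^g\cap[x,y]$ then the projections $p_x,p_y$ of $x$ and $y$ satisfy $p_x\in[x,a]\subseteq[x,y]$ and $p_y\in[y,a]\subseteq[x,y]$, and comparing distances along $[x,y]$ gives $X^g\cap[x,y]=[p_x,p_y]$, a segment.
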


Next, an isometry $g$ of a $\Lambda$-tree $(X,d)$ is called an {\em inversion} if $g^2$ has a fixed
point, but $g$ does not.

\begin{lemma} \cite[Lemma 3.1.2]{Chiswell:2001}
\label{le:inversion}
Let $g$ be an isometry of a $\Lambda$-tree $(X,d)$. The following are equivalent:
\begin{enumerate}
\item $g$ is an inversion,
\item there is a segment of $X$ invariant under $g$, and the restriction of $g$ to this segment has
no fixed points,
\item there is a segment $[x,y]$ in $X$ such that $gx = y,\ gy = x$ and $d(x,y) \notin 2\Lambda$,
\item $g^2$ has a fixed point, and for all $x \in X,\ d(x,gx) \notin 2 \Lambda$.
\end{enumerate}
\end{lemma}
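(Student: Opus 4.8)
The plan is to prove the cyclic chain of implications $(1) \Rightarrow (2) \Rightarrow (3) \Rightarrow (4) \Rightarrow (1)$, using the median (Y-point) construction in a $\Lambda$-tree, Lemma \ref{le:elliptic}, and the fact (from the preceding lemma) that in a $\Lambda$-tree every Gromov product $(x \cdot y)_v$ lies in $\Lambda$. For $(1) \Rightarrow (2)$, suppose $g$ is an inversion and pick $a$ with $g^2 a = a$. Since $g$ has no fixed point, $a \neq ga$; applying $g$ to the segment $[a, ga]$ gives $[ga, g^2 a] = [a, ga]$, so this segment is $g$-invariant, and the restriction of $g$ to it has no fixed point because $g$ has none globally.

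For $(2) \Rightarrow (3)$, I would first note that a bijective isometry of a segment $[u,v]$ onto itself must either fix or interchange the two endpoints, since $\{u,v\}$ is the unique diameter-realizing pair. If it fixes them it is the identity (points of a segment are determined by their distance to $u$), hence has fixed points; as $g|_S$ has none, $g$ must interchange the endpoints, $gu = v$, $gv = u$. Identifying $[u,v]$ with $[0, L]_\Lambda$ where $L = d(u,v)$, this flip is $t \mapsto L - t$, which has a fixed point precisely when $L/2 \in \Lambda$, i.e. when $L \in 2\Lambda$. Since there is no fixed point, $d(u,v) \notin 2\Lambda$, giving $(3)$ with $x = u$, $y = v$.

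The substantive step is $(3) \Rightarrow (4)$. From $gx = y$, $gy = x$ we get $g^2 x = x$, so $g^2$ has a fixed point. Fix any $z$ and set $w = Y(x,y,z)$, $w' = Y(x,y,gz)$; both lie on $[x,y]$, and since isometries preserve medians and $g$ swaps $x,y$, one has $w' = g w$. Using $d(x,gz) = d(y,z)$ and $d(y,gz) = d(x,z)$, the standard median-distance formula $d(x, Y(x,y,z)) = (y \cdot z)_x$ yields $d(x,w') = d(y,w) = d(x,y) - d(x,w)$, so $w \neq w'$ and $d(w,w') = |2\,d(x,w) - d(x,y)|$. The bridges $[z,w]$ and $[gz,w']$ meet $[x,y]$ only at $w$ and $w'$ respectively, so $[z,w] \cup [w,w'] \cup [w',gz]$ is the geodesic $[z,gz]$ and, using $d(gz, w') = d(z, w)$, we obtain $d(z,gz) = 2\,d(z,w) + d(w,w')$. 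Now $2\,d(z,w)$ and $2\,d(x,w)$ lie in $2\Lambda$ while $d(x,y) \notin 2\Lambda$, so $d(w,w') \notin 2\Lambda$ and hence $d(z,gz) \notin 2\Lambda$. Finally, for $(4) \Rightarrow (1)$: if $g$ had a fixed point $p$ then $d(p,gp) = 0 \in 2\Lambda$, contradicting the hypothesis; thus $g$ has no fixed point while $g^2$ does, i.e. $g$ is an inversion.

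The main obstacle I anticipate is the bridge decomposition in $(3) \Rightarrow (4)$: one must justify carefully both that the two median projections $w, w'$ are genuinely distinct and that concatenating the two bridges with the intervening sub-segment $[w,w']$ produces an honest geodesic with no backtracking, so that distances add. This is exactly where axioms (T2), (T3) and the uniqueness of segments in a $\Lambda$-tree are used, and where the value $(x \cdot y)_v \in \Lambda$ (rather than merely $\tfrac12\Lambda$) is needed to keep the arithmetic modulo $2\Lambda$ meaningful.
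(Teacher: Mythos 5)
Your proof is correct, and a comparison with ``the paper's proof'' is somewhat moot here: this is a survey, and the lemma is quoted verbatim from Chiswell's book (Lemma 3.1.2 of \cite{Chiswell:2001}) with no proof given. What you wrote is essentially the standard argument from that source: the cyclic chain $(1)\Rightarrow(2)\Rightarrow(3)\Rightarrow(4)\Rightarrow(1)$, with the fixed point of $g^2$ producing the invariant segment, the flip $t\mapsto L-t$ analysis giving $d(x,y)\notin 2\Lambda$, and the median computation $d(z,gz)=2d(z,w)+d(w,w')$ carrying the obstruction $d(x,y)\notin 2\Lambda$ to every orbit distance. Your use of $(x\cdot y)_v\in\Lambda$ to make sense of the parity argument is exactly the right ingredient, and it is why the statement is a fact about $\Lambda$-trees rather than general $0$-hyperbolic $\Lambda$-metric spaces.

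The one place you left informal --- and correctly flagged --- is the claim that $[z,w]\cup[w,w']\cup[w',gz]$ is a genuine geodesic. Two things must be checked: that $w\neq w'$, and that the two bridges $[z,w]$, $[gz,w']$ are disjoint. The first is immediate from your distance formula: $w=w'$ would force $d(x,y)=2d(x,w)\in 2\Lambda$. For the second, suppose $p\in[z,w]\cap[gz,w']$ and consider the median $m=Y(w,w',p)$. Since $[p,w]\subseteq[z,w]$ meets $[x,y]$ only in $w$, and $m\in[p,w]\cap[w,w']\subseteq[p,w]\cap[x,y]$, one gets $m=w$; symmetrically $m=w'$, contradicting $w\neq w'$. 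With the bridges disjoint, two applications of axiom (T2) assemble the three pieces into the segment $[z,gz]$, and additivity of lengths follows from uniqueness of segments. With that paragraph inserted, your proof is complete.
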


An isometry $g$ of a $\Lambda$-tree $(X,d)$ is called {\em hyperbolic} if it is not an inversion
and not elliptic. It follows that an isometry $g$ is hyperbolic if and only if $g^2$ has no fixed
point.

\begin{figure}[h]
\centering{\mbox{\psfig{figure=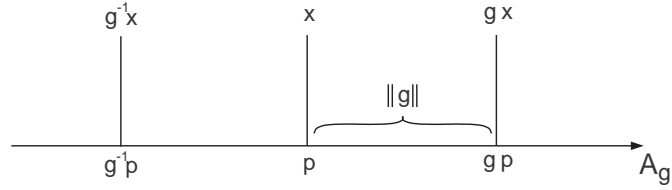,height=1.3in}}}
\caption{The characteristic set of $g$}
\label{axis}
\end{figure}

Suppose $g$ is an isometry of a $\Lambda$-tree $(X,d)$. The {\em characteristic set of $g$} is the
subset $A_g \subseteq X$ defined by
$$A_g = \{p \in X \mid [g^{-1}p,p] \cap [p,gp] = \{p\}\}.$$
By Lemma \ref{le:elliptic}, if $g$ is elliptic then $A_g = X^g$, and by Lemma \ref{le:inversion},
if $g$ is an inversion then $A_g = \emptyset$.

\begin{theorem} \cite[Theorem 3.1.4]{Chiswell:2001}
\label{le:hyperbolic}
Let $g$ be a hyperbolic isometry of a $\Lambda$-tree $(X,d)$. Then $A_g$ is a non-empty closed
$\langle g \rangle$-invariant subtree of $X$. Further, $A_g$ is a linear tree, and $g$ restricted
to $A_g$ is equivalent to a translation $a \rightarrow a + \|g\|$ for some $\|g\| \in \Lambda$ with
$\|g\| > 0$. If $x \in X$ and $[x,p]$ is the bridge between $x$ and $A_g$, then $p = Y(g^{-1} x, x,
gx),\ [x,gx] \cap A_g = [p,gp],\ [x,gx] = [x,p,gp,gx]$ and $d(x,gx) = \|g\| + 2d(x,p)$.
\end{theorem}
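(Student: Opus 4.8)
The plan is to build the characteristic set explicitly out of median (Y-) points and to control everything through a single local picture on the segment $[x,gx]$. Throughout I use the standard reformulation of membership in $A_g$: since $g$ is an isometry $d(g^{-1}p,p)=d(p,gp)$, so $p\in A_g$ exactly when $g^{-1}p,\ p,\ gp$ lie on a common segment with $p$ between the other two, i.e. when $[g^{-1}p,p]\cap[p,gp]=\{p\}$, equivalently $d(g^{-1}p,gp)=2\,d(p,gp)$.

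First I would prove non-emptiness together with the whole bridge picture at one stroke. Fix $x\in X$, set $p=Y(g^{-1}x,x,gx)$ and $\alpha=(g^{-1}x\cdot gx)_x=d(x,p)$. Because $g$ is an isometry it carries the triangle $(g^{-1}x,x,gx)$ to $(x,gx,g^2x)$, so $gp=Y(x,gx,g^2x)$ and $g^{-1}p=Y(g^{-2}x,g^{-1}x,x)$; in particular both $p$ and $gp$ lie on $[x,gx]$, with $d(x,p)=d(gx,gp)=\alpha$. Writing $D=d(x,gx)$, the point $p$ sits at distance $\alpha$ from $x$ and $gp$ at distance $\alpha$ from $gx$ along $[x,gx]$. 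The decisive step is to exclude the overlapping configuration $2\alpha\ge D$: the same position count forces $gp=p$ (if $2\alpha=D$) or $g^{-1}p=gp$ (if $2\alpha>D$), and in either case $g^2$ acquires a fixed point, contradicting the hypothesis that $g$ is hyperbolic, i.e. that $g^2$ has no fixed point. Hence $2\alpha<D$, the order along $[x,gx]$ is $x,p,gp,gx$, and the two tripod legs at $p$ toward $g^{-1}x$ and toward $gx$ meet only at $p$; this is precisely $[g^{-1}p,p]\cap[p,gp]=\{p\}$, so $p\in A_g\neq\varnothing$. Reading off this picture gives simultaneously $[x,gx]=[x,p,gp,gx]$ and $d(x,gx)=2\,d(x,p)+d(p,gp)$, with $\|g\|:=d(p,gp)>0$.

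Next I would establish the algebraic and metric structure of $A_g$. Invariance under $\langle g\rangle$ is immediate by applying $g^{\pm1}$ to the defining condition. For linearity, note that for any $p\in A_g$ the consecutive segments $[g^np,g^{n+1}p]$ and $[g^{n+1}p,g^{n+2}p]$ meet only in the common endpoint $g^{n+1}p$ (this is the $A_g$-condition at $g^{n+1}p$, using invariance), so by axiom (T2) their union is a segment; iterating in both directions shows $\ell_p=\bigcup_{n\in\mathbb{Z}}[g^np,g^{n+1}p]$ is a linear subtree isometric to a convex subset of $\Lambda$ on which $g$ acts as the translation $a\mapsto a+\|g\|$, and the value $\|g\|=d(p,gp)$ is seen to be independent of the chosen point of $A_g$.

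Finally I would show $A_g$ is a single such line and deduce the bridge assertions. Given $x\in X$, I would show that the nearest-point projection of $x$ onto $\ell_p$ exists and coincides with the median $Y(g^{-1}x,x,gx)$ constructed above, using axiom (T3) to intersect the relevant segments; this identifies the bridge point $p$ from $x$ to $A_g$ with that median, forces every point of $A_g$ to lie on $\ell_p$ (so $A_g=\ell_p$ is connected, linear and $\langle g\rangle$-invariant), and yields $[x,gx]\cap A_g=[p,gp]$ together with the already-derived decomposition $[x,gx]=[x,p,gp,gx]$ and $d(x,gx)=\|g\|+2\,d(x,p)$. Closedness of $A_g$ then follows from closedness of segments. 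The main obstacle is the first construction's case analysis, namely extracting a genuine contradiction with hyperbolicity from the overlapping case, and, secondarily, the projection argument showing that the various lines $\ell_p$ coincide, so that $A_g$ is exactly one axis rather than a branching or disconnected set.
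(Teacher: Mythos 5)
The paper does not actually prove this statement; it quotes it as Theorem 3.1.4 of Chiswell's book, so your proposal can only be measured against the standard proof. Up to and including the construction of the line $\ell_p$, your argument is correct and is essentially that standard proof: the median $p=Y(g^{-1}x,x,gx)$, the exclusion of the overlapping case $2d(x,p)\geqslant d(x,gx)$ (since either $p=gp$ or $g^{-1}p=gp$, giving $g^2$ a fixed point), the resulting order $[x,p,gp,gx]$, the verification that $p\in A_g$ with $\|g\|=d(p,gp)>0$, the $\langle g\rangle$-invariance of $A_g$, and the fact that $\ell_p=\bigcup_{n\in\Z}[g^np,g^{n+1}p]$ is a linear subtree on which $g$ acts as translation by $\|g\|$.

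Your final paragraph, however, rests on two claims that are false for non-Archimedean $\Lambda$, and this is a genuine error rather than a routine gap. First, $A_g$ need not equal $\ell_p$: the axis can be strictly larger than the $\langle g\rangle$-orbit of one fundamental segment. Take the paper's own Example \ref{ex:centr_ext_inf_words}: $\Lambda=\Z^2$ with the right lexicographic order, $G=\langle F(X),s\mid u^s=u\rangle\leqslant CDR(\Z^2,X)$ with $u$ cyclically reduced of length $(k,0)$ and $|s|=(0,1)$, and let $g=u$ act on the universal tree $\Gamma_G$. Then $\epsilon\in A_u$, and since $s$ commutes with $u$, Lemma \ref{le:axis}(1) gives $s\cdot A_u=A_{sus^{-1}}=A_u$, so $s\cdot\epsilon\in A_u$; but $d(\epsilon,s\cdot\epsilon)=(0,1)$, whereas every point of $\ell_\epsilon$ is at distance of the form $(j,0)$ from $\epsilon$, so $s\cdot\epsilon\notin\ell_\epsilon$. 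Second, the nearest-point projection onto $\ell_p$ need not exist: for $x=sa\cdot\epsilon$ with $a\in X$, $a\neq u(k)^{-1}$, one computes $d(x,u^n\cdot\epsilon)=(1-nk,\,1)$, which strictly decreases forever as $n$ grows and never attains a minimum on $\ell_\epsilon$; the closest point of the true axis $A_u$ is $s\cdot\epsilon$, at distance $(1,0)$, which your line misses entirely (note that your own Step 2 median $Y(g^{-1}x,x,gx)$ equals this point, so it cannot coincide with a projection onto $\ell_\epsilon$). Consequently everything deduced from these two claims --- linearity and closedness of $A_g$, constancy of $\|g\|$ over $A_g$, the bridge identification, and $[x,gx]\cap A_g=[p,gp]$ --- is unproven. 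The correct route, which is the one Chiswell takes, establishes the structure of $A_g$ by finitely many median/tripod comparisons of two arbitrary points $p,q\in A_g$, showing $d(p,gp)=d(q,gq)$ and that $p,gp,q,gq$ lie coherently on a common linear subtree; generating the axis from a single orbit is an Archimedean intuition (valid for $\Lambda=\Z$ or $\R$) that fails in exactly the generality this theorem is needed for.
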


If $g$ is hyperbolic then $A_g$ is called the {\em axis} and $\|g\|$ the {\em translation length} of
$g$ which can be defined as follows
\[
\|g\| = \left\{ \begin{array}{ll}
\mbox{$\min\{d(x, g x) \mid x \in X\}$}  & \mbox{if $g$ is not an inversion} \\
\mbox{$0$ } & \mbox{otherwise}
\end{array}
\right.
\]
It can be shown that this minimum is always realized. If $g$ is elliptic or an inversion, then $\|g\|
= 0$.

\begin{cor} \cite[Corollary 3.1.5]{Chiswell:2001}
\label{co:axis}
Let $g$ be an isometry of a $\Lambda$-tree $(X,d)$. Then $g$ is an inversion if and only if $A_g =
\emptyset$. If $g$ is not an inversion then $\|g\| = \min \{ d(x,gx) \mid x \in X\}$ and $A_g = \{p
\in X \mid d(p,gp) = \|g\|\}$.
\end{cor}

\begin{cor} \cite[Corollary 3.1.6]{Chiswell:2001}
\label{co:axis_2}
Let $g$ be an isometry of a $\Lambda$-tree $(X,d)$ which is not an inversion. Then $A_g$ meets every
$\langle g \rangle$-invariant subtree of $X$, and $A_g$ is contained in every $\langle g
\rangle$-invariant subtree of $X$ with the property that it meets every $\langle g \rangle$-invariant
subtree of $X$.
\end{cor}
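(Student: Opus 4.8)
The plan is to treat the two assertions separately and, within each, to split according to whether $g$ is elliptic or hyperbolic; by Corollary \ref{co:axis} these are the only possibilities (since $g$ is not an inversion, $A_g \neq \emptyset$), and in both cases $A_g$ is a non-empty $\langle g\rangle$-invariant subtree of $X$, by Lemma \ref{le:elliptic} and Theorem \ref{le:hyperbolic} respectively.

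First I would prove that $A_g$ meets every $\langle g \rangle$-invariant subtree $Y$. Pick any $x \in Y$; then $g^{-1}x, gx \in Y$ as well, and since $Y$ is a subtree the segment $[x,gx]$ lies in $Y$. In the hyperbolic case Theorem \ref{le:hyperbolic} identifies the bridge point from $x$ to $A_g$ as the median $p = Y(g^{-1}x,x,gx)$, which lies on $[x,gx]$ and belongs to $A_g$; in the elliptic case Lemma \ref{le:elliptic} exhibits the midpoint of $[x,gx]$ as a point of $A_g = X^g$. Either way this point lies in $[x,gx] \subseteq Y$, so $A_g \cap Y \neq \emptyset$. This part is routine once the correct bridge formula is invoked.

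Next, let $Y$ be $\langle g \rangle$-invariant and assume $Y$ meets every $\langle g\rangle$-invariant subtree; I must show $A_g \subseteq Y$. Since $A_g$ is itself such a subtree, $Y \cap A_g \neq \emptyset$; fix $p_1 \in Y \cap A_g$, and note that $Y \cap A_g$ is an order-convex $\langle g\rangle$-invariant subtree of $A_g$. The elliptic case is immediate: every $q \in A_g = X^g$ is fixed by $g$, so $\{q\}$ is a $\langle g\rangle$-invariant subtree, whence $q \in Y$ by hypothesis.

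The hyperbolic case is where the real work lies, and I expect it to be the main obstacle, precisely because $\Lambda$ may be non-archimedean: the $\langle g\rangle$-orbit of $p_1$ need not be cofinal and coinitial in the line $A_g$, so one cannot simply sandwich an arbitrary $q$ between two orbit points. Using Theorem \ref{le:hyperbolic} I identify $A_g$ with a convex subset of $\Lambda$ on which $g$ acts as $a \mapsto a + \|g\|$ with $\|g\| > 0$. Given $q \in A_g$ with $q \geq p_1$, I would introduce the ``forward tail'' $T_q = \{a \in A_g : a + n\|g\| \geq q \ \text{for some}\ n \geq 0\}$ and verify it is a non-empty, order-convex, $\langle g\rangle$-invariant subtree of $A_g$ (invariance under both $g$ and $g^{-1}$ being the only slightly delicate check). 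By hypothesis $Y$ meets $T_q$, producing $y \in Y \cap A_g$ with $g^n y = y + n\|g\| \geq q$ for some $n \geq 0$; then $p_1 \leq q \leq g^n y$ with both endpoints in the convex set $Y \cap A_g$, so $q \in [p_1, g^n y] \subseteq Y$. The case $q \leq p_1$ is symmetric, using the backward tail. This yields $A_g \subseteq Y$ and completes the proof. The only genuinely new ingredient is the tail subtree $T_q$, engineered so that the ``meets every invariant subtree'' hypothesis can be applied on the far side of $q$ while $p_1$ controls the near side.
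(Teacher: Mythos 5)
Your proof is correct. Note first that the paper itself gives no argument for this statement: it is quoted verbatim from Chiswell's book (Corollary 3.1.6 of \cite{Chiswell:2001}), so there is no in-paper proof to compare against; I can only judge your argument on its own terms, and it holds up. Part one is the standard argument: for $x$ in an invariant subtree $Y$ the segment $[x,gx]$ lies in $Y$ by convexity, and Lemma \ref{le:elliptic} (midpoint of $[x,gx]$ lies in $X^g$) or Theorem \ref{le:hyperbolic} (the point $Y(g^{-1}x,x,gx)$ lies in $A_g\cap[x,gx]$) produces the required point of $A_g\cap Y$. For part two, the elliptic case via singleton subtrees $\{q\}$, $q\in X^g$, is exactly right. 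The hyperbolic case is where a naive argument fails, and you identified the failure correctly: in a non-archimedean $\Lambda$ (say $\Lambda=\Z^2$ with $\|g\|=(1,0)$ and $A_g=\Lambda$) the $\langle g\rangle$-orbit of a point of $Y\cap A_g$ is not cofinal in $A_g$, so one cannot sandwich an arbitrary $q\in A_g$ between orbit points; moreover proper invariant subtrees of $A_g$ such as $\{(a,b): b\geqslant 0\}$ do exist, which is precisely why the hypothesis that $Y$ meets \emph{every} invariant subtree must be invoked again, not just once to get $p_1$. Your tail subtree $T_q=\{a\in A_g : a+n\|g\|\geqslant q\ \text{for some } n\geqslant 0\}$ does this: it is nonempty, convex, and invariant under both $g$ and $g^{-1}$ (the check you flagged as delicate is indeed the right one and goes through, since $a\geqslant q$ forces $a+\|g\|\geqslant q$, and $g^{-1}$ only increases the exponent $n$ by one), so $Y$ meets it, yielding $g^ny\geqslant q$ with $g^ny\in Y\cap A_g$, and convexity of $Y\cap A_g$ inside the linear tree $A_g$ finishes it. This is a complete and correctly engineered proof of the one genuinely nontrivial point of the corollary.
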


\begin{lemma} \cite[Lemma 3.1.7]{Chiswell:2001}
\label{le:axis}
If $g,h$ are both isometries of a $\Lambda$-tree $(X,d)$, then
\begin{enumerate}
\item $A_{hgh^{-1}} = h A_g$ and $\|hgh^{-1}\| = \|g\|$,
\item $A_{g^{-1}} = A_g$,
\item if $n \in \mathbb{Z}$ then $\|g^n\| = |n| \|g\|$, and $A_g \subseteq A_{g^n}$. If $\|g\| > 0$ and
$n \neq 0$ then $A_g = A_{g^n}$,
\item if $Y$ is a $\langle g \rangle$-invariant subtree of $X$, then $\|g\| = \|g \mid_Y\|$ and $A_{g
\mid_Y} = A_g \cap Y$.
\end{enumerate}
\end{lemma}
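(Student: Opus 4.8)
The plan is to reduce each assertion to the explicit description of the characteristic set and the translation length provided by Theorem~\ref{le:hyperbolic} and Corollaries~\ref{co:axis} and~\ref{co:axis_2}, using the trichotomy elliptic/inversion/hyperbolic throughout. Parts (1) and (2) I would read off directly from the defining condition $[g^{-1}p,p]\cap[p,gp]=\{p\}$. For (2), replacing $g$ by $g^{-1}$ only interchanges the two segments, leaving the condition unchanged, so $A_{g^{-1}}=A_g$. For (1), since $h$ is an isometry it sends segments to segments, so writing $q=h^{-1}p$ gives $[(hgh^{-1})^{-1}p,p]=h\,[g^{-1}q,q]$ and $[p,(hgh^{-1})p]=h\,[q,gq]$; these meet only in $p$ exactly when $q\in A_g$, whence $A_{hgh^{-1}}=hA_g$. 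Conjugation preserves fixed points of $g$ and of $g^2$, hence the isometry type, and $d(x,(hgh^{-1})x)=d(h^{-1}x,gh^{-1}x)$ shows the two translation-length minima coincide, so $\|hgh^{-1}\|=\|g\|$.

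I would prove (4) next, since it drives (3). If $g$ is not an inversion, Corollary~\ref{co:axis_2} guarantees that $A_g$ meets the $\langle g\rangle$-invariant subtree $Y$, so a point realizing $\|g\|=\min_{x\in X}d(x,gx)$ lies in $Y$; hence $\|g\mid_Y\|=\|g\|$, and Corollary~\ref{co:axis} applied inside both $X$ and $Y$ gives $A_{g\mid_Y}=\{x\in Y\mid d(x,gx)=\|g\|\}=A_g\cap Y$, the elliptic case being immediate from $A_g=X^g$ (Lemma~\ref{le:elliptic}). If $g$ is an inversion then $\|g\|=0$ and $A_g=\emptyset$; here I would apply Corollary~\ref{co:axis_2} to the elliptic isometry $g^2$ to find a fixed point of $g^2$ in $Y$, conclude that $g\mid_Y$ is again an inversion, and so $\|g\mid_Y\|=0=\|g\|$ and $A_{g\mid_Y}=\emptyset=A_g\cap Y$.

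For (3) the case $n=0$ is trivial, since $g^0=\mathrm{id}$ has $A_{g^0}=X$ and $\|g^0\|=0$, so take $n\neq 0$. If $g$ is elliptic or an inversion then $\|g\|=0$ and each power $g^n$ is again elliptic or an inversion---its square $g^{2n}$ fixes a point, so $g^n$ is not hyperbolic---whence $\|g^n\|=0=|n|\,\|g\|$ and $A_g\subseteq A_{g^n}$ holds trivially. Assume now $g$ is hyperbolic with $\|g\|>0$. By Theorem~\ref{le:hyperbolic}, $A_g$ is a linear $\langle g\rangle$-invariant subtree on which $g$ translates by $\|g\|$, so $g^n$ translates $A_g$ by $|n|\,\|g\|$ and $d(x,g^n x)=|n|\,\|g\|$ is constant on $A_g$. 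Applying (4) to $g^n$ and the $\langle g^n\rangle$-invariant subtree $A_g$, together with Corollary~\ref{co:axis} inside $A_g$, yields $\|g^n\|=|n|\,\|g\|$ and $A_{g^n}\cap A_g=A_g$, that is $A_g\subseteq A_{g^n}$.

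It remains to upgrade this to $A_{g^n}=A_g$ when $\|g\|>0$ and $n\neq 0$, which I expect to be the main obstacle. The idea is that $g$ commutes with $g^n$, so by part (1) the subtree $A_{g^n}$ is $g$-invariant; applying (4) to $g$ and $Y=A_{g^n}$ gives $\|g\mid_{A_{g^n}}\|=\|g\|>0$ and $A_{g\mid_{A_{g^n}}}=A_g\cap A_{g^n}=A_g$. On the other hand $A_{g^n}$ is a linear tree, and a hyperbolic isometry of a linear tree is a translation of constant displacement, so its characteristic set is the whole tree $A_{g^n}$. Comparing the two computations of $A_{g\mid_{A_{g^n}}}$ forces $A_{g^n}=A_g$. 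The inclusion $A_g\subseteq A_{g^n}$ and the translation-length formula are routine once $A_g$ is seen as a translated line, but excluding extra points of $A_{g^n}$ genuinely needs both the $g$-invariance of $A_{g^n}$, obtained through conjugation, and the rigidity of translations on a linear tree.
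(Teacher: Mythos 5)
The paper itself offers no proof of this lemma: it is quoted as background directly from Chiswell's book \cite{Chiswell:2001}, so your attempt can only be measured against the standard textbook argument. Your proof is correct. Parts (1) and (2), read off from the defining condition of $A_g$, and part (4), handled through Corollaries \ref{co:axis} and \ref{co:axis_2} with the inversion case dispatched via the elliptic isometry $g^2$, follow the usual lines; you correctly note that $g|_Y$ inherits the right type because fixed points of $g$ and of $g^2$ are detected inside $Y$. Where you genuinely diverge from the standard proof is the final inclusion $A_{g^n}\subseteq A_g$ in (3). Chiswell's argument is metric: for $p\in A_{g^n}$ one projects to the closest point $q\in A_g$ and uses the formula $d(p,g^np)=|n|\,\|g\|+2d(p,A_g)$ (a consequence of Theorem \ref{le:hyperbolic}, since the geodesic $[p,g^np]$ passes through $q$ and $g^nq$), which forces $d(p,A_g)=0$. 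Your route instead makes $A_{g^n}$ a $\langle g\rangle$-invariant subtree via part (1) and commutation, applies your part (4) to $g$ acting on $Y=A_{g^n}$ to get $A_{g|_Y}=A_g$, and then invokes rigidity of isometries of a linear tree: a hyperbolic isometry of a convex subset of $\Lambda$ must be a translation (an orientation-reversing isometry squares to the identity, hence is elliptic or an inversion), and a translation's characteristic set is the whole line, so $A_g=Y=A_{g^n}$. This is a legitimate and arguably cleaner structural argument; its only cost is that the classification of isometries of a linear tree is used without proof. That fact is true for arbitrary ordered abelian groups and is standard, but it is the one ingredient not available among the results quoted in this section of the paper, so strictly speaking it should be cited or given the two-line proof sketched above.
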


\subsection{$\Lambda$-free groups}
\label{subs:lambda-free}

Recall that a group $G$ is called {\em $\Lambda$-free} if for all $1 \neq g \in G$, $g$ acts as a
hyperbolic isometry. Here we list some known results about $\Lambda$-free groups for an arbitrary
ordered abelian group $\Lambda$. For all these results the reader can be referred to
\cite{Alperin_Bass:1987, Bass:1991, Chiswell:2001, Martino_Rourke:2005}

\begin{theorem}
\begin{enumerate}
\item[(a)] The class of $\Lambda$-free groups is closed under taking subgroups.
\item[(b)] If $G$ is $\Lambda$-free and $\Lambda$ embeds (as an ordered abelian group) in $\Lambda'$
then $G$ is $\Lambda'$-free.
\item[(c)] Any $\Lambda$-free group is torsion-free.
\item[(d)] $\Lambda$-free groups have the CSA property. That is, every maximal abelian subgroup $A$
is malnormal: $A^g \cap A = 1$ for all $g \notin A$.
\item[(e)] Commutativity is a transitive relation on the set of non-trivial elements of a
$\Lambda$-free group.
\item[(f)] Solvable subgroups of $\Lambda$-free groups are abelian.
\item[(g)] If $G$ is $\Lambda$-free then any abelian subgroup of $G$ can be embedded in $\Lambda$.
\item[(h)] $\Lambda$-free groups cannot contain Baumslag-Solitar subgroups other than $\mathbb{Z}
\times \mathbb{Z}$. That is, no group of the form $\langle a, t \mid t^{-1} a^p t = a^q \rangle$
can be a subgroup of a $\Lambda$-free group unless $p = q = \pm 1$.
\item[(i)] Any two generator subgroup of a $\Lambda$-free group is either free, or free abelian.
\item[(j)] The class of $\Lambda$-free groups is closed under taking free products.
\end{enumerate}
\end{theorem}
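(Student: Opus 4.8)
The plan is to dispatch the ten items separately, leaning throughout on the single-isometry calculus developed above (Theorem \ref{le:hyperbolic} and Lemma \ref{le:axis}) together with one auxiliary fact that I would isolate at the outset: if $g$ and $h$ are commuting hyperbolic isometries of a $\Lambda$-tree $X$, then $A_g = A_h$ and both restrict to translations of the same linear subtree. The quick sketch is that, since $g h g^{-1} = h$, Lemma \ref{le:axis}(1) gives $g A_h = A_{g h g^{-1}} = A_h$, so $g$ preserves the line $A_h$; a hyperbolic isometry preserving a line has that line as its axis, whence $A_g = A_h$. On such a common line $L$ the (signed) translation length extends to a homomorphism into $\Lambda$, and this is the engine behind several of the items.

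I would begin with the formal items. For (a), a subgroup $H \leqslant G$ inherits the action on $X$, and every $1 \neq h \in H$ is a nontrivial element of $G$, hence hyperbolic, so $H$ is $\Lambda$-free. For (b), I would use extension of scalars: the $\Lambda$-tree $X$ embeds into a $\Lambda'$-tree obtained by base change along $\Lambda \hookrightarrow \Lambda'$, on which $G$ still acts with translation lengths equal to the images of the original ones; these stay positive, so the action remains free, and I would reference \cite{Chiswell:2001} for the construction. For (c), if $1 \neq g$ had finite order $n$, then Lemma \ref{le:axis}(3) gives $\|g^n\| = n\|g\| > 0$, so $g^n$ is hyperbolic and in particular $g^n \neq 1$, a contradiction.

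Next comes the cluster built on the commuting-axis lemma. For (g), an abelian subgroup $A$ consists of pairwise commuting hyperbolic elements, so all its nontrivial elements share one axis $L$; sending each element to its signed translation length along $L$ is a homomorphism $A \to \Lambda$, injective because a nontrivial element has positive, hence nonzero, length, so $A$ embeds in $\Lambda$. For (e), if the pairs $a,b$ and $b,c$ commute, the lemma yields $A_a = A_b = A_c$, and two hyperbolic isometries sharing an axis are translations of the same line and so commute, giving $[a,c] = 1$. For (d), let $A$ be maximal abelian and $1 \neq a \in A \cap A^g$, say $a = g^{-1} b g$ with $b \in A$; then $L = A_a = A_b$, while Lemma \ref{le:axis}(1) gives $A_a = A_{g^{-1} b g} = g^{-1} A_b = g^{-1} L$, so $g$ stabilizes $L$ and therefore commutes with every element of $A$, forcing $g \in A$ by maximality. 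The remaining algebraic items follow: (f) is an induction on derived length, using (d) and (e) to collapse a solvable subgroup to an abelian one; for (h), $\|t^{-1} a^p t\| = \|a^q\|$ gives $|p| = |q|$ since $\|a\| > 0$, the case $|p| = |q| > 1$ is excluded because (e) forces $a$ and $t$ to commute (contradicting non-commutativity of the presented group), and the orientation-reversing case $q = -p$ is killed by malnormality (d) together with torsion-freeness, leaving only $p = q = \pm 1$, that is $\mathbb{Z} \times \mathbb{Z}$. For (j), I would construct a $\Lambda$-tree for $G_1 \ast G_2$ by wedging translated copies of the trees $X_1, X_2$ along base points so that reduced words act without fixed points.

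I expect (i) to be the main obstacle, since ``every two-generator subgroup is free or free abelian'' is a genuine theorem rather than a formal corollary of the above. The plan is a Nielsen/ping-pong analysis of the isometries $g, h$ and their axes $A_g, A_h$: when the two elements fix a common line one lands in the free-abelian case via (g), and otherwise the configuration of $A_g$, $A_h$ and their translates produces a ping-pong partition exhibiting $\langle g, h \rangle$ as free. Carrying out the case division and the cancellation estimates over an \emph{arbitrary} ordered abelian group $\Lambda$ is the delicate step, and I would defer to \cite{Martino_Rourke:2005} for the complete argument.
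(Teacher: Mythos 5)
Your proposal cannot be checked against a proof in the paper, because the paper gives none: this theorem appears in the survey as a compilation of known facts, with the reader referred to \cite{Alperin_Bass:1987}, \cite{Bass:1991}, \cite{Chiswell:2001}, \cite{Martino_Rourke:2005}, so your sketch supplies strictly more detail than the text does. What you write is essentially the standard argument from those sources, and the architecture is sound: the commuting-axis lemma drives (d), (e), (g); then (f) and (h) follow from CSA plus commutation-transitivity and torsion-freeness; (a), (c) are formal; and (b), (i), (j) are deferred to the literature, which is exactly the level at which the paper itself treats the result (for (j) the paper even states the free-product closure separately, citing \cite{Harrison:1972} and Proposition 5.1.1 of \cite{Chiswell:2001}).

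Two steps in your auxiliary lemma are stated imprecisely and would need repair in a full write-up. First, ``a hyperbolic isometry preserving a line has that line as its axis'' is false as stated: invariance only gives containment. For example, take $X=\Lambda=\Z^2$ with the right lexicographic order and $g$ the translation by $(1,0)$; then $A_g=X$, yet the convex subgroup $\Z\times\{0\}$ is a proper $g$-invariant line. The correct route is Lemma \ref{le:axis}(4): since $gA_h=A_h$, the restriction $g|_{A_h}$ is an isometry of the linear tree $A_h$ with translation length $\|g\|>0$, hence a translation of it, so $A_g\cap A_h=A_{g|_{A_h}}=A_h$ and $A_h\subseteq A_g$; the symmetric argument with $g$ and $h$ exchanged gives the reverse inclusion, whence $A_g=A_h$. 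Second, ``two hyperbolic isometries sharing an axis are translations of the same line and so commute'' is not true for isometries of the ambient tree in general: their commutator is only forced to fix the common axis pointwise, i.e., to be elliptic, and one must invoke freeness of the action to conclude it is trivial. The same appeal to freeness is needed in (d) to rule out $g$ reversing orientation of the invariant line. You use freeness implicitly throughout, so these are repairable local imprecisions rather than structural gaps; with them patched, items (a)--(h) are complete, and (b), (i), (j) rest on the cited sources exactly as the paper's own citation does.
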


The following result was originally proved in \cite{Harrison:1972} in the case of finitely many
factors and $\Lambda = \mathbb{R}$. A proof of the result in the general formulation given below
can be found in \cite[Proposition 5.1.1]{Chiswell:2001}.

\begin{theorem}
If $\{G_i \mid i \in I \}$ is a collection of $\Lambda$-free groups then the free product $\ast_{i
\in I} G_i$ is $\Lambda$-free.
\end{theorem}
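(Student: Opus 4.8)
The plan is to pass from the free actions on the factors to a free Lyndon length function on the free product, and then invoke the equivalence between free Lyndon length functions and free actions on $\Lambda$-trees (Chiswell's construction together with the Morgan--Shalen generalisation recalled above). Since each $G_i$ is $\Lambda$-free, it carries a free Lyndon length function $l_i : G_i \to \Lambda$, realised as $l_i(g) = d(x_i, g x_i)$ for a base point $x_i$ of the tree on which $G_i$ acts. Write $G = \ast_{i \in I} G_i$ and, for $1 \neq g \in G$ with reduced (normal) form $g = g_1 \cdots g_n$, where $g_k \in G_{i_k} \setminus \{1\}$ and $i_k \neq i_{k+1}$, define
$$L(g) = \sum_{k=1}^{n} l_{i_k}(g_k), \qquad L(1) = 0.$$
The goal is to show that $L$ is a free Lyndon length function on $G$ with values in $\Lambda$.

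The axioms $L(1) = 0$, $L(g) \geq 0$ and $L(g^{-1}) = L(g)$ are immediate, since the reduced form of $g^{-1}$ is $g_n^{-1} \cdots g_1^{-1}$ and each $l_{i_k}$ satisfies the corresponding axiom. The substance of the argument is the hyperbolicity (ultrametric) axiom for the Gromov product $c(g,h) = \tfrac{1}{2}(L(g) + L(h) - L(g^{-1}h))$. To evaluate it one must analyse the cancellation in the concatenation of the reduced form of $g^{-1}$ with that of $h$: outer syllables either coincide after inversion and cancel completely, or meet in a common factor $G_i$ where they only partially cancel, the latter measured by the Gromov product $c_i$ of $l_i$. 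Thus $c(g,h)$ splits as a sum of the lengths of the completely cancelled syllables plus one within-factor term $c_i(\,\cdot\,,\,\cdot\,)$ at the junction. The inequality $c(g,h) \geq \min\{c(g,k), c(k,h)\}$ then follows by a case analysis on how far the common initial segments of the reduced forms of $g$, $h$ and $k$ agree, each case reducing either to a comparison of syllable counts or, at a shared junction, to the corresponding axiom for the factor function $l_i$. This reduction is the main obstacle: keeping track of the interaction between syllable-level cancellation in the free product and letter-level cancellation inside a single factor is exactly the Nielsen-type bookkeeping that the statement is meant to encapsulate.

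Once $L$ is known to be a Lyndon length function, Chiswell's construction yields a $\Lambda$-tree $X$ with base point $x_0$ and an isometric $G$-action satisfying $L(g) = d(x_0, g x_0)$, and it remains to show this action is free, i.e. every $1 \neq g$ is hyperbolic. Here I would use that an elliptic isometry (and hence also an inversion, whose square is elliptic) has a bounded orbit, so $L(g^m) = d(x_0, g^m x_0)$ stays bounded as $m \to \infty$; it therefore suffices to exhibit a conjugate of $g$ whose powers have unbounded $L$-length, since translation length and the type of an isometry are conjugacy invariant by Lemma~\ref{le:axis}. Replacing $g$ by a cyclically reduced conjugate $g'$, there are two cases. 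If $g'$ has syllable length $n \geq 2$, then $g'^m$ is reduced of syllable length $mn$, so $L(g'^m) = m\, L(g') \to \infty$. If $g'$ has syllable length $1$, then $g' \in G_i$ and $L(g'^m) = l_i(g'^m) \to \infty$ because $g'$ is already hyperbolic for the free function $l_i$ on the tree of $G_i$. In either case $g'$, and hence $g$, is hyperbolic. Therefore the action of $G$ on $X$ is free and without inversions, which is to say $G = \ast_{i \in I} G_i$ is $\Lambda$-free.
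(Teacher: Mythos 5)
Your overall route coincides with the paper's: the paper quotes this theorem from \cite[Proposition 5.1.1]{Chiswell:2001}, and the length function used there is exactly your syllable-sum function, recalled in Examples~\ref{exam:free_prod} and~\ref{exam:free_prod_2}, followed by Chiswell's correspondence between free length functions and free actions. The genuine gap is in your freeness argument. You reduce hyperbolicity of $g$ to exhibiting a conjugate whose powers have unbounded $L$-length, and you assert $L(g'^m) = m\,L(g') \to \infty$. In a non-Archimedean $\Lambda$ --- which is the only case of real interest here, since for Archimedean $\Lambda$ one embeds $\Lambda$ in $\R$ and the statement is essentially Harrison's classical result --- the set $\{m\,L(g') \mid m \in \N\}$ need not be unbounded: for $\Lambda = \Z^2$ with the lexicographic order and $L(g') = (1,0)$ one has $m\,L(g') = (m,0) < (0,1)$ for every $m$. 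Indeed a hyperbolic isometry of a $\Lambda$-tree can perfectly well have an orbit bounded above in $\Lambda$ (translation by $(1,0)$ on $\Lambda = \Z^2$ itself), so while your implication ``elliptic or inversion $\Rightarrow$ bounded orbit'' is true, the unboundedness you need to apply its contrapositive is simply not available. The same objection hits your syllable-length-one case, where $l_i(g'^m) = m\|g'\| + 2d(x_i, A_{g'})$ can likewise be bounded.

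The repair is to abandon all limiting arguments and use the pointwise criterion recorded in Section~\ref{sec:length_func}: for a length function based at a point, $\|g\| = \max\{0,\, l_x(g^2) - l_x(g)\}$, and $g$ is hyperbolic if and only if $\|g\| > 0$. For your cyclically reduced $g'$ with at least two syllables, $g'^2$ is reduced as written, so $L(g'^2) = 2L(g') > L(g')$ (each syllable has positive length, since free length functions are positive on nontrivial elements); for $g' \in G_i$, $L(g'^2) = l_i(g'^2) > l_i(g')$ because $l_i$ satisfies (L5). Either way $\|g'\| > 0$, and $\|g\| = \|g'\|$ by conjugation invariance (Lemma~\ref{le:axis}), so $g$ is hyperbolic and the action is free and without inversions. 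Two further points deserve mention. First, to invoke Chiswell's construction you need axiom (L4), $c(f,g) \in \Lambda$; this does hold for your $L$ --- the junction term $c_i(\cdot,\cdot)$ is the distance from $x_i$ to a median point in the $G_i$-tree, hence lies in $\Lambda$ --- but it must be checked, not passed over. Second, your verification of (L3) is only asserted (``follows by a case analysis''); that case analysis is the actual content of \cite[Proposition 5.1.1]{Chiswell:2001}, so as written your argument sketches rather than proves the key axiom.
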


The following result gives a lot of information about the group structure in the case when $\Lambda
= \mathbb{Z} \times \Lambda_0$ with the left lexicographic order.

\begin{theorem} \cite[Theorem 4.9]{Bass:1991}
Let a group $G$ act freely and without inversions on a $\Lambda$-tree, where $\Lambda = \mathbb{Z}
\times \Lambda_0$. Then there is a graph of groups $(\Gamma, Y^*)$ such that:
\begin{enumerate}
\item[(1)] $G = \pi_1(\Gamma, Y^*)$,
\item[(2)] for every vertex  $x^* \in Y^*$, a vertex group $\Gamma_{x^*}$ acts freely and without
inversions on a $\Lambda_0$-tree,
\item[(3)] for every edge  $e \in Y^*$ with an endpoint $x^*$ an edge group $\Gamma_e$ is either
maximal abelian subgroup in $\Gamma_{x^*}$ or is trivial and $\Gamma_{x^*}$ is not abelian,
\item[(4)] if $e_1,e_2,e_3 \in Y^*$  are edges with an endpoint $x^*$ then $\Gamma_{e_1},
\Gamma_{e_2}, \Gamma_{e_3}$ are not all conjugate in $\Gamma_{x^*}$.
\end{enumerate}

Conversely, from the existence of a graph $(\Gamma, Y^*)$ satisfying conditions (1)--(4) it follows
that $G$ acts freely and without inversions on a $\mathbb{Z} \times \Lambda_0$-tree in the
following cases: $Y^*$ is a tree, $\Lambda_0 \subset Q$ and either $\Lambda_0 = Q$ or $Y^*$ is finite.
\end{theorem}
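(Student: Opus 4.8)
The plan is to exploit the convex subgroup $\Lambda_0 = \{0\}\times\Lambda_0 \leqslant \Lambda$, which in the left lexicographic order satisfies $\Lambda/\Lambda_0 \cong \mathbb{Z}$, in order to collapse the given $\Lambda$-tree to a simplicial tree on which Bass--Serre theory applies directly. First I would define on the $\Lambda$-tree $X$ the relation $x \sim y \Leftrightarrow d(x,y)\in\Lambda_0$; convexity of $\Lambda_0$ together with (M4) makes this an equivalence relation whose classes $X_v$ are convex subtrees, each carrying by restriction of $d$ the structure of a $\Lambda_0$-tree. The induced metric $\bar d([x],[y]) = d(x,y)+\Lambda_0$ turns $\bar X = X/\!\sim$ into a $(\Lambda/\Lambda_0)$-tree, and since $\Lambda/\Lambda_0\cong\mathbb{Z}$, Example \ref{examp:1} identifies $\bar X$ with the vertex set of a simplicial tree $T$. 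Because $G$ acts by isometries it preserves $\sim$, so the action descends to a simplicial action of $G$ on $T$; after one subdivision if necessary the action is without inversions, and Theorem \ref{pr:structure} yields $G = \pi_1(\Gamma, Y^*)$ with $Y^* = G\backslash T$, establishing (1).

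For (2), the vertex group $\Gamma_{x^*}$ is the $G$-stabilizer of a class $X_v$, acting on the $\Lambda_0$-tree $X_v$; freeness of $G$ on $X$ restricts to freeness on $X_v$, and for $1\neq g$ stabilizing $X_v$ the axis $A_g$ of Theorem \ref{le:hyperbolic} meets $X_v$, forcing $A_g\subseteq X_v$ and $\|g\|\in\Lambda_0$, so $g$ is hyperbolic on $X_v$. For (3)--(4), an edge group $\Gamma_e$ is the stabilizer of the (unique) bridge between two adjacent classes; every nontrivial $g\in\Gamma_e$ is hyperbolic with axis containing that bridge, so all such axes overlap in a segment of positive $\mathbb{Z}$-component and therefore, by Theorem \ref{le:hyperbolic} together with the standard fact that hyperbolic isometries sharing a segment of their axes have a common axis, coincide. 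The signed translation length along this common axis gives an injective homomorphism $\Gamma_e \to \Lambda$ (injective because the action is free), so $\Gamma_e$ is abelian. Maximality inside $\Gamma_{x^*}$ then follows because any element commuting with $\Gamma_e$ preserves the same axis and hence the bridge, so lies in $\Gamma_e$; and the non-conjugacy condition (4) follows from malnormality of maximal abelian subgroups (the CSA property) combined with the two-endedness of axes, which allows at most two directions at $x^*$ to be stabilized by a given maximal abelian subgroup. When $\Gamma_e$ is trivial the vertex cannot be abelian, else $X_v$ would reduce to a single $\Gamma_{x^*}$-fixed point, contradicting freeness.

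For the converse I would reverse this construction: from $(\Gamma,Y^*)$ satisfying (1)--(4) form the simplicial (hence $\mathbb{Z}$-) Bass--Serre tree $T$ for $G=\pi_1(\Gamma,Y^*)$, replace each vertex of $T$ by a copy of the $\Lambda_0$-tree on which the corresponding $\Gamma_{x^*}$ acts freely, and glue adjacent copies along length-one segments in the $\mathbb{Z}$-direction attached at the points prescribed by the edge-group embeddings. The result is a $\Lambda$-metric space; checking axioms (T1)--(T3) and that the $G$-action is free and without inversions is where the hypotheses $\Lambda_0\subset\mathbb{Q}$ and ($\Lambda_0=\mathbb{Q}$ or $Y^*$ finite) are used, since divisibility of $\Lambda_0$ lets one position the gluing points and axes compatibly while the tree-or-finiteness condition on $Y^*$ controls the combinatorics of the infinitely many gluings.

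The main obstacle I expect lies in the edge-group analysis of (3)--(4) and, symmetrically, in the freeness verification of the converse. Proving that the several axes through a common bridge genuinely coincide rather than merely overlap requires the fine theory of a single isometry from Theorem \ref{le:hyperbolic} and Lemma \ref{le:axis}, and converting ``common axis implies commuting'' plus the CSA property into exact maximality and non-conjugacy statements is delicate. In the converse the corresponding difficulty is guaranteeing that no nontrivial element acquires a fixed point or becomes an inversion after the $\Lambda_0$-trees are amalgamated, which is precisely what the divisibility and finiteness hypotheses are designed to secure.
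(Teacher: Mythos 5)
The paper itself offers no proof of this statement -- it is quoted verbatim from Bass \cite[Theorem 4.9]{Bass:1991} -- so your proposal can only be measured against Bass's argument. Your skeleton is the right one: collapsing the $\Lambda$-tree along the convex subgroup $\{0\}\times\Lambda_0$, noting that the quotient is a $\Z$-tree on which $G$ acts simplicially and without inversions, and invoking Theorem \ref{pr:structure} is exactly the base-change technique the paper uses elsewhere (compare the $\Z^{n-1}$-subtree construction in Subsection \ref{subs:completions_Z^n}), and your treatment of (1) and (2) is essentially sound: elements stabilizing a class have translation length in $\Lambda_0$ and axes meeting that class, so they restrict to hyperbolic isometries of it.

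The genuine gap is condition (3), and it is not a missing detail but a false intermediate claim: the naive quotient decomposition you construct does \emph{not} satisfy (3) in general, so no local argument at a vertex can rescue it. Concretely, let $G=\Z^2\ast\Z$ with $\Lambda_0=\R$, acting freely on a $(\Z\times\R)$-tree by the free-product construction of Subsection \ref{subs:lambda-free} ($\Z^2$ acting by translations on $\{0\}\times\R$, the generator of $\Z$ translating by $(1,0)$). Using the free-product length function one checks that the stabilizer of the class $X_u$ containing the $\Z^2$-axis is exactly $\Z^2$, while the axis of every nontrivial element of $\Z^2$ lies inside $X_u$ and hence meets no adjacent class; so in your quotient graph of groups this vertex has nontrivial abelian vertex group $\Z^2$ and \emph{all} edge groups at it are trivial, which is precisely what (3) forbids. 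Your patch for this case -- ``else $X_v$ would reduce to a single $\Gamma_{x^*}$-fixed point, contradicting freeness'' -- is a non sequitur: a nontrivial abelian group acts freely on its class with no difficulty. This is why Bass's proof does not stop at the collapsed decomposition: one must reduce it, amalgamating vertex groups joined by trivial edge groups into free products (legitimate because free products of $\Lambda_0$-free groups are again $\Lambda_0$-free) and absorbing free factors, and then redo the analysis of directions at each vertex to obtain (4); that reduction, together with the converse, is the real content of the theorem. Two further, smaller problems: the ``standard fact'' you invoke -- hyperbolic isometries sharing a segment of their axes have a common axis -- is false as stated; the correct lemma needs the overlap to have length at least $\|g\|+\|h\|$ and uses freeness (the commutator then fixes a nondegenerate segment, hence is trivial, hence the elements commute and share an axis). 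It does apply here, because the bridge has positive $\Z$-component while $\|g\|,\|h\|\in\Lambda_0$, but that is an argument you must make, not a citation. Finally, your converse is a construction plan, not a proof: the gluing of the $\Lambda_0$-trees along the Bass--Serre tree and the verification that no element becomes elliptic or an inversion is exactly where the hypotheses $\Lambda_0\subseteq\Q$ and ($\Lambda_0=\Q$ or $Y^*$ finite or $Y^*$ a tree) must actually be used, and nothing in the proposal does so.
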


\subsection{$\mathbb{R}$-trees}
\label{sec:r-tres}

The case when $\Lambda = \mathbb{R}$ in the theory of groups acting on $\Lambda$-trees appears to be
the most well-studied (other than $\Lambda = \mathbb{Z}$, of course). $\mathbb{R}$-trees are usual
metric spaces with nice properties which makes them very attractive from geometric point of view.
Lots of results were obtained in the last two decades about group actions on these objects. The most
celebrated one is Rips' Theorem about free actions and a more general result of M. Bestvina and
M. Feighn about stable actions on $\mathbb{R}$-trees (see \cite{GLP:1994, Bestvina_Feighn:1995}).
In particular, the main result of Bestvina and Feighn together with the idea of obtaining a stable
action on an $\mathbb{R}$-tree as a limit of actions on an infinite sequence of $\mathbb{Z}$-trees
gives a very powerful tool in obtaining non-trivial splittings of groups into fundamental groups of
graphs of groups which is known as {\em Rips machine}.

\smallskip

An $\mathbb{R}$-tree $(X,d)$ is a $\Lambda$-metric space which satisfies the axioms (T1) -- (T3)
listed in Subsection \ref{subs:lambda-def} for $\Lambda = \mathbb{R}$ with usual order. Hence, all
the definitions and notions given in Section \ref{sec:lambda} hold for $\mathbb{R}$-trees.

The definition of an $\mathbb{R}$-tree was first given by Tits in \cite{Tits:1977}.

\begin{prop} \cite[Proposition 2.2.3]{Chiswell:2001}
\label{pr:eq_def}
Let $(X,d)$ be an $\mathbb{R}$-metric space. Then the following are equivalent:
\begin{enumerate}
\item $(X,d)$ is an $\mathbb{R}$-tree,
\item given two point of $X$, there is a unique arc having them as endpoints, and it is a segment,
\item $(X,d)$ is geodesic and it contains no subspace homeomorphic to the circle.
\end{enumerate}
\end{prop}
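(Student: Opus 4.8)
The plan is to prove the cycle $(1)\Rightarrow(2)\Rightarrow(3)\Rightarrow(1)$. Before entering the cycle I would record two preliminaries. First, segments in an $\mathbb{R}$-tree are unique: if $[x,y]$ and $[x,y]'$ are segments with the same endpoints, then by (T3) their intersection $I$ is again a segment, hence connected; since $I$ contains both $x$ and $y$ and is a connected subset of $[x,y]\cong[0,d(x,y)]$ meeting both ends, it equals $[x,y]$, and symmetrically $[x,y]'$. Thus the segment joining two points is well defined, and so is the median point $Y(x,y,z)$, the common point of the three segments spanned by $x,y,z$ (already used in Lemma~\ref{le:elliptic} and Theorem~\ref{le:hyperbolic}). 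Second, I would establish the cut-point property: if $c$ lies in the interior of $[x,y]$ then $X\setminus\{c\}$ is disconnected, with $x$ and $y$ in different components. I would prove this by splitting $X\setminus\{c\}$ into $A=\{z:c\in[x,z]\}$ and $B=\{z:c\notin[x,z]\}$ and checking, via continuity of nearest-point projection onto a segment (a standard feature of $0$-hyperbolic geodesic spaces), that both pieces are open.

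For $(1)\Rightarrow(2)$, the existence of an arc joining $x$ and $y$ is immediate from (T1): the segment $[x,y]$ is the isometric image of a real interval, hence an arc. The substance is to show that \emph{every} arc $\gamma:[0,1]\to X$ from $x$ to $y$ coincides with $[x,y]$. Using the cut-point property, each interior point $c$ of $[x,y]$ separates $x$ from $y$, so the connected image of $\gamma$ must contain $c$; therefore $[x,y]\subseteq\operatorname{im}\gamma$. For the reverse inclusion, suppose some $w=\gamma(t_0)\in\operatorname{im}\gamma\setminus[x,y]$; the median $m=Y(x,y,w)$ is then a branch point whose removal places $x,y,w$ in three distinct components, so the sub-arc from $x$ to $w$ and the sub-arc from $w$ to $y$ must each pass through $m$, forcing $\gamma$ to meet $m$ twice and contradicting injectivity. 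Hence $\operatorname{im}\gamma=[x,y]$, so the arc is the segment; uniqueness of segments then yields uniqueness of the arc.

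The implication $(2)\Rightarrow(3)$ is short: geodesicity holds because the unique arc joining two points is a segment, and if $X$ contained a subspace homeomorphic to a circle then any two of its points would be joined by the two complementary arcs of the circle, which are distinct, violating uniqueness. For $(3)\Rightarrow(1)$ I would first recover uniqueness of arcs from the absence of circles, using the standard point-set fact that two distinct arcs with common endpoints in a Hausdorff space contain a simple closed curve (take the first parameter at which they diverge and the next at which they meet again). Geodesicity gives a segment, which is an arc, so the unique arc is that segment; in particular (T1) holds and segments are unique. Axiom (T2) follows because the union of two segments meeting only at a common endpoint is a continuous injective image of an interval, hence an arc, so it equals the segment joining its extremities. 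For (T3), given $[p,x]$ and $[p,y]$, uniqueness of arcs shows that for each $w$ in the intersection the whole subsegment $[p,w]$ lies in both; thus the intersection is closed and downward-closed along each segment, so it equals $[p,z]$ for $z$ the farthest common point, a segment.

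The main obstacle is the core of $(1)\Rightarrow(2)$: proving that arcs coincide with geodesic segments, equivalently establishing the cut-point and branch-point separation property. This is exactly where the special nature of $\mathbb{R}$ — completeness of the order and the resulting uniquely geodesic, $0$-hyperbolic structure — is used essentially, and where the continuity of the projection together with an intermediate-value argument must be handled with care; everything else is either formal or a standard topological lemma.
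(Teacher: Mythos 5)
The paper itself contains no proof of Proposition \ref{pr:eq_def}: it is stated verbatim as Proposition 2.2.3 of \cite{Chiswell:2001}, so there is no in-paper argument to compare yours against, and I judge your proposal on its own merits and against the standard source. Your proof is correct and follows, in outline, the classical textbook route: cut-point separation plus medians show that every arc between two points of an $\mathbb{R}$-tree coincides with the geodesic segment (giving $(1)\Rightarrow(2)$); the two complementary arcs of a circle contradict uniqueness (giving $(2)\Rightarrow(3)$); and the point-set lemma that two distinct arcs with common endpoints in a Hausdorff space contain a simple closed curve recovers unique arc-connectedness, from which (T1)--(T3) follow exactly as you describe (giving $(3)\Rightarrow(1)$). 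The one imprecision worth flagging is in $(1)\Rightarrow(2)$: the median $m=Y(x,y,w)$ need not separate $x$, $y$, $w$ into \emph{three} components, since $m$ may coincide with $x$ or with $y$ (when the arc branches off $[x,y]$ at an endpoint). This is harmless: because $w\notin[x,y]$ one always has $m\neq w$, and $m$ cannot equal both $x$ and $y$, so each of the two sub-arcs of $\gamma$ still passes through $m$ (either because $m$ is interior to $[x,w]$, respectively $[y,w]$, or because $m$ is that sub-arc's endpoint), and injectivity of $\gamma$ is violated exactly as you conclude. Your openness argument for the cut-point property via the $1$-Lipschitz projection onto a segment is likewise a complete and standard way to finish that step.
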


\begin{example}
\label{examp:r_tree1}
Let $Y = \mathbb{R}^2$ be the plane, but with metric $p$ defined by
\[
p((x_1,y_1),(x_2,y_2)) = \left\{ \begin{array}{ll}
\mbox{$|y_1| + |y_2| + |x_1 - x_2|$}  & \mbox{if $x_1 \neq x_2$} \\
\mbox{$|y_1 - y_2|$ } & \mbox{if $x_1 = x_2$}
\end{array}
\right.
\]
That is, to measure the distance between two points not on the same vertical line, we take their
projections onto the horizontal axis, and add their distances to these projections and the distance
between the projections (distance in the usual Euclidean sense).
\end{example}

\begin{example} \cite[Proposition 2.2.5]{Chiswell:2001}
\label{examp:r_tree2}
Given a simplicial tree $\Gamma$, one can construct its realization $real(\Gamma)$ by identifying
each non-oriented edge of $\Gamma$ with the unit interval. The metric on $real(\Gamma)$ is induced
from $\Gamma$.
\end{example}

\begin{example}
\label{exam:asympt_cone}
Let $G$ be a $\delta$-hyperbolic group. Then its Cayley graph with respect to any finite generating
set $S$ is a $\delta$-hyperbolic metric space $(X, d)$ (where $d$ is a word metric) on which $G$ acts
by isometries. Now, the asymptotic cone $Cone_\omega(X)$ of $G$ is a real tree (see \cite{Gr,
VanDerDries_Wilkie:1984,Drutu_Sapir:2005}) on which $G$ acts by isometries.
\end{example}

An $\mathbb{R}$-tree is called {\em polyhedral} if the set of all branch points and endpoints is
closed and discrete. Polyhedral $\mathbb{R}$-trees have strong connection with simplicial trees as
shown below.

\begin{theorem} \cite[Theorem 2.2.10]{Chiswell:2001}
\label{th:polyhedral}
An $\mathbb{R}$-tree $(X,d)$ is polyhedral if and only if it is homeomorphic to $real(\Gamma)$ (with
metric topology) for some simplicial tree $\Gamma$.
\end{theorem}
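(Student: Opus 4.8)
The plan is to treat the two implications separately, the forward one being essentially topological and the converse requiring an explicit construction of a simplicial tree. First I would record that \emph{being polyhedral is a homeomorphism invariant}: a branch point is a point whose complement has at least three connected components, an endpoint is one whose complement is connected, and ``closed'' and ``discrete'' are topological notions, so all of these are preserved under homeomorphism. Hence for the direction ($\Leftarrow$) it suffices to check that $real(\Gamma)$ itself is polyhedral for every simplicial tree $\Gamma$. In $real(\Gamma)$ (Example \ref{examp:r_tree2}) every branch point or endpoint is a vertex of degree $\geq 3$ or of degree $1$, so the set of branch points and endpoints is contained in $V(\Gamma)$; since distinct vertices lie at path-distance $\geq 1$, the set $V(\Gamma)$ is $1$-separated and thus has no accumulation points, so every subset of it is closed and discrete.

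For the converse ($\Rightarrow$), let $V$ be the (closed, discrete) set of branch points and endpoints of $X$. I would first assemble the local structure. Every point outside $V$ has degree exactly $2$, hence is regular; every bounded segment $[a,b]$ meets $V$ in a finite set, because $V\cap[a,b]$ is a closed discrete subset of the compact segment $[a,b]$; and each connected component $C$ of the open set $X\setminus V$ is a linear subtree with $\overline{C}\setminus C\subseteq V$. Consequently each $C$ is either a bounded open segment with both endpoints in $V$, a ray with one endpoint in $V$, or (only when $V=\emptyset$ and $X$ is a line) all of $X$. I would then \emph{subdivide}: along every ray, and along $X$ itself in the line case, insert new vertices at successive unit arc-lengths, obtaining an enlarged set $W\supseteq V$ all of whose complementary components are now bounded segments with both endpoints in $W$. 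The point to verify is that $W$ is again closed and discrete; this follows because any bounded segment meets only finitely many components of $X\setminus V$ (the finitely many points of $V$ on it cut it into finitely many pieces) and each component contributes finitely many new vertices, while a direct analysis rules out accumulation of the new vertices: distinct subdivision points on one ray are uniformly $1$-separated, and subdivision points on infinitely many distinct rays through a common base vertex $u\in V$ all lie at distance $\geq 1$ from $u$.

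With $W$ in hand I would define the abstract graph $\Gamma$ by setting $V(\Gamma)=W$ and declaring one edge for each component of $X\setminus W$, joining the two endpoints of its closure. Connectedness of $\Gamma$ is immediate, since the segment between any two elements of $W$ passes through only finitely many vertices of $W$ and so yields an edge-path; acyclicity follows from Proposition \ref{pr:eq_def}, because a reduced cycle in $\Gamma$ would realize in $X$ a subspace homeomorphic to a circle. Thus $\Gamma$ is a simplicial tree, and it remains to build a homeomorphism $h\colon real(\Gamma)\to X$ carrying each vertex to the corresponding point and each unit edge homeomorphically onto the corresponding segment. I expect this last step to be the \emph{main obstacle}: the naive edge-wise linear rescaling is not continuous in the metric topology at a vertex of infinite degree whose incident segments have unbounded or non-uniform lengths. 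The remedy, which I would use, again exploits discreteness: each $v\in V$ has a radius $\delta_v>0$ with $B(v,\delta_v)\cap V=\{v\}$, so on the inner ball of radius $\delta_v$ the tree $X$ and the realization can be matched \emph{isometrically}; I would define $h$ to be this isometry on a fixed inner collar of every edge abutting an infinite-degree vertex and absorb the discrepancy between the unit length and the true segment length only on the outer part of each edge. Continuity of $h$ and of $h^{-1}$ then reduces, at each infinite-degree vertex, to the facts that the inner reparametrizations are uniformly small near the vertex and uniformly bounded away from the vertex on a fixed inner scale, both guaranteed by the collar; at finite-degree vertices and in edge interiors continuity is routine.
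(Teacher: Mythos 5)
First, a remark on the comparison itself: the survey does not prove this theorem --- it quotes it from Chiswell's book --- so your argument has to stand on its own. Your direction ($\Leftarrow$) does stand: polyhedrality is a homeomorphism invariant, and $real(\Gamma)$ is polyhedral by the $1$-separation of $V(\Gamma)$. The gap is in ($\Rightarrow$), at the trichotomy for the components of $X \setminus V$. An $\mathbb{R}$-tree as defined here need not be complete, so a component $C$ can be a \emph{metrically bounded} open segment one of whose ends converges to no point of $X$. The simplest instance is $X=[0,1)$: it is polyhedral with $V=\{0\}$, and $C=(0,1)$ is neither a bounded segment with both endpoints in $V$, nor a metric ray, nor all of $X$. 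Your subdivision ``at successive unit arc-lengths'' cannot even begin on $C$ (its total length is less than $1$), and any finite subdivision leaves a half-open last piece, so the recipe ``one edge per component of $X\setminus W$, joining the two endpoints of its closure'' produces no valid graph: your $\Gamma$ would be a single vertex, whose realization is a point, whereas the theorem is true for $[0,1)$ (it is homeomorphic to the realization of a simplicial ray). The repair here is to subdivide by points accumulating at the missing end, but that repair needs justification, since your discreteness arguments for $W$ (``subdivision points on one ray are uniformly $1$-separated'') are tailored to genuine metric rays.

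More seriously, this is not a patchable oversight at the stated level of generality, because with the definitions exactly as quoted the implication ($\Rightarrow$) is false. Take $X=\bigvee_{n\geq 1}[0,1/n)$, the wedge of half-open segments glued at a point $p$, with the path metric; this is an $\mathbb{R}$-tree (a convex subtree of the wedge of the closed segments $[0,1/n]$), and its set of branch points and endpoints is exactly $\{p\}$, which is closed and discrete, so $X$ is polyhedral in the sense quoted above. Yet $X$ is homeomorphic to no $real(\Gamma)$: every neighborhood of $p$ contains some ball $B(p,\delta)$ and therefore contains \emph{entirely} all but finitely many components of $X\setminus\{p\}$, since the $n$-th punctured prong lies within distance $1/n$ of $p$; but in $real(\Gamma)$ a point with infinitely many complementary components must be a vertex $v$ of infinite degree, and then $B(v,1)$ contains no complementary component at all, because each such component contains a vertex adjacent to $v$, at distance exactly $1$ from $v$. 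This purely topological property separates $X$ from every $real(\Gamma)$, in either the metric or the weak topology. Consequently the hypotheses actually needed (and presumably operative in the cited source) are stronger than what is quoted --- completeness of $X$ suffices: then every finite-length end of a component of $X\setminus V$ does converge to a point of $V$, your trichotomy is correct, your set $W$ is closed and discrete by the argument you give, and your collar construction at infinite-degree vertices (which is indeed the right way to handle incident segments of unbounded length) finishes the proof. As written, your proof assumes completeness tacitly at exactly the trichotomy step, and that is the gap.
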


Now we briefly recall some known results related to group actions on $\mathbb{R}$-trees. The first
result shows that an action on a $\Lambda$-tree always implies an action on an $\mathbb{R}$-tree.

\begin{theorem} \cite[Theorem 4.1.2]{Chiswell:2001}
\label{th:lambda_r}
If a finitely generated group $G$ has a non-trivial action on a $\Lambda$-tree for some ordered
abelian group $\Lambda$ then it has a non-trivial action on some $\mathbb{R}$-tree.
\end{theorem}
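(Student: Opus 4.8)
The plan is to convert the given $\Lambda$-tree action into an $\mathbb{R}$-valued length function and then recover an $\mathbb{R}$-tree action via Chiswell's construction, taking care that nontriviality survives the passage to $\mathbb{R}$. First I fix a basepoint $v\in X$ and form the based length function $L\colon G\to\Lambda$, $L(g)=d(v,gv)$, which is a Lyndon length function; the action being nontrivial means exactly that no point of $X$ is fixed by all of $G$. Since order-preserving homomorphisms $\Lambda\to\mathbb{R}$ only ``see'' the top archimedean class of their domain, the whole difficulty is (a) to replace $\Lambda$ by a convex subgroup that possesses such a top class and still carries the action, and (b) to make sure the projection to that class does not manufacture a global fixed point.

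The second use of finite generation produces a witness that survives projection. By a Serre/Helly-type fixed-point lemma for $\Lambda$-trees, if every element of $S\cup S^2$ (for a finite generating set $S$) were elliptic, then $G$ would fix a point, contradicting nontriviality; hence some word $h$ of length at most two is hyperbolic, with $\|h\|>0$ by Theorem~\ref{le:hyperbolic} and Corollary~\ref{co:axis}. Next I reduce the value group: set $a=\sum_{s\in S}L(s)$ and let $\Lambda'=C(a)$ be the convex subgroup of $\Lambda$ generated by $a$. For any $g\in G$ we have $0\le L(g)\le\sum_i L(s_{j_i})\le a$ along a word for $g$, so by convexity $L(g)\in\Lambda'$; the same bound applied to the Gromov products $(gv\cdot g'v)_v=\tfrac12(L(g)+L(g')-L(g^{-1}g'))$, which lie in $\Lambda$ for a $\Lambda$-tree and satisfy $0\le(gv\cdot g'v)_v\le a$, shows all branch data of the subtree spanned by the orbit $Gv$ also lies in $\Lambda'$. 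Thus $G$ acts nontrivially on a $\Lambda'$-tree, $\|h\|\in\Lambda'$, and crucially $\Lambda'=C(a)$ has a maximal proper convex subgroup $I_a$ (the elements infinitesimal relative to $a$) whose quotient $\Lambda'/I_a$ is archimedean, hence embeds in $\mathbb{R}$.

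Now I carry out the projection and an induction. Let $\psi\colon\Lambda'\to\Lambda'/I_a\hookrightarrow\mathbb{R}$ be the resulting order-homomorphism. Composing, $\psi\circ L\colon G\to\mathbb{R}$ is again a Lyndon length function (the length-function axioms are order relations preserved by $\psi$, and $\psi\circ d\ge 0$), and Chiswell's construction then yields an $\mathbb{R}$-tree $\bar X$ (equivalently, the quotient of the $\Lambda'$-tree by the relation $x\sim y\Leftrightarrow d(x,y)\in I_a$, whose tree axioms (T1)--(T3) follow from $0$-hyperbolicity of the $\Lambda'$-tree together with order-preservation of $\psi$) carrying a $G$-action. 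If this action on $\bar X$ is nontrivial, we are done. If instead $G$ fixes a point $\bar p\in\bar X$, then $G$ preserves the fiber over $\bar p$, which is an $I_a$-subtree on which $G$ acts; this action is still nontrivial because $\|h\|\in I_a$ keeps $h$ hyperbolic there. I then replace the $\Lambda'$-tree by this fiber and repeat, each time re-running the reduction of the previous paragraph at a new center.

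The main obstacle is termination of this induction. Each recursion strictly lowers the top archimedean class, since the new controlling element $a'=\sum_{s\in S}d(p',sp')$ lies in $I_a$ and hence has strictly smaller class than $a$, while $\|h\|\le a'$ keeps $\|h\|$ inside every group in the chain; the process stops precisely when the class of $\|h\|$ reaches the top, for then $h$ is hyperbolic in the archimedean quotient and the first case applies. Making this finiteness rigorous is the technical heart: one must rule out an infinite strictly decreasing chain of realized top classes lying above the fixed class of $\|h\|$. The clean way to secure this is to arrange at the outset that the operative value group is finitely generated, so that by the structure theorem~\eqref{eq:order-convex} its chain of convex subgroups is finite and it embeds in $\mathbb{R}^n$ with the right lexicographic order; the induction then runs over the finitely many archimedean factors, i.e. by induction on $n$, and is guaranteed to terminate. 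I expect this reduction to a finitely generated value group — leveraging finite generation of $G$ to bound the number of archimedean classes that the action can genuinely occupy — to be the delicate step, with everything else being routine verification that order-homomorphisms preserve the relevant tree- and length-function axioms.
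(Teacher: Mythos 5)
The paper itself gives no proof of this statement (it is quoted from Chiswell's book), so your proposal has to be measured against the argument it is gesturing at, and there your own diagnosis is accurate: the skeleton (based length function, restriction to the convex subgroup $C(a)$, projection to the archimedean quotient $C(a)/I_a\hookrightarrow\mathbb{R}$, a hyperbolic $h\in S\cup S^2$ as witness) is the right one, but the termination of your recursion is a genuine gap, and the repair you propose does not close it. You cannot ``arrange at the outset that the operative value group is finitely generated'': the convex subgroup $C(a)$ is essentially never finitely generated; the subgroup of $\Lambda$ generated by all values $l(G)$ need not be finitely generated for a finitely generated $G$ --- indeed its finite generation is precisely Theorem~\ref{co:main1} of this paper, a substantial result that requires $G$ to be finitely \emph{presented} and the action to be \emph{free}, neither of which you may assume; and the subgroup generated by the finitely many values $L(s)$, $s\in S$, is finitely generated but does not contain the orbit distances, so the action does not restrict to it. Thus the appeal to the structure theorem \eqref{eq:order-convex} begs the question. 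Nor can termination come from order-theoretic bookkeeping: the set of archimedean classes lying between that of $\|h\|$ and that of $a$ need not be well ordered from below (take $\Lambda$ a Hahn product over the index set $\{0\}\cup\{1/n : n\geq 1\}$), and if the tree has branches attached to the axis of a hyperbolic generator at depths of infinitely many distinct classes, then at each stage a ``$G$-fixed point of the quotient'' can legitimately be chosen so that the displacement classes $a_0\succ a_1\succ a_2\succ\cdots$ decrease forever while staying strictly above the class of $\|h\|$. So your induction, as set up, can genuinely fail to terminate; the descent has to be stopped by a clever choice, not by recursion.

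The missing idea is a basepoint-selection lemma that makes the very first projection succeed, eliminating the induction altogether. Set $m=\max\{\|g\| : g\in S\cup S^2\}$; after the Serre--Helly step $m>0$. For each generator $s$ let $Z_s$ be its characteristic set ($A_s$ if $s$ is hyperbolic, $X^s$ if $s$ is elliptic; both are closed subtrees by Lemma~\ref{le:elliptic} and Theorem~\ref{le:hyperbolic}). The standard product formulas for pairs of isometries show that $d(Z_s,Z_{s'})\leq\tfrac{1}{2}\|ss'\|\leq\tfrac{1}{2}m$ for all $s,s'\in S$. Hence the thickenings $Z_s'=\{x\in X : d(x,Z_s)\in C(m)\}$ are subtrees that pairwise intersect, and by the Helly property of subtrees there is a point $p\in\bigcap_{s\in S}Z_s'$. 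For this $p$ one has $d(p,sp)=\|s\|+2d(p,Z_s)\in C(m)$ when $s$ is hyperbolic and $d(p,sp)=2d(p,Z_s)\in C(m)$ when $s$ is elliptic, so $a=\max_{s}d(p,sp)$ has the \emph{same} archimedean class as $m=\|h\|$ for some hyperbolic $h\in S\cup S^2$. Consequently $\psi(\|h\|)>0$ in $C(a)/I_a\hookrightarrow\mathbb{R}$, and since $\|h\|$ in the quotient equals $\max\{0,\psi(l_p(h^2))-\psi(l_p(h))\}=\psi(\|h\|)$, the element $h$ remains hyperbolic there: the quotient action on the resulting $\mathbb{R}$-tree is non-trivial in one step. (Two smaller points you should also address: Serre's lemma only yields that some element of $S\cup S^2$ is hyperbolic \emph{or an inversion}, so one must assume the action is without inversions or first pass to $\tfrac{1}{2}\Lambda$ --- the statement is actually false for actions with inversions, as $\mathbb{Z}/2$ acting on $\mathbb{Z}$ by $n\mapsto 1-n$ shows; and non-triviality of the quotient must be certified by a surviving hyperbolic element, not merely by the basepoint being displaced.)
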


Observe that in general nice properties of the action on a $\Lambda$-tree are not preserved when
passing to the corresponding action on an $\mathbb{R}$-tree above.

Next result was one of the first in the theory of group actions on $\mathbb{R}$-trees. Later it was
generalized to the case of an arbitrary $\Lambda$ in \cite{Urbanski_Zamboni:1993, Chiswell:1994}.

\begin{theorem} \cite{Harrison:1972}
Let $G$ be a group acting freely and  without inversions on an $\mathbb{R}$-tree $X$, and suppose
$g, h \in G\ \backslash \ \{1\}$. Then $\langle g, h \rangle$ is either free of rank two or abelian.
\end{theorem}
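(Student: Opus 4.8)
The plan is to run the argument entirely through the theory of a single isometry developed above, using crucially that a free action has no nontrivial elliptic elements. Since $G$ is $\mathbb{R}$-free, every nontrivial element acts hyperbolically, so both $g$ and $h$ are hyperbolic; by Theorem~\ref{le:hyperbolic} each has an axis $A_g,A_h$, which in an $\mathbb{R}$-tree is a line isometric to $\mathbb{R}$, and a positive translation length $\|g\|,\|h\|$, acting on its axis as a translation. I would use repeatedly that, since the action is free, any element of $G$ fixing a point (hence any element fixing a nondegenerate segment) is trivial, because a nontrivial element is hyperbolic and therefore fixed-point free. The whole proof is then a dichotomy governed by the intersection $A_g\cap A_h$, which by axiom (T3) is again a (possibly empty, possibly unbounded) subsegment of each axis.

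First I would settle the abelian case. Suppose $A_g=A_h=:A$. Parametrising $A$ by $\mathbb{R}$, the restrictions of $g$ and $h$ are translations $t\mapsto t\pm\|g\|$ and $t\mapsto t\pm\|h\|$, and any two translations of a line commute. Hence the commutator $g^{-1}h^{-1}gh$ restricts to the identity on $A$, i.e.\ it fixes $A$ pointwise, so by freeness it is trivial and $\langle g,h\rangle$ is abelian. It therefore remains to prove that whenever $A_g\neq A_h$ the group $\langle g,h\rangle$ is free of rank two; note this is the only outcome left to reach, since a two-generator group that is both free and nonabelian must be free of rank two (a free group of rank at most one is cyclic, hence abelian). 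I would also record the converse direction I will need for the obstacle below: commuting hyperbolics share an axis. Indeed, if $gh=hg$ then $g=hgh^{-1}$, so by Lemma~\ref{le:axis}(1) we get $hA_g=A_g$; as $h$ preserves the line $A_g$ and is fixed-point free, $h|_{A_g}$ is a translation, whence $A_h=A_g$.

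For the free case I would set up a ping-pong between the infinite cyclic groups $\langle g\rangle$ and $\langle h\rangle$. When $A_g\cap A_h=\emptyset$ there is a bridge $[p,q]$ with $p\in A_g$, $q\in A_h$, and the half-trees of $X$ determined by this bridge serve as ping-pong sets: every nontrivial power of $g$ carries one set into the other, and likewise for $h$, so the ping-pong lemma gives $\langle g,h\rangle=\langle g\rangle\ast\langle h\rangle\cong F_2$. The same scheme applies verbatim when the axes meet in a single point. The genuinely delicate configuration, and the \textbf{main obstacle}, is when $A_g\cap A_h$ is a nondegenerate segment or a ray: here the naive half-tree sets overlap and one must analyse how $g$ and $h$ translate along the common part. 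My strategy would be to use freeness to excise the degenerate orientations first: if $g$ and $h$ translated the common subsegment so as to agree on a sub-ray or subsegment, their commutator would fix a nondegenerate segment and hence vanish, forcing $A_g=A_h$ by the fact just proved, contrary to assumption. Having ruled these out, the remaining orientations push the overlap off itself, and a ping-pong partition built from the directions at the two ends of $A_g\cap A_h$ together with the translation directions again yields freeness of rank two. This orientation-and-overlap bookkeeping is exactly where the $\mathbb{R}$-tree geometry, rather than soft group theory, does the work.

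Finally, I would remark that the statement also drops out immediately from the general structural fact recorded earlier that every two-generator subgroup of a $\Lambda$-free group is free or free abelian, specialised to $\Lambda=\mathbb{R}$: a free-abelian or cyclic outcome is abelian, while a nonabelian free outcome is free of rank two. The geometric dichotomy above is the self-contained route and is the version natural to the $\mathbb{R}$-tree setting of this subsection.
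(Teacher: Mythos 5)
You are not being compared against anything in the paper here: this survey states the result as Harrison's theorem with a citation only (its $\Lambda$-generalization by Urbanski--Zamboni and Chiswell is what appears as item (i) of the theorem in Section~\ref{subs:lambda-free}), so your argument has to stand on its own. Much of it does. Freeness forces $g,h$ hyperbolic; equal axes give two translations of a line whose commutator fixes that line pointwise and hence is trivial; commuting elements have equal axes via Lemma~\ref{le:axis}(1); disjoint axes, a single-point overlap, and more generally an overlap shorter than $\min\{\|g\|,\|h\|\}$ yield rank-two freeness by ping-pong; and when $A_g\cap A_h$ is a ray, or a segment of length at least $\|g\|+\|h\|$ with compatible orientations, your commutator computation is correct: all intermediate images of a well-chosen point of the overlap stay in the overlap, so $[g,h^{\pm 1}]$ is elliptic, hence trivial, contradicting $A_g\neq A_h$. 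Your closing remark, however, is circular: item (i) of the theorem in Section~\ref{subs:lambda-free} \emph{is} the generalization of the statement being proved, so it cannot be invoked as an independent route.

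The genuine gap is the intermediate regime that your dichotomy silently skips: $\max\{\|g\|,\|h\|\}\le \ell(A_g\cap A_h)<\|g\|+\|h\|$ (in either orientation). There the commutator trick fails---the computation needs overlap length at least $\|g\|+\|h\|$ to keep all four intermediate points inside $A_g\cap A_h$---and ping-pong with the generators $g,h$ themselves also fails, because $h^{\pm 1}$ maps points of the overlap back into the overlap rather than ``off itself''; so it is simply not true that the remaining orientations admit a ping-pong partition. This case is exactly where the content of Harrison's theorem lies, and it is why her proof (and Chiswell's treatment) is long. The standard repair is a Nielsen descent: replace $(g,h)$ by $(g,g^{-1}h)$ or $(g,gh)$ so as to shorten $\|g\|+\|h\|$, and iterate. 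Over $\mathbb{R}$ this descent need not terminate (the continued-fraction phenomenon already visible for two rationally independent translations of a line), so one must add a limiting argument: Nielsen moves preserve the conjugacy class of the commutator up to inversion, hence preserve the conjugacy-invariant quantity $\|[g,h]\|$, while along a non-terminating descent one bounds $\|[g_n,h_n]\|$ by quantities tending to zero; this forces $\|[g,h]\|=0$, so $[g,h]$ is elliptic and hence trivial, landing in the abelian alternative, whereas a terminating descent ends in a ping-pong configuration. Without this (or an equivalent mechanism), your proof is incomplete precisely at the step you yourself flagged as the main obstacle.
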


It is not hard to define an action of a free abelian group on an $\mathbb{R}$-tree.

\begin{example}
\label{exam:ab_act}
Let $A = \langle a,b \rangle$ be a free abelian group. Define an action of $A$ on $\mathbb{R}$
(which is an $\mathbb{R}$-tree) by embedding $A$ into $Isom(\mathbb{R})$ as follows
$$a \rightarrow t_1,\ \ b \rightarrow t_{\sqrt{2}},$$
where $t_\alpha(x) = x + \alpha$ is a translation. It is easy to see that
$$a^n b^m \rightarrow t_{n+m\sqrt{2}},$$
and since $1$ and $\sqrt{2}$ are rationally independent it follows that the action is free.
\end{example}

The following result was very important in the direction of classifying finitely generated
$\mathbb{R}$-free groups.

\begin{theorem} \cite{Morgan_Shalen:1991}
\label{th:Morgan_Shalen}
The fundamental group of a closed surface is $\mathbb{R}$-free, except for the non-orientable
surfaces of genus $1,2$ and $3$.
\end{theorem}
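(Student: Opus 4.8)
The plan is to split the statement into a positive half---exhibiting free $\mathbb{R}$-tree actions for every closed surface other than the three named ones---and a negative half, verifying that those three genuinely fail to be $\mathbb{R}$-free. The trivial and toral cases are immediate: $\pi_1(S^2)=1$ acts freely on a point, and $\pi_1(T^2)\cong\mathbb{Z}^2$ is $\mathbb{R}$-free by the explicit irrational translation action of Example \ref{exam:ab_act}. For the remaining surfaces of negative Euler characteristic (orientable genus $\geq 2$, non-orientable genus $\geq 4$) I would build the action as the $\mathbb{R}$-tree dual to a measured foliation.

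Concretely, equip $\Sigma$ with a hyperbolic metric and choose a \emph{filling, arational} measured geodesic lamination $(\mathcal{L},\mu)$ (equivalently a measured foliation with no closed leaf and only simply connected complementary pieces). Lifting $\mathcal{L}$ to the universal cover $\widetilde{\Sigma}\cong\mathbb{H}^2$ and declaring the $\widetilde{\Sigma}$-distance of two points to be the total transverse $\mu$-variation along the geodesic joining them yields a pseudometric; collapsing its null sets produces an $\mathbb{R}$-tree $T$. Since $\mathcal{L}$ is $\pi_1(\Sigma)$-invariant, $\pi_1(\Sigma)$ acts on $T$ by isometries, and through Chiswell's equivalence between tree actions and length functions this action is encoded by a Lyndon length function into $\mathbb{R}$. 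Axioms (T1)--(T3) for $T$ follow routinely from $0$-hyperbolicity of the transverse-measure pseudometric.

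The crux is freeness, where I expect the real work to lie. The translation length of a nontrivial $\gamma\in\pi_1(\Sigma)$ on $T$ equals the geometric intersection number $i(\gamma,\mu)$, i.e. the $\mu$-mass picked up by the closed geodesic representing $\gamma$. Thus $\gamma$ is elliptic precisely when its geodesic can be pushed off $\mathcal{L}$, and an inversion cannot occur since on an $\mathbb{R}$-tree every translation length lies in $\mathbb{R}=2\mathbb{R}$, so the criterion $d(x,gx)\notin 2\Lambda$ of Lemma \ref{le:inversion} never holds. The filling and arational hypotheses are exactly what forces $i(\gamma,\mu)>0$ for every $\gamma\neq 1$: no essential curve is disjoint from a filling lamination, and arationality rules out a curve being carried by a closed leaf. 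Hence every nontrivial element is hyperbolic and the action is free. The genuine obstacle is therefore the \emph{existence} of such a lamination together with the identity ``translation length $=$ intersection number''; both hold once the surface is complex enough, and it is precisely the failure of a filling arational lamination to exist that isolates the exceptional surfaces.

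For the negative half, the two low-complexity exceptions fall out of the general structure theory recorded above. The projective plane $N_1$ has $\pi_1\cong\mathbb{Z}/2\mathbb{Z}$, so it cannot be $\mathbb{R}$-free because $\Lambda$-free groups are torsion-free. The Klein bottle $N_2$ has $\pi_1\cong\langle a,t\mid t^{-1}at=a^{-1}\rangle$, a Baumslag--Solitar group $BS(1,-1)\not\cong\mathbb{Z}\times\mathbb{Z}$, which is forbidden as a subgroup of any $\Lambda$-free group. The subtle and genuinely hard exception is $N_3$, with $\pi_1=\langle a,b,c\mid a^2b^2c^2\rangle$ and $\chi=-1$: it admits hyperbolic structures and measured foliations, yet is not $\mathbb{R}$-free. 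Here neither a torsion nor a Baumslag--Solitar obstruction is available, so one must argue directly that \emph{every} isometric action of this group on an $\mathbb{R}$-tree has a nontrivial elliptic element---showing that each measured foliation on $N_3$ necessarily carries a one-sided curve or a Klein-bottle subpiece stabilizing a point of the dual tree, and invoking Skora-type rigidity to reduce arbitrary minimal actions to foliations. This case is what makes the bound ``genus $1,2,3$'' sharp, and I expect it to demand the most delicate, surface-specific analysis.
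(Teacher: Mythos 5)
You should know at the outset that the paper itself contains no proof of this statement: Theorem \ref{th:Morgan_Shalen} is quoted as background from \cite{Morgan_Shalen:1991}, so your proposal can only be compared with the original argument. Your positive half follows the same route as Morgan and Shalen: dual trees of measured foliations, translation length equal to intersection number, inversions excluded because $\mathbb{R}$ is divisible (Lemma \ref{le:inversion}), and freeness forced by the filling/arational hypotheses. That architecture is sound, but note that the existence of a filling arational measured foliation on the non-orientable surfaces of genus at least $4$ is exactly where the original paper does real work; you assert it (``once the surface is complex enough'') rather than prove it. Your treatment of $N_1$ (torsion) and $N_2$ (Baumslag--Solitar, or solvable non-abelian) is correct and matches the obstructions recorded in Section \ref{subs:lambda-free}.

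The genuine gap is $N_3$. You reduce that case to two black boxes: (i) Skora-type rigidity, to replace an arbitrary free minimal action of $\pi_1(N_3)$ on an $\mathbb{R}$-tree by one dual to a measured foliation, and (ii) the claim that every measured foliation on $N_3$ carries an essential curve of zero intersection number. Item (i) is a theorem deeper than, and historically later than, the statement being proved, and item (ii) --- which is where the actual content lies --- is left unproven; granting (i), item (ii) is essentially equivalent to the negative result you want, so the sketch is circular in spirit. Morgan and Shalen argue directly: assuming a free action, they analyze the axes and translation lengths of isometries $a,b,c$ satisfying $a^2b^2c^2=1$ and show this configuration forces commuting elements or an elliptic one, using the Archimedean property of $\mathbb{R}$ in an essential way (this argument is also presented in \cite{Chiswell:2001}). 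That Archimedean input cannot be sidestepped: Example \ref{ex:5} of the present paper shows that the very same group $\langle x_1,x_2,x_3 \mid x_1^2x_2^2x_3^2 = 1\rangle$ acts freely on a $\mathbb{Z}^2$-tree, so no $\Lambda$-independent obstruction of the kind you use for $N_1$ and $N_2$ can possibly exist for $N_3$. Your sketch never identifies where Archimedean-ness enters, which is the surest sign that the hard step is missing.
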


Then, in 1991 E. Rips completely classified finitely generated $\mathbb{R}$-free groups. The ideas
outlined by Rips were further developed by Gaboriau, Levitt and Paulin who gave a complete proof
of this classification in \cite{GLP:1994}.

\begin{theorem}[Rips' Theorem]
\label{th:Rips_0}
Let $G$ be a finitely generated group acting freely and without inversions on an $\mathbb{R}$-tree.
Then $G$ can be written as a free product $G = G_1 \ast \cdots \ast G_n$ for some integer $n
\geqslant 1$, where each $G_i$ is either a finitely generated free abelian group, or the
fundamental group of a closed surface.
\end{theorem}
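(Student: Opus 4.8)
The plan is to follow the geometric strategy of Gaboriau--Levitt--Paulin and Bestvina--Feighn: encode the action by a finite combinatorial object, run the Rips machine on it, read off a decomposition of $G$ as a graph of groups, and then use freeness to collapse that graph of groups into the desired free product. First I would reduce to a minimal action. Since $G$ is finitely generated and acts nontrivially, there is a unique minimal $G$-invariant subtree $X_0 \subseteq X$, and the restricted action on $X_0$ is again free and without inversions; moreover $G \backslash X_0$ is finite in the relevant sense (finitely many orbits of branch points and of directions). I would then build a \emph{band complex} (equivalently, a finite system of partial isometries on a compact forest) that \emph{resolves} the action: its dual tree is equivariantly isometric to $X_0$, and $G$ is recovered from the fundamental group of the underlying complex. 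Finite generation is precisely what guarantees such a finite resolving band complex exists.

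Next I would apply the Rips machine to this band complex. The machine performs a sequence of elementary moves (collapses, slides, subdivisions) and terminates in a normal form in which the band complex splits as a finite union of \emph{indecomposable} components glued along finitely many points. The central structural output is that each indecomposable component is of one of three kinds: \emph{simplicial}, \emph{axial} (the associated subtree is a line on which the component group acts by translations), or \emph{surface} (interval-exchange type, dual to a measured foliation on a compact surface with boundary). Translating through Bass--Serre theory (Theorem \ref{pr:structure}), this exhibits $G$ as the fundamental group of a finite graph of groups whose vertex groups are the component groups and whose edge groups are the stabilizers of the gluing points.

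Finally I would invoke freeness to finish. Because the action is free, every point stabilizer is trivial, so every edge group of the graph of groups is trivial, and the decomposition is exactly a free product $G = G_1 \ast \cdots \ast G_n \ast F$, where $F$ is the free group coming from the loops of the underlying graph (the simplicial part, cf.\ Remark \ref{co:free}) and each $G_i$ is the group of an axial or a surface component. An axial component group acts freely by translations on a line, hence embeds in $\mathbb{R}$, and being finitely generated it is free abelian of finite rank. A surface component group acts freely on the $\mathbb{R}$-tree dual to a measured foliation on a compact surface and is therefore a closed surface group (cf.\ Theorem \ref{th:Morgan_Shalen}). Absorbing the free factor $F$ (a free product of copies of $\mathbb{Z}$, each free abelian of rank one) into the list then yields the stated decomposition.

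The hard part will be the Rips machine itself: proving that the machine terminates and that, for a \emph{free} action, the only surviving indecomposable components are axial and surface type. The delicate point is excluding the exotic \emph{thin} (Levitt) components; one must show that such a component would force either a nontrivial point stabilizer or an element acting non-hyperbolically, contradicting freeness. Establishing this dichotomy, rather than the Bass--Serre bookkeeping at the end, is where essentially all the content of the theorem resides.
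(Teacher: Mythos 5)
Your route is the classical geometric one of \cite{GLP:1994} and \cite{Bestvina_Feighn:1995}, and it is genuinely different from the paper's: the paper proves this statement (restated as Theorem \ref{th:Rips}) by running its elimination process for generalized equations over infinite words from Section \ref{sec:proc} --- the linear case splits off free factors, the quadratic/QH cases yield closed surface factors, the Archimedean property of $\mathbb{R}$ rules out the HNN-extensions with infinitely long stable letters, and the finitely generated case is then deduced from the finitely presented one by adding relations one at a time and using that there is no infinite proper chain of quotients which are free products of abelian and surface groups. The difference of route is not the problem; the problem is that your proposal contains a genuine error at exactly the step you identify as carrying ``essentially all the content of the theorem.''

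You propose to handle thin (Levitt) components by showing that such a component ``would force either a nontrivial point stabilizer or an element acting non-hyperbolically, contradicting freeness.'' No such argument can exist, because the statement is false: free actions dual to thin components do exist. Levitt's exotic examples give free, minimal actions of finite-rank free groups on $\mathbb{R}$-trees with dense orbits that are of neither axial nor surface type (these are the Levitt-type trees familiar from the boundary of outer space). So the dichotomy you want --- ``for a free action the only surviving indecomposable components are axial and surface'' --- is simply not available. What the Rips machine actually gives in the thin case is a free subarc (an arc met by only one band, which is naked); collapsing it splits the fundamental group of that component as a free product of groups carried by band complexes of strictly smaller complexity, and one finishes by induction, so thin components ultimately contribute free factors, i.e.\ free products of copies of $\mathbb{Z}$, which is why the theorem's conclusion survives. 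A second, smaller gap: a resolving band complex with $\pi_1 \cong G$, to which Bass--Serre bookkeeping for $G$ itself can be applied, requires $G$ to be finitely \emph{presented}, not just finitely generated; with finite generation only, the suspension of the system of isometries has free fundamental group merely surjecting onto $G$, and one needs the approximation-by-geometric-actions (strong limit) argument of \cite{GLP:1994} --- or, as in the paper, an explicit reduction of the finitely generated case to the finitely presented one --- to close this.
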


\subsection{Rips-Bestvina-Feighn machine}
\label{sec:rbf}

Suppose $G$ is a finitely presented group acting isometrically on an $\R$-tree $\Gamma$. We assume
the action to be non-trivial and minimal. Since $G$ is finitely presented there is a finite simplicial
complex $K$ of dimension at most $2$ such that $\pi(K) \simeq G$. Moreover, one can assume that
$K$ is a {\em band complex} with underlying {\em union of bands} which is a finite simplicial $\R$-tree $X$ with finitely many {\em bands}
of the type $[0,1] \times \alpha$, where $\alpha$ is an arc of the real line, glued to $X$ so that
$\{0\} \times \alpha$ and $\{1\} \times \alpha$ are identified with sub-arcs of edges of $X$.
Following \cite{Bestvina_Feighn:1995} (the construction originally appears in \cite{Morgan_Shalen:1988})
one can construct a transversely measured lamination $L$ on $K$ and an equivariant map $\phi : \widetilde{K} \to
\Gamma$, where $\widetilde{K}$ is the universal cover of $K$, which sends leaves of the induced
lamination on $\widetilde{K}$ to points in $\Gamma$. The  complex $K$ together with the
lamination $L$ is called a band complex with $\widetilde{K}$ resolving the action of $G$ on
$\Gamma$.

Now, {\em Rips-Bestvina-Feighn machine} is a procedure which given a band complex $K$, transforms
it into another band complex $K'$ (we still have $\pi(K') \simeq G$), whose lamination splits into a
disjoint union of finitely many sub-laminations of several types - simplicial, surface, toral, thin
- and these sub-laminations induce a splitting of $K'$ into sub-complexes containing them. $K'$ can
be thought of as the ``normal form'' of the band complex $K$. Analyzing the structure of $K'$ and
its sub-complexes one can obtain some information about the structure of the group $G$.

In particular, in the case when the original action of $G$ on $\Gamma$ is {\em stable} one can
obtain a splitting of $G$. Recall that a non-degenerate (that is, containing more than one point)
subtree $S$ of $\Gamma$ is {\em stable} if for every non-degenerate subtree $S'$ of $S$, we have
$Fix(S') = Fix(S)$ (here, $Fix(I) \leqslant G$ consists of all elements which fix $I$ point-wise). The action
of $G$ on $\Gamma$ is {\em stable} if every non-degenerate subtree of $T$ contains a stable subtree.

\begin{theorem} \cite[Theorem 9.5]{Bestvina_Feighn:1995}
\label{th:Best_Feighn}
Let $G$ be a finitely presented group with a nontrivial, stable, and minimal action on an
$\R$-tree $\Gamma$. Then either
\begin{enumerate}
\item[(1)] $G$ splits over an extension $E$-by-cyclic, where $E$ fixes on arc of $\Gamma$, or
\item[(2)] $\Gamma$ is a line. In this case, $G$ splits over an extension of the kernel of the
action by a finitely generated free abelian group.
\end{enumerate}
\end{theorem}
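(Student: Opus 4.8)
The plan is to feed the action directly into the Rips-Bestvina-Feighn machine described above and to read off the dichotomy from the normal form it produces. First I would invoke the construction recalled just before the statement: since $G$ is finitely presented, realize $G$ as $\pi_1(K)$ for a band complex $K$ equipped with a transversely measured lamination $L$ and an equivariant resolving map $\phi : \widetilde{K} \to \Gamma$ sending leaves to points. Non-triviality and minimality of the action let me discard superfluous pieces and assume $K$ is taken as economically as possible, while stability is the hypothesis I will lean on throughout to keep the process under control.

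Next I would run the machine to transform $K$ into a normal-form band complex $K'$ with $\pi_1(K') \simeq G$, whose lamination decomposes as a disjoint union of sub-laminations of the four standard types --- simplicial, surface, toral, and thin --- inducing a decomposition of $K'$ into sub-complexes. Each type is then analyzed separately, and I expect three of them to be essentially routine. The simplicial type yields an honest graph-of-groups splitting in which the edge groups fix an arc of $\Gamma$; combined with the interval direction of the bands this produces a splitting over a subgroup that is an extension of an arc-stabilizer $E$ by a cyclic group, which is exactly alternative (1). Surface components carry an arational measured foliation on a compact surface with boundary and yield splittings over boundary-parallel curves, again of $E$-by-cyclic type. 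Toral components are precisely where the tree locally degenerates to a line: here $\Gamma$ is a line and $G$ splits over an extension of the kernel of the action by a finitely generated free abelian group, which is alternative (2). Once the decomposition of $K'$ is in hand, Bass-Serre theory applied to its sub-complexes assembles these local pieces into the global splitting.

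The hard part will be the thin type, which is the genuine innovation of Bestvina and Feighn. The difficulty is that thin components need not directly exhibit a splitting, and naively they can reappear under the moves, so the whole issue is termination. I would therefore spend the bulk of the argument constructing a complexity measure on band complexes that strictly decreases under the moves applied to thin components, and showing --- using \emph{stability} to forbid the infinite descent that an unstable action would permit, together with the bound on the number of bands coming from finite presentability of $G$ --- that the process cannot continue indefinitely. The outcome should be that thin components are eventually eliminated or converted into one of the other three types, so that the terminal complex $K'$ genuinely displays the required structure. Securing this termination statement, rather than the subsequent Bass-Serre bookkeeping, is where I expect essentially all the work to lie.
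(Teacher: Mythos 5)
First, a point of comparison: the survey itself does not prove this statement at all --- it is quoted verbatim, with citation, from Bestvina--Feighn, and the surrounding text (Section \ref{sec:rbf}) only sketches the machine. So your proposal has to be measured against the original Bestvina--Feighn argument, whose skeleton you have reproduced correctly in outline: resolve the action by a band complex, run the machine, decompose the lamination of the normal-form complex into simplicial, surface, toral and thin pieces, and extract the splitting from each piece via Bass--Serre theory.

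The genuine gap is in your treatment of the thin case, which you correctly identify as the heart of the matter but then propose to handle in a way that cannot work. You plan to build a complexity that strictly decreases under the moves on thin components and to conclude that ``thin components are eventually eliminated or converted into one of the other three types.'' This is false: thin (exotic, or Levitt-type) minimal components genuinely occur --- there are stable actions whose laminations have thin pieces that are not of simplicial, surface or toral type --- and on such a component the machine runs \emph{forever}; non-termination of the process is essentially the definition of the thin type, so no termination argument can succeed, with or without stability. The actual Bestvina--Feighn argument does the opposite: it accepts that the process on a thin component is infinite, proves that the complexity is non-increasing and hence eventually constant, and shows that a thin component must contain a ``naked band'' after finitely many steps; collapsing that band exhibits a splitting of $G$ over an extension of an arc stabilizer by a trivial or cyclic group, so the thin case lands in alternative (1) directly rather than being reduced away. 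Stability enters not to force termination but through the arc-stabilizer structure: it guarantees (via stable subtrees and the resulting chain conditions) that the group $E$ appearing in the splitting really is the full fixator of an arc of $\Gamma$. A secondary inaccuracy: a toral component does not force $\Gamma$ to be a line. If $\Gamma$ is not a line, a toral piece still yields a case-(1) splitting; alternative (2) is a global dichotomy about $\Gamma$, not something read off locally from the presence of a toral sub-lamination.
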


The key ingredient of the Rips-Bestvina-Feighn machine is a set of particular operations, called
{\em moves}, on band complexes applied in a certain order. These operations originate from
the work of Makanin \cite{Makanin:1982} and Razborov \cite{Razborov:1985} that ideas of
Rips are built upon.

Observe that the group $G$ in Theorem \ref{th:Best_Feighn} must be finitely presented. To obtain a
similar result about finitely generated groups acting on $\R$-trees one has to further restrict
the action. An action of a group $G$ on an $\R$-tree $\Gamma$ satisfies the {\em ascending
chain condition} if for every decreasing sequence
$$I_1 \supset I_2 \supset \cdots \supset I_n \supset \cdots$$
of arcs in $\Gamma$ which converge into a single point, the corresponding sequence
$$Fix(I_1) \subset Fix(I_2) \subset \cdots \subset Fix(I_n) \subset \cdots$$
stabilizes.

\begin{theorem} \cite{Guirardel:2008}
\label{th:Guirardel_0}
Let $G$ be a finitely generated group with a nontrivial minimal action on an $\R$-tree $\Gamma$.
If
\begin{enumerate}
\item[(1)] $\Gamma$ satisfies the ascending chain condition,
\item[(2)] for any unstable arc $J$ of $\Gamma$,
\begin{enumerate}
\item[(a)] $Fix(J)$ is finitely generated,
\item[(b)] $Fix(J)$ is not a proper subgroup of any conjugate of itself, that is, if $Fix(J)^g
\subset Fix(J)$ for some $g \in G$ then $Fix(J)^g = Fix(J)$.
\end{enumerate}
\end{enumerate}
Then either
\begin{enumerate}
\item[(1)] $G$ splits over a subgroup $H$ which is an extension of the stabilizer of an arc of
$\Gamma$ by a cyclic group, or
\item[(2)] $\Gamma$ is a line.
\end{enumerate}
\end{theorem}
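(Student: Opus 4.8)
The plan is to reduce the finitely generated case to the finitely presented case handled by Theorem \ref{th:Best_Feighn}, via a geometric approximation argument, and then to transport the resulting splitting back to $G$ using hypotheses (1) and (2). Fixing a finite generating set of $G$ and a base point $x \in \Gamma$, I would first build a band complex $\Sigma$ whose underlying union of bands encodes the segments $[x, sx]$ for generators $s$, together with a resolving morphism $f : T_\Sigma \to \Gamma$ from the dual tree of $\Sigma$ onto $\Gamma$. Because $G$ is only finitely generated, $\pi_1(\Sigma)$ surjects onto $G$ but need not equal it; so I would iterate this construction, refining $\Sigma$ by folding and by adding bands that realize longer and longer portions of the action, to obtain a sequence of geometric actions $G \curvearrowright T_n$ with resolving morphisms $f_n : T_n \to \Gamma$ whose restrictions to larger and larger subtrees are isometries. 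The role of the ascending chain condition (1) is precisely to guarantee that this refinement stabilizes on arc stabilizers: along any nested sequence of arcs shrinking to a point the fixators eventually coincide, so the band complexes cannot develop an unbounded tower of thinner and thinner unstable pieces.

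Next I would run the Rips--Bestvina--Feighn machine on each approximating band complex, putting it into the normal form whose lamination is a disjoint union of simplicial, surface, toral and thin components. Applying Theorem \ref{th:Best_Feighn} to the finitely presented group $\pi_1(\Sigma_n)$, each nonexceptional component yields a splitting over an arc-stabilizer-by-cyclic subgroup, while the only way to avoid such a splitting is for the whole complex to reduce to a single axial (toral) component, in which case $T_n$ -- and hence in the limit $\Gamma$ -- is a line. The thin components are exactly the ones supported on unstable arcs, and here I would invoke hypothesis (2a) to see that the candidate edge groups $Fix(J)$ are finitely generated, so that the splittings produced really are splittings over finitely generated subgroups of the required form: an extension of the stabilizer of an arc of $\Gamma$ by a cyclic group.

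The main obstacle, and the heart of the argument, is the descent step: transferring a splitting of the approximation $\pi_1(\Sigma_n)$ to a splitting of $G$ itself. Here I would use that the resolving morphisms $f_n$ become injective on larger and larger subtrees, so that for $n$ large the edge and vertex groups of the splitting of $\pi_1(\Sigma_n)$ map to the corresponding subgroups of $G$ without collapse. Hypothesis (2b), that no arc stabilizer is a proper subgroup of a conjugate of itself, is what prevents a transported edge group from being pushed into a strictly larger conjugate and thereby destroying the nontriviality of the splitting; in effect it rules out the ascending HNN pathologies that a mere surjection $\pi_1(\Sigma_n) \twoheadrightarrow G$ could otherwise introduce. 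Combining injectivity-on-compacta with (2b) and the ACC-controlled finiteness of the thin part, I expect to extract a genuine nontrivial splitting of $G$ over an extension of an arc stabilizer of $\Gamma$ by a cyclic group, unless every approximation degenerates, in which case $\Gamma$ is a line. This yields the stated dichotomy, and I anticipate that verifying the stability of the splitting under the limit -- that the bounded complexity forced by the ACC survives passage from $\pi_1(\Sigma_n)$ to $G$ -- will be the most delicate part to make precise.
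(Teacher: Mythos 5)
The paper you are working from contains no proof of this statement at all --- it is quoted as an external result from Guirardel \cite{Guirardel:2008} --- so your proposal has to be measured against that paper's argument. In outline you have reconstructed its strategy correctly: approximate the action of the finitely generated group by geometric actions of finitely presented groups equipped with resolving morphisms, analyze the approximations by Rips theory, and descend the resulting splittings to $G$ using hypotheses (1) and (2). But two of your steps have genuine gaps. The first is your appeal to Theorem \ref{th:Best_Feighn}: that theorem requires the action of the finitely presented group to be \emph{stable} (and minimal), and nothing in your construction guarantees that the approximating actions of $\pi_1(\Sigma_n)$ on their dual trees are stable; indeed the entire point of hypotheses (1)--(2) is to replace stability, so invoking the stable-action theorem here is unjustified (and close to circular). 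What the actual argument uses instead is the structure theory of \emph{geometric} actions themselves: the dual tree of a finite band complex decomposes into simplicial, surface, toral and thin components, and each non-degenerate component yields a splitting of the finitely presented group with no stability assumption.

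The second and more serious gap is the descent step, which you correctly identify as the heart of the matter but do not carry out. The danger is the kernel of $\pi_1(\Sigma_n) \twoheadrightarrow G$: a nontrivial splitting upstairs can perfectly well become trivial downstairs, and injectivity of the resolving maps on larger and larger subtrees does not by itself control this, since the kernel acts on the tree $T_n$ and not on a subtree you can see. Moreover the roles you assign to the hypotheses are not the ones that make the descent work. Condition (2a) is not there to make edge groups finitely generated for their own sake; it is what allows a stabilizer $Fix(J)$ of an unstable arc of $\Gamma$, \emph{because} it is finitely generated, to be lifted through the strong limit so that it eventually fixes an arc in some approximation $T_n$; the ascending chain condition then forces the nested sequences of arc stabilizers produced along the direct system to stabilize, and (2b) rules out the residual degeneration in which a transported edge group is conjugated properly into itself. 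Your claim that ``the thin components are exactly the ones supported on unstable arcs'' conflates arcs of the limit tree $\Gamma$ with arcs of the approximating trees and is unsupported as stated. As it stands, the proposal is a correct plan whose hardest step is announced rather than proved.
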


Now, we would like to discuss some applications of the above results which are based on the
construction outlined in \cite{Bestvina:1988} and \cite{Paulin:1988} making possible to obtain
isometric group actions on $\R$-trees as Gromov-Hausdorff limits of actions on hyperbolic spaces.
All the details can be found in \cite{Bestvina:1999}.

Let $(X, d_X)$ be a metric space equipped with an isometric action of a group $G$ which can be viewed as
a homomorphism $\rho: G \to Isom(X)$. Assume that $X$ contains a point $\varepsilon$ which is not
fixed by $G$. In this case, we call the triple $(X, \varepsilon, \rho)$ a {\em based $G$-space}.

Observe that given a based $G$-space $(X, \varepsilon, \rho)$ one can define a pseudometric $d =
d_{(X, \varepsilon, \rho)}$ on $G$ as follows
$$d(g,h) = d_X(\rho(g) \cdot \varepsilon, \rho(h) \cdot \varepsilon).$$
Now, the set ${\cal D}$ of all non-trivial pseudometrics on $G$ taken up to rescaling by positive
reals forms a topological space and we say that a sequence $(X_i, \varepsilon_i, \rho_i),\ i \in \N$
of based $G$-spaces {\em converges} to the based $G$-space $(X, \varepsilon, \rho)$ and write
$$\lim_{i \to \infty} (X_i, \varepsilon_i, \rho_i) = (X, \varepsilon, \rho)$$
if $d_{(X_i, \varepsilon_i, \rho_i)} \to d_{(X, \varepsilon, \rho)}$ in ${\cal D}$. Now, the following
result is the main tool in obtaining isometric group actions on $\R$-trees from actions on
Gromov-hyperbolic spaces.

\begin{theorem} \cite[Theorem 3.3]{Bestvina:1999}
\label{th:Paulin}
Let $(X_i, \varepsilon_i, \rho_i), i \in \N$ be a convergent sequence of based $G$-spaces. Assume
that
\begin{enumerate}
\item[(1)] there exists $\delta > 0$ such that every $X_i$ is $\delta$-hyperbolic,
\item[(2)] there exists $g \in G$ such that the sequence $d_{X_i}(\varepsilon_i, \rho_i(g) \cdot
\varepsilon_i)$ is unbounded.
\end{enumerate}
Then there is a based $G$-space $(\Gamma, \varepsilon)$ which is an $\R$-tree and an isometric
action $\rho : G \to Isom(\Gamma)$ such that $(X_i, \varepsilon_i, \rho_i) \to (\Gamma, \varepsilon,
\rho)$.
\end{theorem}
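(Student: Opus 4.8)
The plan is to transport the problem to the level of based length functions and then build the limit tree from a Lyndon length function via Chiswell's construction. First I would unwind the convergence hypothesis: $(X_i,\varepsilon_i,\rho_i)\to(\Gamma,\varepsilon,\rho)$ in ${\cal D}$ means that, after choosing normalizing scalars $\lambda_i>0$, the rescaled action pseudometrics $\lambda_i d_{(X_i,\varepsilon_i,\rho_i)}$ converge pointwise on $G\times G$ to a nontrivial pseudometric $d_\infty$. Each $\lambda_i d_{(X_i,\varepsilon_i,\rho_i)}$ is itself the action pseudometric of $G$ on the rescaled space $\lambda_iX_i$, which is $\lambda_i\delta$-hyperbolic by hypothesis~(1). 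Hypothesis~(2) is precisely what collapses the scale: since $d_{X_i}(\varepsilon_i,\rho_i(g)\varepsilon_i)$ is unbounded while $\lambda_i d_{X_i}(\varepsilon_i,\rho_i(g)\varepsilon_i)$ must converge to the finite value $d_\infty(1,g)$, after passing to a subsequence we have $\lambda_i\to0$, and hence the rescaled hyperbolicity constants $\lambda_i\delta\to0$.

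Next I would show that $l(x):=d_\infty(1,x)$ is a Lyndon length function $l:G\to\R$. Every action pseudometric $d_{(X_i,\varepsilon_i,\rho_i)}$ is left $G$-invariant because each $\rho_i(f)$ is an isometry, so the limit $d_\infty$ is left-invariant and $d_\infty(x,y)=l(x^{-1}y)$; in particular $l(1)=0$ and $l(x)=l(x^{-1})$ hold at once. The crux is the $0$-hyperbolicity of the Gromov product $c(x,y)=\tfrac12\bigl(l(x)+l(y)-l(x^{-1}y)\bigr)$. On the rescaled space $\lambda_iX_i$ the Gromov product based at $\varepsilon_i$ satisfies the four-point inequality with defect $\lambda_i\delta$, exactly the notion of $\lambda_i\delta$-hyperbolicity with respect to a basepoint from Subsection~\ref{subs:lambda-def}; taking the pointwise limit, the defect $\lambda_i\delta\to0$ vanishes and $c$ satisfies the exact inequality $c(x,y)\geq\min\{c(x,z),c(z,y)\}$. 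Thus $l$ is a Lyndon length function.

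With $l$ available I would invoke the Chiswell--Morgan--Shalen construction in the case $\Lambda=\R$ (see \cite{Chiswell:2001}): it produces an $\R$-tree $\Gamma$, a base point $\varepsilon$, and an isometric action $\rho:G\to Isom(\Gamma)$ whose based length function is exactly $l$, equivalently whose action pseudometric equals $d_\infty$. Then the triple $(\Gamma,\varepsilon,\rho)$ satisfies $d_{(\Gamma,\varepsilon,\rho)}=d_\infty=\lim_i\lambda_i d_{(X_i,\varepsilon_i,\rho_i)}$, so $d_{(X_i,\varepsilon_i,\rho_i)}\to d_{(\Gamma,\varepsilon,\rho)}$ in ${\cal D}$, which is the claimed convergence. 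Nontriviality of $d_\infty$ guarantees that $\Gamma$ is not a single point, so the action is genuine.

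The main obstacle is the exchange of limits in the second step: one must establish the four-point inequality with defect $\lambda_i\delta$ uniformly in $i$ before passing to the pointwise limit, and use hypothesis~(2) to be sure the normalized limit $d_\infty$ is nontrivial rather than identically zero. A secondary point, routine but worth stating, is that the limiting action is well defined on all of $G$ and by isometries, which is inherited from the left $G$-invariance of each $d_{(X_i,\varepsilon_i,\rho_i)}$. An alternative route that avoids the explicit axiom-checking is to form the based Gromov--Hausdorff (or ultra-) limit of the spaces $(\lambda_iX_i,\varepsilon_i)$ directly: a limit of geodesic $\delta_i$-hyperbolic spaces with $\delta_i=\lambda_i\delta\to0$ is a geodesic $0$-hyperbolic space, hence an $\R$-tree by Proposition~\ref{pr:eq_def}, on which $G$ acts as the limit of the $\rho_i$; the same pseudometric bookkeeping then yields the convergence.
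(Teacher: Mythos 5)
Your proof is correct and is essentially the standard argument: the paper itself states this theorem without proof (it is quoted from \cite{Bestvina:1999}), and the proof there proceeds exactly as you do — rescale so that the action pseudometrics converge pointwise, use hypothesis (2) to force the scaling factors (hence the rescaled hyperbolicity constants $\lambda_i\delta$) to tend to $0$ along a subsequence, conclude that the limit pseudometric is left-invariant and $0$-hyperbolic, and then realize it as the based pseudometric of an isometric action on an $\R$-tree. Your packaging of the last step through Lyndon length functions and Chiswell's construction (rather than the tree-from-a-$0$-hyperbolic-space construction used in the source) is an equivalent formulation that fits naturally with the framework of Sections \ref{sec:length_func} and \ref{sec:equiv}, and the two delicate points — passing to a subsequence where $\lambda_i\to 0$, and nontriviality of the limit pseudometric as an element of ${\cal D}$ — are both handled correctly.
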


In fact, the above theorem does not guarantee that the limiting action of $G$ on $\Gamma$ has no
global fixed points. But in the case when $G$ is finitely generated and each $X_i$ is proper (closed
metric balls are compact), it is possible to choose base-points in $\varepsilon_i \in X_i$ to make
the action of $G$ on $\Gamma$ non-trivial (see \cite[Proposition 3.8, Theorem 3.9]{Bestvina:1999}).
Moreover, one can retrieve some information about stabilizers of arcs in $\Gamma$ (see
\cite[Proposition 3.10]{Bestvina:1999}).

Note that Theorem \ref{th:Paulin} can also be interpreted in terms of asymptotic cones (see
\cite{Drutu_Sapir:2005, Drutu_Sapir:2008} for details).

\smallskip

The power of Theorem \ref{th:Paulin} becomes obvious in particular when a finitely generated group
$G$ has infinitely many pairwise non-conjugate homomorphisms $\phi_i : G \to H$ into a word-hyperbolic
group $H$. In this case, each $\phi_i$ defines an action of $G$ on the Cayley graph $X$ of $H$ with
respect to some finite generating set. Now, one can define $X_i$ to be $X$ with a word metric rescaled so that
the sequence of $(X_i, \varepsilon_i, \rho_i), i \in \N$ satisfies the requirements of Theorem \ref{th:Paulin} and
thus obtain a non-trivial isometric action of $G$ on an $\R$-tree. Many results about word-hyperbolic
groups were obtained according to this scheme, for example, the following classical result.

\begin{theorem} \cite{Paulin:1991}
\label{th:Paulin_2}
Let $G$ be a word-hyperbolic group such that the group of its outer automorphisms $Out(G)$ is infinite.
Then $G$ splits over a virtually cyclic group.
\end{theorem}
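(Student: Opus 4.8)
The plan is to manufacture a nontrivial isometric action of $G$ on an $\R$-tree out of the hypothesis $|Out(G)| = \infty$, and then feed that action into the Rips-Bestvina-Feighn machine of Section \ref{sec:rbf}. Since $G$ is word-hyperbolic it is finitely presented, and its Cayley graph $X$ with respect to a finite generating set $S$ is $\delta$-hyperbolic for a fixed $\delta$ and carries the natural left-multiplication action $\rho$ of $G$. Because $Out(G)$ is infinite I can choose automorphisms $\phi_i \in Aut(G)$ representing pairwise distinct outer classes; each $\phi_i$ twists $\rho$ into a new action $\rho_i = \rho \circ \phi_i$ on the same space $X$, all with the common hyperbolicity constant $\delta$.

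First I would normalize. For each $i$ set $\lambda_i = \inf_{x \in X} \max_{s \in S} d(x, \rho_i(s) \cdot x)$, the minimal joint displacement of the generators, and choose $\varepsilon_i$ approximately realizing this infimum. The key point, and the only place where the hypothesis enters quantitatively, is that $\lambda_i \to \infty$. Indeed, if $\lambda_i$ stayed bounded then, after conjugating each $\phi_i$ by a suitable inner automorphism, the elements $\phi_i(s)$ would have uniformly bounded length; but a word-hyperbolic group contains only finitely many elements of bounded length, forcing all but finitely many $\phi_i$ to coincide in $Out(G)$, a contradiction. Rescaling the metric of $X$ by $1/\lambda_i$ to obtain $X_i$ and basing at $\varepsilon_i$, condition (2) of Theorem \ref{th:Paulin} is satisfied, so a subsequence converges to a based $G$-space $(\Gamma, \varepsilon)$ which is an $\R$-tree with an isometric $G$-action $\rho_\infty$; the normalization forces the limiting maximal displacement of $S$ to equal $1$, so $\rho_\infty$ has no global fixed point and is nontrivial.

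Next I would verify the remaining hypotheses of the machine. The crucial structural fact, and the one genuine use of hyperbolicity of $G$, is that the stabilizer in $G$ of any nondegenerate arc of $\Gamma$ is virtually cyclic: such a stabilizer is built from elements whose rescaled translation lengths tend to $0$, and a family of such \emph{short} elements in a hyperbolic group generates an elementary (virtually cyclic) subgroup by the standard thinness estimates. This control simultaneously yields the stability required to invoke Theorem \ref{th:Best_Feighn}. Applying that theorem now gives the splitting: in case (1), $G$ splits over a subgroup that is $E$-by-cyclic with $E$ an arc stabilizer, and since $E$ is virtually cyclic the whole extension is virtually cyclic; in case (2), $\Gamma$ is a line, the action induces a homomorphism $G \to Isom(\R)$ whose kernel is an arc stabilizer (again virtually cyclic), and the resulting HNN or amalgam splitting is over a virtually cyclic subgroup.

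The hard part will be the two estimates, both of which live in the geometry of the fixed hyperbolic group $G$ rather than in the formalism of the limiting construction: proving $\lambda_i \to \infty$ from the infinitude of $Out(G)$, and proving that arc stabilizers of $\Gamma$ are virtually cyclic. Once these are established, the splitting over a virtually cyclic subgroup follows essentially formally by combining Theorem \ref{th:Paulin} with Theorem \ref{th:Best_Feighn}.
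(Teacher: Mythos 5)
Your route is exactly the one the paper sketches in the paragraph preceding the theorem (and the one in \cite{Paulin:1991}): twist the left action on the Cayley graph by representatives of infinitely many distinct outer classes, base at points of minimal joint displacement, take a Bestvina--Paulin limit via Theorem \ref{th:Paulin} to obtain a nontrivial isometric action on an $\R$-tree, and feed that action to the Rips--Bestvina--Feighn machine (Theorem \ref{th:Best_Feighn}). Your two ``hard parts'' ($\lambda_i \to \infty$, and arc stabilizers being elementary) are indeed where the real content of Paulin's paper lies, and your sketches of both are the standard, correct ones.

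There is, however, one concrete gap at the very end: the inference ``since $E$ is virtually cyclic the whole extension is virtually cyclic'' is false for abstract groups --- $\Z^2$ and the Klein bottle group are both $\Z$-by-$\Z$ and neither is virtually cyclic. What rescues the conclusion is that the edge group delivered by Theorem \ref{th:Best_Feighn} is a subgroup of $G$ itself: a subgroup of a word-hyperbolic group of the form (virtually cyclic)-by-(finitely generated abelian) must be elementary, because if $E$ is infinite then every element of the extension commensurates a fixed infinite-order $a \in E$ and hence lies in the unique maximal elementary subgroup containing $a$, while if $E$ is finite the extension is virtually abelian and $G$ contains no $\Z^2$. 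The same repair is needed in your case (2), where the theorem actually gives an extension of the kernel by a finitely generated free abelian group, not necessarily by a cyclic one. A second, purely expository, slip: in the paper's formulation of Theorem \ref{th:Paulin}, condition (2) concerns the unrescaled metrics $d_{X_i}$; after rescaling by $1/\lambda_i$ the displacements are bounded by construction, so you should verify (1) and (2) for the sequence carrying the original word metric and let convergence in ${\cal D}$ (pseudometrics taken up to rescaling) absorb the normalization, which is also where your argument for nontriviality of the limit action belongs.
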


Combined with the {\em shortening argument} due to Rips and Sela \cite{RipsSela:1994} this scheme
gives many other results about word-hyperbolic groups, for example, the theorems below.

\begin{theorem} \cite{RipsSela:1994}
\label{th:RipsSela}
Let $G$ be a torsion-free freely indecomposable word-hyperbolic group. Then the internal automorphism
group $Inn(G)$ of $G$ has finite index in $Aut(G)$.
\end{theorem}

\begin{theorem} \cite{Gromov:1987, Sela:1997}
\label{th:GromovSela}
Let $G$ be a finitely presented torsion-free freely indecomposable group and let $H$ be a
word-hyperbolic group. Then there are only finitely many conjugacy classes of subgroups of $G$
isomorphic to $H$.
\end{theorem}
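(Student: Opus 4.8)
The plan is to argue by contradiction, assuming that $G$ contains infinitely many pairwise non-conjugate subgroups $K_1, K_2, \ldots$ each isomorphic to $H$, and to manufacture from this data a nontrivial isometric action of $H$ on an $\R$-tree whose existence is incompatible with the non-conjugacy of the $K_i$. Fix once and for all a finite generating set of $H$ (legitimate, since word-hyperbolic groups are finitely presented) and a finite generating set of $G$ (legitimate, since $G$ is finitely presented), and choose isomorphisms $\phi_i : H \to K_i \subseteq G$. Two such subgroups are conjugate in $G$ precisely when the corresponding embeddings differ by an inner automorphism of $G$ and an automorphism of $H$; so within each conjugacy class I would first replace $\phi_i$ by a representative of minimal length, i.e. one minimizing $\max_{s} d_G(1, \phi_i(s))$ over the generators $s$ of $H$ and over the orbit of $\phi_i$ under $\mathrm{Aut}(H)$ acting by pre-composition and conjugation by $G$ acting by post-composition. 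The upshot is a sequence of pairwise non-conjugate monomorphisms that cannot be shortened within their classes; this minimality is exactly the hypothesis that the shortening argument will later contradict.

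Next I would feed these embeddings into the Bestvina--Paulin machine (Theorem \ref{th:Paulin}). Each $\phi_i$ turns the Cayley graph of $G$ into a based $G$-space on which $H$ acts by $h \cdot x = \phi_i(h)\,x$; rescaling so that the maximal displacement of the generators of $H$ equals $1$ produces a convergent sequence of based $H$-spaces and a limit isometric action of $H$ on an $\R$-tree $\Gamma$. The genuine obstacle here, and the point where the hypothesis on $H$ is indispensable, is that $G$ need not be word-hyperbolic, so the ambient Cayley graph of $G$ is not uniformly $\delta$-hyperbolic and Theorem \ref{th:Paulin} does not apply verbatim. What saves the argument is that the only part of the geometry seen by $H$ is its orbit $\phi_i(H) = K_i$, and each $K_i$ is, for a uniform quasi-isometry constant, a copy of the single fixed hyperbolic group $H$; so the based $H$-spaces are uniformly hyperbolic along the orbit, which is all the limit construction needs. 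Pairwise non-conjugacy together with length-minimality forces the rescaled displacements to be unbounded, so the limit action is nontrivial, and after passing to the minimal invariant subtree, minimal.

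With a nontrivial minimal action of the finitely presented group $H$ on $\Gamma$ in hand, I would run the Rips--Bestvina--Feighn machine (Theorem \ref{th:Best_Feighn}) to obtain the decomposition of $\Gamma$ into simplicial, axial and surface pieces, and then invoke the shortening argument of Rips and Sela \cite{RipsSela:1994}. Its conclusion is that, unless the action is rigid, for all large $i$ one can strictly decrease the length of $\phi_i$ by pre-composing with a modular automorphism of $H$ read off from the decomposition and post-composing with a conjugation of $G$ --- directly contradicting the length-minimal choice of $\phi_i$. The role of the freely indecomposable hypothesis on $G$ is to guarantee that conjugation is the only length-reducing move available in the target: a nontrivial free splitting of $G$ would supply extra moves sliding copies of $H$ between free factors and would destroy the meaning of ``minimal representative,'' so free indecomposability is exactly what makes the minimality constraint, and hence the contradiction, robust. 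Torsion-freeness of $G$ rules out the elliptic and inversion pathologies and, together with Theorem \ref{th:RipsSela}, disposes of the rigid case, in which rigidity forces the $\phi_i$ to become conjugate for large $i$, again contradicting our standing assumption.

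I expect the hardest step to be the limit construction of the second paragraph: making Theorem \ref{th:Paulin} function with a non-hyperbolic target $G$ requires controlling the distortion of the subgroups $K_i$ inside $G$ well enough that the rescaled orbit metrics converge to a $0$-hyperbolic limit, and this is precisely where one must exploit that all the $K_i$ are copies of one fixed hyperbolic group rather than arbitrary subgroups. Everything downstream of the limit action is a now-standard deployment of the Rips machine and the shortening argument, with free indecomposability and torsion-freeness of $G$ supplying the bookkeeping that converts ``cannot be shortened'' into ``cannot be non-conjugate,'' and hence yields the asserted finiteness.
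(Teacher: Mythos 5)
Your proposal cannot be checked against a proof in the paper, because the paper gives none: Theorem \ref{th:GromovSela} is stated as a cited result of \cite{Gromov:1987, Sela:1997}, with only the general scheme (Theorem \ref{th:Paulin} plus the shortening argument of \cite{RipsSela:1994}) indicated. Judged on its own, your argument has a genuine gap exactly at the step you yourself flag as hardest. Theorem \ref{th:Paulin} needs the \emph{target} of the homomorphisms to be uniformly $\delta$-hyperbolic, and your patch --- that the orbits $K_i=\phi_i(H)$ are ``for a uniform quasi-isometry constant'' copies of the fixed hyperbolic group $H$ --- is unjustified and false in general: an abstract isomorphism $K_i\cong H$ says nothing about how $K_i$ sits metrically inside $G$; subgroups isomorphic to a fixed hyperbolic group can be badly distorted in a finitely presented ambient group, and there is no uniformity over $i$. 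Moreover the pseudometrics $d_{(X_i,\varepsilon_i,\rho_i)}$ in Theorem \ref{th:Paulin} are computed in the ambient metric of $G$ (Gromov products between orbit points are witnessed by geodesics of $G$, not of $K_i$), so hyperbolicity ``along the orbit'' would not suffice even if it held. A second, independent defect: the shortening argument requires torsion-freeness and free indecomposability of the group \emph{acting on the limit tree}, i.e.\ of the source $H$; in the statement these hypotheses sit on the target $G$, while $H$ is only assumed hyperbolic (it could be $F_2$, which is freely decomposable and has torsion-free quotients of itself as splittings), so the output of Theorem \ref{th:Best_Feighn} --- ``$H$ splits'' --- is not a contradiction. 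Your attempt to reinterpret free indecomposability of $G$ as ruling out ``length-reducing moves in the target'' is not how that hypothesis enters the Rips--Sela machinery.

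These defects cannot be repaired, because the statement as printed is false: it interchanges the roles of $G$ and $H$ in the actual Gromov--Sela theorem. For a counterexample, take $G = F_2 \times F_2$ (finitely presented, torsion-free, and freely indecomposable) and $H = F_2$ (word-hyperbolic): the subgroups $\langle a, b^n\rangle \times 1$, $n \geqslant 1$, are free of rank two and pairwise non-conjugate in $G$, since conjugation by $(g,h)$ acts on subgroups of $F_2\times 1$ as conjugation by $g$, and the Stallings core graphs of $\langle a,b^n\rangle$ are pairwise non-isomorphic. The correct statement --- which is what \cite{Gromov:1987} (5.3.C$'$) and \cite{Sela:1997} prove, and what the survey intends --- is that a \emph{word-hyperbolic} group contains only finitely many conjugacy classes of subgroups isomorphic to a given finitely presented, torsion-free, freely indecomposable group. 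With the roles corrected, your outline becomes precisely the standard argument sketched in the paper and in \cite{Bestvina:1999}: one takes the pairwise non-conjugate embeddings of the finitely presented, torsion-free, freely indecomposable group \emph{into} the hyperbolic group, chosen shortest in their conjugacy and modular classes; Theorem \ref{th:Paulin} then applies verbatim since the target is hyperbolic; the limit action has controlled (cyclic) arc stabilizers; and the Rips machine together with the shortening argument yields the contradiction, the hypotheses of torsion-freeness and free indecomposability now correctly attached to the acting group. So your instinct about where the difficulty lies was accurate; it is not a difficulty to be overcome but a signal that the theorem should be read with $G$ and $H$ interchanged.
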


For more detailed account of applications of the Rips-Bestvina-Feighn machine please refer to
\cite{Bestvina:1999}.

\section{Lyndon length functions}
\label{sec:length_func}

In 1963, R. Lyndon (see \cite{Lyndon:1963}) introduced a notion of {\em length function on a group}
in an attempt to axiomatize cancelation arguments in free groups as well as free products with
amalgamation and HNN extensions, and to generalize them to a wider class of groups. The main idea
was to measure the amount of cancellation in passing to the reduced form of a product of reduced
words in a free group and free constructions, and it turned out that the cancelation process could
be described by rather simple axioms. Using simple combinatorial techniques Lyndon described groups
with {\em free} $\mathbb{Z}$-valued length functions and conjectured (see \cite{Lyndon_Schupp:2001})
that any finitely generated group with a free $\mathbb{R}$-valued length function can be embedded
into a free product of finitely many copies of $\mathbb{R}$. The conjectures eventually was proved
wrong (counterexamples were initially given in \cite{Alperin_Moss:1985} and \cite{Promislow:1985})
but the idea of using length functions became quite popular (see, for example, \cite{Harrison:1972,
Chiswell:1976, Hoare:1979}), and then it turned out that the language of length functions described
the same class of groups as the language of actions on trees (see Section \ref{sec:equiv} for more
details).

Below we give the axioms of (Lyndon) length function and recall the main results in this field.

\smallskip

Let $G$ be a group and $\Lambda$ be an ordered abelian group. Then a function $l: G \rightarrow
\Lambda$ is called a {\em (Lyndon) length function} on $G$ if the following conditions hold:
\begin{enumerate}
\item [(L1)] $\forall\ g \in G:\ l(g) \geqslant 0$ and $l(1) = 0$,
\item [(L2)] $\forall\ g \in G:\ l(g) = l(g^{-1})$,
\item [(L3)] $\forall\ f, g, h \in G:\ c(f,g) > c(f,h) \rightarrow c(f,h) = c(g,h)$,

\noindent where $c(f,g) = \frac{1}{2}(l(f) + l(g) - l(f^{-1}g))$.
\end{enumerate}

Observe that in general $c(f,g) \notin \Lambda$, but $c(f,g) \in \Lambda_{\mathbb{Q}} = \Lambda
\otimes_{\mathbb{Z}} \mathbb{Q}$, where $\mathbb{Q}$ is the additive group of rational numbers,
so, in the axiom (L3) we view $\Lambda$ as a subgroup of $\Lambda_{\mathbb{Q}}$. But in some cases
the requirement $c(f,g) \in \Lambda$ is crucial so we state it as a separate axiom
\begin{enumerate}
\item [(L4)] $\forall\ f, g \in G:\ c(f,g) \in \Lambda.$
\end{enumerate}

It is not difficult to derive the following two properties of Lyndon length functions from the
axioms (L1) -- (L3):
\begin{itemize}
\item $\forall\ f, g \in G:\ l(f g) \leqslant l(f) + l(g)$,
\item $\forall\ f, g \in G:\ 0 \leqslant c(f,g) \leqslant \min\{l(f),l(g)\}$.
\end{itemize}

The following examples motivated the whole theory of groups with length functions.

\begin{example}
\label{exam:free_gr}
Given a free group $F(X)$ on the set $X$ one can define a (Lyndon) length function on $F$ as follows
$$w(X) \rightarrow |w(X)|,$$
where $| \cdot |$ is the length of the reduced word in $X \cup X^{\pm 1}$ representing $w$.
\end{example}

\begin{example}
\label{exam:free_prod}
Given two groups $G_1$ and $G_2$ with length functions $L_1 : G_1 \rightarrow \Lambda$ and $L_2 :
G_2 \rightarrow \Lambda$ for some ordered abelian group $\Lambda$ one can construct a length
function on $G_1 \ast G_2$ as follows (see \cite[Proposition 5.1.1]{Chiswell:2001}). For any $g \in G_1
\ast G_2$ such that
$$g = f_1 g_1 \cdots f_k g_k f_{k+1},$$
where $f_i \in G_1,\ i \in [1, k+1],\ f_i \neq 1,\ i \in [2, k]$ and $1 \neq g_i \in G_2,\ i \in
[1, k]$, define
$$L(g) = \sum_{i=1}^{k+1} L_1(f_i) + \sum_{j=1}^k L_2(g_j) \in \Lambda.$$
\end{example}

A length function $l: G \rightarrow \Lambda$ is called {\em free}, if it satisfies
\begin{enumerate}
\item [(L5)] $\forall\ g \in G:\ g \neq 1 \rightarrow l(g^2) > l(g)$.
\end{enumerate}

Obviously, the $\mathbb{Z}$-valued length function constructed in Example \ref{exam:free_gr} is
free. The converse is shown below (see also \cite{Hoare:1979} for another proof of this result).

\begin{theorem} \cite{Lyndon:1963}
\label{th:lyndon}
Any group $G$ with a length function $L : G \rightarrow \mathbb{Z}$ can be embedded into a free
group $F$ of finite rank whose natural length function extends $L$.
\end{theorem}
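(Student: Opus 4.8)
The plan is to realize the free $\mathbb{Z}$-valued length function $L$ geometrically as a free action on a simplicial tree and then invoke Bass--Serre theory; the freeness axiom (L5) is essential throughout. First I would apply Chiswell's construction (described in Section \ref{sec:lambda}) to the Lyndon length function $L : G \to \mathbb{Z}$: this produces a $\mathbb{Z}$-tree $(X,d)$ together with a base point $x_0 \in X$ on which $G$ acts by isometries and such that $L(g) = d(x_0, g\,x_0)$ for all $g \in G$, the Gromov products $c(f,g)$ recording where the segments $[x_0, f x_0]$ and $[x_0, g x_0]$ diverge. The point is that the axioms (L1)--(L3) are precisely what is needed for the resulting $\Lambda$-metric to be $0$-hyperbolic, so that $(X,d)$ is a genuine $\mathbb{Z}$-tree.

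The second step is to convert the freeness axiom (L5) into freeness of the action. I would use the standard criterion that $g$ is hyperbolic if and only if $g^2$ has no fixed point, which in length-function terms reads $L(g^2) > L(g)$; an elliptic element or an inversion would instead give $L(g^2) \leqslant L(g)$, contradicting (L5). Since a hyperbolic isometry is by definition neither elliptic nor an inversion, it follows that $G$ acts freely and without inversions on $(X,d)$.

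The third step is purely structural. By Example \ref{examp:1}, the $\mathbb{Z}$-tree $(X,d)$ is the vertex set of a simplicial tree $\Gamma$ with $d$ its path metric, so the free inversion-free isometric action of $G$ on $(X,d)$ is a free action without inversions on $\Gamma$. By Remark \ref{co:free} (Bass--Serre theory applied to a $G$-free tree), $G$ is isomorphic to a free group $F$; taking $F = G$ yields the required embedding, and $F$ has finite rank exactly when $G$ is finitely generated, which is the intended reading of the ``finite rank'' clause.

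The hard part will be the final assertion, that the natural word length function on $F$ extends $L$ under this identification. To pin down a basis I would take a fundamental $G$-transversal $Y$ with subtree $Y_0$ as in Proposition \ref{pr:transversal}; in the $G$-free case all vertex and edge groups are trivial, so the connecting elements $t_e$ attached to the edges leaving $Y_0$ form a free basis of $G = F$. For a reduced word $w = t_{e_1}^{\pm 1}\cdots t_{e_k}^{\pm 1}$ the geodesic $[x_0, w x_0]$ in $\Gamma$ breaks into $k$ edge-translates, giving $d(x_0, w x_0) = k$, the word length of $w$. Matching this combinatorial count with the values of $L$, and verifying the parity condition that keeps every Gromov product integral (so that $X$ is a $\mathbb{Z}$-tree rather than a $\tfrac12\mathbb{Z}$-tree), is the main technical obstacle; once it is settled the natural length function on $F$ restricts to $L$ on $G$, completing the proof.
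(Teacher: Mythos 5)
First, a point of reference: the paper does not prove Theorem \ref{th:lyndon} at all --- it is quoted from \cite{Lyndon:1963} (with \cite{Hoare:1979} cited for an alternative proof) --- so your proposal must be judged against the standard argument rather than anything in the text. Your first two steps are exactly the standard opening and are fine: Chiswell's construction realizes $L$ as the based length function $l_{x_0}$ of an isometric action on a $\mathbb{Z}$-tree, and (L5) rules out elliptic elements and inversions, so the action is free and without inversions. One caveat: the ``parity condition'' you postpone is not something one can verify at the end --- it is precisely axiom (L4), which must be taken as a hypothesis. Without it the theorem as stated is false: for $G = \langle a^2, ab \rangle \leqslant F(a,b)$ with $L$ equal to half the word length (integer-valued on $G$, satisfying (L1)--(L3) and (L5)), one has $c(a^2,ab) = \tfrac12$, while every natural length function on a free group has integer Gromov products, so no length-preserving embedding can exist.

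The genuine gap is your third step, and it is fatal: from ``$G$ acts freely on a simplicial tree, hence $G$ is free'' you pass to ``take $F = G$.'' Abstract freeness of $G$ is not enough, because the theorem requires the \emph{word-length} function of $F$ to restrict to $L$, and $L$ is in general not a word-length function on $G$ with respect to any basis. Concretely, let $G = \langle g \rangle \cong \mathbb{Z}$ with $L(g^n) = |n| + 2$ for $n \neq 0$ (the length function induced by $g \mapsto aba^{-1}$ in $F(a,b)$; it satisfies (L1)--(L5) and (L4)). The only bases of $G$ are $\{g^{\pm 1}\}$, whose word length gives $|g^n| = |n| \neq L(g^n)$, so no identification $F = G$ can succeed; the ambient free group must be strictly larger than $G$. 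Your edge count fails at exactly this point: a connecting element $t_e$ moves $x_0$ by the length of the corresponding loop in the quotient graph, which in this example is $3$, not $1$. The correct construction is to let $F$ be the free group on the \emph{edge set of the quotient} $Q = G \backslash X$ (taking $X$ to be the subtree spanned by $Gx_0$): orient and label the edges of $Q$, pull the labels back to $X$, and define $\phi(g)$ to be the word read along the geodesic $[x_0, g\,x_0]$. Freeness of the action makes $X \to Q$ locally injective, so these words are reduced; equivariance of the labelling makes $\phi$ a homomorphism; freeness makes it injective; and by construction $|\phi(g)| = d(x_0, g\,x_0) = L(g)$. For finitely generated $G$ (which the ``finite rank'' clause tacitly assumes) the quotient $Q$ is covered by the images of finitely many segments $[x_0, s_i x_0]$, so $F$ has finite rank.
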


\begin{example}
\label{exam:free_prod_2}
Given two groups $G_1$ and $G_2$ with free length functions $L_1 : G_1 \rightarrow \Lambda$ and $L_2 :
G_2 \rightarrow \Lambda$ for some ordered abelian group $\Lambda$, the length function on $G_1 \ast
G_2$ constructed in Example \ref{exam:free_prod} is free.
\end{example}

Observe that if a group $G$ acts on a $\Lambda$-tree $(X,d)$ then we can fix a point $x \in X$ and
consider a function $l_x : G \to \Lambda$ defined as $l_x(g) = d(x, g x)$. Such a function $l_x$ on
$G$ we call a {\em length function based at $x$}. It is easy to check that $l_x$ satisfies all the
axioms (L1) -- (L4) of Lyndon length function. Now if $\| \cdot \|$ is the translation length function
associated with the action of $G$ on $(X,d)$ then the following axioms show the connection between
$l_x$ and $\| \cdot \|$.
\begin{enumerate}
\item[(i)] $l_x(g) = \| g \| + 2 d(x, A_g)$ if $g$ is not an inversion.
\item[(ii)] $\| g \| = \max\{0, l_x(g^2) - l_x(g)\}$.
\end{enumerate}
Here, it should be noted that for points $x \notin A_g$, there is a unique closest point of $A_g$
to $x$. The distance between these points is the one referred to in (i). While $A_g = A_{g^n}$ for
all $n \neq 0$ in the case where $g$ is hyperbolic, if $g$ fixes a point, it is possible that
$A_g \subset A_{g^2}$. We may have $l_x(g^22) - l_x(g) < 0$ in this case. Free actions are
characterized, in the language of length functions, by the facts (a) $\| g \| > 0$ for all $g
\neq 1$, and (b) $l_x(g^2) > l_x(g)$ for all $g \neq 1$. The latter follows from the fact that
$\| g \| = n \| g \|$ for all $g$. We note that there are axioms for the translation length function
which were shown to essentially characterize actions on $\Lambda$-trees, up to equivariant isometry,
by W. Parry, \cite{Parry:1991}.

\section{Infinite words}
\label{sec:inf_words}

The notion of Lyndon length function provides a very nice tool to study properties of groups but
unfortunately its applications are quite limited due to very abstract axiomatic approach. Even in
the case of free length functions methods are close in a sense to free group cancelation techniques
but necessity to use axioms makes everything cumbersome. Introduction of {\em infinite words} was
first of all motivated by the idea that working with elements of group with Lyndon length function
should be exactly as in free group. It is not surprising that the first group which the method of
infinite words was applied to was Lyndon's free group $\FZt$ (see \cite{Myasnikov_Remeslennikov_Serbin:2005})
which shares many properties with free group. Later the infinite words techniques were extensively
applied in \cite{Myasnikov_Remeslennikov_Serbin:2006, Kharlampovich_Myasnikov_Remeslennikov_Serbin:2006,
Khan_Myasnikov_Serbin:2007, Nikolaev_Serbin:2011, KMS:2011, KMS:2011(2), KMS:2011(3), KMRS:2012,
Nikolaev_Serbin:2011(2), Malyutin_Nagnibeda_Serbin:2011}.

\smallskip

Below we follow the construction given in \cite{Myasnikov_Remeslennikov_Serbin:2005}.

\subsection{Definition and preliminaries}
\label{subs:words-def}

Let $\Lambda$ be a discretely ordered abelian group with the minimal positive element $1$. It is
going to be clear from the context if we are using $1$ as an element of $\Lambda$, or as an integer.
Let $X = \{x_i \mid i \in I\}$ be a set. Put $X^{-1} = \{x_i^{-1} \mid i \in I\}$ and $X^\pm = X
\cup X^{-1}$. A {\em $\Lambda$-word} is a function of the type
$$w: [1,\alpha_w] \to X^\pm,$$
where $\alpha_w \in \Lambda,\ \alpha_w \geqslant 0$. The element $\alpha_w$ is called the {\em
length} $|w|$ of $w$.

\smallskip

By $W(\Lambda, X)$ we denote the set of all $\Lambda$-words. Observe, that $W(\Lambda, X)$ contains
an empty $\Lambda$-word which we denote by $\varepsilon$.

Concatenation $u v$ of two $\Lambda$-words $u, v \in W(\Lambda, X)$ is an $\Lambda$-word of length
$|u| + |v|$ and such that:
\[
(uv)(a) = \left\{ \begin{array}{ll}
\mbox{$u(a)$}  & \mbox{if $1 \leqslant a \leqslant |u|$} \\
\mbox{$v(a - |u|)$ } & \mbox{if $|u|  < a \leqslant |u| + |v|$}
\end{array}
\right.
\]
Next, for any $\Lambda$-word $w$ we define an {\em inverse} $w^{-1}$ as an $\Lambda$-word of the
length $|w|$ and such that
$$w^{-1}(\beta) = w(|w| + 1 - \beta)^{-1} \ \ (\beta \in [1,|w|]).$$

A $\Lambda$-word $w$ is {\it reduced} if $w(\beta + 1) \neq w(\beta)^{-1}$ for each $1 \leqslant
\beta < |w|$. We denote by $R(\Lambda, X)$ the set of all reduced $\Lambda$-words. Clearly,
$\varepsilon \in R(\Lambda, X)$. If the concatenation $u v$ of two reduced $\Lambda$-words $u$ and
$v$ is also reduced then we write $u v = u \circ v$.

\smallskip

For $u \in W(\Lambda, X)$ and $\beta \in [1, \alpha_u]$ by $u_\beta$ we denote the restriction of
$u$ on $[1, \beta]$. If $u \in R(\Lambda, X)$ and $\beta \in [1, \alpha_u]$ then
$$u = u_\beta \circ {\tilde u}_\beta,$$
for some uniquely defined ${\tilde u}_\beta$.

An element ${\rm com}(u,v) \in R(\Lambda, X)$ is called the ({\emph{longest}) {\it common initial
segment} of $\Lambda$-words $u$ and $v$ if
$$u = {\rm com}(u,v) \circ \tilde{u}, \ \ v = {\rm com}(u,v) \circ \tilde{v}$$
for some (uniquely defined) $\Lambda$-words $\tilde{u}, \tilde{v}$ such that $\tilde{u}(1) \neq
\tilde{v}(1)$.

Now, we can define the product of two $\Lambda$-words. Let $u, v \in R(\Lambda, X)$. If
${\rm com}(u^{-1}, v)$ is defined then
$$u^{-1} = {\rm com}(u^{-1},v) \circ {\tilde u}, \ \ v = {\rm com} (u^{-1},v) \circ {\tilde v},$$
for some uniquely defined ${\tilde u}$ and ${\tilde v}$. In this event put
$$u \ast v = {\tilde u}^{-1} \circ {\tilde v}.$$
The  product ${\ast}$ is a partial binary operation on $R(\Lambda, X)$.

\begin{example}
\label{ex:2-torsion}
Let $\Lambda = \mathbb{Z}^2$ with the right lexicographic order (in this case $1 = (1,0)$). Put
\[
\mbox{$w(\beta)$} = \left\{ \begin{array}{ll}
\mbox{$x$}  & \mbox{if $\beta = (s, 0) \ and \ s \geqslant 1$} \\
\mbox{$x^{-1}$} & \mbox{if $\beta = (s, 1) \ and \ s \leqslant 0$}
\end{array} \right. \]
Then
$$w: [1, (0,1)] \rightarrow X^{\pm}$$
is a reduced $\Lambda$-word. Clearly, $w^{-1} = w$ so $w \ast w = \varepsilon$. In particular,
$R(\Lambda, X)$ has $2$-torsion with respect to $\ast$.
\end{example}

\begin{theorem} \cite[Theorem 3.4]{Myasnikov_Remeslennikov_Serbin:2005}
\label{th:pre}
Let $\Lambda$ be a discretely ordered abelian group and $X$ be a set. Then the set of reduced
$\Lambda$-words $R(\Lambda, X)$ with the partial binary operation $\ast$ satisfies the axioms
{\rm (P1) -- (P4)} of a pregroup.
\end{theorem}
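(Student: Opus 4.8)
The plan is to read the partial operation $\ast$ as "cancel the longest common prefix ${\rm com}(u^{-1},v)$ and concatenate what survives", and to verify each of (P1)--(P4) by reducing it to an elementary statement about ${\rm com}$ and concatenation $\circ$. The key preliminary observation, which streamlines everything, is the following: \emph{$u\ast v$ is defined if and only if ${\rm com}(u^{-1},v)$ exists, and whenever it is defined the resulting word is automatically reduced.} Indeed, write $u^{-1}={\rm com}(u^{-1},v)\circ\tilde u$ and $v={\rm com}(u^{-1},v)\circ\tilde v$, so that $u\ast v=\tilde u^{-1}\circ\tilde v$; when $\tilde u,\tilde v\neq\varepsilon$ the defining inequality $\tilde u(1)\neq\tilde v(1)$ forces the last letter $\tilde u(1)^{-1}$ of $\tilde u^{-1}$ and the first letter $\tilde v(1)$ of $\tilde v$ not to cancel, and the interior is reduced because $u,v$ are, so $u\ast v\in R(\Lambda,X)$. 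Thus the partiality of $\ast$ comes \emph{only} from the possible non-existence of ${\rm com}$ in a non-archimedean $\Lambda$, and checking an axiom amounts to matching up existence-of-${\rm com}$ conditions.

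With this in hand, (P1) and (P2) are immediate: ${\rm com}(u^{-1},\varepsilon)=\varepsilon$ gives $u\ast\varepsilon=u$, and since $(u^{-1})^{-1}=u$ by the definition of inverse, ${\rm com}((u^{-1})^{-1},u)={\rm com}(u,u)=u$ gives $u^{-1}\ast u=\varepsilon$. Axiom (P3) then follows from the symmetry ${\rm com}(a,b)={\rm com}(b,a)$ together with the involution $(w^{-1})^{-1}=w$: the same segment $c={\rm com}(u^{-1},v)={\rm com}(v,u^{-1})={\rm com}((v^{-1})^{-1},u^{-1})$ governs both products, whence $v^{-1}\ast u^{-1}=\tilde v^{-1}\circ\tilde u=(u\ast v)^{-1}$ and is in particular defined. (Alternatively, by the remark recorded after the axioms, (P3) is a formal consequence of (P1), (P2), (P4).)

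The heart of the matter is (P4). Given reduced $u,v,w$ with $u\ast v$ and $v\ast w$ defined, set $c_1={\rm com}(u^{-1},v)$ and $c_2={\rm com}(v^{-1},w)$, and write $u=p^{-1}\circ c_1^{-1}$, $w=c_2\circ s$; note that $c_1$ is an initial segment of $v$ and $c_2^{-1}$ a terminal segment of $v$, so $|c_1|\leqslant|v|$ and $|c_2|\leqslant|v|$. I would split on whether the two cancellation regions inside $v$ overlap. If $|c_1|+|c_2|\leqslant|v|$, then $v=c_1\circ m\circ c_2^{-1}$ has a surviving middle $m$, and a direct computation shows that both $(u\ast v)\ast w$ and $u\ast(v\ast w)$ collapse to the single reduced concatenation $p^{-1}\circ m\circ s$; the relevant common prefixes on both sides coincide with $c_1$ and $c_2$, so the two triple products are simultaneously defined and equal.

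The delicate case, which I expect to be the main obstacle, is $|c_1|+|c_2|>|v|$, where the prefix of $v$ consumed by $u$ and the suffix consumed by $w$ overlap. Here the truncations of $c_1$ and $c_2$ of length $|v|-|c_2|$ and $|v|-|c_1|$ respectively exhibit $v\ast w$ and $(u\ast v)^{-1}$ as words that begin with genuine initial segments of $v$ and of $w$, so that forming $(u\ast v)\ast w$ and $u\ast(v\ast w)$ both come down to the existence of a single residual ${\rm com}$ between the corresponding tails of $u$ and of $w$ that lie beyond the overlap. The crucial point is that this residual condition is symmetric in the two bracketings — it is exactly the condition that the overlap of the appropriate tails of $u^{-1}$ and $w$ terminates at a well-defined position of $\Lambda$ — so both triple products are defined together and agree. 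The bookkeeping will require further subdivision according to whether the overlap exhausts $c_1$, exhausts $c_2$, or stops strictly inside both, together with repeated use of the fact that an initial segment of an existing ${\rm com}$ again exists; this is precisely where the non-archimedean subtlety (that ${\rm com}$ may fail to exist) must be tracked with care.
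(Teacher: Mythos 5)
Your preliminary reduction (partiality of $\ast$ is exactly non-existence of ${\rm com}$, and any defined product is automatically reduced), your treatment of (P1)--(P3), and your computation in the strictly non-overlapping case are all correct. (The survey itself gives no proof of this theorem --- it only cites \cite{Myasnikov_Remeslennikov_Serbin:2005} --- so I am judging the proposal on its own merits.) However, your case analysis for (P4) misplaces where the difficulty lies, and this produces two genuine gaps. First, the boundary case $|c_1|+|c_2|=|v|$, i.e.\ $m=\varepsilon$, is wrongly absorbed into your ``easy'' case: there your claim that both triple products are defined and ``collapse to the single reduced concatenation $p^{-1}\circ m\circ s$'' is false. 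When $m=\varepsilon$ the word $p^{-1}s$ need not be reduced, and worse, neither bracketing need be defined at all: one checks that $(u\ast v)\ast w$ and $u\ast(v\ast w)$ are each defined precisely when ${\rm com}(p,s)$ exists, and this can fail in a non-archimedean $\Lambda$ (take $\Lambda=\Z^2$ and $p,s$ reduced words agreeing exactly on the positions $(n,0)$, $n\geqslant 1$, and differing on all higher positions; the compatibility constraints coming from ${\rm com}(u^{-1},v)=c_1$ and ${\rm com}(v^{-1},w)=c_2$ are easy to satisfy with single-letter $c_1,c_2$). This boundary configuration is in fact the \emph{only} one where definedness can fail, and (P4) survives there only because the condition ``${\rm com}(p,s)$ exists'' is literally the same condition for both bracketings --- a point your argument never records.

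Second, the overlap case $|c_1|+|c_2|>|v|$, which you defer as ``the main obstacle'' and support only with an unproven symmetry claim, is actually the automatic case, and the claim is incorrect as stated. Write $v=a\circ b\circ\tilde v$ with $c_1=a\circ b$, $c_2^{-1}=b\circ\tilde v$ and $b\neq\varepsilon$, so that $u^{-1}=a\circ b\circ\tilde u$ and $w=\tilde v^{-1}\circ b^{-1}\circ\tilde w$. The definedness conditions for the two bracketings are the existence of ${\rm com}(\tilde u,\,b^{-1}\circ\tilde w)$ and of ${\rm com}(b\circ\tilde u,\,\tilde w)$ respectively; these are \emph{different} conditions, not a single symmetric one. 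What saves the day is that both hold vacuously: reducedness of $u^{-1}$ at the junction of $b$ with $\tilde u$ gives $\tilde u(1)\neq b^{-1}(1)=b(|b|)^{-1}$, and reducedness of $w$ at the junction of $b^{-1}$ with $\tilde w$ gives $\tilde w(1)\neq b(1)$, so both residual ${\rm com}$'s are empty and both products equal $\tilde u^{-1}\circ b^{-1}\circ\tilde w$. So the correct partition is: strict non-overlap (your argument works), strict overlap (automatic by the reducedness observation), and exact boundary (the unique delicate case, where definedness may fail but fails simultaneously for both bracketings). As written, your Case 1 asserts something false and your Case 2 is both unfinished and aimed at a non-issue; the proposal is repairable along the lines above, but it is not yet a proof.
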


An element $v \in R(\Lambda, X)$ is termed {\it cyclically reduced} if $v(1)^{-1} \neq v(|v|)$. We
say that an element $v \in R(\Lambda, X)$ admits a {\it cyclic decomposition} if $v = c^{-1} \circ
u \circ c$, where $c, u \in R(\Lambda, X)$ and $u$ is cyclically reduced. Observe that a cyclic
decomposition is unique (whenever it exists). We denote by $CR(\Lambda, X)$ the set of all cyclically
reduced words in $R(\Lambda, X)$ and by $CDR(\Lambda, X)$ the set of all words from $R(\Lambda, X)$
which admit a cyclic decomposition.

\smallskip

Below we refer to $\Lambda$-words as {\it infinite words} usually omitting $\Lambda$ whenever it
does not produce any ambiguity.

\smallskip

A subset $G \leqslant R(\Lambda, X)$ is called a {\em subgroup} of $R(\Lambda, X)$ if $G$ is a group
with respect to $\ast$. We say that a subset $Y \subset R(\Lambda, X)$ {\em generates a subgroup}
$\langle Y \rangle$ in $R(\Lambda, X)$ if the product $y_1 \ast \cdots \ast y_n$ is defined for any
finite sequence of elements $y_1, \ldots, y_n \in Y^{\pm 1}$.

\begin{example}
\label{ex:ex324}
Let $\Lambda$ be a direct sum of copies of $\mathbb{Z}$ with the right lexicographic order. Then the
set of all elements of finite length in $R(\Lambda, X)$ forms a subgroup which is isomorphic to a
free group with basis $X$.
\end{example}

\subsection{Commutation in infinite words}
\label{sec:commutation}

Let $G$ be a subgroup of $CDR(\Lambda ,X)$, where $\Lambda$ is a discretely ordered abelian group.
We fix $G$ for the rest of the subsection.

Let $I_\Lambda$ index the set of all convex subgroups of $\Lambda$. $I_\Lambda$ is linearly ordered
(see, for example, \cite{Chiswell:2001}): $i < j$ if and only if $\Lambda_i < \Lambda_j$, and
$$\Lambda = \bigcup_{i \in I_\Lambda} \Lambda_i.$$
We say that $g \in G$ {\em has the height} $i \in I_\Lambda$ and denote $ht(g) = i$ if $|g| \in
\Lambda_i$ and $|g| \notin \Lambda_j$ for any $j < i$. Observe that this definition depends only on
$G$ since the complete chain of convex subgroups of $\Lambda$ is unique.

It is easy to see that
$$ht(g_1 g_2) \leqslant \max\{ht(g_1), ht(g_2)\},$$
hence, if $G = \langle g_1, \ldots, g_k \rangle$ then we define
$$ht(G) = \max\{ht(g_1),\ldots,ht(g_k)\}.$$

Using the characteristics of elements of $G$ introduced above we prove several technical results
we are used, for example, in Sections \ref{sec:reg} and \ref{sec:Z^n}.

The first result is an analog of Harrison's Theorem (see \cite{Harrison:1972}) in the case of
cyclically reduced elements.

\begin{lemma} \cite[Lemma 5]{KMRS:2012}
\label{le:LS}
Let $f, h \in G$ be cyclically reduced. If\/ $c(f^m, h^n) \geqslant |f| + |h|$ for some $m, n > 0$
then $[f,h] = \varepsilon$.
\end{lemma}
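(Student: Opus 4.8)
The plan is to translate the hypothesis into a statement about common prefixes and then run a Fine--Wilf type periodicity argument adapted to $\Lambda$-words. First I would record the identity $c(a,b) = |{\rm com}(a,b)|$ for reduced words $a,b$, which follows straight from the definitions: if $a = {\rm com}(a,b)\circ \tilde a$ and $b = {\rm com}(a,b)\circ\tilde b$, then $a^{-1}\ast b = \tilde a^{-1}\circ\tilde b$, so $|a^{-1}\ast b| = |a|+|b|-2|{\rm com}(a,b)|$ and hence $c(a,b)=|{\rm com}(a,b)|$. Thus the hypothesis says the longest common prefix $w := {\rm com}(f^m,h^n)$ satisfies $|w| \geq |f|+|h|$. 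Since $f$ and $h$ are cyclically reduced, $f^m = f\circ\cdots\circ f$ and $h^n = h\circ\cdots\circ h$ are genuine reduced concatenations, so $w$ has period $p := |f|$ (being a prefix of $f^m$) and period $q := |h|$ (being a prefix of $h^n$); here ``period $p$'' means $w(i)=w(i+p)$ whenever $i,i+p\in[1,|w|]$, and similarly for $q$.

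By the symmetry of $c$ I may assume $p \geq q$, disposing at once of the degenerate cases $q=0$ (then $h=\varepsilon$) and $p=q$ (then $f=h$). Otherwise $h$ is the length-$q$ prefix of $f$, so I write $f = h\circ g$ with $|g| = p-q > 0$. A short cancellation check using cyclic reducedness shows the two relevant products are literal concatenations: since $f(p)^{-1}\neq h(1)$ and $h(|h|)^{-1}\neq h(1)$, one gets $f\ast h = h\circ g\circ h$ and $h\ast f = h\circ h\circ g$. Consequently $[f,h]=\varepsilon$ reduces, after stripping the common leading $h$, to the purely combinatorial word identity $g\circ h = h\circ g$; expanding $(g\circ h)(i)=f(i)$ over the two ranges of $i$, this is equivalent to the two conditions $f(i)=f(i+q)$ for $i+q\le p$ and $f(j)=f(j+p-q)$ for $1\le j\le q$.

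The remaining step is to deduce these two conditions from the two periods of $w$, using $f = w_p$. The first is immediate: for $i+q\le p\le|w|$ we have $f(i)=w(i)=w(i+q)=f(i+q)$ by period $q$. The second is the crux, and it is here that the non-Archimedean nature of $\Lambda$ would normally obstruct the classical Euclidean/gcd argument, since $\gcd(p,q)$ need not exist in $\Lambda$. I avoid gcd entirely by chaining the two periods a single time: for $1\le j\le q$, $f(j+p-q)=w(j+p-q)=w(j+p)=w(j)=f(j)$, where the middle equality uses period $q$ (legal because $j+p\le p+q\le|w|$) and the next uses period $p$ (legal because $j+p\le|w|$). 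The one inequality $|w|\ge p+q$ is exactly what keeps every index in this chain admissible, so no descent and no common divisor are required. This yields $g\circ h = h\circ g$, hence $f\ast h = h\ast f$ and $[f,h]=\varepsilon$. The main obstacle is precisely this second condition, and the key idea replacing Fine--Wilf is the two-period chain $w(j+p-q)=w(j+p)=w(j)$.
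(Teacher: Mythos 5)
Your proof is correct. One caveat on the comparison: this survey states the lemma only with a citation to \cite[Lemma 5]{KMRS:2012} and contains no proof of its own, so there is nothing internal to measure your argument against; what you have written is essentially the standard proof of this Harrison-type statement. All the delicate points are handled properly: the identity $c(f^m,h^n)=|\mathrm{com}(f^m,h^n)|$ is legitimate because $G$ is closed under $\ast$ (so the common initial segment exists), cyclic reducedness is exactly what makes $f^m$ and $h^n$ literal concatenations $f\circ\cdots\circ f$ and $h\circ\cdots\circ h$ and what kills the cancellation in $f\ast h$ and $h\ast f$, and your two-period chain $w(j+p-q)=w(j+p)=w(j)$ keeps every index inside $[1,|w|]$ precisely because $|w|\geq p+q$ --- so, as you observe, no gcd/Euclidean descent is needed, which matters since $\Lambda$ need not be Archimedean. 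One cosmetic simplification is available: you can skip the decomposition $f=h\circ g$ and the two index conditions entirely. Let $z$ be the prefix of $w$ of length $p+q$. Period $p$ shows that the last $q$ letters of $z$ repeat its first $q$ letters, so $z=f\circ h$; period $q$ shows likewise that $z=h\circ f$. Since $z$ is a prefix of the reduced word $f^m$, both concatenations are reduced, whence $f\ast h=f\circ h=z=h\circ f=h\ast f$ and $[f,h]=\varepsilon$. This is the same two-period idea you use, just packaged so that the commutation identity appears immediately rather than after stripping a common leading $h$.
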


The next lemma shows that commutation implies similar cyclic decompositions.

\begin{lemma} \cite[Lemma 6]{KMRS:2012}
\label{le:cycl}
For any two $g_1, g_2 \in G$ if $[g_1,g_2] = \varepsilon$ and $g_1 = c^{-1} \circ h_1 \circ c,\
g_2 = d^{-1} \circ h_2 \circ d$ are their cyclic decompositions then $c = d$.
\end{lemma}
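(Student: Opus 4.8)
The plan is to leave the combinatorics of infinite words for a moment, move to the $\Lambda$-tree on which $G$ acts, show that the two commuting elements share a common axis, and then recover the equality $c=d$ by reading both conjugating prefixes off the (unique) bridge from the base point to that axis. First I would dispose of the degenerate case: if $g_1=\varepsilon$ or $g_2=\varepsilon$ the cyclic decomposition is forced to be trivial and there is nothing to prove, so assume $g_1,g_2\neq\varepsilon$. By \cite{Myasnikov_Remeslennikov_Serbin:2005} the subgroup $G\leqslant CDR(\Lambda,X)$ acts freely and without inversions on a $\Lambda$-tree $\Gamma$ with base point $\varepsilon$ and $d(\varepsilon,g\varepsilon)=|g|$ for all $g\in G$; in particular every nontrivial element acts as a hyperbolic isometry.

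Using Theorem \ref{le:hyperbolic} with $x=\varepsilon$, I get $\|g_1\|=|h_1|$ and $\|g_2\|=|h_2|$, and if $[\varepsilon,p_i]$ is the bridge from $\varepsilon$ to $A_{g_i}$ then $|g_i|=\|g_i\|+2\,d(\varepsilon,p_i)$; since $|g_1|=2|c|+|h_1|$ and $|g_2|=2|d|+|h_2|$ this yields $d(\varepsilon,p_1)=|c|$ and $d(\varepsilon,p_2)=|d|$. Moreover, in the model of \cite{Myasnikov_Remeslennikov_Serbin:2005} the closest point $p_1$ (resp. $p_2$) of the axis to $\varepsilon$ is precisely the point reached from $\varepsilon$ along the reduced word $c^{-1}$ (resp. $d^{-1}$), and the geodesic $[\varepsilon,p_i]$ is spelled by that word.

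Next I would prove $A_{g_1}=A_{g_2}$. From $[g_1,g_2]=\varepsilon$ we have $g_1g_2g_1^{-1}=g_2$, so Lemma \ref{le:axis}(1) gives $g_1A_{g_2}=A_{g_1g_2g_1^{-1}}=A_{g_2}$, and with Lemma \ref{le:axis}(2) this makes $A_{g_2}$ a $\langle g_1\rangle$-invariant subtree. Applying Lemma \ref{le:axis}(4) with $Y=A_{g_2}$ gives $\|g_1|_{A_{g_2}}\|=\|g_1\|>0$, so $g_1|_{A_{g_2}}$ is a hyperbolic isometry of the line $A_{g_2}$ (a line by Theorem \ref{le:hyperbolic}); a hyperbolic isometry of a line has the whole line as its axis, and by Lemma \ref{le:axis}(4) that axis equals $A_{g_1}\cap A_{g_2}$. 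Hence $A_{g_2}=A_{g_1}\cap A_{g_2}$, i.e. $A_{g_2}\subseteq A_{g_1}$; the symmetric argument gives $A_{g_1}\subseteq A_{g_2}$, so $A_{g_1}=A_{g_2}=:A$. Finally, since $\Gamma$ is geodesically linear the bridge from $\varepsilon$ to the subtree $A$ is unique, whence $p_1=p_2=:p$; the geodesic $[\varepsilon,p]$ is then spelled both by $c^{-1}$ and by $d^{-1}$, and a reduced word spelling a geodesic issuing from $\varepsilon$ is unique, so $c^{-1}=d^{-1}$ and therefore $c=d$.

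The main obstacle will be the first step, namely the precise dictionary between the conjugating prefix of the cyclic decomposition and the bridge to the axis: that the closest point of $A_{g_i}$ to $\varepsilon$ is exactly the point $c^{-1}$ (resp. $d^{-1}$) and that $[\varepsilon,p_i]$ is labeled by that word. This rests on the explicit description of $\Gamma$ in \cite{Myasnikov_Remeslennikov_Serbin:2005}, where points at finite distance from $\varepsilon$ correspond bijectively to reduced $\Lambda$-words and the segment $[\varepsilon,w]$ is spelled by $w$. Once this identification is in place, the common-axis computation and the uniqueness of geodesics finish the argument, the remaining steps being routine applications of the single-isometry theory of Section \ref{subs:single_isom}.
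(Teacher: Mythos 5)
Your proof is essentially correct, but note first that the survey itself contains no proof of this lemma: it is quoted from \cite[Lemma 6]{KMRS:2012}, and the section it sits in (``Commutation in infinite words'') works entirely inside the calculus of infinite words (heights, cancellation, common initial segments). Your route is genuinely different: you pass through the equivalence of Section \ref{sec:equiv} to the universal tree $\Gamma_G$, on which $G$ acts freely with $l_\varepsilon(g)=|g|$, identify the bridge point from $\varepsilon$ to $A_{g_i}$ with the point of $\Gamma_G$ spelled by the conjugating prefix, and then invoke the single-isometry theory. The key identification is sound: since $h_i$ is cyclically reduced and nontrivial, $com(g_i,g_i^{-1})=c^{-1}$ (resp.\ $d^{-1}$), so the median $Y(g_i^{-1}\varepsilon,\varepsilon,g_i\varepsilon)$, which Theorem \ref{le:hyperbolic} identifies as the endpoint of the bridge, is exactly the point labelled by that prefix. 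Your derivation that commuting hyperbolic elements share an axis from Lemma \ref{le:axis}(1) and (4) is also correct (the restriction $g_1|_{A_{g_2}}$ has positive translation length by (4), hence is a translation of the linear tree $A_{g_2}$, whose characteristic set is all of $A_{g_2}$), and uniqueness of the closest-point projection together with well-definedness of the labelling $\Xi$ on $\Gamma_G$ gives $c^{-1}=d^{-1}$. What the geometric route buys is that the real work is absorbed into ready-made tree lemmas; what the combinatorial route of \cite{KMRS:2012} buys is that it stays inside the word calculus and needs no universal-tree dictionary.

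Two small repairs are needed. First, your treatment of the degenerate case is wrong as stated: if $g_1=\varepsilon$ but $g_2\neq\varepsilon$ has $d\neq\varepsilon$, then $c=\varepsilon\neq d$, so there \emph{is} something to prove and it is false; the lemma must be read with both $g_1,g_2$ nontrivial (the trivial element commutes with everything), and you should say that rather than ``there is nothing to prove''. Second, the single identity $|g_1|=\|g_1\|+2d(\varepsilon,p_1)$ cannot by itself yield both $\|g_1\|=|h_1|$ and $d(\varepsilon,p_1)=|c|$; you need one of the two independently, for instance $\|g_1\|=l_\varepsilon(g_1^2)-l_\varepsilon(g_1)=(2|c|+2|h_1|)-(2|c|+|h_1|)=|h_1|$, which uses that $h_1$ cyclically reduced forces $g_1^2=c^{-1}\circ h_1^2\circ c$, or equivalently the computation $com(g_1,g_1^{-1})=c^{-1}$ already mentioned. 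With these patches the argument is complete.
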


In particular, it follows that if $g \in G$ is cyclically reduced then all elements of $C_G(g)$ are
cyclically reduced as well.

\begin{lemma} \cite[Lemma 7]{KMRS:2012}
\label{le:0}
Let $f, h \in G$ be such that $h$ is cyclically reduced and $ht(f) > ht(h)$. If $ht(f^{-1} \ast h
\ast f) < ht(f)$ then for every $n \in \mathbb{N}$ either $f = h^n \circ f_n$, or $f = h^{-n} \circ
f_n$ for some $f_n \in G$.
\end{lemma}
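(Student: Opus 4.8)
The plan is to convert the two height hypotheses into a single statement about common initial segments of infinite words, settle the case $n=1$ using the cyclic reducedness of $h$, and then bootstrap to all $n$ by induction. First I would record the two identities translating lengths into Gromov products: for reduced $\Lambda$-words $u,v$ one has $c(u,v) = |\mathrm{com}(u,v)|$ and $|u \ast v| = |u| + |v| - 2\,c(u^{-1},v)$, both immediate from the definition of $\ast$ in Subsection \ref{subs:words-def}. Applying these twice to $f^{-1} \ast h \ast f$ and using $c(h^{-1},f)=c(f,h^{-1})$ gives $|f^{-1} \ast h \ast f| = 2|f| + |h| - 2\,c(f,h^{-1}) - 2\,c(f, h\ast f)$. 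Writing $C = c(f,h\ast f)$, this rearranges to $2(|f| - C) = |f^{-1}\ast h\ast f| - (|h| - 2\,c(f,h^{-1}))$. Both terms on the right have height $< ht(f)$ (the first by hypothesis, the second because $|h|$ and $c(f,h^{-1})$ lie in $\Lambda_{ht(h)}$), so $|f| - C$ has height $< ht(f)$. Since $|f|$ has height $ht(f)$ and is positive, convexity of $\Lambda_{ht(h)}$ forces $C = c(f,h\ast f) > |h|$; that is, $f$ and $h\ast f$ share an initial segment longer than $|h|$.

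The crux is the case $n = 1$, that $f$ begins with $h$ or with $h^{-1}$, and this is exactly where cyclic reducedness enters. I would argue that $c(f,h)$ and $c(f,h^{-1})$ cannot both be positive: $c(f,h) > 0$ means $f(1) = h(1)$ while $c(f,h^{-1}) > 0$ means $f(1) = h(|h|)^{-1}$, and together these give $h(1) = h(|h|)^{-1}$, contradicting that $h$ is cyclically reduced. Suppose $c(f,h^{-1}) = 0$; by the product formula this means $h \ast f = h \circ f$, so $h\ast f$ begins with $h$, and since $f$ and $h\ast f$ agree on an initial segment of length $C > |h|$ we conclude $f|_{[1,|h|]} = h$, i.e. $f = h \circ f_1$. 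If instead $c(f,h) = 0$, the same reasoning applied to $h^{-1}$ (again cyclically reduced, and satisfying $ht(f^{-1}\ast h^{-1}\ast f) = ht(f^{-1}\ast h\ast f) < ht(f)$ since height is inversion-invariant) yields $f = h^{-1}\circ f_1$. A sign $\epsilon \in \{+1,-1\}$ is thereby determined, and it is unique because $f = h\circ f_1$ together with $f = h^{-1}\circ f_1'$ would again force $h(1) = h(|h|)^{-1}$.

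Finally I would induct on $n$ with this fixed sign $\epsilon$. Assuming $f = h^{\epsilon n}\circ f_n$, set $f_n = h^{-\epsilon n}\ast f \in G$; since $|f_n| = |f| - n|h|$ still has height $ht(f) > ht(h)$, and since $h$ commutes with its powers one computes $f_n^{-1}\ast h\ast f_n = f^{-1}\ast h\ast f$, so the hypotheses of the lemma hold for $f_n$. The case $n=1$ applied to $f_n$ gives $f_n = h^{\pm 1}\circ f_{n+1}$, and the sign must be $\epsilon$: if $f_n$ began with $h^{-\epsilon}$, the concatenation $h^{\epsilon n}\circ f_n$ would fail to be reduced at the junction, as the last letter of $h^{\epsilon n}$ and the first letter of $h^{-\epsilon}$ are mutually inverse. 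Hence $f = h^{\epsilon(n+1)}\circ f_{n+1}$, here using that $h^{\epsilon(n+1)}$ is itself reduced, which is precisely cyclic reducedness of $h$. The main obstacle is the base case; once cyclic reducedness is invoked to rule out simultaneous cancellation against both $h$ and $h^{-1}$, the remaining step is a bookkeeping induction.
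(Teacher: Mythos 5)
Your argument is correct, and it can only be judged on its own terms here, since the paper does not reproduce a proof of this statement (it is quoted verbatim from \cite{KMRS:2012}); checking it directly, every step holds: the identity $2\bigl(|f|-c(f,h\ast f)\bigr)=|f^{-1}\ast h\ast f|-\bigl(|h|-2c(f,h^{-1})\bigr)$ together with convexity of the chain of convex subgroups does force $c(f,h\ast f)>|h|$, cyclic reducedness of $h$ does rule out $f$ beginning simultaneously like $h$ and like $h^{-1}$, and the sign-preserving induction (reducedness of the junction in $f=h^{\epsilon n}\circ f_n$ forbids $f_n$ starting with $h^{-\epsilon}$) is sound. This is essentially the same strategy as the cited source: the height hypothesis forces long cancellation, cyclic reducedness yields the $h^{\pm 1}$ dichotomy, and induction on $n$ finishes.
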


Here is another special case of Harrison's Theorem.

\begin{lemma} \cite[Lemma 8]{KMRS:2012}
\label{le:1}
Let $f, h_1, h_2 \in G$ be such that $ht(h_1), ht(h_2) < ht(f)$ and $ht(f^{-1} \ast h_1 \ast f),\
ht(f^{-1} \ast h_2 \ast f) < ht(f)$. Then $[h_1, h_2] = \varepsilon$.
\end{lemma}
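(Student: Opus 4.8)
Throughout, write $i = ht(f)$, and assume $h_1, h_2 \neq \varepsilon$, since otherwise $[h_1,h_2]=\varepsilon$ trivially. Let $h_j = c_j^{-1}\circ \hat h_j \circ c_j$ be the (unique) cyclic decompositions, so that $\hat h_j$ is cyclically reduced with $|\hat h_j| = \|h_j\| > 0$ and $ht(\hat h_j)\leqslant ht(h_j) < i$. The plan is to use the hypothesis that $f$ conjugates each $h_j$ down in height to force a long power of $\hat h_j$ to appear as an initial segment of $f$, to compare these two occurrences, and then to feed the resulting long common initial segment into the Harrison-type Lemma \ref{le:LS}.

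I would first dispatch the cyclically reduced case $c_1=c_2=\varepsilon$, which is the core. Here Lemma \ref{le:0} applies directly to each pair $(f,h_j)$: for every $n$ we get $f = h_j^{n}\circ f_{j,n}$ or $f = h_j^{-n}\circ f_{j,n}$, and since $h_j(1)\neq h_j^{-1}(1)$ the sign $\epsilon_j\in\{\pm1\}$ is forced to be independent of $n$. Thus $h_1^{\epsilon_1 n}$ and $h_2^{\epsilon_2 n}$ are both initial segments of $f$, hence comparable, so their longest common initial segment has length $\min(n|h_1|, n|h_2|)$; as $|h_j|>0$, fixing $n$ large makes this exceed $|h_1|+|h_2|$, giving $c(h_1^{\epsilon_1 n}, h_2^{\epsilon_2 n})\geqslant |h_1|+|h_2|$. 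Applying Lemma \ref{le:LS} to the cyclically reduced elements $h_1^{\epsilon_1}, h_2^{\epsilon_2}\in G$ with the powers $n,n$ yields $[h_1^{\epsilon_1}, h_2^{\epsilon_2}]=\varepsilon$, whence $[h_1,h_2]=\varepsilon$.

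For the general case I would first upgrade Lemma \ref{le:0}: from $h_j\ast f = f\ast(f^{-1}\ast h_j\ast f)$ with $ht(f^{-1}\ast h_j\ast f) < i$, the same cancellation computation that underlies Lemma \ref{le:0} shows that $f$ and $h_j^{\pm n}\ast f$ agree on an initial segment whose complement has height $< i$, and this forces the ``prepended-bridge'' normal form $f = c_j^{-1}\circ \hat h_j^{\epsilon_j n}\circ(\cdots)$ for all $n$, with a fixed sign $\epsilon_j$. The two prefixes $c_1^{-1}, c_2^{-1}$ of $f$ are comparable, say $c_2^{-1} = c_1^{-1}\circ w$ with $|w|$ of height $<i$; stripping $c_1^{-1}$ shows $w$ is an initial segment of $\hat h_1^{\epsilon_1 n}$, and after reducing $w$ modulo $|\hat h_1|$ one obtains a cyclic rotation $\bar h_1 = w^{-1}\ast \hat h_1^{\epsilon_1}\ast w$, which is again cyclically reduced, whose positive powers share with $\hat h_2^{\epsilon_2 n}$ a common initial segment of length $\geqslant |\hat h_1|+|\hat h_2|$. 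The Harrison argument of Lemma \ref{le:LS} then gives $[\bar h_1, \hat h_2]=\varepsilon$. Finally, unravelling $\hat h_j = c_j\ast h_j\ast c_j^{-1}$ and $w = c_1\ast c_2^{-1}$ one computes $\bar h_1 = c_2\ast h_1^{\epsilon_1}\ast c_2^{-1}$ and $\hat h_2 = c_2\ast h_2\ast c_2^{-1}$, so $[\bar h_1,\hat h_2] = c_2\ast[h_1^{\epsilon_1}, h_2]\ast c_2^{-1}$; cancelling $c_2$ (pregroup cancellation, Lemma \ref{le:pregr_properties}) gives $[h_1^{\epsilon_1},h_2]=\varepsilon$ and therefore $[h_1,h_2]=\varepsilon$.

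The main obstacle is the bridge bookkeeping in the general case. One must (a) re-derive the prepended-bridge form, since Lemma \ref{le:0} is stated only for cyclically reduced elements, and (b) align the two periodic blocks sitting at the differing offsets $c_1^{-1},c_2^{-1}$ inside $f$, passing to the rotation $\bar h_1$ precisely so that the word fed into the Harrison comparison stays cyclically reduced. The genuine subtlety is that the cyclically reduced cores $\bar h_1, \hat h_2$ and the conjugators $c_1,c_2$ need not lie in $G$ (there is no regularity assumption here), so the input at that step must be the basepoint-independent word-combinatorial core of Lemma \ref{le:LS} rather than its $G$-restricted statement; the conjugators then disappear formally by the cancellation of Lemma \ref{le:pregr_properties}, so their falling outside $G$ causes no harm.
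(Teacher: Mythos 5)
The survey itself gives no proof of this lemma (it is quoted from \cite{KMRS:2012}), so your argument has to stand on its own, and it has a genuine gap at its central step. Your reduction to Lemma \ref{le:LS} requires exponents with $c(h_1^{\epsilon_1 m}, h_2^{\epsilon_2 n}) \geqslant |h_1|+|h_2|$, which you obtain from the claim that $\min(n|h_1|,n|h_2|)$ exceeds $|h_1|+|h_2|$ once $n$ is large. That claim presumes $|h_1|$ and $|h_2|$ are Archimedean-comparable, i.e.\ $ht(h_1)=ht(h_2)$ --- but the hypotheses of the lemma allow $ht(h_1)<ht(h_2)$, and in that case the step is not merely unjustified but unattainable: any common initial segment of $h_1^{\epsilon_1 m}$ and $h_2^{\epsilon_2 n}$ has length at most $m|h_1| < |h_2| \leqslant |h_1|+|h_2|$ for \emph{all} $m,n$, so the hypothesis of Lemma \ref{le:LS} can never be satisfied. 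A concrete instance: inside $\FZt$, embedded in $CDR(\Zt,X)$ as in Section \ref{sec:limit_gps}, take $h_1 = x$, $h_2 = x^t$, $f = x^{t^2}$; all hypotheses hold and the conclusion $[x,x^t]=\varepsilon$ is true, but your route cannot reach it. The same hidden Archimedean assumption reappears in your general case, where you assert that $w$ is an initial segment of $\hat h_1^{\epsilon_1 n}$ and that one may ``reduce $w$ modulo $|\hat h_1|$'': Euclidean division by $|\hat h_1|$ exists only when $ht(w)\leqslant ht(\hat h_1)$, which nothing guarantees. So your proof is correct only in the sub-case where all the relevant lengths lie in one Archimedean class; the genuinely non-Archimedean case, which is the one this theory exists to handle, is exactly where it breaks.

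The unequal-height case also cannot be repaired from the \emph{statements} of the quoted lemmas: the conclusion of Lemma \ref{le:0}, that $f$ begins with $h_j^{\epsilon_j n}$ for all $n$, constrains $h_2^{\epsilon_2}$ only on its initial positions of height $ht(h_1)$ and says nothing about the rest of $h_2$. One has to return to the defining relation $h_1^{\epsilon_1}\ast f = f \ast w_1$ with $ht(w_1)<ht(f)$, combine it with $f = h_2^{\epsilon_2 n}\circ g_n$, and compare the two sides along the whole prefix $h_2^{\epsilon_2 n}$: since the cancellation in $(h_2^{\epsilon_2 n}\circ g_n)\ast w_1$ stays inside $g_n$, this forces $h_2^{\epsilon_2 n}$ to be $|h_1|$-periodic throughout its length, in particular $h_2^{\epsilon_2}$ both begins and ends with $h_1^{\epsilon_1 k}$ for every $k$. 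From this one checks position by position that $h_1^{\epsilon_1}\circ h_2^{\epsilon_2} = h_2^{\epsilon_2}\circ h_1^{\epsilon_1}$ (three ranges of positions: $[1,|h_1|]$, $(|h_1|,|h_2|]$, $(|h_2|,|h_2|+|h_1|]$), giving $[h_1,h_2]=\varepsilon$ without any appeal to Lemma \ref{le:LS}. Your closing remark about Lemma \ref{le:LS} needing to be applied to words outside $G$ is a fair secondary point, but it is not the obstruction; the missing idea is a mechanism for periods of different heights.
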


\begin{lemma} \cite[Lemma 9]{KMRS:2012}
\label{le:2}
Let $f, h_1 {\not = \epsilon} \in G$ be such that $f$ is cyclically reduced and $ht(h_1) < ht(f)$.
If $ht(f^{-1} \ast h_1 \ast f) < ht(f)$ and $[h_1,h_2] = \varepsilon$, where $ht(h_2) < ht(f)$,
then $ht(f^{-1} \ast h_2 \ast f) < ht(f)$.
\end{lemma}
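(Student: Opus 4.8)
The plan is to strip the statement down to a single conjugation of cyclically reduced words and then to read off the needed prefix structure from Lemma~\ref{le:0}. First I would dispose of the trivial case $h_2=\varepsilon$, and otherwise pass to the cyclic decompositions $h_1=c^{-1}\circ p\circ c$ and $h_2=d^{-1}\circ q\circ d$ with $p,q$ cyclically reduced. Since $[h_1,h_2]=\varepsilon$, Lemma~\ref{le:cycl} gives $c=d$, and conjugating $c$ away yields $[p,q]=\varepsilon$ with $ht(p)\le ht(h_1)<ht(f)$ and $ht(q)\le ht(h_2)<ht(f)$. Putting $g=c\ast f$ and using $(c\ast f)^{-1}=f^{-1}\ast c^{-1}$, one checks $f^{-1}\ast h_1\ast f=g^{-1}\ast p\ast g$ and $f^{-1}\ast h_2\ast f=g^{-1}\ast q\ast g$. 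So the whole problem reduces to: for $g$ with $ht(g)=ht(f)$, the drop $ht(g^{-1}\ast p\ast g)<ht(g)$ together with $[p,q]=\varepsilon$ forces $ht(g^{-1}\ast q\ast g)<ht(g)$.

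Before that I would record $ht(g)=ht(f)$. Since $ht(c)\le ht(h_1)<ht(f)$, the length $|c\ast f|$ differs from $|f|$ by an element of height $<ht(f)$, and adding an element of smaller height cannot change the topmost convex subgroup containing a length of height $ht(f)$; hence $ht(g)=ht(f)$, and in particular $ht(p),ht(q)<ht(g)$. This legitimizes applying Lemma~\ref{le:0} to the pair $(g,p)$: $p$ is cyclically reduced, $ht(p)<ht(g)$, and $ht(g^{-1}\ast p\ast g)<ht(g)$. That lemma gives, for every $n$, either $g=p^{n}\circ g_n$ or $g=p^{-n}\circ g_n$; replacing $p$ by $p^{-1}$ if necessary (which affects neither cyclic reducedness nor commuting with $q$) I may assume $g=p^{n}\circ g_n$ for arbitrarily large $n$. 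Thus $g$ begins with the entire positive ray of the axis of $p$; comparing the cases $n$ and $n+1$ gives $g_n=p\circ g_{n+1}$, and, crucially, using $p^{-n}\ast q\ast p^{n}=q$ one obtains the identity $g^{-1}\ast q\ast g=g_n^{-1}\ast q\ast g_n$ for every $n$.

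The \emph{main step, and the main obstacle,} is to transfer this prefix structure from $p$ to $q$. Because $p,q$ are nontrivial, cyclically reduced and commute, they lie in a common abelian subgroup and hence share a single axis; at the level of infinite words this means the ray $p^{\infty}$ and one of the rays $q^{\pm\infty}$ coincide up to a bounded shift along the axis. Combined with $g=p^{n}\circ g_n$ for all $n$, this should yield $g=q^{\pm m}\circ g'_m$ for all $m$, with fixed sign and $g'_m\in G$. Feeding this back — precisely the cancellation estimate that is the converse reading of the analysis in Lemma~\ref{le:0} — gives $ht(g^{-1}\ast q\ast g)<ht(g)$, i.e.\ $ht(f^{-1}\ast h_2\ast f)<ht(f)$, which is the claim.

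The delicate point is exactly this alignment: one must prove rigorously, by tracking $com$ of the relevant infinite words and invoking the overlap analysis behind Lemma~\ref{le:LS} to rule out that $q$ runs transversally to the $p$-axis, that $[p,q]=\varepsilon$ forces $q^{\pm\infty}$ to follow the same ray that $g$ already follows. I expect the bookkeeping with signs (matching the alternative chosen in Lemma~\ref{le:0} for $p$ with the correct one for $q$) and with the bounded shift between the two cyclically reduced words to be the fiddly part; once the common ray is established, the height computation is routine manipulation of heights under concatenation and the identity $g^{-1}\ast q\ast g=g_n^{-1}\ast q\ast g_n$.
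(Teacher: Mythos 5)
Your preliminary reductions are sound: passing to the cyclic decompositions via Lemma~\ref{le:cycl}, setting $g=c\ast f$, checking $ht(g)=ht(f)$, and applying Lemma~\ref{le:0} to the pair $(g,p)$ are all correct. The argument fails at the step you describe as routine: the passage from the prefix structure $g=q^{\pm m}\circ g'_m$ (for all $m\in\N$) to $ht(g^{-1}\ast q\ast g)<ht(g)$. There is no ``converse reading'' of Lemma~\ref{le:0}; that implication is strictly one-way, and the prefix condition you retain is strictly weaker than the height-drop hypothesis you discard. The reason is the non-Archimedean gap: the prefix condition constrains $g$ only on positions $\leqslant m|q|$, $m\in\N$, all of height $ht(q)$, whereas the cancellation governing $|g^{-1}\ast q\ast g|=2|g|+|q|-2c(g,q\ast g)$ may terminate at a position beyond every $m|q|$ yet still infinitely smaller than $|g|$, in which case no height drop occurs. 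This really happens inside groups built by this paper's own constructions. Let $F=F(x,y)$, let $G_1=\langle F,s\mid [s,x]=1\rangle\leqslant CDR(\Z^2,X)$ with $s$ the infinite word ``$x$ repeated $(0,1)$ times'' (Example~\ref{ex:centr_ext_inf_words} with $u=x$), and let $G_2=\langle G_1,r\mid [r,s\circ y]=1\rangle\leqslant CDR(\Z^3,X)$ be the extension of the centralizer of the cyclically reduced element $s\circ y$, with $|r|=(0,0,1)$ and $r$ beginning with $s\circ y$ (Section~\ref{subs:ext_centr}). Take $p=q=x$ and $g=r$. Then every intermediate conclusion of your proof holds: $q$ is cyclically reduced, $[p,q]=\varepsilon$, $g=q^m\circ g'_m$ with $g'_m\in G_2$ for every $m\in\N$, and $g^{-1}\ast q\ast g=g'^{-1}_m\ast q\ast g'_m$ for every $m$ (note this identity never helps, since $|g'_m|=|g|-m|q|$ has height $3$ for every finite $m$). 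But $com(g,q\ast g)$ has length exactly $(0,1,0)$, because $r$ and $x\circ r$ first disagree immediately after the copy of $s$, so
$$|g^{-1}\ast q\ast g|=2(0,0,1)+(1,0,0)-(0,2,0)=(1,-2,2),$$
which has height $3=ht(g)$: no drop. (Consistently with the truth of the lemma, in this example no $h_1$ satisfying its hypotheses exists.)

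What the hypothesis on $p$ really provides, and what your argument throws away, is quantitative: for aligned cyclically reduced $p$ one has $|g^{-1}\ast p\ast g|=2|g|+|p|-2c(g,p\ast g)$, so $ht(g^{-1}\ast p\ast g)<ht(g)$ is \emph{equivalent} to $|g|-c(g,p\ast g)=\frac{1}{2}\left(|g^{-1}\ast p\ast g|-|p|\right)$ having height $<ht(g)$, i.e.\ $g$ coincides with the \emph{full} axis of $p$ except for a tail of smaller height. The crux of the lemma is transferring this co-small agreement from $p$ to $q$, and that rests on the fact that commuting nontrivial elements of a freely acting group have the same full axis, $A_{h_1}=A_{h_2}$ (Lemma~\ref{le:axis}(1) plus freeness: $h_2$ preserves the line $A_{h_1}$, hence translates along it). Here ``axis'' means all points displaced by exactly $\|h_1\|$, which in a non-Archimedean tree is much larger than $\bigcup_n[\varepsilon,p^n\cdot\varepsilon]$ — in the example above, $s\cdot\varepsilon$ lies on $A_x$ although it is beyond every $x^n\cdot\varepsilon$. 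Once this is in place the lemma is immediate from Theorem~\ref{le:hyperbolic}: $|f^{-1}\ast h_i\ast f|=\|h_i\|+2d(f\cdot\varepsilon,A_{h_i})$, the hypothesis makes $d(f\cdot\varepsilon,A_{h_1})=d(f\cdot\varepsilon,A_{h_2})$ of height $<ht(f)$, and $\|h_2\|\leqslant|h_2|$ has height $<ht(f)$. Your ray-alignment step identifies only the Archimedean rays $\bigcup_n[\varepsilon,p^n\cdot\varepsilon]$ and $\bigcup_m[\varepsilon,q^m\cdot\varepsilon]$; this is already insufficient to locate $q$ relative to $g$ when $ht(q)>ht(p)$ (then no power of $p$ contains $q$ as a prefix), and, as shown above, it is fatally insufficient for the final height estimate.
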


Using the lemmas above one can easily prove the following well=known result.

\begin{prop}
\label{pr:1} \cite{Alperin_Bass:1987}
If $G < CDR(\Lambda,X)$ then for any $g \in G$, its centralizer $C_G(g)$ is a subgroup of $A$. In
particular, if $\Lambda = \mathbb{Z}^n$ then $C_G(g)$ is a free abelian group of rank not more than
$n$.
\end{prop}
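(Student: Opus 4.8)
The statement to prove is that for $g \neq 1$ the centralizer $C_G(g)$ is abelian (the case $g=1$ gives $C_G(1)=G$ and is excluded), and moreover embeds into $\Lambda$, from which the assertion for $\Lambda = \mathbb{Z}^n$ follows at once. The plan is first to reduce to the case where $g$ is cyclically reduced. Since $G \leqslant CDR(\Lambda, X)$, write the cyclic decomposition $g = c^{-1} \circ u \circ c$ with $u$ cyclically reduced. For any $a \in C_G(g)$ we have $[a,g] = \varepsilon$, so by Lemma \ref{le:cycl} the cyclic decomposition of $a$ uses the \emph{same} $c$; hence $a = c^{-1} \circ v \circ c$ and the map $\phi : a \mapsto c \ast a \ast c^{-1}$ sends $C_G(g)$ onto a set $H$ of cyclically reduced words. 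A short check shows $\phi(a)\ast\phi(b) = \phi(ab)$, so $\phi$ is an injective homomorphism, $H \leqslant CR(\Lambda,X)$ is a subgroup, $u = \phi(g) \in H$, and every element of $H$ commutes with $u$. Thus $H$ is abelian if and only if $C_G(g)$ is, and I may assume from the outset that $g$ is cyclically reduced and that every element of $C_G(g)$ is cyclically reduced.

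The core of the argument is then transitivity of commutation on these cyclically reduced elements. Given $a, b \in C_G(g)$ cyclically reduced and commuting with the cyclically reduced $g \neq 1$, I would show that each shares arbitrarily long common powers with $g$: the relation $[a,g] = \varepsilon$ forces $a$ and $g$ to translate in a common direction, so that $c(a^m, g^k) \geqslant |a| + |g|$ for suitable $m, k > 0$. This is the converse of Lemma \ref{le:LS}, and it is exactly here that the height lemmas do their work: Lemma \ref{le:0} expresses an element of strictly larger height as a power of a lower cyclically reduced element up to a tail, while Lemmas \ref{le:1} and \ref{le:2} control commutation and the heights of the conjugates $f^{-1} \ast h \ast f$, enabling an induction on $\mathrm{ht}(g)$ down to the bottom convex layer, which is infinite cyclic by Lemma \ref{le:discrete} so that $G$ restricts there to a free group (Example \ref{ex:ex324}). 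The same common-power estimate holds for $b$. Since high powers of $a$ and of $b$ then both agree with a high power of $g$ on a long initial segment, they agree with one another on an initial segment of length at least $|a| + |b|$, i.e.\ $c(a^{m'}, b^{n'}) \geqslant |a| + |b|$ for suitable $m', n' > 0$, and Lemma \ref{le:LS} yields $[a,b] = \varepsilon$. Hence $C_G(g)$ is abelian.

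It remains to read off the structural conclusions. As a subgroup of the torsion-free group $G$, the abelian group $C_G(g)$ is torsion-free, and since $G$ acts freely on a $\Lambda$-tree every abelian subgroup embeds, as an ordered abelian group, into $\Lambda$; when $\Lambda = \mathbb{Z}^n$ this exhibits $C_G(g)$ as a subgroup of $\mathbb{Z}^n$, hence free abelian of rank at most $n$. The step I expect to be the main obstacle is the height induction producing the long common powers of commuting elements: one must arrange the case analysis so that the hypotheses on the heights of the conjugates in Lemmas \ref{le:0}--\ref{le:2} are met, which is precisely where the filtration of $\Lambda$ by its convex subgroups carries the bookkeeping.
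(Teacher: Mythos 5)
Your proposal is correct and takes essentially the same approach as the paper: the paper offers no written proof of Proposition \ref{pr:1}, remarking only that it follows ``easily'' from the preceding commutation lemmas, and your sketch uses exactly those tools --- Lemma \ref{le:cycl} to reduce to the cyclically reduced case, Lemmas \ref{le:0}--\ref{le:2} for the height bookkeeping, Lemma \ref{le:LS} to turn long common powers into commutation, and the standard embedding of abelian subgroups of $\Lambda$-free groups into $\Lambda$ for the $\mathbb{Z}^n$ conclusion. The only cosmetic slip is the phrase ``translate in a common direction'': commuting hyperbolic elements share an axis but may translate in opposite directions along it, which is absorbed by replacing $a$ with $a^{-1}$ (since $[a,b]=\varepsilon$ if and only if $[a^{-1},b]=\varepsilon$) and does not affect your transitivity argument.
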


\section{Equivalence}
\label{sec:equiv}

Here we show that all three approaches, namely, free actions on $\Lambda$-trees (see Section
\ref{sec:lambda}), free Lyndon $\Lambda$-valued length functions (see Section \ref{sec:length_func}),
and $\Lambda$-words (see Section \ref{sec:inf_words}) describe the same class of groups which is
called {\em $\Lambda$-free} groups.

\subsection{From actions to length functions}
\label{subs:act-len}

The following theorem is one of the most important results in the theory of length functions.

\begin{theorem} \cite{Chiswell:1976}
Let $G$ be a group and $l: G \rightarrow \Lambda$ a Lyndon length function satisfying the following
condition:
\begin{enumerate}
\item [(L4)] $\forall\ f, g \in G:\ c(f,g) \in \Lambda$.
\end{enumerate}
Then there are a $\Lambda$-tree $(X, d)$, an action of $G$ on $X$, and a point $x \in X$ such that
$l = l_x$.
\end{theorem}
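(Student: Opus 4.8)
The plan is to build the tree $X$ directly out of the length function, realizing each $g \in G$ as the endpoint of a geodesic of length $l(g)$ issuing from a fixed base point $x$, and to recover the metric from the Gromov products $c(f,g)$. Concretely, I would take as points the formal pairs $(g,\lambda)$ with $g \in G$ and $0 \leqslant \lambda \leqslant l(g)$, thinking of $(g,\lambda)$ as the point at distance $\lambda$ from $x$ along the segment $[x,gx]$, and declare $(g,\lambda) \sim (h,\mu)$ if and only if $\lambda = \mu$ and $\lambda \leqslant c(g,h)$. The first task is to check that $\sim$ is an equivalence relation: reflexivity holds because $c(g,g) = l(g) \geqslant \lambda$, symmetry because $c(g,h) = c(h,g)$ by (L2), and transitivity is exactly the content of (L3), which I would first restate in its standard ultrametric form $c(g,k) \geqslant \min\{c(g,h), c(h,k)\}$. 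Here the hypothesis (L4) is indispensable: it guarantees $c(g,h) \in \Lambda$, so that the comparison $\lambda \leqslant c(g,h)$ takes place inside $\Lambda$ and the branch points of the eventual tree are genuine points of $X$, rather than points living only in $\tfrac{1}{2}\Lambda$.

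Next I would set $X = \{(g,\lambda)\}/\!\sim$ with base point $x = [(1,0)]$ and define
\[
d\big([(g,\lambda)],[(h,\mu)]\big) = \lambda + \mu - 2\min\{\lambda,\mu,c(g,h)\}.
\]
Checking that $d$ is independent of the chosen representatives again reduces to (L3): if $\lambda \leqslant c(g,g')$ then $\min\{c(g,g'),c(g,h)\} = \min\{c(g,g'),c(g',h)\}$, and intersecting with $\lambda,\mu$ leaves the minimum unchanged. The axioms (M1)--(M3) are immediate; the triangle inequality (M4) is the only non-formal one and follows once more from the ultrametric inequality for the products $c$. A short computation gives $d(x,[(g,\lambda)]) = \lambda$, hence $d(x,[(g,l(g))]) = l(g)$, so $x$ together with the assignment $g \mapsto [(g,l(g))]$ already encodes $l$.

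For the action I would let $f \in G$ send the point at distance $\lambda$ from $x$ on $[x,gx]$ to the point at distance $\lambda$ from $fx$ on $[fx,fgx]$; translating this back into coordinates based at $x$ yields the explicit rule that $f \cdot [(g,\lambda)]$ equals $[(f, l(f)-\lambda)]$ when $\lambda \leqslant \tfrac{1}{2}(l(f)+l(g)-l(fg))$ and $[(fg, \lambda + l(fg) - l(g))]$ otherwise (the two formulas agreeing at the median point $c(f,fg)$). I would verify that this is well defined on $\sim$-classes, that $1$ acts trivially and $f_1 f_2$ acts as $f_2$ followed by $f_1$, and that each $f$ preserves $d$; all of these are finite case-checks governed by the identity $c(f^{-1},g) = \tfrac{1}{2}(l(f)+l(g)-l(fg))$ and the inequality from (L3). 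Since $f \cdot x = f \cdot [(1,0)] = [(f,l(f))]$, one obtains $d(x,fx) = l(f)$, that is, $l = l_x$, as required.

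What remains, and what I expect to be the main obstacle, is verifying that $(X,d)$ satisfies the $\Lambda$-tree axioms (T1)--(T3). For (T1) I would exhibit, for the endpoints $x$ and $[(g,l(g))]$, the explicit segment $\{[(g,\lambda)] : 0 \leqslant \lambda \leqslant l(g)\}$, noting that $\lambda \mapsto [(g,\lambda)]$ is an isometric embedding of $[0,l(g)]_\Lambda$, and then reduce a general pair of points to this case. For (T2) and (T3) I would identify the intersection of the segments $[x,[(g,l(g))]]$ and $[x,[(h,l(h))]]$ as their common initial segment of length $c(g,h)$, which is precisely what the equivalence relation was designed to record, and then transport the general statement to this normalized situation using the isometric action together with the fact that any point may be moved to lie on a segment through $x$. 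The delicate points throughout are that every required meeting point genuinely exists in $X$ (again where (L4) is used) and that the branching is controlled entirely by the $0$-hyperbolicity encoded in (L3); once these are in hand the axioms follow. This is the construction carried out in detail by Chiswell \cite{Chiswell:1976, Chiswell:2001} and, for general $\Lambda$, by Morgan and Shalen \cite{Morgan_Shalen:1984}.
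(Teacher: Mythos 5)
Your proposal is correct and is essentially the paper's own (i.e.\ Chiswell's) proof: the paper treats this theorem as the constructive statement that one builds a $\Lambda$-metric space directly from $(G,l)$ and checks it is a $\Lambda$-tree with an isometric $G$-action, and it carries out exactly your construction---the same pairs $(g,\lambda)$, the same equivalence $\lambda = \mu \leqslant c(g,h)$, the same metric $\lambda + \mu - 2\min\{\lambda,\mu,c(g,h)\}$, and the same two-case action formulas---in its universal-tree construction (Proposition \ref{pr:lambda_tree}, Lemma \ref{le:isometric}, Proposition \ref{pr:action}). The only cosmetic difference is that the paper closes the argument by proving $0$-hyperbolicity of the resulting geodesic space (and invoking the standard characterization of $\Lambda$-trees) rather than verifying (T2)--(T3) directly as you sketch, which amounts to the same thing.
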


The proof is constructive, that is, one can define a $\Lambda$-metric space out of $G$ and $l$, and
then prove that this space is in fact a $\Lambda$-tree on which $G$ acts by isometries (see
\cite[Subsection 2.4]{Chiswell:2001} for details).

\subsection{From length functions to infinite words}
\label{subs:len-words}

The following results show the connection between groups with Lyndon length functions and groups of
infinite words.

\begin{theorem} \cite[Theorem 4.1]{Myasnikov_Remeslennikov_Serbin:2005}
\label{co:3.1}
Let $\Lambda$ be a discretely ordered abelian group and $X$ be a set. Then any subgroup $G$ of
$CDR(\Lambda, X)$ has a free Lyndon length function with values in $\Lambda$ -- the restriction
$| \cdot |_G$ on $G$ of the standard length function $| \cdot |$ on $CDR(\Lambda, X)$.
\end{theorem}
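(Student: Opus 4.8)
The plan is to reduce every axiom to a transparent statement about longest common initial segments of $\Lambda$-words. The crucial first step is to compute the Gromov product $c(f,g)$ explicitly. Recall that whenever $u \ast v$ is defined one writes $u^{-1} = {\rm com}(u^{-1},v) \circ \tilde u$ and $v = {\rm com}(u^{-1},v) \circ \tilde v$, so that $u \ast v = \tilde u^{-1} \circ \tilde v$; since concatenation $\circ$ adds lengths this gives $|u \ast v| = |u| + |v| - 2|{\rm com}(u^{-1},v)|$. Applying this with $u = f^{-1}$, $v = g$ and using $|f^{-1}| = |f|$ yields $|f^{-1} \ast g| = |f| + |g| - 2|{\rm com}(f,g)|$, whence
$$c(f,g) = \tfrac12\bigl(|f| + |g| - |f^{-1} \ast g|\bigr) = |{\rm com}(f,g)|.$$
(The products involved are defined since $G$ is a subgroup, and ${\rm com}$ of two reduced words always exists as the maximal common prefix.) This identity already settles (L4), because $|{\rm com}(f,g)| \in \Lambda$ by the definition of length.

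With this in hand, (L1) and (L2) are immediate: $|g| \geqslant 0$ with $|\varepsilon| = 0$, and $|g^{-1}| = |g|$ directly from the definition of the inverse $\Lambda$-word. For (L3) I would argue combinatorially. Suppose $c(f,g) > c(f,h)$ and set $a = |{\rm com}(f,h)|$. Then $f$ and $h$ agree on $[1,a]$; since $\Lambda$ is discretely ordered, $a+1$ is the immediate successor of $a$, and either $h$ has length $a$ (so $h$ is an initial segment of $f$) or $f(a+1) \neq h(a+1)$. Because $|{\rm com}(f,g)| > a$, the word $g$ agrees with $f$ throughout $[1,a+1]$, hence $g$ agrees with $h$ on $[1,a]$ and, in the second case, $g(a+1) = f(a+1) \neq h(a+1)$. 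In either case ${\rm com}(g,h)$ has length exactly $a$, giving $c(g,h) = a = c(f,h)$, which is (L3).

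For freeness (L5) I would use that $G \leqslant CDR(\Lambda,X)$, so each $g \neq \varepsilon$ admits a cyclic decomposition $g = c^{-1} \circ u \circ c$ with $u$ cyclically reduced and necessarily $u \neq \varepsilon$ (otherwise $g = c^{-1} \circ c$, which is reduced only for $c = \varepsilon$, forcing $g = \varepsilon$). Then $g^{-1} = c^{-1} \circ u^{-1} \circ c$, and the defining inequality $u(1)^{-1} \neq u(|u|)$ of a cyclically reduced word says exactly that $g$ and $g^{-1}$ first differ in the position right after the common segment $c^{-1}$; hence ${\rm com}(g^{-1},g) = c^{-1}$. The length formula now gives $|g^2| = 2|g| - 2|c| = 2|c| + 2|u|$, while $|g| = 2|c| + |u|$, so $|g^2| - |g| = |u| > 0$. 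This establishes (L5) and completes the verification that $|\cdot|_G$ is a free Lyndon length function.

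I expect the main thing to get right to be the prefix bookkeeping in (L3) rather than any conceptual difficulty: one must treat discreteness of $\Lambda$ carefully in order to speak of the letter in position $a+1$ and to cover the degenerate case where one word is an initial segment of another. Example \ref{ex:2-torsion} shows that $R(\Lambda,X)$ can behave pathologically, but the identity $c(f,g) = |{\rm com}(f,g)|$ depends only on lengths of common prefixes and is insensitive to such torsion phenomena, so no additional care is required there.
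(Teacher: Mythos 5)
Your proof is correct and follows essentially the same route as the source this survey cites for the theorem (\cite{Myasnikov_Remeslennikov_Serbin:2005}): the identity $c(f,g) = |{\rm com}(f,g)|$, obtained from $|f^{-1} \ast g| = |f| + |g| - 2|{\rm com}(f,g)|$, handles (L1)--(L4), and the cyclic decomposition available in $CDR(\Lambda,X)$ gives (L5) via $|g^2| - |g| = |u| > 0$. One caution: your parenthetical claim that ${\rm com}$ of two reduced $\Lambda$-words ``always exists'' is false in general --- this is exactly why $\ast$ is only a \emph{partial} operation on $R(\Lambda,X)$ --- but your argument is unaffected, since the hypothesis that $G$ is a subgroup (so every product $f^{-1} \ast g$ is defined) is what guarantees existence of ${\rm com}(f,g)$, and you invoke that justification as well.
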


The converse of  Theorem \ref{co:3.1} was obtained by Chiswell in \cite{Chiswell:2005}.

\begin{theorem} \cite[Theorem 3.9]{Chiswell:2005}
\label{chis}
Let $G$ have a free Lyndon length function $L : G \rightarrow \Lambda$, where $\Lambda$ is a
discretely ordered abelian group. Then there exists a length preserving embedding $\phi : G
\rightarrow CDR(\Lambda, X)$, that is, $|\phi(g)| = L(g)$ for any $g \in G$.
\end{theorem}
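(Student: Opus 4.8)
The plan is to route the statement through the $\Lambda$-tree associated with $L$ and then read off the infinite words geometrically. First I would invoke the Chiswell construction (the theorem of Section~\ref{subs:act-len}): since $L$ is a Lyndon length function satisfying (L4) (its Gromov products $c(f,g)=\frac{1}{2}(L(f)+L(g)-L(f^{-1}g))$ lie in $\Lambda$, which is available for free length functions into a discretely ordered group), there is a $\Lambda$-tree $(\Gamma,d)$, a point $v\in\Gamma$, and an action of $G$ by isometries with $L(g)=d(v,gv)$ for all $g$. Because $L$ is free, every $1\neq g\in G$ acts hyperbolically and the action is free; in particular $G$ is torsion-free and $L(g)=0$ forces $g=1$.

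Next I would manufacture the alphabet from the tree. Since $\Lambda$ is discretely ordered with minimal positive element $1_\Lambda$, call an ordered pair $(p,q)$ of points of $\Gamma$ with $d(p,q)=1_\Lambda$ a \emph{directed edge}, and let $E$ be the set of directed edges. The group $G$ acts on $E$, and reversal $(p,q)\mapsto(q,p)$ descends to a fixed-point-free involution on the orbit set $X^\pm:=G\backslash E$ (no orbit is reversed by a group element, as that element would have order dividing $2$), which I take as the signed alphabet. For $g\in G$ and $\alpha\in[1,L(g)]$ let $p^g_\alpha$ be the point of the segment $[v,gv]$ at distance $\alpha$ from $v$, and define $\phi(g)\colon[1,L(g)]\to X^\pm$ by $\phi(g)(\alpha)=[(p^g_{\alpha-1},p^g_\alpha)]$. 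By construction $|\phi(g)|=L(g)$, and $\phi(g)$ is reduced: consecutive letters $[(p^g_{\alpha-1},p^g_\alpha)]$ and $[(p^g_\alpha,p^g_{\alpha+1})]$ could be inverse only if some group element fixed $p^g_\alpha$ while swapping $p^g_{\alpha-1}$ and $p^g_{\alpha+1}$, which is impossible for a free action. Passing to orbits is exactly what makes $\phi(g^{-1})=\phi(g)^{-1}$ hold: applying the isometry $g$ to $[v,g^{-1}v]$ identifies its reversed directed edges with those of $[v,gv]$ in the same orbits, so the positional identity $\phi(g^{-1})(L(g)+1-\alpha)=\phi(g)(\alpha)^{-1}$ falls out automatically.

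The hard part will be the homomorphism property $\phi(gh)=\phi(g)\ast\phi(h)$, which I would prove by median analysis in $\Gamma$. Writing $m=Y(v,gv,ghv)$ for the median of $v,gv,ghv$ and using $[gv,ghv]=g[v,hv]$, the segment $[v,ghv]$ decomposes as $[v,m]\cup[m,ghv]$ with $[v,m]\subseteq[v,gv]$ and $[m,ghv]\subseteq g[v,hv]$, while the overlap $[m,gv]$ is traversed forward by $\phi(g)$ and backward (inside $g[v,hv]$) by $\phi(h)$. A Gromov-product computation gives $d(m,gv)=c(g^{-1},h)$, so the cancellation length matches the one prescribed by $\ast$; by (L4) this quantity lies in $\Lambda$, hence $m$ sits at an edge boundary and the cancellation is letter-by-letter rather than mid-edge. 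The genuine content is to check that the cancelled letters really are mutually inverse orbits and that the surviving letters reassemble into $\phi(gh)$ — this is where one must carefully exploit $G$-invariance of orbits to match the letters read along $g[v,hv]$ with those read along $[v,hv]$. Injectivity is then immediate, since $\ker\phi$ consists of elements of length $0$, so $\phi$ is a length-preserving embedding into $R(\Lambda,X)$.

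Finally I would place the image in $CDR(\Lambda,X)$ using the theory of a single isometry. For $g\neq1$, which is hyperbolic, Theorem~\ref{le:hyperbolic} supplies the bridge $[v,p]$ from $v$ to the axis $A_g$, with $[v,gv]=[v,p]\cup[p,gp]\cup[gp,gv]$ and $[gp,gv]=g[p,v]$. Reading $\phi(g)$ along this decomposition yields $\phi(g)=c\circ u\circ c^{-1}$, where $c$ is the word along $[v,p]$ and $u$ is the word along the axis segment $[p,gp]$; after relabelling this is a cyclic decomposition. Linearity of $A_g$ (again Theorem~\ref{le:hyperbolic}) makes $[p,g^2p]$ a genuine segment, so $u\circ u$ is reduced and hence $u$ is cyclically reduced, as required. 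This shows $\phi(g)\in CDR(\Lambda,X)$ and completes the embedding.
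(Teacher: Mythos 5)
Your route --- run Chiswell's construction to get a $\Lambda$-tree with $L=l_v$, take $G$-orbits of directed edges (ordered pairs of points at distance $1_\Lambda$) as the signed alphabet, read orbit labels along $[v,gv]$, verify $\phi(g)\ast\phi(h)=\phi(gh)$ via the median computation $d(m,gv)=c(g^{-1},h)$, and extract cyclic decompositions from the bridge-plus-axis decomposition of $[v,gv]$ --- is sound, and it is essentially the standard (Chiswell-style) proof; the survey itself does not reprove this theorem but cites Chiswell for it. Your checks of reducedness, of $\phi(g^{-1})=\phi(g)^{-1}$, and of why freeness of the action rules out inverted or coinciding orbit letters are correct, and the letter-matching you flag as the genuine content does go through by exactly the equivariance you describe.

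The one step that is actually false is the parenthetical claim that axiom (L4), $c(f,g)\in\Lambda$, ``is available for free length functions into a discretely ordered group.'' It is not a consequence of (L1)--(L3), (L5) and discreteness of $\Lambda$. Concretely: in $F(a,b,c)$ declare $|a|=|b|=|c|=\tfrac{1}{2}$ and let $G=\langle ab,\ ac\rangle$, a free group of rank $2$ with basis $x=ab$, $y=ac$. The induced based length function on $G$ takes values in $\mathbb{Z}$ (each generator has length $1$, and each cancellation, which can occur only at a junction $x^{-1}\cdot y$ or $y^{-1}\cdot x$, removes exactly length $1$), and it is a free Lyndon length function; yet $c(x,y)=\tfrac{1}{2}(1+1-1)=\tfrac{1}{2}\notin\mathbb{Z}$ --- geometrically, the median of $v, xv, yv$ is the midpoint of an edge, which is precisely the mid-edge degeneration your argument needs to exclude. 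Moreover this $G$ admits no length-preserving embedding into any $CDR(\mathbb{Z},X)$ whatsoever: $\phi(x)$ and $\phi(y)$ would have to be single letters, so $|\phi(x)^{-1}\ast\phi(y)|$ is either $0$ or $2$, never $1=L(x^{-1}y)$. So (L4) can be neither derived nor dispensed with: it has to be read as part of the hypotheses (as it is in Chiswell's own formulation, and as the survey itself does when stating Chiswell's 1976 construction in Section \ref{subs:act-len}). Once (L4) is assumed, your first step is legitimate and the rest of your argument works; as written, though, the proof rests on a false lemma, and the statement as you have interpreted it (without (L4)) is itself false.
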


\begin{cor} \cite[Corollary 3.10]{Chiswell:2005}
\label{chis-cor}
Let $G$ have a free Lyndon length function $L : G \rightarrow \Lambda$, where $\Lambda$ is an
arbitrary ordered abelian group. Then there exists an embedding $\phi : G \rightarrow CDR(\Lambda',
X)$, where $\Lambda' = \mathbb{Z} \oplus \Lambda$ is discretely ordered with respect to the right
lexicographic order and $X$ is some set, such that, $|\phi(g)| = (0, L(g))$ for any $g \in G$.
\end{cor}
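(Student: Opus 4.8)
The plan is to reduce the statement to Theorem~\ref{chis}, whose only additional hypothesis over the present one is that the value group be discretely ordered. So the entire point is to replace the arbitrary $\Lambda$ by a discretely ordered overgroup into which it order-embeds, and to transport the length function along that embedding.

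First I would set $\Lambda' = \mathbb{Z} \oplus \Lambda$ with the right lexicographic order and check that $\Lambda'$ is discretely ordered. An element $(a,b)$ is positive precisely when $b > 0$, or $b = 0$ and $a > 0$; the smallest such element is $1_{\Lambda'} = (1,0)$, since every element with $b > 0$ exceeds it, while $(1,0)$ is the least among those with $b = 0,\ a > 0$. Hence $\Lambda'$ is discrete with minimal positive element $(1,0)$, exactly as in the examples of Section~\ref{subs:ord_ab}.

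Next I would verify that $\iota : \Lambda \to \Lambda'$, $\lambda \mapsto (0,\lambda)$, is an order-preserving embedding of ordered abelian groups: it is plainly an injective homomorphism, and $(0,\lambda_1) < (0,\lambda_2)$ holds in the right lexicographic order exactly when $\lambda_1 < \lambda_2$. Setting $L' = \iota \circ L : G \to \Lambda'$, I would then show that $L'$ is again a free Lyndon length function. Axioms (L1), (L2) and (L5) are immediate from the corresponding axioms for $L$ together with the fact that $\iota$ preserves $0$, commutes with inversion-invariance, and respects the order. For (L3) the one point deserving care is that the Gromov products a priori lie in $\Lambda_{\mathbb{Q}}$ rather than in $\Lambda$; here I would use that $\iota$ extends to an order-preserving injective homomorphism $\iota_{\mathbb{Q}} : \Lambda_{\mathbb{Q}} \to \Lambda'_{\mathbb{Q}}$ with $c_{L'}(f,g) = \iota_{\mathbb{Q}}(c_L(f,g))$, so that every strict inequality between Gromov products for $L$ transfers verbatim to one for $L'$, and (L3) for $L$ yields (L3) for $L'$. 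The same computation shows $c_{L'}(f,g) \in \Lambda'$ if and only if $c_L(f,g) \in \Lambda$, so that (L4) transfers in both directions should one wish to carry it.

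Finally I would apply Theorem~\ref{chis} to the free Lyndon length function $L' : G \to \Lambda'$ on the discretely ordered group $\Lambda'$. This produces a length-preserving embedding $\phi : G \to CDR(\Lambda', X)$ for a suitable set $X$, that is, $|\phi(g)| = L'(g) = (0, L(g))$ for every $g \in G$, which is precisely the assertion. The argument carries no real obstacle beyond the bookkeeping above: the only genuinely substantive input, namely the passage from a length function to its realization by infinite words, is exactly what Theorem~\ref{chis} supplies, and the sole role of the extra $\mathbb{Z}$-summand is to manufacture a minimal positive element, rendering the value group discrete so that that theorem becomes applicable.
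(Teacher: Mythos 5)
Your proof is correct and is essentially the intended derivation: the paper simply cites Chiswell's Corollary 3.10, which is obtained from Theorem \ref{chis} (Chiswell's Theorem 3.9) exactly as you do, by order-embedding $\Lambda$ onto $\{0\}\oplus\Lambda \subset \mathbb{Z}\oplus\Lambda$ (discretely ordered under the right lexicographic order, with minimal positive element $(1,0)$), transporting the free Lyndon length function along this embedding, and applying the theorem to get $|\phi(g)| = (0,L(g))$. Your handling of the axioms, including the Gromov-product point via $\iota_{\mathbb{Q}}$, is accurate.
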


\subsection{From infinite words to actions}
\label{subs:words-act}

Below we follow the construction given in \cite{KMS:2011}.

\subsubsection{Universal trees}
\label{sec:universal}

Let $G$ be a subgroup of $CDR(\Lambda, X)$ for some discretely ordered abelian group $\Lambda$ and a
set $X$. We assume $G,\ \Lambda$, and $X$ to be fixed for the rest of this section.

Every element $g \in G$ is a function
$$g: [1,|g|] \rightarrow X^{\pm},$$
with the domain $[1,|g|]$ which a closed segment in $\Lambda$. Since $\Lambda$ can be viewed as a
$\Lambda$-metric space then $[1,|g|]$ is a geodesic connecting $1$ and $|g|$, and every $\alpha \in
[1,|g|]$ we view as a pair $(\alpha, g)$. We would like to identify initial subsegments of the
geodesics corresponding to all elements of $G$ as follows.

\smallskip

Let
$$S_G = \{(\alpha,g) \mid g \in G, \alpha \in [0,|g|]\}.$$
Since for every $f,g \in G$ the word $com(f,g)$ is defined, we can introduce an equivalence relation
on $S_G$ as follows: $(\alpha,f) \sim (\beta,g)$ if and only if $\alpha = \beta \in [0, c(f,g)]$.
Obviously, it is symmetric and reflexive. For transitivity observe that if $(\alpha,f) \sim (\beta,
g)$ and $(\beta,g) \sim (\gamma,h)$ then $0 \leqslant \alpha = \beta = \gamma \leqslant c(f,g),
c(g,h)$. Since $c(f,h) \geqslant \min\{c(f,g), c(g,h)\}$ then $\alpha = \gamma \leqslant c(f,h)$.

Let $\Gamma_G = S_G / \sim$ and $\epsilon = \langle 0, 1 \rangle$, where $\langle \alpha, f \rangle$
is the equivalence class of $(\alpha,f)$.

\begin{prop} \cite{KMS:2011}
\label{pr:lambda_tree}
$\Gamma_G$ is a $\Lambda$-tree,
\end{prop}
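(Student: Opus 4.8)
The plan is to put a $\Lambda$-metric on $\Gamma_G$, reduce everything to combinatorics of prefixes of infinite words, and then check the tree axioms (T1)--(T3) directly. The key preliminary observation is that for $f,g\in G$ one has $c(f,g)=|{\rm com}(f,g)|$: unwinding the definition of $\ast$, the reduced product $f^{-1}\ast g$ equals $\tilde f^{-1}\circ\tilde g$ where $f={\rm com}(f,g)\circ\tilde f$ and $g={\rm com}(f,g)\circ\tilde g$, so $|f^{-1}\ast g|=|f|+|g|-2|{\rm com}(f,g)|$ and hence $c(f,g)=\frac12(|f|+|g|-|f^{-1}\ast g|)=|{\rm com}(f,g)|$. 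Consequently $(\alpha,f)\sim(\beta,g)$ says exactly that $\alpha=\beta\leqslant c(f,g)$, i.e. the prefixes $f_\alpha$ and $g_\alpha$ coincide. Thus $\langle\alpha,f\rangle\mapsto f_\alpha$ identifies $\Gamma_G$ with the set $P=\{f_\alpha\mid f\in G,\ \alpha\in[0,|f|]\}$ of all initial segments of elements of $G$, with base point $\epsilon$ corresponding to the empty word $\varepsilon$. On $P$ I define $d(u,v)=|u|+|v|-2|{\rm com}(u,v)|$; since this depends only on the prefixes $u,v$ and not on the representatives $f,g$, the metric is automatically well defined on $\Gamma_G$, and it agrees with $d(\langle\alpha,f\rangle,\langle\beta,g\rangle)=\alpha+\beta-2\min\{\alpha,\beta,c(f,g)\}$.

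First I would verify that $(P,d)$ is a $\Lambda$-metric space. Axioms (M1)--(M3) are immediate. The triangle inequality (M4) rests on a single combinatorial fact: longest common prefixes are ultrametric, namely $|{\rm com}(u,w)|\geqslant\min\{|{\rm com}(u,v)|,|{\rm com}(v,w)|\}$, with equality whenever the two right-hand lengths differ. This is precisely the inequality $c(f,h)\geqslant\min\{c(f,g),c(g,h)\}$ coming from (L3) that was already used to check transitivity of $\sim$. Granting it, (M4) reduces to $|{\rm com}(u,v)|+|{\rm com}(v,w)|-|{\rm com}(u,w)|\leqslant|v|$, which holds since $|{\rm com}(u,v)|,|{\rm com}(v,w)|\leqslant|v|$ and the subtracted term is at least the smaller of the two. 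Equivalently, this shows $(\Gamma_G,d)$ is $0$-hyperbolic with respect to $\epsilon$ with all Gromov products in $\Lambda$; note that (L4) holds here since $c(f,g)=|{\rm com}(f,g)|\in\Lambda$.

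Next I would establish (T1). Given $u,v\in P$ put $p={\rm com}(u,v)$, $a=|u|$, $b=|v|$, $c=|p|$, so $d(u,v)=(a-c)+(b-c)$. The geodesic travels from $u$ back along its prefixes to $p$ and then out along the prefixes of $v$: $t\mapsto u_{a-t}$ for $t\in[0,a-c]_\Lambda$ and $t\mapsto v_{c+t-(a-c)}$ for $t\in[a-c,d(u,v)]_\Lambda$. Both pieces are isometric embeddings of subsegments of $\Lambda$, they agree at $t=a-c$ (both give $p$), and for a point on the first piece and a point on the second piece their longest common prefix is exactly $p$, because $u$ and $v$ diverge immediately after position $c$; hence the map is an isometry of $[0,d(u,v)]_\Lambda$ onto its image. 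This realizes every segment as a \emph{prefix interval}: prefixes of $u$ of length $\geqslant c$ together with prefixes of $v$ of length $\geqslant c$.

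The last and hardest step is (T2) and (T3), the branch-point analysis. For three points $u,v,w$ the ultrametric property singles out a median $Y=Y(u,v,w)$: among $|{\rm com}(u,v)|,|{\rm com}(v,w)|,|{\rm com}(u,w)|$ the two smallest coincide, and the corresponding common prefix $Y$ is a prefix of all of $u,v,w$ and lies on each of $[u,v],[v,w],[u,w]$. Using the prefix-interval description from (T1), I would show that two segments sharing the common endpoint $u$ meet exactly in the prefix interval from $u$ to $Y(u,v,w)$, which is again a segment, giving (T3); and that if two segments $[x,y]$ and $[y,z]$ meet only in the common endpoint $y$, then $y$ is forced to be the median $Y(x,y,z)$, so $d(x,z)=d(x,y)+d(y,z)$ and the concatenated parametrization is an isometry, giving (T2). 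The main obstacle is precisely this bookkeeping: checking that intersections and unions of prefix intervals are again prefix intervals and that no unexpected overlaps occur, which is where $0$-hyperbolicity is used most heavily. A shortcut is also available: since $|\cdot|_G$ is a free Lyndon length function satisfying (L4), the space $\Gamma_G$ is exactly Chiswell's construction, so the theorem of Section~\ref{subs:act-len} already yields that it is a $\Lambda$-tree; the direct argument above merely makes the identification with infinite words transparent.
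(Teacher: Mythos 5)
Your identification of $\Gamma_G$ with the set of initial segments of elements of $G$, via $c(f,g)=|{\rm com}(f,g)|$, is correct, and it makes well-definedness of $d$ automatic; this is cleaner than the paper's own proof, which checks well-definedness by a two-case computation with the length-function axioms, gets the triangle inequality from the isosceles-triple lemma in Chiswell's book, proves $0$-hyperbolicity with respect to $\epsilon$, and stops there (the paper never verifies (T1)--(T3) directly). Your metric axioms and your explicit geodesic construction for (T1) (prefix intervals through ${\rm com}(u,v)$) are also correct, so up to that point you have done at least as much as the paper, by a genuinely more combinatorial route.

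The genuine gap is in your median, on which your whole (T2)/(T3) step rests. You claim that when the two smallest of $|{\rm com}(u,v)|,|{\rm com}(v,w)|,|{\rm com}(u,w)|$ coincide, ``the corresponding common prefix $Y$'' is a prefix of all of $u,v,w$ and lies on all three segments. The point that is a prefix of all three words is the \emph{shortest} pairwise common prefix, and it does not in general lie on the third segment; the actual median is the \emph{longest} of the three pairwise common prefixes, which in general is a prefix of only two of the three words. Concretely, in $F(a,b)\leqslant CDR(\mathbb{Z},\{a,b\})$ take $u=ab$, $v=ab^{-1}$, $w=ab^2$: then ${\rm com}(u,v)={\rm com}(v,w)=a$ while ${\rm com}(u,w)=ab$, so your $Y=a$, but $[u,w]=\{ab,ab^2\}$ does not contain $a$, and $[u,v]\cap[u,w]=\{ab\}$ is the prefix interval from $u$ to ${\rm com}(u,w)$, not to $a$ (your claim would give $\{ab,a\}$). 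Thus your characterization of the intersection of two segments with a common endpoint is false as stated, and (T2)/(T3) do not follow. The repair is to define $Y(u,v,w)$ as the longest of the three pairwise common prefixes; with that correction your case analysis does go through. Alternatively, your closing shortcut is sound and non-circular: the length function on $G\leqslant CDR(\Lambda,X)$ satisfies (L1)--(L4), the theorem quoted in Section \ref{subs:act-len} is proved independently of infinite words, and $\Gamma_G$ is literally Chiswell's construction (same pairs, same equivalence, same metric) -- which is, in effect, also what the paper relies on implicitly when it stops at $0$-hyperbolicity.
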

\begin{proof} At first we show that $\Gamma_G$ is a $\Lambda$-metric space. Define the metric by
$$d(\langle \alpha, f \rangle, \langle \beta, g \rangle) = \alpha + \beta - 2\min\{\alpha, \beta,
c(f,g)\}.$$
Let us check if it is well-defined. Indeed, $c(f,g) \in \Lambda$ is defined for every $f,g \in G$.
Moreover, let $(\alpha, f) \sim (\gamma,u)$ and $(\beta, g) \sim (\delta,v)$, we want to prove
$$d(\langle \alpha, f \rangle, \langle \beta, g \rangle) = d(\langle \gamma, u \rangle, \langle
\delta, v \rangle)$$
which is equivalent to
$$\min\{\alpha,\beta,c(f,g)\} = \min\{\alpha,\beta,c(u,v)\}$$
since $\alpha = \gamma,\ \beta = \delta$. Consider the following cases.

\begin{enumerate}
\item[(a)] $\min\{\alpha,\beta\} \leqslant c(u,v)$

Hence, $\min\{\alpha,\beta,c(u,v)\} = \min\{\alpha,\beta\}$ and it is enough to prove $\min\{\alpha,
\beta\}$ $= \min\{\alpha,\beta,c(f,g)\}$. From length function axioms for $G$ we have
$$c(f,g) \geqslant \min\{c(u,f),c(u,g)\},\ \ c(u,g) \geqslant \min\{c(u,v),c(v,g)\}.$$
Hence,
$$c(f,g) \geqslant \min\{c(u,f),c(u,g)\} \geqslant  \min\{c(u,f), \min\{c(u,v),c(v,g)\} \}$$
$$ = \min\{c(u,f),c(u,v),c(v,g)\}.$$
Now, from $(\alpha, f) \sim (\gamma,u),\ (\beta, g) \sim (\delta,v)$ it follows that $\alpha \leqslant
c(u,f),\ \beta \leqslant c(v,g)$ and combining it with the assumption $\min\{\alpha,\beta\} \leqslant
c(u,v)$ we have
$$c(f,g) \geqslant \min\{c(u,f),c(u,v),c(v,g)\} \geqslant \min\{\alpha,\beta\},$$
or, in other words,
$$\min\{\alpha,\beta,c(f,g)\} = \min\{\alpha,\beta\}.$$

\item[(b)] $\min\{\alpha,\beta\} > c(u,v)$

Hence, $\min\{\alpha,\beta,c(u,v)\} = c(u,v)$ and it is enough to prove $c(f,g) = c(u,v)$.

Since
$$c(u,f) \geqslant \alpha > c(u,v),\ c(v,g) \geqslant \beta > c(u,v),$$
then $\min\{c(u,f),c(u,v),c(v,g)\} = c(u,v)$ and
$$c(f,g) \geqslant \min\{c(u,f),c(u,v),c(v,g)\} = c(u,v).$$
Now we prove that $c(f,g) \leqslant c(u,v)$. From length function axioms for $G$ we have
$$c(u,v) \geqslant \min\{c(v,g),c(u,g)\} = c(u,g) \geqslant \min\{c(v,g),c(u,v)\} = c(u,v),$$
that is, $c(u,v) = c(u,g)$. Now,
$$c(u,v) = c(u,g) \geqslant \min\{c(u,f),c(f,g)\},$$
where $\min\{c(u,f),c(f,g)\} = c(f,g)$ since otherwise we have $c(u,v) \geqslant c(u,f) \geqslant
\alpha$ - a contradiction. Hence, $c(u,v) \geqslant c(f,g)$ and we have $c(f,g) = c(u,v)$.
\end{enumerate}

By definition of $d$, for any $\langle \alpha, f \rangle,\ \langle \beta, g \rangle$ we have
$$d(\langle \alpha, f \rangle, \langle \beta, g \rangle) = d(\langle \beta, g \rangle, \langle
\alpha, f \rangle) \geqslant 0,$$
$$d(\langle \alpha, f \rangle, \langle \alpha, f \rangle) = 0.$$
If
$$d(\langle \alpha, f \rangle, \langle \beta, g \rangle) = \alpha + \beta - 2 \min\{\alpha, \beta,
c(f,g)\} = 0$$
then $\alpha + \beta = 2 \min\{\alpha,\beta,c(f,g)\}$. It is possible only if $\alpha = \beta \leqslant
c(f,g)$ which implies $\langle \alpha, f \rangle = \langle \beta, g \rangle$. Finally, we have to
prove the triangle inequality
$$d(\langle \alpha, f \rangle, \langle \beta, g \rangle) \leqslant d(\langle \alpha, f \rangle, \langle
\gamma, h \rangle) + d(\langle \beta, g \rangle, \langle \gamma, h \rangle)$$
for every $\langle \alpha, f \rangle,\ \langle \beta, g \rangle,\ \langle \gamma, h \rangle \in
\Gamma_G$. The inequality above is equivalent to
$$\alpha + \beta - 2\min\{\alpha, \beta, c(f,g)\} \leqslant \alpha + \gamma$$
$$ - 2 \min\{\alpha, \gamma, c(f,h) + \beta + \gamma - 2\min\{\beta,\gamma,c(g,h)\}\}$$
which comes down to
$$\min\{\alpha,\gamma,c(f,h)\} + \min\{\beta,\gamma,c(g,h)\} \leqslant  \min\{\alpha,\beta,c(f,g)\} +
\gamma.$$

\smallskip

First of all, observe that for any $\alpha,\beta,\gamma \in \Lambda$ the triple $(\min\{\alpha,
\beta\},$ $\min\{\alpha,\gamma\},$ $\min\{\beta,\gamma\})$ is isosceles. Hence, by Lemma 1.2.7(1)
\cite{Chiswell:2001}, the triple
$$(\min\{\alpha,\beta,c(f,g)\},\ \min\{\alpha,\gamma,c(f,h)\},\ \min\{\beta,\gamma,c(g,h)\})$$
is isosceles too. In particular,
$$\min\{\alpha,\beta,c(f,g)\} \geqslant \min\{ \min\{\alpha,\gamma,c(f,h)\},\ \min\{\beta,\gamma,c(g,h)\} \}$$
$$ = \min\{\alpha,\beta,\gamma,c(f,h),c(g,h)\}.$$
Now, if
$$\min\{\alpha,\beta,\gamma,c(f,h),c(g,h)\} = \min\{\alpha,\gamma,c(f,h)\}$$
then $\min\{\beta,\gamma,c(g,h)\} = \gamma$ and
$$\min\{\alpha,\gamma,c(f,h)\} + \min\{\beta,\gamma,c(g,h)\} \leqslant  \min\{\alpha,\beta,c(f,g)\} + \gamma$$
holds. If
$$\min\{\alpha,\beta,\gamma,c(f,h),c(g,h)\} = \min\{\beta,\gamma,c(g,h)\}$$
then $\min\{\alpha,\gamma,c(f,h)\} = \gamma$ and
$$\min\{\alpha,\gamma,c(f,h)\} + \min\{\beta,\gamma,c(g,h)\} \leqslant  \min\{\alpha,\beta,c(f,g)\} + \gamma$$
holds again. So, $d$ is a $\Lambda$-metric.

\smallskip

Finally, we want to prove that $\Gamma_G$ is $0$-hyperbolic with respect to $\epsilon = \langle 0,
1 \rangle$ (and, hence, with respect to any other point in $\Gamma_G$). It is enough to prove that
the triple
$$((\langle \alpha, f \rangle \cdot \langle \beta, g \rangle)_\epsilon,\ (\langle \alpha, f \rangle
\cdot \langle \gamma, h \rangle)_\epsilon,\ (\langle \beta, g \rangle \cdot \langle \gamma, h
\rangle)_\epsilon)$$
is isosceles for every $\langle \alpha, f \rangle,\ \langle \beta, g \rangle,\ \langle \gamma, h
\rangle \in \Gamma_G$. But by definition of $d$ the above triple is isosceles if and only if
$$(\min\{\alpha,\beta,c(f,g)\},\ \min\{ \alpha, \gamma, c(f,h) \},\ \min\{ \beta, \gamma, c(g,h) \})$$
is isosceles which holds.

\smallskip

So, $\Gamma_G$ is a $\Lambda$-tree.
\end{proof}

Since $G$ is a subset of $CDR(\Lambda,X)$ and every element $g \in G$ is a function defined on
$[1_A,|g|]$ with values in $X^\pm$ then we can define a function
$$\xi : (\Gamma_G - \{\epsilon\}) \rightarrow X^\pm,\ \ \xi(\langle \alpha, g \rangle) = g(\alpha).$$
It is easy to see that $\xi$ is well-defined. Indeed, if $(\alpha, g) \sim (\alpha_1,g_1)$
then $\alpha = \alpha_1 \leqslant c(g,g_1)$, so $g(\alpha) = g_1(\alpha_1)$. Moreover, since every $g \in G$
is reduced then $\xi(p) \neq \xi(q)^{-1}$ whenever $d(p,q) = 1$.

$\xi$ can be extended to a function
$$\Xi : geod(\Gamma_G)_\epsilon \to R(\Lambda,X),$$
where $geod(\Gamma_G)_\epsilon = \{ (\epsilon,p] \mid p \in \Gamma_G \}$, so that
$$\Xi(\ (\epsilon, \langle \alpha, g \rangle]\ )(t) = g(t),\ t \in [1_A,\alpha].$$
That is, $\Xi(\ (\epsilon, \langle \alpha, g \rangle]\ )$ is the initial subword of $g$ of length
$\alpha$, and
$$\Xi(\ (\epsilon, \langle |g|, g \rangle]\ ) = g.$$
On the other hand, if $g \in G$ and $\alpha \in [1_A,|g|]$ then
the initial subword of $g$ of length $\alpha$ uniquely corresponds to $\Xi(\ (\epsilon, \langle
\alpha, g \rangle]\ )$. If $(\alpha, g) \sim (\alpha_1,g_1)$ then $\alpha = \alpha_1 \leqslant c(g,g_1)$,
and since $g(t) = g_1(t)$ for any $t \in [1_A,c(g,g_1)]$ then
$$\Xi(\ (\epsilon, \langle \alpha, g \rangle]\ ) = \Xi(\ (\epsilon, \langle \alpha_1, g_1 \rangle]\ ).$$

\begin{lemma} \cite{KMS:2011}
\label{le:subword_prod}
Let $u,v \in R(\Lambda,X)$. If $u \ast v$ is defined then $u \ast a$ is also defined, where $v = a
\circ b$. Moreover, $u \ast a$ is an initial subword of either $u$ or $u \ast v$.
\end{lemma}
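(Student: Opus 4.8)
The plan is to unwind the definition of the partial product $\ast$ and then split into two cases according to the length of the prefix $a$. Write $c = {\rm com}(u^{-1}, v)$, which exists precisely because $u \ast v$ is defined, and fix the decompositions $u^{-1} = c \circ \tilde{u}$ and $v = c \circ \tilde{v}$, so that by definition $u \ast v = \tilde{u}^{-1} \circ \tilde{v}$, where $\tilde{u}(1) \neq \tilde{v}(1)$. This last inequality is exactly what makes $\tilde{u}^{-1} \circ \tilde{v}$ reduced, and it will be the tool used throughout. Since $v = a \circ b$, the word $a$ is the prefix $v_{|a|}$ of $v$, and everything hinges on comparing $|a|$ with $|c|$.

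First I would treat the case $|a| \geqslant |c|$. Here $c$ is a prefix of $a$, so I can write $a = c \circ a''$, where $a'' = \tilde{v}_{|a| - |c|}$ is a prefix of $\tilde{v}$. If $a''$ is nonempty then $a''(1) = \tilde{v}(1) \neq \tilde{u}(1)$, while if $a'' = \varepsilon$ then $a = c$ is already a prefix of $u^{-1}$; in either event ${\rm com}(u^{-1}, a) = c$ exists, so $u \ast a$ is defined and equals $\tilde{u}^{-1} \circ a''$. Because $a''$ is an initial subword of $\tilde{v}$, the word $\tilde{u}^{-1} \circ a''$ is an initial subword of $\tilde{u}^{-1} \circ \tilde{v} = u \ast v$, which is the first alternative in the statement.

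Next I would treat the case $|a| < |c|$. Since $a$ and $c$ are both prefixes of $v$ with $|a| < |c|$, the word $a$ is a prefix of $c$, and $c$ is in turn a prefix of $u^{-1}$; hence $a$ is a prefix of $u^{-1}$ and ${\rm com}(u^{-1}, a) = a$. Writing $u^{-1} = a \circ u'$, the definition of $\ast$ yields $u \ast a = (u')^{-1}$, so $u \ast a$ is defined. Inverting $u^{-1} = a \circ u'$ gives $u = (u')^{-1} \circ a^{-1}$, which exhibits $(u')^{-1} = u \ast a$ as an initial subword of $u$, the second alternative.

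The bookkeeping is routine; the only genuine subtlety is verifying that the longest common initial segments ${\rm com}(u^{-1}, a)$ really exist (so that $u \ast a$ is defined at all) and handling the degenerate cases $\tilde{u} = \varepsilon$, $\tilde{v} = \varepsilon$, or $a = \varepsilon$. I expect the main obstacle to be exactly this existence and degeneracy check: one must confirm that identifying ${\rm com}(u^{-1}, a)$ with $c$ in the first case and with $a$ in the second is legitimate even when one of the relevant tails is empty, which is where the reducedness of the words and the inherited inequality $\tilde{u}(1) \neq \tilde{v}(1)$ are used.
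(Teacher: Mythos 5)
Your proof is correct, and it takes essentially the same approach as the paper: the paper's proof is just a reference to a figure of the two possible cancelation diagrams, which correspond exactly to your two cases $|a| \geqslant |{\rm com}(u^{-1},v)|$ (giving $u \ast a$ as an initial subword of $u \ast v$) and $|a| < |{\rm com}(u^{-1},v)|$ (giving $u \ast a$ as an initial subword of $u$). Your write-up simply makes the figure's argument explicit, including the degenerate cases.
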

\begin{proof} The proof follows from Figure \ref{cancel_diag}.
\end{proof}

\begin{figure}[htbp]
\centering{\mbox{\psfig{figure=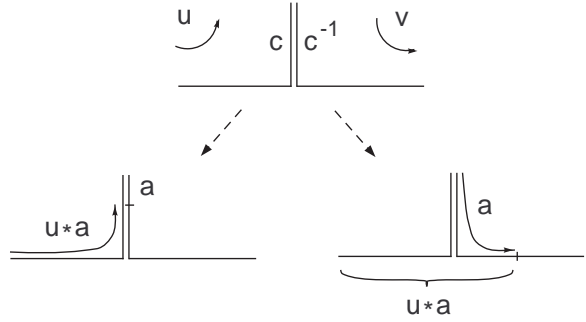,height=2in}}}
\caption{Possible cancelation diagrams in Lemma \ref{le:subword_prod}.}
\label{cancel_diag}
\end{figure}

Now, since for every $\langle \alpha, g \rangle \in \Gamma_G,\ \Xi(\ (\epsilon, \langle \alpha, g
\rangle]\ )$ is an initial subword of $g \in G$ then by Lemma \ref{le:subword_prod}, $f \ast
\Xi(\ (\epsilon, \langle \alpha, g \rangle]\ )$ is defined for any $f \in G$. Moreover, again by
Lemma \ref{le:subword_prod},  $f \ast \Xi(\ (\epsilon, \langle \alpha, g \rangle]\ )$ is an initial
subword of either $f$ or $f \ast g$. More precisely,
$$f \ast \Xi(\ (\epsilon, \langle \alpha, g \rangle]\ ) = \Xi(\ (\epsilon, \langle |f| - \alpha,
f \rangle]\ )$$
if $f \ast \Xi(\ (\epsilon, \langle \alpha, g \rangle]\ )$ is an initial subword of $f$, and
$$f \ast \Xi(\ (\epsilon, \langle \alpha, g \rangle]\ ) = \Xi(\ (\epsilon, \langle |f| + \alpha -
2 c(f^{-1},g), f\ast g \rangle]\ )$$
if $f \ast \Xi(\ (\epsilon, \langle \alpha, g \rangle]\ )$ is an initial subword of $f \ast g$.

\smallskip

Hence, we define a (left) action of $G$ on $\Gamma_G$ as follows:
$$f \cdot \langle \alpha, g \rangle = \langle |f| + \alpha - 2\min\{\alpha,c(f^{-1},g)\}, f \rangle$$
if $\alpha \leqslant c(f^{-1},g)$, and
$$f \cdot \langle \alpha, g \rangle = \langle |f| + \alpha - 2\min\{\alpha,c(f^{-1},g)\}, f\ast g
\rangle$$
if $\alpha > c(f^{-1},g)$.

\smallskip

The action is well-defined. Indeed, it is easy to see that $f \cdot \langle \alpha, g \rangle = f
\cdot \langle \alpha_1, g_1 \rangle$ whenever $(\alpha, g) \sim (\alpha_1,g_1)$.

\begin{lemma} \cite{KMS:2011}
\label{le:isometric}
The action of $G$ on $\Gamma_G$ defined above is isometric.
\end{lemma}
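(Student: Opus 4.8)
The plan is to reduce the isometry statement $d(f \cdot p, f \cdot q) = d(p,q)$ (for all $f \in G$ and $p,q \in \Gamma_G$) to the behaviour of the common-prefix function $c$ under left multiplication inside $G$, and then to settle it using the length-function axiom (L3). First I would record the dictionary between points and words: for $p = \langle \alpha, g \rangle$ the word $w_p = \Xi(\,(\epsilon, p]\,)$ is the initial subword of $g$ of length $\alpha$, the metric reads $d(\langle \alpha, g \rangle, \langle \beta, h \rangle) = \alpha + \beta - 2\min\{\alpha,\beta,c(g,h)\} = |w_p| + |w_q| - 2 c(w_p, w_q)$, and the action carries $p$ to the point whose associated word is $f \ast w_p$ (this is exactly the content of the discussion preceding the definition of the action, via Lemma \ref{le:subword_prod}). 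Conceptually, then, the action is left multiplication of words by $f$, and isometry is the identity $|(f \ast w_p)^{-1} \ast (f \ast w_q)| = |w_p^{-1} \ast w_q|$; morally this collapses by $(f \ast u)^{-1} = u^{-1} \ast f^{-1}$ and $f^{-1} \ast f = \varepsilon$. The catch is that $w_p, w_q$ are initial subwords that need not lie in $G$, and $R(\Lambda, X)$ is only a pregroup, so I would not reassociate these partial products directly but instead argue by a computation that stays inside the genuine group $G$.

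Concretely I would write $c_1 = c(f^{-1}, g)$, $c_2 = c(f^{-1}, h)$, $c_3 = c(g,h)$ and split into the four cases coming from $\alpha \lessgtr c_1$ and $\beta \lessgtr c_2$, which decide whether the index of $f \cdot \langle \alpha, g \rangle$ is $f$ or $f \ast g$ (and similarly for $h$). In every case the first coordinates are $\alpha' = |f| + \alpha - 2\min\{\alpha, c_1\}$ and $\beta' = |f| + \beta - 2\min\{\beta, c_2\}$, and the only new ingredient is the Gromov product $c(g', h')$ of the two \emph{new} indices. Since $f, g, h, f \ast g, f \ast h$ are all honest elements of the group $G$, whose restricted length function is a Lyndon length function by Theorem \ref{co:3.1}, I may use ordinary group algebra to establish $c(f,f) = |f|$, $c(f, f \ast h) = |f| - c_2$, and $c(f \ast g, f \ast h) = |f| - c_1 - c_2 + c_3$, each from $c(u,v) = \tfrac12(|u| + |v| - |u^{-1} \ast v|)$ together with $|f \ast g| = |f| + |g| - 2 c_1$, $f^{-1} \ast (f \ast h) = h$, and $(f \ast g)^{-1} \ast (f \ast h) = g^{-1} \ast h$.

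With these identities each case becomes a short evaluation of $d(f \cdot p, f \cdot q) = \alpha' + \beta' - 2\min\{\alpha', \beta', c(g', h')\}$, and the crux is that the case hypotheses combined with axiom (L3) — which forces the triple $(c_1, c_2, c_3)$ to be isosceles, so that the two smallest of the three coincide — always make the nested minimum collapse to $\alpha + \beta - 2\min\{\alpha,\beta,c_3\} = d(p,q)$. For example, if $\alpha \le c_1$ and $\beta \le c_2$ then $c(g',h') = c(f,f) = |f|$ gives $d(f\cdot p, f \cdot q) = |\alpha - \beta|$, while (L3) yields $c_3 \ge \min\{c_1,c_2\} \ge \min\{\alpha,\beta\}$, so $d(p,q) = |\alpha - \beta|$ too. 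I expect the main obstacle to be the remaining case $\alpha > c_1,\ \beta > c_2$, where $c(g',h') = |f| - c_1 - c_2 + c_3$ is itself a mixed quantity: verifying the equality there requires subdividing according to which two of $c_1, c_2, c_3$ are equal and comparing $\alpha, \beta$ against $c_3$. Controlling these nested minima through the isosceles property — rather than through the merely partial word arithmetic that motivated the argument — is where the real work lies; the two genuinely mixed cases are symmetric and follow the same pattern.
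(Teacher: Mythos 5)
Your proposal is correct, but it takes a genuinely different route from the paper. The paper first reduces the problem: it observes that it suffices to prove $d(\epsilon, \langle \alpha, g\rangle) = d(f\cdot\epsilon, f\cdot\langle\alpha,g\rangle)$ for all $f,g$, because then the geodesic tripod on $(\epsilon, \langle|g|,g\rangle, \langle|h|,h\rangle)$ is carried isometrically onto the tripod based at $f\cdot\epsilon$ (pairwise distances between orbit points of $\epsilon$ being automatically preserved, since they equal lengths of genuine group elements); after this reduction only the two cases $\alpha \leqslant c(f^{-1},g)$ and $\alpha > c(f^{-1},g)$ remain, and in the second case the paper computes $c(f, f\ast g)$ via the decomposition $f = f_1 \circ c^{-1}$, $g = c \circ g_1$. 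You instead verify $d(f\cdot p, f\cdot q) = d(p,q)$ directly for an arbitrary pair, which forces the four-case analysis and requires the three Gromov-product identities $c(f,f)=|f|$, $c(f,f\ast h)=|f|-c_2$, $c(f\ast g, f\ast h)=|f|-c_1-c_2+c_3$; all three are correct, and your use of the isosceles property of $(c_1,c_2,c_3)$ coming from (L3) does close every case, including the mixed case $\alpha > c_1$, $\beta > c_2$, where the needed identity $\min\{\alpha+c_2-c_1,\ \beta+c_1-c_2,\ c_3\} = \min\{\alpha,\beta,c_3\}$ follows exactly as you indicate by splitting on which two of $c_1,c_2,c_3$ coincide. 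What the paper's route buys is brevity: one point, two cases, and geometry does the rest. What your route buys is that it is purely axiomatic and self-contained: it never appeals to the tripod reduction (whose justification the paper leaves implicit), it stays entirely inside the group $G$ where associativity is available, and it makes visible precisely where $0$-hyperbolicity enters the argument. Either proof would be acceptable; yours is longer but arguably more transparent about its ingredients.
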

\begin{proof} Observe that it is enough to prove
$$d(\epsilon, \langle \alpha, g \rangle) = d(f \cdot \epsilon, f \cdot \langle \alpha, g \rangle)$$
for every $f,g \in G$. Indeed, from the statement above it is going to follow that the geodesic tripod
$(\epsilon,\langle |g|,g \rangle, \langle |h|,h \rangle)$ is isometrically mapped to the geodesic tripod
$(\langle |f|,f \rangle, f \cdot \langle |g|,g \rangle, f \cdot \langle |h|,h \rangle)$ and isometricity
follows.

We have
$$d(\epsilon, \langle \alpha, g \rangle) = d(\langle 0, 1 \rangle, \langle \alpha, g \rangle) = 0 +
\alpha - 2 \min\{ 0, \alpha, c(1,g) \} = \alpha,$$
$$d(f \cdot \epsilon, f \cdot \langle \alpha, g \rangle) = d(\langle |f|,f \rangle, f \cdot \langle
\alpha, g \rangle).$$
Consider two cases.

\begin{enumerate}
\item[(a)] $\alpha \leqslant c(f^{-1},g)$

Hence,
$$d(\langle |f|,f \rangle, f \cdot \langle \alpha, g \rangle) = d(\langle |f|,f \rangle, \langle |f|
- \alpha, f \rangle)$$
$$ = |f| + |f| - \alpha - 2 \min\{|f|,|f|-\alpha,c(f,f)\} = |f| + |f|-\alpha - 2(|f|-\alpha) = \alpha.$$

\item[(b)] $\alpha > c(f^{-1},g)$

Hence,
$$d(\langle |f|,f \rangle, f \cdot \langle \alpha, g \rangle) = d(\langle |f|,f \rangle, \langle |f|
+ \alpha - 2c(f^{-1},g), f\ast g \rangle)$$
$$ = |f| + |f| + \alpha - 2c(f^{-1},g) - 2\min\{|f|,|f| + \alpha - 2c(f^{-1},g),c(f,f\ast g\})$$
$$= 2|f| + \alpha - 2c(f^{-1},g) - 2\min\{|f| + \alpha - 2c(f^{-1},g),c(f,f\ast g)\}.$$
Let $f = f_1 \circ c^{-1},\ g = c \circ g_1,\ |c| = c(f^{-1},g)$. Then $|f| + \alpha - 2c(f^{-1},g)
= |f_1|+\alpha-c(f^{-1},g) > |f_1|$. At the same time, $c(f,f \ast g) = |f_1|$, so $\min\{|f| +
\alpha - 2c(f^{-1},g),c(f,f\ast g)\} = |f_1|$ and
$$d(\langle |f|,f \rangle, f \cdot \langle \alpha, g \rangle) = 2|f| + \alpha - 2c(f^{-1},g) - 2|f_1|
= 2|f| + \alpha - 2|c| - 2|f_1| = \alpha$$.
\end{enumerate}
\end{proof}

\begin{prop} \cite{KMS:2011}
\label{pr:action}
The action of $G$ on $\Gamma_G$ defined above is free and $L_\epsilon(g) = |g|$. Moreover, $\Gamma_G$
is minimal with respect to this action if and only if $G$ contains a cyclically reduced element $h
\in G$, that is, $|h^2| = 2|h|$.
\end{prop}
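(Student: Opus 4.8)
The plan is to reduce the length-function identity and freeness to the standard length $|\cdot|_G$ on $G$, and then to characterise minimality through the position of the basepoint $\epsilon$ relative to the axes. First I would record two elementary facts. Since $c(g^{-1},1) = \tfrac12(|g| + 0 - |g|) = 0$, the displayed action formula gives $g\cdot\epsilon = \langle |g|, g\rangle$, and then the metric formula yields $L_\epsilon(g) = d(\epsilon, g\cdot\epsilon) = |g|$, which is the second assertion. Using $c(g,g) = |g|$ and $c(1,g)=0$ one checks that for each $\alpha \in [0,|g|]$ the point $\langle\alpha,g\rangle$ satisfies $d(\epsilon,\langle\alpha,g\rangle)=\alpha$ and $d(\langle\alpha,g\rangle, g\cdot\epsilon) = |g|-\alpha$; hence $\langle\alpha,g\rangle$ lies on the geodesic $[\epsilon,g\cdot\epsilon]$, and since every point of $\Gamma_G$ has this form, $\Gamma_G = \bigcup_{g\in G}[\epsilon, g\cdot\epsilon]$. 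I will use this description repeatedly.

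For freeness, by Theorem \ref{co:3.1} the restriction $|\cdot|_G$ is a \emph{free} Lyndon length function, so $|g^2| > |g|$ for every $1\neq g\in G$. Combining $L_\epsilon = |\cdot|_G$ with the relation $\|g\| = \max\{0, L_\epsilon(g^2) - L_\epsilon(g)\}$ from Section \ref{sec:length_func} gives $\|g\| = |g^2| - |g| > 0$ for all $g\neq 1$. Since elliptic isometries and inversions have zero translation length (Corollary \ref{co:axis}), every nontrivial $g$ is hyperbolic, i.e.\ the action is free and without inversions. The same computation yields the key geometric fact for minimality: because $g$ is hyperbolic, Theorem \ref{le:hyperbolic} gives $|g| = d(\epsilon,g\cdot\epsilon) = \|g\| + 2d(\epsilon, A_g)$, so $2d(\epsilon, A_g) = 2|g| - |g^2|$; thus $\epsilon\in A_g$ if and only if $|g^2| = 2|g|$, that is, if and only if $g$ is cyclically reduced.

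For minimality I would invoke the standard structural result (see Chiswell's book) that, for a group containing a hyperbolic element, the union $T_\ast = \bigcup_{1\neq g\in G} A_g$ of all axes is a subtree and is the unique minimal nonempty $G$-invariant subtree; it is $G$-invariant here because conjugation permutes axes by Lemma \ref{le:axis}. Granting this, the equivalence follows formally. If some $1\neq h\in G$ is cyclically reduced then $\epsilon\in A_h\subseteq T_\ast$; invariance gives $G\cdot\epsilon\subseteq T_\ast$ and, $T_\ast$ being a subtree, $[\epsilon,g\cdot\epsilon]\subseteq T_\ast$ for all $g$, so $\Gamma_G = T_\ast$ by the description above and $\Gamma_G$ is minimal. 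Conversely, if $\Gamma_G$ is minimal then the nonempty invariant subtree $T_\ast\subseteq\Gamma_G$ must equal $\Gamma_G$, whence $\epsilon\in T_\ast$, i.e.\ $\epsilon\in A_g$ for some $g\neq 1$, and by the previous paragraph such $g$ is cyclically reduced.

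The hard part is the input I am citing rather than the bookkeeping: the fact that $T_\ast$ is genuinely \emph{connected} — that the bridge between two disjoint axes is itself absorbed into the axis of a suitable product element — is exactly what forbids the pathology in which $\epsilon$ sits on a geodesic joining two axes while lying on none of them. This is a standard but nontrivial theorem on $\Lambda$-tree actions, and I would cite it; a self-contained argument through Corollary \ref{co:axis_2} is possible, but then one must separately identify $A_h$ with the orbit line $\bigcup_n[h^n\epsilon, h^{n+1}\epsilon]$, which is delicate for non-archimedean $\Lambda$. A minor loose end is the degenerate case $G=\{1\}$, where $\Gamma_G$ is a single point and no nontrivial hyperbolic element exists; this is dispatched by restricting to nontrivial $G$.
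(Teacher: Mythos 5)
Your proof is correct, but it reaches the statement by a genuinely different route than the paper. For freeness the paper works entirely inside the infinite-words formalism: it supposes $f \cdot \langle \alpha, g\rangle = \langle \alpha, g\rangle$ and runs a three-case analysis ($\alpha < c(f^{-1},g)$, $\alpha = c(f^{-1},g)$, $\alpha > c(f^{-1},g)$), extracting contradictions from decompositions such as $f = f_1\circ c^{-1}\circ c_\alpha^{-1}$; note that this only rules out fixed points, i.e.\ elliptic elements. You instead transfer axiom (L5) for $|\cdot|_G$ (Theorem \ref{co:3.1}) through the identity $L_\epsilon = |\cdot|$ and the standard relation $\|g\| = \max\{0, L_\epsilon(g^2)-L_\epsilon(g)\}$, which is shorter and yields the stronger conclusion that every nontrivial element is hyperbolic (so no inversions either) --- a fact the paper's minimality argument uses implicitly anyway; there is no circularity, since Theorem \ref{co:3.1} is proved by direct verification on $\Lambda$-words, independently of any tree. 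For minimality, the paper never invokes the minimal-invariant-subtree theorem as a black box; it proves the relevant instance by hand: given a proper invariant subtree $\Delta$, it shows $\epsilon \notin \Delta$, that $\Delta$ contains some axis $A_g$ yet is disjoint from the axis of every cyclically reduced element, and then uses the bridge lemma ($[p,q]\subset A_{gh}$) to force $\Delta \cap A_h \neq \emptyset$, a contradiction; its converse direction (the span of the axes is invariant and misses $\epsilon$) is identical to yours. So your cited structural result that $\bigcup_{g\neq 1}A_g$ is the unique minimal nonempty invariant subtree is precisely what the paper re-derives inline, and your computation $2d(\epsilon,A_g) = 2|g| - |g^2|$ is a cleaner justification of the equivalence ``$\epsilon\in A_g$ iff $g$ is cyclically reduced'' than the paper's bare ``Observe that $\ldots$''. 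The trade-off: the paper's version is self-contained within its quoted toolkit (Theorem \ref{le:hyperbolic}, Corollary \ref{co:axis_2}, Lemma \ref{le:axis}, plus Chiswell's bridge lemma), whereas yours is briefer and proves slightly more, but rests on a theorem whose general-$\Lambda$ version you must locate or re-prove --- and the sketch you give (connectivity via bridge absorption, invariance via Lemma \ref{le:axis}, containment of each axis in every nonempty invariant subtree) is exactly the right way to do that, making your worry about identifying $A_h$ with an orbit line unnecessary.
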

\begin{proof} {\bf Cialm 1.} The stabilizer of every $x \in \Gamma_G$ is trivial.

\smallskip

Next, suppose $f \cdot \langle \alpha, g \rangle = \langle \alpha, g \rangle$.
First of all, if $\alpha = 0$ then $|f| + \alpha - 2\min\{\alpha,c(f^{-1},g)\} = |f|$ then $|f| =
\alpha = 0$. Also, if $c(f^{-1},g) = 0$ then $|f| + \alpha - 2\min\{\alpha,c(f^{-1},g)\} = |f| +
\alpha$ which has to be equal to $\alpha$ form our assumption. In both cases $f = 1$ follows.

Assume $f \neq 1$ (which implies $\alpha,\ c(f^{-1},g) \neq 0$) and consider the following cases.

\begin{enumerate}
\item[(a)] $\alpha < c(f^{-1},g)$

Hence, from
$$\langle \alpha, g \rangle = \langle |f| - \alpha, f \rangle$$
we get $\alpha = |f| - \alpha \leqslant c(f,g)$. In particular, $|f| = 2 \alpha$.

Consider the product $f \ast g$. We have
$$f = f_1 \circ com(f^{-1},g)^{-1},\ g = com(f^{-1},g) \circ g_1.$$
Since $\alpha < c(f^{-1},g)$ then we have $com(f^{-1},g) = c_\alpha \circ c,\ |c_\alpha| = \alpha$.
Hence,
$$f = f_1 \circ c^{-1} \circ c_\alpha^{-1},\ g = c_\alpha \circ c \circ g_1.$$
On the other hand, from $|f| = 2\alpha$ we get $|f_1| + |c| = \alpha \leqslant c(f,g)$, so, $com(f,g)$
has $f_1 \circ c$ as initial subword. That is, $g = f_1 \circ c \circ g_2$, but now comparing two
representations of $g$ above we get $c_\alpha = f_1 \circ c^{-1}$ and $c_\alpha \ast c \neq c_\alpha
\circ c$ - a contradiction.

\item[(b)] $\alpha = c(f^{-1},g)$

We have $f = f_1 \circ c_\alpha^{-1},\ g = c_\alpha \circ g_1,\ |c_\alpha| = \alpha$. From  $\langle
\alpha, g \rangle = \langle |f| - \alpha, f \rangle$ we get $\alpha = |f| - \alpha \leqslant c(f,g)$, so
$|f| = 2\alpha$ and $|f_1| = \alpha$. Since $|f_1| = \alpha \leqslant c(f,g)$ then $g = f_1 \circ g_2$
from which it follows that $f_1 = c_\alpha$. But then $f_1 \ast c_\alpha^{-1} \neq f_1 \circ
c_\alpha^{-1}$ - contradiction.

\item[(c)] $\alpha > c(f^{-1},g)$

Hence, from
$$\langle \alpha, g \rangle = \langle |f| + \alpha - 2 c(f^{-1},g), f \ast g \rangle$$
we get $\alpha = |f| + \alpha - 2 c(f^{-1},g) \leqslant c(g,f \ast g)$. In particular, $|f| = 2 c(f^{-1},
g)$.

Consider the product $f \ast g$. We have
$$f = f_1 \circ c^{-1},\ g = c \circ g_1,$$
where $c = com(f^{-1},g)$. Hence, $|f_1| = |c| < \alpha \leqslant c(g,f \ast g) = c(g, f_1 \circ g_1)$.
It follows that $g = f_1 \circ g_2$ and, hence, $c = f_1$ which is impossible.

\end{enumerate}

\smallskip

{\bf Cialm 2.} $L_\epsilon(g) = |g|$

\smallskip

We have $L_\epsilon(g) = d(\epsilon, g \cdot \epsilon)$. Hence, by definition of $d$
$$d(\langle 0,1\rangle, g \cdot \langle 0,1\rangle) = d(\langle 0, 1 \rangle, \langle |g|, g \rangle)
= 0 + |g| - 2 \min\{ 0, |g|, c(1,g) \} = |g|.$$

\smallskip

{\bf Cialm 3.} $\Gamma_G$ is minimal with respect to the action if and only if $G$ contains a
cyclically reduced element $h \in G$, that is, $|h^2| = 2|h|$.

\smallskip

Suppose there exists a cyclically reduced element $h \in G$. Let $\Delta \subset \Gamma_G$ be a
$G$-invariant subtree.

First of all, observe that $\epsilon \notin \Delta$. Indeed, if $\epsilon \in \Delta$ then $f \cdot
\epsilon \in \Delta$ for every $f \in G$ and since $\Delta$ is a tree then $[\epsilon, f \cdot
\epsilon] \in \Delta$ for every $f \in G$. At the same time, $\Gamma_G$ is spanned by $[\epsilon, f
\cdot \epsilon],\ f \in G$, so, $\Delta = \Gamma_G$ - a contradiction.

Let $u \in \Delta$. By definition of $\Gamma_G$ there exists $g \in G$ such that $u \in [\epsilon,
g \cdot \epsilon]$. Observe that $A_g \subseteq \Delta$. Indeed, for example by Theorem 1.4
\cite{Chiswell:2001}, if $[u,p]$ is the bridge between $u$ and $A_g$ then $p = Y(g^{-1} \cdot u,\
u,\ g \cdot u)$. In particular, $p \in \Delta$ and since for every $v \in A_g$ there exist $g_1,
g_2 \in C_G(g)$ such that $v \in [g_1 \cdot p,\ g_2 \cdot p]$ then $A_g \subseteq \Delta$.

Observe that if $g$ is cyclically reduced then $\epsilon \in A_g$, that is, $\epsilon \in \Delta$ -
a contradiction. More generally, $\Delta \cap A_f = \emptyset$ for every cyclically reduced $f \in G$.
Hence, let $[p,q]$ be the bridge between $A_g$ and $A_h$ so that $p \in A_g,\ q \in A_h$. Then by
Lemma 2.2 \cite{Chiswell:2001}, $[p,q] \subset A_{gh}$, in particular, $p,q \in A_{gh}$. It follows
that $A_{gh} \subseteq \Delta,\ q \in A_{gh} \cap A_h$, and $\Delta \cap A_h \neq \emptyset$ - a
contradiction.

Hence, there can be no proper $G$-invariant subtree $\Delta$.

\smallskip

Now, suppose $G$ contains no cyclically reduced element. Hence, $\epsilon \notin A_f$ for every $f
\in G$. Let $\Delta$ be spanned by $A_f,\ f \in G$. Obviously, $\Delta$ is $G$-invariant. Indeed,
let $u \in [p,q]$, where $p \in A_f,\ q \in A_g$ for some $f,g \in G$. Then $h \cdot u \in [h \cdot p,
h \cdot q]$, where $h \cdot p \in h \cdot A_f = A_{hfh^{-1}},\ h \cdot q \in h \cdot A_g = A_{hgh^{-1}}$,
that is, $h \in \Delta$.

Finally, $\epsilon \in \Gamma_G - \Delta$.

\end{proof}

\begin{prop} \cite{KMS:2011}
\label{pr:universal}
If $(Z,d')$ is a $\Lambda$-tree on which $G$ acts freely as isometries, and $w \in Z$ is such that
$L_w(g) = |g|,\ g \in G$ then there is a unique $G$-equivariant isometry $\mu: \Gamma_G \to Z$
such that $\mu(\epsilon) = w$, whose image is the subtree of $Z$ spanned by the orbit $G \cdot w$
of $w$.
\end{prop}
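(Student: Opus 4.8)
The plan is to build $\mu$ directly from the based action on $Z$, exploiting that the generators $\langle \alpha, g\rangle$ of $\Gamma_G$ are nothing but parametrised points on the geodesics emanating from the basepoint. Since $Z$ is a geodesic $\Lambda$-tree it is geodesically linear (see \cite{Chiswell:2001}), so for each $g \in G$ there is a unique segment $[w, g\cdot w]_Z$, and I would define $\mu(\langle \alpha, g\rangle)$ to be the unique point of $[w, g\cdot w]_Z$ at distance $\alpha$ from $w$; this is meaningful because $d'(w, g\cdot w) = L_w(g) = |g| \geqslant \alpha$. The first thing to check is that this respects the equivalence defining $\Gamma_G$. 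The key computation is the Gromov-product identity $(\,g\cdot w\,\cdot\, h\cdot w\,)_w = c(g,h)$ in $Z$: indeed $d'(w, g\cdot w) = |g|$, $d'(w, h\cdot w) = |h|$, and $d'(g\cdot w, h\cdot w) = d'(w, (g^{-1}h)\cdot w) = |g^{-1}\ast h|$, so the Gromov product in $Z$ equals $\tfrac12(|g| + |h| - |g^{-1}\ast h|) = c(g,h)$, the Gromov product of the Lyndon length function $|\cdot|_G$ furnished by Theorem \ref{co:3.1}. By the axioms (T1)--(T3) the segments $[w, g\cdot w]_Z$ and $[w, h\cdot w]_Z$ share a common initial subsegment of length exactly $c(g,h)$; hence whenever $(\alpha, g) \sim (\beta, h)$, i.e. $\alpha = \beta \leqslant c(g,h)$, the two candidate images coincide and $\mu$ is well-defined.

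Next I would verify that $\mu$ is an isometry. Given $\langle \alpha, g\rangle$ and $\langle \beta, h\rangle$, let $Y = Y(w, g\cdot w, h\cdot w)$ be the median point, which lies at distance $c(g,h)$ from $w$ and at which the segments $[w, g\cdot w]_Z$ and $[w, h\cdot w]_Z$ branch. Writing $P = \mu(\langle\alpha,g\rangle)$ and $Q = \mu(\langle\beta,h\rangle)$, a short case analysis on whether $\alpha$ and $\beta$ exceed $c(g,h)$ computes $d'(P,Q)$ through $Y$: one obtains $d'(P,Q) = \alpha + \beta - 2\min\{\alpha, \beta, c(g,h)\}$ in every case, which is precisely the metric $d$ on $\Gamma_G$. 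Thus $\mu$ is distance-preserving, and in particular injective.

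For $G$-equivariance I would first observe that $\mu(g\cdot\epsilon) = \mu(\langle|g|, g\rangle)$ is the endpoint $g\cdot w$ of $[w, g\cdot w]_Z$, so $\mu(g\cdot\epsilon) = g\cdot\mu(\epsilon)$ for all $g$. For a general point $x = \langle\alpha,g\rangle \in [\epsilon, g\cdot\epsilon]$ and $h \in G$, the element $h$ acts as an isometry on both trees; since an isometry of a $\Lambda$-tree carries segments to segments and preserves distance from an endpoint, $\mu$ maps $[h\cdot\epsilon, hg\cdot\epsilon]$ onto the segment of $Z$ with endpoints $\mu(h\cdot\epsilon) = h\cdot w$ and $\mu(hg\cdot\epsilon) = hg\cdot w$, namely $[h\cdot w, hg\cdot w]_Z = h\cdot[w, g\cdot w]_Z$. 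Comparing the point at distance $\alpha$ from $h\cdot\epsilon$ on the one side with the point at distance $\alpha$ from $h\cdot w$ on the other yields $\mu(h\cdot x) = h\cdot\mu(x)$. Uniqueness is then immediate: any $G$-equivariant isometry $\mu'$ with $\mu'(\epsilon) = w$ agrees with $\mu$ on the orbit $G\cdot\epsilon$, and since each $x = \langle\alpha, g\rangle$ lies on $[\epsilon, g\cdot\epsilon]$, whose image must be the unique segment $[w, g\cdot w]_Z$, it is forced that $\mu'(x) = \mu(x)$ everywhere.

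Finally, for the image: each $[w, g\cdot w]_Z = \mu(\{\langle\alpha, g\rangle \mid 0 \leqslant \alpha \leqslant |g|\})$ lies in the subtree $T$ spanned by the orbit $G\cdot w$, so $\mu(\Gamma_G) \subseteq T$; conversely any segment $[g\cdot w, h\cdot w]_Z$ passes through the median $Y(w, g\cdot w, h\cdot w)$ and hence is contained in $[w, g\cdot w]_Z \cup [w, h\cdot w]_Z \subseteq \mu(\Gamma_G)$, giving $T \subseteq \mu(\Gamma_G)$. I expect the main obstacle to be purely at the level of careful bookkeeping with $\Lambda$-tree geometry --- in particular establishing the identity $(\,g\cdot w\,\cdot\, h\cdot w\,)_w = c(g,h)$ and that the two segments share an initial subsegment of precisely that length, together with the three-case distance computation --- since everything downstream (equivariance, uniqueness, and identification of the image) follows formally once $\mu$ is known to be a well-defined isometry.
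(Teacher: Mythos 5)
Your proposal is correct, and it constructs exactly the same map as the paper: $\mu(\langle\alpha,g\rangle)$ is the point of the (unique) segment $[w,g\cdot w]_Z$ at distance $\alpha$ from $w$, and the isometry claim is verified by the same median/Gromov-product computation $(g\cdot w\cdot h\cdot w)_w=c(g,h)$ that the paper gets from Chiswell's Lemma 1.2, yielding $d'(P,Q)=\alpha+\beta-2\min\{\alpha,\beta,c(g,h)\}$. Where you genuinely diverge is the equivariance step, which is the bulk of the paper's argument: there the authors verify $\mu(f\cdot\langle\alpha,g\rangle)=f\cdot\mu(\langle\alpha,g\rangle)$ by an explicit two-case computation ($\alpha\leqslant c(f^{-1},g)$ versus $\alpha>c(f^{-1},g)$) using the defining formulas for the action on $\Gamma_G$ and checking the two distance conditions that characterize $\mu$. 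You instead deduce equivariance formally: once $\mu(g\cdot\epsilon)=g\cdot w$ on the orbit and $\mu$ is known to be distance-preserving, both $\mu(h\cdot x)$ and $h\cdot\mu(x)$ are the point of the same segment $[h\cdot w,(hg)\cdot w]_Z$ at distance $\alpha$ from $h\cdot w$, hence equal. This is cleaner and shorter, but note it quietly uses Lemma \ref{le:isometric} (that $G$ acts on $\Gamma_G$ by isometries, so that $h\cdot x$ lies on $[h\cdot\epsilon,(hg)\cdot\epsilon]$ at distance $\alpha$ from $h\cdot\epsilon$), whereas the paper's computational route re-derives what it needs directly from the action formulas. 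Your write-up is also more careful than the paper on two points the authors leave implicit: that $\mu$ is well defined on $\sim$-classes (via the common initial subsegment of length exactly $c(g,h)$), and both inclusions in the identification of $\mu(\Gamma_G)$ with the subtree spanned by $G\cdot w$ (using $[g\cdot w,h\cdot w]\subseteq[w,g\cdot w]\cup[w,h\cdot w]$). The uniqueness argument coincides with the paper's: agreement on the orbit plus preservation of segments forces agreement everywhere.
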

\begin{proof} Define a mapping $\mu : \Gamma_G \rightarrow Z$ as follows
$$\mu(\langle \alpha, f \rangle) = x\ {\rm if}\ d'(w,x) = \alpha,\ d'(f \cdot w,x) = |f|-\alpha.$$
Observe that $\mu(\epsilon) = \mu(\langle 0,1\rangle) = w$

\smallskip

{\bf Claim 1.} $\mu$ is an isometry.

\smallskip

Let $\langle \alpha, f \rangle,\ \langle \beta, g \rangle \in \Gamma_G$. Then by definition of $d$
we have
$$d(\langle \alpha, f \rangle, \langle \beta, g \rangle) = \alpha + \beta - 2 \min\{ \alpha, \beta,
c(f,g)\}.$$
Let $x = \mu(\langle \alpha, f \rangle),\ y = \mu(\langle \beta, g \rangle)$. Then By Lemma 1.2
\cite{Chiswell:2001} in $(Z,d')$ we have
$$d'(x,y) = d(w,x) + d(w,y) - 2\min\{d(w,x),d(w,y),d(w,z)\},$$
where $z = Y(w,f\cdot w, g\cdot w)$. Observe that $d(w,x) = \alpha,\ d(w,y) = \beta$. At the same
time, since $L_w(g) = |g|,\ g \in G$ then
$$d(w,z) = \frac{1}{2}(d(w,f\cdot w) + d(w,g\cdot w) - d(f\cdot w,g\cdot w)) = \frac{1}{2}(|f| + |g|
- |f^{-1} g|) = c(f,g),$$
and
$$d(\mu(\langle \alpha, f \rangle), \mu(\langle \beta, g \rangle)) = d'(x,y) = \alpha + \beta -
2\min\{\alpha,\beta,c(f,g)\} = d(\langle \alpha, f \rangle, \langle \beta, g \rangle).$$

\smallskip

{\bf Claim 1.} $\mu$ is equivariant.

\smallskip

We have to prove
$$\mu(f\cdot \langle \alpha, g \rangle) = f \cdot \mu(\langle \alpha, g \rangle).$$
Let $x = \mu(\langle \alpha, g \rangle),\ y = \mu(f\cdot \langle \alpha, g \rangle)$.
By definition of $\mu$ we have $d'(w,x) = \alpha,\ d'(g \cdot w,x) = |g|-\alpha$.
\begin{enumerate}
\item[(a)] $\alpha \leqslant c(f^{-1},g)$

Hence,
$$f \cdot \langle \alpha, g \rangle = \langle |f| - \alpha, f \rangle.$$
and to prove $y = f\cdot x$ it is enough to show that $d'(w,f\cdot x) = |f|-\alpha$
and $d'(f\cdot w,f\cdot x) = \alpha$.

Observe that the latter equality holds since $d'(f\cdot w,f\cdot x) = d'(w,x) = \alpha$.
To prove the former one, by Lemma 1.2 \cite{Chiswell:2001} we have
$$d(w,f\cdot x) = d'(w,f\cdot w) + d'(f\cdot x, f\cdot w)$$
$$ - 2\min\{d'(w,f\cdot w),
d'(f\cdot x, f\cdot w), d'(f\cdot w,z)\},$$
where $z = Y(w,f\cdot w, (fg)\cdot w)$. Also,
$$d'(f\cdot w,z) = \frac{1}{2}(d'(f\cdot w,w) + d'(f\cdot w,(fg)\cdot w)- d'(w,(fg)\cdot w))$$
$$ = \frac{1}{2}(|f|+|g|-|f^{-1}g|) = c(f^{-1},g).$$
Since, $d'(w,f\cdot w) = |f|,\ d'(f\cdot x, f\cdot w) = \alpha$ then $\min\{d'(w,f\cdot w),
d'(f\cdot x, f\cdot w), d'(f\cdot w,z)\} = \alpha$, and
$$d'(w,f\cdot x) = |f|+\alpha - 2\alpha = |f|-\alpha.$$

\item[(b)] $\alpha > c(f^{-1},g)$

Hence,
$$f \cdot \langle \alpha, g \rangle = \langle |f| + \alpha - 2c(f^{-1},g), f\ast g \rangle.$$
and to prove $y = f\cdot x$ it is enough to show that $d'(w,f\cdot x) = |f| + \alpha - 2c(f^{-1},g)$
and $d'(f\cdot x,(fg) \cdot w) = |fg|-(|f| + \alpha - 2c(f^{-1},g))$.

Observe that $d'(f\cdot x,(fg) \cdot w) = d'(x,gw) = |g| - \alpha = |fg|-(|f| + \alpha - 2c(f^{-1},g))$,
so the latter equality holds.

By Lemma 1.2 \cite{Chiswell:2001} we have
$$d(w,f\cdot x) = d'(w,f\cdot w) + d'(f\cdot x, f\cdot w)$$
$$ - 2\min\{d'(w,f\cdot w),
d'(f\cdot x, f\cdot w), d'(f\cdot w,z)\},$$
where $z = Y(w,f\cdot w, (fg)\cdot w)$. Also,
$$d'(f\cdot w,z) = \frac{1}{2}(d'(f\cdot w,w) + d'(f\cdot w,(fg)\cdot w)- d'(w,(fg)\cdot w))$$
$$ = \frac{1}{2}(|f|+|g|-|f^{-1}g|) = c(f^{-1},g).$$
$d'(w,f\cdot w) = |f|,\ d'(f\cdot x, f\cdot w) = \alpha$, so $\min\{d'(w,f\cdot w),
d'(f\cdot x, f\cdot w), d'(f\cdot w,z)\} = d'(f\cdot w,z) = c(f^{-1},g)$, and
$$d(w,f\cdot x) = |f| + \alpha - 2c(f^{-1},g).$$

\smallskip

{\bf Claim 1.} $\mu$ is unique.

\smallskip

Observe that if $\mu' : \Gamma_G \rightarrow Z$ is another equivariant isometry such that
$\mu'(\epsilon) = w$ then for every $g \in G$ we have
$$\mu'(\langle |g|,g\rangle) = \mu'(g\cdot \langle 0,1\rangle) = g \cdot \mu'(\langle 0,1\rangle) =
g\cdot w.$$
That is, $\mu'$ agrees with $\mu$ on $G\cdot \epsilon$, hence $\mu = \mu'$ because isometries
preserve geodesic segments.

Thus, $\mu$ is unique. Moreover, $\mu(\Gamma_G)$ is the subtree of $Z$ spanned by $G \cdot w$.

\end{enumerate}

\end{proof}

The discussion above can be summarized in the following theorem.

\begin{theorem} \cite{KMS:2011}
Let $G$ be a subgroup of $CDR(\Lambda, X)$ for some discretely ordered abelian group $\Lambda$ and a
set $X$. Let $| \cdot | : G \to \Lambda$ be the length function on $G$ induced from $CDR(\Lambda, X)$.
Then there are a $\Lambda$-tree $(\Gamma_G, p)$, an action of $G$ on $\Gamma_G$ and a point $x \in
\Gamma_G$ such that $|g| = l_x(g)$ for any $g \in G$, where $l_x(g) = p(x, g \cdot x)$. Moreover, If
$(Y, d)$ is a $\Lambda$-tree on which $G$ acts freely by isometries, and $y \in Y$ is such that
$l_y(g) = |g|,\ g \in G$, then there is a unique $G$-equivariant isometry $\mu: \Gamma_G \to Y$ such
that $\mu(x) = y$, whose image is the subtree of $Y$ spanned by the orbit $G \cdot y$ of $y$.
\end{theorem}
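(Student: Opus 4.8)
The plan is to assemble the theorem directly from the three propositions already established in this subsection, since the statement is precisely the conjunction of their conclusions once $\Gamma_G$, its metric $d$, and the base point $\epsilon = \langle 0,1\rangle$ are taken to play the roles of $(\Gamma_G,p)$ and $x$. First I would recall the construction of the set $S_G = \{(\alpha,g) \mid g\in G,\ \alpha \in [0,|g|]\}$, the equivalence relation $(\alpha,f)\sim(\beta,g)$ iff $\alpha=\beta\in[0,c(f,g)]$, and the quotient $\Gamma_G = S_G/\sim$ equipped with $d(\langle\alpha,f\rangle,\langle\beta,g\rangle) = \alpha+\beta-2\min\{\alpha,\beta,c(f,g)\}$. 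By Proposition \ref{pr:lambda_tree} this $d$ is a well-defined $\Lambda$-metric and $(\Gamma_G,d)$ is a $\Lambda$-tree, so I set $p=d$ and $x=\epsilon$, which settles the existence of the tree.

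Next I would invoke the left action of $G$ on $\Gamma_G$ defined just before Proposition \ref{pr:action}. That proposition (together with Lemma \ref{le:isometric}) shows the action is isometric and free, with trivial stabilizers, and that $L_\epsilon(g) = d(\epsilon, g\cdot\epsilon) = |g|$ for every $g\in G$. Since the theorem's $l_x(g) = p(x,g\cdot x)$ is literally $d(\epsilon, g\cdot\epsilon)$ when $x=\epsilon$, this yields $l_x(g)=|g|$ and completes the first assertion.

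For the ``moreover'' clause I would apply Proposition \ref{pr:universal} verbatim with $(Z,d') = (Y,d)$ and $w = y$: its hypotheses, namely that $Y$ is a $\Lambda$-tree, that $G$ acts freely by isometries, and that $l_y(g)=|g|$, are exactly those present here. The proposition then furnishes the unique $G$-equivariant isometry $\mu:\Gamma_G\to Y$ with $\mu(\epsilon)=y$ whose image is the subtree of $Y$ spanned by the orbit $G\cdot y$, which is the final claim.

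The only genuinely substantive work lives inside the three propositions, so there is no fresh obstacle at the summary stage; the single point I would take care to verify is that the notation matches, namely that the length function $l_x$ based at $x=\epsilon$ coincides with the quantity $L_\epsilon$ used in Propositions \ref{pr:action} and \ref{pr:universal}, and that the induced length $|\cdot|$ from $CDR(\Lambda,X)$ is the same datum fed into the construction of $\Gamma_G$. Were the propositions not at hand, the hard part would have been the freeness claim of Proposition \ref{pr:action} — the case analysis showing that no nontrivial $f$ fixes any $\langle\alpha,g\rangle$ — together with the well-definedness and $0$-hyperbolicity of $d$ verified in Proposition \ref{pr:lambda_tree}.
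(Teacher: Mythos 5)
Your proposal is correct and follows exactly the paper's own route: the paper introduces this theorem with the words ``The discussion above can be summarized in the following theorem,'' i.e.\ its proof is precisely the assembly of Proposition \ref{pr:lambda_tree}, Lemma \ref{le:isometric}, Proposition \ref{pr:action}, and Proposition \ref{pr:universal} with $p = d$, $x = \epsilon = \langle 0,1\rangle$, and $(Z,d') = (Y,d)$, $w = y$. Your notational check that $l_x = L_\epsilon$ and that the induced length $|\cdot|$ is the datum entering the construction of $\Gamma_G$ is exactly the right (and only) point of care at this summary stage.
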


\subsubsection{Examples}
\label{sec:examples}

Below we consider two examples of subgroups of $CDR(\Lambda, X)$, where $\Lambda = \mathbb{Z}^2$ and
$X$ an arbitrary alphabet, and explicitly construct the corresponding universal trees for these groups.

\begin{figure}[htbp]
\centering{\mbox{\psfig{figure=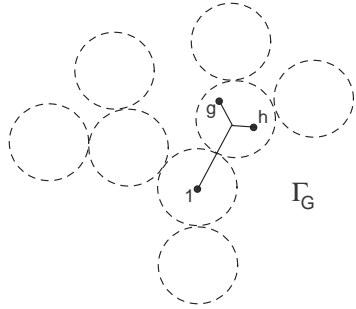,height=1.8in}}}
\caption{$\Gamma_G$ as a $\mathbb{Z}$-tree of $\mathbb{Z}$-trees.}
\label{pic2}
\end{figure}

\begin{example}
\label{ex:centr_ext_inf_words}
Let $F = F(X)$ be a free group with basis $X$ and the standard length function $|\cdot|$, and let
$u \in F$ a cyclically reduced element which is not a proper power. If we assume that $\mathbb{Z}^2
= \langle 1, t \rangle$ is the additive group of linear polynomials in $t$ ordered lexicographically
then the HNN-extension
$$G = \langle F, s \mid u^s = u \rangle$$
embeds into $CDR(\mathbb{Z}^2, X)$ under the following map $\phi$:
$$\phi(x) = x,\ \forall\ x \in X,$$
\[ \mbox{$\phi(s)(\beta)$} = \left\{ \begin{array}{ll}
\mbox{$u(\alpha)$,} & \mbox{if $\beta = m |u| + \alpha, m \geqslant 0, 1 \leqslant \alpha \leqslant |u|$,} \\
\mbox{$u(\alpha)$,} & \mbox{if $\beta = t - m |u| + \alpha, m > 0, 1 \leqslant \alpha \leqslant |u|$.}
\end{array}
\right.
\]

It is easy to see that $|\phi(s)| = t$ and $\phi(s)$ commutes with $u$ in $CDR(\mathbb{Z}^2, X)$. To simplify
the notation we identify $G$ with its image $\phi(G)$.

Every element $g$ of $G$ can be represented as the following reduced $\mathbb{Z}^2$-word
$$g = g_1 \circ s^{\delta_1} \circ g_2 \circ \cdots \circ g_k \circ s^{\delta_k} \circ g_{k+1},$$
where $[g_i, u] \neq 1$. Now, according to the construction described in Subsection \ref{sec:universal}, the
universal tree $\Gamma_G$ consists of the segments in $\mathbb{Z}^2$ labeled by elements from $G$ which
are glued together along their common initial subwords.

\begin{figure}[htbp]
\centering{\mbox{\psfig{figure=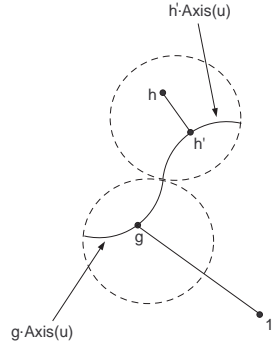,height=2in}}}
\caption{Adjacent $\mathbb{Z}$-subtrees in $\Gamma_G$.}
\label{pic3}
\end{figure}

Thus, $\Gamma_G$ can be viewed as a $\mathbb{Z}$-tree
of $\mathbb{Z}$-trees which are Cayley graphs of $F(X)$ and every vertex $\mathbb{Z}$-subtree can
be associated with a right representative in $G$ by $F$. The end-points of the segments $[1, |g|]$ and
$[1, |h|]$ labeled respectively by $g$ and $h$ belong to the same vertex $\mathbb{Z}$-subtree if and only if
$h^{-1} g \in F$ (see Figure \ref{pic2}).

In other words, $\Gamma_G$ is a ``more detailed'' version of the Bass-Serre tree $T$ for $G$,
in which every vertex is replaced by the Cayley graph of the base group $F$ and the adjacent $\mathbb{Z}$-subtrees
of $\Gamma_G$ corresponding to the representatives $g$ and $h$ are ``connected'' by means of $s^{\pm}$ which
extends $g \cdot Axis(u)$ to $h' \cdot Axis(u)$, where $h'^{-1} h \in F$ and $g^{-1} h \in s^{\pm} F$
(see Figure \ref{pic3}).
\end{example}

The following example is a generalization of the previous one.

\begin{figure}[htbp]
\centering{\mbox{\psfig{figure=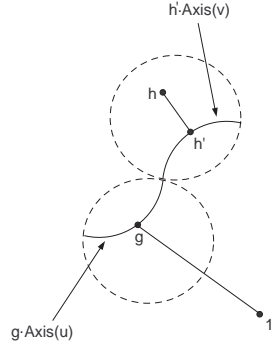,height=2in}}}
\caption{Adjacent $\mathbb{Z}$-subtrees in $\Gamma_H$.}
\label{pic4}
\end{figure}

\begin{example}
\label{ex:HNN_inf_words}
Let $F = F(X)$ be a free group with basis $X$ and the standard length function $|\cdot|$, and let
$u, v \in F$ be cyclically reduced elements which is not a proper powers and such that $|u| = |v|$.
The HNN-extension
$$H = \langle F, s \mid u^s = v \rangle$$
embeds into $CDR(\mathbb{Z}^2, X)$ under the following map $\psi$:
$$\psi(x) = x,\ \forall\ x \in X,$$
\[ \mbox{$\psi(s)(\beta)$} = \left\{\begin{array}{ll}
\mbox{$u(\alpha)$,} & \mbox{if $\beta = m |u| + \alpha, m \geqslant 0, 1 \leqslant \alpha \leqslant |u|$,} \\
\mbox{$v(\alpha)$,} & \mbox{if $\beta = t - m |v| + \alpha, m > 0, 1 \leqslant \alpha \leqslant |v|$.}
\end{array}
\right.
\]
It is easy to see that $|\psi(s)| = t$ and $u \circ \psi(s) = \psi(s) \circ v$ in $CDR(\mathbb{Z}^2, X)$.
Again, to simplify the notation we identify $H$ with its image $\psi(H)$.

The structure of $\Gamma_H$ is basically the same as the structure of $\Gamma_G$ in Example \ref{ex:centr_ext_inf_words}.
The only difference is that the adjacent $\mathbb{Z}$-subtrees of $\Gamma_H$ corresponding to the representatives
$g$ and $h$ are ``connected'' by means of $s^{\pm}$ which extends $g \cdot Axis(u)$ to $h' \cdot Axis(v)$, where
$h'^{-1} h \in F$ and $g^{-1} h \in s^{\pm} F$ (see Figure \ref{pic4}).
\end{example}

\subsubsection{Labeling of ``edges'' and ``paths'' in universal trees}
\label{subsubsec:label}

Let $G$ be a subgroup of $CDR(\Lambda, X)$ for some discretely ordered abelian group $\Lambda$ and a
set $X$, and let $\Gamma_G$ be its universal $\Lambda$-tree constructed in Subsection \ref{sec:universal}.
Recall that there exists a labeling function
$$\xi : (\Gamma_G - \{\epsilon\}) \rightarrow X^\pm,\ \ \xi(\langle \alpha, g \rangle) = g(\alpha)$$
on all points of $\Gamma_G$ except the base-point $\epsilon$.

It is easy to see that the labeling $\xi$ is not equivariant, that is, $\xi(v) \neq \xi(g \cdot v)$
in general (even if both $v$ and $g \cdot v$ are in $\Gamma_G - \{\varepsilon\}$, which is not stable
under the action of $G$). In the present paper we are going to introduce another labeling function
for $\Gamma_G$ defined not on vertices but on ``edges'', stable under the action of $G$. With this
new labeling $\Gamma_G$ becomes an extremely useful combinatorial object in the case $\Lambda =
\mathbb{Z}^n$, but in general such a labeling can be defined for every discretely ordered $\Lambda$.

First of all, for every $v_0, v_1 \in \Gamma_G$ such that $d(v_0, v_1) = 1$ we call the ordered pair
$(v_0, v_1)$ the {\em edge} from $v_0$ to $v_1$. Here, if $e = (v_0,v_1)$ then denote $v_0 = o(e),\
v_1 = t(e)$ which are respectively the {\em origin} and {\em terminus} of $e$. Now, if the vertex
$v_1 \in \Gamma_G - \{\varepsilon\}$ is fixed then, since $\Gamma_G$ is a $\Lambda$-tree, there is
exactly one point $v_0$ such that $d(\varepsilon, v_1) = d(\varepsilon, v_0) + 1$. Hence, there
exists a natural orientation, with respect to $\varepsilon$, of edges in $\Gamma_G$, where an edge
$(v_0,v_1)$ is {\em positive} if $d(\varepsilon, v_1) = d(\varepsilon, v_0) + 1$, and {\em negative}
otherwise. Denote by $E(\Gamma_G)$ the set of edges in $\Gamma_G$. If $e \in E(\Gamma_G)$ and $e =
(v_0,v_1)$ then the pair $(v_1,v_0)$ is also an edge and denote $e^{-1} = (v_1,v_0)$. Obviously,
$o(e) = t(e^{-1})$. Because of the orientation, we have a natural splitting
$$E(\Gamma_G) = E(\Gamma_G)^+ \cup E(\Gamma_G)^-,$$
where $E(\Gamma_G)^+$ and $E(\Gamma_G)^-$ denote respectively the sets of positive and negative
edges. Now, we can define a function $\mu : E(\Gamma_G)^+ \rightarrow X^\pm$ as follows: if $e =
(v_0,v_1) \in E(\Gamma_G)^+$ then $\mu(e) = \xi(v_1)$. Next, $\mu$ can be extended to $E(\Gamma_G)^-$
(and hence to $E(\Gamma_G)$) by setting $\mu(f) = \mu(f^{-1})^{-1}$ for every $f \in E(\Gamma_G)^-$.

\begin{example}
\label{ex:1}
Let $F = F(X)$ be a free group on $X$. Hence, $F$ embeds into (coincides with) $CDR(\mathbb{Z},X)$
and $\Gamma_F$ with the labeling $\mu$ defined above is just a Cayley graph of $F$ with respect to
$X$. That is, $\Gamma_F$ is a labeled simplicial tree.
\end{example}

The action of $G$ on $\Gamma_G$ induces the action on $E(\Gamma_G)$ as follows $g \cdot (v_0, v_1)
= (g \cdot v_0, g \cdot v_1)$ for each $g \in G$ and $(v_0, v_1) \in E(\Gamma_G)$. It is easy to see
that $E(\Gamma_G)^+$ is not closed under the action of $G$ but the labeling is equivariant as the
following lemma shows.

\begin{lemma}
\label{le:label_edges}
If $e,f \in E(\Gamma_G)$ belong to one $G$-orbit then $\mu(e) = \mu(f)$.
\end{lemma}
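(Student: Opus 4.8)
The plan is to prove the equivalent statement that $\mu$ is invariant along $G$-orbits, i.e. $\mu(g\cdot e)=\mu(e)$ for every $g\in G$ and every $e\in E(\Gamma_G)$. Since $g\cdot(e^{-1})=(g\cdot e)^{-1}$ and $\mu(f)=\mu(f^{-1})^{-1}$ by definition, it suffices to treat positive edges: once $\mu(g\cdot e)=\mu(e)$ is known for $e\in E(\Gamma_G)^+$, the negative case follows by applying it to $e^{-1}$ and inverting. So I would fix a positive edge $e=(v_0,v_1)$; its terminus $v_1$ is the endpoint farther from $\epsilon$, so one may write $v_1=\langle\alpha,h\rangle$ with $\alpha=d(\epsilon,v_1)\geqslant 1$ and $v_0=\langle\alpha-1,h\rangle$, whence $\mu(e)=\xi(v_1)=h(\alpha)$.

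Next I would apply the explicit action formula from Subsection~\ref{sec:universal} to the two endpoints and split according to the position of $\alpha$ relative to $c:=c(g^{-1},h)$. Recall that $c=|com(g^{-1},h)|$, so $g^{-1}$ and $h$ agree letter-by-letter on $[1,c]$, and that in the product $g\ast h$ exactly the segment $com(g^{-1},h)$ cancels, so the surviving tail of $g\ast h$ past position $|g|-c$ is the tail $\tilde v$ of $h$, where $h=com(g^{-1},h)\circ\tilde v$. Three cases arise:
\begin{enumerate}
\item[(a)] $\alpha\leqslant c$: both endpoints map to points of the form $\langle\beta,g\rangle$, the edge $g\cdot e$ becomes \emph{negative}, and reading $\mu$ through $(g\cdot e)^{-1}$ gives $\mu(g\cdot e)=\bigl(g(|g|+1-\alpha)\bigr)^{-1}=g^{-1}(\alpha)$ by the inverse-word formula; since $\alpha\leqslant c$ this equals $h(\alpha)$.
\item[(b)] $\alpha=c+1$: here $v_0$ maps to $\langle |g|-c,g\rangle=\langle |g|-c,g\ast h\rangle$ while $v_1$ maps to $\langle |g|-c+1,g\ast h\rangle$, so $g\cdot e$ is positive and $\mu(g\cdot e)=(g\ast h)(|g|-c+1)=\tilde v(1)=h(c+1)=h(\alpha)$.
\item[(c)] $\alpha\geqslant c+2$: both endpoints map to points of the form $\langle\beta,g\ast h\rangle$, the edge $g\cdot e$ is positive, and $\mu(g\cdot e)=(g\ast h)(|g|+\alpha-2c)=\tilde v(\alpha-c)=h(\alpha)$.
\end{enumerate}
In every case $\mu(g\cdot e)=h(\alpha)=\mu(e)$, which completes the positive case and hence the lemma.

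The routine part is the arithmetic of distances that pins down which endpoint is the new terminus; the part deserving care is the orientation bookkeeping in case (a), where applying $g$ flips the edge from positive to negative, so that $\mu(g\cdot e)$ must be read off through $(g\cdot e)^{-1}$ and an inverse appears which is then absorbed exactly by the inverse-word identity $g^{-1}(\beta)=g(|g|+1-\beta)^{-1}$. The other delicate point is the ``seam'' case (b), $\alpha=c+1$, where the edge straddles the boundary between the part of $g$ surviving in $g\ast h$ and the first surviving letter of $h$; verifying that the single letter read off there is precisely $h(\alpha)$ rather than a letter of $g$ is the crux, and it follows from the cancellation description of $\ast$ recalled above together with the identification of $\langle |g|-c,g\rangle$ and $\langle |g|-c,g\ast h\rangle$ as the same point of $\Gamma_G$. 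No genuinely hard estimate is needed; the entire content lies in matching positions and letters across the action formula.
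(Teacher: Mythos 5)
Your proof is correct and follows essentially the same route as the paper: apply the explicit action formula to both endpoints of a positive edge, split into cases according to the position of the terminus relative to $c(g^{-1},h)$, and track the orientation flip (where the inverse-word identity absorbs the inversion) and the seam where $g\ast h$ switches from letters of $g$ to letters of $h$. Your three cases coincide with the paper's Case 2(a)--(c) (the paper's separate Case 1, $c(f^{-1},g)=0$, is subsumed by your cases (b) and (c)), and your explicit reduction from negative to positive edges is left implicit in the paper.
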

\begin{proof} Let $e = (v_0,v_1) \in E(\Gamma_G)^+$. Hence, there exists $g \in G$ such that $v_0 =
\langle \alpha, g \rangle,\ v_1 = \langle \alpha + 1, g \rangle$. Let $f \in G$ and consider the
following cases.

\smallskip

\noindent {\bf Case 1}. $c(f^{-1},g) = 0$

Then $f \ast g = f \circ g$. If $\alpha = 0$ then $f \cdot v_0 = \langle |f|, f \rangle = \langle
|f|, f \circ g \rangle$, and $f \cdot v_1 = \langle |f| + 1, f \circ g \rangle$. Hence, $f \cdot e
\in E(\Gamma_G)^+$ and $\mu(f \cdot e) = \xi(f \cdot v_1) = g(1) = \xi(v_1) = \mu(e)$.

\smallskip

\noindent {\bf Case 2}. $c(f^{-1},g) > 0$

\begin{enumerate}

\item[(a)] $\alpha + 1 \leqslant c(f^{-1},g)$

Then $f \cdot v_0 = \langle |f| + \alpha - 2\alpha, f \rangle = \langle |f| - \alpha, f \rangle$
and $f \cdot v_1 = \langle |f| - (\alpha + 1), f \rangle$. So, $d(\varepsilon, f \cdot v_1) <
d(\varepsilon, f \cdot v_0)$ and $f \cdot e \in E(\Gamma_G)^-$. Now,
$$\mu(f \cdot e) = \mu((f \cdot e)^{-1})^{-1} = \mu((f \cdot v_1, f \cdot v_0))^{-1} = \xi(f \cdot
v_0)^{-1} = f(|f|-\alpha)^{-1}$$
$$ = g(\alpha+1) = \xi(v_1) = \mu(e).$$

\item[(b)] $\alpha = c(f^{-1},g)$

We have $f \cdot v_0 = \langle |f| - \alpha, f \rangle$ and $f \cdot v_1 = \langle |f| + (\alpha +
1) - 2 c(f^{-1},g), f \ast g \rangle = \langle |f| - \alpha + 1, f \ast g \rangle$. It follows that
$f \cdot e \in E(\Gamma_G)^+$ and $\mu(f \cdot e) = \xi(f \cdot v_1) = (f \ast g)(|f| - \alpha +
1)$. At the same time, $f \ast g = f_1 \circ g_1$, where $|f_1| = |f| - c(f^{-1},g) = |f| - \alpha,
\ g = g_0 \circ g_1,\ |g_0| = \alpha$, so, $(f \ast g)(|f| - \alpha + 1) = g_1(1) = g(\alpha + 1)$
and $\mu(f \cdot e) = g(\alpha + 1) = \xi(\langle \alpha + 1, g \rangle) = \xi(v_1) = \mu(e)$.

\item[(c)] $\alpha > c(f^{-1},g)$

Hence, $f \cdot v_0 = \langle |f| + \alpha - 2c(f^{-1},g), f \ast g \rangle$ and $f \cdot v_1 =
\langle |f| + \alpha + 1 - 2c(f^{-1},g), f \ast g \rangle$. Obviously, $f \cdot e \in E(\Gamma_G)^+$
and
$$\mu(f \cdot e) = \xi(f \cdot v_1) = (f \ast g)(|f| + \alpha + 1 - 2c(f^{-1},g)) = g_1(\alpha + 1
- c(f^{-1},g))$$
$$ = g(\alpha + 1) = \xi(v_1) = \mu(e),$$
where $f \ast g = f_1 \circ g_1,\ |f_1| = |f| - c(f^{-1},g) = |f| - \alpha,\ g = g_0 \circ g_1,\
|g_0| = \alpha$.
\end{enumerate}

Thus, in all possible cases we got $\mu(f \cdot e) = \mu(e)$ and the required statement follows.
\end{proof}

Let $v,w$ be two points of $\Gamma_G$. Since $\Gamma_G$ is a $\Lambda$-tree there exists a
unique geodesic connecting $v$ to $w$, which can be viewed as a ``path'' is the following sense. A
{\em path from $v$ to $w$} is a sequence of edges $p = \{ e_\alpha \},\ \alpha \in [1, d(v,w)]$ such
that $o(e_1) = v,\ t(e_{d(v,w)}) = w$ and $t(e_\alpha) = o(e_{\alpha + 1})$ for every $\alpha \in
[1,d(v,w)-1]$. In other words, a path is an ``edge'' counter-part of a geodesic and usually, for the
path from $v$ to $w$ (which is unique since $\Gamma_G$ is a $\Lambda$-tree) we are going to use the
same notation as for the geodesic between these points, that is, $p = [v,w]$. In the case when $v =
w$ the path $p$ is empty. The {\em length} of $p$ we denote by $|p|$ and set $|p| = d(v,w)$. Now,
the {\em path label} $\mu(p)$ for a path $p = \{ e_\alpha \}$ is the function $\mu : \{ e_\alpha \}
\rightarrow X^\pm$, where $\mu(e_\alpha)$ is the label of the edge $e_\alpha$.

\begin{lemma}
\label{le:paths}
Let $v,w$ be points of $\Gamma_G$ and $p$ the path from $v$ to $w$. Then $\mu(p) \in R(\Lambda, X)$.
\end{lemma}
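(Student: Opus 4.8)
The plan is to reduce an arbitrary path $[v,w]$ to two ``radial'' pieces issuing from the base point $\varepsilon$, to read off the label of each radial piece directly from the defining words in $G$, and then to check that no cancellation occurs at the one junction where the two pieces meet. Throughout I would write $v = \langle \alpha, f\rangle$ and $w = \langle \beta, g\rangle$ with $f,g \in G$, and I would treat the empty path (when $v=w$) separately, where $\mu(p)=\varepsilon \in R(\Lambda,X)$ trivially.

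First I would pin down the shape of radial geodesics. For $w=\langle\beta,g\rangle$ the map $\tau \mapsto \langle\tau,g\rangle$ is an isometry of $[0,\beta]_\Lambda$ into $\Gamma_G$ with endpoints $\varepsilon=\langle 0,g\rangle$ and $w$: indeed the metric from Proposition \ref{pr:lambda_tree} gives $d(\langle\tau_1,g\rangle,\langle\tau_2,g\rangle)=\tau_1+\tau_2-2\min\{\tau_1,\tau_2,c(g,g)\}=|\tau_1-\tau_2|$ since $\tau_i\leqslant|g|=c(g,g)$. By uniqueness of geodesics this image is exactly $[\varepsilon,w]$. Every edge of this geodesic points away from $\varepsilon$, hence is positive, and the edge terminating at $\langle\tau,g\rangle$ carries label $\xi(\langle\tau,g\rangle)=g(\tau)$. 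Thus the path label of $[\varepsilon,w]$ is precisely the initial subword $\Xi((\varepsilon,w])$, the restriction of $g$ to $[1,\beta]$, which is reduced because $g\in CDR(\Lambda,X)\subseteq R(\Lambda,X)$.

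Next I would split the general geodesic at the median $y=Y(\varepsilon,v,w)$. Setting $m=(v\cdot w)_\varepsilon=\min\{\alpha,\beta,c(f,g)\}$ (computed from $d$ as above), the median satisfies $d(\varepsilon,y)=m$ and $y=\langle m,f\rangle=\langle m,g\rangle$, lying on both $[\varepsilon,v]$ and $[\varepsilon,w]$. Hence $[v,w]=[v,y]\cup[y,w]$, where $[y,w]$ is the terminal part of the radial geodesic $[\varepsilon,w]$ and $[y,v]$ is the terminal part of $[\varepsilon,v]$. By the radial computation, the label of $[y,w]$ is the subword $g(m+1)\cdots g(\beta)$ and the label of $[y,v]$ is $f(m+1)\cdots f(\alpha)$; traversing the latter backwards from $v$ to $y$ replaces each edge $e$ by $e^{-1}$, so using $\mu(e^{-1})=\mu(e)^{-1}$ the label of $[v,y]$ is the $\Lambda$-word inverse of $f(m+1)\cdots f(\alpha)$. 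Concatenating along $v\to y\to w$ yields $\mu([v,w])=\bigl(f(m+1)\cdots f(\alpha)\bigr)^{-1}\,\bigl(g(m+1)\cdots g(\beta)\bigr)$.

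Both factors are reduced, being (an inverse of) a subword of the reduced words $f$ and $g$, so the only place reduction could fail is the junction, where I must compare the last letter $f(m+1)^{-1}$ of the first factor with the first letter $g(m+1)$ of the second; reducedness is exactly the inequality $f(m+1)\neq g(m+1)$. This is the one genuinely load-bearing step, and it follows from a short case analysis on which term realizes $m$. If $m=\alpha$ then $v=y$ and the first factor is empty (symmetrically if $m=\beta$), so nothing must be checked; while if $m<\alpha$ and $m<\beta$ simultaneously, then necessarily $m=c(f,g)$ with $c(f,g)<|f|$ and $c(f,g)<|g|$, so $com(f,g)$ is a proper prefix of both $f$ and $g$ and they differ in position $m+1$ by the very definition of the longest common initial segment (recall $c(f,g)=|com(f,g)|$, which is how $\Gamma_G$ is glued). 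In every case the junction is reduced, so $\mu([v,w])\in R(\Lambda,X)$. The main obstacle is purely bookkeeping --- keeping the edge orientations and the index $m$ straight --- since reducedness itself is forced by the combinatorial meaning of $c(f,g)$.
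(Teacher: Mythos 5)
Your proposal is correct and follows essentially the same route as the paper's proof: split $[v,w]$ at the median $Y(\varepsilon,v,w)$, observe that the two halves are reduced because radial paths carry (inverses of) subwords of reduced $\Lambda$-words, and rule out cancellation at the junction using the defining gluing of $\Gamma_G$ via $c(f,g)$. The only cosmetic difference is that the paper phrases the junction step as a contradiction with the equivalence relation ($t(e_1)\sim t(e_2)$ yet $t(e_1)\neq t(e_2)$), whereas you invoke the definition of the longest common initial segment directly --- these are the same fact.
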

\begin{proof} From the definition of $\Gamma_G$ it follows that the statement is true when $v =
\varepsilon$. Let $v_0 = Y(\varepsilon, v, w)$ and let $p_v$ and $p_w$ be the paths from
$\varepsilon$ respectively to $v$ and $w$. Also, let $p_1$ and $p_2$ be the paths from $v_0$ to $v$
and $w$. Since $\mu(p_v), \mu(p_w) \in R(\Lambda,X)$ then $\mu(p_1), \mu(p_2) \in R(\Lambda,X)$ as
subwords. Hence, $\mu(p) \notin R(\Lambda, X)$ implies that the first edges $e_1$ and $e_2$
correspondingly of $p_1$ and $p_2$ have the same label. But this contradicts the definition of
$\Gamma_G$ because in this case $t(e_1) \sim t(e_2)$, but $t(e_1) \neq t(e_2)$.
\end{proof}

As usual, if $p$ is a path from $v$ to $w$ then its {\em inverse} denoted $p^{-1}$ is a path from
$w$ back to $v$. In this case, the label of $p^{-1}$ is $\mu(p)^{-1}$, which is again an element of
$R(\Lambda, X)$.

Define
$$V_G = \{ v \in \Gamma_G \mid\ \exists\ g \in G:\ v = \langle |g|,g \rangle \},$$
which is a subset of points in $\Gamma_G$ corresponding to the elements of $G$. Also, for every $v
\in \Gamma_G$ let
$$path_G(v) = \{ \mu(p) \mid\ p = [v,w]\ {\rm where}\ w \in V_G\}.$$
The following lemma follows immediately.

\begin{lemma}
\label{le:paths_2}
Let $v \in V_G$. Then $path_G(v) = G \subset CDR(\Lambda, X)$.
\end{lemma}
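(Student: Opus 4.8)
The plan is to reduce the statement to the transparent case $v = \epsilon = \langle 0, 1 \rangle$ and then transport it to an arbitrary $v \in V_G$ by means of the equivariance of the edge-labeling. First I would treat the base point. For any $h \in G$ the geodesic $[\epsilon, \langle |h|, h \rangle]$ passes through exactly the points $\langle \alpha, h \rangle$ with $\alpha \in [0, |h|]$, and its $\alpha$-th edge runs from $\langle \alpha - 1, h \rangle$ to $\langle \alpha, h \rangle$; this edge is positive, since $d(\epsilon, \langle \alpha, h \rangle) = \alpha = d(\epsilon, \langle \alpha - 1, h \rangle) + 1$, so by definition of $\mu$ it carries the label $\xi(\langle \alpha, h \rangle) = h(\alpha)$. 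Hence the path label of $[\epsilon, \langle |h|, h \rangle]$ is precisely the word $h$, i.e. $\mu([\epsilon, \langle |h|, h \rangle]) = h$ as an element of $R(\Lambda, X)$. Since $\langle |h|, h \rangle$ runs over all of $V_G$ as $h$ runs over $G$, this already gives $path_G(\epsilon) = G$.

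Next I would record how $G$ acts on $V_G$. A short computation with the two cases in the definition of the action shows that for all $g, h \in G$ one has $g \cdot \langle |h|, h \rangle = \langle |g \ast h|, g \ast h \rangle$ (when $c(g^{-1},h) = |h|$ the cancelled word $g \ast h$ is an initial segment of $g$, and the two representatives are identified in $\Gamma_G$; otherwise the length formula $|g \ast h| = |g| + |h| - 2 c(g^{-1},h)$ applies directly). In particular $G$ preserves $V_G$ and acts on it transitively, with $\langle |h|, h \rangle = h \cdot \epsilon$. The decisive ingredient is then the $G$-invariance of the labeling, Lemma \ref{le:label_edges}: if $w, w' \in V_G$ and $f \in G$, then $f$ carries the geodesic $[w, w']$ isometrically onto $[f \cdot w, f \cdot w']$, taking each edge $e_\alpha$ of the path to the edge $f \cdot e_\alpha$, and $\mu(f \cdot e_\alpha) = \mu(e_\alpha)$. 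Consequently the two paths carry the same label, i.e. $\mu([f \cdot w, f \cdot w']) = \mu([w, w'])$.

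Finally I would combine these observations. Writing $v = g_0 \cdot \epsilon$ for the element $g_0 \in G$ with $v = \langle |g_0|, g_0 \rangle$, every $w \in V_G$ factors uniquely as $w = g_0 \cdot w'$ with $w' = g_0^{-1} \cdot w \in V_G$, and the previous paragraph gives $\mu([v, w]) = \mu([g_0 \cdot \epsilon, g_0 \cdot w']) = \mu([\epsilon, w'])$. As $w$ runs over $V_G$ so does $w'$, whence $path_G(v) = \{\mu([\epsilon, w']) \mid w' \in V_G\} = path_G(\epsilon) = G$ inside $CDR(\Lambda, X)$. I expect the only genuinely delicate point to be the base case: one must verify carefully that every edge along $[\epsilon, \langle |h|, h \rangle]$ receives the positive orientation and that $\mu$ reproduces the letters of $h$ in their correct order, so that the path label is literally the function $h$. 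Once that is settled, the transport step is purely formal given Lemma \ref{le:label_edges}.
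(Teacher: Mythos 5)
Your proof is correct. Note, though, that the paper offers no argument here at all: the lemma is introduced with ``The following lemma follows immediately,'' the intended justification being the one-line direct computation. Namely, for $v = \langle |g|, g\rangle$ and $w = \langle |h|, h\rangle$ the geodesic $[v,w]$ runs from $v$ down to the branch point $Y(\epsilon, v, w) = \langle c(g,h), g\rangle$ and then up to $w$, so its label is $\tilde g^{-1} \circ \tilde h = g^{-1} \ast h$, where $g = com(g,h) \circ \tilde g$ and $h = com(g,h) \circ \tilde h$; hence $path_G(v) = \{ g^{-1} \ast h \mid h \in G\} = G$ directly, for every $v \in V_G$ at once. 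Your route is genuinely different in organization: you compute only at the base point $\epsilon$, where the label of $[\epsilon, \langle |h|,h\rangle]$ is read off letter by letter as $h$, and then transport the conclusion to an arbitrary $v \in V_G$ using the identity $g \cdot \langle |h|, h \rangle = \langle |g\ast h|, g \ast h\rangle$ (which you verify correctly in both cases of the action) together with the equivariance of $\mu$ on paths --- which is exactly the extension of Lemma \ref{le:label_edges} to $G$-equivalent paths that the paper records in the sentence immediately following this lemma. What your version buys is that the only honest label computation happens at $\epsilon$, where orientations and letters are transparent; what it costs is the extra verification that $V_G$ is $G$-invariant and that the action on it is transitive, which the direct computation of $\mu([v,w])$ avoids. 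Both arguments are sound and fully compatible with the paper's framework.
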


The action of $G$ on $E(\Gamma_G)$ extends to the action on all paths in $\Gamma_G$, hence, Lemma
\ref{le:label_edges} extends to the case when $e$ and $f$ are two $G$-equivalent paths in
$\Gamma_G$.

\section{Regular length functions and actions}
\label{sec:reg}

Regularity of Lyndon length function, or of the underlying action turns out to be an important
property in the theory of $\Lambda$-free groups. By imposing this restriction on the length function
or action one gains a lot of information about the group through inner combinatorics of group elements
viewed as $\Lambda$-words.

\subsection{Regular length functions}

In this section we define regular length functions and show some examples of groups with regular
length functions.

A length function $l: G \rightarrow \Lambda$ is  called {\it regular} if it satisfies the {\em
regularity} axiom:
\begin{enumerate}
\item[(L6)] $\forall\ g, f \in G,\ \exists\ u, g_1, f_1 \in G:$
$$g = u \circ g_1 \ \& \  f = u \circ f_1 \ \& \ l(u) = c(g,f).$$
\end{enumerate}

Observe that a regular length function does not have to be free, as well as, freeness does not imply
regularity.

Here are several examples of groups with regular free length functions.

\begin{example}
\label{ex:reg_len}
Let $F = F(X)$ be a free group on $X$. The length function
$$| \cdot | : F \rightarrow \mathbb{Z},$$
where $|f|$ is a the length  of $f \in F$ as a word in $X^{\pm 1}$, is regular.
\end{example}

The following is a more general example.

\begin{example} \cite{KMRS:2012}
\label{ex:1b}
Let $F = F(X)$ be a free group on $X$, $H$ a finitely generated subgroup of $H$, and $l_H$ the
restriction to $H$ of the length function in $F$ relative to $X$. Then $l_H$ is a regular length
function on $H$ if and only if there exists a basis $U$ of $H$ such that every two non-equal
elements from $U^{\pm 1}$ have different initial letters.
\end{example}

\begin{example}
\label{ex:2}  \cite{Myasnikov_Remeslennikov_Serbin:2005}
Lyndon's free $\Zt$-group $\FZt$ has a regular free length function with values in $\Zt$.
\end{example}

\begin{example} \cite{KMRS:2012}
\label{ex:3}
Let $F = F(X)$ be a free group with basis $X$, $| \cdot |$ the standard length function on $F$
relative to $X$, and  $u, v \in F$ such that $|u| = |v|$ and $u$ is not conjugate to $v^{-1}$. Then
the  HNN-extension
$$G = \langle F, s \mid u^s = v \rangle,$$
has  a regular free length function $l : G \rightarrow \mathbb{Z}^2$ which extends $| \cdot |$.
\end{example}

\begin{example} \cite{KMRS:2012}
\label{ex:4}
For any $n \geqslant 1$ the orientable surface group
$$G = \langle x_1, x_2, \ldots, x_{2n-1}, x_{2n} \mid [x_1,x_2] \cdots [x_{2n-1}, x_{2n}] = 1 \rangle$$
has a regular free length function $l : G \rightarrow \mathbb{Z}^2$.
\end{example}
\begin{proof}
It suffices to represent $G$ as an HNN extension from Example \ref{ex:3}. The word
$$R(X) = x_1 \cdots x_{2n} x_1^{-1} \cdots x_{2n}^{-1}.$$
is quadratic, so   there exists an automorphism $\phi$ of $F = F(x_1, \ldots, x_{2n})$ such that
$$R(X)^\phi = [x_1,x_2] \cdots [x_{2n-1}, x_{2n}]$$
(see, for example,   Proposition 7.6 \cite{Lyndon_Schupp:2001}). It follows that $G$ is isomorphic
to
$$G^\prime = \langle x_1,\ldots, x_{2n} \mid x_1 \cdots x_{2n} x_1^{-1} \cdots x_{2n}^{-1} = 1
\rangle,$$
which can be represented as an HNN-extension of the required form
$$G^\prime = \langle F(x_2,\ldots, x_{2n}), x_1 \mid x_1 (x_2 \cdots x_{2n}) x_1^{-1} = x_{2n}
x_{2n-1} \cdots x_2 \rangle,$$
since $|x_2 \cdots x_{2n}| = |x_{2n} x_{2n-1} \cdots x_2|$.
\end{proof}

\begin{example} \cite{KMRS:2012}
\label{ex:5}
For any $n,\ n \geqslant 3$ the non-orientable surface group
$$G = \langle x_1, x_2, \ldots,  x_{n} \mid x_1^2 x_2^2 \ldots x_n^2 = 1 \rangle$$
has a regular free length function $l : G \rightarrow \mathbb{Z}^2$.
\end{example}
\begin{proof}
Again, it suffices to represent $G$ as an HNN extension from Example \ref{ex:3}. An argument similar
to the one in the proof of Example \ref{ex:4} shows that the group $G$ is isomorphic to
$$G^\prime = \langle x_1, x_2, \ldots, x_n \mid x_1 \ldots x_{n-1} x_n x_1^{-1} \cdots x_{n-1}^{-1}
x_n \rangle $$
and the result follows, since the presentation above can be written as
$$G^\prime = \langle x_1, x_2, \ldots, x_n \mid x_1 (x_2 \ldots x_{n-1} x_n) x_1^{-1} = x_n^{-1}
x_{n-1} \cdots x_2 \rangle$$
\end{proof}

\begin{example} \cite{KMRS:2012}
\label{ex:6}
A free abelian group of rank $n$ has a free regular length function in ${\mathbb Z}^n$.
\end{example}
\begin{proof}
Let $G = \mathbb{Z}^n$ be a free abelian group of rank $n$. Then $G$ is an ordered abelian group
relative to the right lexicographic order ``$\leqslant$''. The absolute value $|u|$ of an element
$u \in G$, defined as $|u| = \max \{u,-u\}$,  gives a free length function $l : G \rightarrow
\mathbb{Z}^n$. It is easy to see that $l$ is regular.
\end{proof}

\begin{example} \cite{KMRS:2012}
\label{ex:7}
Let $G_i,\ i = 1,2$ be a group  with a free regular length function $l_i : G_i \to {\mathbb Z}^n$.
Then the free product $G = G_1 \ast G_2$ has a free regular length function in $l: G \to {\mathbb Z}^n$
that extends the functions $l_1$ and $l_2$.
\end{example}
\begin{proof}
Let $g \in G$ given in the reduced form  $g = u_1 v_1 \ldots u_k v_k,$ where $u_1, \ldots, u_k \in
G_1$ and $v_1, \ldots, v_k \in G_2$. Define $l : G \to  {\mathbb Z}^n$ by
$$l(g)=\sum_{i = 1}^k (l_1(u_i) + l_2(v_i)).$$
$l$ is a free length function (see, for example, \cite{Chiswell:2001}). Also, note that $c(g,h) = 0$
if $g \in G_1$ and $h \in G_2$. Next, $l$ is regular. Indeed, let $g = g_1 \cdots g_k,\ h = h_1 \cdots
h_n \in G$, where the products $g_1 \cdots g_k,\ h_1 \cdots h_n$ are reduced and $k \leqslant n$. Let $m =
\max\{i \mid g_i = h_i\}$. If $m = k$ then $h = g \circ (h_{m+1} \cdots h_n)$ and $c(g,h) = l(g)$,
so the regularity axiom holds for the pair $g, h$. If $m = n$ then necessarily $m = k$ and $h = g$,
so the the axiom holds again. Now assume that $m < k$. If we denote $c = g_1 \ldots g_m$ then $g =
c \circ g_{m+1} \circ g',\ h = c \circ h_{m+1} \circ h'$, where $g' = g_{m+2} \cdots g_k,\ h' =
h_{m+2} \cdots h_n$ ($g'$ is trivial if $k = m + 1$, and $h'$ is trivial if $n = m + 1$). Hence,
$g^{-1} h = (g')^{-1} \circ (g_{m+1}^{-1} h_{m+1}) \circ h'$ and
$$c(g, h) = \frac{1}{2}(l(g) + l(h) - l(g^{-1} h)) = \frac{1}{2}(l(c) + l(g_{m+1}) + l(g') + l(c) +
l(h_{m+1}) + l(h')$$
$$ - (l(g') + l(g_{m+1}^{-1} h_{m+1}) + l(h'))) = l(c) + c(g_{m+1}, h_{m+1}).$$
Since both $g_{m + 1}$ and $h_{m + 1}$ belong to the same factor $G_i$ and the length function on
$G_i$ is regular, it follows that there exists $u \in G_i$ such that $l(u) = c(g_{m + 1}, h_{m + 1}),\
g_{m + 1} = u \circ v,\ h_{m + 1} = u \circ w$ for some $v, w \in G_i$. Hence, $g = (c\ u) \circ v
\circ g',\ h = (c\ u) \circ w \circ h'$, where $c(g,h) = l(c\ u)$ and $c\ u \in G$.
\end{proof}

\begin{example}
Let $G$ be a finitely generated $\mathbb{R}$-free group. Then $G$ has a free regular length function
in $\mathbb{Z}^n$, where $n$ is the maximal rank of free abelian subgroups (centralizers) of $G$.
\end{example}
\begin{proof}
By Rips' Theorem every finitely generated $\mathbb{R}$-free group is a free product of groups
described in Examples \ref{ex:4}, \ref{ex:5}, \ref{ex:6}, hence the result.
\end{proof}

\begin{theorem}
\label{chis-cor-1}
Let $G$ have a free regular Lyndon length function $L : G \rightarrow \Lambda$, where $\Lambda$ is
an arbitrary ordered abelian group. Then there exists an embedding $\phi : G \rightarrow
R(\Lambda', X)$, where $\Lambda'$ is a discretely ordered abelian group and $X$ is some set, such
that, the Lyndon length function on $\phi(G)$ induced from $R(\Lambda',X)$ is regular.
\end{theorem}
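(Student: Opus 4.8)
\emph{Proof proposal.} The plan is to take the length-preserving embedding already supplied by Corollary~\ref{chis-cor} and to check that regularity of $L$ is inherited by its image. First I would apply Corollary~\ref{chis-cor} to obtain an embedding $\phi : G \to CDR(\Lambda', X)$, where $\Lambda' = \mathbb{Z} \oplus \Lambda$ is discretely ordered by the right lexicographic order and $|\phi(g)| = (0, L(g))$ for every $g \in G$. Since $CDR(\Lambda', X) \subseteq R(\Lambda', X)$, this is an embedding into $R(\Lambda', X)$ as required. Because $\phi(G)$ is a subgroup of $CDR(\Lambda', X)$, the map $\phi$ is a group isomorphism onto $\phi(G)$, so $\phi(ab) = \phi(a) \ast \phi(b)$ for all $a,b \in G$; and by Theorem~\ref{co:3.1} the restriction of the standard length $|\cdot|$ to $\phi(G)$ is already a free Lyndon length function with values in $\Lambda'$. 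What remains is to verify the regularity axiom (L6) for this induced function.

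The bridging observation is that a length-preserving homomorphism converts the abstract ``no-cancellation'' decompositions produced by regularity of $L$ into genuine prefix decompositions of $\Lambda'$-words. Concretely, for $f, g \in G$ I would first record that $c(\phi(f),\phi(g)) = (0, c(f,g))$: indeed $\phi(f)^{-1} \ast \phi(g) = \phi(f^{-1}g)$, and since $|\phi(f)|, |\phi(g)|, |\phi(f^{-1}g)|$ all have vanishing first coordinate, the defining formula for $c$ gives exactly $(0, c(f,g))$. Next, regularity (L6) of $L$ yields $u, f_1, g_1 \in G$ with $f = u f_1$, $g = u g_1$, $L(u) = c(f,g)$, where $f = u \circ f_1$ is read as $L(f) = L(u) + L(f_1)$ and similarly for $g$. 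Applying $\phi$ and comparing lengths, $|\phi(u) \ast \phi(f_1)| = |\phi(f)| = (0,L(f)) = |\phi(u)| + |\phi(f_1)|$, which forces the product to carry no cancellation, i.e. $\phi(f) = \phi(u) \circ \phi(f_1)$, and likewise $\phi(g) = \phi(u) \circ \phi(g_1)$.

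Thus $\phi(u) \in \phi(G)$ is a common prefix of $\phi(f)$ and $\phi(g)$ of length $|\phi(u)| = (0, L(u)) = (0, c(f,g)) = c(\phi(f),\phi(g))$. Setting $u' = \phi(u)$, $f_1' = \phi(f_1)$, $g_1' = \phi(g_1)$, this triple witnesses axiom (L6) for $\phi(f), \phi(g)$ inside $\phi(G)$; equivalently, $\phi(u)$ is forced to be the longest common initial segment ${\rm com}(\phi(f),\phi(g))$, so $\phi(G)$ is closed under taking common initial segments. Hence the induced length function on $\phi(G)$ is regular, which finishes the argument.

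I expect the only genuine subtlety to be the correct reading of the symbol ``$\circ$'' in the abstract axiom (L6): inside $G$ it must be interpreted as the group product together with additivity of lengths, and the crux of the proof is that $\phi$ transports this additivity into the \emph{absence} of cancellation in $R(\Lambda', X)$. Once that point is pinned down, verifying (L6) is a direct length computation using $|\phi(g)| = (0, L(g))$ and the homomorphism property of $\phi$, with no further obstacles.
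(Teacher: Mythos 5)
Your proposal is correct and follows essentially the same route as the paper: apply Corollary~\ref{chis-cor} to get the length-preserving embedding with $|\phi(g)| = (0, L(g))$, then transfer the (L6) decomposition $f = u \circ f_1$, $g = u \circ g_1$ through $\phi$, using additivity of lengths to rule out cancellation and to conclude $c(\phi(f),\phi(g)) = |\phi(u)|$. The only difference is cosmetic (you compute $c(\phi(f),\phi(g)) = (0, c(f,g))$ up front, while the paper derives it last), so there is nothing to add.
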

\begin{proof}
Since $L$ is regular then for any $g, f \in G$ there exist $u, g_1, f_1 \in G$ such that
$$g = u \circ g_1 \ \& \  f = u \circ f_1 \ \& \ L(u) = c(g,f)$$
By Corollary \ref{chis-cor} it follows that
$$|\phi(g)| = |\phi(u)| + |\phi(g_1)|,\ |\phi(f)| = |\phi(u)| + |\phi(f_1)|.$$
Indeed, if for example $|\phi(g)| < |\phi(u)| + |\phi(g_1)|$ then $L(g) < L(u) + L(g_1)$ -- a
contradiction. So, we have
$$2 c(\phi(g),\phi(f)) = |\phi(g)| + |\phi(f)| - |\phi(g^{-1} f)| = |\phi(g)| + |\phi(f)| -
|\phi(g_1^{-1} \circ f_1)| = 2|\phi(u)|.$$
Hence, $c(\phi(g),\phi(f)) = |\phi(u)|$ and the length function on $\phi(G)$ induced from $R(A, X)$
is regular.
\end{proof}

Notice that the converse of the theorem above is obviously true.

\subsection{Regular actions}
\label{subse:regact}

In this section we give a geometric characterization of group actions that come from regular
length functions.

\smallskip

Let $G$ act on a $\Lambda$-tree $\Gamma$. The action is {\em regular with respect to $x \in \Gamma$}
if for any $g,h \in G$ there exists $f \in G$ such that $[x, f x] = [x, g x] \cap [x, h x]$.

\smallskip

The next lemma shows that regular actions exactly correspond to regular length functions (hence the
term).
\begin{lemma} \cite{KMRS:2012}
Let $G$ act on a $\Lambda$-tree $\Gamma$. Then the action of $G$ is regular with respect to $x \in
\Gamma$ if and only if the length function $L_x: G \rightarrow \Lambda$ based at $x$ is regular.
\end{lemma}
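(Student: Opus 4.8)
The plan is to translate both conditions into statements about the geometry of the geodesics $[x, gx]$ in $\Gamma$ and to use the standard dictionary between the Gromov product and intersections of segments in a $\Lambda$-tree. The central fact I would establish first is that, for any $g, h \in G$, the length function $L_x$ based at $x$ satisfies
$$c(g,h) = \tfrac{1}{2}\bigl(L_x(g) + L_x(h) - L_x(g^{-1}h)\bigr) = (gx \cdot hx)_x,$$
where the last equality uses isometry invariance $L_x(g^{-1}h) = d(x, g^{-1}hx) = d(gx, hx)$. Since $[x,gx]$ and $[x,hx]$ share the endpoint $x$, axiom (T3) shows their intersection is a segment $[x, w]$, and the usual $\Lambda$-tree computation identifies $w = Y(x, gx, hx)$ with $d(x, w) = (gx \cdot hx)_x = c(g,h)$. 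I would also record the elementary dictionary for the symbol $\circ$: for $u, g_1 \in G$, writing $g = u \circ g_1$ (cancellation-free) is equivalent to $g = u g_1$ together with $L_x(g) = L_x(u) + L_x(g_1)$, which after applying the isometry $u^{-1}$ reads $d(x, gx) = d(x, ux) + d(ux, gx)$, i.e. $ux$ lies on the geodesic $[x, gx]$.

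For the forward direction I would assume the action regular with respect to $x$ and verify axiom (L6). Given $g, f \in G$, regularity supplies $u \in G$ with $[x, ux] = [x, gx] \cap [x, fx] = [x, w]$, so $ux = w$ and $L_x(u) = d(x,w) = c(g,f)$. Setting $g_1 = u^{-1}g$ and $f_1 = u^{-1}f$, the fact that $ux$ lies on $[x,gx]$ gives $d(x,gx) = d(x,ux) + d(ux,gx) = L_x(u) + L_x(g_1)$, that is $g = u \circ g_1$, and symmetrically $f = u \circ f_1$; together with $L_x(u) = c(g,f)$ this is exactly (L6).

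For the converse I would assume $L_x$ regular and produce, for given $g, h \in G$, the required element. Applying (L6) to the pair $g, h$ yields $u, g_1, h_1$ with $g = u \circ g_1$, $h = u \circ h_1$ and $L_x(u) = c(g,h)$. By the dictionary above, $u \circ g_1$ and $u \circ h_1$ force $ux$ to lie on both $[x, gx]$ and $[x, hx]$, hence $ux \in [x, gx] \cap [x, hx] = [x, w]$. Since $d(x, ux) = L_x(u) = c(g,h) = d(x, w)$ and $ux$ lies on the segment $[x,w]$, uniqueness of the point at a prescribed distance along a segment gives $ux = w$, so $[x, ux] = [x, gx] \cap [x, hx]$ and $f = u$ witnesses regularity of the action.

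The routine steps are the isometry-invariance computations; the only point requiring genuine care — the main obstacle — is the passage between the algebraic meaning of $\circ$ (additivity of $L_x$, i.e. no cancellation) and the geometric assertion that a point lies on a geodesic. This relies on the characterization $p \in [a,b] \iff d(a,p) + d(p,b) = d(a,b)$ valid in $\Lambda$-trees, together with the identification of $[x,gx]\cap[x,hx]$ with $[x, Y(x,gx,hx)]$ and the computation of $d(x, Y(x,gx,hx))$ as $(gx\cdot hx)_x$. Once these $\Lambda$-tree facts are in place, both implications close immediately.
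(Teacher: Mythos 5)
Your proposal is correct and follows essentially the same route as the paper's proof: both directions rest on the dictionary that $g = u \circ g_1$ (additivity of $L_x$) is equivalent to $ux$ lying on the geodesic $[x,gx]$, together with the identification of $c(g,h)$ with the Gromov product $(gx\cdot hx)_x$, i.e.\ the distance from $x$ to the median $Y(x,gx,hx)$. The only cosmetic difference is that the paper closes the converse via the distance equation $d(gx,hx)=d(fx,gx)+d(fx,hx)$, while you identify $ux$ with the median by uniqueness of the point at prescribed distance along the segment $[x,w]$; these are the same geometric fact.
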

\begin{proof}
Let $d$ be the $\Lambda$-metric on $\Gamma$. By definition, the length function $L_x$ is regular if
for every $g,h \in G$ there exists $f \in G$ such that $g = f g_1,\ h = f h_1$, where $L_x(f) =
c(g,h)$ and $L_x(g) = L_x(f) + L_x(g_1),\ L_x(h) = L_x(f) + L_x(h_1)$.

\smallskip

Suppose the action of $G$ is regular with respect to $x$. Then for $g,h \in G$ there exists $f \in G$
such that $[x,fx] = [x,gx] \cap [x,hx]$. We have $[x,gx] = [x,fx] \cup [fx,gx],\ [x,hx] = [x,fx] \cup
[fx,hx]$ and $d(fx,gx) = d(x,(f^{-1} g)x) = L_x(f^{-1} g),\ d(fx,hx) = d(x,(f^{-1} h)x) = L_x(f^{-1}
h)$. Taking $g_1 = f^{-1} g,\ h_1 = f^{-1} h$ we have $L_x(g) = L_x(f) + L_x(g_1),\ L_x(h) = L_x(f) +
L_x(h_1)$. Finally, since $c(g,h) = \frac{1}{2}(L_x(g)+L_x(h)-L_x(g^{-1} h))$ and $L_x(g^{-1} h) =
d(x, (g^{-1}h)x) = d(gx, hx) = d(fx, gx) + d(fx, hx)$ we get $L_x(f) = c(g,h)$.

\smallskip

Suppose that $L_x$ is regular. Then from $g = f g_1,\ h = f h_1$, where $L_x(g) = L_x(f) + L_x(g_1),\
L_x(h) = L_x(f) + L_x(h_1)$ it follows that $[x,gx] = [x,fx] \cup [fx,gx],\ [x,hx] = [x,fx] \cup [fx,hx]$.
Now, $L_x(f) = c(g,h) = \frac{1}{2}(L_x(g)+L_x(h)-L_x(g^{-1} h))$, so $2d(x,fx) = d(x,gx) + d(x,hx) -
d(x,(g^{-1} h)x) = d(x,gx) + d(x,hx) - d(gx,hx)$. In other words,
$$d(gx,hx) = d(x,gx) + d(x,hx) - 2d(x,fx) = (d(x,gx) - d(x,fx)) +$$
$$ (d(x,hx) - d(x,fx)) = d(fx,gx) + d(fx,hx)$$
which is equivalent to $[x,fx] = [x,gx] \cap [x,hx]$.
\end{proof}

\begin{lemma} \cite{KMRS:2012}
Let $G$ act minimally on a $\Lambda$-tree $\Gamma$. If the action of $G$ is regular with respect to
$x \in \Gamma$ then all branch points of $\Gamma$ are $G$-equivalent.
\end{lemma}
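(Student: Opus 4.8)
The plan is to prove the stronger statement that every branch point of $\Gamma$ lies in the single orbit $G \cdot x$; since this forces all branch points to be $G$-equivalent to $x$, and hence to one another, it yields the lemma. The main tool is that regularity with respect to $x$ converts an intersection of two segments issuing from $x$ into a segment of the form $[x, fx]$, combined with the observation that a branch point is exactly the median $Y$ of three suitably placed orbit points.

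First I would record the consequence of minimality that $\Gamma$ is spanned by the orbit $G \cdot x$. The subtree $T$ spanned by $\{gx \mid g \in G\}$ is $G$-invariant, so by minimality it is either all of $\Gamma$ or trivial; the trivial case means $G$ fixes $x$, which forces $\Gamma$ to be a single point (having no branch points), so the statement is vacuous there and we may assume $T = \Gamma$.

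Next, let $p$ be a branch point; if $p = x$ we are done, so assume $p \neq x$. Then $\Gamma \setminus \{p\}$ has at least three components, and since $x$ lies in at most one of them I can choose two components $C_2, C_3$ with $x \notin C_2 \cup C_3$. The key step is to locate orbit points inside $C_2$ and $C_3$. Because $\Gamma$ is spanned by the orbit, each $C_i$ contains a point $q$ lying on some geodesic $[ax, bx]$; passing to the median $m = Y(x, ax, bx)$ one sees that $q$ lies on $[x, ax]$ or on $[x, bx]$, and following that geodesic past $q$ (which cannot recross $p$ without backtracking) produces an orbit endpoint $g_i x \in \overline{C_i}$. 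If this endpoint equals $p$ we are already finished; otherwise $g_i x \in C_i$.

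Finally, with orbit points $x, g_2 x, g_3 x$ sitting in three distinct components of $\Gamma \setminus \{p\}$, every geodesic between two of them passes through $p$, so $Y(x, g_2 x, g_3 x) = p$ and hence $[x, g_2 x] \cap [x, g_3 x] = [x, p]$ by the standard $\Lambda$-tree identity $[x,a] \cap [x,b] = [x, Y(x,a,b)]$. Regularity with respect to $x$ now supplies $f \in G$ with $[x, fx] = [x, g_2 x] \cap [x, g_3 x] = [x, p]$; comparing the endpoints of these equal segments gives $fx = p$, so $p \in G \cdot x$. The step I expect to be the main obstacle is populating the branches at $p$ by orbit points (producing $g_2 x$ and $g_3 x$), which is where the spanning-by-one-orbit consequence of minimality and the median bookkeeping in a $\Lambda$-tree really do the work; once that is in place, regularity pins down $p$ as an orbit point immediately.
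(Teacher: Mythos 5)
Your proof is correct and follows essentially the same route as the paper's: minimality gives that the orbit $G\cdot x$ spans $\Gamma$, a branch point $p$ is realized as $[x,p]=[x,gx]\cap[x,hx]$ for suitable $g,h\in G$, and regularity then produces $f$ with $fx=p$. The only difference is that the paper simply asserts the existence of such $g,h$, whereas you justify it by placing orbit points in two components of $\Gamma\setminus\{p\}$ missing $x$ and using the median identity $[x,a]\cap[x,b]=[x,Y(x,a,b)]$ --- a detail worth having, but not a different argument.
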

\begin{proof}
From minimality of the action it follows that $\Gamma$ is spanned by the set of points $Gx = \{gx
\mid g \in G\}$.

Now let $y$ be a branch point in $\Gamma$. It follows that there exist (not unique in general) $g,
h \in G$ such that $[x,y] = [x, g x] \cap [x, h x]$. From regularity of the action it follows that
there exists $f \in G$ such that $y = f x$. Hence, every branch point is $G$-equivalent to $x$ and
the statement of the lemma follows.
\end{proof}

\begin{lemma} \cite{KMRS:2012}
Let $G$ act on a $\Lambda$-tree $\Gamma$. If the action of $G$ is regular with respect to $x \in
\Gamma$ then it is regular with respect to any $y \in G x$.
\end{lemma}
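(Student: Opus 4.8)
The plan is to reduce regularity at an arbitrary orbit point $y \in Gx$ to regularity at $x$ by transporting the whole configuration along the isometry furnished by the group element carrying $x$ to $y$. Write $y = kx$ for some $k \in G$. Since every element of $G$ acts on $\Gamma$ as an isometry, both $k$ and $k^{-1}$ send geodesic segments to geodesic segments (using the uniqueness of the segment between two given points in a $\Lambda$-tree, which comes from the tree axioms (T1)--(T3)), and, being bijections, they commute with intersection of subsets: $k(A \cap B) = kA \cap kB$.

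First I would record the endpoint computations. For any $a \in G$, setting $a' = k^{-1} a k$ one has $k[x, a'x] = [kx, ka'x] = [y, ay]$, because $k a' x = k(k^{-1}ak)x = a k x = a(kx) = ay$. Likewise, for $f \in G$, putting $f' = kfk^{-1}$ gives $k[x, fx] = [kx, kfx] = [y, f'y]$, since $f'y = kfk^{-1}\cdot kx = kfx$. Both $a'$ and $f'$ are genuine elements of $G$, so nothing leaves the group.

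With these identities in hand, let $g, h \in G$ be given and set $g' = k^{-1}gk$, $h' = k^{-1}hk$. Regularity of the action with respect to $x$ supplies an $f \in G$ with $[x, fx] = [x, g'x] \cap [x, h'x]$. Applying the isometry $k$, using that $k$ preserves intersections, and invoking the endpoint identities yields
$$[y, f'y] = k[x, fx] = k\bigl([x, g'x] \cap [x, h'x]\bigr) = k[x, g'x] \cap k[x, h'x] = [y, gy] \cap [y, hy],$$
where $f' = kfk^{-1} \in G$. Thus the required element exists for every pair $g,h$, and the action is regular with respect to $y$.

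This is a direct equivariance / transport-of-structure argument, and I do not expect a serious obstacle. The only points demanding care are the verification that a group isometry carries the segment $[x, a'x]$ precisely onto $[y, ay]$ (which rests on the uniqueness of segments in a $\Lambda$-tree) and the conjugation bookkeeping needed to guarantee that the translated endpoints land exactly on $gy$, $hy$, and $f'y$ rather than on merely equivalent points.
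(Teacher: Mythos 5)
Your proposal is correct and is essentially the paper's own argument: the paper also writes $y = tx$, conjugates $g,h$ by $t$, applies regularity at $x$, and transports the resulting segment identity back by the isometry $t$ (equivalently, observes that $[tx,(ft)x] = [tx,(gt)x] \cap [tx,(ht)x]$ is equivalent to $[x,(t^{-1}ft)x] = [x,(t^{-1}gt)x] \cap [x,(t^{-1}ht)x]$). Your version merely spells out the supporting facts (isometries preserve segments by uniqueness of segments in a $\Lambda$-tree, and bijections commute with intersections) that the paper leaves implicit.
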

\begin{proof}
We have to show that for every $g,h \in G$ there exists $f \in G$ such that $[y, f y] = [y, g y]
\cap [y, h y]$. Since $y = t x$ for some $t \in G$ then we have to prove that $[t x, (f t) x] =
[t x, (g t) x] \cap [t x, (h t) x]$. The latter equality is equivalent to $[x, (t^{-1} f t) x] =
[x, (t^{-1} g t) x] \cap [x, (t^{-1} h t) x]$ which follows from regularity of the action with
respect to $x$.
\end{proof}

\begin{lemma} \cite{KMRS:2012}
Let $G$ act freely on a $\Lambda$-tree $\Gamma$ so that all branch points of $\Gamma$ are
$G$-equivalent. Then the action of $G$ is regular with respect to any branch point in $\Gamma$.
\end{lemma}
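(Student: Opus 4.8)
The plan is to apply the definition of regularity directly. Fix a branch point $x$ of $\Gamma$ and arbitrary $g, h \in G$; the goal is to produce $f \in G$ with $[x, fx] = [x, gx] \cap [x, hx]$. By axiom (T3) the intersection of the two segments $[x, gx]$ and $[x, hx]$, which share the common endpoint $x$, is again a segment, so I would write it as $[x, y]$. The entire problem then reduces to showing that the far endpoint $y$ lies in the orbit $Gx$: for then $y = fx$ for some $f \in G$, and this $f$ is exactly what regularity requires.

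To locate $y$ in $Gx$, I would split into cases according to its position on the two segments. If $y \in \{x, gx, hx\}$ we are done at once, taking $f = 1$, $f = g$, or $f = h$ respectively (these cases simply record that the two orbit segments diverge already at $x$, or that one contains the other). The remaining, and essential, case is that $y$ is strictly interior to both $[x, gx]$ and $[x, hx]$, and the key claim to establish is that such a $y$ must be a branch point of $\Gamma$.

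To prove the claim I would exhibit three pairwise distinct directions at $y$, namely those of the non-degenerate segments $[y, x]$, $[y, gx]$, and $[y, hx]$. Since $y$ is an interior point of the geodesic $[x, gx]$, the standard property of $\Lambda$-trees that a segment is the isometric image of an interval gives $[y, x] \cap [y, gx] = \{y\}$, and likewise $[y, x] \cap [y, hx] = \{y\}$. For the third pair, suppose $[y, gx]$ and $[y, hx]$ shared a point $z \neq y$; then $z$ would lie in $[x, gx] \cap [x, hx] = [x, y]$ while also lying in $[y, gx]$, and $[x, y] \cap [y, gx] = \{y\}$ would force $z = y$, a contradiction. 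Hence the three directions are distinct and $y$ is indeed a branch point.

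With the claim in hand the argument concludes immediately: by hypothesis all branch points of $\Gamma$ are $G$-equivalent, and since $x$ is itself a branch point, $y \in Gx$; writing $y = fx$ yields $[x, fx] = [x, gx] \cap [x, hx]$, which is regularity with respect to $x$. The whole weight of the proof rests on the branch-point claim, and I expect the one point needing genuine care to be the verification that $[y, gx] \cap [y, hx] = \{y\}$, since this is precisely where the defining relation $[x, gx] \cap [x, hx] = [x, y]$ for $y$ must be fed back in. Freeness of the action is the ambient assumption of the section but plays no further role in this purely combinatorial step.
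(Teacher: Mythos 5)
Your proof is correct and follows essentially the same route as the paper: the far endpoint of $[x,gx] \cap [x,hx]$ is the median $Y(x,gx,hx)$ of the tripod on $x, gx, hx$, which is a branch point and hence lies in the orbit $Gx$. If anything, your version is more careful than the paper's, which only separates the case $g = h$, invokes freeness to conclude $gx \neq hx$, and then asserts without further argument that the median is a branch point --- thereby glossing over the degenerate configurations $y \in \{x, gx, hx\}$ that you treat explicitly (and whose treatment shows, as you observe, that freeness is not actually needed).
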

\begin{proof}
Let $x$ be a branch point in $\Gamma$ and $g, h \in G$. If $g = h$ then $[x, g x] \cap [x, h x] =
[x, g x]$ and $g$ is the required element. Suppose $g \neq h$. Since the action is free then $g x
\neq h x$ and we consider the tripod formed by $x, g x, h x$. Hence, $y = Y(x, g x, h x)$ is a
branch point in $\Gamma$ and by the assumption there exists $f \in G$ such that $y = f x$.
\end{proof}

\begin{figure}[htbp]
\label{reg_action}
\centering{\mbox{\psfig{figure=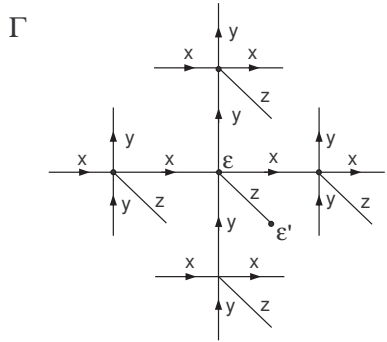,height=2in}}}
\caption{$\Gamma$ in Example \ref{example_non_reg}.}
\end{figure}

\begin{example}
\label{example_non_reg}
Let $\Gamma'$ be the Cayley graph of a free group $F(x,y)$ with the base-point $\varepsilon$. Let
$\Gamma$ be obtained from $\Gamma'$ by adding an edge labeled by $z \neq x^{\pm 1}, y^{\pm 1}$ at
every vertex of $\Gamma'$. $F(x,y)$ has a natural action on $\Gamma'$ which we can extend to the
action on $\Gamma$. The edge at $\varepsilon$ labeled by $z$ has an endpoint not equal to
$\varepsilon$ and we denote it by $\varepsilon'$. Observe that the action of $F(x,y)$ on $\Gamma$
is regular with respect to $\varepsilon$ but is not regular with respect to $\varepsilon'$.
\end{example}

\subsection{Merzlyakov's Theorem for $\Lambda$-free groups with regular actions}
\label{subs:merz}

Here are the results which show that groups with regular free Lyndon length functions
generalize free groups in the following sense.

\begin{theorem} \cite{Khan_Myasnikov_Serbin:2007}
Every finitely generated non-abelian group $G$ with a regular free Lyndon length function freely
lifts every positive sentence of the language $L_G$ which holds in $G$, that is, $G$ freely lifts
its positive theory $Th^+(G)$ (the set of all positive sentences that are true in $G$).
\end{theorem}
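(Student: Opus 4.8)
The plan is to adapt Merzlyakov's original cancellation argument to the setting of infinite $\Lambda$-words, using regularity to guarantee that every cut-and-paste operation produces a genuine group element. Recall first what the statement asks for: to \emph{freely lift} a true positive sentence $\forall \bar{x}\, \exists \bar{y}\, \Phi(\bar{x},\bar{y})$ (with $\Phi$ positive quantifier-free) means producing a tuple of words $\bar{y} = \bar{p}(\bar{x})$ in the free product $G \ast F(\bar{x})$ for which $\Phi(\bar{x}, \bar{p}(\bar{x}))$ holds identically there; for sentences with deeper quantifier alternation one reads the assertion game-theoretically, so that the $\exists$-player's winning strategy assigns to each block of $\forall$-variables a word in the variables played so far. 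First I would pass to prenex $\forall\exists$ normal form with positive quantifier-free matrix, and note that since $G$ is non-abelian it contains a free subgroup of rank two (by the two-generator dichotomy and transitivity of commutation), which supplies the room needed for the encoding; a disjunction $\bigvee_i U_i$ will be handled not by a priori simplification but by letting the generic specialization below single out the one conjunctive disjunct that admits a formal solution. Thus it suffices to treat $\forall \bar{x}\, \exists \bar{y}\, (W(\bar{x},\bar{y}) = 1)$, where $W=1$ is a finite system of equations with constants from $G$.

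By Theorem \ref{chis-cor-1} I may assume $G \leqslant R(\Lambda', X)$ with $\Lambda'$ discretely ordered and the induced length function regular; then every element of $G$ is an honest reduced $\Lambda'$-word, and, crucially, the longest common initial segment $\mathrm{com}(u,v)$ of any two elements of $G$ is again realized inside $G$. Finite generation keeps the set of constants appearing in $W$ finite, and this in-group existence of common segments is exactly what makes the Nielsen method available for $G$; it is the $\Lambda$-word counterpart of the elementary cancellation lemmas that drive Merzlyakov's proof over a free group.

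The heart of the argument is a \emph{generic specialization}. I would substitute for the universal variables a tuple of long, mutually independent $\Lambda'$-words $\bar{x} \mapsto \bar{a}$ in $G$, chosen so that the $a_i^{\pm 1}$ pairwise share no long initial segments and each has length dominating the constants of $W$. Since the sentence is true there is a solution $\bar{y}\mapsto \bar{b}$ of $W(\bar{a},\bar{b})=1$ in $G$. Analyzing the cancellation pattern of the reduced word $W(\bar{a},\bar{b})$, the general position of the $a_i$ forces each $b_j$ to decompose under $\ast$ into alternating pieces that are either subwords of the $a_i^{\pm 1}$ or short connecting pieces of bounded height. Regularity guarantees that each such piece, being a common initial segment of elements of $G$, is itself an element of $G$, so re-abstracting the $a_i$ to the formal generators $x_i$ turns this into a genuine factorization in $G \ast F(\bar{x})$. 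Passing to a cofinal family of ever more generic $\bar{a}$ forces the pieces to stabilize, and reading off the stable pieces yields words $\bar{p}(\bar{x})$ with $W(\bar{x},\bar{p}(\bar{x}))=1$ holding formally in $G \ast F(\bar{x})$; the alternating case is organized inductively along the quantifier prefix, each existential block being lifted by the same analysis relative to the already-chosen words.

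The main obstacle I anticipate is establishing the rigidity of the cancellation pattern non-archimedeanly: over a general ordered abelian group one cannot simply ``take $N$ large'', so the genericity of $\bar{a}$ and the bounded-height control on the connecting pieces must be extracted from the convex-subgroup and height structure of $\Lambda'$ — via the height estimates and commutation lemmas of Section \ref{sec:commutation} — rather than from an archimedean size comparison. Checking that the stabilized pieces assemble into a single consistent formal solution, and that the lift respects disjunctions and the full quantifier prefix, is where the bulk of the technical work lies; and it is precisely at these points that regularity, through the in-group existence of longest common segments, is indispensable.
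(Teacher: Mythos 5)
The paper states this theorem without proof, citing \cite{Khan_Myasnikov_Serbin:2007}, and your plan coincides with the approach of that source: a Merzlyakov-style generic specialization carried out inside the infinite-word model $R(\Lambda',X)$ provided by Theorem \ref{chis-cor-1}, with regularity supplying in-group longest common initial segments for the Nielsen-type cancellation analysis, and with the height and commutation lemmas of Section \ref{sec:commutation} replacing the archimedean ``take $N$ large'' estimates. Your sketch is sound and correctly locates the technical core (non-archimedean rigidity and stabilization of the cancellation pattern, the pigeonhole treatment of disjunctions over a cofinal family of generic tuples, and the game-theoretic handling of the quantifier prefix), so it is essentially the same route as the cited proof.
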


\begin{theorem} \cite{Khan_Myasnikov_Serbin:2007}
Let $G$ be a finitely generated non-abelian group with a regular free Lyndon length function. Then
every positive sentence in the language $L_G$ which holds in $G$ has term-definable Skolem functions
in $G$. Moreover, if $G$ has a decidable word problem then such Skolem functions can be found
effectively.
\end{theorem}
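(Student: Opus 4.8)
The plan is to deduce the statement from the free-lifting theorem stated immediately above, which I take as given: its proof carries all of the substantive work (the elimination process for regular $\Lambda$-actions together with the combinatorics of infinite $\Lambda$-words), while the present theorem is a packaging of that result plus an enumeration argument for effectiveness. First I would pass to a normal form. Every positive sentence is logically equivalent to one in prenex form
$$\Phi \;=\; \forall x_1\,\exists y_1\,\forall x_2\,\exists y_2 \cdots \forall x_n\,\exists y_n\ \Psi(x_1,y_1,\dots,x_n,y_n),$$
where the matrix $\Psi$ is positive and quantifier-free; since the only atomic formulas in the language of groups are equations and the only connectives allowed are $\wedge$ and $\vee$, distributivity writes $\Psi$ as a finite disjunction $\Psi=\bigvee_j \Psi_j$ of finite systems of equations $\Psi_j$ with constants from $G$. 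Padding with dummy quantifiers lets me assume the prefix strictly alternates.

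Next I would invoke the previous theorem. Because $\Phi$ holds in $G$ and $G$ is a finitely generated non-abelian group with a regular free Lyndon length function, $G$ freely lifts $\Phi$. Unpacking this, let $F_n=F(x_1,\dots,x_n)$ be free on the universal variables and form the free product $G*F_n$. The free lift furnishes, stage by stage along the quantifier prefix, group words $w_i=w_i(x_1,\dots,x_i)$ in the constants of $G$ and in the variables $x_1,\dots,x_i$ introduced before $y_i$ --- the restricted dependence being forced by the inductive construction, which respects the $\forall\exists$-alternation --- such that a single disjunct $\Psi_{j_0}(\bar x,w_1,\dots,w_n)$ is satisfied identically in $G*F_n$.

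These words are exactly the Skolem functions. For any tuple $\bar g=(g_1,\dots,g_n)\in G^n$, the specialization $\pi_{\bar g}\colon G*F_n\to G$ that fixes $G$ pointwise and sends $x_i\mapsto g_i$ is a homomorphism; since a conjunction of equations is preserved under any homomorphism, $\Psi_{j_0}(\bar g,w_1(\bar g),\dots,w_n(\bar g))$ holds in $G$, whence $\Psi$ holds. As $\bar g$ was arbitrary and $w_i$ depends only on $x_1,\dots,x_i$, the assignments $(x_1,\dots,x_i)\mapsto w_i$ are term-definable Skolem functions witnessing $\Phi$ in $G$. Note that positivity is used twice: once to pick a single disjunct at the generic free point, and once to transport that disjunct across the specialization.

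For effectiveness, assume the word problem in $G$ is decidable. Then so is the word problem in $G*F_n$, by computing free-product normal forms (the word problem in $F_n$ being trivial and that in $G$ decidable by hypothesis). I would then enumerate all finite tuples $(w_1,\dots,w_n)$, with $w_i$ a word over the generators, the finitely many constants occurring in $\Phi$, and $x_1,\dots,x_i$, together with the finitely many disjuncts $\Psi_j$; for each candidate I check whether $\Psi_j(\bar x,\bar w)$ holds in $G*F_n$ by deciding its finitely many equations. The free-lifting theorem guarantees that some candidate succeeds, so the search terminates and outputs the desired functions. The only real obstacle here is termination, and it is secured not by an a priori bound on the search but by this existence guarantee combined with decidability of the word problem in $G*F_n$.
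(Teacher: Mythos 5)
Your proposal is correct. The survey gives no proof of this theorem at all --- it is quoted from \cite{Khan_Myasnikov_Serbin:2007} immediately after the free-lifting theorem --- and your derivation is exactly the intended route by which the Skolem-function statement follows from free lifting: specialize the generic witnesses through the retraction $G \ast F(x_1,\dots,x_n) \to G$ fixing $G$ and sending $x_i \mapsto g_i$ (positivity ensuring a single disjunct survives both at the generic point and under the specialization), then, for effectiveness, enumerate candidate tuples of words and verify them against the word problem of $G \ast F(x_1,\dots,x_n)$, with termination guaranteed by the existence statement. The one point worth flagging is that your unpacking of ``freely lifts'' builds in the restricted dependence $w_i = w_i(x_1,\dots,x_i)$ respecting the quantifier alternation; this is indeed part of the Merzlyakov-type statement being invoked rather than something your argument establishes, so the reliance is legitimate, but that is where all the substantive content of the cited theorem resides.
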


\section{Limit groups}
\label{sec:limit_gps}

These numerous characterizations of limit groups make them into a very robust tool linking  group
theory, topology and logic.

Limit groups play an important part in modern group theory. They appear in many different situations:
in combinatorial group theory as groups discriminated by $G$ ($\omega$-residually $G$-groups or
fully residually $G$-groups) \cite{Baumslag:1962, Baumslag:1967, Myasnikov_Remeslennikov:2000,
Baumslag_Miasnikov_Remeslennikov:2000, Baumslag_Miasnikov_Remeslennikov:2002}, in the algebraic
geometry over groups as the coordinate groups of irreducible varieties over $G$
\cite{Baumslag_Miasnikov_Remeslennikov:1999, Kharlampovich_Myasnikov:1998(1),
Kharlampovich_Myasnikov:1998(2), Kharlampovich_Myasnikov:2005(3), Sela:2001}, groups universally
equivalent to $G$ \cite{Remeslennikov:1989, GaglioneSpellman:1993, Myasnikov_Remeslennikov:2000}, limit groups of
$G$ in the Gromov-Hausdorff topology \cite{Champetier_Guirardel:2005}, in the theory of equations
in groups \cite{Lyndon:1960, Razborov:1982, Razborov:1995, Kharlampovich_Myasnikov:1998(1),
Kharlampovich_Myasnikov:1998(2), Kharlampovich_Myasnikov:2005(3), Groves:2005(2)}, in group actions
\cite{Bestvina_Feighn:1995, GLP:1994, Myasnikov_Remeslennikov_Serbin:2005, Groves:2005(1),
Guirardel:2004},  in the solutions of Tarski problems \cite{Kharlampovich_Myasnikov:2006, Sela:2006},
etc.

Recall, that   a group $G$ is called {\em fully residually free} if for any non-trivial $g_1, \ldots,
g_n \in G$ there exists a homomorphism $\phi$ of $G$ into a free group such that $\phi(g_1), \ldots,
\phi(g_n)$ are non-trivial.

It is a crucial result  that every limit group admits
a free action on a $\mathbb{Z}^n$-tree for an appropriate $n \in \mathbb{N}$, where $\mathbb{Z}^n$
is ordered lexicographically (see \cite{Kharlampovich_Myasnikov:1998(2)}). The proof comes in several steps.
The initial breakthrough is due to Lyndon, who introduced a construction of the  free  $\Zt$-completion $\FZt$ of a free group $F$ (nowadays it is called Lyndon's free
$\Zt$-group) and showed that this group, as well as all its subgroups,  is fully residually free \cite{Lyndon:1960}.
Much later Remeslennikov proved that every finitely generated fully residually free group has a free Lyndon length function with values in $\mathbb{Z}^n$ (but not necessarily  ordered lexicographically) \cite{Remeslennikov:1989}.  That was a first link between limit groups and free actions on $\mathbb{Z}^n$-trees. In 1995 Myasnikov and Remeslennikov showed that Lyndon free exponential group $\FZt$  has a free Lyndon length function with values in $\mathbb{Z}^n$ with  lexicographical ordering \cite{Myasnikov_Remeslennikov:1995} and stated a conjecture that every limit group embeds into $\FZt$. Finally, Kharlampovich and Myasnikov proved that every limit group $G$ embeds into $\FZt$  \cite{Kharlampovich_Myasnikov:1998(2)}.

Below, following \cite{Myasnikov_Remeslennikov_Serbin:2005} we construct a free $\Zt$-valued
length function on $\FZt$ which combined with the result of Kharlampovich and Myasnikov mentioned above
 gives a free $\mathbb{Z}^n$-valued length function on a given  limit group $G$. Then we discuss various algorithmic applications of
these results which are based on the technique of infinite words and Stallings foldings techniques for subgroups of $\FZt$ (see
\cite{Myasnikov_Remeslennikov_Serbin:2006, Kharlampovich_Myasnikov_Remeslennikov_Serbin:2006,
Nikolaev_Serbin:2011}).

We fix a set $X$, a free group $F = F(X)$, and consider the additive group of the polynomial ring $\Zt$ as a free abelian group (with basis  $1, t, t^2,\ldots $) ordered lexicographically.

\subsection{Lyndon's free group $\FZt$}

Let $A$ be an associative unitary ring. A group $G$ is termed an {\em $A$-group} if it is equipped
with a function ({\em exponentiation}) $G \times A \rightarrow G$:
$$(g,\alpha) \rightarrow  g^\alpha $$
satisfying the following conditions for arbitrary $g,h \in G$ and $\alpha, \beta \in A$:
\begin{enumerate}
\item[(Exp1)] $g^1 = g, \ \ \ g^{\alpha + \beta} = g^{\alpha} g^{\beta}, \ \ \ g^{\alpha \beta} =
(g^{\alpha})^{\beta}$,
\item[(Exp2)] $g^{-1} h^{\alpha} g = (g^{-1} h g)^{\alpha}$,
\item[(Exp3)] if $g$ and $h$ commute, then $(gh)^{\alpha} = g^{\alpha} h^{\alpha}$.
\end{enumerate}
The axioms (Exp1) and (Exp2) were introduced originally by R. Lyndon in \cite{Lyndon:1960}, the
axiom (Exp3) was added later in \cite{Myasnikov_Remeslennikov:1994}. A homomorphism $\phi: G
\rightarrow H$ between two $A$-groups is termed an $A$-homomorphism if $\phi(g^\alpha) = \phi(g)^\alpha$
for every $g \in G$ and $\alpha \in A$. It is not hard to prove (see, \cite{Myasnikov_Remeslennikov:1994})
that for every group $G$ there exists an $A$-group $H$ (which is unique up to an $A$-isomorphism)
and a homomorphism $\mu: G \longrightarrow H$ such that for every $A$-group $K$ and every $A$-homomorphism
$\theta: G \longrightarrow K$, there exists a unique $A$-homomorphism $\phi : H \longrightarrow K$
such that $\phi \mu = \theta$. We denote $H$ by $G^A$ and call it the {\em $A$-completion} of $G$.

In \cite{Myasnikov_Remeslennikov:1996} an effective  construction of $\FZt$ was given
in terms of extensions of centralizers. For a group $G$ let  $S = \{C_i \mid i \in I\}$ be a set of
representatives of conjugacy classes of proper cyclic centralizers in $G$, that is, every proper
cyclic centralizer in $G$ is conjugate to one from $S$, and no two centralizers from $S$ are conjugate.
Then the HNN-extension
$$H = \langle \ G, s_{i,j} \ \ (i \in I, j \in \mathbb{N}) \ \mid \ [s_{i,j}, u_i] = [s_{i,j},s_{i,k}] =
1\, (u_i \in C_i, i \in I, j,k \in \mathbb{N})\ \rangle,$$
is termed an {\em extension of cyclic centralizers} in $G$. Now the group $\FZt$ is
isomorphic to the direct limit of the following infinite chain of groups:
\begin{equation}
\label{eq:FZt}
F = G_0 < G_1 < \cdots < G_n < \cdots < \cdots,
\end{equation}
where $G_{i+1}$ is obtained from $G_i$ by  extension of all cyclic centralizers in $G_i$.

\subsection{$\Zt$-exponentiation on $CR(\Zt, X)$}
\label{subs:cr}

Define $t$-exponentiation on $CR(\Zt, X)$ as follows.

\begin{enumerate}
\item[(1)] Let $u \in CR(\Zt, X)$ be not a proper power and such that
$$|u| = f(t) = a_n t^n + a_{n-1} t^{n-1} + \cdots a_1 t + a_0.$$
Observe that for every $\beta \in [1, t^{n+1}]$ there exists $m \geqslant 0$ such that either $\beta
\in [m |u| + 1, (m+1) |u|]$, or $\beta \in [t |u| - (m + 1) |u| + 1, t |u| - m |u|]$. Hence, define
$u^t$ to be an element of $CR(\Zt, X)$ with length $t^{n+1}$ as follows
\[ \mbox{$u^t(\beta)$} = \left\{ \begin{array}{ll}
\mbox{$u(\alpha)$,} & \mbox{if $\beta = m |u| + \alpha, m \geqslant 0, 1 \leqslant \alpha \leqslant
|u|$,} \\
\mbox{$u(\alpha)$,} & \mbox{if $\beta = t |u| - m |u| + \alpha, m > 0, 1 \leqslant \alpha \leqslant
|u|$.}
\end{array}
\right.
\]

\item[(2)] If $v \in CR(\Zt, X)$ is such that $v = u^k$ for some $u \in CR(\Zt,X)$ then we set $v^t
= (u^t)^k$.

Thus we have defined an exponent $v^t$ for a given $v \in CR(\Zt, X)$. Notice that it follows from
the construction that $v^t$ starts with $v$ and ends with $v$. In particular, $v^t \in CR(\Zt,X)$.
It follows that $v^t \ast v = v^t \circ v = v \circ v^t = v \ast v^t$, hence, $[v^t, v] =
\varepsilon$.

\item[(3)]  Now for $v \in CR(\Zt, X)$ we define exponents $v^{t^k}$ by induction. Since $v^t \in
CR(\Zt, X)$ one can repeat the construction from (1) and define
$$v^{t^{k+1}} = (v^{t^k})^t.$$

\item[(4)] Now we define $v^{f(t)}$, where $f(t) \in \Zt$, by linearity, that is, if $f(t) = m_0 +
m_1 t + \ldots + m_k t^k$ then
$$v^{f(t)} = v^{m_0} \ast (v^t)^{m_1} \ast \cdots \ast (v^{t^k})^{m_k}.$$

Observe that the product above is defined because ${v^t}^{m+1}$ is cyclically reduced, and starts
and ends with ${v^t}^m$.
\end{enumerate}

\begin{lemma}
\label{le:cr_exp_1}
Let $v \in CR(\Zt, X), f(t) \in \Zt$. Then $v^{f(t)} \in CR(\Zt, X)$ and $[v^{f(t)}, v] =
\varepsilon,\ v^{-f(t)} = (v^{-1})^{f(t)}$.
\end{lemma}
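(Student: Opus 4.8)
The plan is to prove the three assertions by first pinning down the structure of the \emph{pure} powers $v^{t^k}$ and then assembling the general case from the linear-combination formula in step~(4). Concretely, I would establish by induction on $k$ the invariant that $v^{t^k} \in CR(\Zt,X)$, that $v^{t^k}$ both begins and ends with $v$ (so that $v^{t^k} = v \circ a_k = b_k \circ v$ for suitable $a_k,b_k$), and, most importantly, that $v^{t^k}$ commutes with $v$ in the strong form $v^{t^k} \circ v = v \circ v^{t^k}$. The base cases $k=0,1$ are exactly what the construction records: step~(2) together with its in-line remark gives $v^t \in CR(\Zt,X)$, $v^t = v \circ(\cdots) = (\cdots)\circ v$, and $[v^t,v]=\varepsilon$.

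For the inductive step I would apply the primitive construction of steps~(1)--(2) to the cyclically reduced word $w := v^{t^k}$. This immediately yields that $v^{t^{k+1}} = w^t$ is cyclically reduced, starts and ends with $w$ --- hence with $v$, since $w$ does --- and satisfies $w^t \circ w = w \circ w^t$. The one remaining point, \emph{and the main obstacle}, is to upgrade commutation with $w$ to commutation with $v$ itself, i.e.\ to show $v^{t^{k+1}} \circ v = v \circ v^{t^{k+1}}$. The conceptual reason is periodicity: $w^t$ is a non-Archimedean tiling of the interval $[1,t^{k+1}|v|]$ by copies of $w$, and each copy of $w$ is in turn tiled by $v$, so $w^t$ is tiled by $v$; appending or prepending one further $v$ then produces the \emph{same} word, namely the unique word of length $(t^{k+1}+1)|v|$ obtained by tiling $v$. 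Making this rigorous amounts to tracking the explicit indexing formula of step~(1) through one more level, which is where the real computational work lies. Alternatively, once $v$, $w=v^{t^k}$ and $w^t$ are known to lie in a common subgroup of $CDR(\Zt,X)$, one may deduce $[w^t,v]=\varepsilon$ from $[w^t,w]=\varepsilon$ and $[w,v]=\varepsilon$ via the commutation analysis of Section~\ref{sec:commutation} (Lemma~\ref{le:cycl}) together with Proposition~\ref{pr:1}, which forces centralizers of single elements in $CDR(\Zt,X)$ to be abelian.

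With the invariant in hand, the general statement follows. For $f(t)=m_0+m_1t+\cdots+m_kt^k$, each factor $(v^{t^j})^{m_j}$ is cyclically reduced (as a power of the cyclically reduced $v^{t^j}$, which commutes with $v$), and any two factors commute: they all commute with $v$ and, by the same tiling argument, with one another. Because consecutive factors each begin and end with $v$ along a common root, the $\ast$-products of step~(4) telescope into concatenations beyond the boundary copies of $v$, so $v^{f(t)}$ is a genuine cyclically reduced $\Zt$-word; this gives $v^{f(t)} \in CR(\Zt,X)$, and commuting with $v$ is inherited factor-by-factor, so $[v^{f(t)},v]=\varepsilon$.

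Finally, for $v^{-f(t)} = (v^{-1})^{f(t)}$ I would first note that $v^{-1}$ is cyclically reduced precisely when $v$ is (the defining condition is symmetric), and then verify the base identity $(v^{-1})^t = (v^t)^{-1}$ by direct inspection of the formula in step~(1): under passage to the inverse word the two clauses of that formula are interchanged and each letter is inverted, which is exactly the effect of tiling $v^{-1}$ in place of $v$. Induction on $j$ upgrades this to $(v^{-1})^{t^j} = (v^{t^j})^{-1}$, and substituting into the step~(4) expansions of both sides one matches them term by term --- legitimately, since the commuting factors established above let us reorder the $\ast$-products freely. This completes all three claims.
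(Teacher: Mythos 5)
For the record, the paper's entire proof of this lemma is the sentence ``Follows directly from the definition of $\Zt$-exponentiation,'' so your plan --- induction on $k$ with the invariant that $v^{t^k}$ is cyclically reduced, begins and ends with $v$, and satisfies $v^{t^k}\circ v=v\circ v^{t^k}$ --- is exactly the bookkeeping the authors leave implicit, and the scaffolding is sound. The problem is the justification you give at what you correctly single out as the main obstacle. Your claim that $v\circ w^t$ and $w^t\circ v$ (with $w=v^{t^k}$) must coincide because each is ``the unique word of its length obtained by tiling $v$'' fails as soon as $k\geqslant 1$: an interval whose length lies two or more convex-subgroup levels above $|v|$ is \emph{not} covered by forward $v$-tiles from its left end together with backward $v$-tiles from its right end, so there are middle ``galaxies'' in which a partition into $v$-tiles can sit with an arbitrary offset, and different offsets give different words. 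Hence there is no uniqueness statement to appeal to, and the step as written does not go through. (A symptom of the same looseness: you write the lengths as $t^{k+1}|v|$, but by Remark~\ref{le:cr_exp_2} this definition sacrifices multiplicativity; here $|v^{t^k}|=t^{n+k}$ where $n=\deg|v|$.) The repair is pleasant and worth stating: the inductive hypothesis $[w,v]=\varepsilon$, read letter-by-letter, says precisely that $w$ begins with $v$, ends with $v$, and that its interior letters are invariant under the shift by $|v|$. Coverage \emph{does} hold one level at a time (every position of $w^t$ is within finitely many $|w|$-tiles of one of its two ends --- this is the paper's own observation in step (1) of the definition), so every position of $v\circ w^t$ and of $w^t\circ v$ lands in a forward or backward copy of $w$, where the hypothesis applies tile-by-tile; comparing the three ranges of positions then gives $v\circ w^t=w^t\circ v$ directly.

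Your fallback argument is also not available at this point of the development: Lemma~\ref{le:cycl} and Proposition~\ref{pr:1} apply to elements of a \emph{subgroup} of $CDR(\Zt,X)$, and by the paper's definition a subset generates a subgroup only if all finite $\ast$-products of its elements are defined. Nothing proved so far shows that $v$, $v^{t^k}$ and $(v^{t^k})^t$ lie in a common subgroup --- establishing definedness of all those products is essentially the content of the present lemma and of the pregroup machinery of Section~\ref{subs:ext_centr}, which comes later and itself relies on this lemma. So the centralizer route is circular. Once the inductive step is repaired as above, your assembly of the general case and of $v^{-f(t)}=(v^{-1})^{f(t)}$ (via $(v^{-1})^t=(v^t)^{-1}$, checked on the defining formula) is fine, modulo noting that for mixed-sign $f(t)$ the $\ast$-products in step (4) involve bounded cancellation rather than pure concatenation, exactly as the paper's parenthetical remark after step (4) indicates.
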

\begin{proof} Follows directly from the definition of $\Zt$-exponentiation.
\end{proof}

\begin{remark}
\label{le:cr_exp_2}
Observe that under the above definition we loose the property $|u^{f(t)}| = |u| |f(t)|$ for $u \in
\FZt, f(t) \in \Zt$. But, at the same time, we obtain computational advantages which will be clear
later.
\end{remark}

\begin{lemma}
\label{le:cr_exp_3}
Let $u, v \in CR(\Zt, X)$ and $u = c^{-1} \ast v \ast c$ for some $c \in R(\Zt, X)$. Then for every
$f(t) \in \Zt$ we have
$$u^{f(t)} = c^{-1} \ast v^{f(t)} \ast c.$$
\end{lemma}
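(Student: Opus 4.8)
The plan is to reduce the identity to the single building block $v^t$ and then reassemble it through the inductive and linear clauses of the definition of $\Zt$-exponentiation. First I would establish the base identity $u^t = c^{-1} \ast v^t \ast c$ for every pair of cyclically reduced $u,v$ with $u = c^{-1} \ast v \ast c$, and then bootstrap: since $u^{t^k},\,v^{t^k}\in CR(\Zt,X)$ by Lemma \ref{le:cr_exp_1}, an induction on $k$ using $v^{t^{k+1}}=(v^{t^k})^t$ and the base identity applied to the pair $(u^{t^k},v^{t^k})$ gives $u^{t^k}=c^{-1}\ast v^{t^k}\ast c$ for all $k$. For integer exponents $(c^{-1}\ast v\ast c)^m=c^{-1}\ast v^m\ast c$ is immediate, so writing $f(t)=m_0+m_1t+\cdots+m_kt^k$ the linear clause yields
\[
u^{f(t)}=\prod_{j=0}^{k}\bigl(u^{t^j}\bigr)^{m_j}=\prod_{j=0}^{k}\bigl(c^{-1}\ast (v^{t^j})^{m_j}\ast c\bigr)=c^{-1}\ast\Bigl(\prod_{j=0}^{k}(v^{t^j})^{m_j}\Bigr)\ast c=c^{-1}\ast v^{f(t)}\ast c,
\]
where the intermediate factors $c\ast c^{-1}=\varepsilon$ telescope and all products are defined because the construction of $v^{f(t)}$ guarantees it.

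The heart of the argument is the base identity, and here the main work is a cancellation analysis of the conjugator. Since $u$ and $v$ are both cyclically reduced and conjugate, I would first show that $c$ has the normal form $c = v^{g(t)}\ast d$, where $v = d \circ d'$ is a concatenation without cancellation, $u = d' \circ d$, and $v^{g(t)}$ lies in the centralizer $C(v)$, which accounts for the freedom in the conjugator. The decomposition into a commuting factor on the left and a particular prefix solution is the standard normal form of an element conjugating one cyclically reduced word to another, transcribed to $\Zt$-words via the pregroup structure of Theorem \ref{th:pre}; here one uses that $C(v)$ is abelian and consists exactly of the $\Zt$-powers of the non-proper-power $v$ (cf.\ Proposition \ref{pr:1}). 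Establishing this rigorously — controlling precisely where cancellation occurs in $c^{-1}\ast v\ast c$ so that the outcome is reduced and cyclically reduced — is the step I expect to be the main obstacle.

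Granting the normal form, the computation is clean. Because $v^t$ commutes with $v$ (Lemma \ref{le:cr_exp_1}) it commutes with $v^{g(t)}$, so the commuting part of $c$ passes through and cancels:
\[
c^{-1}\ast v^t\ast c = d^{-1}\ast v^{-g(t)}\ast v^t\ast v^{g(t)}\ast d = d^{-1}\ast v^t\ast d .
\]
It then remains to verify $d^{-1}\ast v^t\ast d = u^t$ for the genuine prefix $d$ with $v=d\circ d'$, $u=d'\circ d$, and this follows directly from the explicit formula in clause~(1) of the definition: conjugating the periodic word $v^t$ by the prefix $d$ merely rotates its period from $v=d\circ d'$ to $u=d'\circ d$, producing exactly $u^t$. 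Finally I would dispose of the proper-power case $v=v_0^{\,k}$ (with $v_0\in CR(\Zt,X)$ not a proper power) separately: setting $w=c^{-1}\ast v_0\ast c$ one checks $w\in CR(\Zt,X)$ and that $w$ is not a proper power, being conjugate to $v_0$, so $u=w^{k}$ and hence $u^t=(w^t)^k$ by clause~(2); the already-proved non-proper-power case gives $w^t=c^{-1}\ast v_0^{\,t}\ast c$, whence $u^t=c^{-1}\ast (v_0^{\,t})^k\ast c=c^{-1}\ast v^t\ast c$. If one wishes to avoid part of the cancellation bookkeeping in identifying $d^{-1}\ast v^t\ast d$, an alternative is to note that both $c^{-1}\ast v^t\ast c$ and $u^t$ commute with $u$ and lie in $CR(\Zt,X)$, so by the abelianity of $C(u)$ and Lemma \ref{le:cycl} (with a length comparison) they coincide; either way, the control over $c$ is the crux.
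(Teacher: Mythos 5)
Your proposal is correct in outline and its skeleton is exactly that of the paper's proof: use the linearity clause of the definition to reduce to monomials $f(t)=t^{n}$, prove the base case $f(t)=t$ by a direct computation showing that conjugation by a prefix of $v$ merely rotates the period of $v^{t}$, and then induct on the degree using the fact (Lemma \ref{le:cr_exp_1}) that $u^{t^{k}}$, $v^{t^{k}}$ stay cyclically reduced and are conjugate by the same $c$. Where you genuinely depart from the paper is the cancellation analysis of the conjugator, and here your version is the more careful one. The paper simply asserts that $c$ is a prefix of $v$, i.e. $v=v_{1}\circ v_{2}$, $u=v_{2}\circ v_{1}$, $c=v_{1}$, and calls the base case immediate; taken literally this is incomplete, since the hypothesis admits conjugators such as $c=v\circ v_{1}$ or $c=v^{t}\circ v_{1}$, which carry $v$ to the same cyclic permutation but are not prefixes of $v$. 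Your normal form $c=v^{g(t)}\ast d$, with $d$ a prefix and the power $v^{g(t)}$ absorbing exactly this centralizer freedom, is what is actually needed to cover a general $c\in R(\Zt,X)$, and your observation that the commuting factor passes through $v^{t}$ is the correct way to dispose of it. The price is that this normal form --- which you rightly call the crux --- still has to be proved: the classical free-group argument must be redone for infinite words, and Proposition \ref{pr:1} cannot be cited as is, since it concerns centralizers inside subgroups of $CDR(\Lambda,X)$, not arbitrary conjugating elements of $R(\Zt,X)$. So your plan carries an explicit debt at the same point where the paper's proof is silent; neither text discharges it, but your statement of what must be shown is the accurate one. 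Your separate handling of the proper-power case via clause (2) is likewise something the paper leaves implicit.

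One caution about the shortcut you offer at the end: knowing that $c^{-1}\ast v^{t}\ast c$ and $u^{t}$ both lie in $CR(\Zt,X)$, commute with $u$, and have equal length does not force equality --- $u^{t}$ and $u^{-t}=(u^{t})^{-1}$ share all three properties --- so that variant needs an additional argument fixing the orientation (for instance, comparing initial segments with $u$). The main line of your argument does not depend on it.
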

\begin{proof} Since $u$ and $v$ are cyclically reduced and $u = c^{-1} \ast v \ast c$ then $v = v_1
\circ v_2, u = v_2 \circ v_1, c = v_1$.

In view of (4) in the definition of $\Zt$-exponentiation above it suffices to  prove the lemma
for $f(t) = t^n$. For $f(t) = t$ we immediately get
$$(v_2 \circ v_1)^t = v_1^{-1} \ast (v_1 \circ v_2)^t \ast v_1$$
from the definition. This implies that $v^t$ and $u^t$ are cyclic permutations of each other and both
belong to $CR(\Zt, X)$, therefore one can apply the induction on $\deg{f(t)}$ and the lemma follows.
\end{proof}

\begin{lemma}
\label{le:cr_exp_4}
Let $u, v \in CR(\Zt, X)$ and $f(t), g(t) \in \Zt$ be such that $u^{f(t)} = v^{g(t)}$. Then $[u,v]$
is defined and is equal to $\varepsilon$.
\end{lemma}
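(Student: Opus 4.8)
The plan is to reduce the statement to the transitivity of commutation in $CDR(\Zt,X)$. Write $w = u^{f(t)} = v^{g(t)}$. First I would dispose of the degenerate cases: if $u=\varepsilon$ or $f(t)=0$ then $w=\varepsilon$, and then $v^{g(t)}=\varepsilon$ forces $v=\varepsilon$ or $g(t)=0$ as well; in each such case $[u,v]=\varepsilon$ trivially, since $\varepsilon$ commutes with everything. So assume $u,v\neq\varepsilon$, $f(t),g(t)\neq 0$, and hence $w\neq\varepsilon$. The crucial observation is that, by Lemma \ref{le:cr_exp_1}, $[u^{f(t)},u]=\varepsilon$ and $[v^{g(t)},v]=\varepsilon$; that is, \emph{both} $u$ and $v$ commute with the single cyclically reduced element $w$. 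Thus it suffices to prove that two cyclically reduced elements commuting with a common nontrivial $w$ must commute with each other.

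For this I would use the Harrison-type criterion of Lemma \ref{le:LS}: it is enough to exhibit integers $m,n>0$ with $c(u^m,v^n)\geq |u|+|v|$. After replacing the relation $u^{f(t)}=v^{g(t)}$ by its inverse and, if necessary, replacing $v$ by $v^{-1}$ (using $v^{-g(t)}=(v^{-1})^{g(t)}$ from Lemma \ref{le:cr_exp_1}), I may assume that $f(t)$ and $g(t)$ have positive leading coefficients. Then the construction of $\Zt$-exponentiation shows that $w=u^{f(t)}$ has $u^m$ as an initial subword for every positive integer $m$, and likewise $w=v^{g(t)}$ has $v^n$ as an initial subword for every $n$. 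Consequently $u^m$ and $v^n$ are nested initial subwords of $w$, so $c(u^m,v^n)=\min\{m|u|,n|v|\}$. When $ht(u)=ht(v)$ the integer multiples $m|u|$ and $n|v|$ grow without bound inside the same Archimedean component, so for suitably large $m,n$ we get $\min\{m|u|,n|v|\}\geq |u|+|v|$, and Lemma \ref{le:LS} yields $[u,v]=\varepsilon$.

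The main obstacle is the case of unequal heights, say $ht(u)<ht(v)$, where $m|u|<|v|$ for all $m$ and the bound above saturates at $m|u|<|u|+|v|$, so Lemma \ref{le:LS} cannot be applied to the pair $u,v$ directly. Here I would argue that the higher element is forced to be $u$-periodic: since $w=u^{f(t)}$ commutes with $u$, and $v$ is an initial subword of $w$ lying in the low-height region on which $w$ literally repeats $u$, the word $v$ must satisfy $v(\beta)=v(\beta-|u|)$ throughout $[1,|v|]$, i.e. $v$ is a power of $u$ in $CR(\Zt,X)$; the suffix version of the same observation (using that $w$ also ends in arbitrarily high powers of $u$) pins down the tail. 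Making this periodicity rigorous is the delicate point, and it is exactly where the height and conjugation Lemmas \ref{le:0}--\ref{le:2} enter: they let me control $ht(f^{-1}\ast h\ast f)$ and reduce the unequal-height situation to placing $u$ and $v$ in one abelian centralizer.

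Finally, I would package the conclusion through Proposition \ref{pr:1}: once $u,v$ both commute with $w\neq\varepsilon$ and the periodicity analysis places $u,v,w$ in a single subgroup of $CDR(\Zt,X)$, the centralizer of $w$ is abelian, whence $[u,v]=\varepsilon$. Lemma \ref{le:cycl} then guarantees that, being cyclically reduced, $u$ and $v$ have compatible (trivial) cyclic decompositions, so the product $u\ast v$ is genuinely defined and equals $v\ast u$. I expect the bookkeeping of heights in the unequal-height case to be the only substantial work; the equal-height case is immediate from Lemmas \ref{le:cr_exp_1} and \ref{le:LS}.
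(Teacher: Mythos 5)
Your opening move---both $u$ and $v$ commute with the common value $w=u^{f(t)}=v^{g(t)}$---is the same as the paper's, but after that your proof has a genuine gap: the unequal-height case, which you yourself flag as ``the delicate point,'' is never actually proved, and it is the heart of the lemma. The lemmas you gesture at do not obviously apply there: Lemma \ref{le:0} and Lemma \ref{le:2} require a hypothesis of the form $ht(f^{-1}\ast h\ast f)<ht(f)$, whereas all you know is that $v$ commutes with $u^{f(t)}$, an element of height strictly \emph{larger} than $ht(v)$; nothing in your sketch converts this into the needed control of $ht(v^{-1}\ast u\ast v)$. There is also a circularity running through both of your cases: Lemma \ref{le:LS}, Lemmas \ref{le:0}--\ref{le:2} and Proposition \ref{pr:1} are all stated for elements of a fixed \emph{subgroup} $G\leqslant CDR(\Zt,X)$, i.e.\ in a setting where all relevant products are already known to be defined, while definedness of $u\ast v$ is part of the conclusion of the present lemma. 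Your closing appeal to Lemma \ref{le:cycl} cannot repair this, because that lemma takes $[g_1,g_2]=\varepsilon$ (hence definedness) as a hypothesis and deduces compatibility of cyclic decompositions, not the other way around. (A smaller inaccuracy: $u^{f(t)}$ has $u^m$ as an initial subword for \emph{every} positive integer $m$ only when $\deg f\geqslant 1$; for constant exponents your prefix computation degenerates and needs the classical free-group argument.)

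For contrast, the paper's proof needs no heights, no Harrison-type criterion, and no centralizer structure; it is a self-contained prefix/suffix computation that handles all height configurations uniformly. It splits by \emph{length}: if $|u|=|v|$, then since $u^{\pm1}$ and $v^{\pm1}$ are initial segments of the same word $w$, one gets $u=v^{\pm1}$ immediately. Otherwise, say $|u|<|v|$, one writes out the single relation $u\ast v^{g(t)}=v^{g(t)}\ast u$; because equal $\Zt$-words have equal initial and terminal segments, this already shows $[u,v]$ is defined, and a case split on whether $u\ast v$ cancels yields either $v=u\circ v_1=v_1\circ v_2$ with $v_2=u$ (so $u\circ v_1=v_1\circ u$ and hence $[u,v]=\varepsilon$), or $v=u^{-1}\circ v_1$ and the symmetric computation. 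If you want to salvage your outline, this direct comparison is what should replace both your unequal-height sketch and your reliance on the subgroup-based commutation lemmas.
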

\begin{proof} Since $[u, u^{f(t)}] = \varepsilon$ and $[v, v^{g(t)}] = \varepsilon$ then $[u,
v^{g(t)}] = \varepsilon$ and $[v,u^{f(t)}] = \varepsilon$. Now we are going to derive the required
statement from these equalities.

Observe that if $|u| = |v|$ then it follows automatically that $u = v^{\pm 1}$. Indeed, by the
definition of exponents $u^{f(t)}$ and $v^{g(t)}$ have correspondingly $u^{\pm 1}$ and $v^{\pm 1}$
as initial segments. Since $u^{f(t)} = v^{g(t)}$ then initial segments of length $|u|$ in both
coincide.

We can assume $|u| < |v|$ and consider $[u, v^{g(t)}] = \varepsilon$ (if $|u| > |v|$ then we
consider $[v, u^{f(t)}] = \varepsilon$ and apply the same arguments). Also, $g(t) > 1$, otherwise
we have nothing to prove.

Thus we have $u \ast v^{g(t)} = v^{g(t)} \ast u$. Since $u$ and $v$ are cyclically reduced and
equal $\Zt$-words have equal initial and terminal segments of the same length then $[u,v]$ is
defined and we have two cases.

\begin{enumerate}
\item[(a)] Suppose $u \ast v = u \circ v$.

Thus, automatically we have $v \ast u = v \circ u$. Next, $u \circ v^{g(t)}$ and $v^{g(t)} \circ u$
have the same initial segment of length $2|v|$. So $v = u \circ v_1 = v_1 \circ v_2$ and $|u| =
|v_2|$. Comparing terminal segments of $u \circ v^{g(t)}$ and $v^{g(t)} \circ u$ of length $|u|$ we
have $u = v_2$ and from $u \circ v_1 = v_1 \circ u$ it follows that $[u,v] = \varepsilon$.

\item[(b)] Suppose there is a cancellation in $u \ast v$.

Then, from $u^{f(t)} = v^{g(t)}$ it follows that $v^{-1} = v_1^{-1} \circ u$ and so $v = u^{-1}
\circ v_1$. Using the same arguments as in (a) we obtain $v = u^{-1} \circ v_1 = v_1 \circ v_2$,
$|u| = |v_2|$ and $u^{-1} = v_2$. It follows immediately that $[u,v] = \varepsilon$.
\end{enumerate}

\end{proof}

\subsection{$\Zt$-exponentiation on $CDR(\Zt, X)$}
\label{subs:cdr}

Let $v \in CDR(\Zt, X)$ have a cyclic decomposition $v = c^{-1} \circ u \circ c$ and $f(t) \in \Zt$.
We define $v^{f(t)}$ as follows
\begin{equation}
\label{le:cdr_exp_1}
v^{f(t)} = c^{-1} \circ u^{f(t)} \circ c.
\end{equation}
Observe that the product above is well defined since $u^{f(t)}$ starts and ends on $u$ if $f(t) >
0$, and starts and ends on $u^{-1}$ if $f(t) < 0$.

Thus we have defined $\Zt$-exponentiation function
$$\exp : CDR(\Zt, X) \times \Zt \rightarrow CDR(\Zt, X)$$
on the whole set $CDR(\Zt, X)$.

There are other ways of defining $\Zt$-exponentiation on $CDR(\Zt, X)$ but from now on we fix the
exponentiation described above.

\begin{lemma}
\label{le:cdr_exp_2}
Let $u,v \in CDR(\Zt, X)$ be such that $h(u) = h(v)$ and $[u, v] = \varepsilon$. Then $[u^{f(t)}, v]
= \varepsilon$ for any $f(t) \in \Zt$ provided $[u^{f(t)}, v]$ is defined.
\end{lemma}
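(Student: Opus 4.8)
The plan is to strip off the conjugating parts so that the problem becomes one about cyclically reduced words, and then to exhibit $u^{f(t)}$ and $v$ as $\Zt$-powers of a common cyclically reduced element, all of whose powers commute with each other.

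\textbf{Reduction to the cyclically reduced case.} Let $u=c^{-1}\circ u_0\circ c$ and $v=d^{-1}\circ v_0\circ d$ be the cyclic decompositions, with $u_0,v_0\in CR(\Zt,X)$. Since $[u,v]=\varepsilon$, Lemma \ref{le:cycl} forces $c=d$, so both $u$ and $v$ are conjugate by the single element $c$ to their cyclically reduced cores, and conjugating $[u,v]=\varepsilon$ by $c$ gives $[u_0,v_0]=\varepsilon$. By the definition (\ref{le:cdr_exp_1}) of $\Zt$-exponentiation on $CDR(\Zt,X)$ one has $u^{f(t)}=c^{-1}\circ u_0^{f(t)}\circ c$. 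Using $c\ast c^{-1}=\varepsilon$, the products $u^{f(t)}\ast v$ and $v\ast u^{f(t)}$ (which the hypothesis assumes to be defined) are the $c$-conjugates of $u_0^{f(t)}\ast v_0$ and $v_0\ast u_0^{f(t)}$; hence $[u^{f(t)},v]=\varepsilon$ if and only if $[u_0^{f(t)},v_0]=\varepsilon$, and it suffices to prove the statement for cyclically reduced $u_0,v_0$.

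\textbf{The common axis.} The heart is the structural claim that two commuting cyclically reduced $\Zt$-words are $\Zt$-powers of one cyclically reduced $w$ that is not a proper power: $u_0=w^{p(t)}$ and $v_0=w^{q(t)}$. I would prove this by analysing the equality $u_0\ast v_0=v_0\ast u_0$ directly, comparing initial segments of length up to $|u_0|+|v_0|$ and invoking Lemma \ref{le:LS} to detect when a large common prefix of powers already forces commutation and the extraction of a shorter common root; the hypothesis $h(u)=h(v)$ is what guarantees that $u_0$ and $v_0$ sit over the same axis, so that the root $w$ may be chosen at the common top height and an induction on the height (or on the degree of the exponent) closes. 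Once this is in hand, the exponent laws established in Lemmas \ref{le:cr_exp_1} and \ref{le:cr_exp_3} give $u_0^{f(t)}=(w^{p(t)})^{f(t)}=w^{p(t)f(t)}$, and since $w^{a(t)}$ and $w^{b(t)}$ both start and end on powers of $w$ their product reduces in the same way read from either side, whence $[w^{p(t)f(t)},w^{q(t)}]=\varepsilon$. Conjugating back by $c$ yields $[u^{f(t)},v]=\varepsilon$.

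\textbf{Main obstacle.} The principal difficulty is the structural claim of the second paragraph: because $\Zt$-exponentiation is defined combinatorially and is \emph{not} length-multiplicative (Remark \ref{le:cr_exp_2}), one cannot simply read off a common root from the lengths, and the verification that $(w^{p(t)})^{f(t)}=w^{p(t)f(t)}$ is compatible with the combinatorial definition must be carried out level by level in the degree of the exponent, using Lemma \ref{le:cr_exp_3} to pass between cyclic permutations. An alternative that sidesteps explicit root extraction is to observe that $u_0^{f(t)}$ commutes with $u_0$ by Lemma \ref{le:cr_exp_1}, while $v_0$ commutes with $u_0$ by hypothesis, and then to deduce $[u_0^{f(t)},v_0]=\varepsilon$ from the transitivity of commutation for cyclically reduced words (the content of Proposition \ref{pr:1} together with the commutation lemmas); here the equal-height hypothesis is again exactly what places all three elements in the centralizer of a single element, so that the transitivity applies.
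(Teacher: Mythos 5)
Your proof has a genuine gap, and it sits exactly where you locate it: the ``common axis'' claim. The assertion that two commuting cyclically reduced elements of $CDR(\Zt,X)$ of equal height are $\Zt$-powers $w^{p(t)}, w^{q(t)}$ of a common cyclically reduced root is a far \emph{stronger} statement than Lemma~\ref{le:cdr_exp_2} itself (the lemma is an immediate corollary of it), and nothing available at this point of the paper supplies it; your sketch (``analyse $u_0\ast v_0=v_0\ast u_0$, invoke Lemma~\ref{le:LS}, induct on height'') does not engage the actual difficulty, namely that the exponentiation is a specific combinatorial construction which is not length-multiplicative, so a ``root'' cannot be extracted by comparing lengths, and the root, when it exists, need not live at the top height (e.g.\ $x^{t}$ and $x^{t+1}$ have height $2$ but common root $x$ of height $1$). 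In effect you are asserting a description of centralizers in $CDR(\Zt,X)$, which is the kind of result this machinery is being built to prove, not a tool you may assume. Your fallback argument is also unavailable: Proposition~\ref{pr:1} (transitivity of commutation) is a statement about centralizers \emph{inside a subgroup} $G\leqslant CDR(\Lambda,X)$. The set $CDR(\Zt,X)$ itself is not a group --- the multiplication $\ast$ is partial, and $R(\Lambda,X)$ even has $2$-torsion (Example~\ref{ex:2-torsion}) --- and you have not shown that $u_0$, $v_0$, $u_0^{f(t)}$ generate a subgroup, which would require \emph{all} finite products of these elements and their inverses to be defined. The lemma's own proviso ``provided $[u^{f(t)},v]$ is defined'' exists precisely because such definedness fails in general; moreover, Lemma~\ref{le:cdr_exp_2} is itself one of the ingredients used later (Subsection~\ref{subs:ext_centr}) to prove that the relevant sets do form groups, so invoking group-theoretic commutation transitivity here is circular.

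For contrast, the paper's proof needs no root extraction at all. One first arranges (as you do, via cyclic decompositions) that $u$ is cyclically reduced, so that $[u,v]=\varepsilon$ reads $v^{-1}\ast u\ast v=u$. If $|u|<|v|$, complete cancellation of $v$ in this product forces $v=u^{k}\circ w$ with $|w|<|u|$ and $[u,w]=\varepsilon$, and since trivially $[u^{f(t)},u^{k}]=\varepsilon$, this reduces everything to the case $|u|>|v|$. In that case the equation $v^{-1}\ast u\ast v=u$ exhibits the cyclically reduced word $u$ as a cyclic permutation of itself by $v$, and Lemma~\ref{le:cr_exp_3} --- conjugation-compatibility of exponentiation for cyclically reduced words --- applied to this conjugation yields $v^{-1}\ast u^{f(t)}\ast v=\left(v^{-1}\ast u\ast v\right)^{f(t)}=u^{f(t)}$; the hypothesis that $[u^{f(t)},v]$ is defined is used exactly here to make sense of the left-hand side. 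So the key lemma is \ref{le:cr_exp_3}, which you cite only as an auxiliary device for ``passing between cyclic permutations''; used as the main engine it closes the proof in a few lines and avoids both of the unsupported claims above.
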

\begin{proof} We can assume that either $u$ or $v$ is cyclically reduced. This is always possible
because both elements belong to $CDR(\Zt, X)$. Suppose we have $v^{-1} \ast u \ast v = u$, where
$u$ is cyclically reduced.

\begin{enumerate}
\item[(a)] Suppose $|u| < |v|$.

Since $u$ is cyclically reduced either $v^{-1} \ast u = v^{-1} \circ u$ or $u \ast v = u \circ v$.
Assume the former. Then $v$ has to cancel completely in $v^{-1} \ast u \ast v$ because this product
is equal to $u$ which is cyclically reduced. So $v$ has the form $v = u^k \circ w$, where $k < 0$ is
the smallest possible and $w$ does not have $u$ as an initial segment. We have then
$$v^{-1} \ast u \ast v = w^{-1} \ast u \ast w = w^{-1} \ast (u \circ w) = u.$$
and $w^{-1}$ cancels completely. In this case the only possibility is that $|w| < |u|$ (otherwise
we have a contradiction with the choice of $k$) and $[u,w] = \varepsilon$. So now we reduced
everything to the case (b) because clearly $[u^{f(t)}, u^k] = \varepsilon$ for any $f(t) \in \Zt$.

\item[(b)] Suppose $|u| > |v|$.

We have $v^{-1} \ast u \ast v = u$. $u$ is cyclically reduced, moreover, $u$ is a cyclic permutation
of itself that is $v^{-1} \ast u \ast v = u$. Finally, since $[u^{f(t)},v]$ is defined then
$$v^{-1} \ast u^{f(t)} \ast v = u^{f(t)}$$
follows from Lemma \ref{le:cr_exp_3}.

\end{enumerate}
\end{proof}

We summarize the properties of the exponentiation $exp$ in the following theorem.

\begin{theorem}
\label{le:cdr_exp_3}
The $\Zt$-exponentiation function
$$\exp : (u,f(t)) \mapsto u^{f(t)}$$
defined in {\rm (\ref{le:cdr_exp_1})} satisfies the following axioms:
\begin{enumerate}
\item[(E1)] $u^1 = u, \ \ \ u^{f g} = (u^f)^g, \ \ \ u^{f + g} = u^f \ast u^g$,
\item[(E2)] $(v^{-1} \ast u \ast v)^f = v^{-1} \ast u^f \ast v$ \noindent provided $[u,v] =
\varepsilon$ and $h(u) = h(v)$, or $u = v \circ w$, or $u= w^\alpha$, $v = w^\beta$ for some $w \in
CDR(\Zt, X)$ and $\alpha, \beta \in \Zt$,
\item[(E3)] if $[u,v] = \varepsilon$ and $u = w^\alpha,\ v = w^\beta$ for some $w \in CDR(\Zt, X)$
and $\alpha, \beta \in \Zt$ then
$$(u \ast v)^f = u^f \ast v^f$$
\end{enumerate}
\end{theorem}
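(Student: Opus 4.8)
The plan is to verify the three axiom-groups (E1)--(E3) by first reducing every statement to the case of cyclically reduced base words via the cyclic decomposition and definition (\ref{le:cdr_exp_1}), and then to monomial/bilinear computations in the exponent, invoking the already-established Lemmas \ref{le:cr_exp_1}, \ref{le:cr_exp_3}, \ref{le:cr_exp_4} and \ref{le:cdr_exp_2}. First I would dispose of (E1). The identity $u^1 = u$ is immediate from clause (4) of the definition. For additivity $u^{f+g} = u^f \ast u^g$ I would treat $u \in CR(\Zt, X)$ first: by Lemma \ref{le:cr_exp_1} each $u^{t^i}$ commutes with $u$, and since the centralizer of $u$ in $CDR(\Zt,X)$ is abelian (Proposition \ref{pr:1}), all factors $u^{t^i}$ pairwise commute; hence the ordered product in clause (4) may be regrouped, and collecting the coefficients of each $t^i$ gives $u^{f+g} = u^f \ast u^g$. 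For general $v \in CDR(\Zt,X)$ I pass through its cyclic decomposition $v = c^{-1}\circ u \circ c$ and definition (\ref{le:cdr_exp_1}), where conjugation by $c$ acts as a homomorphism on the commuting family $\{u^{t^i}\}$.

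The substantive part of (E1) is multiplicativity $u^{fg} = (u^f)^g$. I would reduce it to the single identity $(u^f)^t = u^{ft}$. Granting this, induction on degree via clause (3) yields $(u^f)^{t^j} = u^{f t^j}$, and then writing $g = \sum_j n_j t^j$, applying clause (4) to the base $u^f$ together with the additivity just proved gives $(u^f)^g = \prod_j (u^{f t^j})^{n_j} = u^{f \sum_j n_j t^j} = u^{fg}$. To prove $(u^f)^t = u^{ft}$ itself I would compare the two $\Zt$-words directly using the explicit palindromic formula of clause (1): both are cyclically reduced (Lemma \ref{le:cr_exp_1}), both commute with $u$, and both read the period $u$ forward from the front and from the back, so matching the break-points determined by the lengths forces them to coincide. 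I expect this letter-by-letter comparison --- with the extra bookkeeping of clause (2) in case $u^f$ fails to be primitive --- to be the main obstacle of the whole theorem.

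For (E2), the first and third provisos both yield $[u,v]=\varepsilon$ (in the third, $u=w^\alpha$ and $v=w^\beta$ commute as powers of a common $w$), so $v^{-1}\ast u \ast v = u$, while Lemma \ref{le:cdr_exp_2} gives $[u^f, v]=\varepsilon$ and hence $v^{-1}\ast u^f \ast v = u^f$; thus both sides equal $u^f$. The middle proviso $u = v\circ w$ is the genuine one: here $v^{-1}\ast u \ast v = w \ast v$ is a cyclic permutation of $u$, so I would apply Lemma \ref{le:cr_exp_3} to the cyclically reduced core of $u$ and transport the conclusion through the cyclic decompositions of $u$ and of $w\ast v$ via definition (\ref{le:cdr_exp_1}), checking that the conjugating prefixes match up.

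Finally, (E3) is a direct consequence of (E1). Since $[u,v]=\varepsilon$ with $u=w^\alpha$ and $v=w^\beta$, we have $u\ast v = w^{\alpha+\beta}$ by additivity, so applying the multiplicativity and additivity already established in (E1) gives $(u\ast v)^f = w^{(\alpha+\beta)f} = w^{\alpha f}\ast w^{\beta f} = u^f \ast v^f$, which is exactly (E3).
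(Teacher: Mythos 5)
Your overall architecture matches the paper's proof in most places: additivity in (E1) via reduction to the cyclically reduced core and commuting factors $u^{t^i}$, the three-case analysis of (E2) with Lemma \ref{le:cr_exp_3} handling the case $u=v\circ w$ (the paper actually cites ``Lemma \ref{le:cr_exp_2}'' there, which is a typo for \ref{le:cr_exp_3} --- you identified the right lemma), and (E3) as a formal consequence of (E1). The genuine problem is your treatment of multiplicativity $(u^f)^g=u^{fg}$, precisely the step you single out as the main obstacle. You propose to prove $(u^f)^t=u^{ft}$ by applying the palindromic formula of clause (1) to the word $u^f$ and matching it letter-by-letter against $u^{ft}$. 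This step would fail, because the two words in general have \emph{different lengths}. Clause (1) assigns to a cyclically reduced non-proper-power $w$ the $t$-exponent of length $t^{\deg|w|+1}$; the Remark labelled \ref{le:cr_exp_2} records exactly this loss of multiplicativity of lengths. Concretely, take $f=1+t$ and $\deg|u|=n$: then $u^{1+t}=u\circ u^t$ is cyclically reduced, is not a proper power (the leading coefficient of $|u|+t^{n+1}$ is $1$), and $|u^{1+t}|$ has degree $n+1$, so clause (1) applied to it as a fresh base produces a word of length $t^{n+2}$; on the other hand $u^{(1+t)t}=u^t\ast u^{t^2}=u^t\circ u^{t^2}$ has length $t^{n+1}+t^{n+2}$. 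Reduced words of different lengths are never equal, so ``matching the break-points'' is impossible. The identity in (E1) is true only when the outer exponentiation of $u^f$ is read through the definition, i.e. through its representation over the root $u$: for monomials this is literally clause (3), $u^{t^{k+1}}:=(u^{t^k})^t$, and the general case is definitional bookkeeping combined with additivity. This is what the paper means when it says that $u^1=u$ and $(u^f)^g=u^{fg}$ ``follow directly from the definition of exponentiation'': there is nothing to verify letter-by-letter, and any attempt to verify it with $u^f$ treated as a fresh base refutes rather than proves the statement.

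There is also a smaller gap in your (E2): for the third proviso $u=w^\alpha$, $v=w^\beta$ you invoke Lemma \ref{le:cdr_exp_2} to conclude $[u^f,v]=\varepsilon$, but that lemma requires $h(u)=h(v)$, which fails whenever $\deg\alpha\neq\deg\beta$ (e.g. $u=w$, $v=w^t$). The repair is the one the paper uses: for powers of a common root everything commutes by the definition itself, since $w^{\alpha f}\ast w^{\beta}=w^{\alpha f+\beta}=w^{\beta}\ast w^{\alpha f}$ by additivity, so both sides of (E2) equal $u^f$ with no appeal to Lemma \ref{le:cdr_exp_2}. (A related quibble: your use of Proposition \ref{pr:1} to get pairwise commutation of the $u^{t^i}$ presupposes that they already lie in a common subgroup of $CDR(\Zt,X)$, which is circular at this stage; the commutation $[u^{t^i},u^{t^j}]=\varepsilon$ follows instead from Lemma \ref{le:cr_exp_1} applied to the base $u^{t^i}$ together with clause (3).) Your (E3) is identical to the paper's.
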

\begin{proof} Let $u \in CDR(\Zt, X)$ and $\alpha, \beta \in \Zt$.

\begin{enumerate}
\item[(E1)] The equalities $u^1 = u$ and $(u^f)^g = u^{f g}$ follow directly from the definition of
exponentiation. We need to prove only that $u^{f + g} = u^f \ast u^g$. Let
$$u = c^{-1} \circ u_1^k \circ c$$
be a cyclic decomposition of $u$. Then
$$u^f = c^{-1} \circ (u_1^f)^k \circ c, \ \ u^g = c^{-1} \circ (u_1^g)^k \circ c.$$
Now
$$u^{f + g} = c^{-1} \circ (u_1^{f + g})^k \circ c = (c^{-1} \circ (u_1^f)^k \circ c) \ast (c^{-1}
\circ (u_1^g)^k \circ c),$$
as required.

\item[(E2)] If $u = w^\alpha, v = w^\beta$ for some $w \in CDR(\Zt, X)$ and $\alpha, \beta \in \Zt$,
then the result follows from the definition of exponentiation. If $[u,v] = \varepsilon$ and $h(u) =
h(v)$ then result follows from Lemma \ref{le:cdr_exp_2}. If $u = v \circ w$ then the result follows
from Lemma \ref{le:cr_exp_2}.

\item[(E3)] We have $(u \ast v)^f = (w^{\alpha + \beta})^f = w^{(\alpha + \beta)f} = w^{\alpha f}
\ast w^{\beta f} = (w^\alpha)^f \ast (w^\beta)^f = u^f \ast v^f$.
\end{enumerate}
\end{proof}

\subsection{Extension of centralizers in $CDR(\Zt, X)$}
\label{subs:ext_centr}

Below we recall some of the definitions given in \cite{Myasnikov_Remeslennikov_Serbin:2005} and
state a few lemmas and theorems needed for extension of centralizers in $CDR(\Zt, X)$. The proofs
are exactly the same as in the case of $\Zt$-exponentiation introduced in
\cite{Myasnikov_Remeslennikov_Serbin:2005}, so we only give references.

\smallskip

$u,v \in CDR(\Zt, X)$ are called {\em separated} if $u^m \ast v^n$ is defined for any $n,m \in
\mathbb{N}$ and there exists $r = r(u,v) \in \mathbb{N}$ such that for all $m,n > r$
$$u^m \ast v^n = u^{m-r} \circ_\delta (u^r \ast v^r) \circ_\delta v^{n-r}.$$

A subset $M \subseteq CDR(\Zt, X)$ is called an {\em S-set} if any two non-commuting elements of
$M$ with cyclic centralizers are separated. For example, it is easy to see that the free group $F$
is an $S$-subgroup of $CDR(\Zt, X)$.

Next, let $M \subseteq CDR(\Zt, X)$. A subset $R_M \subseteq CR(\Zt, X)$ is called a set of {\em
representatives of $M$} if $R_M$ satisfies the following conditions:
\begin{enumerate}
\item[(1)] $R_M$ does not contain proper powers,
\item[(2)] for any $u, v \in R_M$, $u \neq v^{-1}$,
\item [3)] for each $u \in M$ there exist $v \in R_M$, $k \in \Z$, $c \in R(\Zt, X)$, and a cyclic
permutation $\pi(v)$ of $v$ such that
$$u = c^{-1} \circ \pi(v)^k \circ c,$$
moreover, such $v, c, k, \pi(v)$ are unique.
\end{enumerate}

In \cite{Myasnikov_Remeslennikov_Serbin:2005} it was shown that a set of representatives $R_M$
exists for any $M \subseteq CDR(\Zt, X)$. Observe that $R_M$ does not have to be a subset of $M$.

The next definition we also borrow from \cite{Myasnikov_Remeslennikov_Serbin:2005} but here we
restate it with respect to the $\Zt$-exponentiation we introduced in Subsection \ref{subs:cdr}.

Let $G$ be a subgroup of $CDR(\Zt, X)$ and let
$$K(G) = \{ v \in G \mid C_G(v) = \langle v \rangle \}.$$
A {\em Lyndon's set} of $G$ is a set $R = R_{K(G)}$ of representatives of $K(G)$ which satisfies
the following conditions:
\begin{enumerate}
\item[(1)] $R \subset G$,
\item[(2)] for any $g \in G, u \in R$, and $\alpha \in \Zt$ the inner product $c(u^\alpha, g)$
exists and $c(u^\alpha, g) < k|u|$ for some $k \in \mathbb{N}$,
\item[(3)] no word from $G$ contains a subword $u^\alpha$, where $u \in R$ and $\alpha \in \Zt$ with
${\rm deg}(\alpha) > 0$.
\end{enumerate}

\begin{lemma}
\label{le:ext_centr_1}
Let $G$ be an $S$-subgroup of $CDR(\Zt, X)$ and let $R$ be a Lyndon's set of $G$. If $u, v \in
R^{\pm 1}$ and $g \in G$ are such that either $[u,v] \neq \varepsilon$ or $[u,g] \neq \varepsilon$
then there exists $r \in \mathbb{N}$ such that for all $m, n \geqslant r$ the following holds:
$$u^m \ast g \ast v^n = u^{m-r} \circ (u^r \ast g \ast v^r) \circ v^{n-r}.$$
\end{lemma}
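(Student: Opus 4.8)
The plan is to prove this as a \emph{bounded cancellation} statement: the reductions occurring at the two junctions $u^m\mid g$ and $g\mid v^n$ involve only a bounded number of copies of $u$ and of $v$, uniformly in $m$ and $n$, so that once $m,n$ exceed that bound the outer powers $u^{m-r}$ and $v^{n-r}$ survive intact and attach to the middle block without further cancellation. First I would record the two one-sided cancellation bounds. Writing $u=w^{\pm1}$ with $w\in R$, property (2) of a Lyndon's set applied to the pair $g,w$ furnishes $k_1\in\mathbb N$ with $c(u^{-m},g)<k_1|u|$ for every $m$; applying (2) to $g^{-1}$ and the representative underlying $v$ gives $k_2$ with $c(g^{-1},v^n)<k_2|v|$ for every $n$ (here I use $c(a,b)=c(b,a)$ and $|w|=|w^{-1}|$). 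Crucially these bounds are genuine content, not automatic: in $\Zt$ one may have $\deg|g|>\deg|u|$, so $k|u|<|g|$ for all $k$, and it is exactly axiom (3) of a Lyndon's set — that no element of $G$ contains a positive-degree power of a representative — that forces the overlap of $g$ with powers of $u$ to be confined to finitely many copies.

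Set $r_0=\max\{k_1,k_2\}$. For $m\ge r_0$ the terminal cancellation in $u^m\ast g$ has length $<r_0|u|$, hence lies inside the last $r_0$ copies of $u$; writing $u^m=u^{m-r_0}\circ u^{r_0}$ I get
$$u^m\ast g = u^{m-r_0}\circ(u^{r_0}\ast g),$$
and symmetrically $g\ast v^n=(g\ast v^{r_0})\circ v^{n-r_0}$ for $n\ge r_0$. If in addition $g$ is long enough that its two cancellation zones are disjoint, i.e. $|g|>c(u^{-m},g)+c(g^{-1},v^n)$, then the left-hand cancellation is insensitive to the $v$-tail ($c(u^{-m},g\ast v^n)=c(u^{-m},g)$) and vice versa, and associativity of multiplication inside the group $G$ gives the claim with middle block $u^{r_0}\ast g\ast v^{r_0}$, taking $r=r_0$.

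The hard part is the remaining regime, where $g$ is \emph{short} in the non-Archimedean sense ($|g|\le k_1|u|+k_2|v|$) so the two cancellations may overlap and $u^m$ and $v^n$ effectively meet; this is precisely where the hypothesis and the $S$-subgroup structure must be used, and I would split into cases. If $[u,g]=\varepsilon$ then, since $u\in R^{\pm1}$ has cyclic centralizer $\langle u\rangle$, we get $g=u^s$ for some $s\in\mathbb Z$ and $u^m\ast g\ast v^n=u^{m+s}\ast v^n$; the hypothesis then forces $[u,v]\neq\varepsilon$, so $u,v$ are non-commuting elements with cyclic centralizers in the $S$-subgroup $G$ and are therefore \emph{separated}, and the separated decomposition of $u^{m+s}\ast v^n$ regroups (for $r$ exceeding the separation constant plus $|s|$) into the desired form with $u^r\ast g\ast v^r=u^{r+s}\ast v^r$. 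The symmetric possibility $[v,g]=\varepsilon$ is handled the same way. If instead $[u,g]\neq\varepsilon$ (and likewise $[v,g]\neq\varepsilon$), then the left cancellation cannot consume all of $g$: otherwise $g$ would be a terminal segment of $u^{-m}$, i.e. a power of $u$, contradicting non-commutation; hence a nonempty core of $g$ survives on each side, the two cancellation zones cannot merge into a $u$--$v$ interaction, and the disjoint-zone argument of the previous paragraph applies with $r=\max\{r_0,r(u,v)\}$. I expect the main obstacle to be this short-$g$ overlap case — in particular, verifying rigorously that the junctions between the outer powers $u^{m-r}$, $v^{n-r}$ and the middle block are genuine concatenations (the ``$\circ$''), which requires combining the uniform bound from axiom (2), the no-proper-power normalization of $R$, and the separation constant into a single choice of $r$ valid for all $m,n\ge r$.
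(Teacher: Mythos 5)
The paper does not actually prove this lemma — it only cites Lemma 6.9 of Myasnikov--Remeslennikov--Serbin (2005) — so your argument has to stand on its own, and it has a genuine gap exactly at the point you yourself flag as the crux. In your Case B you dismiss the merged-zone configuration by arguing that if the left cancellation consumed all of $g$ then ``$g$ would be a terminal segment of $u^{-m}$, i.e.\ a power of $u$, contradicting non-commutation.'' That inference is false: an initial segment of $u^{-m}$ need not be a power of $u$. Concretely, take $u^{-1} = x \circ y \circ x$ (cyclically reduced, not a proper power) and $g = x$. Then $com(u^{-m},g) = g$, so the left junction consumes \emph{all} of $g$, yet $u \ast g = x^{-1} \ast y^{-1} \neq y^{-1} \ast x^{-1} = g \ast u$, so $[u,g] \neq \varepsilon$. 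Taking $v = x^{-1} \circ y \circ x^{-1}$ one also gets $com(g^{-1},v^n) = g^{-1}$ and $[v,g] \neq \varepsilon$, so both zones consume all of $g$ and merge, with all the non-commutation hypotheses of your Case B in force. (The lemma still holds here -- one checks the displayed formula directly with $r=1$ -- but your argument does not establish it: the claim that ``a nonempty core of $g$ survives on each side'' is simply unavailable.)

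Consequently the merged-zone case must be handled head-on rather than excluded, and this requires the two tools you deploy only in Case A. When $[u,v]\neq\varepsilon$, after $g$ is partially or totally absorbed the product reduces to (prefix of $u^m$)$\,\ast\,$(suffix of $v^n$), and bounding the residual cancellation there is precisely what the separation constant $r(u,v)$ of the $S$-subgroup is for; your Case B never invokes separation. When $[u,v] = \varepsilon$ -- which the hypothesis permits, since it only demands $[u,g]\neq\varepsilon$ in that event, so $v$ may equal $u^{\pm 1}$ -- separation is not available at all, and one needs a different argument: if $g$ is totally absorbed, the absorption boundary cannot land on a power boundary of $u$ (that would force $g \in \langle u\rangle$, contradicting $[u,g]\neq\varepsilon$, which is the correct use of your idea), and then the leftover nonempty proper prefix $\hat u$ of $u$ together with primitivity of $u$ (a long overlap of $u^{\pm\infty}$ with a shift of itself by $0 < |\hat u| < |u|$ would make $u$ a proper power) bounds the residual $u$--$u^{\mp 1}$ cancellation. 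Neither argument appears in your Case B, so the proof is incomplete precisely where the hypothesis of the lemma does its work. The rest of your write-up -- the one-sided bounds from axiom (2) of a Lyndon's set, the disjoint-zone regime, and Case A via separation with the regrouping $u^{m-r}\circ(u^{r+s}\ast v^{r})\circ v^{n-r}$ -- is sound.
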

\begin{proof} See the proof of Lemma 6.9 in \cite{Myasnikov_Remeslennikov_Serbin:2005}.
\end{proof}

\begin{lemma}
\label{le:ext_centr_2}
Let $G$ be an $S$-subgroup of $CDR(\Zt, X)$ and $R$ a Lyndon's set of $G$. If $u_1, \ldots, u_n \in
R^{\pm 1}$ and $g_1, \ldots, g_{n+1} \in G$ are such that for any $i = 2, \ldots, n$ either
$[u_{i-1}, u_i] \neq \varepsilon$, or $[u_i, g_i] \neq \varepsilon$ then there exists $r \in
\mathbb{N}$ such that
\begin{eqnarray*}
&&g_1 \ast u_1^{m_1} \ast g_2 \ast \cdots \ast u_n^{m_n} \ast g_{n+1} \\
&&\qquad =(g_1 \ast u_1^r) \circ u_1^{m_1 - 2r} \circ (u_1^r \ast g_2 \ast u_2^r) \circ
u_2^{m_2 - 2r} \circ \cdots \circ u_n^{m_n - 2r} \circ (u_n^r \ast g_{n+1})
\end{eqnarray*}
for all $m_i \in \mathbb{N},\ m_i > 2r,\ i \in [1,n]$.
\end{lemma}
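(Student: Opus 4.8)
The plan is to reduce the global no-cancellation statement to the purely local one already available in Lemma \ref{le:ext_centr_1}, combine it with the uniform cancellation bound built into a Lyndon's set, and then glue the local pieces using the fact that reducedness of a $\Zt$-word is a letter-by-letter condition. First I would record the elementary fact that every $u_i \in R^{\pm 1}$ is cyclically reduced (since $R \subseteq CR(\Zt,X)$ and inverses of cyclically reduced words are cyclically reduced), so each power splits freely: $u_i^{m} = u_i^{a} \circ u_i^{m-a}$ for all $0 \leqslant a \leqslant m$. This lets me speak unambiguously of the head $u_i^{r}$ and tail $u_i^{r}$ of a block $u_i^{m_i}$ once a common $r$ is fixed, and it makes the two letters meeting at any power/power junction equal to $u_i(|u_i|)$ and $u_i(1)$, a pair that is reduced precisely because $u_i$ is cyclically reduced.

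Next I would isolate the two local phenomena. For each internal junction $i \in \{2,\dots,n\}$ I want to apply Lemma \ref{le:ext_centr_1} to the sub-product $u_{i-1}^{m_{i-1}} \ast g_i \ast u_i^{m_i}$, obtaining an $r_i$ with $u_{i-1}^{m_{i-1}} \ast g_i \ast u_i^{m_i} = u_{i-1}^{m_{i-1}-r_i} \circ (u_{i-1}^{r_i} \ast g_i \ast u_i^{r_i}) \circ u_i^{m_i-r_i}$. The only mismatch is that Lemma \ref{le:ext_centr_1} requires $[u_{i-1},u_i] \neq \varepsilon$ or $[u_{i-1},g_i] \neq \varepsilon$, whereas the hypothesis here gives $[u_{i-1},u_i] \neq \varepsilon$ or $[u_i,g_i] \neq \varepsilon$. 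When the first disjunct holds I apply Lemma \ref{le:ext_centr_1} directly; when instead $[u_i,g_i] \neq \varepsilon$, I apply it to the inverse sub-product $u_i^{-m_i} \ast g_i^{-1} \ast u_{i-1}^{-m_{i-1}}$ (legitimate since $u_i^{-1}, u_{i-1}^{-1} \in R^{\pm 1}$ and $[u_i^{-1},g_i^{-1}] \neq \varepsilon$) and invert the resulting $\circ$-decomposition, which produces exactly the same form. For the two end blocks I would invoke condition (2) in the definition of a Lyndon's set: the cancellations in $g_1 \ast u_1^{m_1}$ and in $u_n^{m_n} \ast g_{n+1}$, measured by $c(u_1^{m_1}, g_1^{-1})$ and $c(u_n^{m_n}, g_{n+1})$, are bounded by $k|u_1|$ and $k|u_n|$ for a fixed $k$ that does not depend on the exponents, so these cancellations are confined to a bounded number of periods. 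I then set $r = \max\{r_2,\dots,r_n,k\}$; the decompositions persist for this common $r$ because enlarging $r$ merely transfers whole periods $u_i^{\pm 1}$ from a free power-block into the adjacent stable piece, preserving the $\circ$-structure.

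Finally I would assemble the word $W = (g_1 \ast u_1^{r}) \circ u_1^{m_1-2r} \circ (u_1^{r} \ast g_2 \ast u_2^{r}) \circ \cdots \circ u_n^{m_n-2r} \circ (u_n^{r} \ast g_{n+1})$ and verify it is reduced. Since $m_i > 2r$, every central block $u_i^{m_i-2r}$ is non-empty, so no two stable pieces are ever adjacent; each junction in $W$ is between a power-block and a stable piece that begins (or ends) with the matching $u_i^{r}$, and there the meeting pair is reduced by the cyclic-reducedness observation above. As reducedness is the local condition $w(\beta+1) \neq w(\beta)^{-1}$, this gives $W \in R(\Zt,X)$. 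It then remains to identify $W$ with the product, i.e. to show that forming $g_1 \ast u_1^{m_1} \ast \cdots \ast g_{n+1}$ never cancels beyond the heads and tails already accounted for, and this is where I expect the real difficulty to lie. The crux is that the uniform bounds from condition (2) and from Lemma \ref{le:ext_centr_1} cap the cancellation depth by a number of periods independent of the $m_i$, so the long buffers $u_i^{m_i-2r}$ cannot be penetrated and cancellation cannot propagate from one junction to the next. Making this confinement precise — most cleanly by an induction on $n$ that carries the reduced form of the partial product together with the guarantee that its suffix is $u_i^{r} \ast g_{i+1}$, and then appends $u_{i+1}^{m_{i+1}} \ast g_{i+2}$ using Lemma \ref{le:ext_centr_1} at the new junction — is the technical heart of the argument, after which the stated equality follows.
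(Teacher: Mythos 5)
Your proposal is correct and takes essentially the same route as the paper, which itself gives no argument but points to Lemma 6.10 of \cite{Myasnikov_Remeslennikov_Serbin:2005}: that proof is exactly the reduction you describe, namely the two-block separation Lemma \ref{le:ext_centr_1} at each internal junction, condition (2) of a Lyndon's set to bound cancellation at the two ends, and an induction on the number of blocks in which the long buffers $u_i^{m_i-2r}$ confine cancellation to single junctions. Your inversion trick --- applying Lemma \ref{le:ext_centr_1} to $u_i^{-m_i}\ast g_i^{-1}\ast u_{i-1}^{-m_{i-1}}$ when only $[u_i,g_i]\neq\varepsilon$ is available --- correctly resolves the asymmetry between the hypotheses of the two lemmas, and the monotonicity of the decomposition in $r$ that you use to pass to a common $r$ is immediate since Lemma \ref{le:ext_centr_1} holds for all exponents $\geqslant r_i$.
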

\begin{proof} See the proof of Lemma 6.10 in \cite{Myasnikov_Remeslennikov_Serbin:2005}.
\end{proof}

Let $G$ be a subgroup of $CDR(\Zt, X)$ with a Lyndon's set $R$. A sequence
\begin{equation}
\label{eq:ext_centr_1}
p = (g_1, u_1^{\alpha_1}, g_2, \dots, g_n, u_n^{\alpha_n}, g_{n+1}),
\end{equation}
where $g_i \in G,\ u_i \in R,\ \alpha_i \in \Zt,\ n \geqslant 1$ is called an {\em $R$-form} over
$G$. An $R$-form (\ref{eq:ext_centr_1}) is {\em reduced} if ${\rm deg}(\alpha_i) > 0, i \in
[1,n]$, and if $u_i = u_{i-1}$ then $[u_i, g_i] \neq \varepsilon$.

Denote by ${\mathcal P}(G,R)$ the set of all $R$-forms over $G$. We define a partial function $w:
{\mathcal P}(G,R) \rightarrow R(\Zt, X)$ as follows. If
$$p = (g_1, u_1^{\alpha_1}, g_2, \ldots, g_n, u_n^{\alpha_n}, g_{n+1})$$
then
$$w(p) = (\cdots (g_1 \ast u_1^{\alpha_1}) \ast g_2) \ast \cdots \ast g_n ) \ast u_n^{\alpha_n})
\ast g_{n+1}$$
if it is defined.

An $R$-form $p = (g_1, u_1^{\alpha_1}, g_2, \ldots, g_n, u_n^{\alpha_n}, g_{n+1})$ over $G$ is
called {\em normal} if it is reduced and the following conditions hold:
\begin{itemize}
\item[(1)] $w(p) = g_1 \circ u_1^{\alpha_1} \circ g_2 \circ \cdots \circ g_n \circ u_n^{\alpha_n}
\circ g_{n+1}$,
\item[(2)] $g_i$ does not have $u_i^{\pm 1}$ as a terminal segment for any $i \in [1,n]$ and $g_i
\circ u_i^{\alpha_i}$ does not have $u_{i-1}^{\pm 1}$ as an initial segment for any $i \in [2,n]$.
\end{itemize}

\begin{lemma}
\label{le:ext_centr_3}
Let $G$ be an $S$-subgroup of $CDR(\Zt, X)$ with a Lyndon's set $R$. Then for every $R$-form $p$
over $G$ the following holds:
\begin{enumerate}
\item[(1)] the product $w(p)$ is defined and it does not depend on the placement of parentheses,
\item[(2)] there exists a reduced $R$-form $q$ over $G$ such that $w(q) = w(p)$,
\item[(3)] there exists a unique normal $R$-form $q$ over $G$ such that $w(p) = w(q)$,
\item[(4)] $w(p) \in CDR(\Zt, X)$.
\end{enumerate}
\end{lemma}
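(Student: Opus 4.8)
The plan is to deduce all four claims from the two separation lemmas \ref{le:ext_centr_1} and \ref{le:ext_centr_2} together with the exponentiation axioms of Theorem \ref{le:cdr_exp_3}, proving them in the order (2), then (1) and (4), then (3). I would begin with (2) by reducing an arbitrary $R$-form to a reduced one via two value-preserving moves. If some $\alpha_i$ has $\deg(\alpha_i)=0$, then $\alpha_i\in\Z$ and $u_i^{\alpha_i}\in G$, so the triple $g_i,u_i^{\alpha_i},g_{i+1}$ collapses to the single element $g_i\ast u_i^{\alpha_i}\ast g_{i+1}\in G$, lowering the number $n$ of power blocks. If $u_i=u_{i-1}$ but $[u_i,g_i]=\varepsilon$, then by axioms (E1), (E2) of Theorem \ref{le:cdr_exp_3} we may slide $g_i$ past the power and merge, replacing $u_{i-1}^{\alpha_{i-1}}\ast g_i\ast u_i^{\alpha_i}$ by $g_i\ast u_i^{\alpha_{i-1}+\alpha_i}$, again lowering $n$. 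Since each move strictly decreases $n$, iteration terminates in a reduced $R$-form $q$ with $w(q)=w(p)$.

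Next, for (1) and (4), I would apply Lemma \ref{le:ext_centr_2} to a reduced $R$-form. Its hypothesis holds automatically: if $u_i=u_{i-1}$ then reducedness forces $[u_i,g_i]\neq\varepsilon$, while if $u_i\neq u_{i-1}$ then, since elements of a Lyndon's set are non-powers, pairwise distinct and non-inverse, transitivity of commutation in the $\Zt$-free group $G$ forces $[u_{i-1},u_i]\neq\varepsilon$. The one point needing care is that the exponents $\alpha_i$ lie in $\Zt$ with $\deg(\alpha_i)>0$ rather than being large integers; here I would observe that for any fixed integer $r$ one has $u_i^{\alpha_i}=u_i^{r}\circ u_i^{\alpha_i-2r}\circ u_i^{r}$ with $\deg(\alpha_i-2r)>0$, so the integer-power separation of Lemma \ref{le:ext_centr_2} applies verbatim and yields the single $\circ$-product
$$w(p)=(g_1\ast u_1^{r})\circ u_1^{\alpha_1-2r}\circ(u_1^{r}\ast g_2\ast u_2^{r})\circ\cdots\circ u_n^{\alpha_n-2r}\circ(u_n^{r}\ast g_{n+1}).$$
This shows $w(p)$ is defined; since concatenation is associative and every bracketing of the original product collapses to this same right-hand side, (1) follows; and being a reduced concatenation of elements of $CDR(\Zt,X)$ whose cyclic decomposition can be read off from its dominant power blocks, $w(p)\in CDR(\Zt,X)$, giving (4).

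Finally, for (3), I would first promote the reduced form to a normal one by absorbing any terminal $u_i^{\pm1}$ of $g_i$ into $\alpha_i$ and any initial $u_{i-1}^{\pm1}$ of $g_i\circ u_i^{\alpha_i}$ into $\alpha_{i-1}$; each absorption shortens some $g_i$ while preserving $w$ and the condition $\deg(\alpha_i)>0$, so the process terminates at a normal $R$-form. Uniqueness is the main obstacle. Given two normal forms with equal value, I would recover the blocks directly from the word $w(p)=g_1\circ u_1^{\alpha_1}\circ\cdots\circ g_{n+1}$, using condition (2) of normality to locate unambiguously where each maximal $R$-power $u_i^{\alpha_i}$ begins and ends: the boundary conditions rule out the possibility that a factor of $u_i$ could have been contributed by an adjacent $g_i$ or $g_{i+1}$, so the dominant power blocks are intrinsic to the word, and peeling them off inductively forces $u_i,\alpha_i,g_i$ to agree. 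The delicate verification is precisely this boundary analysis, and I expect it to be the hardest part; it mirrors the corresponding argument for $\Zt$-exponentiation in \cite{Myasnikov_Remeslennikov_Serbin:2005}, which carries over once the $\deg(\alpha_i)>0$ domination above is in place.
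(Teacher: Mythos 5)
Your proposal is correct and takes essentially the same approach as the proof this paper relies on: the paper itself gives no argument here, only the citation to Lemma 6.13 of \cite{Myasnikov_Remeslennikov_Serbin:2005}, and that proof, like yours, verifies the hypotheses of Lemma \ref{le:ext_centr_2} from the defining properties of a Lyndon's set (no proper powers, no inverse pairs, cyclic centralizers), extends the integer-power separation to $\Zt$-exponents via the collar decomposition $u_i^{\alpha_i}=u_i^{r}\circ u_i^{\alpha_i-2r}\circ u_i^{r}$ together with the bounded-cancellation axiom $c(u^\alpha,g)<k|u|$ to obtain the canonical $\circ$-form (whence (1) and (4)), and settles (2) and (3) by the same collapsing/absorption moves and intrinsic recovery of the dominant power blocks. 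The only caveats are organizational rather than substantive: the value-preservation of your moves in (2) tacitly uses the bracketing independence of (1), so the two claims are best proved together by induction on $n$ using axiom (P4), and (4) additionally needs a short termination argument for cyclic reduction (reducedness of the form rules out the degenerate palindromic case), both of which go through with the tools you already cite.
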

\begin{proof} See the proof of Lemma 6.13 in \cite{Myasnikov_Remeslennikov_Serbin:2005}.
\end{proof}

Let $G$ and $R$ be as above. By Lemma \ref{le:ext_centr_3} for every $g, h \in G,\ u \in R,\
\alpha \in \Zt$ the product $g \ast u^\alpha \ast h$ is defined and belongs to $CDR(\Zt, X)$. Put
$$P = P(G,R) = \{ g \ast u^\alpha \ast h \mid g,h \in G, u \in R, \alpha \in \Zt \}.$$
Multiplication $\ast$ induces a partial multiplication (which we again denote by $\ast$) on $P$ so
that for $p, q \in P$ the product $p \ast q$ is defined in $P$ if and only if $p \ast q$ is defined
in $R(\Zt, X)$ and $p \ast q \in P$. Now we are ready to prove the main technical result of this
subsection.

\begin{lemma}
\label{le:ext_centr_4}
Let $G$ be an $S$-subgroup of $CDR(\Zt, X)$ and let $R$ be a Lyndon's set for $G$. Then the set $P$
forms a pregroup with respect to the multiplication $\ast$.
\end{lemma}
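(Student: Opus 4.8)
The plan is to verify the pregroup axioms (P1)--(P5) for $(P,\ast)$ directly, exploiting the fact that $P$ sits inside the ambient structure $R(\Zt,X)$, which already satisfies (P1)--(P4) by Theorem \ref{th:pre}. First I would record two closure facts. The identity lies in $P$, since $1 = 1 \ast u^0 \ast 1$ for any $u \in R$ (so in particular $G \subseteq P$). For closure under inversion, $(g \ast u^\alpha \ast h)^{-1} = h^{-1} \ast (u^{-1})^{\alpha} \ast g^{-1}$ by Lemma \ref{le:cr_exp_1}; since $u^{-1}$ is again cyclically reduced, is not a proper power, and lies in $K(G)$ (as $C_G(u^{-1}) = C_G(u) = \langle u \rangle$), the representative property (condition (3) of a set of representatives, together with Lemma \ref{le:cr_exp_3}) rewrites $(u^{-1})^{\alpha}$ as a conjugate $c^{-1}\ast v^{\pm\alpha}\ast c$ with $v \in R$, whence the inverse again has the form $g'\ast v^{\pm\alpha}\ast h'$ and lies in $P$. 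Thus $1 \in P$ and $P = P^{-1}$.

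Given these, axioms (P1)--(P4) are inherited from $R(\Zt,X)$. Indeed, by the very definition of the induced partial operation, a product is defined in $P$ precisely when it is defined in $R(\Zt,X)$ and lands back in $P$; so (P1) and (P2) are immediate from $1, p^{-1} \in P$, (P3) follows formally (as noted after the axioms), and (P4) transfers because if $pq$ and $qr$ are defined in $P$ then the ambient (P4) forces $(pq)r$ and $p(qr)$ to be the same element of $R(\Zt,X)$, so one lies in $P$ if and only if the other does. This reduces the whole lemma to verifying (P5).

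The substance is therefore (P5): given $a,b,c,d \in P$ with $ab$, $bc$, $cd$ all defined, I must produce one of $abc$, $bcd$ as a defined $P$-product. Here I would put each of $a,b,c,d$ into its unique normal $R$-form (Lemma \ref{le:ext_centr_3}(3)), so that each is either an element of $G$ (when its exponent has degree $0$) or has a single genuine exponential syllable $u^\alpha$ with $\deg(\alpha) > 0$ flanked by $G$-syllables admitting no further cancellation. The condition that a product of two such elements again lie in $P$ translates into a statement about how much cancellation occurs across the central $G$-syllables and, crucially, about which exponential syllables survive. The separation lemmas (Lemmas \ref{le:ext_centr_1} and \ref{le:ext_centr_2}) are exactly what control this: whenever exponential syllables meet, a product $u^m \ast g \ast v^n$ retains a stable core $u^{m-r}\circ(\cdots)\circ v^{n-r}$, so leading and trailing powers are never fully consumed. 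Feeding the three hypotheses $ab, bc, cd \in P$ through this machinery, one runs a case analysis on which syllables of $b$ and $c$ survive in the pairwise products and shows that at least one of the two triple products keeps only a single exponential syllable, hence lies in $P$.

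The main obstacle I anticipate is precisely this (P5) case analysis, and within it the cases where all four elements have genuinely exponential parts and the cancellation is large enough to erase entire intermediate $G$-syllables, so that exponential syllables of non-adjacent factors could \emph{a priori} interact. This is where the hypotheses (that $R$ is a Lyndon set and $G$ an $S$-subgroup, yielding separation) do the real work; the task is to organize the subcases so that each reduces to a single application of Lemma \ref{le:ext_centr_1} or \ref{le:ext_centr_2}. Since the normal-form and separation apparatus here is set up to mirror the $\Zt$-exponentiation of \cite{Myasnikov_Remeslennikov_Serbin:2005}, the argument parallels the proof of the corresponding pregroup statement there.
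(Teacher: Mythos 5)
The paper itself gives no argument for this lemma: its ``proof'' is the citation to Proposition 6.14 of \cite{Myasnikov_Remeslennikov_Serbin:2005}, and the whole subsection announces that the proofs are verbatim those of that paper. So the comparison is with that source, and your framework does match it: you correctly observe that every product of elements of $P$ is defined in $R(\Zt,X)$, that $G\subseteq P$ (take $\alpha=0$) so $1\in P$, that $P=P^{-1}$, and that consequently (P1)--(P4) transfer from Theorem \ref{th:pre} to the induced partial operation on $P$, reducing the lemma to (P5). That reduction is sound (your (P4) transfer argument is exactly right).

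The genuine gap is that (P5) is the entire mathematical content of the lemma, and you never prove it --- you only announce that ``one runs a case analysis'' and defer it as ``the main obstacle I anticipate''. To close it you need an explicit membership criterion, which follows from uniqueness of normal forms (Lemma \ref{le:ext_centr_3}(3)): if $p = g_1\ast u^{\alpha}\ast g_2$ and $q = h_1\ast v^{\beta}\ast h_2$ are in normal form with $\deg\alpha,\deg\beta>0$, then $p\ast q\in P$ if and only if $u=v$ and $g_2\ast h_1$ is a power of $u$; otherwise the normal form of $p\ast q$ retains two infinite exponential syllables. Granting this, the case analysis is short and, contrary to your expectation, the ``all four exponential'' case is the easy one: the hypotheses $ab,bc,cd\in P$ force all three middle products to be powers of one and the same $u\in R$, so \emph{both} $abc$ and $bcd$ collapse to a single syllable; if instead $b\in G$ then $ab,bc\in P$ hold automatically and $cd\in P$ makes $bcd\in P$, symmetrically $c\in G$ gives $abc\in P$. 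Note also that the separation lemmas (\ref{le:ext_centr_1}, \ref{le:ext_centr_2}) do their work in establishing Lemma \ref{le:ext_centr_3}, not in this case analysis, so your plan to ``reduce each subcase to a single application'' of them is aimed at the wrong spot. Finally, a small repair: closure under inversion needs no representatives at all, since $(g\ast u^{\alpha}\ast h)^{-1} = h^{-1}\ast u^{-\alpha}\ast g^{-1}\in P$ because $-\alpha\in\Zt$; your detour through a representative of $u^{-1}$ is in fact problematic as written, because the conjugating element $c$ furnished by the definition of a set of representatives is not known to lie in $G$, so the word you produce is not visibly of the form $g'\ast v^{\pm\alpha}\ast h'$ with $g',h'\in G$.
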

\begin{proof} See the proof of Proposition 6.14 in \cite{Myasnikov_Remeslennikov_Serbin:2005}.
\end{proof}

The next two results reveal the structure of the universal group $U(P)$ of $P$.

\begin{theorem}
\label{th:ext_centr_2}
$P$ generates a subgroup $\langle P \rangle$ in $CDR(\Zt, X)$, which is isomorphic to $U(P)$.
\end{theorem}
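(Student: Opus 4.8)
The plan is to realize $\langle P\rangle$ as a homomorphic image of the universal group $U(P)$ via the universal property of pregroups, and then to force that homomorphism to be injective by showing that reduced $P$-products cannot collapse inside $CDR(\Zt,X)$.

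First I would check that $P$ really does generate a subgroup of $CDR(\Zt,X)$. Note that $G\subseteq P$ (take $\alpha=0$, so that $g\ast u^0\ast h=g\ast h\in G$) and that $P=P^{-1}$, since the inverse of $g\ast u^\alpha\ast h$ is $h^{-1}\ast u^{-\alpha}\ast g^{-1}$, which is again of the admissible shape because $u^{-\alpha}=(u^\alpha)^{-1}$ by (E1) and $u\in R$. Hence any element of $\langle P\rangle$ is a finite $\ast$-product of members of $P$. Writing out such a product $p_1\ast\cdots\ast p_n$ with $p_i=g_i\ast u_i^{\alpha_i}\ast h_i$ and merging each adjacent pair $h_i\ast g_{i+1}\in G$ produces a single $R$-form over $G$, so Lemma \ref{le:ext_centr_3}(1),(4) guarantees that the product is always defined and lies in $CDR(\Zt,X)$. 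Thus products of elements of $P$ never fail to be defined and stay inside $CDR(\Zt,X)$, and together with closure under inversion this shows $\langle P\rangle$ is a subgroup. The inclusion $\iota:P\to\langle P\rangle$ is then a morphism of the pregroup $P$ into a group, so the universal property of $U(P)$ yields a unique group homomorphism $\psi:U(P)\to\langle P\rangle$ with $\psi\lambda=\iota$; since $P$ generates $\langle P\rangle$, $\psi$ is surjective.

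It remains to show $\psi$ is injective, i.e.\ $\ker\psi=1$. Every element of $U(P)$ is the $\sim$-class of a reduced $P$-product $p_1\cdots p_n$, and $\psi$ sends it to the honest product $p_1\ast\cdots\ast p_n\in CDR(\Zt,X)$; this class is trivial in $U(P)$ exactly when $n=0$. So injectivity is equivalent to the assertion that a reduced $P$-product with $n\geq 1$ has a nonempty $\ast$-image. I would prove this by turning the reduced $P$-product into an $R$-form as above and then, via Lemma \ref{le:ext_centr_3}(2),(3), replacing it by the unique normal $R$-form $q=(g_1,u_1^{\alpha_1},\dots,u_m^{\alpha_m},g_{m+1})$ with $w(q)=w(p)$. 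By condition (1) in the definition of a normal form, $w(q)=g_1\circ u_1^{\alpha_1}\circ\cdots\circ u_m^{\alpha_m}\circ g_{m+1}$ is a reduced concatenation, so its length is literally the sum of the lengths of its blocks; this is strictly positive as soon as $m\geq 1$ or some $g_i\neq 1$, whence $w(p)\neq\varepsilon$.

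The technical heart, and the main obstacle, is precisely the matching between \emph{reducedness in the pregroup $P$} and \emph{non-collapsing of the associated normal $R$-form}: one must rule out that a nonempty reduced $P$-product becomes trivial after all cancellations in the long alternating product $g_1\ast u_1^{\alpha_1}\ast\cdots\ast u_m^{\alpha_m}\ast g_{m+1}$. Here I would lean on Lemma \ref{le:ext_centr_2}: splitting off a bounded amount $u_i^{r}$ of cancellation at each end of every power leaves the central blocks $u_i^{m_i-2r}$ intact as literal subwords, which is exactly what makes the normal form survive; the reducedness hypothesis — no consecutive pair $p_i\ast p_{i+1}$ lies in $P$, i.e.\ cannot be amalgamated into a single $g\ast u^\beta\ast h$ — is what prevents the surviving normal form from degenerating to the empty word. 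Granting this, a nonempty reduced $P$-product always has $w(p)\neq\varepsilon$, so $\ker\psi=1$ and $\psi$ is injective. Combined with surjectivity, this gives the isomorphism $U(P)\cong\langle P\rangle$.
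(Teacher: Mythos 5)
Your formal architecture is correct, and it is in fact a tidier route than the one the survey itself takes where it writes such arguments out: the paper's proof of this theorem is only a citation to Theorem 6.15 of \cite{Myasnikov_Remeslennikov_Serbin:2005}, and the in-paper analogues (Theorems \ref{pr:pregroup} and \ref{th:U(P)}) go through Rimlinger's reduced-word-structure criterion (Theorem \ref{th:red_word_str}) together with an induction on the total length of two reduced $P$-sequences representing the same element, whereas you get surjectivity for free from the universal property and correctly reduce injectivity to a single claim: no reduced $P$-product of length $n \geqslant 2$ evaluates to $\varepsilon$ in $CDR(\Zt,X)$. (Your ``trivial exactly when $n=0$'' convention is harmlessly off from the paper's, where the identity class is the one-term product $1$; the reduction is the same.) Both your route and the survey's hinge on exactly that claim.

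The genuine gap is that this claim --- the step you introduce with ``Granting this'' --- is the entire content of the theorem, and it cannot be obtained by citing Lemma \ref{le:ext_centr_3}(2),(3) at face value. An arbitrary $R$-form can collapse: for $u \in R$ and $m \in \Z$ the $R$-form $(g_1,\, u^{t},\, u^{m},\, u^{-t},\, g_3)$ has value $g_1 \ast u^{m} \ast g_3$ by axiom (E1) of Theorem \ref{le:cdr_exp_3}, which equals $\varepsilon$ for a suitable choice of $g_1, g_3 \in G$; hence the ``unique normal $R$-form'' of Lemma \ref{le:ext_centr_3}(3) must be allowed to degenerate to an element of $G$, and the lemma gives no criterion for when degeneration occurs. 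What actually has to be proved is that pregroup-reducedness blocks degeneration: (a) no factor $p_i$ of a reduced $P$-product of length $\geqslant 2$ lies in $G$ (else it amalgamates with a neighbour inside $P$), so each $p_i = g_i \ast u_i^{\alpha_i} \ast h_i$ with $\deg(\alpha_i) > 0$; and (b) whenever $u_i = u_{i+1} = u$, the merged middle $h_i \ast g_{i+1}$ does not commute with $u$ --- for this one needs that an element of $G$ commuting with $u \in R$ is a power of $u$ (equivalently, commutes with the infinite powers $u^{\alpha}$), a centralizer property of Lyndon's-set representatives; granted it, (E1) gives $p_i \ast p_{i+1} = g_i \ast u^{\alpha_i + m + \alpha_{i+1}} \ast h_{i+1} \in P$, contradicting reducedness. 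Only after (a) and (b) is the merged form a reduced $R$-form, whose normal form is nondegenerate and hence nontrivial. Note also that Lemma \ref{le:ext_centr_2}, on which you lean for non-collapsing, is stated only for integer exponents $m_i \in \N$ and so cannot by itself control the $\deg(\alpha_i) > 0$ exponents occurring here; that control lives inside the proof of Lemma \ref{le:ext_centr_3}. Since steps (a)--(b) are precisely where the Lyndon's set and $S$-subgroup hypotheses enter, deferring them leaves the theorem unproved.
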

\begin{proof} See the proof of Theorem 6.15 in \cite{Myasnikov_Remeslennikov_Serbin:2005}.
\end{proof}

Let $R = \{c_i \mid i \in I\}$. Put $S = \{s_{i,j} \mid i \in I, j \in \mathbb{N}\}$. Then the
group
$$G(R,S) = \langle G, S \mid [c_i,s_{i,j}] = [s_{i,j},s_{k,j}] = 1, i \in I, j,k \in \mathbb{N}
\rangle$$
is an extension of all cyclic centralizers of $G$ by a direct sum of countably many copies of an
infinite cyclic group. Sometimes, we will refer to $G(R,S)$ as an extension of all cyclic
centralizers of $G$ by $\Zt$.

\begin{theorem}
\label{th:ext_centr_3}
$\langle P \rangle \simeq G(R,S)$.
\end{theorem}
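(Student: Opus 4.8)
The plan is to build an explicit isomorphism $\Phi : G(R,S) \to \langle P \rangle$ and to identify $\langle P \rangle$ with $U(P)$ via Theorem \ref{th:ext_centr_2}. First I would define $\Phi$ on the generators of $G(R,S)$: on $G$ it is the inclusion $G \hookrightarrow CDR(\Zt, X)$, and on the stable letters I set $\Phi(s_{i,j}) = c_i^{t^j}$, where $c_i^{t^j}$ is the element produced by the $\Zt$-exponentiation of Subsection \ref{subs:cdr}. To see that $\Phi$ respects the defining relations of $G(R,S)$, note that $rel(G)$ is automatically preserved since $\Phi|_G$ is the inclusion homomorphism, while the commutation relations hold in $CDR(\Zt, X)$: Lemma \ref{le:cr_exp_1} gives $[c_i, c_i^{t^j}] = \varepsilon$, and axiom (E1) of Theorem \ref{le:cdr_exp_3} yields $c_i^{t^j} \ast c_i^{t^k} = c_i^{t^j + t^k} = c_i^{t^k} \ast c_i^{t^j}$, so $[c_i^{t^j}, c_i^{t^k}] = \varepsilon$. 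Hence $\Phi$ is a well-defined homomorphism, and since each $c_i^{t^j} = \varepsilon \ast c_i^{t^j} \ast \varepsilon$ lies in $P$, the image of $\Phi$ is contained in $\langle P \rangle$.

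Next I would check surjectivity, for which it suffices to hit the generating set $\{g \ast u^\alpha \ast h \mid g,h \in G,\ u \in R,\ \alpha \in \Zt\}$ of $\langle P \rangle$. Writing $u = c_i$ and $\alpha = m_0 + m_1 t + \cdots + m_k t^k$, the definition of exponentiation gives $u^\alpha = u^{m_0} \ast (u^t)^{m_1} \ast \cdots \ast (u^{t^k})^{m_k}$, so that $g \ast u^\alpha \ast h = \Phi\bigl(g\, c_i^{m_0} s_{i,1}^{m_1} \cdots s_{i,k}^{m_k}\, h\bigr)$, using $R \subset G$ so that $c_i \in G$. Thus $\Phi$ is onto.

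The main obstacle is injectivity, and this is where the normal form machinery is essential. The plan is to argue that $\Phi$ carries the canonical normal form of $G(R,S)$ to the normal $R$-forms of $CDR(\Zt, X)$. On the one hand, since $G(R,S)$ is an iterated extension of the cyclic centralizers $\langle c_i \rangle$ by the free abelian groups $\langle c_i, s_{i,1}, s_{i,2}, \ldots \rangle$, it carries a Britton-type normal form: every nontrivial element is an alternating reduced product $g_0 a_1 g_1 \cdots a_n g_n$ with the $a_\ell$ drawn from the new abelian parts and the $g_\ell \in G$ satisfying the evident reducedness conditions. On the other hand, by Lemma \ref{le:ext_centr_3}(3) every element of $\langle P \rangle = U(P)$ has a unique normal $R$-form, whose value $w(p) = g_1 \circ u_1^{\alpha_1} \circ \cdots \circ u_n^{\alpha_n} \circ g_{n+1}$ is by part (1) of that lemma a genuine concatenation, so its length is the sum of the lengths of its syllables. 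I would match these two normal forms syllable by syllable under $\Phi$, so that a nontrivial reduced word of $G(R,S)$ maps to a nontrivial concatenation in $CDR(\Zt, X)$; equivalently, I would invoke the universal property of $U(P)$ to construct an inverse pregroup morphism $\psi : P \to G(R,S)$ sending $g \ast c_i^{\alpha} \ast h$ to $g\, c_i^{m_0} s_{i,1}^{m_1} \cdots s_{i,k}^{m_k}\, h$, whose well-definedness is guaranteed precisely by the uniqueness of the normal $R$-form, and then verify $\psi \circ \Phi = \mathrm{id}$ on generators. The delicate point throughout is the bookkeeping of reducedness — ensuring that a reduced product in $G(R,S)$ produces no unexpected cancellation in $CDR(\Zt, X)$ — and this is exactly what Lemmas \ref{le:ext_centr_1}, \ref{le:ext_centr_2}, and \ref{le:ext_centr_3} are designed to control.
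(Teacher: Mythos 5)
Your proposal is correct, but it runs the argument in the opposite direction from the paper, and the two organizations trade different burdens. The paper starts on the pregroup side: it defines $\phi : P \to G(R,S)$ by $g \ast c_i^{\alpha} \ast h \mapsto g\, s_{i,n}^{a_n} \cdots s_{i,1}^{a_1} c_i^{a_0}\, h$ (for $\alpha = a_n t^n + \cdots + a_1 t + a_0$), asserts that $\phi$ is a morphism of pregroups, extends it to $\psi : \langle P \rangle \simeq U(P) \to G(R,S)$ by the universal property, reads off surjectivity from $\psi(c_i^{t^j}) = s_{i,j}$, and proves injectivity by applying Britton's Lemma to the $\psi$-image of a reduced $P$-sequence. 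You instead start on the presentation side: your $\Phi : G(R,S) \to \langle P \rangle$ is well defined because the defining relations are verified inside $CDR(\Zt,X)$ via Lemma \ref{le:cr_exp_1} and axiom (E1) of Theorem \ref{le:cdr_exp_3}, surjectivity is the computation $g \ast c_i^{\alpha} \ast h = \Phi\bigl(g\, c_i^{m_0} s_{i,1}^{m_1}\cdots s_{i,k}^{m_k}\, h\bigr)$, and injectivity follows once you exhibit a left inverse $\psi$ with $\psi \circ \Phi = \mathrm{id}$ --- and that left inverse is precisely the paper's map. What your organization buys is that Britton's Lemma is never invoked: $\psi \circ \Phi = \mathrm{id}$ on generators together with surjectivity of $\Phi$ already forces both maps to be isomorphisms. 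What it costs is that the full weight of the proof now rests on $\psi$ being a well-defined pregroup morphism on $P$: different decompositions $g \ast c_i^{\alpha} \ast h$ of the same element of $P$ must have equal images, and products that land in $P$ must be respected. The paper dismisses this step as easy; your appeal to the uniqueness of normal $R$-forms (Lemma \ref{le:ext_centr_3}) is the right source, but that is exactly where the verification lives --- one uses that $C_G(c_i) = \langle c_i \rangle$ and the Lyndon-set conditions to see that the only ambiguity in a decomposition is a shift of integer powers of $c_i$ between the three factors, which $G(R,S)$ absorbs because $c_i$ commutes with every $s_{i,j}$. Finally, of your two suggested injectivity routes, carry out the second one (the inverse morphism): the first, matching Britton normal forms against normal $R$-forms ``syllable by syllable,'' amounts to reproving Britton's Lemma in this setting and requires precisely the reducedness bookkeeping you defer.
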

\begin{proof} Define $\phi : P \rightarrow G(R,S)$ as follows. Let $g_i \ast c_i^\alpha \ast h_i
\in P$ and $\alpha = a_n t^n + a_{n-1} t^{n-1} + \cdots + a_1 t + a_0$. Put
$$g_i \ast c_i^\alpha \ast h_i \ \stackrel{\phi}{\rightarrow}\ g_i\ s_{i,n}^{a_n}\
s_{i,n-1}^{a_{n-1}}\ \cdots\ s_{i,1}^{a_1}\ c_i^{a_0}\ h_i.$$
It is easy to see that $\phi$ is a morphism of pregroups. Since $\langle P \rangle \simeq U(P)$,
the morphism $\phi$ extends to a unique homomorphism $\psi : \langle P \rangle \rightarrow G(R,S)$.
We claim that $\psi$ is bijective. Indeed, observe first that $G(R,S)$ is generated by $G \cup S$.
Now, since $\psi(c_i^{t^j}) = s_{i,j}$ and $\psi$ is identical on $G$, it follows that $\psi$ is
onto. To see that $\psi$ is one-to-one it suffices  to notice that if
$$y = (g_1 \ast c_1^{\alpha_1} \ast h_1, g_2 \ast c_2^{\alpha_2} \ast h_2, \ldots, g_m \ast
c_m^{\alpha_m} \ast h_m).$$
is a reduced $R$-form then $y^\psi \neq 1$ by Britton's Lemma (see, for example,
\cite{Magnus_Karras_Solitar:1977}). This proves that $\psi$ is an isomorphism, as required.
\end{proof}

\begin{lemma}
\label{le:ext_centr_5}
If $G$ is subwords-closed then so is $\langle P \rangle$.
\end{lemma}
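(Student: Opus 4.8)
The plan is to take an arbitrary $y\in\langle P\rangle$, put it into the canonical cancellation-free form provided by the normal-form theory, and then show that every initial subword of $y$ again lies in $\langle P\rangle$. Since $\langle P\rangle$ is a subgroup, closure under initial subwords is all that is needed: if $g=a\circ m\circ b\in H$ and $H$ is a subgroup closed under initial subwords, then $a,\,a\circ m\in H$, whence $m=a^{-1}\ast(a\circ m)\in H$, so $H$ is closed under arbitrary subwords. By Theorem~\ref{th:ext_centr_2} together with Lemma~\ref{le:ext_centr_3}, every $y\in\langle P\rangle$ is represented by a normal $R$-form $p=(g_1,u_1^{\alpha_1},g_2,\dots,g_n,u_n^{\alpha_n},g_{n+1})$ for which
$$y=w(p)=g_1\circ u_1^{\alpha_1}\circ g_2\circ\cdots\circ g_n\circ u_n^{\alpha_n}\circ g_{n+1},$$
a genuine concatenation with no cancellation. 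Fix $\gamma\in[0,|y|]$ and let $w=y_\gamma$ be the initial subword of $y$ of length $\gamma$. Because the displayed expression is reduced, $w$ is obtained by truncating exactly one block of $y$ while keeping all preceding blocks verbatim, so it suffices to treat two cases, according to whether the truncation falls in a factor $g_i$ or in a factor $u_i^{\alpha_i}$.

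Suppose first that $w$ ends in a factor $g_i$, i.e. $w=g_1\circ u_1^{\alpha_1}\circ\cdots\circ u_{i-1}^{\alpha_{i-1}}\circ g_i'$ with $g_i'$ an initial subword of $g_i\in G$. Since $G$ is subwords-closed, $g_i'\in G\subseteq\langle P\rangle$. The preceding part $z=g_1\circ u_1^{\alpha_1}\circ\cdots\circ u_{i-1}^{\alpha_{i-1}}$ is the value $w(p')$ of the truncated $R$-form $p'=(g_1,u_1^{\alpha_1},\dots,u_{i-1}^{\alpha_{i-1}},1)$, hence $z\in\langle P\rangle$. As $w=z\circ g_i'=z\ast g_i'$ is simultaneously a product of two elements of the group $\langle P\rangle$ and the cancellation-free concatenation of the same two words, the two agree and $w\in\langle P\rangle$.

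The main case is when $w$ ends inside a factor $u_i^{\alpha_i}$, so that $w=z\circ w_0$, where $z=g_1\circ u_1^{\alpha_1}\circ\cdots\circ g_i=w(p'')$ for the truncated $R$-form $p''=(g_1,\dots,u_{i-1}^{\alpha_{i-1}},g_i)$, hence $z\in\langle P\rangle$, and $w_0$ is an initial subword of $u_i^{\alpha_i}$. Here I invoke the key structural feature of the $\Zt$-exponentiation of Subsection~\ref{subs:cdr}: for $u\in R$ and $\alpha\in\Zt$ with $\alpha\geq 0$ the word $u^\alpha$ is $u$-periodic, so that every initial subword of $u^\alpha$ has the form $u^\beta\circ s$ with $\beta\in\Zt$, $0\leq\beta\leq\alpha$, and $s$ a proper initial subword of $u$ (allowing $\beta=\alpha,\,s=\varepsilon$ at a block boundary). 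Granting this, $w_0=u_i^{\beta}\circ s$; now $u_i^{\beta}=1\ast u_i^{\beta}\ast 1\in P\subseteq\langle P\rangle$, while $s$ is an initial subword of $u_i\in R\subseteq G$, so $s\in G\subseteq\langle P\rangle$ because $G$ is subwords-closed. Thus $w=z\ast u_i^{\beta}\ast s$ is a product of three elements of the group $\langle P\rangle$ which coincides with the cancellation-free concatenation $z\circ u_i^{\beta}\circ s$, giving $w\in\langle P\rangle$.

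The one step requiring genuine work, and which I expect to be the main obstacle, is the structural claim that every initial subword of $u^\alpha$ factors as $u^\beta\circ s$. This has to be read off the explicit definition of $v\mapsto v^{f(t)}$. Axiom (E1) of Theorem~\ref{le:cdr_exp_3}, combined with Lemma~\ref{le:cr_exp_1} (each $u^\gamma$ is cyclically reduced and both begins and ends with $u$), already yields $u^\alpha=u^\beta\ast u^{\alpha-\beta}=u^\beta\circ u^{\alpha-\beta}$ for every $0\leq\beta\leq\alpha$, so each such $u^\beta$ is an initial subword of $u^\alpha$. The real content is that these truncation points, together with terminal prefixes of $u$ of length less than $|u|$, exhaust all initial subwords; this is precisely the coverage statement for the index segment $[1,t^{n+1}]$ recorded in the construction of $u^t$ in Subsection~\ref{subs:cr}, which must be verified for arbitrary positions (not merely near the two ends) using division-with-remainder by $|u|$ in $\Zt$ and the self-similar definition $u^{t^{k+1}}=(u^{t^k})^t$. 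Once the $u$-periodicity of $u^\alpha$ is established, the case analysis above shows that $\langle P\rangle$ is subwords-closed.
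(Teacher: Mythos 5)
The comparison here is necessarily indirect, because the paper contains no argument for this lemma at all: its ``proof'' is a pointer to Lemma 6.18 of \cite{Myasnikov_Remeslennikov_Serbin:2005}. That said, your plan is exactly the natural one (and surely the shape of the cited proof): reduce subword closure to closure under initial subwords inside a subgroup, represent an arbitrary element of $\langle P\rangle$ by its normal $R$-form via Lemma~\ref{le:ext_centr_3}, and split according to whether the truncation point lands in a block $g_i\in G$ or in an exponential block $u_i^{\alpha_i}$. Your handling of the $g_i$ case, and the bookkeeping showing that the truncated word is simultaneously a $\ast$-product of elements of $P\cup G\subseteq\langle P\rangle$ and a cancellation-free concatenation, are correct.

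The gap is precisely the step you flagged, and your proposed way of closing it cannot work. You want that every initial subword of $u^{\alpha}$ ($u\in R$, $\alpha\in\Zt$) has the form $u^{\beta}\circ s$ with $s$ an initial segment of $u$, and you suggest verifying the coverage statement ``using division-with-remainder by $|u|$ in $\Zt$''. But $\Zt$ admits no such division: for instance $t$ cannot be written as $2q+r$ with $q\in\Zt$ and $r\in\{0,1\}$. What actually makes the claim true is a degree comparison, not Euclidean division: if $\deg|u|=n$, then every $\gamma\in[1,t^{n+1}]$ of degree at most $n$ is bounded above by an integer multiple of $|u|$ (compare leading coefficients), hence lies within finitely many periods of the left end, while every $\gamma$ of degree $n+1$ in that segment equals $t^{n+1}$ minus an element of degree at most $n$, hence lies within finitely many periods of the right end. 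There is no ``middle'' region to worry about --- that is the content of the ``Observe that\dots'' sentence in Subsection~\ref{subs:cr}, and it is the very reason $u^t$ is well defined by prescribing it only near its two ends, contrary to your expectation that arbitrary positions away from the ends must be treated. With the case $\alpha=t$ settled, you still need the induction $u^{t^{k+1}}=(u^{t^k})^t$ (which you mention) and an argument for a general exponent $f(t)=m_0+m_1t+\cdots+m_kt^k$, where intermediate coefficients $m_i$ may be negative, so $u^{f(t)}$ is not a plain concatenation of the pieces $(u^{t^i})^{m_i}$ but involves cancellation governed by (E1) of Theorem~\ref{le:cdr_exp_3}; your write-up does not address this at all. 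So, as written, the proposal is a correct plan whose central lemma is unproved, and the one concrete suggestion offered for proving it is mathematically untenable.
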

\begin{proof} See the proof of Lemma 6.18 in \cite{Myasnikov_Remeslennikov_Serbin:2005}.
\end{proof}

\begin{lemma}
\label{le:ext_centr_6}
If $G$ is subwords-closed then $H$ is an $S$-subgroup.
\end{lemma}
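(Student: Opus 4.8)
The plan is to verify directly the two requirements in the definition of ``separated'' for any two non-commuting $u,v \in H := \langle P\rangle$ with cyclic centralizers. The first requirement, that $u^m\ast v^n$ be defined for all $m,n\in\mathbb{N}$, is free: $H$ is a subgroup of $CDR(\Zt,X)$, so $\ast$ is total on $H$ and $u^m\ast v^n\in H$ always. Thus the whole content lies in the stabilization identity $u^m\ast v^n = u^{m-r}\circ_\delta (u^r\ast v^r)\circ_\delta v^{n-r}$ for $m,n>r$. Throughout I would use that, by Lemma \ref{le:ext_centr_5}, $H$ is subwords-closed; this guarantees that the cyclically reduced conjugates below, their conjugators, and the middle block $u^r\ast v^r$ all remain inside $H$.

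First I would reduce to cyclically reduced cores. Write the cyclic decompositions $u=c^{-1}\circ u_0\circ c$ and $v=d^{-1}\circ v_0\circ d$ with $u_0,v_0\in CR(\Zt,X)$; subwords-closedness puts $c,d,u_0,v_0\in H$, and by the commutation Lemma \ref{le:cycl} the conjugators are the canonical ones attached to the respective centralizers. Since $C_H(u)=\langle u\rangle$ is cyclic, $u_0$ is primitive: it is neither a proper integer power nor a proper $\Zt$-power $w^\alpha$ with $\mathrm{deg}(\alpha)>0$, for either would produce an element of $C_H(u_0)$ outside $\langle u_0\rangle$ (cf.\ Proposition \ref{pr:1}), and likewise for $v_0$. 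Being cyclically reduced, $u_0^m=u_0\circ\cdots\circ u_0$ with $|u_0^m|=m|u_0|$, so integer powers are honest concatenations; primitivity will be exploited only to dispatch the degenerate case below.

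The engine of the stabilization is the Harrison-type Lemma \ref{le:LS}. Suppose first $[u_0,v_0]\neq\varepsilon$. The amount of cancellation in $u_0^m\ast v_0^n$ equals $c\bigl((u_0^m)^{-1},v_0^n\bigr)=c\bigl((u_0^{-1})^m,v_0^n\bigr)$, so applying Lemma \ref{le:LS} with $f=u_0^{-1}$ and $h=v_0$ (both cyclically reduced), a value $\geqslant |u_0|+|v_0|$ would force $[u_0^{-1},v_0]=\varepsilon$, hence $[u_0,v_0]=\varepsilon$, a contradiction. Thus the cancellation is bounded by $B=|u_0|+|v_0|$ uniformly in $m,n$. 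Choosing $r$ with $r\cdot\min\{|u_0|,|v_0|\}>B$, the cancellation touches only the last $r$ copies of $u_0$ and the first $r$ copies of $v_0$, giving $u_0^m\ast v_0^n=u_0^{m-r}\circ(u_0^r\ast v_0^r)\circ v_0^{n-r}$ for all $m,n>r$. Reinserting the conjugators, $u^m\ast v^n=c^{-1}\circ u_0^m\circ(c\ast d^{-1})\circ v_0^n\circ d$ after the bounded middle cancellation of $c\ast d^{-1}$, and the conjugators contribute only a bounded overlap that is absorbed into the constant $\delta$ of the required $\circ_\delta$-decomposition.

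It remains to treat the commuting-core case $[u_0,v_0]=\varepsilon$, which is where the cyclic-centralizer hypothesis genuinely intervenes and which I expect to be the main obstacle. Here $v_0\in C_H(u_0)=\langle u_0\rangle$, so by primitivity $v_0=u_0^{\pm1}$; thus $u$ and $v$ are both conjugates of a single primitive word $u_0$, and by the CSA/malnormality property of $\Lambda$-free groups the hypothesis $[u,v]\neq\varepsilon$ forces the bridge element $e:=c\ast d^{-1}$ to lie outside $\langle u_0\rangle$, i.e.\ to be misaligned with the axis of $u_0$. The product then has the shape $u_0^m\ast e\ast u_0^{\pm n}$, exactly the situation controlled by Lemma \ref{le:ext_centr_1} (whose proof again rests on bounding cancellation by Lemma \ref{le:LS}): non-alignment of $e$ caps the cancellation on each side by a constant, yielding $u_0^m\ast e\ast u_0^{\pm n}=u_0^{m-r}\circ(u_0^r\ast e\ast u_0^{\pm r})\circ u_0^{\pm(n-r)}$. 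Conjugating back and absorbing $c,d$ into $\delta$ gives the desired decomposition. The delicate part is verifying this bridge bound uniformly in $m,n$ while keeping every intermediate word inside $H$ via Lemma \ref{le:ext_centr_5}; once that is done, both cases combine to show that $u$ and $v$ are separated, whence $H$ is an $S$-subgroup.
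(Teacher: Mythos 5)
There is a genuine gap, and it sits exactly where the non-Archimedean nature of $\Zt$ matters. In your Case 1 you bound the cancellation in $u_0^m\ast v_0^n$ by $B=|u_0|+|v_0|$ via Lemma \ref{le:LS} and then ``choose $r$ with $r\cdot\min\{|u_0|,|v_0|\}>B$''. Such an $r$ need not exist: when $|u_0|$ and $|v_0|$ lie in different convex subgroups of $\Zt$ (say $|u_0|=1$ and $|v_0|$ of degree $1$), we have $r|u_0|<|v_0|<B$ for every $r\in\mathbb{N}$, so the Harrison bound does not confine the cancellation to finitely many periods of the shorter word. Worse, the conclusion itself can fail at this level of generality: take $u_0=x$ and $v_0\in CDR(\Zt,X)$ of length $t$ which reads $x^{-1}$ at every finite position and a different letter $z^{-1}$ at every position $t-k$. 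Then $u_0,v_0$ are cyclically reduced and non-commuting, Lemma \ref{le:LS} never applies (the overlap $m$ is always below $1+t$), all of $u_0^m$ is swallowed in $u_0^m\ast v_0^n$ for every $m$, and the pair is not separated. So separation of non-commuting cyclically reduced pairs is simply not a formal consequence of Lemma \ref{le:LS} plus subwords-closedness; any correct proof must use the specific structure of $H=\langle P\rangle$ to exclude such configurations from occurring with cyclic centralizers. That is precisely what your proof never does: you use neither the hypothesis that $G$ is an $S$-subgroup, nor the Lyndon's set axioms (which bound overlaps $c(u^\alpha,g)<k|u|$ and forbid subwords $u^\alpha$ with $\deg(\alpha)>0$ inside $G$), nor the normal $R$-forms of Lemma \ref{le:ext_centr_3}. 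The intended argument (the paper itself gives no self-contained proof but defers to Lemma 6.19 of \cite{Myasnikov_Remeslennikov_Serbin:2005}) analyzes normal forms $g_1\circ u_1^{\alpha_1}\circ\cdots\circ u_k^{\alpha_k}\circ g_{k+1}$: an element of $H$ can begin with unboundedly many copies of a word $w$ only if $w$ is commensurable with a conjugate of some $u_i\in R$, and the centralizers of such elements in $H$ are the extended ones, hence non-cyclic — this is how the cyclic-centralizer hypothesis actually enters, and it is invisible in your argument.

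The second gap is your Case 2. Lemma \ref{le:ext_centr_1} is a statement about $u,v\in R^{\pm 1}$ (elements of the Lyndon's set of $G$) and $g\in G$; you invoke it for an arbitrary element $u_0\in H$ with cyclic centralizer and a bridge $e=c\ast d^{-1}\in H$. That is outside its scope, and the ``$H$-version'' you would need — stabilization of $u_0^m\ast e\ast u_0^{\pm n}$ for $u_0,e\in H$ with $C_H(u_0)$ cyclic and $[u_0,e]\neq\varepsilon$ — is essentially equivalent to the separation property of $H$ that the lemma asserts, so the appeal is circular. In short: the reduction to cyclically reduced cores and the use of Harrison-type bounds is a reasonable opening move (and does handle the case where $|u_0|$ and $|v_0|$ have the same height), but the heart of the lemma is the unequal-height case, and there the proof must run through the $R$-form machinery and the properties of the centralizer-extension construction, not through Lemma \ref{le:LS} alone.
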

\begin{proof} See the proof of Lemma 6.19 in \cite{Myasnikov_Remeslennikov_Serbin:2005}.
\end{proof}

\subsection{Embedding of $\FZt$ into $CDR(\Zt, X)$}
\label{subs:embedding}

Let $F$ be a free non-abelian group. Recall that one can view the group $\FZt$ as a union of the
following infinite chain of groups:
\begin{equation}
\label{eq:embedding_1}
F = G_0 < G_1 < G_2 < \cdots < G_n < \cdots,
\end{equation}
where $G_n$ is obtained from $G_{n-1}$ by extension of all cyclic centralizers of $G_{n-1}$.

For each $n \in \mathbb{N}$ we construct by induction an embedding
$$\psi_n : G_n \rightarrow  CDR(\Zt, X)$$
such that $\psi_{n-1}$ is the restriction of $\psi_n$ to $G_{n-1}$. To this end, let $H_0$ be the
set of all words of finite length in $CDR(\Zt, X)$. Clearly, $F = H_0$. We denote by $\psi_0 : F
\rightarrow H_0$ the identity isomorphism. It is obvious that $H_0$ is subwords-closed. Moreover,
$H_0$ is an $S$-subgroup and it has a Lyndon's set.

Suppose by induction that there exists an embedding
$$\psi_{n-1} : G_{n-1} \rightarrow CDR(\Zt, X)$$
such that the image $H_{n-1} = \psi_{n-1}(G_{n-1})$ is an $S$-subgroup, it is subwords-closed, and
there exists a Lyndon's set, say $R_{n-1}$, in $H_{n-1}$. Then by Proposition \ref{le:ext_centr_4}
and Theorem \ref{th:ext_centr_2}, there exists an embedding $\psi_n :G_n \rightarrow CDR(\Zt,X)$.
Moreover, in this case, the image $H_n = \psi_n(G_n)$ is the subgroup of $CDR(\Zt, X)$ generated
by the pregroup
$$P(H_{n-1},R_{n-1}) = \{f \ast u^\alpha \ast h \ \mid \ f,h \in H_{n-1}, \ u \in R_{n-1}, \ \alpha
\in \Zt \}.$$
Notice that by Lemma \ref{le:ext_centr_6}, the group $H_n$ is an $S$-subgroup of $CDR(\Zt, X)$, and
by Lemma \ref{le:ext_centr_5}, $H_n$ is subwords-closed. So to finish the proof one needs to show
that $H_n$ has a Lyndon's set.

\begin{lemma}
\label{le:embedding_1}
Let $H_{n-1}$ from the series {\rm (\ref{eq:embedding_1})} be a subwords-closed $S$-subgroup of
$CDR(\Zt, X)$ with a Lyndon's set $R_{n-1}$. Then there exists a Lyndon's set $R_n$ in $H_n$.
\end{lemma}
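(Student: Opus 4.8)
The plan is to construct $R_n$ explicitly as a set of cyclically reduced, primitive representatives of the self-centralizing cyclic subgroups of $H_n$, and then to verify the three defining conditions of a Lyndon's set one at a time, using the structural facts already in hand: that $H_n = \langle P \rangle$ is an $S$-subgroup (Lemma \ref{le:ext_centr_6}), that it is subwords-closed (Lemma \ref{le:ext_centr_5}), and that every element of $H_n$ has a unique normal $R_{n-1}$-form (Lemma \ref{le:ext_centr_3}). First I would describe $K(H_n) = \{v \in H_n \mid C_{H_n}(v) = \langle v \rangle\}$. By Proposition \ref{pr:1} every centralizer in $H_n$ is abelian; the extension of centralizers replaces each cyclic centralizer $\langle u \rangle$, $u \in R_{n-1}$, by the non-cyclic abelian group $\langle u, s_{u,j} \mid j \in \mathbb{N} \rangle$, so no maximal cyclic subgroup of $H_{n-1}$ survives as a self-centralizing one in $H_n$. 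Passing to cyclically reduced cores (each element of $CDR(\Zt,X)$ has a unique cyclic decomposition), the elements of $K(H_n)$ are exactly those cyclically reduced $v$ which are not proper powers, are not conjugate into any $\langle u, s_{u,j} \rangle$, and whose normal $R_{n-1}$-form genuinely involves the stable letters. From each class of such elements under conjugation, cyclic permutation, and inversion I would select one representative; the resulting set $R_n$ satisfies conditions (1)--(3) in the definition of a set of representatives of $K(H_n)$ (uniqueness of the data $v,c,k,\pi(v)$ following from uniqueness of cyclic decomposition in $CDR(\Zt,X)$), and condition (1) of a Lyndon's set, $R_n \subset H_n$, holds by construction.

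Next I would verify condition (2), that for all $g \in H_n$, $u \in R_n$, $\alpha \in \Zt$ the inner product $c(u^\alpha, g)$ exists and is bounded by $k|u|$ for some $k$ independent of $\alpha$. Existence follows from Lemma \ref{le:ext_centr_3}(1),(4) once $u^\alpha$ and $g$ are placed in a common $R_{n-1}$-form. For the bound, the essential input is the $S$-subgroup property: by the construction of $\Zt$-exponentiation the word $u^\alpha$ (for $\deg\alpha > 0$) begins and ends with a fixed block of copies of $u$, so any cancellation of $g$ against $u^\alpha$ can only reach into a bounded number of leading periods of $u$. Quantitatively this is precisely the separation estimate of Lemmas \ref{le:ext_centr_1} and \ref{le:ext_centr_2}, which supply a constant $r$ depending only on $u$ and $g$ and not on the exponent, after which the product stabilizes as an honest concatenation $\circ$; taking $k = r + 1$ then gives $c(u^\alpha, g) < k|u|$ uniformly in $\alpha$.

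Finally I would verify condition (3), that no word of $H_n$ contains a subword $u^\alpha$ with $u \in R_n$ and $\deg(\alpha) > 0$. The point is that the centralizer of $u$ in $H_n$ is still cyclic --- it is precisely one of the centralizers scheduled to be extended at the next stage --- so the genuine $\Zt$-power $u^\alpha$ with $\deg\alpha > 0$ is not an element of $H_n$ at all. If some $w \in H_n$ contained $u^\alpha$ as a subword, then, since $H_n$ is subwords-closed (Lemma \ref{le:ext_centr_5}), $u^\alpha$ together with suitable truncations would lie in $H_n$; this would force $\langle u, u^\alpha \rangle \le C_{H_n}(u)$ to be non-cyclic, contradicting $u \in K(H_n)$. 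This completes the construction of the Lyndon's set $R_n$ in $H_n$.

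I expect the main obstacle to be the first step: the precise identification of $K(H_n)$ and the extraction of an \emph{exponent-independent} separation constant for condition (2). One must rule out accidental commutation across the stable letters (so that the candidate representatives really are self-centralizing and primitive), and one must check that the separation bounds established for integer powers in Lemmas \ref{le:ext_centr_1}--\ref{le:ext_centr_2} genuinely transfer to arbitrary $\Zt$-exponents $u^\alpha$, whose lengths are non-archimedean; it is here that the careful bookkeeping of the normal form and the $S$-subgroup hypothesis do the real work.
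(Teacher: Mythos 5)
Your overall skeleton matches the paper's proof: the paper also takes $R_n$ to be a set of representatives of $K(H_n)$, uses subwords-closedness of $H_n$ to arrange $R_n \subset H_n$ (this, not just "by construction", is what puts the cyclically reduced cores and primitive roots inside $H_n$), observes that $K(H_n)\cap H_{n-1}=\emptyset$ so every $u\in R_n$ has a normal form genuinely involving the new stable letters, and rules out subwords $u^{\alpha}$ with $\deg(\alpha)>0$ by exactly your commutation argument: subwords-closedness would force $u^{\alpha}\in H_n$, making $C_{H_n}(u)$ non-cyclic.

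The genuine gap is in condition (2), and it is the obstacle you flag at the end without resolving. Lemmas \ref{le:ext_centr_1} and \ref{le:ext_centr_2} cannot supply your separation constant: both are stated for elements $u\in R^{\pm 1}$ where $R$ is an \emph{already established} Lyndon's set of the ambient group. Applied to $H_n$ this is circular (whether $R_n$ is a Lyndon's set is what is being proved), and applied to $H_{n-1}$ it is vacuous for your $u$, since $u\notin H_{n-1}$. Your existence argument has the same defect: $u^{\alpha}$ with $\deg(\alpha)>0$ is not an element of $H_n$ at all (it commutes with $u$, as you note for condition (3)), so it is not the value of any $R_{n-1}$-form and cannot be "placed in a common $R_{n-1}$-form" with $g$. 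The paper closes the gap by a different mechanism: write $u = f_1 \circ u_1^{\alpha_1} \circ \cdots \circ u_k^{\alpha_k} \circ f_{k+1}$ and, if $g\notin H_{n-1}$, also $g = g_1 \circ v_1^{\beta_1}\circ \cdots \circ v_p^{\beta_p} \circ g_{p+1}$ in their unique normal forms (Lemma \ref{le:ext_centr_3}), with $u_i, v_j \in R_{n-1}$ and infinite exponents, and then invoke the induction hypothesis that $R_{n-1}$ is a Lyndon's set of $H_{n-1}$ to prove the stabilization equations $(g \ast u^m)\ast u = (g\ast u^m)\circ u$ and $u \ast (u^m \ast g) = u \circ (u^m \ast g)$, valid for $m=1$ when $g\in H_{n-1}$ and for all $m>p$ otherwise. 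The exponent-independent constant $k$ is thus controlled by the number $p$ of infinite blocks in the normal form of $g$, not by a separation constant attached to $u$; once cancellation is known to stabilize after finitely many periods of $u$, passing from finite powers $u^m$ to arbitrary $\Zt$-exponents $u^{\alpha}$ is the easy step (every such $u^{\alpha}$ begins with $u^m$ for all finite $m$), whereas in your write-up it is precisely the finite-power bound for the new elements $u\in R_n$ that is never actually established.
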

\begin{proof} Recall that $K = K(H_n) \subset H_n$ is the subset consisting of all elements $v \in
H_n$ such that $C_{H_n}(v) = \langle v \rangle$. Denote by $R$ a set of representatives for $K$.

Since  $H_n$ is subwords-closed then we may assume that $R \subset H_n$. The same argument shows
that an element $f \in H_n$ does not contain a subword $u^\alpha$, where $u \in R$ and $\alpha \in
\Zt$ is infinite. Indeed, in this case it would imply that $u^\alpha \in H_n$, hence, $[u^\alpha,
u] = \varepsilon$, so the centralizer of $u$ in $H_n$ is not cyclic -- a contradiction with $u \in
R$. Finally, let $u \in R,\ g \in H_n$. Observe that $u \notin H_{n-1}$, so $u$ has a unique normal
form
$$u = f_1 \circ u_1^{\alpha_1} \circ f_2 \circ \dots \circ u_k^{\alpha_k} \circ f_{k+1},$$
where $f_i \in H_{n-1},\ u_i \in R_{n-1}$, and $\alpha_i \in \Zt$ is infinite for any $i \in [1,k]$.
If $g \in H_{n-1}$ then
\begin{equation}
\label{eq:embedding_2}
(g \ast u^m) \ast u = (g \ast u^m) \circ u, \ \ \ u \ast (u^m \ast g) = u \circ (u^m \ast g)
\end{equation}
holds for $m = 1$, since $R_{n-1}$ is a Lyndon's set for $H_{n-1}$. If $g \notin H_{n-1}$ then
$$g = g_1 \circ v_1^{\beta_1} \circ g_2 \circ \dots \circ v_l^{\beta_l} \circ g_{p+1},$$
where $g_j \in H_{n-1},\ v_j \in R_{n-1}$ and $\beta_j \in \Zt$ is infinite for any $j \in [1,p]$.
In this case (\ref{eq:embedding_2}) holds for any $m > p$.

It follows that the set $R_n = R$ is a Lyndon's set for $H_n$.
\end{proof}

\subsection{Limit groups embed into $\FZt$}
\label{subs:limit_gps_embed}

The following results illustrate the connection of limit groups and finitely generated
subgroups of $\FZt$.

\begin{theorem} \cite{Kharlampovich_Myasnikov:1998(2)}
\label{th:embKM}
Given a finite presentation  of a finitely generated fully residually free group $G$ one can
effectively construct an embedding $\phi : G \rightarrow \FZt$ (by specifying the images of
the generators of $G$).
\end{theorem}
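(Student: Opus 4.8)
The plan is to exploit the characterization of finitely generated fully residually free groups as coordinate groups of irreducible systems of equations over $F$, and then to run the elimination process (the $\Z$-machine) on such a system to put it into a form from which an embedding into $\FZt$ can be read off. First I would use that free groups are equationally Noetherian, so the given finite presentation of $G$ lets one write $G$ effectively as the coordinate group $F_{R(S)}$ of a \emph{finite} system $S = 1$ over $F$; full residual freeness of $G$ is precisely the statement that this system is irreducible, i.e. that $G$ is discriminated into $F$ by the tautological family of solutions. The goal is then to trade this abstract discrimination for a single injective homomorphism into $\FZt$.

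The central tool is the elimination process applied to $S = 1$. Running the $\Z$-machine produces finitely many systems $U_1 = 1, \ldots, U_m = 1$ in standard NTQ (non-degenerate quasi-quadratic) form such that every solution of $S = 1$ factors, up to a change of variables, through some $U_i$. Because $G$ is irreducible, the generic point of the variety of $S = 1$ must factor through one particular system $U_i = 1$, and this factorization yields an embedding $G \hookrightarrow F_{R(U_i)}$ of $G$ into the coordinate group of an NTQ system. Thus the problem reduces to embedding NTQ coordinate groups into $\FZt$, and the whole reduction is algorithmic, since the $\Z$-machine is an algorithm and its output systems are explicitly computable from the input.

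It remains to embed an NTQ group $F_{R(U_i)}$ into $\FZt$, and here I would use the tower structure of such a group: it is built from a free group by a finite chain of extensions, where each step is either an extension of a centralizer by a free abelian group or a quadratic (surface-type) extension sitting over the previous level. The centralizer steps are handled directly, since $\FZt$ is by construction the direct limit of the chain $F = G_0 < G_1 < \cdots$ obtained by iterated extension of cyclic centralizers (see (\ref{eq:FZt})), and the associated embeddings into $CDR(\Zt, X)$ were realized in Subsection \ref{subs:embedding}. The substantive work is in the quadratic levels: one must produce, inside $\FZt$, solutions of the defining quadratic equations that discriminate the surface relators, i.e. realize each orientable or non-orientable surface group over its boundary subgroup by an explicit $t$-exponential substitution. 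This is exactly the type of substitution used in Examples \ref{ex:4} and \ref{ex:5} to equip surface groups with free regular length functions in $\Z^2$, and iterating it up the tower embeds the entire NTQ group into $\FZt$.

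The main obstacle is this last step: showing that the quadratic extensions of the NTQ tower can be realized inside $\FZt$ by \emph{injective} maps, and that these maps are compatible with the centralizer-extension steps beneath them, so that the composite along the whole tower remains injective. Controlling injectivity amounts to verifying that the chosen $t$-exponential solutions are generic enough to discriminate all relations of $F_{R(U_i)}$ into $F$ simultaneously, and this is precisely where the non-degeneracy conditions of the NTQ form are indispensable. Once injectivity at each level is secured and the images of the generators of $G$ are tracked through the composite, one obtains the desired effective embedding $\phi : G \to \FZt$, specified explicitly on the generators of $G$.
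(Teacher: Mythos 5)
Your outline coincides with the proof strategy of the cited source \cite{Kharlampovich_Myasnikov:1998(2)} — which is all the survey itself offers, since Theorem \ref{th:embKM} is stated there as a citation without proof: realize $G$ effectively as the coordinate group of a finite irreducible system, run the elimination process (the $\Z$-machine) to embed $G$ into the coordinate group of one NTQ system, and then embed the NTQ tower into $\FZt$ level by level, with centralizer extensions absorbed by the very construction of $\FZt$ and the quadratic levels realized by discriminating $t$-exponential solutions (this last step being the technical core, as you correctly flag). So your route is essentially the same as the paper's, and the only caveat is that the hard work — injectivity at the quadratic levels via the non-degeneracy of the NTQ form — is identified rather than carried out, exactly as in the survey's own treatment.
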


Combining Theorem \ref{th:embKM} with the result on the representation of $\FZt$ as a union of a
sequence of extensions of centralizers one can get the following theorem.

\begin{theorem} \cite{Kharlampovich_Myasnikov:2005(2)}
\label{th:embKMR}
Given a finite presentation of a finitely generated fully residually free group $G$ one can
effectively construct a finite sequence of extension of centralizers
$$F < G_1 < \cdots < G_n,$$
where $G_{i+1}$ is an extension of the  centralizer of some element $u_i \in G_i$ by an infinite
cyclic group $\Z$, and an embedding $\psi^\ast : G \rightarrow G_n$ (by specifying the images of
the generators of $G$).
\end{theorem}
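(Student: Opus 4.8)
The plan is to combine the effective embedding $\phi : G \to \FZt$ provided by Theorem \ref{th:embKM} with the description of $\FZt$ as the direct limit of the chain (\ref{eq:embedding_1}), $F = G_0 < G_1 < \cdots$, in which each $G_{i+1}$ is obtained from $G_i$ by extending all of its cyclic centralizers. The only genuine work is to convert the ``all centralizers at once, by $\Zt$'' extensions of Lyndon's construction into a finite sequence of single extensions, each of one centralizer by one copy of $\Z$, while keeping every step effective.

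First I would run the algorithm of Theorem \ref{th:embKM} on the given finite presentation of $G = \langle x_1, \ldots, x_k \mid \cdots \rangle$; this returns explicit words $\phi(x_1), \ldots, \phi(x_k) \in \FZt$. Since each of these lies in some term of the chain (\ref{eq:embedding_1}) and $G$ is finitely generated, there is an $N$ with $\phi(x_1), \ldots, \phi(x_k) \in G_N$, hence $\phi(G) \leqslant G_N$; the integer $N$ can be read off directly from the returned words. Recall next that by Theorem \ref{th:ext_centr_3} each step $G_i \to G_{i+1}$ adjoins, for every representative $c \in R_i$ of a conjugacy class of cyclic centralizer, a family of mutually commuting stable letters $s_{c,1}, s_{c,2}, \ldots$ (with $s_{c,j} \leftrightarrow c^{t^j}$), each commuting with $c$, so that the centralizer of $c$ is extended from $\langle c \rangle$ to a free abelian group.

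The key observation is that the finitely many words $\phi(x_1), \ldots, \phi(x_k)$ involve only finitely many stable letters across all $N$ levels. Collecting this finite set $T$ and taking its downward closure under the dependency ``a level-$l$ letter extends the centralizer of an element of $G_{l-1}$, which must itself be expressible in the letters retained at lower levels'' (the closure stays finite, since each defining word is finite), I would enumerate the resulting letters $s^{(1)}, \ldots, s^{(n)}$ in non-decreasing order of level and adjoin them one at a time. This yields a chain
$$F < G_1' < \cdots < G_n',$$
where $G_l' = \langle G_{l-1}', s^{(l)}\rangle$ and $s^{(l)}$ commutes with the already-constructed centralizer $C_{G_{l-1}'}(u_l)$ of a suitable element $u_l$. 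Because the $s_{c,j}$ sharing the same $c$ commute pairwise, each $G_l'$ is precisely an extension of the centralizer of $u_l$ by the infinite cyclic group $\langle s^{(l)}\rangle$. By construction $\phi(G) \leqslant G_n'$, so $\psi^\ast := \phi$, with codomain restricted to $G_n'$, is the desired embedding, specified by $\psi^\ast(x_j) = \phi(x_j)$.

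The main obstacle is exactly this refinement step: one must verify that the subgroup of $\FZt$ generated by $F$ together with a finite, dependency-closed set of stable letters is genuinely built by single extensions of centralizers by $\Z$ --- in particular that adjoining $s^{(l)}$ extends $C_{G_{l-1}'}(u_l) = C_{\FZt}(u_l) \cap G_{l-1}'$ and introduces no unintended commutation --- and that the entire bookkeeping (the set $T$, its dependency closure, the elements $u_l$, and the order of adjunction) is computable from the explicit output of Theorem \ref{th:embKM}. Effectiveness follows because the chain (\ref{eq:embedding_1}) and the identification $s_{c,j} \leftrightarrow c^{t^j}$ of Theorem \ref{th:ext_centr_3} are explicit, so the level of each letter, the representatives $u_l$, and the defining words are all extractable by inspection of $\phi(x_1), \ldots, \phi(x_k)$.
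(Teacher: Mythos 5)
Your proposal takes essentially the same route as the paper: the paper derives this theorem precisely by combining the effective embedding of Theorem \ref{th:embKM} with the representation (\ref{eq:embedding_1}) of $\FZt$ as a union of extensions of cyclic centralizers, and your extraction of the finitely many stable letters occurring in $\phi(x_1), \ldots, \phi(x_k)$, together with their (finite) dependency closure, adjoined one at a time in level order, is exactly how that combination is made effective. The verification you flag as the main obstacle --- that adjoining a single stable letter $s_{c,j}$ to a subgroup $G'$ containing $c$ yields a genuine extension of $C_{G'}(c)$ by $\Z$ with no unintended relations --- is a standard Britton's lemma argument for the HNN presentation of each level of the chain (a pinch $s_{c,j}^{-1} g s_{c,j}$ with $g \in G'$ forces $g \in C_{G_{l-1}}(c) \cap G' = C_{G'}(c)$), so your construction goes through.
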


Now Theorem \ref{th:embKMR} implies the following important corollaries.

\begin{cor} \cite{Kharlampovich_Myasnikov:2005(2)}
\label{co:embKMR_1}
For every freely indecomposable non-abelian finitely generated fully residually free group one can
effectively find a non-trivial splitting (as an amalgamated product or HNN extension) over a cyclic
subgroup.
\end{cor}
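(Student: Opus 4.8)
The plan is to combine the effective embedding of Theorem~\ref{th:embKMR} with Bass--Serre theory applied to the tower of centralizer extensions, and then to refine the abelian splitting so obtained into a cyclic one. First I would feed the given finite presentation of $G$ into Theorem~\ref{th:embKMR} to produce, effectively, the chain $F < G_1 < \cdots < G_n$ together with the embedding $\psi^\ast : G \to G_n$. Each step is an amalgamated free product $G_i = G_{i-1} \ast_{C_{i-1}} (C_{i-1} \oplus \mathbb{Z})$ along the centralizer $C_{i-1} = C_{G_{i-1}}(u_{i-1})$, which by Proposition~\ref{pr:1} is a finitely generated free abelian group. As in Example~\ref{ex:amalgam}, this displays $G_i$ as the fundamental group of a one-edge graph of groups and hence supplies a $G_i$-tree $T_i$ with edge stabilizers the conjugates of $C_{i-1}$; restricting the action, $\psi^\ast(G)$ acts on each $T_i$.

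The second step is a descent through the tower. Acting on $T_n$, the group $\psi^\ast(G)$ either fixes a vertex or has no global fixed point. In the first case it lies in a vertex stabilizer, which is a conjugate either of $G_{n-1}$ or of the abelian group $C_{n-1} \oplus \mathbb{Z}$; the latter would force $G$ to be abelian, contradicting the hypothesis, so $G$ is conjugate into $G_{n-1}$ and I replace it by this conjugate and repeat the analysis one level down. This process must halt: were it to reach $G_0 = F$, the group $G$ would be a freely indecomposable subgroup of a free group, hence infinite cyclic and abelian, again a contradiction. Therefore at some level $k \geq 1$ the group $\psi^\ast(G)$ acts on $T_k$ with no global fixed point, and Theorem~\ref{induced_split} yields a nontrivial decomposition of $G$ (as an amalgam or an HNN extension) whose edge groups have the form $G \cap C_{k-1}^{\,g}$, and are therefore abelian.

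The third step, which I expect to be the main obstacle, is upgrading this \emph{abelian} splitting to a \emph{cyclic} one. When the descent stops at $k=1$ there is nothing to do, since $C_0 = C_F(u_0)$ is cyclic and so is every $G \cap C_0^{\,g}$; the difficulty is genuine only when $k \geq 2$, where $C_{k-1}$ can be free abelian of rank $\geq 2$. Here I would exploit that, by Proposition~\ref{pr:1}, both the edge group $E = G \cap C_{k-1}^{\,g}$ and the adjacent abelian vertex group $B = G \cap (C_{k-1}\oplus\mathbb{Z})^{\,g}$ are finitely generated free abelian, with $E \leq B$. Since a free abelian group is an iterated HNN extension over corank-one subgroups, I would refine the graph of groups through the abelian vertices, peeling off $\mathbb{Z}$-directions one at a time, until the amalgamating or associated subgroup has rank one. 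The freely indecomposable, non-abelian hypothesis, together with the CSA (malnormality of maximal abelian subgroups) property of $\Lambda$-free groups, guarantees that some stage of this refinement remains a nontrivial splitting over an infinite cyclic subgroup rather than collapsing to the trivial one.

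Finally, for effectivity I would note that every ingredient above is algorithmic: Theorem~\ref{th:embKMR} produces the tower and the embedding explicitly, the fixed-point test at each descent level reduces to a membership/conjugacy computation that is decidable by the elimination-process ($\mathbb{Z}^n$-machine) techniques available for these groups, the induced splitting of Theorem~\ref{induced_split} is computable from the Bass--Serre data, and the free-abelian refinement is a finite linear-algebra manipulation of the edge and vertex groups. Assembling these steps gives an algorithm that outputs an explicit nontrivial amalgamated-product or HNN decomposition of $G$ over a cyclic subgroup.
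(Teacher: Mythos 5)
Your first two steps are sound and follow the derivation the paper intends: the corollary is presented there as a consequence of the effective embedding of Theorem~\ref{th:embKMR}, and your descent through the tower plus the induced splitting of Theorem~\ref{induced_split} correctly produce a nontrivial decomposition of $G$ whose edge groups $G \cap C_{k-1}^{\,g}$ are abelian. The genuine gap is your third step, and it is not a fixable detail but a wrong mechanism. Your refinement acts at the \emph{abelian} vertex groups $V = G \cap (C_{k-1}\oplus\Z)^{\,g}$. But a splitting of a vertex group extends to a splitting of $G$ only if all incident edge groups are elliptic in it, and every edge incident to such a $V$ carries the \emph{same} group $E = G \cap C_{k-1}^{\,g}$ (since $C_{k-1}$ is central in $C_{k-1}\oplus\Z$, every edge of the Bass--Serre tree at the vertex $g(C_{k-1}\oplus\Z)$ has stabilizer $gC_{k-1}g^{-1}$). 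An abelian group admits no nontrivial amalgam decompositions, and any HNN splitting of the free abelian group $V$ in which $E$ is elliptic has associated subgroup \emph{containing} $E$. Hence every edge group your ``peeling'' produces still contains $E$, and the rank of the splitting subgroup can never drop; in the problematic case where all edge groups $E$ have rank at least $2$, your procedure can never reach a cyclic edge group, and the appeal to CSA and free indecomposability does not address this.

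A concrete instance exhibits both the failure and the missing idea. Let $H = F \ast_{\langle u \rangle} (\langle u \rangle \times \langle s \rangle)$ with $F=F(x,y)$, $u=[x,y]$, let $E = \langle u,s\rangle \cong \Z^2$, and let $G = H \ast_E H'$ be the double of $H$ over $E$. Then $G$ is a finitely generated, freely indecomposable, non-abelian limit group; it embeds into $G_2 = \langle H, t \mid [E,t]=1\rangle$ with minimal level $k=2$, and the reduced induced splitting is exactly $G = H \ast_E H'$: here the abelian vertex group equals $E$, so there is no $\Z$-direction to peel at all, and the edge group is $\Z^2$. Yet $G$ does split over a cyclic subgroup, namely $G = F \ast_{\langle u \rangle} H'$, and this splitting is found by opening up the \emph{rigid} vertex $H = F\ast_{\langle u\rangle} E$, in whose lower-level splitting the offending edge group $E$ is elliptic (indeed a vertex group), and then combining the two decompositions. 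That is precisely the ingredient your proof lacks: when $E = G\cap C_{k-1}^{\,g}$ is non-cyclic, $C_{k-1}$ is a non-cyclic maximal abelian subgroup of $G_{k-1}$ and is therefore conjugate to an extended centralizer $C_j \times \langle t_j \rangle$ of a strictly earlier level of the tower; one must show that $E$ is elliptic in (a conjugate of) that lower-level splitting, combine the splittings so that the new splitting subgroup lies in $C_j$, and induct on the rank until reaching the cyclic situation, which is your correctly handled $k=1$ case. This rank-reducing descent into the rigid vertices, together with its ellipticity and effectivity bookkeeping, is the substance that the paper delegates to the cited Kharlampovich--Myasnikov reference, and it is absent from, and in fact contradicted by, the refinement you propose.
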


\begin{cor} \cite{Kharlampovich_Myasnikov:2005(2)}
\label{co:embKMR_2}
Every finitely generated fully residually free group is finitely presented. There is an algorithm
that, given a presentation of a finitely generated fully residually free group $G$ and generators
of the subgroup $H$, finds a finite presentation for $H$.
\end{cor}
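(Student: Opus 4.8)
The plan is to deduce the statement from the effective embedding of Theorem~\ref{th:embKMR} together with the effective induced-splitting machinery of Theorem~\ref{th:eff}; both the finite presentability of $G$ and the algorithm for subgroups will then fall out of a single construction. First I would invoke Theorem~\ref{th:embKMR} to produce, from the given presentation of the finitely generated fully residually free group $G$, an explicit chain of centralizer extensions $F < G_1 < \cdots < G_n$ together with an embedding $\psi^\ast : G \rightarrow G_n$ specified on generators. If $T$ is the given finite generating set of a subgroup $H \leqslant G$ (with $H = G$, $T$ the generators of $G$, as the special case handling finite presentability), then $S = \psi^\ast(T) \subset G_n$ is a computable finite set and $\langle S \rangle \cong H$.

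The key structural step is to present $G_n$ as the fundamental group of an effectively computable graph of groups $({\cal G}, X)$ whose vertex groups are finitely generated free or finitely generated free abelian, and whose edge groups are finitely generated free abelian (infinite cyclic in the cyclic-centralizer case). I would build this by unfolding the chain. Starting from the one-vertex graph of groups with vertex group $F$, each step $G_{i+1} = G_i \ast_{C_{G_i}(u_i)} \bigl(C_{G_i}(u_i) \times \mathbb{Z}\bigr)$ refines the decomposition already produced for $G_i$: when the extended centralizer $C_{G_i}(u_i)$ is a maximal cyclic subgroup lying, up to conjugacy, in a free vertex group, one attaches a new free-abelian vertex $C_{G_i}(u_i) \times \mathbb{Z} \cong \mathbb{Z}^2$ along a cyclic edge; when it coincides with an already-present free-abelian vertex group $\mathbb{Z}^r$, one simply enlarges that vertex group to $\mathbb{Z}^{r+1}$. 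In either case the vertex groups remain free or free abelian and the edge groups abelian, and since the elements $u_i$ are returned explicitly by Theorem~\ref{th:embKMR}, the whole decomposition is effective.

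With this decomposition in hand the hypotheses of Theorem~\ref{th:eff} are immediate: finitely generated free groups are word-hyperbolic and locally quasiconvex, finitely generated free abelian groups are polycyclic, hence polycyclic-by-finite, and the edge groups, being cyclic or finitely generated free abelian, are polycyclic-by-finite as well. Theorem~\ref{th:eff} then supplies an algorithm which, on input $S = \psi^\ast(T)$, constructs the induced splitting of $\langle S \rangle \cong H$ (in the sense of Theorem~\ref{induced_split}) together with a finite presentation for it. Taking $H = G$ shows that $G$ is finitely presented and yields a presentation, and taking an arbitrary finitely generated $H \leqslant G$ yields the desired subgroup algorithm.

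The hard part is the unfolding step of the second paragraph: one must prove that at every stage the extended centralizer $C_{G_i}(u_i)$ is elliptic, that is, conjugate into a single vertex group of the decomposition built so far, so that attaching the new abelian vertex along that edge produces a genuine graph of groups rather than forcing a further splitting of a vertex. This is exactly where the special structure of $\Lambda$-free (limit) groups enters — the CSA property and transitivity of commutation (parts (d) and (e) of the structure theorem in Section~\ref{subs:lambda-free}) force the centralizers being extended to be maximal abelian and to sit compatibly inside the abelian splittings, and maximality guarantees that the edge embeddings are the expected ones. Locating the vertex group containing $u_i$ effectively, and passing to a conjugate of $u_i$ when necessary, is the one place demanding care, but it is controlled by the explicit data returned by Theorem~\ref{th:embKMR}.
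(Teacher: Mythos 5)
Your overall strategy --- embed $G$ into $G_n$ via Theorem~\ref{th:embKMR} and then run the effective induced-splitting machinery of Theorem~\ref{th:eff} on $\psi^\ast(T)$ --- is indeed the intended derivation, but the middle step, the ``unfolding'' of the chain into a single graph of groups with free and free-abelian vertex groups, is where the proof breaks, and your proposed fix does not work. The element $u_i$ whose centralizer is extended at stage $i$ is an arbitrary element of $G_i$; in particular it may be \emph{hyperbolic} with respect to the graph-of-groups decomposition already built for $G_i$. For instance, in $G_2 = F \ast_{\langle u_1\rangle}\bigl(\langle u_1\rangle\times\langle t\rangle\bigr)$ the element $a t$, with $a \in F$ not commuting with $u_1$, is hyperbolic, and its centralizer $\langle a t\rangle$ is a maximal abelian subgroup that is conjugate into no vertex group. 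Extending such a centralizer gives $G_3 = G_2 \ast_{\langle a t\rangle}\bigl(\langle a t\rangle \times \mathbb{Z}\bigr)$, a splitting whose non-abelian vertex group is all of $G_2$ and which cannot be refined into free and free-abelian pieces; this is precisely why iterated centralizer extensions form a strictly larger class than fundamental groups of graphs of free and free-abelian groups with abelian edge groups, and why the chain in Theorem~\ref{th:embKMR} genuinely needs length greater than one. The CSA property and commutation transitivity, which you invoke to force ellipticity, give malnormality of maximal abelian subgroups and say nothing about ellipticity --- the subgroup $\langle a t\rangle$ above is maximal abelian and malnormal yet hyperbolic. With the unfolding gone, Theorem~\ref{th:eff} cannot be applied as stated, because for $i \geq 2$ the vertex group $G_i$ is neither locally quasiconvex word-hyperbolic nor polycyclic-by-finite.

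The proof in the cited source (and the natural repair of yours) is an induction along the chain rather than a one-shot unfolding: regard $G_{i+1}$ as a one-edge splitting over the finitely generated free abelian group $C_{G_i}(u_i)$ whose non-abelian vertex group is $G_i$, treated as a black box for which the inductive hypothesis supplies exactly the algorithmic facts the folding machinery needs (decidable membership in $C_{G_i}(u_i)$ and its cosets, computable finite presentations of finitely generated subgroups, finiteness of the induced decomposition). Running the induced-splitting argument of Theorem~\ref{induced_split} relative to this single splitting expresses a finitely generated $H \leqslant G_{i+1}$ as the fundamental group of a finite graph of groups whose edge groups are abelian and whose vertex groups are finitely generated subgroups of conjugates of $G_i$ or finitely generated abelian groups; induction then yields finite presentability and a computable presentation, and the special case $H = G$ inside $G_n$ gives the corollary.
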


\begin{cor} \cite{Kharlampovich_Myasnikov:2005(2)}
\label{co:embKMR_3}
Every finitely generated residually free group $G$ is a subgroup of a direct product of finitely
many fully residually free groups; hence, $G$ is embeddable into $\FZt \times \cdots \times \FZt$.
If $G$ is given as a coordinate group of a finite system of equations, then this embedding can be
found effectively.
\end{cor}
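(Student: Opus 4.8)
The plan is to realize $G$ as the coordinate group of an algebraic set over $F$ and then exploit the fact that this algebraic set splits into finitely many pieces whose coordinate groups are fully residually free. First I would fix a finite generating tuple $(g_1,\dots,g_n)$ of $G$ and identify $\mathrm{Hom}(G,F)$ with the algebraic set $V\subseteq F^n$ of all tuples $(\phi(g_1),\dots,\phi(g_n))$ with $\phi\in\mathrm{Hom}(G,F)$; this set is cut out by the system $S=1$ consisting of all relators of $G$. The role of the hypothesis that $G$ be residually free is precisely that a word $w(g_1,\dots,g_n)$ equals $1$ in $G$ if and only if $\phi(w)=1$ for every $\phi\in\mathrm{Hom}(G,F)$, i.e. if and only if $w$ vanishes on $V$. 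Hence the relators of $G$ coincide with the radical of $S$, the natural map $G\to F_{R(S)}$ is an isomorphism, and $G$ is genuinely a coordinate group $F[V]$.

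Next I would decompose $V$. Since $F$ is equationally Noetherian, the Zariski topology on $F^n$ is Noetherian and $V$ is a finite union $V=V_1\cup\dots\cup V_k$ of irreducible algebraic sets; equivalently, running the elimination process (the $\Z$-machine) on a finite subsystem of $S=1$ outputs finitely many systems $U_1=1,\dots,U_k=1$ in NTQ form whose solution sets cover $V$. The feature I want is that the coordinate group $F[V_i]=F_{R(U_i)}$ of each piece is a \emph{finitely generated fully residually free} group: this is the irreducible $\Leftrightarrow$ fully residually free dictionary, which for NTQ systems is the statement that NTQ coordinate groups are limit groups.

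Then I would assemble the embedding. Each inclusion $V_i\subseteq V$ induces a restriction epimorphism $\pi_i\colon G=F[V]\twoheadrightarrow F[V_i]$, and I claim the diagonal map
$$\pi=(\pi_1,\dots,\pi_k)\colon G\longrightarrow F[V_1]\times\dots\times F[V_k]$$
is injective: if $\pi_i(w)=1$ for all $i$, then $w$ vanishes on every $V_i$, hence on $V=\bigcup_i V_i$, hence $w=1$ in $G=F[V]$. This already exhibits $G$ as a subgroup of a product of finitely many fully residually free groups. Since each $F[V_i]$ is finitely generated fully residually free, Theorem~\ref{th:embKM} supplies an embedding $F[V_i]\hookrightarrow\FZt$, and composing yields $G\hookrightarrow\FZt\times\dots\times\FZt$. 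For the effectiveness clause, when $G$ is presented as $F_{R(S)}$ for a \emph{finite} $S$, the whole construction is algorithmic: the $\Z$-machine produces the systems $U_i$ effectively, the restriction maps are explicit on generators, and Theorem~\ref{th:embKMR} (hence Theorem~\ref{th:embKM}) makes each factor embedding into $\FZt$ effective.

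The main obstacle is the decomposition step: I must justify both that finitely many pieces suffice and that each carries a fully residually free coordinate group. This rests on two inputs beyond the elementary set-up — equational Noetherianity of free groups, which guarantees finiteness of the irreducible decomposition and lets me replace the possibly infinite system $S$ by a finite one, and the correspondence between irreducible (respectively NTQ) systems and fully residually free coordinate groups delivered by the elimination process. Once these are in hand, the injectivity of $\pi$ and the reduction to $\FZt$ are purely formal.
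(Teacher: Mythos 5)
Your proposal is correct and follows essentially the same route as the paper's (cited) proof: identify $G$ with the coordinate group $F_{R(S)}$ via residual freeness, use equational Noetherianity of $F$ to obtain a finite decomposition into pieces whose coordinate groups are fully residually free, embed $G$ diagonally into their direct product by the vanishing argument, and then apply Theorem \ref{th:embKM} (effectively, via Theorem \ref{th:embKMR}) to each factor. The only imprecision is your parenthetical "equivalently" identifying the irreducible components with the NTQ systems output by the elimination process --- the solution sets of the $U_i = 1$ cover $V$ only up to a change of coordinates, so the induced maps $G \to F_{R(U_i)}$ are not restriction epimorphisms onto coordinate groups of subvarieties; the argument is airtight as long as you run it with the irreducible components themselves (whose coordinate groups are the maximal fully residually free quotients, and which in turn embed into the NTQ coordinate groups), exactly as in your primary formulation.
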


Let $K$ be an HNN-extension of a group $G$ with associated subgroups $A$ and $B$. $K$ is called a
separated HNN-extension if for any $g\in G,\ A^g \cap B = 1$.

\begin{cor} \cite{Kharlampovich_Myasnikov:2005(2)}
\label{co:embKMR_4}
Let a group $G$ be obtained from a free group $F$ by finitely many centralizer extensions. Then
every finitely generated subgroup $H$ of $G$ can be obtained from free abelian groups of finite
rank by finitely many operations of the following type: free products, free products with abelian
amalgamated subgroups at least one of which is a maximal abelian subgroup in its factor, free
extensions of centralizers, separated HNN-extensions with abelian associated subgroups at least
one of which is maximal.
\end{cor}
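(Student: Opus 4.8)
The plan is to induct on the number $n$ of centralizer extensions needed to build $G$ from the free group $F$, using Bass--Serre theory --- in particular the induced splitting of Theorem~\ref{induced_split} --- as the engine. Write the defining chain as $F = G_0 < G_1 < \cdots < G_n = G$, where each $G_{i+1}$ is obtained from $G_i$ by extending all of its cyclic centralizers. For the base case $n = 0$ we have $H \leqslant F$, so $H$ is free by Remark~\ref{co:free}, and a finitely generated free group is assembled from free abelian groups of rank one by free products, which is among the permitted operations.

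For the inductive step I would first realise the top extension as a finite graph of groups. Since $G$ is $\Lambda$-free, the centralizer of a nontrivial element is abelian, so each extended cyclic centralizer $\langle u_k\rangle \leqslant G_{n-1}$ is in fact maximal abelian. Thus $G = G_n$ is the fundamental group of a graph of groups with one vertex carrying $G_{n-1}$, additional vertices carrying the free abelian groups $\langle u_k\rangle \times \mathbb{Z}^{r_k}$, and edges carrying the maximal abelian groups $\langle u_k\rangle$; by Theorem~\ref{pr:structure} this presents $G$ acting on its Bass--Serre tree $T$. Restricting the action to the finitely generated subgroup $H$ and applying Theorem~\ref{induced_split} to an $H$-invariant subtree $T_H$, I obtain $H \cong \pi_1(\mathcal{H}, Y)$ with $Y = H\backslash T_H$, whose vertex groups are the intersections $H \cap G_{n-1}^{\,g}$ and $H \cap (\langle u_k\rangle \times \mathbb{Z}^{r_k})^{g}$, and whose edge groups are the intersections $H \cap \langle u_k\rangle^{g}$.

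Next I would identify each piece. The abelian vertex groups $H \cap (\langle u_k\rangle \times \mathbb{Z}^{r_k})^{g}$ and all edge groups $H \cap \langle u_k\rangle^g$ are finitely generated abelian subgroups of the $\Lambda$-free group $G$, hence free abelian of finite rank by Proposition~\ref{pr:1} together with torsion-freeness. The remaining vertex groups $H\cap G_{n-1}^{\,g}$ are finitely generated subgroups of $G_{n-1}$, so by the inductive hypothesis each is built from free abelian groups by the four allowed operations. It then remains to check that the gluing operations dictated by $(\mathcal{H},Y)$ --- free products for trivial edge groups (as in Example~\ref{kurosh}), amalgams over nontrivial abelian edge groups, and HNN extensions along the loops of $Y$ --- all fall within the allowed list; in particular an amalgam of an abelian group over a maximal cyclic subgroup is exactly a free extension of a centralizer. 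The maximality requirement and the separatedness of the HNN pieces are where the CSA property of $\Lambda$-free groups enters: since each $\langle u_k\rangle$ is malnormal in $G_{n-1}$, the induced edge group $H\cap\langle u_k\rangle^g$ is malnormal --- hence maximal abelian --- in the adjacent induced vertex group lying over $G_{n-1}$, and malnormality forces $A^h \cap B = 1$ for the associated subgroups $A,B$ of each HNN piece, which is precisely the separated condition.

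The main obstacle I anticipate is establishing that the vertex groups $H \cap G_{n-1}^{\,g}$ are finitely generated, so the induction can be fed back in, and simultaneously that $Y$ is a finite graph so that only finitely many operations occur. This is not formal, since a finitely generated subgroup of the fundamental group of a graph of groups need not have finitely generated vertex groups in its induced splitting without extra hypotheses. Because the edge groups here are cyclic, the effective induced-splitting theorem (Theorem~\ref{th:eff}) applies directly whenever the base group is of the quasiconvex-hyperbolic or polycyclic type; in general I would instead invoke the Stallings-folding and Nielsen machinery for $\mathbb{Z}^n$-free groups available through the length-preserving embedding $G \hookrightarrow CDR(\mathbb{Z}[t], X)$ constructed in Subsection~\ref{subs:embedding}, which produces a finite induced graph of groups with finitely generated vertex groups. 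Verifying that malnormality is inherited by the induced edge groups, and that at least one side of each amalgam is genuinely maximal abelian, is the remaining delicate bookkeeping that the CSA property is designed to handle.
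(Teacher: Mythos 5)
Your overall route --- induction along the chain $F = G_0 < \cdots < G_n = G$, realizing each centralizer extension as the amalgam $G_i \ast_C (C \times \Z)$, and applying the induced-splitting theorem (Theorem \ref{induced_split}) to a finitely generated $H$ --- is exactly the argument the paper has in mind: the survey states this corollary as a consequence of Theorem \ref{th:embKMR}, via Bass--Serre theory applied to subgroups of iterated centralizer extensions, and your CSA bookkeeping for maximality of edge groups on the non-abelian side is essentially right. The problem is the step you yourself flag as the main obstacle: you never close it, and both patches you offer fail. Theorem \ref{th:eff} covers only the first level of the induction: for $n \geqslant 2$ the vertex group $G_{n-1}$ contains $\Z^2$, so it is neither locally quasiconvex word-hyperbolic nor polycyclic-by-finite, and the hypotheses of Theorem \ref{th:eff} are simply violated. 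Your fallback cites Subsection \ref{subs:embedding}, but that subsection constructs the embedding of $\FZt$ into $CDR(\Zt,X)$; neither it nor the folding results quoted elsewhere in the survey (which concern the membership problem and normal forms) asserts that induced splittings of finitely generated subgroups have finitely generated vertex groups, so invoking it amounts to assuming the very statement that needs proof.

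The gap is real but is closed by a standard argument that your proposal is missing. First, since $H$ is finitely generated, its minimal invariant subtree $T_H$ has finite quotient graph $Y = H \backslash T_H$, so only finitely many operations occur. Second, every induced edge group $H \cap C^g$ is an abelian subgroup of the ($\Z^m$-free, CSA) group $G$, hence free abelian of finite rank by Proposition \ref{pr:1} and torsion-freeness; in particular it is finitely generated. Third --- the missing lemma --- a finitely generated group which is the fundamental group of a finite graph of groups with finitely generated edge groups has finitely generated vertex groups; applied to $({\cal H}, Y)$ this makes each $H \cap G_{n-1}^{\,g}$ finitely generated, so the inductive hypothesis can be fed back in. Two smaller corrections. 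After the first level the extended centralizers $C_{G_{n-1}}(u_k)$ need not be cyclic (they are free abelian of finite rank, since the same element's centralizer may have been extended earlier), so your repeated references to cyclic edge groups should be dropped; this is harmless for the operations list, which permits abelian amalgamations, but it invalidates ``the edge groups here are cyclic'' as a reason for Theorem \ref{th:eff} to apply. Finally, malnormality alone does not make every induced HNN edge separated: when the two associated subgroups of a loop in $Y$ are conjugate in the base group, one obtains (after adjusting the stable letter) a centralizer extension rather than a separated HNN extension --- this is precisely the dichotomy treated in Case I of the proof of Theorem \ref{th:Lambda_quasi-convex_hierarchy}, and your argument needs the same case distinction to sort the loops of $Y$ into the two kinds of allowed HNN operations.
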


\begin{cor} \cite{Kharlampovich_Myasnikov:2005(2),Groves_Wilton:2009}
\label{co:embKMR_5}
One can enumerate all finite presentations of finitely generated fully residually free groups.
\end{cor}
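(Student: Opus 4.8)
The plan is to produce a recursively enumerable list of finite presentations with two properties: every group on the list is finitely generated and fully residually free, and every finitely generated fully residually free group occurs on it up to isomorphism. The naive approach — enumerate all finite presentations and discard those that are not fully residually free — fails, since recognizing full residual freeness among arbitrary finite presentations is not known to be decidable. Instead I would build, by construction, a family of groups that is \emph{a priori} contained in the class and, by the embedding theorems quoted above, exhausts it.

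First I would enumerate the groups obtained from free groups of finite rank by finitely many extensions of cyclic centralizers. Each such group is specified by finite combinatorial data — the rank of the initial free group $F$ together with, at each stage, a choice of (the conjugacy class of) a cyclic centralizer to be extended — and the defining presentation of an extension of a cyclic centralizer by $\Z$ is explicit, so from a finite presentation of $G_i$ one writes down a finite presentation of $G_{i+1}$ mechanically. This yields a uniformly computable enumeration $G^{(0)}, G^{(1)}, \dots$ of all iterated centralizer extensions of free groups, each given by an explicit finite presentation. Every such $G^{(k)}$ is a term in a chain of the form (\ref{eq:embedding_1}) and hence embeds into $\FZt$; since $\FZt$ and all of its subgroups are fully residually free, every $G^{(k)}$ is finitely presented and fully residually free.

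Next, for each $G^{(k)}$ I would enumerate all finite subsets $S$ of words in its generators, each determining a finitely generated subgroup $H = \langle S\rangle \leqslant G^{(k)}$. Being a subgroup of the fully residually free group $G^{(k)}$, each $H$ is again finitely generated and fully residually free, so \emph{soundness} of the list is automatic. Applying the algorithm of Corollary \ref{co:embKMR_2} to the pair $(G^{(k)}, S)$ — which is legitimate because $G^{(k)}$ is a finitely presented fully residually free group supplied with an explicit presentation — I obtain a finite presentation of $H$ and place it on the list. Running this over all pairs $(k, S)$ in a dovetailed fashion gives a single recursively enumerable list of finite presentations; duplicate isomorphism types are harmless, as enumerability does not require eliminating them.

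It remains to check \emph{completeness}. Let $G$ be an arbitrary finitely generated fully residually free group. By Theorem \ref{th:embKMR} there is a finite sequence of centralizer extensions $F < G_1 < \cdots < G_n$ and an embedding $\psi^{\ast}: G \hookrightarrow G_n$; thus $G$ is isomorphic to the finitely generated subgroup $\psi^{\ast}(G)$ of $G_n$. The group $G_n$ appears as some $G^{(k)}$ in the first enumeration, and the images of a generating set of $G$, written as words in the generators of $G_n$, form one of the finite subsets $S$, so a finite presentation of $G \cong \langle S\rangle$ is produced at the corresponding step. Hence every finitely generated fully residually free group is represented on the list, completing the argument. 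The only genuine content lies in the two cited ingredients — the embedding of limit groups through centralizer extensions (Theorem \ref{th:embKMR}) and the effective computation of subgroup presentations (Corollary \ref{co:embKMR_2}); the main point to verify carefully is that these two algorithms can be invoked \emph{uniformly}, which is guaranteed once each $G^{(k)}$ is presented together with the explicit structural data coming from its construction as an iterated centralizer extension.
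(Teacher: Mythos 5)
Your overall architecture is the intended one: enumerate iterated centralizer extensions of free groups together with explicit finite presentations, then enumerate finite generating sets $S$ of subgroups and convert each pair into a finite presentation via Corollary \ref{co:embKMR_2}, with soundness coming from full residual freeness of subgroups of such extensions and completeness from Theorem \ref{th:embKMR}. However, there is a genuine gap in your first stage: you restrict to extensions of \emph{cyclic} centralizers, each by a single copy of $\Z$. Such a step replaces a cyclic maximal abelian subgroup $\langle u_i\rangle$ of $G_i$ by $\langle u_i\rangle\times\langle t_i\rangle\cong\Z^2$ and never touches non-cyclic centralizers; since $G_{i+1}=G_i\ast_{\langle u_i\rangle}\bigl(\langle u_i\rangle\times\Z\bigr)$, an easy induction using the action on the Bass--Serre tree shows that every abelian subgroup of every group $G^{(k)}$ on your list is free abelian of rank at most $2$. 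Consequently $\Z^3$ --- a finitely generated fully residually free group --- embeds in no $G^{(k)}$, and your completeness argument fails exactly at the sentence ``the group $G_n$ appears as some $G^{(k)}$'': in the chains of Theorem \ref{th:embKMR} each step extends the centralizer $C_{G_i}(u_i)$ of an \emph{element}, and after the first extension this centralizer may already be free abelian of rank $\geqslant 2$, so those chains are not among the ones you enumerate. The fix is to enumerate chains in which each step adjoins a stable letter commuting with the full centralizer $C_{G_i}(u_i)$ of an arbitrary word $u_i$, whatever its rank.

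Repairing this exposes a second, related gap which you acknowledge only in passing: to write down a finite presentation of $G_{i+1}$ you must impose the relations $[t_i,c]=1$ for $c$ ranging over a finite generating set of the \emph{full} centralizer $C_{G_i}(u_i)$, and producing such a generating set from the word $u_i$ is not purely mechanical; it requires the effective computation of centralizers in limit groups (available by the results of \cite{Kharlampovich_Myasnikov_Remeslennikov_Serbin:2006} quoted in Section \ref{subs:alg_prob_limit_gps}, and applicable here because each $G_i$ is built with explicit embedding data). This is not a pedantic point, since it also threatens soundness: if one merely declares $t_i$ to commute with the word $u_i$ when $\langle u_i\rangle$ is a proper subgroup of its centralizer, the resulting group need not be fully residually free at all. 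For example, $\langle F(a,b), t \mid [a^2,t]=1\rangle$ fails commutation transitivity ($t$ commutes with $a^2$, and $a^2$ commutes with $a$, yet $t$ does not commute with $a$), hence is not fully residually free. So both directions of your argument hinge on extending exactly the full, effectively computed centralizers; once that is in place, the rest of your dovetailing argument goes through.
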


\begin{cor} \cite{Kharlampovich_Myasnikov:1998(2)}
\label{co:embKMR_6}
Every finitely generated fully residually free group acts freely on some $\Z^n$-tree with
lexicographic order for a suitable $n$.
\end{cor}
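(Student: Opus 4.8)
The plan is to realize $G$ as a subgroup of $CDR(\Z^n,X)$ for a suitable $n$ and then to invoke the universal-tree construction of Subsection \ref{subs:words-act}. First I would embed $G$ into Lyndon's group: by Theorem \ref{th:embKM} there is an embedding $\phi : G \to \FZt$. Next, the inductive construction of Subsection \ref{subs:embedding} (the compatible maps $\psi_n$ together with Lemma \ref{le:embedding_1}) supplies a length-preserving embedding of $\FZt$ into $CDR(\Zt,X)$. Composing the two, I obtain an isomorphism of $G$ onto a subgroup $H = \langle h_1,\dots,h_k\rangle$ of $CDR(\Zt,X)$, where $h_1,\dots,h_k$ are the images of a finite generating set of $G$.

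The heart of the argument, and the step I expect to be the main obstacle, is to show that although $\Lambda=\Zt$ has infinite rational rank, the image $H$ in fact lies inside $CDR(\Z^n,X)$ for some finite $n$. Set $d=\max_i \deg|h_i|$ and let $n=d+1$. Every element of $H$ is a $\ast$-product of the $h_i^{\pm 1}$, so by subadditivity of the induced length function (the property $l(fg)\leqslant l(f)+l(g)$ from Section \ref{sec:length_func}) its length is bounded above by a sum of the $|h_i|$, hence is a polynomial of degree at most $d$. Thus $|h|\in E_d$ for every $h\in H$, where $E_d=\{f\in\Zt \mid \deg f\leqslant d\}$ is the convex subgroup of $\Zt$ identified in Subsection \ref{subs:ord_ab}. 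Because $E_d$ is convex and contains $|h|$, the whole domain $[1,|h|]$ of $h$ lies in $E_d$, so each $h$ is actually an $E_d$-word; applying the same bound to a cyclic decomposition $h=c^{-1}\circ u\circ c$ (with $|c|,|u|\leqslant|h|$) shows $h\in CDR(E_d,X)$. Under the order-isomorphism $E_d\cong\Z^{n}$ of Corollary \ref{co:discrete}, sending $1,t,\dots,t^{d}$ to the standard basis with lexicographic order, we obtain $H\leqslant CDR(\Z^n,X)$; and $H$ is a genuine subgroup since $CDR(E_d,X)$ is closed under $\ast$, the product of two $E_d$-words again having length in $E_d$ by convexity.

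Finally I would apply the machinery of Subsection \ref{subs:words-act} to $H$ with $\Lambda=\Z^n$: by Proposition \ref{pr:lambda_tree} the universal tree $\Gamma_H$ is a $\Z^n$-tree, and by Proposition \ref{pr:action} the canonical action of $H$ on $\Gamma_H$ is free (with $L_\epsilon(h)=|h|$). Transporting along the isomorphism $G\cong H$ yields a free action of $G$ on the $\Z^n$-tree $\Gamma_H$, which is exactly the assertion. An alternative, more algebraic route would replace the first two steps by Theorem \ref{th:embKMR}, embedding $G$ into a group $G_n$ obtained from $F$ by finitely many $\Z$-extensions of centralizers, and then arguing inductively that each such extension raises the rank of the value group by one; but the infinite-words route above keeps both $n$ and the ambient tree completely explicit, and it is effective since Theorem \ref{th:embKM} and the embedding of Subsection \ref{subs:embedding} are constructive.
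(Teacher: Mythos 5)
Your proof is correct, but it follows a genuinely different route from the paper's. The paper derives Corollary \ref{co:embKMR_6} directly from Theorem \ref{th:embKMR}: $G$ embeds into a group $G_n$ obtained from a free group by finitely many extensions of centralizers by $\Z$, and by the Myasnikov--Remeslennikov result quoted in the introduction each such extension of a $\Z^k$-free group is $\Z^{k+1}$-free with the lexicographic order; since $\Lambda$-freeness passes to subgroups, $G$ is $\Z^n$-free. This is exactly the ``alternative, more algebraic route'' you sketch in your last sentence. Your main argument instead composes Theorem \ref{th:embKM} with the embedding of $\FZt$ into $CDR(\Zt,X)$ from Subsection \ref{subs:embedding} (i.e., Theorem \ref{th:embMR}), and then supplies the one ingredient not already quoted: a finitely generated subgroup $H \leqslant CDR(\Zt,X)$ has all its lengths trapped in the convex subgroup $E_d$, where $d$ is the maximal degree of the generators' lengths, because $0 \leqslant |h| \leqslant \sum_j |h_{i_j}| \in E_d$ and $E_d$ is convex; convexity likewise puts the domains $[1,|h|]$, the cyclic decompositions, and all $\ast$-products inside $E_d$-words, so $H \leqslant CDR(E_d,X) \cong CDR(\Z^{d+1},X)$, and Propositions \ref{pr:lambda_tree} and \ref{pr:action} then give the free action on the universal tree $\Gamma_H$. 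This truncation step is sound, and it is essentially the observation the paper itself makes at the start of Section \ref{sec:embedding} (that for a finitely generated group the length function with values in $\Zt$ actually takes values in some $\Z^n$). What your route buys is an explicit bound on $n$ (one plus the maximal degree of the generators' lengths) and a completely explicit tree; what the paper's route buys is brevity, since it invokes the structural results on centralizer extensions wholesale and needs no analysis of infinite words.
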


Finally, combining Theorem \ref{th:embKM} with the result on the effective embedding of $\FZt$ into
$R(\Zt,X)$ obtained in \cite{Myasnikov_Remeslennikov_Serbin:2005} one can get the following theorem.

\begin{theorem}
\label{th:embMR}
Given a finite presentation  of a finitely generated fully residually free group $G$ one can
effectively construct an embedding $\psi : G \rightarrow R(\Zt, X)$ (by specifying the images of
the generators of $G$).
\end{theorem}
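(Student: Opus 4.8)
The plan is to obtain $\psi$ as a composition of two effective embeddings that are already available in the excerpt: the effective embedding $\phi : G \to \FZt$ supplied by Theorem \ref{th:embKM}, and the effective embedding of $\FZt$ into $CDR(\Zt, X) \subseteq R(\Zt, X)$ assembled in Subsection \ref{subs:embedding}. Since the target statement itself advertises this as a combination of those two results, the mathematical content is the verification that the composite is again effective and is specified on a generating set.

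First I would apply Theorem \ref{th:embKMR}, which refines Theorem \ref{th:embKM}: from a finite presentation of $G$ one effectively produces a finite chain of centralizer extensions $F < G_1 < \cdots < G_n$ together with an embedding $\psi^{\ast} : G \to G_n$ given on generators. Thus the image of $G$ lies inside a single, explicitly reached level $G_n$ of the chain \eqref{eq:embedding_1}, so only finitely many extension steps need to be analyzed.

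Second I would invoke the inductive construction of Subsection \ref{subs:embedding}. Setting $\psi_0 = \operatorname{id} : F \to H_0$ with $H_0 \subseteq CDR(\Zt, X)$ the subgroup of finite-length words, one builds $\psi_k : G_k \to CDR(\Zt, X)$ by induction: assuming $H_{k-1} = \psi_{k-1}(G_{k-1})$ is an $S$-subgroup which is subwords-closed and carries a Lyndon's set $R_{k-1}$, Lemma \ref{le:ext_centr_4} and Theorem \ref{th:ext_centr_2} realize the centralizer extension $G_k$ as the subgroup generated by the pregroup $P(H_{k-1}, R_{k-1})$, while Theorem \ref{th:ext_centr_3} identifies this subgroup with $G(R,S)$ via $c_i^{t^j} \mapsto s_{i,j}$; Lemma \ref{le:embedding_1} then produces the Lyndon's set $R_k$ needed to continue. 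Running this for $k = 1, \ldots, n$ yields $\psi_n : G_n \to CDR(\Zt, X)$ on generators, and the composite $\psi = \psi_n \circ \psi^{\ast}$ is the desired embedding, with $\psi(G) \subseteq CDR(\Zt, X) \subseteq R(\Zt, X)$.

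The only point requiring care is effectiveness, and this is where I expect the bookkeeping to concentrate. I would check that each inductive step is algorithmic: computing representatives and a Lyndon's set for the new level (Lemma \ref{le:embedding_1}), expressing each stable letter $s_{i,j}$ as the explicit $\Zt$-exponential $c_i^{t^j}$ of a representative under the $\Zt$-exponentiation of Subsection \ref{subs:cdr}, and evaluating the partial product $w(\cdot)$ on $R$-forms as in Lemma \ref{le:ext_centr_3}. Since $\psi^{\ast}$ is effective and only the first $n$ levels enter, the composite images $\psi(x)$ of the finitely many generators $x$ of $G$ are obtained by finitely many such effective steps. No new mathematics beyond the already-established effective embeddings is needed; the work is purely the organizational verification that their composition stays effective and is given on generators.
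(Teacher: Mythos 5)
Your proposal is correct and follows essentially the same route as the paper, which obtains $\psi$ precisely by combining the effective embedding $G \to \FZt$ of Theorem \ref{th:embKM} with the effective embedding of $\FZt$ into $R(\Zt, X)$ from \cite{Myasnikov_Remeslennikov_Serbin:2005} (the construction of Subsection \ref{subs:embedding}). Your extra step of routing through Theorem \ref{th:embKMR} to confine the image to a single finite level $G_n$ of the chain, so that only finitely many centralizer-extension steps need to be made effective, is exactly the bookkeeping the paper's one-line combination implicitly relies on.
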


\subsection{Algorithmic problems for limit groups}
\label{subs:alg_prob_limit_gps}

Existence of a free length function on $\FZt$ becomes a very powerful tool in solving various
algorithmic problems for subgroups of $\FZt$ (which are precisely limit groups). Using the length
function one can introduce unique normal forms for elements of $\FZt$ and then work with them
pretty puch in the same way as in free groups. In the seminal paper \cite{Stallings:1983} J.
Stallings introduced an extremely useful notion of a folding of graphs and initiated the study of
subgroups (and automorphisms) of free groups via folded directed labeled graphs. This approach
turned out to be very influential and allowed researches to prove many new results and simplify
old proofs (see \cite{Kapovich_Myasnikov:2002}). In \cite{Myasnikov_Remeslennikov_Serbin:2006}
Stallings techniques were generalized in order to effectively solve the Membership Problem for
finitely generated subgroups of $\FZt$ and this result can be reformulated for limit groups.

\begin{theorem}\cite{Myasnikov_Remeslennikov_Serbin:2006}
Let $G$ be a limit group and $G \hookrightarrow \FZt$ the effective embedding. For any f.g. subgroup
$H \leq G$ one can effectively construct a finite labeled graph $\Gamma_H$ that in the group $\FZt$ accepts
precisely the normal forms of elements from $H$.
\end{theorem}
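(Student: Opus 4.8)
The plan is to generalize Stallings' folding technique from free groups to the group $\FZt$ of infinite $\Zt$-words, treating the infinite ``periodic'' subwords $u^\alpha$ (with $\deg(\alpha)>0$) as special edges. First I would invoke the effective embedding $\psi : G \to R(\Zt, X)$ of Theorem \ref{th:embMR} to write each of the finitely many generators $h_1,\dots,h_k$ of $H$ in its (effectively computable) normal form
$$h_i = f_{i,1} \circ u_{i,1}^{\alpha_{i,1}} \circ f_{i,2} \circ \cdots \circ u_{i,m_i}^{\alpha_{i,m_i}} \circ f_{i,m_i+1},$$
where the $f_{i,j}$ have finite length and the $\alpha_{i,j}\in\Zt$ have positive degree. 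I would then build an initial finite labeled graph as a wedge of loops at a base vertex $v_0$, one loop per generator, in which each finite piece $f_{i,j}$ is realized by ordinary edges carrying letters of $X^\pm$ (exactly the edge labeling $\mu$ of Subsection \ref{subsubsec:label}), while each periodic piece $u_{i,j}^{\alpha_{i,j}}$ is realized by a single \emph{exponential edge} decorated by the pair $(u_{i,j},\alpha_{i,j})$. By Lemma \ref{le:paths} every path label in this graph lies in $R(\Zt,X)$, so the graph genuinely encodes a finite portion of the universal tree $\Gamma_G$ of Subsection \ref{sec:universal}.

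Next I would define the folding moves and apply them until none remain. Beyond the ordinary Stallings fold -- which identifies two $X^\pm$-edges with equal label issuing from a common vertex -- I need folds that reconcile exponential edges: when two exponential edges $(u,\alpha)$ and $(u',\alpha')$ share a common initial segment, or when an exponential edge overlaps an ordinary path. The decisive ingredient is the commutation theory of infinite words: by Lemma \ref{le:LS} and Lemma \ref{le:cycl} two periods with a sufficiently long common overlap must commute, hence by the defining properties of a Lyndon's set (Subsection \ref{subs:ext_centr}) they are cyclic permutations of one primitive word; and by Proposition \ref{pr:1} the centralizer of each period in $H$ is free abelian of finite rank, so all exponential edges over a fixed period are coordinated by a single finitely generated abelian group of exponents. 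This lets me merge parallel exponential edges over a common period by adding their exponent vectors, and split an exponential edge at the point where a competing fold forces a transition into an ordinary edge.

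I would establish termination and finiteness by induction on the height $n$ in the chain $F=H_0<H_1<\cdots$ (equivalently on $ht(H)$): the finitely many periods that occur have strictly smaller height than $H_n$, so by the inductive hypothesis the data governing each period is already finitely describable, while the outer moves, being ordinary Stallings folds on a graph carrying a bounded number of exponential edges, strictly decrease the edge count and therefore terminate. The output is a finite, folded (deterministic) labeled graph $\Gamma_H$. Correctness then follows the classical pattern: determinism guarantees that each normal form is read along at most one reduced path from $v_0$, closure under the folding moves guarantees that every closed path at $v_0$ reads an element of $H$, and every generator -- hence, after reduction to the unique normal $R$-form via Lemma \ref{le:ext_centr_3}, every element of $H$ -- is read by some closed path; thus $\Gamma_H$ accepts exactly the normal forms of $H$.

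The hard part, where most of the work lies, is the folding of exponential edges. Unlike the finite case one cannot simply compare letters: one must use the cancellation and commutation lemmas of Section \ref{sec:commutation} to decide effectively when two infinite periodic segments coincide, when one period is a power of another, and how to perform a partial fold that leaves the remaining (possibly infinite) exponents mutually consistent. Controlling this reconciliation so that the procedure both terminates and exactly preserves the accepted language is the crux of the argument, and it is precisely here that the infinite-word machinery of the preceding sections replaces the elementary letter-by-letter bookkeeping available in the free-group case.
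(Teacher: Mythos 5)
Your proposal follows essentially the same route as the proof this survey cites from \cite{Myasnikov_Remeslennikov_Serbin:2006}: represent the generators of $H$ as reduced infinite $\Zt$-words via the effective embedding, build a wedge of loops at a base vertex in which the periodic pieces $u^{\alpha}$ become special (exponential) edges, and run a generalized Stallings folding process whose exponential-edge folds are governed by the commutation lemmas for infinite words and the free abelian centralizer structure, with termination argued along the centralizer-extension hierarchy. The issues you single out as the crux --- deciding effectively when two infinite periodic segments coincide or overlap, merging exponents inside a single abelian centralizer, and preserving the accepted language through partial folds --- are precisely the technical content of the cited paper, so your plan is sound and coincides with the paper's approach.
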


Now, that the graph $\Gamma_H$ is constructed for $H$, one can use it to solve a lot of algorithmic
problems almost as in free groups. Recall that limit groups are finitely presented.

\begin{theorem}
Let $G$ be a limit group given by a finite presentation with additional information that it is a limit group or given as a subgroup of $\FZt$, and $H, K$ f.g. subgroups of $G$ given by their generators. Then
\begin{itemize}
\item $H \cap K$ is f.g (Howson Property) and can be found effectively
(\cite{Kharlampovich_Myasnikov_Remeslennikov_Serbin:2006}),
\item up to conjugation by elements from $K$ there are only finitely many subgroups of
$G$ of the type $H^g \cap K$, where $g \in G$ (\cite{Kharlampovich_Myasnikov_Remeslennikov_Serbin:2006}),
\item it can be decided effectively if $H$ is malnormal in $G$ (\cite{Kharlampovich_Myasnikov_Remeslennikov_Serbin:2006}),
\item it can be decided if $H^g = K$ (and $H^g \leq K$) for some $g \in G$ and if yes such $g$ can
be found effectively (\cite{Kharlampovich_Myasnikov_Remeslennikov_Serbin:2006}),
\item for any $g \in G$, its centralizer $C_G(g)$ in $G$ can be found effectively
(\cite{Kharlampovich_Myasnikov_Remeslennikov_Serbin:2006}),
\item homological and cohomological dimensions of $G$ can be computed effectively (\cite{KMRS:2008}),
\item it can be decided if $|G : H| < \infty$ (\cite{Nikolaev_Serbin:2011}),
\item $Comm_G(H)$ can be found effectively, hence it is possible to find effectively $n(H) \in \mathbb{N}$
such that for any $P \leq G$ if $|P : H | < \infty$ then $|P : H | < n(H)$ (\cite{Nikolaev_Serbin:2011}).
\end{itemize}
\end{theorem}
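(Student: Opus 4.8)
The plan is to derive every item in the list from the single combinatorial tool furnished by the preceding theorem: the finite folded labeled graph $\Gamma_H$ which, inside $\FZt$, accepts exactly the normal forms of the elements of a finitely generated subgroup $H \leqslant G$, where $G \hookrightarrow \FZt$ is the effective embedding of Theorem \ref{th:embMR}. The strategy throughout is to transplant, item by item, the classical Stallings-graph arguments for free groups (surveyed in \cite{Kapovich_Myasnikov:2002}) to this non-archimedean setting, using that $\Gamma_H$ is a deterministic automaton over the normal forms of $\FZt$ and that cancellation of $\Zt$-words is computable.

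First I would handle the Howson property and the intersection simultaneously. Given $\Gamma_H$ and $\Gamma_K$, I form their fiber product (pullback) in the category of $\FZt$-labeled graphs; the connected component of the base vertex, after folding, is a finite graph accepting exactly $H \cap K$. Finiteness of this core yields that $H \cap K$ is finitely generated, and, folding being effective over $\Zt$-words, one reads off a generating set, establishing the first item. The second item---finiteness, up to $K$-conjugacy, of the subgroups $H^g \cap K$---follows from the same fiber-product analysis applied to the finitely many vertex-coset representatives occurring in the pullback, exactly as in the free case.

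Next I would treat malnormality and the subgroup conjugacy and containment problems. Malnormality of $H$ is equivalent to the statement that, off the diagonal, every component of the fiber product $\Gamma_H \times \Gamma_H$ accepts only the trivial element---a condition checkable on the finite folded graphs; deciding whether $H^g = K$ or $H^g \leqslant K$ reduces to comparing $\Gamma_H$ and $\Gamma_K$ up to a change of base vertex, the candidate $g$ being recovered from the connecting path and verified by an effective normal-form computation. The centralizer $C_G(g)$ is controlled by Proposition \ref{pr:1}: it embeds in a free abelian subgroup of $\Lambda = \Z^n$ of bounded rank, and a basis is found by locating the cyclically reduced root of $g$ as a $\Zt$-word and intersecting the associated abelian subgroup with $G$ through $\Gamma_H$. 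The finite-index question is decided by testing whether $\Gamma_H$ is a complete cover of the folded graph accepting $G$ (every label present at every vertex), and $Comm_G(H)$, together with the bound $n(H)$, is extracted from the finitely many subgroups commensurable with $H$ detected by the pullback. The homological and cohomological dimension statements are of a different flavour and I would simply invoke \cite{KMRS:2008}, where they are computed from the abelian graph-of-groups decomposition of $G$.

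The main obstacle, and the one point where the free-group template does not transfer verbatim, is guaranteeing that all these graph operations remain finite and terminating in the presence of infinite $\Zt$-words. In a free group a fold strictly decreases the number of edges; here the labels are non-archimedean words of the form $f \ast u^{\alpha} \ast h$ with $\alpha \in \Zt$, so I must show that the fiber product has only finitely many components and that folding over such labels halts. This is exactly what the effective normal-form machinery of \cite{Myasnikov_Remeslennikov_Serbin:2005, Myasnikov_Remeslennikov_Serbin:2006} secures: the Lyndon set constrains the exponents $\alpha$, the bounded-cancellation estimates of Lemmas \ref{le:ext_centr_1} and \ref{le:ext_centr_2} limit the interaction of consecutive powers, and subword-closedness keeps every intermediate label inside $\FZt$. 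Once finiteness and termination are established, each individual algorithmic consequence becomes the routine graph computation described above.
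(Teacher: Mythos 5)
Your proposal takes essentially the same route as the paper: the paper gives no self-contained argument here, but presents the theorem as a consequence of the preceding result (the finite folded graph $\Gamma_H$ that accepts exactly the normal forms of a finitely generated subgroup inside $\FZt$), delegating each item to the cited references, which indeed proceed by the Stallings-type fiber-product and folding arguments over $\Zt$-words that you describe. Your sketch, including singling out termination and finiteness of folding over infinite $\Zt$-words as the one point where the free-group template does not transfer verbatim, matches both the paper's framing and the method of the cited sources.
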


The following theorems use the elimination process for limit groups. For a limit group $G$, two
homomorphisms $\phi_i : G \rightarrow F,\ i = 1,2$ are automorphically equivalent if $\phi_1$ is
obtained by pre-composing $\phi_2$ with an automorphism of $G$ and post-composing with conjugation.
The quotient group of $G$ over the intersection of the kernels of all homomorphisms minimal in their
equivalence classes is called the maximal standard quotient of $G$ (or the shortening quotient).

\begin{theorem} (\cite[Theorem 13.1]{Kharlampovich_Myasnikov:2005(2)},
\cite[Theorem 35]{Kharlampovich_Myasnikov:2006})
\label{modulo}
Let $G$ be a freely indecomposable limit group and $K_1, \ldots, K_m$ be finitely generated subgroups
of $G$. There exists an algorithm to obtain an abelian $JSJ$-decomposition of $G$ with subgroups $K_1,
\ldots, K_m$ being elliptic, and  to find the maximal standard quotient (or shortening quotient)
with respect to this decomposition.
\end{theorem}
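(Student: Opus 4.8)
The plan is to reduce the whole statement to the effective machinery available for $\Z^n$-free groups, namely the infinite-words combinatorics and the elimination process (the $\Z^n$-machine) of \cite{Kharlampovich_Myasnikov:2005(2)}. First I would use Theorem \ref{th:embKMR} to effectively produce, from the finite presentation of $G$, a finite tower of centralizer extensions $F < G_1 < \cdots < G_n$ together with an explicit embedding $\psi^\ast : G \hookrightarrow G_n$; by Corollary \ref{co:embKMR_6} this equips $G$ with an effective free $\Z^n$-valued Lyndon length function, so the $\Z^n$-machine and the Stallings-type graph techniques become genuine algorithms on $G$. Since limit groups are finitely presented (Corollary \ref{co:embKMR_2}) and the word, conjugacy and membership problems are decidable in them, all the bookkeeping needed below — deciding whether a given element is elliptic in a proposed splitting, recognizing conjugacy of edge and vertex groups, testing commutation — can be carried out effectively.

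Next I would construct the relative abelian JSJ-decomposition. The decisive point is that the elimination process, run on $G$ with the constraint that each finitely generated subgroup $K_1, \ldots, K_m$ fix a point of the relevant Bass--Serre tree, outputs exactly the abelian splittings of $G$ in which $K_1, \ldots, K_m$ are elliptic; organizing this output into quadratically hanging (surface) vertex groups, abelian vertex groups, rigid vertices, and their abelian edge groups yields the desired JSJ. Free indecomposability guarantees that the process produces a splitting over abelian subgroups rather than a free product, with the effective cyclic splitting of Corollary \ref{co:embKMR_1} as the base case. Relative ellipticity of the $K_i$ is enforced throughout because membership and conjugacy are decidable, so at each stage any splitting in which some $K_i$ fails to be elliptic is discarded. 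Termination of the recursion rests on equational Noetherianity (Corollary \ref{co:Lambda_quasi-convex_hierarchy_4}), which supplies the descending chain condition on the limit quotients appearing in the process.

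Having the JSJ explicitly, I would read off the modular group $\mathrm{Mod}(G)$ directly: its generators are the Dehn twists along the abelian edge groups, the automorphisms of the abelian vertex groups fixing the incident edge groups, and the mapping-class automorphisms of the quadratically hanging vertex groups, each computable once the vertex groups, edge groups and boundary monomorphisms are known. The maximal standard (shortening) quotient is then $G$ modulo the intersection of the kernels of all homomorphisms $G \to F$ that are shortest in their $\mathrm{Mod}(G)$-orbit. To compute it I would combine the decidability of the Diophantine problem in $G$ with the Rips--Sela shortening argument \cite{RipsSela:1994}: the shortening argument bounds the minimal representatives of each modular orbit so they can be enumerated, and the intersection of their kernels is a finite system of equations whose coordinate group — the quotient — is produced in NTQ form by the elimination process. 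Since the shortening quotient is again a proper quotient limit group, the iterated chain of such quotients stabilizes, again by equational Noetherianity, which guarantees that the procedure halts.

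The hard part will be the second paragraph: proving that the relative output of the $\Z^n$-machine is genuinely the canonical JSJ-decomposition and not merely some abelian splitting in which the $K_i$ are elliptic. This requires matching the formal output of the machine (a collection of NTQ-type systems) with the algebraic invariants of the JSJ, and checking that the maximality conditions defining the JSJ — maximality of the quadratically hanging subgroups and of the abelian edge groups, together with the enclosing properties — correspond precisely to the stopping conditions of the elimination process. Verifying that correspondence, and that the relative constraint on $K_1,\ldots,K_m$ does not destroy it, is where the main work lies.
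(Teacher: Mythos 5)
The first thing to note is that the paper contains no proof of Theorem \ref{modulo}: it is a survey statement imported verbatim from \cite[Theorem 13.1]{Kharlampovich_Myasnikov:2005(2)} and \cite[Theorem 35]{Kharlampovich_Myasnikov:2006}, with only the terminology (maximal standard quotient) fixed beforehand. Your overall strategy --- use Theorem \ref{th:embKMR} to make the free $\Z^n$-valued length function on $G$ effective, and then read the JSJ-decomposition and the shortening quotient off the elimination process --- is exactly the strategy of those cited papers, so at the level of the plan you are on the intended route.

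As a proof, however, the proposal has genuine gaps. First, the central step is deferred rather than carried out: you yourself flag that identifying the output of the (relative) elimination process with the \emph{canonical} abelian JSJ-decomposition ``is where the main work lies.'' That identification is the substance of the theorem; without it you have only produced some abelian splitting in which the $K_i$ are elliptic. Second, your mechanism for the relative version would fail: the relative JSJ is not obtained by running the absolute process and ``discarding'' splittings in which some $K_i$ is not elliptic. A posteriori filtering yields neither canonicity nor the universal property with respect to \emph{all} splittings in which the $K_i$ are elliptic; in \cite{Kharlampovich_Myasnikov:2005(2)} the constraint is built into the process itself (the generators of the $K_i$ are carried along as distinguished elements, which restricts the admissible moves), so that ellipticity of the $K_i$ holds by construction in every splitting the machine produces. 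Third, your computation of the maximal standard quotient is not an algorithm as described. The Rips--Sela shortening argument \cite{RipsSela:1994} is a compactness argument: it proves the standard quotient is a proper quotient, but it supplies no effective bound that would let you ``enumerate the minimal representatives'' --- there are infinitely many homomorphisms minimal in their $\mathrm{Mod}(G)$-orbits and no stopping criterion for such an enumeration; equational Noetherianity tells you the intersection of their kernels is the normal closure of some finite set, but not how to find that set. In the cited proof the shortening quotient is extracted from the elimination process directly, because the moves of the process realize the modular automorphisms (Dehn twists along edge groups, automorphisms of abelian and QH vertex groups), so the branches along which solutions can no longer be shortened exhibit the quotient explicitly; replacing this with ``decidability of the Diophantine problem plus enumeration'' does not close the loop.
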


\begin{theorem} \cite{Bumagin_Kharlampovich_Myasnikov:2007}
Let $G \cong \langle \mathcal{S}_G \mid \mathcal{R}_G \rangle$ and $H \cong \langle \mathcal{S}_H
\mid \mathcal{R}_H \rangle$ be finite presentations of limit groups. There exists an algorithm that
determines whether or not $G$ and $H$ are isomorphic. If the groups are isomorphic, then the
algorithm finds an isomorphism $G \rightarrow H$.
\end{theorem}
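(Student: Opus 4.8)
The plan is to reduce the isomorphism problem to an effective comparison of canonical abelian JSJ decompositions, exploiting the fact that for limit groups this decomposition is simultaneously computable and an isomorphism invariant. The whole argument rests on the effective JSJ machinery of Theorem \ref{modulo} together with the effective embedding and splitting results (Theorem \ref{th:embKMR}, Corollary \ref{co:embKMR_1}, Corollary \ref{co:algorithm_lambda_1}).

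First I would reduce to the freely indecomposable case. Using the effective splitting machinery for limit groups, one computes the Grushko decomposition of each of $G$ and $H$ as a free product of finitely many freely indecomposable non-cyclic limit groups together with a free group of finite rank. By uniqueness of the Grushko decomposition, $G \cong H$ if and only if the two free ranks agree and the multisets of freely indecomposable factors coincide up to isomorphism. This reduces the problem to deciding isomorphism between finitely many pairs of \emph{freely indecomposable} limit groups, so from now on I assume $G$ and $H$ are freely indecomposable.

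Next, for such $G$ and $H$ I would compute their canonical abelian JSJ decompositions $\mathcal{D}_G$ and $\mathcal{D}_H$ via Theorem \ref{modulo}. Since the JSJ decomposition is canonical, every isomorphism $G \to H$ carries $\mathcal{D}_G$ onto $\mathcal{D}_H$ (up to the action of the modular automorphism group), so that $G \cong H$ if and only if there is a graph isomorphism of the underlying graphs of groups matching vertices and edges compatibly. The matching splits according to vertex type: abelian vertex groups are compared by their rank together with the peripheral subgroup generated by the incident edge groups; quadratically hanging (surface) vertex groups are compared by the homeomorphism type of the underlying surface and its boundary pattern; and rigid vertex groups must be matched by isomorphisms respecting the tuples of incident edge groups.

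The hard part will be the comparison of the rigid vertex groups together with their peripheral structure, and the bookkeeping needed to make the local matchings globally consistent modulo the modular group (Dehn twists along edges, mapping-class automorphisms of the surface pieces, and automorphisms of the abelian vertices). The key fact making this step decidable is that rigid vertex groups admit no further abelian splitting: by the finiteness phenomena available for limit groups — equational Noetherianity and the Rips–Sela shortening argument, which underlies the maximal standard (shortening) quotient produced in Theorem \ref{modulo} — there are only finitely many candidate peripheral-structure-preserving isomorphisms between two rigid pieces, and each candidate can be tested effectively using the decidability of the Diophantine problem in limit groups (Theorem \ref{th:word_conjugacy_lambda}). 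Once a globally consistent matching of the decorated graphs of groups is found, an explicit isomorphism $G \to H$ is recovered by assembling the local isomorphisms through Bass–Serre theory applied to $\mathcal{D}_G$ and $\mathcal{D}_H$; if no consistent matching exists, the groups are not isomorphic.
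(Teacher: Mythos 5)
Your overall architecture coincides with that of the actual proof of Bumagin--Kharlampovich--Myasnikov, which the survey is citing: reduce to the freely indecomposable case by an effective Grushko decomposition, compute the canonical abelian JSJ decomposition effectively (this is exactly what Theorem \ref{modulo} supplies), and then compare the two decorated graphs of groups --- abelian, QH, and rigid vertices --- modulo the modular group. So the strategy is the right one; the problem is in the step you yourself identify as ``the hard part.''

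The genuine gap is in your treatment of the rigid vertex groups. First, finiteness of the set of peripheral-structure-preserving isomorphisms (up to conjugacy) between two rigid pieces, which indeed follows from rigidity and the Rips--Sela shortening argument, is not by itself an algorithm: you have no effective bound on, or enumeration of, this finite set, hence no way to know when a search has been exhausted. This is fatal for exactly the half of the problem that matters, namely certifying \emph{non}-isomorphism: an algorithm that only tests candidates as they are found terminates only on isomorphic inputs. Second, the Diophantine problem of Theorem \ref{th:word_conjugacy_lambda} decides positive existential conditions; it can verify that a putative map on generators is a homomorphism, and (together with the decidable membership problem) that it is surjective, but injectivity of a candidate is not a positive existential condition, and non-existence of any isomorphism is a universally quantified statement, so ``testing each candidate via the Diophantine problem'' does not close the loop. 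The actual proof closes it with the second, easily overlooked, half of Theorem \ref{modulo}: the maximal standard (shortening) quotient is \emph{computable}. Since a monomorphism of a rigid piece respecting the peripheral structure cannot factor through the shortening quotient, every such monomorphism occurs among the ``short'' homomorphisms of an effectively constructible Makanin--Razborov-type diagram over the target limit group; this is what converts the abstract finiteness you invoke into a computable candidate list, and hence into a terminating comparison of the JSJ decompositions. Your proposal gestures at the shortening quotient but never uses its effectivity, which is precisely the content that makes the theorem true rather than merely plausible.
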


\begin{theorem} \cite{Kharlampovich_Myasnikov:2005(2)}
The universal theory of a limit group $G$ in the language with coefficients from $G$ is decidable
(in the language without coefficients the universal theory of $G$ is the same as the universal
theory of a free group \cite{Remeslennikov:1989}).
\end{theorem}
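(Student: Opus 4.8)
The plan is to reduce the decision problem for the universal theory to a solvability question for finite systems of equations and inequations, and then to settle the latter with the elimination process for limit groups. First I would pass from universal sentences to existential ones. A universal sentence $\forall\,\bar{x}\,\phi(\bar{x},\bar{c})$ with $\bar{c}\in G$ holds in $G$ if and only if its negation $\exists\,\bar{x}\,\neg\phi(\bar{x},\bar{c})$ fails in $G$, so it suffices to decide the existential theory of $G$ with coefficients. Writing the quantifier-free matrix $\neg\phi$ in disjunctive normal form, an existential sentence is true exactly when at least one disjunct is satisfiable, and each disjunct is a conjunction of equations and inequations. Thus the whole problem reduces to a single decision: given a finite system
\[
S(\bar{x},\bar{c}) = 1 \ \wedge\ \bigwedge_{j=1}^{m} T_j(\bar{x},\bar{c}) \neq 1
\]
with coefficients $\bar{c}\in G$, decide whether it has a solution $\bar{x}$ in $G$.

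Next I would analyze the equation part $S(\bar{x},\bar{c})=1$. Using the effective embedding of $G$ into $\FZt$ (Theorem \ref{th:embKM}) together with the realization of $G$ as a finitely generated subgroup of a finite tower of centralizer extensions (Theorem \ref{th:embKMR}), the $\mathbb{Z}^n$-machine, i.e. the elimination process for limit groups, produces effectively a finite description of the full solution set $V_G(S)$: up to change of coordinates it is a finite union of canonical families of solutions, each parametrized by a homomorphism from a coordinate group that is itself fully residually free, namely a leaf of the Makanin--Razborov diagram given by an NTQ tower. This is precisely the statement that one can describe solution sets of arbitrary equations in limit groups \cite{Kharlampovich_Myasnikov:2005(2)}.

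Finally, I would incorporate the inequations by a genericity argument along this diagram. For each leaf the substituted words $T_j$ become elements of the corresponding coordinate group; if some $T_j$ is identically trivial there, that whole family contributes no solution to this disjunct, while otherwise $T_j=1$ cuts out a proper subfamily. Since there are only finitely many $T_j$ and each nontrivial $T_j$ vanishes only on a proper sublocus, the abundance of the NTQ family lets one specialize the free parameters in general position so as to keep every $T_j\neq 1$ simultaneously; the original system is then solvable precisely when some leaf admits such a specialization, which is a finite, decidable check. The main obstacle is to make this last step rigorous and effective: one must prove that within a canonical solution family the finitely many equational constraints $T_j=1$ can be simultaneously avoided (unless forced), and that this avoidance is algorithmically verifiable. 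This is exactly where the fine structure of regular NTQ systems, and the fact that for the purpose of parametric inequations their solution families behave like solution families over a free group, does the real work. In the coefficient-free case one may argue more cheaply, since $G$ is universally equivalent to $F$, so its universal theory coincides with that of a free group, which is decidable by the Makanin--Razborov algorithm.
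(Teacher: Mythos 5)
The paper itself contains no proof of this theorem: it is quoted verbatim from \cite{Kharlampovich_Myasnikov:2005(2)}, and the only indication of method is the preceding remark that ``the following theorems use the elimination process for limit groups.'' Your reconstruction follows precisely that route --- negate and pass to the existential theory, reduce via disjunctive normal form to satisfiability of finite systems of equations and inequations with coefficients, apply the elimination process (the $\Z^n$-machine, through the effective embedding $G \hookrightarrow \FZt$ of Theorems \ref{th:embKM} and \ref{th:embKMR}) to get the canonical finite parametric description of the solution set of the equational part, and then dispose of the inequations family by family. So in approach you and the cited source agree.

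The step you yourself flag as the ``main obstacle'' is, however, the one place where the justification you gesture at is not valid as stated. The inference ``each nontrivial $T_j$ cuts out a proper sublocus, hence a general-position choice of parameters avoids all of them simultaneously'' is Zariski-style reasoning that has no a priori analogue in algebraic geometry over groups: a variety need not contain a point lying off finitely many proper subvarieties. What actually closes the argument is discrimination. Since limit groups are equationally Noetherian, each canonical family produced by the process has coordinate group which is a limit group over $G$, i.e., fully residually $G$ as a $G$-group; consequently, if every $T_j$ is nontrivial \emph{as an element of that coordinate group}, then a single $G$-homomorphism to $G$ can be chosen which keeps all the $T_j$ nontrivial at once, and its values on the variables give a solution of that disjunct. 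Conversely, if some $T_j$ is trivial in the coordinate group, the family contributes nothing. The test ``is $T_j$ trivial in the coordinate group'' is the word problem in an effectively presented limit group (over $G$), which is decidable, so the whole procedure is algorithmic. With that substitution your outline becomes a complete proof along the intended lines; the coefficient-free parenthetical is handled exactly as you say, by Remeslennikov's universal equivalence of $G$ and $F$ together with Makanin's algorithm for the universal theory of free groups.
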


\section{$\mathbb{Z}^n$-free groups}
\label{sec:Z^n}

In this section we consider in detail the situation when $\Lambda = \mathbb{Z}^n$ with the right
lexicographic order. The structure of $\mathbb{Z}^n$-free groups is very clear and the machinery
of infinite words plays a significant role in all proofs.

\subsection{Complete $\mathbb{Z}^n$-free groups}
\label{subs:regular_Z^n}

In this section we fix a finitely generated group $G$ which has a free regular length function with
values in $\mathbb{Z}^n,\ n \in \mathbb{Z}$ (with the right lexicographic order). In other words,
$G$ is a {\it complete} $\mathbb{Z}^n$-free group (see \cite{KMRS:2012}). Due to Theorem
\ref{chis-cor-1} we may and will view $G$ as a subgroup of $CDR(\mathbb{Z}^n, X)$ for an appropriate
set $X$. Therefore, elements of $G$ are infinite words from $CDR(\mathbb{Z}^n,X)$, multiplication in
$G$ is the multiplication ``$\ast$'' of infinite words, and the regular length function is the
standard length $| \cdot |$ of infinite words. That is, $G$ is complete in the sense that it is
closed under the operation of taking common initial subwords of its elements.

In the ordered group $\mathbb{Z}^n = \langle a_1 \rangle \oplus \ldots \oplus \langle a_n \rangle$ with basis $a_1, \ldots, a_n$ the subgroups $E_k = \langle a_1,
\ldots,$ $a_k\rangle$ are convex, and every non-trivial convex subgroup is equal to $E_k$ for some
$k$, so
$$0 = E_0 < E_1 < \cdots < E_n,$$
is the complete chain of convex subgroups of $\mathbb{Z}^n$. Recall, that the height $ht(g)$ of a
word $g \in G$ is equal to $k$ if $|g| \in E_k - E_{k-1}$ (see \cite{Khan_Myasnikov_Serbin:2007}).
Since $|g \ast h| \leqslant |g| + |h|$ and $|g^{-1}| = |g|$ one has for any $f, g \in G$:
\begin{enumerate}
\item $ht(f \ast g) \leqslant \max\{ht(f),ht(g)\}$,
\item $ht(g) = ht(g^{-1})$.
\end{enumerate}
We will assume that there is an element $g \in G$ with $ht(g) = n$, otherwise, the length function
on $G$ has  values in $\Z^{n-1}$, in which case we  replace $\Z^n$ with $\Z^{n-1}$. For any $k
\in [1,n]$
$$G_k = \{g \in G \mid ht(g) \leqslant k\}$$
is a subgroup of $G$ and
$$1 = G_0 < G_1 < \cdots < G_n = G.$$
Observe, that if $l:G \to \Lambda$ is a Lyndon length function with values in some ordered abelian
group $\Lambda$ and $\mu:\Lambda \to \Lambda^\prime$ is a homomorphism of ordered abelian groups
then the composition $l^\prime = \mu \circ l$ gives a Lyndon length function $l^\prime : G \to
\Lambda^\prime$. In particular, since $E_{n-1}$ is a convex subgroup of $\Z^n$ then the canonical
projection $\pi_n:\Z^n \to \Z$ such that  $\pi_n(x_1, x_2, \ldots, x_n) = x_n$ is an ordered
homomorphism, so the composition $\pi_n \circ |\cdot|$ gives a Lyndon length function $\lambda:
G \to \Z$ such that $\lambda(g) = \pi_n(|g|)$. Notice also that if $u = g \circ h$ then $\lambda(u)
= \lambda(g) + \lambda(h)$ for any $g, h, u \in G$.

All the proofs and details can be found in \cite{KMRS:2012}.

\subsubsection{Elementary transformations of infinite words}
\label{subsec:elem-moves}

In this section we describe  an analog of Nielsen reduction in the group $G$. Since $G$ is complete
the Nielsen reduced sets have much stronger non-cancelation properties then usual and the
transformations are simpler. On the other hand, since $\Z^n$ is non-Archimedean (for $n > 1$) the
reduction process is more cumbersome, it goes in stages along the complete series of convex subgroups
in $\Z^n$.

For a finite subset  $Y$ of $G$ define its $\lambda$-length as
$$|Y|_\lambda = \sum_{g \in Y} \lambda(g).$$
If $Y$ is a generating set of $G$ then  $|Y|_\lambda >0$, otherwise $G = G_{n-1}$. It follows that
$$Y = Y_+ \cup Y_0,$$
where
$$Y_+ = \{g \in Y \mid \lambda(g) > 0\},\ Y_{0} = \{g \in Y \mid \lambda(g) = 0\}.$$
Obviously, $|Y|_\lambda = |Y_+|_\lambda$ and $\langle Y_{0} \rangle$ is a finitely generated subgroup
of $G_{n-1}$.

Let $Y$ be a finite generating set for $G$. Assuming $Y = Y^{-1}$ we define three types of elementary
transformations of $Y$.

\medskip {\bf Transformation $\mu$}. Let  $f,g \in Y_+,\ f \neq g$,
$h \in \langle Y_0 \rangle$,  $u = com(f,h\ast g)$, and $\lambda(u) >0$. Then
$f = u \circ w_1,\ h \ast g = u \circ w_2$ for some $u, w_1, w_2$ from $G$ (since $G$ is complete).
Put
$$\mu_{f,g,h}(Y) = (Y - \{f^{\pm 1}, g^{\pm 1}\}) \bigcup \{w_1^{\pm 1}, w_2^{\pm 1}, u^{\pm 1}\}$$
$$\bigcup \{(f^{-1} \ast h \ast g)^{\pm 1} \mid {\rm if}\ \lambda(f^{-1} \ast h \ast g) = 0\}$$
if $f \neq g^{-1}$, and
$$\mu_{f,g,h}(Y) = (Y - \{g^{\pm 1}\}) \bigcup \{u^{\pm 1}, (w_2 \ast u)^{\pm 1}\}$$
if $f = g^{-1}$.

\begin{lemma} \cite{KMRS:2012}
\label{le:mu}
In the notation above
\begin{enumerate}
\item[(1)] $\langle Y \rangle = \langle \mu_{f,g,h}(Y) \rangle$,
\item[(2)] $|\mu_{f,g,h}(Y)|_\lambda < |Y|_\lambda$.
\end{enumerate}
\end{lemma}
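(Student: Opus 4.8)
The plan is to treat part (1) as a Nielsen/Tietze computation and part (2) as a bookkeeping argument for the $\lambda$-weight, with the whole difficulty concentrated in controlling the cancellation in the exceptional case $f = g^{-1}$.

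For part (1), I would first record the structural identities coming from $u = com(f, h\ast g)$: since $G$ is complete, the words $u$, $w_1 = u^{-1}\ast f$ and $w_2 = u^{-1}\ast(h\ast g)$ all lie in $G = \langle Y\rangle$, and by construction $f = u\circ w_1$ and $h\ast g = u\circ w_2$. Hence $\langle \mu_{f,g,h}(Y)\rangle \subseteq G$ is automatic, and I only need to recover the removed generators from $\mu_{f,g,h}(Y)$. In the case $f\neq g^{-1}$ the set $Y_0$ is untouched, so $h\in\langle Y_0\rangle\subseteq\langle\mu_{f,g,h}(Y)\rangle$, and then $f = u\ast w_1$ and $g = h^{-1}\ast u\ast w_2$ exhibit $f,g\in\langle\mu_{f,g,h}(Y)\rangle$; the conditionally adjoined element $f^{-1}\ast h\ast g = w_1^{-1}\circ w_2$ is redundant for generation and is kept only to feed the next stage. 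In the case $f = g^{-1}$ I would use $w_2 = (w_2\ast u)\ast u^{-1}$ to see $w_2\in\langle\mu_{f,g,h}(Y)\rangle$ and then $f = w_2^{-1}\ast u^{-1}\ast h$, giving the same conclusion. This settles (1).

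For part (2) the fact I would isolate first is that $\lambda(h\ast g) = \lambda(g)$ whenever $h\in\langle Y_0\rangle$. Indeed $h\in G_{n-1}$, so $|h|\in E_{n-1}$, and the cancellation term $c(h^{-1},g)\leq |h|$ is therefore annihilated by $\pi_n$, whence $\lambda(h\ast g) = \lambda(h) + \lambda(g) - 2\pi_n(c(h^{-1},g)) = \lambda(g)$. Now in the case $f\neq g^{-1}$, applying $\lambda$ to $f = u\circ w_1$ and $h\ast g = u\circ w_2$ and using additivity of $\lambda$ on $\circ$-products gives $\lambda(w_1) = \lambda(f) - \lambda(u)$ and $\lambda(w_2) = \lambda(g) - \lambda(u)$. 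Since $Y = Y^{-1}$ and $\lambda$ is inversion-invariant, the weight changes by $-2\lambda(f) - 2\lambda(g) + 2(\lambda(u) + \lambda(w_1) + \lambda(w_2)) = -2\lambda(u) < 0$, the adjoined $\lambda$-trivial element contributing nothing. This is (2) in the generic case.

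The main obstacle is the exceptional case $f = g^{-1}$, where $u$ and $w_2\ast u$ replace the single pair $f^{\pm1}$ and I must prove $\lambda(u) + \lambda(w_2\ast u) < \lambda(f)$. The clean route is to observe that $h\ast f^{-1}$ ends in $u^{-1}$ at top level: writing $f^{-1} = w_1^{-1}\circ u^{-1}$, the terminal height-$n$ block $u^{-1}$ survives left multiplication by $h$, since that multiplication cancels only a height-$<n$ amount (again by the estimate $c(h^{-1},f^{-1})\leq|h|\in E_{n-1}$). Because $h\ast f^{-1} = u\circ w_2$, this forces $w_2 = w_2'\circ u^{-1}$ with $\lambda(w_2') = \lambda(w_2) - \lambda(u)$, hence $w_2\ast u = w_2'$ and $\lambda(w_2\ast u) = \lambda(w_2) - \lambda(u) = \lambda(f) - 2\lambda(u)$. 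Then the weight changes by $-2\lambda(f) + 2\lambda(u) + 2\lambda(w_2\ast u) = -2\lambda(u) < 0$, as required. The genuinely delicate point is precisely this height/convexity bookkeeping; the degenerate subcases (for instance $u = f$, so $w_1 = \varepsilon$, or $\lambda(w_i) = 0$) are harmless, since the final estimate only uses the hypothesis $\lambda(u) > 0$.
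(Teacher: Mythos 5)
Parts (1) and the generic case of part (2) of your proof are fine: completeness puts $u,w_1,w_2$ in $G$, the recovery of $f$ and $g$ is correct, the identity $\lambda(h\ast g)=\lambda(g)$ for $h\in\langle Y_0\rangle$ follows exactly as you say from $c(h^{-1},g)\leqslant |h|\in E_{n-1}$ and convexity, and the net change $-2\lambda(u)<0$ when $f\neq g^{-1}$ is right. The gap is in the exceptional case $f=g^{-1}$, at the single step where you assert $w_2=w_2'\circ u^{-1}$. Two things are assumed there without justification. First, that the cancellation in $h\ast f^{-1}$ stays inside $w_1^{-1}$; this needs $c(h^{-1},f^{-1})\leqslant |w_1|$ and fails precisely in the subcases you dismiss as harmless ($w_1=\varepsilon$ or $\lambda(w_1)=0$), where the cancellation can eat into $u^{-1}$ itself. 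Second, and more seriously: even when $h\ast f^{-1}$ does end in the full block $u^{-1}$, to conclude that this block lies inside the $w_2$-part of $u\circ w_2$ you need $|w_2|\geqslant |u|$, i.e.\ $\lambda(u)\leqslant\lambda(w_2)=\lambda(f)-\lambda(u)$. Nothing in your argument gives this; indeed your own formula $\lambda(w_2\ast u)=\lambda(f)-2\lambda(u)$ silently presupposes $\lambda(f)\geqslant 2\lambda(u)$, which is equivalent to the inequality that has to be proved, and is negative otherwise --- a red flag that these subcases are not harmless.

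The opposite configuration $|w_2|<|u|$ is genuinely dangerous, not degenerate. Since $f=(h\ast g)^{-1}\ast h=(w_2^{-1}\circ u^{-1})\ast h$ and the cancellation with $h$ is infinitesimal, both $u$ and $w_2^{-1}$ are initial segments of $f$; if $|w_2|<|u|$ this gives $u=w_2^{-1}\circ z$ with $z=w_2\ast u\neq\varepsilon$, $\lambda(z)=\lambda(u)-\lambda(w_2)$, and the weight change becomes $2\lambda(u)-4\lambda(w_2)$, which is $\geqslant 0$ whenever $\lambda(u)\geqslant 2\lambda(w_2)$ --- so the lemma would be false in this configuration. It cannot be excluded by length bookkeeping alone: for abstract reduced $\Lambda$-words such self-overlaps of $u$ with $u^{-1}$ do occur (they force $z$ to share a long prefix with $z^{-1}$, and $R(\Lambda,X)$ contains $2$-torsion words, cf.\ Example \ref{ex:2-torsion}). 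What kills it is group-theoretic input that your proof never uses: by completeness $u\in G$, hence $w_2\in G$ and $z=w_2\ast u\in G$, and the overlap equations force either $z=z^{-1}$ (so $z^2=1$, contradicting torsion-freeness of a $\Lambda$-free group) or $c(z,z^{-1})\geqslant |z|-|h|$, i.e.\ $|z^2|\leqslant 2|h|<|z|$, contradicting the freeness axiom (L5). Ruling out $|w_2|<|u|$ by this argument is the missing idea; once it is in place, your case-(i) computation $\lambda(w_2\ast u)=\lambda(f)-2\lambda(u)$ and the final estimate $-2\lambda(u)<0$ go through.
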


\medskip

{\bf Transformation $\eta$}. Let $f \in Y_+$ be
such that $\lambda(f) > \lambda(com(f,h\ast f)) > 0$ for some $h \in \langle Y_0
\rangle$. Then $f = u \circ f_1,\ h \ast f = u \circ f_2,\ \lambda(u) > 0$.
Define
$$\eta_{f,h}(Y) = (Y - \{f^{\pm 1}\}) \bigcup \{f_1^{\pm 1}, u^{\pm 1}, (u^{-1} \ast h \ast u)^{\pm 1}\}.$$
Notice, that $f = (h^{-1} \ast u) \circ f_2 = u \circ f_1$ hence $\lambda(f_2) = \lambda(f_1) > 0$.
On the other hand $f_2 = (u^{-1}\ast h\ast u) \ast f_1$ and it follows that
$\lambda(u^{-1} \ast h \ast u) = 0$.

\begin{lemma} \cite{KMRS:2012}
\label{le:eta}
In the notation above
\begin{enumerate}
\item[(1)] $\langle Y \rangle = \langle \eta_{f,h}(Y) \rangle$,
\item[(2)] either $|\eta_{f,h}(Y)|_\lambda < |Y|_\lambda$, or $|\eta_{f,h}(Y)|_\lambda = |Y|_\lambda$
but then $|{\eta_{f,h}(Y)}_+| > |Y_+|$.
\end{enumerate}
\end{lemma}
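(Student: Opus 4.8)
The plan is to treat the two assertions separately, using the completeness (regularity) of $G$ for (1) and the additivity of $\lambda$ together with careful set-versus-multiset bookkeeping for (2). Throughout I write $g_0 = u^{-1} \ast h \ast u$ for the adjoined $\lambda$-zero element, and I record from the preamble that $f = u \circ f_1$, $\lambda(u) > 0$, $\lambda(f_1) = \lambda(f_2) > 0$, and $\lambda(g_0) = 0$.

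For (1) I would first note that since $G$ is complete, the common initial segment $u = com(f, h \ast f)$ is itself an element of $G$; hence $f_1 = u^{-1} \ast f$ and $g_0 = u^{-1} \ast h \ast u$ also lie in $G = \langle Y \rangle$ (here $h \in \langle Y_0 \rangle \leqslant \langle Y \rangle$). As every remaining element of $\eta_{f,h}(Y)$ already belongs to $Y$, this gives $\langle \eta_{f,h}(Y) \rangle \subseteq \langle Y \rangle$. For the reverse inclusion, the only elements of $Y$ absent from $\eta_{f,h}(Y)$ are $f^{\pm 1}$, and the reduced factorization $f = u \circ f_1$ shows $f = u \ast f_1 \in \langle \eta_{f,h}(Y) \rangle$ (and likewise $f^{-1} = f_1^{-1} \ast u^{-1}$); hence $\langle Y \rangle \subseteq \langle \eta_{f,h}(Y) \rangle$, and equality follows.

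For (2) the heart is a length computation. Additivity of $\lambda$ under $\circ$ applied to $f = u \circ f_1$ gives $\lambda(f) = \lambda(u) + \lambda(f_1)$, and combined with $\lambda(f) > \lambda(u) = \lambda(com(f,h\ast f))$ this re-confirms $\lambda(f_1) > 0$. Passing from $Y$ to $\eta_{f,h}(Y)$ deletes $f^{\pm 1}$ (which, since $f \neq f^{-1}$ by torsion-freeness and $Y = Y^{-1}$, contribute $2\lambda(f)$ to $|Y|_\lambda$) and adjoins $f_1^{\pm1}, u^{\pm1}, g_0^{\pm1}$, whose total $\lambda$-contribution is at most $2\lambda(f_1) + 2\lambda(u) + 2\lambda(g_0) = 2(\lambda(u)+\lambda(f_1)) = 2\lambda(f)$. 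Since a sum over a set can only decrease when elements coincide, this yields $|\eta_{f,h}(Y)|_\lambda \leqslant |Y|_\lambda$ in all cases, with equality precisely when the adjoined $\lambda$-positive elements $f_1^{\pm1}, u^{\pm1}$ are four distinct elements none of which already lies in $Y \setminus \{f^{\pm1}\}$ (coincidences among the $\lambda$-zero elements $g_0^{\pm1}$ being invisible to $|\cdot|_\lambda$).

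It remains to analyze the equality case, which I expect to be the main obstacle — not for depth but because it demands tracking set coincidences precisely. Here I would observe that $\lambda(u), \lambda(f_1) < \lambda(f)$ forces $u^{\pm1}, f_1^{\pm1} \neq f^{\pm1}$, so the deleted and adjoined positive generators are automatically disjoint, and that in the equality case $f_1^{\pm1}, u^{\pm1}$ are four distinct elements lying outside $Y_+ \setminus \{f^{\pm1}\}$. Consequently $(\eta_{f,h}(Y))_+ = (Y_+ \setminus \{f^{\pm1}\}) \sqcup \{f_1^{\pm1}, u^{\pm1}\}$ is a disjoint union, so $|(\eta_{f,h}(Y))_+| = |Y_+| - 2 + 4 = |Y_+| + 2 > |Y_+|$. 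This yields the dichotomy asserted in (2): either $|\eta_{f,h}(Y)|_\lambda$ strictly drops, or it is preserved and the number of $\lambda$-positive generators strictly increases.
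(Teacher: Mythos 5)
Your proposal is correct: completeness of $G$ gives $u,\ f_1,\ u^{-1}\ast h\ast u \in G = \langle Y\rangle$ for one inclusion in (1) while $f = u \ast f_1$ gives the other, and the additivity $\lambda(f) = \lambda(u) + \lambda(f_1)$ together with your set-versus-multiset bookkeeping yields $|\eta_{f,h}(Y)|_\lambda \leqslant |Y|_\lambda$ in all cases, with the equality case forcing the disjoint-union count $|(\eta_{f,h}(Y))_+| = |Y_+| + 2 > |Y_+|$. The survey states this lemma with a citation to KMRS:2012 and reproduces no proof, but your argument (including the needed observations that $f \neq f^{-1}$ by torsion-freeness, that $\lambda(u),\lambda(f_1) < \lambda(f)$ keeps the new positive generators distinct from $f^{\pm 1}$, and that coincidences only help the strict-decrease case) is exactly the natural one and has no gaps.
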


\medskip

{\bf Transformation $\nu$}. Let $f \in Y_+$ be not cyclically reduced. Then $f = c^{-1} \circ
\overline{f} \circ c$, where $c \neq 1$ and $\overline{f}$ is cyclically reduced. In this case
$c^{-1} = com(f,f^{-1})$, hence (since $G$ is complete) $c, \overline{f} \in G$.
Put
$$\nu_f(Y) = (Y - \{f^{\pm 1}\}) \bigcup \{c^{\pm 1}, \overline{f}^{\pm 1}\}.$$

\begin{lemma} \cite{KMRS:2012}
\label{le:nu}
In the notation above
\begin{enumerate}
\item[(1)] $\langle Y \rangle = \langle \nu_f(Y) \rangle$,
\item[(2)] either $|\nu_f(Y)|_\lambda < |Y|_\lambda$, or $|\nu_f(Y)|_\lambda = |Y|_\lambda$ but
then $|\nu_f(Y)_+| = |Y_+|$.
\end{enumerate}
\end{lemma}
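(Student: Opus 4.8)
The plan is to treat the two claims separately, with the bulk of the work being an elementary $\lambda$-length computation once the cyclic decomposition $f = c^{-1}\circ\overline f\circ c$ is in hand. Throughout I would keep in mind that completeness of $G$ is exactly what legitimizes the transformation.

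For (1), I would first record that completeness puts the new generators back into $G$: since $c^{-1}=com(f,f^{-1})$ with $f,f^{-1}\in G$, the common initial subword $c^{-1}$ lies in $G$, and then $\overline f=c\ast f\ast c^{-1}\in G$ as well. Hence $c^{\pm 1},\overline f^{\pm 1}\in G=\langle Y\rangle$, which gives $\langle\nu_f(Y)\rangle\subseteq\langle Y\rangle$. Conversely, $Y-\{f^{\pm 1}\}\subseteq\nu_f(Y)$ and $f=c^{-1}\ast\overline f\ast c\in\langle\nu_f(Y)\rangle$, so $Y\subseteq\langle\nu_f(Y)\rangle$ and the two subgroups coincide.

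For (2), the key identity comes from applying the ordered homomorphism $\pi_n$ to the length equation in $\mathbb{Z}^n$. Since $f=c^{-1}\circ\overline f\circ c$ is reduced without cancellation, $|f|=2|c|+|\overline f|$, and therefore $\lambda(f)=2\lambda(c)+\lambda(\overline f)$. Writing $Z=Y-\{f,f^{-1}\}$ and using $Y=Y^{-1}$ together with torsion-freeness of $\Lambda$-free groups (so $f\neq f^{-1}$ and $\overline f\neq\overline f^{-1}$, as $\lambda(f)>0$ forces $f\neq 1$), one has $|Y|_\lambda=|Z|_\lambda+2\lambda(f)$ and
\[
|\nu_f(Y)|_\lambda\le |Z|_\lambda+2\lambda(c)+2\lambda(\overline f)=|Z|_\lambda+2\lambda(f)-2\lambda(c)=|Y|_\lambda-2\lambda(c),
\]
where I used $\lambda(c^{-1})=\lambda(c)$ and $\lambda(\overline f^{-1})=\lambda(\overline f)$, and where the inequality is an equality precisely when $c^{\pm 1},\overline f^{\pm 1}$ are pairwise distinct and disjoint from $Z$. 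Since $\lambda(c)\ge 0$, this already yields $|\nu_f(Y)|_\lambda\le|Y|_\lambda$, with strict inequality whenever $\lambda(c)>0$ or a coincidence occurs.

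It then remains to analyze the equality case, in which $\lambda(c)=0$ and no coincidences occur. Here $\lambda(\overline f)=\lambda(f)>0$, so $\overline f^{\pm 1}\in\nu_f(Y)_+$, while $c^{\pm 1}$ contribute $0$ and hence lie outside $\nu_f(Y)_+$. Passing to positive parts, the transformation deletes the pair $f^{\pm 1}\in Y_+$ and inserts $\overline f^{\pm 1}$, leaving $|\nu_f(Y)_+|=|Y_+|-2+2=|Y_+|$. The point requiring most care, and where I expect the subtlety to lie, is the bookkeeping of coincidences: one must verify that if some of $c^{\pm 1},\overline f^{\pm 1}$ already occur in $Z$ then the $\lambda$-length strictly drops, so that such degenerate configurations fall under the first alternative rather than corrupting the cardinality count of the second. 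This is handled uniformly by the displayed inequality, since any coincidence strictly decreases the set-sum defining $|\nu_f(Y)|_\lambda$.
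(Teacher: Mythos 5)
Your argument for part (1) is correct and is the natural one: completeness gives $c = com(f,f^{-1})^{-1} \in G = \langle Y \rangle$, hence $\overline{f} = c \ast f \ast c^{-1} \in \langle Y \rangle$, and the two mutual inclusions follow. The core computation for part (2) is also right: $\lambda(f) = 2\lambda(c) + \lambda(\overline{f})$ yields $|\nu_f(Y)|_\lambda \le |Y|_\lambda - 2\lambda(c)$, which correctly reduces everything to the analysis of the equality case. (The survey itself states the lemma with a citation to \cite{KMRS:2012} and gives no proof, so this review is on correctness alone.)

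The genuine flaw is in your final coincidence bookkeeping. The claim that ``any coincidence strictly decreases the set-sum defining $|\nu_f(Y)|_\lambda$'' is false when the coinciding element has zero $\lambda$-length. Concretely, write $Z = Y - \{f^{\pm 1}\}$ and suppose $\lambda(c) = 0$, $c^{\pm 1} \in Z$, and $\overline{f}^{\pm 1} \notin Z$ --- a perfectly possible configuration (take $c \in Y$ of height less than $n$, so $\lambda(c)=0$, and $f = c^{-1} \circ \overline{f} \circ c$ with $\overline{f}$ of height $n$). Absorbing $c^{\pm 1}$ into $Z$ changes nothing numerically, since those elements contribute $0$ to the sum; hence $|\nu_f(Y)|_\lambda = |Z|_\lambda + 2\lambda(\overline{f}) = |Z|_\lambda + 2\lambda(f) = |Y|_\lambda$. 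This is an equality configuration, not a strict drop, so your dichotomy misclassifies it, and your equality analysis --- which explicitly assumes ``no coincidences occur'' --- leaves part (2) unproved exactly there. The repair is short: equality holds precisely when $\lambda(c) = 0$ and $\overline{f}^{\pm 1} \notin Z$ (a coincidence involving $\overline{f}^{\pm 1}$ genuinely forces a strict drop, because then $\lambda(\overline{f}) = \lambda(f) > 0$ is already counted inside $|Z|_\lambda$; and $\lambda(c)>0$ forces a strict drop by your main bound). In that equality case one has $Y_+ = Z_+ \cup \{f^{\pm 1}\}$ and $\nu_f(Y)_+ = Z_+ \cup \{\overline{f}^{\pm 1}\}$ regardless of whether $c^{\pm 1} \in Z$, since elements of zero $\lambda$-length never enter the positive part; hence $|\nu_f(Y)_+| = |Z_+| + 2 = |Y_+|$, exactly as in your clean case. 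One further minor point: you should also justify that $c^{\pm 1}, \overline{f}^{\pm 1}$ are four distinct elements; this follows from reducedness of $c^{-1} \circ \overline{f} \circ c$ (which rules out $\overline{f} = c^{\pm 1}$) together with torsion-freeness.
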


We write $Y \to Y'$ ($Y \to^\ast Y'$) if $Y'$ is obtained from $Y$ by a single (finitely many)
elementary transformation, that is, $\to^\ast$ is the transitive closure of the relation $\to$.
We call a generating set $Y$ of $G$ {\it transformation-reduced}  if none of the transformations
$\mu, \eta, \nu$ can be applied to $Y$. Recall that the binary relation $\to^\ast$ is called {\it
terminating} if there is no an infinite sequence of finite subsets $Y_i, i\in \N$, of $G$ such that
$Y_i \to Y_{i+1}$ for every $i \in \N$, i.e., every rewriting system $Y_1 \to Y_2 \to \ldots$ is
finite. We say that $\to^\ast$ is {\em uniformly terminating} if for every finite set $Y$ of $G$
there is a  natural number $n_Y$ such that every rewriting system starting at $Y$ terminates in at
most $n_Y$ steps.

\begin{prop} \cite{KMRS:2012}
\label{pr:min}
The following hold:
\begin{enumerate}
\item [1)] The relation $\to^\ast$ is uniformly terminating. Moreover, for any
finite subset $Y$ of $G$ one has $n_Y \leqslant (|Y|_\lambda)^3$. In particular,
there exists a finite transformation-reduced $Z \subset G$ which can be
obtained from $Y$ in not more than $(|Y|_\lambda)^3$ steps.
\item [(2)] If $Z$ is a transformation-reduced finite subset of $G$ then:
\begin{itemize}
\item[(a)] all elements of $Z_+$ are cyclically reduced;
\item[(b)] if $f,g \in Z_+^{\pm 1},\ f \neq g$ then $\lambda(com(f,h\ast g)) = 0$
for any $h \in \langle Z_0 \rangle$;
\item[(c)] if $f \in Z_+^{\pm 1}$ and $\lambda(com(f,h\ast f)) > 0$ for some
$h \in \langle Z_0 \rangle$ then $\lambda(com(f,h\ast f)) = \lambda(f)$.
\end{itemize}
\item [(3)] If $Z$  is a transformation-reduced finite subset of $G$ then one can add to $Z$
finitely many elements $h_1, \ldots, h_m \in G_{n-1}$ such that $T = Z \cup \{h_1, \ldots, h_m\}$
is transformation-reduced and satisfies the following condition
\begin{itemize}
\item[(d)] if $f \in T_+^{\pm 1}$ and $\lambda(com(f,h\ast f)) > 0$ for some
$h \in \langle T_0 \rangle$ then $\lambda(com(f,h\ast f)) = \lambda(f)$ and
$f^{-1} \ast h \ast f \in \langle T_0 \rangle$.
\end{itemize}
\end{enumerate}
\end{prop}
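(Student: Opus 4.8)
The plan is to dispose of (1) and (2) quickly by reading off the behaviour of the three moves from Lemmas \ref{le:mu}--\ref{le:nu}, and to reserve the real work for the saturation argument in (3).

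For (1) I would exhibit a ranking function on finite subsets of $G$ that strictly decreases under every elementary transformation. With $M=|Y|_\lambda$, assign to a finite $Y$ the lexicographically ordered triple
$$\Phi(Y)=\bigl(|Y|_\lambda,\ |Y|_\lambda-|Y_+|,\ \kappa(Y)\bigr),$$
where $\kappa(Y)$ counts the elements of $Y_+$ that are not cyclically reduced. Each coordinate is a non-negative integer bounded by $M$: we have $|Y_+|\leqslant|Y|_\lambda$ since every $g\in Y_+$ has $\lambda(g)\geqslant 1$, and $\kappa(Y)\leqslant|Y_+|\leqslant M$. Lemma \ref{le:mu} shows $\mu$ drops the first coordinate; Lemma \ref{le:eta} shows $\eta$ either drops the first coordinate or fixes it and raises $|Y_+|$, hence drops the second; Lemma \ref{le:nu} shows $\nu$ either drops the first coordinate or fixes the first two while removing one non-cyclically-reduced element, hence drops the third. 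Thus $\Phi$ strictly decreases at every step and ranges over $\{0,\ldots,M\}^3$, so the rewriting stops after at most $(M+1)^3$ steps; the stated cubic bound follows after a routine sharpening of the count. This proves uniform termination and the reachability of a transformation-reduced set.

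For (2) I argue by contraposition from the definitions of the moves, first noting $Z_+=Z_+^{-1}$ (because $Z=Z^{-1}$ and $\lambda(g^{-1})=\lambda(g)$), so $Z_+^{\pm 1}=Z_+$. If some $f\in Z_+$ were not cyclically reduced, $\nu_f$ would apply, giving (a). If for distinct $f,g\in Z_+$ and some $h\in\langle Z_0\rangle$ we had $\lambda(\mathrm{com}(f,h\ast g))>0$, then $\mu_{f,g,h}$ would apply in one of its two branches, giving (b). For (c), writing $f=\mathrm{com}(f,h\ast f)\circ\tilde f$ and using that $\lambda$ is additive along $\circ$ and non-negative on prefixes yields $\lambda(\mathrm{com}(f,h\ast f))\leqslant\lambda(f)$; were this value strictly between $0$ and $\lambda(f)$, then $\eta_{f,h}$ would apply, so it must equal $\lambda(f)$.

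For (3), first observe that $\lambda(\mathrm{com}(f,h\ast f))=\lambda(f)$ with $h\in G_{n-1}$ forces, via $f=u\circ f_1$ and $h\ast f=u\circ f_2$ with $\lambda(u)=\lambda(f)$, that $f_1,f_2\in G_{n-1}$ and $f^{-1}\ast h\ast f=f_1^{-1}\ast f_2\in G_{n-1}$, so $\mathrm{ht}(f^{-1}\ast h\ast f)<n=\mathrm{ht}(f)$. By Lemma \ref{le:1} any two such conjugating elements commute, so
$$C_f=\{h\in G_{n-1}\mid f^{-1}\ast h\ast f\in G_{n-1}\}$$
is abelian; being abelian it lies in a maximal abelian subgroup, which by Proposition \ref{pr:1} is free abelian of finite rank, so $C_f$ is finitely generated. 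I would then take finite generating sets of each $\langle Z_0\rangle\cap C_f$, conjugate them by $f$, and let $\{h_1,\ldots,h_m\}$ collect these over all $f\in Z_+$, setting $T=Z\cup\{h_1,\ldots,h_m\}$; since each added element has $\lambda=0$ we get $T_+=Z_+$, preserving (a). The main obstacle is then to verify that $T$ is still transformation-reduced and satisfies (d), both of which require controlling the enlarged subgroup $\langle T_0\rangle$: the danger is that the new conjugates make $\mu$ or $\eta$ applicable again, or produce some $h\in\langle T_0\rangle\cap C_f$ whose conjugate escapes $\langle T_0\rangle$, forcing an iteration. To close this off I would invoke the CSA/malnormality property of maximal abelian subgroups of $\mathbb{Z}^n$-free groups together with Lemmas \ref{le:0} and \ref{le:2}: malnormality makes the subgroups $C_f$ and their $f$-conjugates for distinct $f$ meet only trivially, so that after cancellation any element of $\langle T_0\rangle$ lying in $C_f$ is already expressible through $Z_0\cap C_f$ and the generators added for that same $f$; this confines the conjugation to a single maximal abelian subgroup, yields $f^{-1}\ast(\langle T_0\rangle\cap C_f)\ast f\subseteq\langle T_0\rangle$, and simultaneously rules out new $\mu$ and $\eta$ moves, with finite rank guaranteeing termination of any residual iteration. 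I expect that making the cancellation and normal-form bookkeeping of this malnormality argument precise is where essentially all the difficulty lies.
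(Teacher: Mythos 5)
Your parts (1) and (2) are correct and use the paper's lemmas exactly as intended: the lexicographic ranking $\Phi(Y)=(|Y|_\lambda,\ |Y|_\lambda-|Y_+|,\ \kappa(Y))$ strictly decreases under $\mu$, $\eta$, $\nu$ by Lemmas \ref{le:mu}, \ref{le:eta}, \ref{le:nu} (the discrepancy between your $(M+1)^3$ and the stated $M^3$ is a harmless counting issue), and (2) is indeed just the contrapositive of applicability of the three moves, using additivity and non-negativity of $\lambda$ on $\circ$-decompositions. In part (3) your opening reductions are also right: $\lambda(com(f,h\ast f))=\lambda(f)$ with $h\in G_{n-1}$ forces $f^{-1}\ast h\ast f=f_1^{-1}\ast f_2\in G_{n-1}$, and $C_f=\{h\in G_{n-1}\mid f^{-1}\ast h\ast f\in G_{n-1}\}$ is abelian by Lemma \ref{le:1} (applicable since $f\in Z_+$ has $ht(f)=n$ and is cyclically reduced by (a)) and finitely generated by Proposition \ref{pr:1}.

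The gap is in how you finish (3), and it is twofold. First, adjoining only the $f$-conjugates of generators of $\langle Z_0\rangle\cap C_f$ is what creates the iteration you then have to fight, and it is avoidable: since $C_f$ is defined relative to the fixed ambient group $G_{n-1}$, it does not change when $Z_0$ is enlarged by elements of $G_{n-1}$; so one should adjoin generating sets of the full groups $C_f$ for all $f\in Z_+^{\pm 1}$ (note $f^{-1}\ast C_f\ast f=C_{f^{-1}}$, so the conjugates are automatically included). Then any $h\in\langle T_0\rangle$ with $\lambda(com(f,h\ast f))=\lambda(f)$ lies in $C_f\subseteq\langle T_0\rangle$ and $f^{-1}\ast h\ast f\in C_{f^{-1}}\subseteq\langle T_0\rangle$, so (d) holds by construction with no iteration at all. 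Second, and more seriously, the mechanism you invoke to control the enlargement — that malnormality forces the $C_f$ for distinct $f\in Z_+^{\pm 1}$ to intersect trivially — is false: two distinct elements of $Z_+$ may conjugate the very same centralizer of $G_{n-1}$ down into $G_{n-1}$ (Theorem \ref{th:main2}(5) explicitly permits a centralizer $C_{i,j}$ to be associated with more than one stable letter), in which case $C_f=C_g\neq 1$ for $f\neq g$; malnormality of maximal abelian subgroups does not separate these groups, since they can lie in one maximal abelian subgroup. Consequently the step that actually carries the weight of (3) — verifying that no new $\mu$- or $\eta$-move becomes applicable for $h\in\langle T_0\rangle\setminus\langle Z_0\rangle$, i.e. that conditions (b) and (c) survive the enlargement — has neither a proof nor a workable strategy in your proposal; this requires a genuine cancellation analysis of products of elements of $\langle Z_0\rangle$ with elements of the abelian groups $C_f$ (via Lemmas \ref{le:0}--\ref{le:2} and cyclic reducedness of $Z_+$), which is precisely the part you defer.
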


A finite set $Y$ of $G$ is called {\em reduced} if it satisfies the conditions (a) - (d) from
Proposition \ref{pr:min}.

\subsubsection{Minimal sets of generators and pregroups}
\label{subsec:minimal}

Let $Z$ be a finite reduced generating set of $G$. Put
$$P_Z = \{g \ast f \ast h \mid f \in Z_+^{\pm 1},\ g,h \in \langle Z_0 \rangle \} \cup \langle Z_0
\rangle.$$
Multiplication $\ast$ induces a partial multiplication (which we again denote by $\ast$) on $P_Z$
so that for $p, q \in P_Z$ the product $p \ast q$ is defined in $P_Z$ if and only if $p \ast q \in
P_Z$. Notice, that $P_Z$ is closed under inversion.

\begin{lemma} \cite{KMRS:2012}
\label{le:claim}
Let $x = h_1(x) \ast f_x \ast h_2(x),\ y = h_1(y) \ast f_y \ast h_2(y) \in P_Z$, where $h_i(x),
h_i(y) \in \langle Z_0 \rangle,\ i = 1,2$ and $f_x, f_y \in Z_+^{\pm 1}$. Then $x \ast y \in P_Z$
if and only if $f_x = f_y^{-1}$ and $f_x \ast (h_2(x) \ast h_1(y)) \ast f_x^{-1} \in \langle Z_0
\rangle$.
\end{lemma}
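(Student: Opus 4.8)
Let $Z$ be a finite reduced generating set of $G$, and set
$$P_Z = \{g \ast f \ast h \mid f \in Z_+^{\pm 1},\ g,h \in \langle Z_0 \rangle \} \cup \langle Z_0 \rangle.$$
Given $x = h_1(x) \ast f_x \ast h_2(x)$ and $y = h_1(y) \ast f_y \ast h_2(y)$ in $P_Z$, with $h_i(x), h_i(y) \in \langle Z_0 \rangle$ and $f_x, f_y \in Z_+^{\pm 1}$, I must show $x \ast y \in P_Z$ if and only if $f_x = f_y^{-1}$ and $f_x \ast (h_2(x) \ast h_1(y)) \ast f_x^{-1} \in \langle Z_0 \rangle$.

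**Overall approach.** The plan is to compute the product $x \ast y$ as an infinite word and read off its height-$n$ syllable structure, using that $\lambda(g) = 0$ exactly for $g \in \langle Z_0\rangle$ while $\lambda(f) > 0$ for $f \in Z_+^{\pm 1}$. Because $Z$ is reduced, Proposition~\ref{pr:min}(2)(b)--(d) tightly controls cancellation between elements of $Z_+^{\pm1}$ after inserting a factor from $\langle Z_0\rangle$: for $f \ne g$ in $Z_+^{\pm1}$ the common prefix $\lambda(com(f, h\ast g)) = 0$, so essentially no height-$n$ cancellation occurs, whereas for $f = g^{-1}$ condition (d) tells us $f^{-1}\ast h\ast f \in \langle Z_0\rangle$ whenever the prefix is long. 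The key computation is the middle junction $f_x \ast h_2(x) \ast h_1(y) \ast f_y$, since the outer factors $h_1(x)$ and $h_2(y)$ lie in $\langle Z_0\rangle$ and contribute no height-$n$ length.

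**Key steps in order.** First I would reduce to analyzing $w := f_x \ast (h_2(x)\ast h_1(y)) \ast f_y$, writing $h := h_2(x) \ast h_1(y) \in \langle Z_0\rangle$, so $x \ast y = h_1(x) \ast w \ast h_2(y)$ with $h_1(x), h_2(y) \in \langle Z_0\rangle$; thus $x\ast y \in P_Z$ iff $w$ has the form $(\text{element of }\langle Z_0\rangle) \ast f \ast (\text{element of }\langle Z_0\rangle)$ with $f \in Z_+^{\pm1}$, or $w \in \langle Z_0\rangle$. Second, for the forward direction, suppose $f_x \ne f_y^{-1}$. By Proposition~\ref{pr:min}(2)(b), $\lambda(com(f_x, h\ast f_y)) = 0$ (after first cyclically reducing — using (2)(a) that $Z_+^{\pm1}$ elements are cyclically reduced), so the product $f_x \ast h \ast f_y$ retains positive $\lambda$-contributions from \emph{both} $f_x$ and $f_y$, giving $\lambda(w) \geq 2\min\{\lambda(f_x),\lambda(f_y)\} > 0$ in a way that forces $w$ to contain two distinct height-$n$ syllables; this cannot be written with a single $f \in Z_+^{\pm1}$, so $w \notin P_Z$ in the required sense. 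Hence $f_x = f_y^{-1}$. Third, once $f_x = f_y^{-1} =: f$, I compute $w = f \ast h \ast f^{-1}$ and show $\lambda(w) = 0$, i.e. $w \in \langle Z_0\rangle$, precisely when $f \ast h \ast f^{-1} \in \langle Z_0\rangle$ — which is the stated condition — and in that case $x\ast y = h_1(x)\ast w \ast h_2(y) \in \langle Z_0\rangle \subseteq P_Z$. Fourth, for the converse, if $f_x = f_y^{-1}$ and $f_x \ast h \ast f_x^{-1} \in \langle Z_0\rangle$, then directly $x\ast y \in \langle Z_0\rangle \subseteq P_Z$.

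**Main obstacle.** The delicate point is controlling the height-$n$ cancellation at the junction when $f_x = f_y^{-1}$ but the condition $f\ast h\ast f^{-1}\in\langle Z_0\rangle$ \emph{fails}: I must show $w \notin P_Z$, i.e. that $w$ is not expressible as $g' \ast f' \ast h'$ for a \emph{single} $f' \in Z_+^{\pm1}$. Here condition (d) of Proposition~\ref{pr:min} is essential — it guarantees that if $\lambda(com(f, h\ast f)) > 0$ then in fact the full common prefix has $\lambda$-length $\lambda(f)$ and $f^{-1}\ast h\ast f \in \langle Z_0\rangle$, so a ``partial'' cancellation leaving a short residual $Z_+$-syllable cannot occur. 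The argument must carefully track the lexicographic structure of $\mathbb{Z}^n$: cancellation happens in stages through the convex chain $E_0 < \cdots < E_n$, and I expect the cleanest route is to apply $\lambda = \pi_n \circ |\cdot|$ to collapse to the top $\mathbb{Z}$-coordinate, reducing the syllable-counting to Nielsen-type length bookkeeping exactly as in the free-group case, with completeness of $G$ ensuring all the relevant common prefixes and residuals genuinely lie in $G$.
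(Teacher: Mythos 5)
Your skeleton matches what a correct proof needs: reduce to the junction word $w = f_x \ast h \ast f_y$ with $h = h_2(x) \ast h_1(y)$, observe that $x \ast y \in P_Z$ if and only if $w \in P_Z$, note that the ``if'' direction is immediate, and split the ``only if'' direction according to whether $f_x = f_y^{-1}$, using properties (b) and (d) of Proposition \ref{pr:min}. However, the decisive step is left unproved. To show $w \notin P_Z$ you must exclude $w = g_1 \ast f' \ast g_2$ for \emph{every} $f' \in Z_+^{\pm 1}$ and $g_1, g_2 \in \langle Z_0 \rangle$, and your justification --- that $w$ ``contains two distinct height-$n$ syllables'' which ``cannot be written with a single $f \in Z_+^{\pm 1}$'', to be checked by collapsing to the top coordinate and Nielsen-type bookkeeping --- is precisely the assertion that needs proof, and $\lambda$-bookkeeping alone cannot deliver it. Indeed, when $f_x \neq f_y^{-1}$ property (b) gives $\lambda(w) = \lambda(f_x) + \lambda(f_y)$, while any $g_1 \ast f' \ast g_2$ has $\lambda$-length $\lambda(f')$; nothing numerical forbids the coincidence $\lambda(f') = \lambda(f_x) + \lambda(f_y)$ (or $\lambda(f') = 2\lambda(f_x)$ when $f_x = f_y^{-1}$ with infinitesimal cancellation), and a generator $f'$ is just an infinite word which a priori could have exactly the shape of $w$. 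The missing argument is structural: write $w$ reduced in two ways, $w = u^{-1} \circ v$ from $f_x \ast (h \ast f_y)$ (so $f_x = u^{-1} \circ d^{-1}$, $h \ast f_y = d \circ v$ with $d = com(f_x^{-1}, h \ast f_y)$, $\lambda(d) = 0$ by (b), $\lambda(u) = \lambda(f_x)$, $\lambda(v) = \lambda(f_y)$), and $w = e_1 \circ f_2 \circ e_2$ from $g_1 \ast f' \ast g_2$ (so $f' = c_1 \circ f_2 \circ c_2^{-1}$, $g_1 = e_1 \circ c_1^{-1}$, $g_2 = c_2 \circ e_2$, with $\lambda(e_i) = \lambda(c_i) = 0$). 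Since $e_1, e_2$ are infinitesimal they are absorbed: $u^{-1} = e_1 \circ u_1$, $v = v_1 \circ e_2$, hence $f_2 = u_1 \circ v_1$ and $f' = c_1 \circ u_1 \circ v_1 \circ c_2^{-1}$. Now $h' := g_1^{-1} = c_1 \circ e_1^{-1} \in \langle Z_0 \rangle$ satisfies $h' \ast f_x = c_1 \circ u_1 \circ d^{-1}$, so $\lambda(com(f', h' \ast f_x)) \geqslant \lambda(u_1) = \lambda(f_x) > 0$, and property (b) forces $f' = f_x$; only after this identification does counting give the contradiction $\lambda(f_x) = \lambda(f') = \lambda(w) = \lambda(f_x) + \lambda(f_y)$, i.e. $\lambda(f_y) = 0$. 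Without this comparison of the hypothetical $f'$ against $f_x$ through an auxiliary element of $\langle Z_0 \rangle$, the forward direction does not go through.

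There is also a circularity you must remove. You take as known that ``$\lambda(g) = 0$ exactly for $g \in \langle Z_0 \rangle$'' and in your third step deduce $w \in \langle Z_0 \rangle$ from $\lambda(w) = 0$. The implication $\lambda(g) = 0 \Rightarrow g \in \langle Z_0 \rangle$ is the equality $G_{n-1} = \langle Z_0 \rangle$ of Corollary \ref{co:1}, which is deduced from Theorem \ref{pr:pregroup} --- the very theorem this lemma exists to prove; at this point only the trivial direction $g \in \langle Z_0 \rangle \Rightarrow \lambda(g) = 0$ is available. The circularity is also avoidable: when $f_x = f_y^{-1}$, either $\lambda(com(f_x^{-1}, h \ast f_x^{-1})) > 0$, and then property (d) applied to $f_x^{-1}$ gives the \emph{membership} $f_x \ast h \ast f_x^{-1} \in \langle Z_0 \rangle$ directly, or the cancellation is infinitesimal, $\lambda(w) = 2\lambda(f_x) > 0$, and $w \notin P_Z$ by the identification argument above. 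Finally, a small slip: the cancellation in $f_x \ast (h \ast f_y)$ is measured by $com(f_x^{-1}, h \ast f_y)$, not by $com(f_x, h \ast f_y)$, so (b) must be applied to the pair $f_x^{-1} \neq f_y$, which is exactly what the hypothesis $f_x \neq f_y^{-1}$ supplies.
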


Now we are ready to prove the main technical result of this section.

\begin{theorem} \cite{KMRS:2012}
\label{pr:pregroup}
Let $G$ be a finitely generated complete $\Z^n$-free group. Then:
\begin{enumerate}
\item[(1)] $P_Z$ forms a pregroup with respect to the multiplication $\ast$ and inversion;
\item[(2)] the inclusion $P_Z \to G$ extends to the group isomorphism $U(P_Z) \to G$, where $U(P)$
is the universal group of $P_Z$;
\item[(3)] if $(g_1,\ldots,g_k)$ is a reduced $P_Z$-sequence for an element $g \in G$ then
$$\lambda(g) = \sum^k_{i=1} \lambda(g_i).$$
\end{enumerate}
\end{theorem}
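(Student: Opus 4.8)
The plan is to establish (1) and (2) together, and then (3) separately. For (1) and (2), the crucial point is that $P_Z$ is a subset of the group $G$ that generates $G$ (it contains $Z$), is closed under inversion, and carries the multiplication and inversion induced from $G \subseteq CDR(\mathbb{Z}^n,X)$. Hence Lemma \ref{le:pregr_P5} is applicable, and the whole of (1) together with (2) will reduce to verifying the single axiom (P5) for $P_Z$: that lemma yields at once that $P_Z$ is a pregroup \emph{and} that it is a pregroup structure for $G$, i.e.\ that the canonical map $U(P_Z) \to G$ is an isomorphism, which is exactly assertion (2).

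To verify (P5) I would first record the basic dichotomy among elements of $P_Z$: every element is either of \emph{type} $0$, lying in $\langle Z_0\rangle$, or of \emph{type} $1$, of the form $h_1 \ast f \ast h_2$ with $f \in Z_+^{\pm 1}$ and $h_1,h_2 \in \langle Z_0\rangle$. Two structural facts follow immediately from Lemma \ref{le:claim} together with the fact that $\langle Z_0\rangle$ is a subgroup closed under $\ast$: first, if at least one factor of a product is of type $0$ then the product lies in $P_Z$ (the type-$0$ factor being absorbed into an adjacent $\langle Z_0\rangle$-part); second, a product of two type-$1$ elements, whenever it lies in $P_Z$, is necessarily of type $0$. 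Consequently a product $x \ast y$ fails to lie in $P_Z$ only when both $x$ and $y$ are of type $1$ and the cancellation condition of Lemma \ref{le:claim} fails. With this in hand, (P5) becomes a short case analysis: assuming $uv,vw,wz$ are all defined, suppose $uvw=(uv)w$ is not defined; then $uv$ and $w$ are both of type $1$, and since $uv$ is defined and of type $1$ exactly one of $u,v$ is of type $0$. The subcase ``$u$ of type $0$, $v$ of type $1$'' is excluded, because then the definedness of $vw$ transfers verbatim, via Lemma \ref{le:claim}, to definedness of $(uv)w$; so $u$ is of type $1$ and $v$ of type $0$, and in this configuration one checks directly that the definedness of $wz$ transfers to definedness of $(vw)z=vwz$. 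Thus $uvw$ or $vwz$ is always defined, establishing (P5).

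For (3), note first that a reduced $P_Z$-sequence $(g_1,\dots,g_k)$ with $k \geq 2$ forces every $g_i$ to be of type $1$: each product $g_i \ast g_{i+1}$ is undefined in $P_Z$, which by the dichotomy above can happen only when both factors are of type $1$. Writing $g_i = h_1^{(i)} \ast f_i \ast h_2^{(i)}$ and collecting the $\langle Z_0\rangle$-parts, one obtains $g = c_0 \ast f_1 \ast c_1 \ast \cdots \ast f_k \ast c_k$ with $f_i \in Z_+^{\pm 1}$ and $c_i \in \langle Z_0\rangle$, where reducedness says precisely that whenever $f_i = f_{i+1}^{-1}$ one has $f_i \ast c_i \ast f_i^{-1} \notin \langle Z_0\rangle$. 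Since $\lambda$ vanishes on $\langle Z_0\rangle \leq G_{n-1}$ while $\lambda(f_i) > 0$, the claim $\lambda(g) = \sum_i \lambda(g_i)$ is equivalent to the assertion that the top-level lengths $\lambda(f_i)$ do not cancel across the junctions $f_i \ast c_i \ast f_{i+1}$. The heart of the matter is to show $\lambda(\mathrm{com}(f_i^{-1}, c_i \ast f_{i+1})) = 0$ at each junction. When $f_i^{-1} \neq f_{i+1}$ this is exactly condition (b) of Proposition \ref{pr:min}; when $f_i^{-1} = f_{i+1}$, conditions (c) and (d) say that a positive value here would force $f_i \ast c_i \ast f_i^{-1} \in \langle Z_0\rangle$, contradicting reducedness. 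With per-junction non-cancellation in place, I would finish by induction on $k$: the partial product $c_0 \ast f_1 \ast \cdots \ast f_j \ast c_j$ retains a top-level terminal occurrence of $f_j$ that serves as a buffer, so appending $f_{j+1}$ adds exactly $\lambda(f_{j+1})$, giving $\lambda(g) = \sum_i \lambda(f_i) = \sum_i \lambda(g_i)$ (the equalities $\lambda(g_i)=\lambda(f_i)$ following from the same non-cancellation since $h_1^{(i)},h_2^{(i)}$ have height below $n$).

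The main obstacle is the non-cancellation argument in (3). Because $\mathbb{Z}^n$ is non-Archimedean, a word $f_i$ of height $n$ carries lower-height detail, and the hard part will be to argue carefully that cancellation at a junction cannot propagate past an intact top-level occurrence of $f_i$ into the earlier factors; this is where the reduced conditions (a)--(d) and the additivity $\lambda(u) = \lambda(g)+\lambda(h)$ for $u = g \circ h$ must be combined, in effect reproving a Britton-type normal-form statement at the level of the top convex quotient $E_n/E_{n-1}$. Parts (1) and (2), by contrast, are essentially formal once Lemma \ref{le:pregr_P5} and Lemma \ref{le:claim} are invoked.
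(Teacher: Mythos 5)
Your verification of (P5) via the type-$0$/type-$1$ dichotomy and Lemma \ref{le:claim} is correct, and your argument for (3) is essentially the paper's own: the paper likewise reduces to $\lambda(com(g_i^{-1},g_{i+1}))=0$ at each junction, using reducedness of the $P_Z$-sequence together with conditions (b)--(d) of Proposition \ref{pr:min}. The genuine gap is in (2). Lemma \ref{le:pregr_P5} cannot give you the isomorphism $U(P_Z)\to G$: its proof only observes that (P1)--(P4) hold automatically for a symmetric generating subset with induced multiplication, so together with your (P5) check it yields exactly assertion (1), that $P_Z$ is a pregroup, and nothing more. Indeed, the stronger conclusion stated in that lemma is false as written: take $G=\mathbb{Z}/5\mathbb{Z}=\langle a\rangle$ and $P=\{1,a,a^{-1}\}$ with the partial multiplication induced from $G$; this $P$ is symmetric, generates $G$, and satisfies (P1)--(P5) (it is precisely the free pregroup on $\{a\}$ of Example \ref{ex:pregr_1}), yet $U(P)\cong\mathbb{Z}\neq G$. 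Surjectivity of $U(P_Z)\to G$ is automatic, but injectivity is not, and no amount of checking pregroup axioms can rule out relations of $G$ among elements of $P_Z$ that do not follow from the partial multiplication.

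The missing step is precisely the substantive content of the paper's proof: one shows that all reduced $P_Z$-sequences representing the same element of $G$ have the same $P_Z$-length, i.e., that $P_Z$ is a reduced word structure for $G$, and then Theorem \ref{th:red_word_str} (Rimlinger) delivers (1) and (2) simultaneously. The paper's induction on the sum of lengths of two such sequences rests on the key fact that a reduced $P_Z$-sequence of length at least $2$ has non-trivial product in $G$ (a Britton-type statement, valid because $Z$ is a reduced set). Note that your proposal actually contains the means of repair: your part (3), whose proof does not depend on (1) or (2), shows that a reduced $P_Z$-sequence of length $k\geqslant 2$ consists of type-$1$ elements and its product $g$ satisfies $\lambda(g)=\sum_i\lambda(f_i)>0$, hence $g\neq 1$; this is exactly the Britton-type statement, and injectivity of $U(P_Z)\to G$ then follows since every non-trivial element of $U(P_Z)$ is the class of a reduced sequence whose image in $G$ is its $\ast$-product. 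So the correct logical order is the reverse of yours: establish the non-cancellation statement (3) first and derive (2) from it; as written, the claim that (1) and (2) are ``essentially formal'' consequences of (P5) is the point at which your proof fails.
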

\begin{proof}
Observe that $P_Z = P_Z^{-1} \subset G$ generates $G$ and every $g \in G$ corresponds to a finite
reduced $P_Z$-sequence
$$(u_1, u_2, \ldots, u_k),$$
where $u_i \in P_Z,\ i \in [1,k],\ u_i \ast u_{i+1} \notin P_Z,\ i \in [1,k-1]$ and $g = u_1 \ast u_2
\ast \cdots \ast u_k$ in $G$. By Theorem 2, \cite{Rimlinger:1987(2)}, to prove that $P_Z$ is a
pregroup and the inclusion $P_Z \to G$ extends to the isomorphism $U(P_Z) \to G$ it is enough to show
that all reduced $P_Z$-sequences representing the same element have the same $P_Z$-length.

Suppose two reduced $P_Z$-sequences
$$(u_1, u_2, \ldots, u_k),\ (v_1, v_2, \ldots, v_n)$$
represent the same element $g \in G$. That is,
$$(u_1 \ast \cdots \ast u_k) \ast (v_1 \ast \cdots \ast v_n)^{-1} = \varepsilon.$$
We use induction on $k+n$ to show that $k = n$. If the $P_Z$-sequence
$$(u_1, \ldots, u_k, v_n^{-1}, \ldots, v_1^{-1})$$
is reduced then
$$u_1 \ast \ldots \ast u_k \ast v_n^{-1} \ast \ldots \ast v_1^{-1} \neq \varepsilon$$
because $Z$ is a reduced set. Hence,
$$(u_1, \ldots, u_k, v_n^{-1}, \ldots, v_1^{-1})$$
is not reduced and $u_k \ast v_n^{-1} \in P_Z$. If $u_k = h_1 \ast f_1 \ast g_1,\ v_n = h_2 \ast f_2
\ast g_2$, where $h_i, g_i \in \langle Z_0 \rangle$ and $f_i \in Z_+^{\pm 1},\ i = 1,2$ then by Lemma
\ref{le:claim} $f_1 = f_2$ and $f_1 \ast (g_1 \ast g_2^{-1}) \ast f_2^{-1} = c \in \langle Z_0 \rangle$.
It follows that
$$(u_1, u_2, \ldots, u_{k-1} \ast (h_1 \ast c \ast h_2^{-1})),\ (v_1, v_2, \ldots, v_{n-1})$$
represent the same element $g \ast v_n^{-1} \in G$ and the sum of their lengths is less than $k + n$,
so the result follows by induction. Hence, (1) and (2) follow.

\smallskip

Finally we prove (3).

If $g_i = h_1(g_i) \ast f_{g_i} \ast h_2(g_i),\ i \in [1,k]$ then $\lambda(g_i) = \lambda(f_{g_i})$
because $\lambda(h_1(g_i)) = \lambda(h_2(g_i)) = 0$. On the other hand, since $Z$ is reduced and
$(g_1,\ldots,g_k)$ is a reduced $P_Z$-sequence then $\lambda(com(g_i^{-1},g_{i+1})) = 0$ for $i \in
[1,k-1]$. In other words $\lambda(g_i \ast g_{i+1}) = \lambda(g_i) + \lambda(g_{i+1})$ and the result
follows.
\end{proof}

\begin{cor} \cite{KMRS:2012}
\label{co:1}
$G_{n-1} = \langle Z_0 \rangle$.
\end{cor}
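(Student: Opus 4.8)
$G_{n-1} = \langle Z_0 \rangle$.

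The plan is to identify $G_{n-1}$ with the kernel of the auxiliary length function $\lambda$ and then exploit the additivity of $\lambda$ along reduced $P_Z$-sequences supplied by Theorem \ref{pr:pregroup}. First I would observe that, since $E_{n-1} = \langle a_1, \ldots, a_{n-1} \rangle$ is precisely the subgroup of $\mathbb{Z}^n$ on which the coordinate projection $\pi_n$ vanishes, an element $g \in G$ satisfies $ht(g) \leqslant n-1$ if and only if $|g| \in E_{n-1}$, that is, if and only if $\pi_n(|g|) = \lambda(g) = 0$. Hence $G_{n-1} = \{g \in G \mid \lambda(g) = 0\}$. The inclusion $\langle Z_0 \rangle \subseteq G_{n-1}$ is then immediate: every element of $Z_0$ has $\lambda$-value $0$ by definition, and because $\lambda(uv) \leqslant \lambda(u) + \lambda(v)$ and $\lambda(u^{-1}) = \lambda(u)$, the same holds for all of $\langle Z_0 \rangle$.

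For the reverse inclusion I would take an arbitrary $g \in G_{n-1}$, so $\lambda(g) = 0$, and use Theorem \ref{pr:pregroup}(2) to represent $g$ by a reduced $P_Z$-sequence $(g_1, \ldots, g_k)$ with $g = g_1 \ast \cdots \ast g_k$; part (3) of the same theorem then gives $\lambda(g) = \sum_{i=1}^k \lambda(g_i)$. The key point to verify is that every factor satisfies $\lambda(g_i) \geqslant 0$, with equality forcing $g_i \in \langle Z_0 \rangle$. Indeed, each $p \in P_Z$ is either an element of $\langle Z_0 \rangle$, in which case $\lambda(p) = 0$, or has the form $h_1 \ast f \ast h_2$ with $h_1, h_2 \in \langle Z_0 \rangle$ and $f \in Z_+^{\pm 1}$; in the latter case $\lambda(p) = \lambda(f)$ (exactly as computed inside the proof of Theorem \ref{pr:pregroup}(3), using $\lambda(h_1) = \lambda(h_2) = 0$), and $\lambda(f) > 0$ because $\lambda$ is invariant under inversion and $f$ or $f^{-1}$ lies in $Z_+$.

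Combining these observations, the relation $0 = \lambda(g) = \sum_i \lambda(g_i)$ with all summands non-negative forces $\lambda(g_i) = 0$ for every $i$, hence each $g_i$ must be of the first type, i.e. $g_i \in \langle Z_0 \rangle$. It follows that $g = g_1 \ast \cdots \ast g_k \in \langle Z_0 \rangle$, giving $G_{n-1} \subseteq \langle Z_0 \rangle$ and thus the desired equality. I do not anticipate a genuine obstacle here, as the corollary is a direct consequence of Theorem \ref{pr:pregroup}; the only step requiring care is the elementary bookkeeping that separates the two kinds of pregroup generators and confirms that $\lambda$ is strictly positive on the ``$Z_+$-type'' ones, so that a vanishing total $\lambda$-value leaves no room for a factor of height $n$.
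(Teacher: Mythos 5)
Your proof is correct and is essentially the intended argument: the paper states Corollary \ref{co:1} without a separate proof, as an immediate consequence of Theorem \ref{pr:pregroup}, and your route --- identifying $G_{n-1}$ with $\{g \in G \mid \lambda(g)=0\}$, then using the additivity $\lambda(g)=\sum_i \lambda(g_i)$ over a reduced $P_Z$-sequence together with $\lambda(h_1 \ast f \ast h_2)=\lambda(f)>0$ for $f \in Z_+^{\pm 1}$ --- is exactly the bookkeeping the authors themselves deploy (for instance in the regularity part of the proof of Theorem \ref{th:main}). No gaps.
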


\subsubsection{Algebraic structure of complete $\mathbb{Z}^n$-free groups}
\label{subsec:structure}

\begin{theorem} \cite{KMRS:2012}
\label{th:main}
Let $G$ be a finitely generated complete $\Z^n$-free group and let $Z$ be a reduced generating
set for $G$. Then $G$ has the following presentation
$$G = \langle H, Y \mid t_i^{-1} C_H(u_{t_i}) t_i = C_H(v_{t_i}),\ t_i \in Y^{\pm 1} \rangle,$$
where $Y = Z_+$ is finite, $H = G_{n-1} = \langle Z_0 \rangle$ is finitely generated and
$C_H(u_{t_i})$ $C_H(v_{t_i})$ are either trivial or finitely generated free abelian subgroups of
$H$. Moreover, $H$ has a regular free Lyndon length function in $\mathbb{Z}^{n-1}$.
\end{theorem}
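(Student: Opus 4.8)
The plan is to read the presentation off the pregroup $P_Z$ and its universal group, and then use the height machinery to identify the associated subgroups as centralizers. Throughout write $H=\langle Z_0\rangle$ and $Y=Z_+$; by Corollary \ref{co:1} we have $H=G_{n-1}$, finitely generated because $Z_0$ is finite, and $Y$ is finite by construction. By Theorem \ref{pr:pregroup}, $P_Z$ is a pregroup with $U(P_Z)\cong G$, and every element of $G$ has a reduced $P_Z$-sequence whose $\lambda$-lengths add up. Since each element of $P_Z$ lies either in $H$ or is of the form $g\ast f\ast h$ with $f\in Z_+^{\pm1}$ and $g,h\in H$, the pregroup is naturally stratified: the sub-pregroup of elements of height $\leqslant n-1$ is exactly $H$, while the positive generators $f$ raise the height to $n$. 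Hence $P_Z$ is of finite height in the sense of Rimlinger \cite{Rimlinger:1987}, so $U(P_Z)\cong G$ is the fundamental group of a graph of groups with a single vertex group $H$ and one loop per $t=f\in Y$. It then remains to identify the edge (associated) subgroups.

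\textbf{Associated subgroups are centralizers.} Fix $t=f\in Y=Z_+$; by Proposition \ref{pr:min}(2a), $f$ is cyclically reduced with $ht(f)=n$. Set
$$A_f=\{h\in H\mid f^{-1}\ast h\ast f\in H\}.$$
This is a subgroup of $H$, and it is abelian: if $h_1,h_2\in A_f$ then $ht(h_i)<ht(f)$ and $ht(f^{-1}\ast h_i\ast f)<ht(f)$, so Lemma \ref{le:1} gives $[h_1,h_2]=\varepsilon$. Consequently $A_f$ lies in a single maximal abelian subgroup of $H$; by the transitivity of commutation and the CSA property of $\Lambda$-free groups (Section \ref{subs:lambda-free}) this subgroup is $C_H(u_f)$ for any non-trivial $u_f\in A_f$, and by Proposition \ref{pr:1} it is free abelian of finite rank, in fact of rank $\leqslant n-1$ as it sits inside the $\Z^{n-1}$-free group $H$. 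Lemma \ref{le:0} forces the choice of $u_f$: if $f^{-1}\ast h\ast f$ has height $<n$ then $f$ begins with $h^{m}$ for every $m$, so $u_f$ is precisely the initial period of the infinite word $f$, and the \emph{whole} of $C_H(u_f)$ is conjugated into $H$ by $f$ thanks to condition (d) of Proposition \ref{pr:min}(3); thus $A_f=C_H(u_f)$. Setting $v_f=f^{-1}\ast u_f\ast f\in H$ (the terminal period of $f$) and repeating the argument for $f^{-1}$ yields $B_f:=f^{-1}A_f f=C_H(v_f)$, again finitely generated free abelian; when $A_f$ is trivial the corresponding relation is vacuous.

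\textbf{Matching the relations and the length function on $H$.} Next I verify that the only relations required are $rel(H)$ together with the HNN relations $t^{-1}C_H(u_t)t=C_H(v_t)$. Lemma \ref{le:claim} records exactly when a product $x\ast y$ of elements $x=h_1(x)\ast f_x\ast h_2(x)$, $y=h_1(y)\ast f_y\ast h_2(y)$ of $P_Z$ stays in $P_Z$: precisely when $f_x=f_y^{-1}$ and $f_x\ast(h_2(x)\ast h_1(y))\ast f_x^{-1}\in H$, i.e. when $h_2(x)\ast h_1(y)$ lies in the relevant centralizer. These are exactly the pinch moves of an HNN extension, a reduced $P_Z$-sequence is a Britton-reduced word in $H$ and the stable letters, and the additivity of $\lambda$-length in Theorem \ref{pr:pregroup}(3) is Britton's Lemma for this presentation; hence the pregroup presentation $\langle P_Z\mid xy=z\rangle$ collapses to
$$G=\langle H,\,Y\mid t_i^{-1}C_H(u_{t_i})t_i=C_H(v_{t_i}),\ t_i\in Y^{\pm1}\rangle.$$
For the length function, restrict $|\cdot|$ to $H=G_{n-1}$: every $h\in H$ has $|h|\in E_{n-1}\cong\Z^{n-1}$, so the restriction is a free length function into $\Z^{n-1}$. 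It is regular because $H$ is complete: for $h_1,h_2\in H$ the word $com(h_1,h_2)$ is an initial subword of $h_1$, hence lies in $G$ with height $\leqslant n-1$, so $com(h_1,h_2)\in G_{n-1}=H$, which is exactly the regularity axiom (L6) for the restricted function.

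The main obstacle is the middle step: showing that $A_f$ equals the \emph{full} centralizer $C_H(u_f)$ (and $B_f$ the full $C_H(v_f)$) rather than a proper abelian subgroup, and that no relations beyond the conjugation ones survive. Both hinge on the fine structure of $f$ as an infinite word, the existence of a well-defined initial period $u_f$ supplied by Lemma \ref{le:0}, and the closure condition (d) of the reduced generating set in Proposition \ref{pr:min}, which together guarantee that every element commuting with $u_f$ is conjugated back into $H$ by $f$.
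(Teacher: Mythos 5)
Your proposal is correct and follows essentially the same route as the paper's proof: both start from $U(P_Z)\cong G$ (Theorem \ref{pr:pregroup}), use Lemma \ref{le:claim} to see that the only relations beyond $rel(H)$ are conjugation relations attached to the letters of $Y^{\pm 1}$, identify the associated subgroups as full centralizers via the height lemmas, and deduce regularity of the induced $\mathbb{Z}^{n-1}$-valued length function on $H=G_{n-1}$ from completeness of $G$. Two small caveats: the Rimlinger finite-height detour is unnecessary and not fully justified (the specific one-vertex-with-loops shape does not follow from Rimlinger's theorem alone; it is your later relation-matching paragraph that actually establishes the presentation), and for the key inclusion $C_H(u_f)\subseteq A_f$ the precise tool is Lemma \ref{le:2} (which is what the paper invokes) rather than Lemma \ref{le:0} together with condition (d) of Proposition \ref{pr:min}, since the hypothesis $\lambda(com(f,h\ast f))>0$ of condition (d) is itself nontrivial to verify for an arbitrary $h\in C_H(u_f)$.
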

\begin{proof} From Theorem \ref{pr:pregroup} it follows that $G = U(P_Z)$, where
$$P_Z = \{g \ast f \ast h \mid f \in Z_+^{\pm 1},\ g,h \in \langle Z_0 \rangle \} \cup \langle Z_0
\rangle.$$
It follows that every element $g$ of $G$ can be represented as a reduced $P_Z$-sequence $g = (g_1,
\ldots,g_k)$, where $g_i \in P_Z - \langle Z_0 \rangle$ for any $i \in [1,k]$ and $g_i \ast g_{i+1}
\notin P_Z$ for any $i \in [1,k-1]$ if $k > 1$ (if $k = 1$ then $g_1$ may be in $\langle Z_0 \rangle$).
In fact (see \cite{Rimlinger:1987}), we have
$$G = U(G) = \langle P_Z \mid x y = z,\ (x, y, z \in P_Z\ {\rm and}\ x \ast y = z) \rangle.$$
Denote $H = \langle Z_0 \rangle$ and $Y = Z_+$. By Corollary \ref{co:1} we have $H = G_{n-1}$.

At first observe that $P_Z$ is infinite but for each $p \in P_Z$ either $p \in H$ or $p = h_1(p)
\ast f_p \ast h_2(p)$, where $f_p \in Y^{\pm 1}$ and $h_i(p) \in H,\ i = 1,2$. Hence, every $p \in
P_Z$ can rewritten in terms of $Y^{\pm}$ and finitely many generators of $H$. On the other hand, if
$x,y,z \in P_Z$ and $x \ast y = z$ then one of the three of them is in $H$ and without loss of
generality we can assume $z \in H$. Hence, either $x,y$ are in $H$ too, or $x,y \notin H$ and
assuming $x = h_1(x) \ast f_x \ast h_2(x),\ y = h_1(y) \ast f_y \ast h_2(y)$, where $h_i(x), h_i(y)
\in H,\ i = 1,2,\ f_x, f_y \in Y^{\pm 1}$ by Lemma \ref{le:claim} we get $f_x = f_y^{-1}$ and $f_x
\ast (h_2(x) \ast h_1(y)) \ast f_x^{-1} \in H$. Hence, every relator $x y = z$, where $x, y, z \in
P_Z$ and $x \ast y = z$ can be rewritten as
$$f_x \ast u_{x,y} \ast f_x^{-1} = v_{x,y,z},$$
where $f_x \in Y^{\pm 1}$ and $u_{x,y}, v_{x,y,z} \in H$. By Lemma \ref{le:2}, for each $q \in
Y^{\pm 1}$ there exists $u_q \in H$ such that for each $u \in H$ we have $q \ast u \ast q^{-1} \in H$
if and only if $u \in C_H(u_q)$. Since $Z$ is reduced it follows that for each $q \in Y^{\pm 1}$
both $C_H(u_q)$ and $q \ast C_H(u_q) \ast q^{-1}$ are in $H$ and also note that $q \ast C_H(u_q)
\ast q^{-1}$ is a centralizer of some element in $H$. Hence, every
$$f_x \ast u_{x,y} \ast f_x^{-1} = v_{x,y,z},$$
is a consequence of
$$f_x \ast C_H(u_x) \ast f_x^{-1} = C_H(v_x),$$
where $u_x,v_x$ depend only on $f_x$. Thus,
$$G = \langle Y, H \mid t_i^{-1} C_H(u_{t_i}) t_i = C_H(v_{t_i}),\ t_i \in Y^{\pm 1} \rangle,$$
where $Y$ is finite, $H$ is finitely generated and $C_H(u_y), C_H(v_y)$ are finitely generated
abelian (see Proposition \ref{pr:1}).

Finally, we have to show that $H$ has a regular free Lyndon length function in $\mathbb{Z}^{n-1}$.
Indeed, since $H = G_{n-1} < G$ then the free Lyndon length function with values in $\mathbb{Z}^{n-1}$
is automatically induced on $H$. We just have to check if it is regular.

Take $g, h \in H$ and consider $com(g,h)$. Since the length function on $G$ is regular then $com(g,h)
\in G = U(P_Z)$ and $com(g,h)$ can be represented by the reduced $P_Z$-sequence $(g_1, \ldots, g_k)$.
By Theorem \ref{pr:pregroup}, (3) it follows that
$$\lambda(com(g,h)) = \sum^k_{i=1} \lambda(g_i).$$
But if $\lambda(com(g,h)) > 0$ then $\lambda(g),\lambda(h) > 0$ - contradiction with the choice of
$g$ and $h$. Hence, $\lambda(g_i) = 0,\ i \in [1,k]$ and it follows that $k = 1$. Thus, $com(g,h) =
g_1 \in H$. This completes the proof of the theorem.
\end{proof}

\begin{theorem} \cite{KMRS:2012}
\label{th:main2}
Let $G$ be a finitely generated complete $\Z^n$-free group. Then $G$ can be represented as a union
of a finite series of groups
$$G_1 < G_2 < \cdots < G_n = G,$$
where $G_1$ is a free group of finite rank, and
$$G_{i+1} = \langle G_i, s_{i,1},\ \ldots,\ s_{i,k_i} \mid s_{i,j}^{-1}\ C_{i,j}\ s_{i,j} =
\phi_{i,j}(C_{i,j}) \rangle,$$
where for each $j \in [1,k_i],\ C_{i,j}$ and $\phi_{i,j}(C_{i,j})$ are cyclically reduced centralizers
of $G_i$, $\phi_{i,j}$ is an isomorphism, and the following conditions are satisfied:
\begin{enumerate}
\item[(1)] $C_{i,j} = \langle c^{(i,j)}_1, \ldots, c^{(i,j)}_{m_{i,j}} \rangle,\ \phi_{i,j}(C_{i,j})
= \langle d^{(i,j)}_1, \ldots, d^{(i,j)}_{m_{i,j}} \rangle$, where $\phi_{i,j}(c^{(i,j)}_k) =
d^{(i,j)}_k,\ k \in [1,m_{i,j}]$ and
$$ht(c^{(i,j)}_k) = ht(d^{(i,j)}_k) < ht(d^{(i,j)}_{k+1}) = ht(c^{(i,j)}_{k+1}),\ k \in [1,m_{i,j}-1],$$
$$ht(s_{i,j}) > ht(c^{(i,j)}_k),$$
\item[(2)]  $|\phi_{i,j}(w)| = |w|$ for any $w \in C_{i,j}$,
\item[(3)] $w$ is not conjugate to $\phi_{i,j}(w)^{-1}$ in $G_i$ for any non-trivial $w \in C_{i,j}$,
\item[(4)] if $A, B \in \{C_{i,1}, \phi_{i,1}(C_{i,1}), \ldots, C_{i,k_i}, \phi_{i,k_i}(C_{i,k_i})\}$
then either $A = B$, or $A$ and $B$ are not conjugate in $G_i$,
\item[(5)] $C_{i,j}$ can appear in the list
$$\{ C_{i,k}, \phi_{i,k}(C_{i,k}) \mid k \neq j \}$$
not more than twice.
\end{enumerate}
\end{theorem}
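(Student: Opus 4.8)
The plan is to argue by induction on $n$, using Theorem~\ref{th:main} as the single inductive step and reading off the normalization conditions (1)--(5) from the combinatorics of infinite words. For the base case $n=1$ the group $G$ is complete $\Z$-free, hence free of finite rank (Theorem~\ref{th:lyndon}, together with the fact that subgroups of free groups are free), so $G = G_1$ and there is nothing to extend. For the inductive step I would fix a reduced generating set $Z$ of $G$, which exists by Proposition~\ref{pr:min}, and apply Theorem~\ref{th:main}. This presents $G$ as a multiple HNN extension of $H = G_{n-1} = \langle Z_0 \rangle$ with stable letters drawn from $Y = Z_+$ and with associated subgroups that are trivial or finitely generated free abelian centralizers of $H$; moreover it guarantees that $H$ itself carries a regular free Lyndon length function in $\Z^{n-1}$, i.e. $H$ is a finitely generated complete $\Z^{n-1}$-free group. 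Applying the induction hypothesis to $H$ yields a series $G_1 < \cdots < G_{n-1} = H$ of the required shape, and I would then set $G_n = G$, letting the top layer $s_{n-1,1}, \ldots, s_{n-1,k_{n-1}}$ be one representative from each inverse pair in $Z_+$, with $C_{n-1,j} = C_H(u_{t_j})$ and $\phi_{n-1,j}$ the conjugation isomorphism onto $C_H(v_{t_j})$.

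It then remains to verify conditions (1)--(5) for the top layer, the lower layers inheriting them by induction. Condition (1) concerns the generators of the abelian associated subgroups: each $C_{n-1,j}$ is finitely generated free abelian (Proposition~\ref{pr:1}) and consists of cyclically reduced words (Lemma~\ref{le:cycl} and the remark following it), so its intersections with the subgroups $G_k = \{g : ht(g) \leqslant k\}$ are pure (a power and its root have the same height), hence direct summands; choosing a basis adapted to this filtration yields generators of strictly increasing heights, the single-generator-per-level property reflecting that centralizers in a complete $\Z^n$-free group grow by one new $\Z$-summand of a fresh height at each centralizer extension. The inequality $ht(s_{n-1,j}) > ht(c^{(n-1,j)}_k)$ is then automatic, since $s_{n-1,j}$ is precisely the new letter of height $n$ raising the level from $G_{n-1}$ to $G_n$. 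Condition (2) records that $\phi_{n-1,j}$ is length-preserving: this is built into the embedding into $CDR(\Z^n, X)$, where conjugation by a stable letter is realized by an isometry of infinite words exactly as in Example~\ref{ex:3}, giving $|\phi_{n-1,j}(w)| = |w|$ for every $w \in C_{n-1,j}$.

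Conditions (3), (4) and (5) are the separatedness and non-redundancy requirements, and these I expect to be the main obstacle. For (3) one must exclude that a nontrivial $w \in C_{n-1,j}$ is conjugate to $\phi_{n-1,j}(w)^{-1}$ in $G_{n-1}$; such a conjugacy would, after adjoining $s_{n-1,j}$, force either an inversion or a Baumslag--Solitar-type relation, contradicting the torsion-freeness and the prohibition on Baumslag--Solitar subgroups valid in any $\Lambda$-free group, which cannot occur since $G$ is $\Z^n$-free. For (4) and (5) I would exploit the CSA property, i.e. malnormality of maximal abelian subgroups of the $\Lambda$-free group $G$: two distinct but conjugate associated subgroups collapse the corresponding maximal abelian subgroups, so after merging stable letters whose associated subgroups are conjugate and reindexing, one arranges that any two of the $C_{n-1,j}$, $\phi_{n-1,j}(C_{n-1,j})$ are either equal or non-conjugate, while malnormality together with the structure of the centralizer extensions bounds how often a single centralizer can serve as an associated subgroup, yielding the ``at most twice'' statement of (5).

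The delicate part throughout is exactly this regrouping of the stable letters produced by Theorem~\ref{th:main} into a \emph{reduced} HNN (graph-of-groups) presentation: one must perform the merging without destroying the length-preserving property (2) or the presentation of $G$, while keeping the associated subgroups maximal abelian and of the prescribed height profile. Verifying that malnormality genuinely forces the required non-conjugacy in (4) and the bounded multiplicity in (5), and that separatedness (3) is a consequence of freeness of the action, is where the substantive work lies; the skeleton of the argument, however, is simply induction through the convex chain $E_1 < \cdots < E_n$ driven by Theorem~\ref{th:main}.
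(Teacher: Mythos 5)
Your skeleton coincides with the paper's: induction on $n$ driven by Theorem \ref{th:main}, base case from Lyndon's theorem, and verification of (1)--(5) for the top layer, with the lower layers handled by the inductive hypothesis. Your filtration argument for (1) is sound, and your argument for (3) is a genuinely different, and valid, route: the paper proves (3) by a direct computation with heights and common initial segments of infinite words, whereas you note that $g^{-1} w g = \phi(w)^{-1}$ with $g \in G_i$ would make $w$ conjugate to $w^{-1}$ inside $G_{i+1} \leqslant G$, which is impossible in a torsion-free CSA group (equivalently, by the exclusion of $BS(1,-1)$ subgroups in $\Lambda$-free groups). That is cleaner than the paper's computation. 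One caveat on (2): your appeal to Example \ref{ex:3} runs in the wrong direction --- that example \emph{assumes} $|u| = |v|$ in order to build an embedding, while here one must \emph{deduce} length preservation from the given embedding. The paper's actual argument is that after first normalizing both centralizers to be cyclically reduced (conjugating them and adjusting the stable letters accordingly --- a step your write-up passes over), the relation $t_j^{-1} \ast a \ast t_j = b$ between cyclically reduced infinite words forces $|a| = |b|$.

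The genuine gap is (5). Malnormality cannot produce the ``at most twice'' bound, because (5) is not forced on decompositions satisfying (1)--(4): take $G_i = F(a, b, c_1, d_1, \ldots, c_4, d_4)$, $C = \langle ab \rangle$, and four stable letters $t_k$ with $t_k^{-1} (ab) t_k = c_k d_k$. Each step is a length-preserving separated HNN extension, so by Theorem \ref{th:HNN_Z^n} the resulting group is $\Z^{n'}$-free --- in particular CSA with malnormal maximal abelian subgroups --- and conditions (1)--(4) hold, yet $C$ occurs as an associated subgroup of four distinct stable letters, so it appears three times in the list of condition (5). Hence (5) must be \emph{arranged}, not deduced from malnormality, and your proposal supplies no mechanism for the rearrangement (in the example one would have to replace $t_2, t_3, t_4$ by $t_1^{-1}t_2,\ t_2^{-1}t_3,\ t_3^{-1}t_4$, re-chaining the extensions). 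What actually yields (5) in the paper is the combinatorics of the \emph{reduced} generating set $Y \cup \{t_1, \ldots, t_p\}$ supplied by Theorem \ref{th:main}: if $t_k \neq t_j$ also conjugates $C_{G_i}(u_{t_j})$ into $G_i$, an analysis of $com(t_j, t_k)$ using heights and minimality of the generating set shows $c(t_j, t_k) = 0$, whence $t_j$ and $t_k$ must begin with powers of $c$ and of $c^{-1}$ respectively, where $c$ is the top-height generator of the cyclically reduced centralizer; since only two signs are available, at most one such $t_k$ besides $t_j$ can exist. This infinite-words step is the missing substance of your proposal. The same remark applies, less critically, to (4), which the paper obtains not from CSA but by Tietze-rewriting conjugate centralizers into a single one, after which cyclic reducedness and length preservation must be restored.
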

\begin{proof} Existence of the series
$$G_1 < G_2 < \cdots < G_n = G,$$
where $G_{i+1},\ i \in [1,n-1]$ can be obtained from $G_i$ by finitely many HNN-extensions in which
associated subgroups are maximal abelian of finite rank follows by induction applying Theorem
\ref{th:main}. Also, observe that $G_1$ has a free length function with values in $\mathbb{Z}$, hence,
by the result of Lyndon \cite{Lyndon:1963} it follows that $G_1$ is a free group. Moreover, $G_1$ is of finite
rank by Theorem \ref{th:main}.

Now, consider $G_{i+1}$. By Theorem \ref{th:main} we can assume that
\begin{equation}
\label{eq:1}
G_{i+1} = \langle G_i, t_1, t_2, \ldots, t_p \mid t_j^{-1} C_{G_i}(u_{t_j}) t_j = C_{G_i}(v_{t_j})
\rangle,
\end{equation}
where
\begin{enumerate}
\item[(a)] all $t_j$ are cyclically reduced,
\item[(b)] $G_i = \langle Y \rangle,\ ht(t_j) > ht(G_i)$ and
$$Y \cup \{t_1, t_2, \ldots, t_p\}$$
is a reduced generating set for $G_{i+1}$.
\end{enumerate}
In particular, $Y \cup \{t_1, t_2, \ldots, t_p\}$ is reduced, that is, it has the properties listed in
Proposition \ref{pr:min}.

At first, we can assume that all $C_{G_i}(u_{t_j}),\ C_{G_i}(v_{t_j})$ are cyclically reduced.
Indeed, if not then by Lemma \ref{le:cycl} we have $C_{G_i}(u_{t_j}) = c^{-1} \circ B \circ c$,
where $B$ is cyclically reduced, $c \in G_i$ by regularity of the length function on $G_i$, and
$$(t_j^{-1} \ast c^{-1}) \ast B \ast (c \ast t_j) = C_{G_i}(v_{t_j}).$$
Thus, we can substitute $t_j$ by $c \ast t_j,\ C_{G_i}(u_{t_j})$ by $B$, and the same can be done
for $C_{G_i}(v_{t_j})$.

Observe that conjugation by $t_j$ induces an isomorphism between $C_{G_i}(u_{t_j})$ and
$C_{G_i}(v_{t_j})$, and since we can assume both centralizers to be cyclically reduced then from
$$t_j^{-1} \ast C_{G_i}(u_{t_j}) \ast t_j = C_{G_i}(v_{t_j})$$
it follows that for $a \in C_{G_i}(u_{t_j}),\ b \in C_{G_i}(v_{t_j})$ if $t_j^{-1} \ast a \ast t_j
= b$ then $|a| = |b|$. In particular, if
$$C_{G_i}(u_{t_j}) = \langle c^{(i,j)}_1, \ldots, c^{(i,j)}_{m_{i,j}} \rangle,$$
where we can assume $ht(c^{(i,j)}_k) < ht(c^{(i,j)}_{k+1})$ for $k \in [1, m_{i,j} - 1]$, then all
$d^{(i,j)}_k = t_j^{-1} \ast c^{(i,j)}_k \ast t_j$ generate $C_{G_i}(v_{t_j})$ and $|c^{(i,j)}_k|
= |d^{(i,j)}_k|$. This proves (1) and (2).

\smallskip

Suppose there exist $w_1 \in C_{G_i}(u_{t_j})$ and $g \in G_i$ such that $g^{-1} \ast w_1 \ast g =
w_2^{-1}$, where $w_2 = \phi_i(w_1) \in C_{G_i}(v_{t_j})$. Observe that either $ht(g) \leqslant ht(w_1)
= ht(w_2)$ and in this case $w_1$ is a cyclic permutation of $w_2^{-1}$, or $ht(g) > ht(w_1)$.
In the latter case, $g$ has any positive power of $w_1^\delta, \delta \in \{1,-1\}$ as an initial
subword and any positive power of $w_2^{-\delta}$ as a terminal subword. Without loss of generality
we can assume $\delta = 1$. Hence, $t_j \ast g^{-1} = t_j \circ g^{-1}$ and
$$(t_j \circ g^{-1})^{-1} \ast w_1 \ast (t_j \circ g^{-1}) = w_1^{-1}.$$
Consider $h = com(t_j \circ g^{-1},(t_j \circ g^{-1})^{-1})$. Observe that $h \in G_i$ and $|h^{-1}
\ast w_1 \ast h| = |w_1|$. Indeed, $w_1$ is cyclically reduced, so either $(t_j \circ g^{-1})^{-1}
\ast w_1 = (t_j \circ g^{-1})^{-1} \circ w_1$, or $w_1 \ast (t_j \circ g^{-1}) = w_1 \circ (t_j
\circ g^{-1})$. Assuming the latter (the other case is similar) we have
$$(t_j \circ g^{-1})^{-1} \ast w_1 \ast (t_j \circ g^{-1}) = (t_j \circ g^{-1})^{-1} \ast (w_1 \circ
(t_j \circ g^{-1}))$$
and from $|(t_j \circ g^{-1})^{-1} \ast w_1 \ast (t_j \circ g^{-1})| = |w_1|$ it follows that $(t_j
\circ g^{-1})^{-1}$ cancels completely in the product $(t_j \circ g^{-1})^{-1} \ast w_1 \ast (t_j
\circ g^{-1})$. Eventually, since $h$ is an initial subword of $t_j \circ g^{-1}$, it follows that
$h^{-1}$ cancels completely in the product $h^{-1} \ast (w_1 \circ h)$, so $|h^{-1} \ast w_1 \ast h|
= |w_1|$. Thus, if $w_3 = h^{-1} \ast w_1 \ast h$ then $h$ ends with any positive power of $w_3$.
We have $t_j \circ g^{-1} = h \circ f \circ h^{-1}$, where $f$ is cyclically reduced. But at the
same time we have $f^{-1} \ast w_3 \ast f = w_3^{-1}$ and this produces a contradiction. Indeed, if
$ht(f) \leqslant ht(w_3)$ then $w_3$ is a cyclic permutation of $w_3^{-1}$ which is impossible. On the
other hand, if $ht(f) > ht(w_3)$ then $f$ has any power of $w_3^\alpha, \alpha \in \{1,-1\}$ as an
initial subword, and any power of $w_3^{-\alpha}$ as a terminal subword - a contradiction with the
fact that $f$ is cyclically reduced. This proves (3).

\smallskip

To prove (4), assume that two centralizers from the list
$$C_{G_i}(u_{t_1}), \ldots, C_{G_i}(u_{t_p})$$
are conjugate in $G_i$. Denote $C_1 = C_{G_i}(u_{t_1}),\ C_2 = C_{G_i}(u_{t_2})$ and let $C_1 =
h^{-1} \ast C_2 \ast h$ for some $h \in G_i$. Hence, in (\ref{eq:1}) every entry of $C_1$ can be
substituted by $h^{-1} \ast C_2 \ast h$ and some of the elements $t_1, t_2, \ldots, t_p$ can be
changed accordingly.

\smallskip

Finally, assume that there exists $t_k \neq t_j$ such that $t_k^{-1} \ast C_{G_i}(u_{t_j}) \ast t_k
\leqslant G_i$. Suppose $c(t_j, t_k) > 0$ and denote $z = com(t_j, t_k)$. Observe that $ht(z) >
ht(C_{G_i}(u_{t_j}))$. If $ht(z) < ht(t_j)$ then $z$ conjugates $C_{G_i}(u_{t_j})$ into a cyclically
reduced centralizer $A$ of $G_i$ and $z$ has any positive power of some $a \in A,\ ht(a) = ht(A)$
as a terminal subword. But then $ht(z^{-1} \ast t_j) = ht(z^{-1} \ast t_k) = ht(t_j)$ and since both
$z^{-1} \ast t_j$ and $z^{-1} \ast t_k$ conjugate $A$ into a cyclically reduced centralizer of $G_i$
it follows that $z^{-1} \ast t_j$ and $z^{-1} \ast t_k$ have $a^{\pm 1}$ as an initial subword.
If $z^{-1} \ast t_j$ has $a$ as an initial subword and $z^{-1} \ast t_k$ has $a^{-1}$ as an initial
subword then $z \ast (z^{-1} \ast t_k) \neq z \circ (z^{-1} \ast t_k)$, and we have a contradiction.
If both $z^{-1} \ast t_j$ and $z^{-1} \ast t_k$ have $a$ as an initial subword then $z$ cannot be
$com(t_j,t_k)$ and again we have a contradiction. Thus, $ht(z) = ht(t_j)$, but it is possible only
if $t_j = t_k$ since $Y \cup \{t_1, t_2, \ldots, t_p\}$ is a minimal generating set, and again we
have a contradiction with our choice of $t_k$. It follows that $c(t_j, t_k) = 0$ and if $t_j$ begins
with $c \in C_{G_i}(u_{t_j})$ then $t_k$ begins with $c^{-1}$. It also follows that there can be
only one $t_k \neq t_j$ such that $t_k^{-1} \ast C_{G_i}(u_{t_j}) \ast t_k \leqslant G_i$.

This completes the proof of the theorem.
\end{proof}

\subsection{HNN-extensions of complete $\mathbb{Z}^n$-free groups}
\label{sec:embedding}

Let $H$ be a finitely generated complete $\mathbb{Z}^n$-free group. Observe that $\mathbb{Z}^n \simeq
\bigoplus_{i=0}^{n-1} \langle t^i \rangle$ which is a subgroup of $\Zt$, so we can always
assume that $H$ has a regular free length function with values in $\Zt$. On the other hand,
observe that for a finitely generated complete $\mathbb{Z}^n$-free group there exists $n \in
\mathbb{N}$ such that the length function takes values in $\mathbb{Z}^n$.

The main goal of this section is to prove the following result.

\begin{theorem} \cite{KMRS:2012}
\label{main4}
Let $H$ be a finitely generated complete $\mathbb{Z}^n$-free group. Let $A$ and $B$ be centralizers
in $H$ whose elements are cyclically reduced and such that there exists an isomorphism $\phi : A
\rightarrow B$ with the following properties
\begin{enumerate}
\item $a$ is not conjugate to $\phi(a)^{-1}$ in $H$ for any $a \in A$,
\item $|\phi(a)| = |a|$ for any $a \in A$.
\end{enumerate}
Then the group
\begin{equation}
\label{eq:G}
G = \langle H, z \mid z^{-1} A z = B \rangle,
\end{equation}
is a finitely generated complete $\Zt$-free group and the length function on $G$ extends
the one on $H$.
\end{theorem}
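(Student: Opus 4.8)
The plan is to realize $G$ explicitly as a subgroup of $CDR(\Zt, X)$ extending the given embedding of $H$, and then to read off completeness and $\Zt$-freeness from the combinatorics of infinite words. First I would use Theorem \ref{chis-cor-1} to view $H$ as a subgroup of $CDR(\Zt, X)$ on which the induced length function is free and regular; since $\mathbb{Z}^n \cong \bigoplus_{i=0}^{n-1}\langle t^i\rangle$ sits inside $\Zt$ as the polynomials of degree $\le n-1$, every element of $H$ has length in $E_{n-1}$. By Proposition \ref{pr:1} the associated subgroups $A$ and $B$ are free abelian of finite rank, and, arranging generators by height as in Theorem \ref{th:main2}, I would write $A=\langle a_1,\dots,a_m\rangle$, $B=\langle b_1,\dots,b_m\rangle$ with $b_i=\phi(a_i)$, the heights $\mathrm{ht}(a_i)=\mathrm{ht}(b_i)$ strictly increasing, all $a_i,b_i$ cyclically reduced, and $|a_i|=|b_i|$.

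The crux is the construction of the stable letter. Generalizing Example \ref{ex:HNN_inf_words} and using the $\Zt$-exponentiation of Subsection \ref{subs:cdr}, I would define an infinite word $\psi(z)\in CDR(\Zt,X)$ of length $t^{n}$ whose initial half consists of arbitrarily long positive powers assembled from the generators of $A$ and whose terminal half consists of the matching powers assembled from the generators of $B=\phi(A)$, glued along a seam in the middle. Because the $a_i$ pairwise commute and share a common cyclic decomposition (Lemma \ref{le:cycl}), these nested powers are consistently defined, and the hypothesis $|\phi(a)|=|a|$ makes $\psi(z)$ reduced with $|\psi(z)|=t^{n}$, strictly above every length occurring in $H$. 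Setting $\psi|_H=\mathrm{id}$, the central computation is the non-cancellation identity $a\circ\psi(z)=\psi(z)\circ\phi(a)$ for $a\in A$ — the initial $A$-powers and terminal $B$-powers match up, and $|a|=|\phi(a)|$ forces the lengths to agree — which yields $\psi(z)^{-1}\ast a\ast\psi(z)=\phi(a)$ and hence that $\psi$ respects the defining HNN relations.

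To see that $\psi$ is an isomorphism onto $\langle H,\psi(z)\rangle$ I would form the set
\[
P=\{\,g\ast\psi(z)^{\varepsilon}\ast h \mid g,h\in H,\ \varepsilon\in\{1,-1\}\,\}\cup H \subset CDR(\Zt,X),
\]
which satisfies $P=P^{-1}$ and generates $\langle H,\psi(z)\rangle$. Mirroring the proof of Theorem \ref{pr:pregroup}, I would verify that $P$ is a reduced word structure for $G$, i.e. that all reduced $P$-products representing the same infinite word have equal $P$-length; by Theorem \ref{th:red_word_str} this makes $P$ a pregroup structure whose universal group is $G$, so $G\cong\langle P\rangle\hookrightarrow CDR(\Zt,X)$. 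The length computation along a Britton-reduced form $h_0\ast\psi(z)^{\varepsilon_1}\ast h_1\ast\cdots\ast\psi(z)^{\varepsilon_k}\ast h_k$ is controlled by the separation lemmas for powers (Lemmas \ref{le:ext_centr_1} and \ref{le:ext_centr_2}): the only possible cancellations occur at junctions $\psi(z)^{\varepsilon}\ast h\ast\psi(z)^{\delta}$, and here condition (1) of the theorem — that $a$ is never conjugate to $\phi(a)^{-1}$ — guarantees that no pinch shortens the word below height $n$, so each form maps to a reduced infinite word of the predicted length. This simultaneously gives injectivity and the freeness axiom (L5), since $|\psi(g)^{2}|>|\psi(g)|$ for $g\neq 1$.

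Finally, for completeness I would check that $\langle P\rangle$ is subwords-closed, i.e. closed under taking longest common initial segments, so that the induced length function is regular: since $H$ is already complete and the only new cancellation patterns are the controlled seams inside $\psi(z)^{\pm1}$, a common initial subword of two elements of $\langle P\rangle$ is again of the form $g\ast\psi(z)^{\varepsilon}\ast h$ or lies in $H$. Combined with (L5) this shows $G$ is complete $\Zt$-free; as $\psi|_H$ is the identity, the length function on $G$ restricts to the given one on $H$, and $G$ is finitely generated by the generators of $H$ together with $z$. The main obstacle is the construction of $\psi(z)$ for a multi-generated abelian associated pair and the verification that the seams — both inside $\psi(z)$ and at the junctions $\psi(z)^{\varepsilon}\ast h\ast\psi(z)^{\delta}$ — never over-cancel; this is exactly where the separatedness hypothesis (1) and the length hypothesis (2) are consumed.
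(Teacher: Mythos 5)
Your overall architecture coincides with the paper's: embed $H$ into $CDR(\Zt,X)$ via Theorem \ref{chis-cor-1}, realize the stable letter as a ``connecting element'' whose initial half is built from powers on the $A$-side and whose terminal half from matching powers on the $B$-side, form the set $P = H \cup H\psi(z)^{\pm 1}H$, verify via Rimlinger's criterion (Theorem \ref{th:red_word_str}) that $P$ is a pregroup whose universal group is $G$, and deduce completeness/regularity from normal forms. This is exactly the route of Theorems \ref{th:U(P)} and \ref{th:Greg}, with one structural difference: the paper builds $s = s_{u^\alpha,v^\alpha}$ from the \emph{single} maximal-height generator $u$ of $A$, raised by $\Zt$-exponentiation to the power $\alpha = t^{n-ht(A)}$ so that $ht(u^\alpha)=n$, rather than from powers ``assembled from the generators of $A$''; it must then prove separately (Lemma \ref{le:4.2}) that conjugation by $s$ implements $\phi$ on \emph{all} of $A$, not merely on powers of $u$. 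Your multi-generator seam is left vague, but that is a secondary issue.

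The genuine gap is at the point you yourself call the crux. The bounded-cancellation (``stabilization'') behaviour at the seams --- that for each $g\in H$ the cancellation in $g\ast u^{k}$ stabilizes after finitely many steps, so that Britton-reduced $s$-forms map to honest reduced concatenations --- is \emph{not} a consequence of hypotheses (1) and (2) of the theorem, and the lemmas you invoke for it (Lemmas \ref{le:ext_centr_1} and \ref{le:ext_centr_2}) do not apply: they presuppose that the elements in question belong to a Lyndon's set, whose defining axioms (2) and (3) are precisely the bounded-cancellation statements you need to establish. The danger is concrete: $H$ is itself a union of HNN extensions (Theorem \ref{th:main2}), and the stable letters of the lower levels are infinite words beginning with arbitrarily large powers of the generators of the centralizers they are attached to (cf.\ Example \ref{ex:centr_ext_inf_words}). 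If $A$ is conjugate to such a centralizer, then $H$ contains an element $w$ having $u^{k}$ as an initial subword for \emph{every} $k\in\mathbb{N}$, and stabilization fails for $g=w^{-1}$; no appeal to (1) or (2) repairs this. The paper removes the obstruction before constructing $s$: using Remark \ref{rem:4.2} it replaces $A$ and $B$ by suitable conjugates, and the attached-element analysis (Lemmas \ref{le:attached}, \ref{le:attached2}, \ref{le:attached3} and Remark \ref{rem:attached}) guarantees that afterwards no element of $H$ has any positive power of $u$ as an initial subword, nor any positive power of $v$ as a terminal subword. Only then do the stabilization Lemmas \ref{le:stab0} and \ref{le:stab1} hold, and with them your normal-form, pregroup, and regularity steps go through. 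Without this normalization your proof cannot be completed as written.
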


\subsubsection{Cyclically reduced centralizers and attached elements}

From Theorem \ref{th:main2}, $H$ is union of the chain
$$F(X) = H_1 < H_2 < \cdots < H_n = H,$$
where
$$H_{i+1} = \langle H_i, s_{i,1},\ldots,\ s_{i,k_i} \mid s_{i,j}^{-1} C_{i,j} s_{i,j} = D_{i,j}
\rangle,$$
$C_{i,j},\ D_{i,j}$ are maximal abelian subgroups of $H_i$, and $ht(s_{i,j}) > ht(H_i)$ for any
$i \in [1,n-1],\ j \in [1,k_i]$.

Let $K$ be a cyclically reduced centralizer in $H$. It is easy to see that either $ht(K) = ht(H) =
n$, or $ht(K) < ht(H)$ and $K$ is a centralizer from $H_{n-1}$.

For a cyclically reduced centralizer $K$ of $H$ we define
$${\cal C}(K) = \{C_{i,j},\ D_{i,j}\} \cap \{ {\rm cyclically\ reduced\
centralizers\ conjugate\ to}\ K\ {\rm in}\ H\}.$$

\begin{lemma} \cite{KMRS:2012}
\label{le:attached}
Let $K$ be a cyclically reduced centralizer of $H$, and let ${\cal C}(K)$ be empty. Let $a$ be a
generator of $K$ of maximal height. Then there is no element in $H$ which has any positive power of
$a^{\pm 1}$ as an initial subword.
\end{lemma}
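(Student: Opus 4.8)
The plan is to prove the statement in the only non-vacuous form it can have: there is no $f\in H$ that has $a^{n}$ (respectively $a^{-n}$) as an initial subword for \emph{all} $n\in\mathbb{N}$; say for short that such an $f$ \emph{begins with $a^{\omega}$}. (Finite powers $a^{n}\in H$ trivially occur as initial subwords, so the content concerns unbounded powers, equivalently an infinite power $a^{\pm\alpha}$, $\alpha\in\Zt$, $\deg\alpha>0$.) I will treat the $a$-case, the $a^{-1}$-case being identical after replacing $a$ by $a^{-1}$ and using $C_H(a^{-1})=C_H(a)=K$. First note that if $f$ begins with $a^{\omega}$ then $ht(f)>ht(a)=k$, since the initial segments of $f$ have lengths $n\,|a|$ cofinal in the positive cone of the $k$-th convex factor, so $|f|\notin E_k$. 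In particular, if $ht(a)=n=ht(H)$ the statement is immediate, because no element of $H$ can have height greater than $n$.

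So assume $ht(a)=k<n$ and argue by induction on the length $n$ of the defining chain $F(X)=H_1<\cdots<H_n=H$ of Theorem~\ref{th:main2}, the base case $H=F(X)$ being clear as free-group elements have finite length. Since $a$ is the generator of $K$ of maximal height and $ht(a)\le n-1$, all generators of $K$ lie in $H_{n-1}$, so $K\subseteq H_{n-1}$; as $K$ is a maximal abelian centralizer we get $K=C_H(a)=C_{H_{n-1}}(a)$. Moreover ${\cal C}(K)=\emptyset$ descends to $H_{n-1}$: any associated subgroup $C_{i,j}$ or $D_{i,j}$ with $i\le n-2$ that is cyclically reduced and conjugate to $K$ in $H_{n-1}$ is an associated subgroup of the chain of $H$ as well and is conjugate to $K$ in $H\supseteq H_{n-1}$, hence would lie in ${\cal C}(K)$. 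Now suppose for contradiction that some $f\in H$ begins with $a^{\omega}$. If $f\in H_{n-1}$ the inductive hypothesis applied to $H_{n-1}$ and $K$ gives a contradiction.

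It remains to treat $f\notin H_{n-1}$, so $ht(f)=n$. Write $f$ in reduced HNN form over $H_{n-1}$, $f=g_0\,s_{j_1}^{\varepsilon_1}\,g_1\cdots s_{j_r}^{\varepsilon_r}\,g_r$ with $g_i\in H_{n-1}$ and $r\ge 1$, where $s_j=s_{n-1,j}$, $ht(s_j)=n$, and $C_j,D_j$ are the associated subgroups. By the explicit infinite-word realisation of the stable letters (as in Examples~\ref{ex:centr_ext_inf_words} and~\ref{ex:HNN_inf_words}), $s_{j_1}^{\varepsilon_1}$ begins with an infinite power of the maximal-height generator $c$ of $C_{j_1}$ (if $\varepsilon_1=+1$) or of $D_{j_1}$ (if $\varepsilon_1=-1$). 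The key step is that the infinite prefix of $f$ must be produced by this first stable letter: $g_0\in H_{n-1}$ does not begin with $a^{\omega}$ by the inductive hypothesis, so $com(g_0,a^{\omega})$ carries only a finite power of $a$; using that the generating set is reduced (Proposition~\ref{pr:min}) and the non-cancellation of separated products (Lemmas~\ref{le:ext_centr_1} and~\ref{le:ext_centr_2}), the initial segment of $f$ equals $g_0'\circ(\text{initial segment of }s_{j_1}^{\varepsilon_1})$ after the bounded cancellation between $g_0$ and $s_{j_1}^{\varepsilon_1}$. Hence, after a finite prefix, the tail of $f$ coincides with a conjugate of an infinite power of $c^{\pm1}$. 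Comparing the two eventually periodic infinite words, both primitive by cyclic reducedness and the choice of representatives, forces $ht(c)=k$ and $c$ conjugate to $a$ in $H_{n-1}$, so $C_{j_1}=C_{H_{n-1}}(c)=C_H(c)$ (again by maximality, since $ht(c)=k\le n-1$) is a cyclically reduced centralizer of $H$ conjugate to $K=C_H(a)$. Thus $C_{j_1}\in{\cal C}(K)$ (resp. $D_{j_1}\in{\cal C}(K)$), contradicting ${\cal C}(K)=\emptyset$ and completing the induction.

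The main obstacle is exactly the bookkeeping in the last paragraph: making rigorous that the unbounded $a$-prefix is localised at the first stable letter, and that the only available source of an infinite height-$(k+1)$ initial segment is a stable letter whose associated subgroup is conjugate to $K$. This rests on the reduced normal-form properties of Proposition~\ref{pr:min}, the separation estimates of Lemmas~\ref{le:ext_centr_1}--\ref{le:ext_centr_2}, and Lemma~\ref{le:0}, which identifies elements beginning with all powers $a^{n}$ with those satisfying $ht(f^{-1}\ast a\ast f)<ht(f)$; once the source is pinned down, the conjugacy of its associated subgroup to $K$ is forced, contradicting the hypothesis.
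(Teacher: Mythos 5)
There is no proof of this lemma in the survey itself --- it is stated with a citation to \cite{KMRS:2012} --- so your proposal can only be measured against the machinery the paper makes available. Your reading of the statement is the intended one (the content is that no $f \in H$ has $a^{\pm m}$ as an initial subword for \emph{all} $m$), and your first two paragraphs are sound: such an $f$ satisfies $ht(f) > ht(a)$, the case $ht(a) = ht(H)$ is vacuous, and the hypotheses do descend to $H_{n-1}$ (conjugates of centralizers are centralizers, and $H_{n-1} = \{h \in H \mid ht(h) \leqslant n-1\}$ for the chain of Theorem \ref{th:main2}), so the induction is correctly set up.

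The genuine gap is the central claim of your last paragraph: that the cancellation between $g_0$ and $s_{j_1}^{\epsilon_1}$ is bounded, so that the initial segment of $f$ has the form $g_0' \circ (\mbox{initial segment of } s_{j_1}^{\epsilon_1})$. Your inductive hypothesis controls powers of $a^{\pm 1}$ only; it says nothing about powers of $c^{\pm 1}$, the maximal-height generator of $C_{j_1}$, and the hypothesis ${\cal C}(K) = \emptyset$ does not prevent elements of $H_{n-1}$ from beginning with arbitrarily large powers of $c^{\pm 1}$: indeed ${\cal C}$ of the centralizer of $c$ is typically non-empty ($C_{j_1}$ itself witnesses this), and lower-level stable letters attached to conjugates of $C_{j_1}$ are exactly such elements. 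Concretely, take $F(x,y) < H_2 = \langle F(x,y), s \mid s^{-1}xs = y \rangle < H_3 = \langle H_2, t \mid [t,x] = 1 \rangle$ (a length-preserving separated HNN extension followed by a centralizer extension, so $H_3$ is complete). By Lemma \ref{le:0} both $s$ and $t$ begin with every positive power of $x$, and $f = s^{-1}t$ is Britton-reduced over $H_2$ while $com(s,t)$ has $x^m$ as a prefix for every $m$; the decomposition you assert simply does not exist for such $f$. The references you lean on cannot close this: Lemmas \ref{le:ext_centr_1}--\ref{le:ext_centr_2} concern Lyndon's sets in the $\FZt$ construction, and their hypotheses (bounded inner products $c(u^{\alpha},g)$, no infinite-power subwords) are precisely the kind of statement Lemma \ref{le:attached} is meant to supply, so invoking them is circular; likewise the concatenation normal form of Lemma \ref{le:p1} is proved in Section \ref{subs:main_constr} \emph{using} Lemma \ref{le:attached} via Remark \ref{rem:attached}. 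A workable version of your argument must treat the unbounded-cancellation case head on: by completeness $z = com(g_0^{-1}, s_{j_1}^{\epsilon_1}) \in H$ with $ht(z) \leqslant n-1$, hence $z \in H_{n-1}$ and the inductive hypothesis applies to $g_0 \ast z \in H_{n-1}$; one then needs a height dichotomy on the surviving prefix, Lemma \ref{le:LS} together with the CSA property to identify $C_H(c)$ with a conjugate of $K$, and a separate argument (e.g.\ via translation lengths) excluding the possibility that $C_H(c)$ properly contains $C_{j_1}$ when level $n-1$ is a centralizer extension. That is a missing piece of the proof, not bookkeeping.
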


If ${\cal C}(K) \neq \emptyset$ then for $C \in {\cal C}(K)$ we call $w$ from the list $s_{i,j},\ i
\in [1,n-1],\ j \in [1,k_i]$ {\em attached to $C$} if $ht(w) > ht(C)$ and $ht(w^{-1} \ast C \ast w)
= ht(C)$. Observe that by Theorem \ref{th:main2}, $C$ can have at most two attached elements of the
same height, and if $w_1,w_2,\ w_1 \neq w_2$ are attached to $C$ and $ht(w_1) = ht(w_2)$ then
$w_1^{-1} \ast w_2 = w_1^{-1} \circ w_2$.

Below we are going to distinguish attached elements in the following way. Suppose $C \in {\cal C}(K)$,
and let $w$ be an element attached to $C$. If $c$ is a generator of $C$ of maximal height then we
call $w$ {\em left-attached to $C$ with respect to $c$} if $c^{-1} \ast w = c^{-1} \circ w$, and we
call $w$ {\em right-attached to $C$ with respect to $c$} if $c \ast w = c \circ w$.

\begin{lemma} \cite{KMRS:2012}
\label{le:attached2}
Let $K$ be a cyclically reduced centralizer of $H$, and let $C \in {\cal C}(K)$. Let $c$ be a
generator of $C$ of maximal height. If there exists a right(left)-attached to $C$ with respect to
$c$ element, then there exists $D \in {\cal C}(K)$ and its generator $d$ of maximal height such
that $c$ is conjugate to $d$ in $H$ and $D$ does not have right(left)-attached with respect to $d$
elements.
\end{lemma}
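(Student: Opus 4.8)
The plan is to show that following the conjugation by a right-attached element always carries $C$ to another member of ${\cal C}(K)$, and that this process cannot continue indefinitely, so a centralizer with no right-attached element must eventually be reached. First I would record the basic dichotomy. If $w$ is attached to $C$ and $c$ is a generator of $C$ of maximal height, then $ht(w) > ht(c)$ and $ht(w^{-1} \ast c \ast w) = ht(c) < ht(w)$, so Lemma \ref{le:0} applies with $f = w$, $h = c$ and yields, for every $n$, a factorization $w = c^n \circ w_n$ or $w = c^{-n} \circ w_n$. Thus $w$ begins with arbitrarily high powers of $c$ or of $c^{-1}$, so $w$ is right-attached or left-attached with respect to $c$, the two cases being mutually exclusive. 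In particular a right-attached $w$ begins with positive powers of $c$; writing $w = s_{i,j}$, the fact that $w$ begins with powers of the top generator of the centralizer it conjugates from forces $C$ to be that source $C_{i,j}$, whence $C' := w^{-1} \ast C \ast w = D_{i,j}$ is again one of the listed, cyclically reduced centralizers. Since $C'$ is conjugate to $C$, hence to $K$, we get $C' \in {\cal C}(K)$, and $c' := w^{-1} \ast c \ast w$ is a maximal-height generator of $C'$ conjugate to $c$ (lengths being preserved by the defining isomorphism, $ht(c') = ht(c)$).

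Next I would iterate. Starting from $C^{(0)} = C$, $c^{(0)} = c$ and a right-attached $w_0$, set $C^{(1)} = w_0^{-1} \ast C^{(0)} \ast w_0$ and $c^{(1)} = w_0^{-1} \ast c^{(0)} \ast w_0$. If $C^{(1)}$ has no right-attached element with respect to $c^{(1)}$, stop and take $D = C^{(1)}$, $d = c^{(1)}$; otherwise choose such a $w_1$ and repeat, producing pairs $(C^{(r)}, c^{(r)})$ with $C^{(r)} \in {\cal C}(K)$, all of height $ht(C)$, and each $c^{(r)}$ conjugate to $c$. Each pair lies in the finite set of listed centralizers equipped with a maximal-height generator (finitely many, up to lower-height terms and sign), so if the process never stopped it would repeat a pair, say $C^{(p)} = C^{(q)}$ with $c^{(p)} = c^{(q)}$ for some $p < q$.

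The heart of the argument, and the step I expect to be the main obstacle, is to derive a contradiction from such a repetition. Put $g = w_p \ast w_{p+1} \ast \cdots \ast w_{q-1}$; then $g^{-1} \ast C^{(p)} \ast g = C^{(q)} = C^{(p)}$, so $g$ normalizes $C^{(p)}$. Being a centralizer, $C^{(p)}$ is maximal abelian, and by the CSA property of $\Lambda$-free groups (property (d) of the theorem in Section \ref{subs:lambda-free}) its normalizer is $C^{(p)}$ itself; hence $g \in C^{(p)}$ and $ht(g) = ht(C^{(p)})$. On the other hand each $w_r$ satisfies $ht(w_r) > ht(C^{(r)}) = ht(C^{(p)})$, and I would show that the reduced form of $g$ still has height $> ht(C^{(p)})$, contradicting $g \in C^{(p)}$. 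The only way the height could drop is through cancellation at a junction $w_r \ast w_{r+1}$: here $w_r$ ends with a power of $c^{(r+1)}$, while $w_{r+1}$, being right-attached, begins with a positive power of $c^{(r+1)}$. I would invoke the orientation condition (3) of Theorem \ref{th:main2} — no non-trivial element of a centralizer is conjugate to the inverse of its image — to conclude that $w_r$ must end with a positive power of $c^{(r+1)}$ as well; the two meeting powers then add instead of cancelling, every junction survives, and $ht(g) > ht(C^{(p)})$.

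This orientation bookkeeping, carried out carefully with generators determined only up to lower-height terms, is the technical core of the proof: it is precisely where the separatedness hypotheses behind Theorem \ref{th:main2} are used to rule out catastrophic cancellation in the conjugator product. Once termination is established, the stopping centralizer $D = C^{(r)}$ with $d = c^{(r)}$ is conjugate to $C$ and has no right-attached element with respect to $d$, which is the assertion. The left-attached version is entirely symmetric, obtained by replacing $c$ by $c^{-1}$ throughout.
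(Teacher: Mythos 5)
The paper does not actually reproduce a proof of this lemma (it is quoted from \cite{KMRS:2012}), but your plan is sound and is essentially the argument of the original source: iterate $(C,c) \mapsto (w^{-1}\ast C\ast w,\; w^{-1}\ast c\ast w)$ along right-attached elements, check that each image stays in the finite set ${\cal C}(K)$, and rule out a cycle because the product $g$ of the conjugators would normalize --- hence, by the CSA/malnormality property, lie in --- a centralizer whose height it strictly exceeds.

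The one step you hang on the wrong peg is the orientation claim. Condition (3) of Theorem \ref{th:main2} (no nontrivial $a \in C_{i,j}$ is conjugate to $\phi_{i,j}(a)^{-1}$) does not by itself yield that $w_r$ ends with \emph{positive} powers of $c^{(r+1)}$. What does is condition (2) together with the defining relation: since $w_r$ is right-attached with respect to $c^{(r)}$ we have $c^{(r)}\ast w_r = c^{(r)}\circ w_r$, and $c^{(r)}\ast w_r = w_r \ast c^{(r+1)}$, so $|w_r \ast c^{(r+1)}| = |w_r| + |c^{(r)}| = |w_r| + |c^{(r+1)}|$, the last equality being the length-preservation condition (2); hence there is no cancellation in $w_r \ast c^{(r+1)}$, and the dichotomy of Lemma \ref{le:0} (applied to $w_r^{-1}$) then forces $w_r$ to end with positive rather than negative powers of $c^{(r+1)}$. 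With this substitution your junction analysis --- only bounded cancellation at each juncture, so $ht(g) > ht(C^{(p)})$, contradicting $g \in C^{(p)}$ --- goes through. One further point worth making explicit: your identification $C = C_{i,j}$ needs Lemma \ref{le:LS} (arbitrarily long common prefixes of powers force commutation, hence equality of the ambient centralizers) \emph{and} the height bound $ht(w) > ht(C)$ from the definition of an attached element, which is what rules out $C$ properly containing $C_{i,j}$ when $C_{i,j}$ is later absorbed into a bigger centralizer by a centralizer extension.
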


\begin{lemma} \cite{KMRS:2012}
\label{le:attached3}
Let $K$ be a cyclically reduced centralizer of $H$, and let $C \in {\cal C}(K)$. Let $c$ be a
generator of $C$ of maximal height. If there exists no right(left)-attached to $C$ with respect to
$c$ element, then there is no element $g \in H$ which has any positive power of $c$ as an initial
(terminal) subword.
\end{lemma}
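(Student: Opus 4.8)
The plan is to prove the contrapositive of the right/initial case; the left/terminal case then follows by passing to inverses, since $g$ ends in arbitrarily high positive powers of $c$ if and only if $g^{-1}$ begins in arbitrarily high positive powers of $c^{-1}$. So I assume there is $g \in H$ beginning with $c^m$ for every $m \in \mathbb{N}$, and I produce a positive stable letter that is right-attached to $C$ with respect to $c$. First I record that such a $g$ satisfies $ht(g) > ht(C)$: since $|c^m| = m|c|$ is an unbounded multiple of $|c|$ inside the convex subgroup $E_{ht(C)}$, the requirement $|g| \geqslant m|c|$ for all $m$ forces $|g| \notin E_{ht(C)}$. In particular $ht(C) < n$, so $C$ and $c$ already live in $H_{n-1}$, and the statement is vacuous when $ht(C) = ht(H)$.

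Next I induct on the length of the chain $F(X) = H_1 < \cdots < H_n = H$ of Theorem~\ref{th:main2}, the base case being a free group, where no $g$ of the required kind exists. Writing the top extension as an HNN extension over $H_{n-1}$ with stable letters $s_{n-1,j}$, I put $g$ into reduced (Britton) normal form $g = h_0 \ast s^{\varepsilon_1} \ast h_1 \ast \cdots \ast s^{\varepsilon_r} \ast h_r$ with $h_i \in H_{n-1}$. By the non-cancellation properties of reduced $R$-forms and the pregroup normal form (Lemmas~\ref{le:ext_centr_1}--\ref{le:ext_centr_3} and Theorem~\ref{pr:pregroup}), each initial syllable embeds as an honest initial subword of $g$ and the $\lambda$-lengths add along the product. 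Hence exactly one of two things happens: either $h_0 \in H_{n-1}$ already begins with arbitrarily high positive powers of $c$, in which case the inductive hypothesis applied to the complete $\mathbb{Z}^{n-1}$-free group $H_{n-1}$ (Theorem~\ref{th:main}) yields a right-attached element, which is a stable letter from a lower level of the chain and hence still right-attached to $C$ in $H$; or $h_0$ contributes only a bounded power of $c$ and the unbounded $c$-power spills past $h_0$, which (because $ht(s^{\varepsilon_1}) = n > ht(c)$) forces the first stable letter $s^{\varepsilon_1}$ itself to begin with $c^m$ for every $m$.

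In the remaining case I must recognise $s^{\varepsilon_1}$ as an attached element. By the explicit construction of the centralizer-extension generators (Examples~\ref{ex:centr_ext_inf_words} and~\ref{ex:HNN_inf_words}), $s^{\varepsilon_1}$ begins with arbitrarily high powers of the maximal-height generator $d$ of the associated centralizer it emanates from. The common initial segment of the words $c^m$ and $d^{m'}$ inside $s^{\varepsilon_1}$ is therefore longer than $|c| + |d|$; since $c$ and $d$ are cyclically reduced, Harrison's Lemma~\ref{le:LS} gives $[c,d] = \varepsilon$. By transitivity of commutation and the CSA property of $\Lambda$-free groups, $C_H(c) = C_H(d)$, so the maximal abelian subgroup $C$ coincides with that associated centralizer. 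As $s^{\varepsilon_1}$ begins with a \emph{positive} power of $c$, conjugation by the corresponding positive stable letter is length- (hence height-) preserving on $C$, so that letter is right-attached to $C$ with respect to $c$, contradicting the hypothesis.

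The main obstacle is the orientation bookkeeping hidden in the last step. A stable letter $s_{i,j}$ is, by definition, attached only to its source centralizer $C_{i,j}$, whereas the unbounded $c$-power may instead match the target centralizer $D_{i,j}$ --- precisely the case $\varepsilon_1 = -1$, where it is $s^{-1}$, not the listed positive letter, that carries the positive power of $c$. Reconciling this requires the length-preserving isomorphism $\phi_{i,j} \colon C_{i,j} \to D_{i,j}$ of Theorem~\ref{th:main2}(2) together with Lemma~\ref{le:attached2}, which lets me replace $C$ by the conjugate centralizer in $\mathcal{C}(K)$ that carries the correctly oriented positive stable letter. Verifying that reduced forms genuinely prevent the high power of $c$ from being manufactured by cancellation among several syllables, and that this replacement delivers an element right-attached to the original $C$ with respect to $c$, is the delicate heart of the proof; the case $\mathcal{C}(K) = \emptyset$ treated in Lemma~\ref{le:attached} provides the template for these cancellation estimates.
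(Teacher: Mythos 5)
Your overall strategy---passing to the contrapositive, inducting along the chain of Theorem \ref{th:main2}, splitting off the first normal-form block, and invoking Lemma \ref{le:LS} to identify centralizers---is the natural one, but the step you yourself call ``the delicate heart'' fails as you describe it. In the case $\varepsilon_1 = -1$ you must contradict the hypothesis that the specific pair $(C,c)$ admits no right-attached element, and you propose to do so via Lemma \ref{le:attached2}, ``replacing $C$ by the conjugate centralizer that carries the correctly oriented positive stable letter.'' This cannot work, for two reasons. First, Lemma \ref{le:attached2} points in the opposite direction: its \emph{hypothesis} is the existence of a right-attached element for $(C,c)$ (exactly what you are trying to produce), and its conclusion is the existence of a conjugate pair $(D,d)$ with \emph{no} right-attached elements; it gives no way to transport the non-existence hypothesis for $(C,c)$ to a conjugate pair, nor to manufacture a right-attached element for $(C,c)$ out of the fact that $s_{i,j}$ is right-attached to $(C_{i,j},u)$. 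Second, replacing $(C,c)$ by a conjugate pair changes the statement being proved: both hypothesis and conclusion of Lemma \ref{le:attached3} are about $(C,c)$ itself, and ``$s_{i,j}$ is right-attached to $C_{i,j}$ with respect to $u$'' is perfectly compatible with ``$D_{i,j}$ has no right-attached element with respect to $c$,'' so no contradiction is reached. What actually closes this case is that the attachment candidates must be the elements $s_{i,j}^{\pm 1}$ (equivalently, conjugation in either direction is allowed in the definition of attached): then $s^{-1}$, which begins with every positive power of $c$, is \emph{itself} the required right-attached element, and no appeal to Lemma \ref{le:attached2} is needed. If one insists on positive stable letters only, the lemma is simply false: in $H = \langle F(X), s \mid s^{-1}us = v\rangle$ (separated case), take $C = \langle v\rangle$ and $c = v^{-1}$; the only candidate $s$ is not attached to $\langle v\rangle$ at all, since $ht(s^{-1}\ast v \ast s) = ht(s) > ht(v)$, yet $s^{-1}$ begins with $v^{-m}$ for every $m$.

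There is a second gap earlier in the argument. Your claim that ``each initial syllable embeds as an honest initial subword of $g$'' is not what the normal form gives: the $\circ$-blocks (cf.\ Theorem \ref{pr:pregroup}(3) and the shape of the normal form in Lemma \ref{le:p1}(3)) are $h_0 \ast u_1^{N_1}$, $u_1^{-N_1}\ast s^{\varepsilon_1}\ast v_1^{-M_1}$, \dots, i.e.\ bounded powers of the associated generators are absorbed across syllable boundaries. Consequently, in the spill-over case the stable-letter block begins with arbitrarily high powers not of $c$ but of a cyclic permutation $\tilde c = P^{-1}\ast c \ast P$, where $P$ is the residual prefix of the first block modulo $c$-powers. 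Your Harrison step then yields $[\tilde c, d] = \varepsilon$, hence only that $C$ is \emph{conjugate} to the listed associated centralizer, not equal to it. To repair this you need that distinct listed centralizers at a given level are either equal or non-conjugate (condition (4) of Theorem \ref{th:main2}) together with malnormality of maximal abelian subgroups (the CSA property): these force $P^{-1}CP = C$, hence $P \in C$, hence $\tilde c = c$ because $C$ is abelian. Without this, the assertion ``$C$ coincides with that associated centralizer'' --- on which your final contradiction rests --- is unjustified.
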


\subsubsection{Connecting elements}
\label{subs:4.1}

We call a pair of elements $u, v \in CDR(\Zt,X)$ an {\em admissible pair} if
\begin{enumerate}
\item $u,v$ are cyclically reduced,
\item $u,v$ are not proper powers,
\item $|u| = |v|$,
\item $u$ is not conjugate to $v^{-1}$ (in particular, $u \neq v^{-1}$).
\end{enumerate}

For an admissible pair $\{u,v\}$ we define an infinite word $s_{u,v} \in R(\Zt,X)$, which
we call the {\em connecting element} for the pair $\{u,v\}$, in the following way
\[
\mbox{$s_{u,v}(\beta)$} = \left\{ \begin{array}{ll}
\mbox{$u(\alpha)$} & \mbox{if $\beta = (k |u| + \alpha, 0), k \geqslant 0, 1 \leqslant \alpha
\leqslant |u|$,} \\
\mbox{$v(\alpha)$} & \mbox{if $\beta = (-k|v|+\alpha, 1), k \geqslant 1, 1 \leqslant \alpha
\leqslant |v|$.}
\end{array}
\right.
\]
Since there exists $m > 0$ such that $u,v \in CDR(\mathbb{Z}^m,X) - CDR(\mathbb{Z}^{m-1},X)$ then
it is easy to see that $s_{u,v} \in R(\mathbb{Z}^{m+1},X) - R(\mathbb{Z}^m,X)$. Also $s_{u,v}^{-1}
= s_{v^{-1},u^{-1}}$ and $u \circ s_{u,v} = s_{u,v} \circ v$ - both follow directly from the
definition.

Notice that any two connecting elements $s_{u_1,v_1},\ s_{u_2,v_2}$ have the same length whenever
$u_1,\ v_1,\ u_2,\ v_2 \in CDR(\mathbb{Z}^m,X) - CDR(\mathbb{Z}^{m-1},X)$. In this event we have
$$|s_{u_1,v_1}| = |s_{u_2,v_2}| = (0,\ldots,0,1) \in \mathbb{Z}^{m+1}.$$

\begin{lemma} \cite{KMRS:2012}
\label{le:4.1}
Let $u,v$ be elements of a group $H \subset CDR(\Zt,X)$. If
the pair $\{u,v\}$ is admissible then $s_{u,v} \in CDR(\Zt,X)$.
\end{lemma}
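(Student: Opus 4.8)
The plan is to exhibit a cyclic decomposition of $s:=s_{u,v}$ directly, by locating the maximal common initial segment $com(s,s^{-1})$ and showing it is short. Recall that $s\in CDR(\Zt,X)$ means precisely that $s$ admits a decomposition $s=c^{-1}\circ w\circ c$ with $w$ cyclically reduced. Since it is already established that $s\in R(\Zt,X)$ is reduced, the entire problem is to control the cancellation coming from the two ends of $s$, and this cancellation is governed by $com(s,s^{-1})$: for a reduced word, $c^{-1}=com(s,s^{-1})$ is exactly the conjugating factor of its (unique) cyclic decomposition, provided the head and tail cancellations do not overlap.

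First I would read the two ends of $s$ off its defining formula. By construction the initial portion of $s$ (the letters at positions $k|u|+\alpha$, lying in the lower convex layers of $\Zt$) is the periodic word $u\circ u\circ\cdots$, while the terminal portion (positions $-k|v|+\alpha$ in the top layer) is $v\circ v\circ\cdots$; here cyclic reducedness of $u$ and of $v$ is what makes these periodic blocks genuinely reduced. Using the stated identity $s_{u,v}^{-1}=s_{v^{-1},u^{-1}}$, the initial portion of $s^{-1}$ is correspondingly $v^{-1}\circ v^{-1}\circ\cdots$. Hence $com(s,s^{-1})$ is the longest common initial segment of the two periodic words $u\circ u\circ\cdots$ and $v^{-1}\circ v^{-1}\circ\cdots$, both of period $|u|=|v|$.

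The key step is the length estimate. Because these two words are periodic with the common period $|u|$, agreement on an initial segment of length $\geq|u|$ would force their first periods to coincide, i.e. $u=v^{-1}$; this is excluded by admissibility, which gives that $u$ is not conjugate to $v^{-1}$ and in particular $u\neq v^{-1}$. Therefore $|com(s,s^{-1})|<|u|$. I would then record the crucial non-Archimedean gap: $|u|$ lies in a convex subgroup of $\Zt$ strictly below the layer containing $|s|=(0,\dots,0,1)\in\mathbb{Z}^{m+1}$, so that $2\,|com(s,s^{-1})|<2|u|<|s|$. Setting $c^{-1}:=com(s,s^{-1})$ of length $\ell<|u|$, the standard splitting of a reduced word with non-overlapping self-cancellation yields $s=c^{-1}\circ w\circ c$ with $w$ cyclically reduced of length $|s|-2\ell>0$, the maximality of $com(s,s^{-1})$ being exactly what guarantees $w(1)^{-1}\neq w(|w|)$. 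This is a cyclic decomposition of $s$, so $s\in CDR(\Zt,X)$.

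The one point that requires genuine care, and which I expect to be the main obstacle, is the claim that $com(s,s^{-1})$ is captured entirely by comparing the two front periodic blocks $u\circ u\circ\cdots$ and $v^{-1}\circ v^{-1}\circ\cdots$ — that is, that the top-layer $v$-tail of $s$ cannot contribute any agreement with $s^{-1}$ beyond length $|u|$. This is precisely where the inequality $2|u|<|s|$ is used: the relevant comparison never leaves the lower convex layers of $\Zt$, where $s$ and $s^{-1}$ are literally the periodic words above, so the top-layer structure is irrelevant to the head cancellation. Making this ``the comparison stays in the bottom layer'' argument rigorous in the non-Archimedean setting is the only delicate part; everything else is routine manipulation of $com$ and $\circ$.
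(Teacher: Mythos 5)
Your strategy is the natural one and, in outline, it is sound: identify $com(s,s^{-1})$ with $com(u,v^{-1})$ using the periodicity of the two ends of $s$, bound its length by $|u|$, and then split $s=c^{-1}\circ w\circ c$, with maximality of the common initial segment forcing $w$ to be cyclically reduced. (Note that this survey states the lemma only with a citation to [KMRS 2012] and contains no proof to compare against, so I am judging your argument on its own terms.) Your flagged ``delicate point'' about the $v$-tail is also handled correctly: agreement of $s$ and $s^{-1}$ on an initial segment of length $|u|$ would already force $u=v^{-1}$, so under admissibility the comparison never leaves the bottom convex layer.

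The genuine gap is one step earlier, at ``Therefore $|com(s,s^{-1})|<|u|$'': this presupposes that $com(s,s^{-1})$ \emph{exists}, which in the non-Archimedean setting is not automatic, and it is exactly the content of the one hypothesis you never use, namely that $u$ and $v$ lie in a common group $H\subset CDR(\Zt,X)$. The longest common initial segment of two reduced $\Lambda$-words can fail to exist: take $u$ and $v^{-1}$ of length $(0,1)\in\mathbb{Z}^2$, both constantly $x$ at the positions $(n,0)$, $n\geqslant 1$, with $u$ constantly $y$ and $v^{-1}$ constantly $z$ at the positions $(m,1)$, $m\leqslant 0$ (where $x,y,z$ are distinct, mutually non-inverse letters). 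These are cyclically reduced, not proper powers (no $k\geqslant 2$ divides $(0,1)$), of equal length, and not conjugate, since conjugate cyclically reduced words are cyclic permutations of one another and these two involve different letters; so the pair is admissible. Yet the set of positions up to which $u$ and $v^{-1}$ agree is $\{(n,0) : n\geqslant 1\}$, which has no maximum, so $com(u,v^{-1})$ --- hence also $com(s,s^{-1})$ --- does not exist. Since any cyclic decomposition $s=c^{-1}\circ w\circ c$ with $w$ cyclically reduced forces $com(s,s^{-1})$ to exist and equal $c^{-1}$ (both $s$ and $s^{-1}=c^{-1}\circ w^{-1}\circ c$ begin with $c^{-1}$ and then diverge by cyclic reducedness of $w$), this $s_{u,v}$ is \emph{not} in $CDR(\Zt,X)$. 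So admissibility alone does not suffice, and a proof that never invokes $H$ cannot be complete. The repair is short and shows where the hypothesis belongs: since $H$ is a group under the partial multiplication $\ast$, the product $u^{-1}\ast v^{-1}$ is defined, and by the very definition of $\ast$ this means precisely that $com(u,v^{-1})$ exists. With that sentence inserted before your length estimate, the rest of your argument goes through as written.
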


\subsection{Main construction}
\label{subs:main_constr}

Now, let $A, B$ be cyclically reduced centralizers in $H$ such that there exists an isomorphism
$\phi : A \rightarrow B$ satisfying the following conditions
\begin{enumerate}
\item $a$ is not conjugate to $\phi_i(a)^{-1}$ in $H$ for any $a \in A$,
\item $|\phi(a)| = |a|$ for any $a \in A$.
\end{enumerate}

In particular, it follows that $ht(A) = ht(B)$.

\begin{remark}
\label{rem:4.2}
Observe that if $C$ is conjugate to $A$ and $D$ is conjugate to $B$ then
$$\langle H, z \mid z^{-1} A z = B \rangle \simeq \langle H, z' \mid z'^{-1} C z' = D \rangle.$$
Hence, it is always possible to consider $A$ and $B$ up to taking conjugates.
\end{remark}

Let $u$ be a generator of $A$ of maximal height, and let $v = \phi(u) \in B$. Then $v$ is a generator
of $B$ of maximal height and $|u| = |v|$. Observe that from the conditions imposed on $\phi$ it
follows that the pair $u,v$ is admissible. We fix $u$ and $v$ for the rest of the paper.

\begin{remark}
\label{rem:attached}
Observe that if ${\cal C}(A) = \emptyset$ then by Lemma \ref{le:attached}, $H$ does not contain an
element which has any positive power of $u^{\pm 1}$ as an initial subword (similar statement for $B$
and $v$ if ${\cal C}(B) = \emptyset$). If ${\cal C}(A) \neq \emptyset$ then by Lemma
\ref{le:attached2} we can assume $A$ to have no right-attached elements with respect to $u$. Hence,
by Lemma \ref{le:attached3}, $H$ does not contain an element which has any positive power of $u$ as
an initial subword. Similarly, if ${\cal C}(B) \neq \emptyset$ then by Lemma \ref{le:attached2} we
can assume $B$ to have no left-attached elements with respect to $v$. Again, by Lemma
\ref{le:attached3}, $H$ does not contain an element which has any positive power of $v$ as a
terminal subword.
\end{remark}

Now, we are in position to define $s \in R(\mathbb{Z}^{n+1},X)$ which is going to be an infinite
word representing $z$ from the presentation (\ref{eq:G}). Since $\Zt$-exponentiation is
defined on $CR(\Zt,X)$ (see \cite{Myasnikov_Remeslennikov_Serbin:2005} for details) then
for any $f(t) \in \Zt$ we can define $v^{f(t)}, u^{f(t)} \in CR(\Zt,X)$ so that
$$|v^{f(t)}| = |v| |f(t)|,\ |u^{f(t)}| = |u| |f(t)|$$
and
$$[v^{f(t)},v] = \varepsilon,\ [u^{f(t)},u] = \varepsilon.$$
Thus, if $\alpha = t^{n-ht(A)}$ then $|u^\alpha| = |v^\alpha| = |u| |\alpha|$ and $ht(u^\alpha) =
ht(v^\alpha) = ht(u) + (n-ht(A)) = n$. Hence, we define
$$s = s_{u^\alpha,v^\alpha} \in CDR(\mathbb{Z}^{n+1},X).$$
Observe that $ht(s) = n + 1 = ht(G) + 1$.

\begin{remark}
\label{rem:4.3}
It is easy to see that no element of $H$ has $s^{\pm 1}$ as a subword.
\end{remark}

\begin{lemma} \cite{KMRS:2012}
\label{le:4.2}
For any $h \in A$ we have $s^{-1} \ast h \ast s = \phi(h) \in B$.
\end{lemma}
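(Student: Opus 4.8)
The plan is to prove the stronger ``shift identity'' $h \circ s = s \circ \phi(h)$ for every $h \in A$, where $\circ$ denotes reduced (cancellation-free) concatenation, and then to deduce the statement of Lemma~\ref{le:4.2} by the cancellation rules for $\ast$. Indeed, once $h \circ s = s \circ \phi(h)$ is known with no cancellation on either side, one first checks that $h \ast s = h \circ s$: the absence of cancellation between $h$ and $s$ is exactly the junction condition $com(h^{-1},s) = \varepsilon$ that makes $h \circ s$ reduced. Hence $h \ast s = s \circ \phi(h)$, and computing $s^{-1} \ast (h \ast s) = s^{-1} \ast (s \circ \phi(h))$ one finds that $com(s, s \circ \phi(h))$ is all of $s$, so the whole of $s$ cancels against $s^{-1}$ and precisely $\phi(h)$ remains. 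Thus $s^{-1} \ast h \ast s = \phi(h)$, which also exhibits $\phi(h) \in B$.

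By Proposition~\ref{pr:1} the centralizer $A$ is a finitely generated free abelian group, so every $h \in A$ is an integer product $c_1^{k_1} \cdots c_m^{k_m}$ of chosen generators $c_1, \dots, c_m$, with $c_m = u$ of maximal height and $d_i = \phi(c_i)$. Since $\phi$ is a homomorphism and the shift identity for a single generator propagates to integer powers by induction (from $c_i \circ s = s \circ d_i$ one gets $c_i^{k} \circ s = s \circ d_i^{k}$ for all $k \in \mathbb{Z}$, and likewise for inverses), it suffices to establish $c_i \circ s = s \circ d_i$ for each generator separately; multiplying these relations and using $\phi(h) = d_1^{k_1}\cdots d_m^{k_m}$ then yields $h \circ s = s \circ \phi(h)$.

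For the top generator $c_m = u$ the identity comes straight from the construction of the connecting element. By definition $s = s_{u^{\alpha}, v^{\alpha}}$ satisfies $u^{\alpha} \circ s = s \circ v^{\alpha}$, and $s \in CDR(\Zt, X)$ by Lemma~\ref{le:4.1}. Writing $u^{\alpha} = u \circ u^{\alpha - 1}$ and $v^{\alpha} = v \circ v^{\alpha - 1}$ (valid since $u, v$ are cyclically reduced and $u^{\alpha - 1}, v^{\alpha-1}$ begin with $u, v$ respectively) and matching the periodic block structure of $s$, whose positive part is an exponential power of $u$ and whose terminal part is an exponential power of $v$, one peels off the common factor and obtains $u \circ s = s \circ v$. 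The admissibility of the pair $\{u,v\}$ is what guarantees that $s$ is genuinely cyclically reduced and that no collapse occurs at this step.

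The remaining generators $c_i$ with $i < m$ have strictly smaller height and commute with $u$, since $A$ is abelian; using this commutation one shows that each $c_i$ is compatible with the $u$-periodic block structure of $s$ and verifies $c_i \circ s = s \circ d_i$ by the same positional mechanism as for $u$, comparing the two infinite words index by index, using $|c_i| = |d_i|$, and invoking the fact (Remark~\ref{rem:attached}) that no element of $H$ begins with a positive power of $u$ or ends with a positive power of $v$ to pin down exactly where and how much cancellation occurs. This last point is the main obstacle: the bookkeeping is non-Archimedean, so one must track cancellation at indices of several heights simultaneously and confirm that the prepended factor $c_i$ is absorbed into the $u$-periodic part of $s$ and re-emerges, near the top of $s$ at the index of height $t^{n}$, as the factor $d_i$ in the $v$-periodic part. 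Once all generator identities are in hand, the reduction of the first paragraph completes the proof.
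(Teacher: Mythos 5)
Your strategy has two genuine gaps, and the first one is fatal as stated. The ``shift identity'' $h \circ s = s \circ \phi(h)$ cannot hold cancellation-free for \emph{every} $h \in A$, because it would then hold for both $h$ and $h^{-1}$, which is impossible: the word $s = s_{u^{\alpha},v^{\alpha}}$ begins with $u$ (its restriction to $[1,|u|]$ is exactly $u$, so $s(1) = u(1)$), while $u^{-1} \in A$ ends with the letter $u(1)^{-1}$; hence the concatenation $u^{-1}\cdot s$ cancels at the junction and ``$u^{-1} \circ s$'' is not even defined. The same failure occurs for every $h \in A$ oriented against the $u$-periodic direction of $s$: for such $h$ one only has $h \ast s = s \ast \phi(h)$ with genuine (though bounded by $|h|$) cancellation. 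This breaks your opening reduction ``$h \ast s = h \circ s$'', your induction step ``$c_i^{k} \circ s = s \circ d_i^{k}$ for all $k \in \mathbb{Z}$, and likewise for inverses'', and the final ``multiplication'' of the generator relations, since $c_1^{k_1}\cdots c_m^{k_m}$ is itself not a reduced concatenation when the $k_i$ have mixed signs. The lemma has to be proved in the form $h \ast s = s \ast \phi(h)$, treating negatively aligned elements by inversion and justifying the algebra with the partial-multiplication axioms of Theorem \ref{th:pre}; note that you cannot shortcut this with group-theoretic conjugation identities, because at this point $\langle H, s\rangle$ is not yet known to be a group --- that is Theorem \ref{th:U(P)}, whose proof (via Lemma \ref{le:p1}) uses the present lemma.

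The second, deeper gap is that you never explain why the word that re-emerges at the top of $s$ is $d_i = \phi(c_i)$ rather than $d_i^{-1}$. For a generator $c_i$ with $ht(c_i) < ht(u)$, commutation only gives that $u$ begins and ends with powers of $c_i^{\varepsilon}$ for \emph{some} sign $\varepsilon$, and $v$ with powers of $d_i^{\delta}$ for \emph{some} sign $\delta$; taking $c_i$ positively aligned with $u$, your positional matching produces at the top the suffix of $v^{\alpha}$ of length $|d_i|$, and this suffix is $d_i$ if $\delta = +1$ but $d_i^{-1}$ if $\delta = -1$, in which case one would get $s^{-1} \ast c_i \ast s = d_i^{-1}$ and the lemma would be false. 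Nothing you invoke --- the equality $|c_i| = |d_i|$ or Remark \ref{rem:attached} --- rules out the bad sign. What does the work is length-preservation of $\phi$ on \emph{all} of $A$, applied to products: exactly one of $|u \ast c_i| = |u| + |c_i|$ and $|u \ast c_i^{-1}| = |u| + |c_i|$ holds (the other product loses $|c_i|$), and since $\phi(u \ast c_i^{\pm 1}) = v \ast d_i^{\pm 1}$ has the same length, the same alternative holds for $d_i$ relative to $v$; this forces $\phi$ to preserve alignment and pins the emerging factor to be $d_i$. (The survey states Lemma \ref{le:4.2} as a quotation from \cite{KMRS:2012} without reproducing a proof, so there is no in-paper argument to compare against; but any correct proof must contain this alignment-preservation step, which your proposal treats as automatic.)
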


Now, our goal is to prove that a pair $H, s$ generates a group in $CDR(\Zt, X)$.

\begin{lemma} \cite{KMRS:2012}
\label{le:stab0}
For any $g \in H$ there exists $N = N(g) > 0$ such that
$$g \ast u^k = (g \ast u^N) \circ u^{k-N},\ v^k \ast g = v^{k-N} \circ (v^N \ast g).$$
for any $k > N$.
\end{lemma}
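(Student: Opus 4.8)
The plan is to reduce both identities to a single \emph{stabilization of cancellation} statement: the amount of cancellation in the product $g \ast u^{k}$, measured by $\beta_{k} := c(g^{-1}, u^{k}) = |\mathrm{com}(g^{-1}, u^{k})|$, becomes independent of $k$ once $k$ is large. Granting this, suppose $\beta_{k} = \beta$ is constant for all $k \geq N$ with the divergence occurring strictly inside $u^{N}$, i.e. $\beta < N|u|$. Writing $g^{-1} = p \circ \tilde{g}$ and $u^{N} = p \circ q$ with $|p| = \beta$, the definition of $\ast$ gives $g \ast u^{N} = \tilde{g}^{-1} \circ q$. Since $u$ is cyclically reduced, $u^{k} = u^{N} \circ u^{k-N} = p \circ q \circ u^{k-N}$ is reduced, so for every $k > N$ the same cancellation yields $g \ast u^{k} = \tilde{g}^{-1} \circ q \circ u^{k-N} = (g \ast u^{N}) \circ u^{k-N}$, which is the first identity; the junction $q \circ u^{k-N}$ is reduced because $q$ is a terminal segment of $u^{N}$ and $u$ is cyclically reduced.

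First I would record that $\beta_{k}$ is non-decreasing, since $u^{k}$ is an initial segment of $u^{k+1}$, and that $\beta_{k} \leq |g^{-1}| = |g|$. The key observation is that a \emph{strict} increase $\beta_{k+1} > \beta_{k}$ is possible only when $g^{-1}$ agrees with $u^{k}$ all the way to its end. Indeed, if $\beta_{k} < k|u|$, then since $\mathbb{Z}^{n}$ is discretely ordered we have $\beta_{k}+1 \leq k|u|$, and at this position $u^{k}$ and $u^{k+1}$ coincide while $g^{-1}(\beta_{k}+1) \neq u^{k}(\beta_{k}+1)$ by definition of $\mathrm{com}$; hence $\beta_{k+1} = \beta_{k}$. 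Therefore, if the sequence $(\beta_{k})$ never stabilizes, then $g^{-1}$ must have $u^{k}$ as an initial subword for arbitrarily large, and hence all, positive integers $k$.

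The heart of the argument is to rule this out. Since $g^{-1} \in H$ and $H$ is subwords-closed, an element that begins with $u^{k}$ for every finite $k$ must begin with $u^{\alpha}$ for some $\alpha \in \Zt$ of positive degree: the common initial segment of $g^{-1}$ with the infinite word $u^{\infty}$ then has height exceeding $ht(u)$ and is realized as an initial subword of positive $u$-degree. This contradicts Remark \ref{rem:attached}, which guarantees---after replacing $A$ by a conjugate having no right-attached element with respect to $u$, via Lemmas \ref{le:attached}--\ref{le:attached3}---that no element of $H$ has a positive power of $u$ as an initial subword. Thus $(\beta_{k})$ stabilizes and the first identity holds. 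The second identity is the mirror image: applying the first identity to $g^{-1} \in H$ and to the cyclically reduced word $w = v^{-1}$ (legitimate because, by the $v$-part of Remark \ref{rem:attached}, no element of $H$ has a positive power of $v$ as a terminal subword, equivalently no element of $H$ has a positive power of $v^{-1}$ as an initial subword) gives $g^{-1} \ast (v^{-1})^{k} = (g^{-1} \ast (v^{-1})^{N'}) \circ (v^{-1})^{k-N'}$; inverting this equality yields $v^{k} \ast g = v^{k-N'} \circ (v^{N'} \ast g)$. Taking $N$ to be the larger of the two thresholds proves the lemma.

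The hard part will be precisely the passage in the third paragraph from ``$g^{-1}$ begins with $u^{k}$ for all finite $k$'' to ``$g^{-1}$ begins with $u^{\alpha}$ for some $\alpha$ of positive degree.'' Over an Archimedean value group this is automatic from the bound $\beta_{k} \le |g|$, but over $\mathbb{Z}^{n}$ (or $\Zt$) a bounded monotone sequence of segment-lengths need not terminate, so one must genuinely invoke the non-Archimedean structure: the accumulated common prefix lives in a convex subgroup of strictly larger height than $ht(u)$ and has to be recognized as an honest infinite power before Remark \ref{rem:attached} can be applied. Everything else is routine bookkeeping with $\mathrm{com}$, the partial product $\ast$, and the cyclic reducedness of $u$ and $v$.
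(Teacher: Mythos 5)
Your main argument (paragraphs one through three, plus the inversion trick for the $v$-side) is correct and is essentially the intended proof: the lemma is exactly a statement that cancellation stabilizes, Remark \ref{rem:attached} exists precisely to force that stabilization, and your monotonicity-plus-discreteness analysis of $\beta_k = c(g^{-1},u^k)$ (which is well defined and lies in $H$ because $H$ is complete) together with taking the maximum of the two thresholds is the right bookkeeping.

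The one correction concerns your final paragraph: the ``hard part'' you identify is not actually part of the proof. In this paper the phrase ``no element of $H$ has any positive power of $u$ as an initial subword'' means ``no element of $H$ has $u^k$ as an initial subword for \emph{every} positive integer $k$''; this is how the same phrase is used in the proof of Lemma \ref{le:p1} (where the connecting element $s$ is said to have ``any power of $u$ as an initial subword''), and the other pointwise reading is absurd since $u^2 \in A \leqslant H$ begins with $u$. Hence, once non-stabilization forces $g^{-1}$ to begin with $u^k$ for all finite $k$, Remark \ref{rem:attached} is contradicted directly, with no need to upgrade the prefix to an honest $\Zt$-exponent $u^\alpha$ of positive degree. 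This is fortunate, because the upgrade you sketch would not work as stated: an element of $CDR(\Zt,X)$ agreeing with $u^k$ for every finite $k$ need not have the specific word $u^\alpha$, $\deg(\alpha)>0$, as an initial segment, since $u^\alpha$ has a prescribed upper portion (the part of its domain of height greater than $ht(u)$) that is in no way determined by finite-power prefixes. So delete the last paragraph: as it stands it flags a gap that does not exist and proposes a repair that would not close it if it did.
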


\begin{lemma} \cite{KMRS:2012}
\label{le:stab1}
\begin{enumerate}
\item[{\bf (i)}] For any $g \in H - A$ there exists $N = N(g) > 0$ such that for any $k > N$
$$u^{-k} \ast g \ast u^k = u^{-k+N} \circ (u^{-N} \ast g \ast u^N) \circ u^{k-N}.$$
\item[{\bf (ii)}] For any $g \in H$ there exists $N = N(g) > 0$ such that for any $k > N$
$$v^k \ast g \ast u^k = v^{k-N} \circ (v^N \ast g \ast u^N) \circ u^{k-N}.$$
\item[{\bf (iii)}] For any $g \in H - B$ there exists $N = N(g) > 0$ such that for any $k > N$
$$v^k \ast g \ast v^{-k} = v^{k-N} \circ (v^N \ast g \ast v^{-N}) \circ u^{-k+N}.$$
\end{enumerate}
\end{lemma}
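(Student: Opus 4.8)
The plan is to treat all three parts as uniform cancellation-stabilization statements: in each case a bounded amount of cancellation occurs at the two junctions of the product, and once $k$ exceeds this bound the surplus powers of $u$ and $v$ attach by the cancellation-free concatenation $\circ$. I will repeatedly use Lemma~\ref{le:stab0}, which already records the one-sided versions $g\ast u^k=(g\ast u^N)\circ u^{k-N}$ and $v^k\ast g=v^{k-N}\circ(v^N\ast g)$, together with Remark~\ref{rem:attached}, which guarantees (after the normalizing choices made there) that no element of $H$ has a positive power of $u$ as an initial subword, nor a positive power of $v$ as a terminal subword. Recall also that, since $u$ and $v$ are generators of maximal height of cyclically reduced maximal abelian subgroups, we have $A=C_H(u)$ and $B=C_H(v)$, so the hypotheses $g\in H-A$ and $g\in H-B$ in (i) and (iii) say precisely that $[g,u]\neq\varepsilon$, respectively $[g,v]\neq\varepsilon$.

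For part (i), first apply Lemma~\ref{le:stab0} on the right to obtain $N_1$ with $g\ast u^k=(g\ast u^{N_1})\circ u^{k-N_1}$ for $k>N_1$. A symmetric argument bounds the cancellation in $u^{-k}\ast g$: if it were unbounded, then $g$ would begin with arbitrarily high powers of $u$, contradicting Remark~\ref{rem:attached}; hence there is $N_2$ with $u^{-k}\ast g=u^{-(k-N_2)}\circ(u^{-N_2}\ast g)$ for $k>N_2$. Putting $N=\max\{N_1,N_2\}$ and $p=u^{-N}\ast g\ast u^N$, the only point left is that the two stabilized sides do not cancel into one another through $p$, i.e. that $p$ neither begins with $u$ nor ends with $u^{-1}$. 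This is exactly where $g\notin A$ enters: if the left power cut into the right one for every choice of $N$, then iterating would produce an overlap of (a cyclically reduced conjugate $\bar g$ of) $g$ with arbitrarily high powers of $u$, so that $c(\bar g^{\,m},u^n)\geq|\bar g|+|u|$ for suitable $m,n>0$, and Lemma~\ref{le:LS} would force $[g,u]=\varepsilon$, contradicting $g\in H-A$. Therefore the powers attach cleanly and $u^{-k}\ast g\ast u^k=u^{-(k-N)}\circ p\circ u^{k-N}$, as claimed.

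Part (iii) is the mirror image of (i) under the orientation-reversing symmetry exchanging the roles of $u$ and $v$; here the hypothesis $g\in H-B=H-C_H(v)$ supplies $[g,v]\neq\varepsilon$, and the same Lemma~\ref{le:LS} argument rules out cross-cancellation (the correct right-hand endpoint of the decomposition being $v^{-(k-N)}$, the symbol $u^{-k+N}$ in the statement being a typographical slip for $v^{-k+N}$). Part (ii) is unconditional and follows the template of the $S$-set separation Lemma~\ref{le:ext_centr_2} specialized to a single exponent on each side: Lemma~\ref{le:stab0} bounds, independently of $k$, the cancellation of $v^k$ on the left and of $u^k$ on the right of $g$, and admissibility of the pair $\{u,v\}$ (in particular $u$ is not conjugate to $v^{-1}$, so the two transverse junctions cannot conspire to consume the whole core) guarantees that for $N$ large the decomposition $v^k\ast g\ast u^k=v^{k-N}\circ(v^N\ast g\ast u^N)\circ u^{k-N}$ holds. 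The main obstacle throughout is precisely this last point — proving that the left- and right-hand cancellations, each individually bounded, never overlap for large $k$ — and in every case it is resolved by the commutation/Harrison dichotomy of Lemma~\ref{le:LS} applied to the surviving core.
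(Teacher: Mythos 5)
Your skeleton is the right one---stabilize the two junctions separately via Lemma~\ref{le:stab0} and Remark~\ref{rem:attached} (applied to $g^{-1}$ where needed), then use $g\notin A$, admissibility, and $g\notin B$ to rule out interaction between the junctions---and several side points are correct: $A=C_H(u)$ and $B=C_H(v)$ by the CSA property, so the hypotheses do mean $[g,u]\neq\varepsilon$, resp.\ $[g,v]\neq\varepsilon$, and $u^{-k+N}$ in (iii) is indeed a misprint for $v^{-k+N}$. Bear in mind that the survey states this lemma without proof (it is quoted from \cite{KMRS:2012}), so your argument has to stand on the tools the survey supplies.

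The genuine gap is the central step in each part, which is the whole content of the lemma. In (i), the passage ``if the left power cut into the right one for every choice of $N$, then iterating would produce an overlap \dots so that $c(\bar{g}^m,u^n)\geq|\bar{g}|+|u|$'' is asserted, not derived, and it targets the wrong element. After stabilizing the right junction, $g\ast u^k=(g\ast u^{N_1})\circ u^{k-N_1}$, what persistent failure actually yields is a statement about $w:=g\ast u^{N_1}\in H$: for every $j$, the word $w\circ u^j$ is an initial segment of $u^m$ for all sufficiently large $m$. From this one concludes $w\circ u^j=u^j\circ w$ (compare the initial segment of $u^m$ of length $|w|+j|u|$ computed two ways), hence $[w,u]=\varepsilon$ and $g=w\ast u^{-N_1}\in C_H(u)=A$, the desired contradiction; alternatively one can check that $w\circ w\circ\cdots\circ w$ is an initial segment of $u^n$, so that Lemma~\ref{le:LS} applies to the pair $(w,u)$. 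Your version applies Lemma~\ref{le:LS} to a cyclically reduced conjugate $\bar{g}$ of $g$, but nothing in the failure data controls the powers $\bar{g}^2,\bar{g}^3,\dots$ against $u^n$ (that control becomes available only after commutation is known, which is circular); and even granting $[\bar{g},u]=\varepsilon$, your conclusion $[g,u]=\varepsilon$ does not follow, since $g=c^{-1}\ast\bar{g}\ast c$ only places $g$ in $A^{c}$ and you never dispose of the conjugator $c$. The same defect recurs in (ii) and (iii): ``admissibility \dots so the two transverse junctions cannot conspire to consume the whole core'' restates what must be proved. For (ii) the missing argument is that unbounded interaction forces arbitrarily long powers of $u$ to occur as factors of $v^{-\infty}$; since $|u|=|v|$ this gives $u^2=(s\circ p)^2$ with $p$ a prefix and $s$ the complementary suffix of $v^{-1}$, so $u=s\circ p$ is a cyclic permutation of $v^{-1}$, contradicting condition (4) of admissibility. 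As written, the proposal is a correct plan whose decisive steps are missing.
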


A sequence
\begin{equation}
\label{eq:p1}
p = (g_1, s^{\epsilon_1}, g_2, \dots, g_k, s^{\epsilon_k}, g_{k+1}),
\end{equation}
where $g_j \in H,\ \epsilon_j \in \{-1,1\}$, $k \geqslant 1$, is called an {\em $s$-form} over $H$.

An $s$-form (\ref{eq:p1}) is {\em reduced} if subsequences
$$\{s^{-1}, c, s\},\ \ \{s, d, s^{-1}\}$$
where $c \in A,\ d \in B$, do not occur in it.

Denote by ${\mathcal P}(H,s)$ the set of all $s$-forms over $H$. We define a partial function
$w: {\mathcal P}(H,s) \rightarrow R(\Zt,X)$ as follows. If
$$p = (g_1, s^{\epsilon_1}, g_2, \dots, g_k, s^{\epsilon_k}, g_{k+1})$$
then
$$w(p) = (\cdots (g_1 \ast s^{\epsilon_1}) \ast g_2) \ast \cdots \ast g_k) \ast s^{\epsilon_k})
\ast g_{k+1})$$
if it is defined.

\begin{lemma} \cite{KMRS:2012}
\label{le:p1}
Let $p = (g_1, s^{\epsilon_1}, g_2, \dots, g_k, s^{\epsilon_k}, g_{k+1})$ be an $s$-form over $H$.
Then the following hold.
\begin{enumerate}
\item[(1)] The product $w(p)$ is defined and it does not depend on the placement of parentheses.
\item[(2)] There exists a reduced $s$-form $q$ over $H$ such that $w(q) = w(p)$.
\item[(3)] If $p$ is reduced then there exists a unique representation for $w(p)$ of the following
type
$$w(p) = (g_1 \ast u_1^{N_1}) \circ (u_1^{-N_1} \ast s^{\epsilon_1} \ast v_1^{-M_1}) \circ (v_1^{M_1}
\ast g_2 \ast u_2^{N_2}) \circ \cdots $$
$$\cdots \circ (u_k^{-N_k} \ast s^{\epsilon_k} \ast v_k^{-M_k}) \circ (v_k^{M_k} \ast g_{k+1}),$$
where $N_j, M_j \geqslant 0,\ u_j = u, v_j = v$ if $\epsilon_j = 1$, and $N_j, M_j \leqslant 0,\ u_j = v, v_j
= u$ if $\epsilon_j = -1$ for $j \in [1,k]$. Moreover, $g_1 \ast u_1^{N_1}$ does not have $u_1^{\pm 1}$
as a terminal subword, $v_{j-1}^{M_{j-1}} \ast g_j \ast u_j^{N_j}$ does not have $u_j^{\pm 1}$ as a
terminal subword for every $j \in [2,k]$, and $v_{j-1}^{M_{j-1}} \ast g_j \ast s_i^{\epsilon_j}$ does
not have $v_{j-1}^{\pm 1}$ as an initial subword for every $j \in [2,k]$, $v_k^{M_k} \ast g_{k+1}$
does not have $v_k^{\pm 1}$ as an initial subword.
\item[(4)] $w(p) \in CDR(\Zt,X)$.
\end{enumerate}
\end{lemma}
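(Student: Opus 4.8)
The plan is to prove the four statements of Lemma \ref{le:p1} by induction on the $s$-form length $k$, using the three stabilization lemmas (Lemma \ref{le:stab0} and Lemma \ref{le:stab1}) as the computational engine. The key structural fact I would establish first is that every time two consecutive factors $s^{\epsilon_j}$ and $s^{\epsilon_{j+1}}$ have a group element $g_{j+1}$ between them, the cancellation that occurs when forming the product is \emph{controlled}: it can only happen along the powers of $u$ and $v$ that $s$ begins and ends with. This is precisely what Lemma \ref{le:4.2} guarantees ($s^{-1}\ast h\ast s = \phi(h)\in B$ for $h\in A$), so that the only way to get unbounded cancellation is when $g_{j+1}$ lies in the associated subgroup, which is exactly the situation ruled out by the reducedness hypothesis.

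First I would prove (1), that $w(p)$ is defined regardless of parenthesization. Since $R(\Zt,X)$ with $\ast$ satisfies the pregroup axioms (P1)--(P4) by Theorem \ref{th:pre}, associativity of defined products is available once I show every intermediate product is defined. The point is that $s = s_{u^\alpha,v^\alpha}$ begins with arbitrarily high powers of $u^\alpha$ and ends with arbitrarily high powers of $v^\alpha$ (from the definition of the connecting element and $u\circ s_{u,v}=s_{u,v}\circ v$), so by Lemma \ref{le:stab0} and Lemma \ref{le:stab1} each factor $g_j$ sandwiched between two $s^{\pm 1}$'s can absorb only a bounded amount of the adjacent $u$- or $v$-powers; beyond the stabilization constant $N(g_j)$ the remaining powers survive as a $\circ$-decomposition. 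I would feed these stabilization constants into an explicit bookkeeping of the exponents $N_j, M_j$.

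Statement (3) is the heart of the lemma and I would treat it as the main technical step, proving the displayed normal form by induction on $k$. The inductive step splices the normal form for the length-$(k-1)$ prefix onto the last block $(u_k^{-N_k}\ast s^{\epsilon_k}\ast v_k^{-M_k})\circ(v_k^{M_k}\ast g_{k+1})$, and the three cases of Lemma \ref{le:stab1} tell me exactly how much of the surviving $u$- or $v$-power at the junction cancels against $g_{k+1}$. The reducedness hypothesis (no $\{s^{-1},c,s\}$ with $c\in A$ and no $\{s,d,s^{-1}\}$ with $d\in B$) is what forbids the pathological total cancellation: if $g_{j+1}\in A$ (resp.\ $B$) sat between $s^{-1}$ and $s$ (resp.\ $s$ and $s^{-1}$), then by Lemma \ref{le:4.2} the whole block would collapse via $\phi$ and the $\circ$-decomposition would fail. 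I expect the hardest part to be verifying the ``does not have $u_j^{\pm1}$ as a terminal subword / $v_{j-1}^{\pm1}$ as an initial subword'' clauses: these require showing that the stabilization from Lemma \ref{le:stab1} is \emph{sharp}, i.e.\ that one cannot extract a further power of $u$ or $v$ from $v_{j-1}^{M_{j-1}}\ast g_j\ast u_j^{N_j}$, which amounts to checking that $g_j$ genuinely fails to commute with or extend the relevant $u$-power, using that $u,v$ are not proper powers and the attached-element normalizations from Remark \ref{rem:attached} (no element of $H$ has a positive power of $u$ as an initial subword or of $v$ as a terminal subword).

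Finally, statements (2) and (4) follow cheaply once (1) and (3) are in hand. For (2), if $p$ is not reduced I locate a forbidden subsequence $\{s^{-1},c,s\}$ or $\{s,d,s^{-1}\}$, apply Lemma \ref{le:4.2} to rewrite $s^{-1}\ast c\ast s=\phi(c)\in B\subset H$ (merging three factors into the single adjacent group element), which strictly decreases $k$; iterating terminates and yields a reduced $q$ with $w(q)=w(p)$. For (4), the explicit $\circ$-decomposition in (3) exhibits $w(p)$ as a reduced word whose first and last blocks are $g_1\ast u_1^{N_1}$ and $v_k^{M_k}\ast g_{k+1}$, both lying in $H\subset CDR(\Zt,X)$, and since $s\in CDR(\Zt,X)$ by Lemma \ref{le:4.1}, one checks that the leading and trailing letters do not cancel cyclically, so $w(p)$ admits a cyclic decomposition and hence lies in $CDR(\Zt,X)$.
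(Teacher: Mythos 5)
Your argument for parts (1)--(3) follows essentially the same route as the paper: induction on the number of $s$-letters, with Lemmas \ref{le:stab0} and \ref{le:stab1} supplying the stabilization constants, the pregroup axioms (Theorem \ref{th:pre}) giving associativity of defined products, and Lemma \ref{le:4.2} both ruling out collapse of blocks in reduced forms and merging a forbidden subsequence $\{s^{-\epsilon}, c, s^{\epsilon}\}$ into the adjacent $H$-letters for part (2). One minor remark: the ``sharpness'' of the normal form in (3) does not require Remark \ref{rem:attached} or the fact that $u,v$ are not proper powers; the paper simply defines $N_j$ (resp.\ $M_j$) by absorbing into the adjacent $H$-block the maximal power of $u_j$ (resp.\ $v_j$) that it ends (resp.\ begins) with, and uniqueness is immediate from this maximality.

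Part (4), however, contains a genuine gap. You assert that ``one checks that the leading and trailing letters do not cancel cyclically, so $w(p)$ admits a cyclic decomposition.'' This check cannot succeed, because the assertion is false in general: nothing prevents the first letter of $g_1 \ast u_1^{N_1}$ from being inverse to the last letter of $v_k^{M_k} \ast g_{k+1}$, and, much worse, cyclic cancellation can swallow entire $s$-blocks. Concretely, with $\epsilon_k = 1$ and $\epsilon_1 = -1$, if $g_{k+1} \ast g_1 \in B$ then $s \ast (g_{k+1} \ast g_1) \ast s^{-1} \in A$ by Lemma \ref{le:4.2}, so cyclic reduction of $w(p)$ erases the outermost $s^{-1}$ and $s$ entirely; no inspection of the two end blocks detects this. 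Recall also that membership in $CDR(\Zt,X)$ is a nontrivial condition for infinite words (Example \ref{ex:2-torsion} exhibits a reduced word equal to its own inverse). The paper's proof of (4) therefore proceeds differently: by Lemma 3.8 of \cite{Myasnikov_Remeslennikov_Serbin:2005} it suffices to exhibit a single conjugator $g \in R(\Zt,X)$ with $g^{-1} \ast w(p) \ast g \in CDR(\Zt,X)$, and this is done by a case split. In the generic case ($g_{k+1} \ast g_1 \ast u_1^{N_1} \notin B$, or it lies in $B$ but $\epsilon_1 = 1$) one conjugates by $(g_1 \ast u_1^{N_1}) \ast u^m$ for large $m$ and uses Lemma \ref{le:stab1} to see that the conjugate lies in $CR(\Zt,X)$. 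In the degenerate case ($g_{k+1} \ast g_1 \in B$ and $\epsilon_1 = -1$) one conjugates by $g_1 \ast s^{\epsilon_1}$, which collapses two occurrences of $s^{\pm 1}$ and allows induction on the number of $s$-letters. Your proposal is missing both the reduction to a conjugate and this case analysis, and without them (4) does not follow from (3).
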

\begin{proof} Let
$$p = (g_1, s^{\epsilon_1}, g_2, \dots, g_k, s^{\epsilon_k}, g_{k+1})$$
be an $s$-form over $H$.

We show first that (1) implies (2). Suppose that $w(p)$ is defined for every placement of parentheses
and all such products are equal. If $p$ is not reduced then there exists $j \in [2,k]$ such that
either $g_j \in A,\ \epsilon_{j-1} = -1,\ \epsilon_j = 1$, or $g_j \in B,\ \epsilon_{j-1} = 1,\
\epsilon_j = -1$. Without loss of generality we can assume the former. Thus, we have
$$s^{-1} \ast g_j \ast s = g'_j \in B \subseteq H$$
and we obtain a new $s$-form
$$p_1 = (g_1, s^{\epsilon_1}, g_2, \dots, g_{j-1} \ast g'_j \ast g_{j+1}, s^{\epsilon_{j+1}}, \dots,
g_k, s^{\epsilon_k}, g_{k+1})$$
which is shorter then $p$ and $w(p) = w(p_1)$. Proceeding this way (or by induction) in a finite
number of steps we obtain a reduced $s$-form
$$q = (f_1, s^{\delta_1}, f_2, \ldots, s^{\delta_l}, f_{l+1}),$$
such that $w(q) = w(p)$, as required.

\smallskip

Now we show that (1) implies (3). Assume that
$$p = (g_1, s^{\epsilon_1}, g_2, \dots, g_k, s^{\epsilon_k}, g_{k+1})$$
is reduced.

By Lemma \ref{le:stab0} and Lemma \ref{le:stab1} there exists $r \in \mathbb{N}$ such that for any
$\alpha > r$

\begin{enumerate}
\item[(a)] $g_1 \ast u^\alpha = (g_1 \ast u^r) \circ u^{\alpha-r},\ g_1 \ast v^{-\alpha} = (g_1
\ast u^{-r}) \circ u^{-\alpha+r}$,
\item[(b)] $v^\alpha \ast g_{k+1} = v^{\alpha-r} \circ (v^r \ast g_{k+1}),\ u^{-\alpha} \ast g_{k+1}
= u^{-\alpha+r} \circ (u^{-r} \ast g_{k+1})$,
\item[(c)] $u^{-\alpha} \ast g_j \ast u^\alpha = u^{-(\alpha-r)} \circ (u^{-r} \ast g_j \ast u^r)
\circ u^{\alpha-r}$ for all $j \in [2,k]$ such that $g_j \notin A$,
\item[(d)] $v^\alpha \ast g_j \ast u^\alpha = v^{\alpha-r} \circ (v^r \ast g_j \ast u^r) \circ
u^{\alpha-r}$ for all $j \in [2,k]$,
\item[(e)] $v^\alpha \ast g_j \ast v^{-\alpha} = v^{\alpha-r} \circ (v^r \ast g_j \ast v^{-r}) \circ
u^{-(\alpha-r)}$ for all $j \in [2,k]$ such that $g_j \notin B$.
\end{enumerate}

Since $p$ is reduced, that is, it does not contain neither a subsequence $\{s^{-1}, g_j,$ $ s\}$,
where $g_j \in A$, nor a subsequence $\{s, g_j, s^{-1}\}$, where $g_j \in B$, and $s$ has any power
of $u$ as an initial subword and any power of $v$ as a terminal subword then we have
$$w(p) = g_1 \ast s^{\epsilon_1} \ast g_2 \ast \cdots \ast g_k \ast s^{\epsilon_k} \ast g_{k+1} = $$
$$= (g_1 \ast u_1^r) \circ (u_1^{-r} \ast s^{\epsilon_1} \ast v_1^{-r}) \circ (v_1^r \ast g_2 \ast
u_2^r) \circ \cdots \circ (u_k^{-r} \ast s^{\epsilon_k} \ast v_k^{-r}) \circ (v_k^r \ast g_{k+1}),$$
where $u_j = u, v_j = v$ if $\epsilon_j = 1$ and $u_j = v^{-1}, v_j = u^{-1}$ if $\epsilon_j = -1$
for every $j \in [1,k]$.

Now, if $g_1 \ast u_1^r$ has $u_1^{\gamma_1},\ \gamma_1 \in \mathbb{Z}$ (with $\gamma_1$ maximal
possible) as a terminal subword then we denote $N_1 = r-\gamma_1$ and rewrite $w(p)$ as follows
$$w(p) = (g_1 \ast u_1^{N_1}) \circ (u_1^{-N_1} \ast s^{\epsilon_1} \ast v_1^{-r}) \circ (v_1^r \ast
g_2 \ast u_2^r) \circ \cdots$$
$$\cdots \circ (u_k^{-r} \ast s^{\epsilon_k} \ast v_k^{-r}) \circ (v_k^r \ast g_{k+1}).$$
Now, if $v_1^r \ast g_2 \ast u_2^r$ contains $v_1^{\delta_1},\ \delta_1 \in \mathbb{Z}$ (with
$\delta_1$ maximal possible) as an initial subword then we denote $M_1 = r-\delta_1$ and again
rewrite $w(p)$
$$w(p) = (g_1 \ast u_1^{N_1}) \circ (u_1^{-N_1} \ast s^{\epsilon_1} \ast v_1^{-M_1}) \circ (v_1^{M_1}
\ast g_2 \ast u_2^r) \circ \cdots$$
$$\cdots \circ (u_k^{-r} \ast s^{\epsilon_k} \ast v_k^{-r}) \circ (v_k^r \ast g_{k+1}).$$
In a finite number of steps we obtain the required result. Observe that by the choice of $N_i, M_i$
the representation of $w(p)$ is unique.

\smallskip

Now we prove (1) by induction on $k$. If $k = 1$ then by Lemma \ref{le:stab0} there exists $r \in
\mathbb{N}$ such that
$$g_1 \ast u^\alpha = (g_1 \ast u^r) \circ u^{\alpha-r},\ \ g_1 \ast v^{-\alpha} = (g_1 \ast v^{-r})
\circ v^{-\alpha+r},$$
$$v^\beta \ast g_2 = u^{\beta-r} \circ (u^r \ast g_2),\ \ u^{-\beta} \ast g_2 = u^{-\beta+r} \circ
(v^{-r} \ast g_2)$$
for any $\alpha, \beta > r$. Hence,
$$(g_1 \ast s^{\epsilon_1}) \ast g_2 = ((g_1 \ast u_1^r) \circ (u_1^{-r} \ast s^{\epsilon_1})) \ast
g_2 =$$
$$= ((g_1 \ast u_1^r) \circ (u_1^{-r} \ast s^{\epsilon_1} \ast v_1^{-r})) \circ (v_1^r \ast g_2),$$
where $u_1 = u, v_1 = v$ if $\epsilon_1 = 1$ and $u_1 = v^{-1}, v_1 = u^{-1}$ if $\epsilon_1 = -1$.
By Theorem 3.4 \cite{Myasnikov_Remeslennikov_Serbin:2005}, the product $(g_1 \ast s^{\epsilon_1})
\ast  g_2$ does not depend on the placement of parentheses. So (1) holds for $k = 1$.

Now, consider an initial $s$-subsequence of $p$
$$p_1 = (g_1, s^{\epsilon_1}, g_2, \dots, g_{k-1}, s^{\epsilon_{k-1}}, g_k).$$
By induction $w(p_1)$ is defined and it does not depend on the placement of parentheses. By the
argument above there exists a unique representation of $w(p_1)$
$$w(p_1) = (g_1 \ast u_1^{N_1}) \circ (u_1^{-N_1} \ast s^{\epsilon_1} \ast v_1^{-M_1}) \circ
(v_1^{M_1} \ast g_2 \ast u_2^{N_2}) \circ \cdots$$
$$\cdots \circ (u_{k-1}^{-N_{k-1}} \ast s^{\epsilon_{k-1}} \ast v_{k-1}^{-M_{k-1}}) \circ
(v_{k-1}^{M_{k-1}} \ast g_k),$$
where $N_j, M_j \geqslant 0,\ u_j = u, v_j = v$ if $\epsilon_j = 1$, and $N_j, M_j \leq 0,\ u_j = v, v_j
= u$ if $\epsilon_j = -1$ for $j \in [1,k-1]$. To prove that $p$ satisfies (1) it suffices to show
that
$$w(p_1) \ast (s^{\epsilon_k} \ast g_{k+1})$$
is defined and does not depend on the placement of parentheses.

Without loss of generality we assume $\epsilon_{k-1} = 1, \epsilon_k = 1$ - other combinations of
$\epsilon_{k-1}$ and $\epsilon_k$ are considered similarly.

By Lemma \ref{le:stab1}
$$((u^{-N_{k-1}} \ast s \ast v^{-M_{k-1}}) \circ (v^{M_{k-1}} \ast g_k)) \ast (s \ast g_{k+1}) =
(u^{-N_{k-1}} \ast s \ast v^{-M_{k-1}-r})$$
$$ \circ (v^{M_{k-1}+r} \ast g_k \ast u^{m_1}) \circ (u^{-m_1} \ast s \ast v^{-m_2}) \circ (v^{m_2}
\ast g_{k+1})$$
for some $m_1, m_2,r \in \mathbb{N}$. Thus $w(p_1) \ast (s \ast g_{k+1})$ is defined and does not
depend on the placement of parentheses.

\smallskip

Now we prove (4). By (3) there exists a unique representation of $w(p)$
$$w(p) = (g_1 \ast u_1^{N_1}) \circ (u_1^{-N_1} \ast s^{\epsilon_1} \ast v_1^{-M_1}) \circ (v_1^{M_1}
\ast g_2 \ast u_2^{N_2}) \circ \cdots$$
$$\cdots \circ (u_k^{-N_k} \ast s^{\epsilon_k} \ast v_k^{-M_k}) \circ (v_k^{M_k} \ast g_{k+1}),$$
where $N_j, M_j \geqslant 0,\ u_j = u, v_j = v$ if $\epsilon_j = 1$, and $N_j, M_j \leqslant 0,\ u_j = v,
v_j = u$ if $\epsilon_j = -1$ for $j \in [1,k]$. By Lemma 3.8 \cite{Myasnikov_Remeslennikov_Serbin:2005},
to prove that $w(p) \in CDR(\Zt,X)$ it suffices  to show that
$$g^{-1} \ast w(p) \ast g \in CDR(\Zt,X)$$
for some $g \in R(\Zt,X)$.

Without loss of generality we assume $\epsilon_k = 1$ and consider two cases.

\begin{enumerate}
\item[(i)] $g_{k+1} \ast g_1 \ast u_1^{N_1} \notin B$ or $g_{k+1} \ast g_1 \ast u_1^{N_1} \in B$ but
$\epsilon_1 = 1$.

Without loss of generality we assume the former and $\epsilon_1 = 1$. That is, $u_1 = u,\ v_1 = v$.
By Lemma \ref{le:stab1} there exists $N \in \mathbb{N}$ such that
$$(u^{-m} \ast (g_1 \ast u^{N_1})^{-1}) \ast w(p) \ast ((g_1 \ast u^{N_1}) \ast u^m) = (u^{-N_1-m}
\ast s \ast v^{-M_1})$$
$$\circ (v^{M_1} \ast g_2 \ast u_2^{N_2}) \circ \cdots \circ (u^{-N_k} \ast s \ast v^{-M_k-m}) \circ
(v^{M_k+m} \ast g_{k+1} \ast g_1 \ast u^{N_1+N}) \circ u^{m-N}$$
for any $m > N$. Thus,
$$(u^{-m} \ast (g_1 \ast u^{N_1})^{-1}) \ast w(p) \ast ((g_1 \ast u^{N_1}) \ast u^m) \in
CR(\Zt,X) \subset CDR(\Zt,X).$$

\item[(ii)] $g_{k+1} \ast g_1 \in B$ and $\epsilon_1 = -1$.

Thus, $a = s \ast (g_{k+1} \ast g_1) \ast s^{-1} \in A$ and we have
$$(g_1 \ast s)^{-1} \ast w(p) \ast (g_1 \ast s) = \left((g_2 \ast u_2^{N_2}) \circ (u_2^{-N_2} \ast
s^{\epsilon_2} \ast v_2^{-M_2}) \circ \cdots\right.$$
$$\left.\cdots \circ (u_{k-1}^{-N_{k-1}} \ast s^{\epsilon_{k-1}} \ast v_{k-1}^{-M_{k-1}}) \circ
(v_{k-1}^{M_{k-1}} \ast g_k)\right) \ast a = (g_2 \ast u_2^{N_2})$$
$$\circ (u_2^{-N_2} \ast s^{\epsilon_2} \ast
v_2^{-M_2}) \circ \cdots \circ (u_{k-1}^{-N_{k-1}} \ast s^{\epsilon_{k-1}} \ast v_{k-1}^{-M'_{k-1}})
\circ (v_{k-1}^{M'_{k-1}} \ast (g_k \ast a)),$$
where $M'_{k-1} \in \mathbb{Z}$ is the power which works for $g_k \ast a$. So the number of
$s^{\pm 1}$ is reduced by two and we can use induction.
\end{enumerate}
\end{proof}

Now we are ready to prove the main results of this subsection from which Theorem \ref{main4} follows.

\begin{theorem} \cite{KMRS:2012}
\label{th:U(P)}
Put
$$P = P(H,s) = \{ g \ast s^\epsilon \ast h \mid g,h \in H, \epsilon \in \{-1,0,1\} \} \subseteq
CDR(\Zt,X).$$
Then the following hold.
\begin{itemize}
\item [(1)] $P$ generates a subgroup $H^*$ in $CDR(\Zt,X)$.
\item [(2)] $P$, with the multiplication $\ast$ induced from $R(\Zt,X)$, is a pregroup
and $H^*$ is isomorphic to $U(P)$.
\item [(3)] $H^*$ is isomorphic to $G = \langle H, z \mid z^{-1} A z = B \rangle$.
\end{itemize}
\end{theorem}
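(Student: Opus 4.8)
The plan is to read off all three statements from the analysis of $s$-forms already carried out in Lemma \ref{le:p1}, treating $P$ as the pregroup attached to the HNN-data $(H,A,B,\phi)$ realized inside $CDR(\Zt,X)$. First I would establish (1). Every finite product of elements of $P^{\pm 1}$ is, after writing each factor as $g \ast s^\epsilon \ast h$ and concatenating, the value $w(p)$ of some $s$-form $p$ over $H$; note that $P = P^{-1}$ since $(g \ast s^\epsilon \ast h)^{-1} = h^{-1} \ast s^{-\epsilon} \ast g^{-1} \in P$. By Lemma \ref{le:p1}(1) the product $w(p)$ is defined regardless of bracketing, and by Lemma \ref{le:p1}(4) it lies in $CDR(\Zt,X)$. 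Hence $P$ generates a subgroup $H^* \leqslant CDR(\Zt,X)$.

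For (2) I would invoke Rimlinger's criterion, Theorem \ref{th:red_word_str}: it suffices to show that any two reduced $P$-products representing the same element of $H^*$ have equal $P$-length. The key point is that a $P$-product fails to be reduced exactly when two consecutive factors $g_i \ast s^{\epsilon_i} \ast h_i$ and $g_{i+1} \ast s^{\epsilon_{i+1}} \ast h_{i+1}$ either meet at an $\epsilon = 0$ slot or form a cancelling pair $s^{-1}(h_i \ast g_{i+1})s$ with $h_i \ast g_{i+1} \in A$ (respectively $s(\cdot)s^{-1}$ with the bracketed factor in $B$); these are precisely the Britton reductions, legitimized by Lemma \ref{le:4.2}. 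Thus a reduced $P$-product corresponds to a reduced $s$-form, whose $P$-length is governed by the number $k$ of surviving $s^{\pm 1}$-letters. By the uniqueness of the normal representation in Lemma \ref{le:p1}(3), this number $k$ is an invariant of $w(p)$, so reduced $P$-products of the same element share the same $P$-length. Theorem \ref{th:red_word_str} then gives that $P$ is a pregroup structure for $H^*$, whence $H^* \simeq U(P)$.

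Finally, for (3), the universal property of the HNN extension lets the assignments $h \mapsto h$ (for $h \in H$) and $z \mapsto s$ extend to a homomorphism $\theta : G \to H^*$, since the defining relation $z^{-1} A z = B$ holds in $H^*$ by Lemma \ref{le:4.2}. This $\theta$ is onto because its image contains $H$ and $s$, hence all of $P$. For injectivity I would match a Britton-reduced word of $G$ with a reduced $s$-form $p$ carrying the same number $k$ of stable $s^{\pm 1}$-letters: Britton's lemma forces such a word to be trivial in $G$ only when $k = 0$, while Lemma \ref{le:p1}(3)--(4) shows $w(p) \neq \varepsilon$ whenever $k \geqslant 1$ (the normal form genuinely contains $s^{\pm 1}$); when $k = 0$ the element lies in $H$, and triviality in $H^*$ reduces to the already-established embedding $H \hookrightarrow H^*$. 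Hence $\theta$ is an isomorphism, and Theorem \ref{main4} follows.

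The main obstacle I anticipate is the bookkeeping in step (2): verifying that the \emph{only} ways a $P$-product degenerates are the two Britton reductions, so that $P$-length equals the number of surviving $s$-letters. This requires the stabilization estimates of Lemma \ref{le:stab0} and Lemma \ref{le:stab1} to guarantee that, outside of membership in $A$ or $B$, no further cancellation of an $s^{\pm 1}$ against its neighbours is possible. Once that is secured, the uniqueness clause of Lemma \ref{le:p1}(3) converts ``same element'' into ``same length'' mechanically, and the remaining verifications are formal.
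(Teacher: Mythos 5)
Your proposal is correct and follows the same skeleton as the paper's proof ($s$-forms, Lemma \ref{le:p1}, Rimlinger's reduced-word criterion, Britton's lemma), but it diverges at two points, both worth recording. For (2), the paper does not argue via invariance of the number of $s$-letters; it proves the equal-length property by induction on the sum of the two lengths: given two reduced $P$-sequences with the same value, it concatenates one with the inverse of the other, notes that the resulting $s$-form cannot be reduced (its value is $\varepsilon$, which would contradict Lemma \ref{le:p1}(3)), and then uses its Claim 2 to peel off the cancelling junction and induct. Your route instead reads the uniqueness clause of Lemma \ref{le:p1}(3) as a statement about the word rather than about the form; that reading is consistent with how the paper later uses normal forms (e.g., in the proof of Theorem \ref{th:Greg}), but the literal proof of Lemma \ref{le:p1}(3) only establishes uniqueness of the normalization of a \emph{given} form, so the paper's induction is the safer path. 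Relatedly, the fact you flag as the main obstacle --- that the only degenerations of a $P$-product are the two Britton moves --- is exactly the paper's Claim 2, and it is not obtained by rerunning the stabilization estimates of Lemmas \ref{le:stab0} and \ref{le:stab1} (nor is it ``legitimized by'' Lemma \ref{le:4.2}, which gives only the easy direction): the paper derives it in a few lines by forming the $s$-form associated to $p \ast q \ast (p \ast q)^{-1}$, whose value is $\varepsilon$, so it cannot be reduced, and analyzing which junction must collapse. For (3) you build $\theta : G \to H^*$ from the HNN universal property and prove injectivity by matching Britton-reduced words of $G$ with reduced $s$-forms; the paper goes the opposite way, extending the pregroup morphism $P \to G$ to $\psi : U(P) \to G$, with surjectivity from $G = \langle H, z\rangle$ and injectivity via Britton's lemma applied to images of reduced $P$-sequences. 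These are mirror images; your direction has the small advantage of not needing Claim 2 to verify a pregroup-morphism condition, while the paper's direction keeps the whole argument inside the pregroup formalism it has already set up.
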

\begin{proof} We need the following claims.

\medskip

{\bf Claim 1.} Let $g_j \ast s^{\epsilon_j} \ast h_j  \in P$ $j = 1,2$. If
$$g_1 \ast s^{\epsilon_1} \ast h_1 = g_2 \ast s^{\epsilon_2} \ast h_2$$
then $\epsilon_1 = \epsilon_2$ and $h_1 \ast h_2^{-1} \in A$ if $\epsilon_1 = -1$, and $h_1 \ast
h_2^{-1} \in B$ if $\epsilon_1 = 1$.

\smallskip

To prove the claim consider an $s$-form
$$a = (g_1, s^{\epsilon_1}, h_1 \ast h_2^{-1}, s^{-\epsilon_2},
g_2^{-1}).$$
By Lemma \ref{le:p1}, $w(a)$ is defined and
$$g_1 \ast s^{\epsilon_1} \ast h_1 \ast h_2^{-1} \ast s^{-\epsilon_2} \ast g_2^{-1} = \varepsilon.$$
Hence, $a$ is not reduced and the claim follows.

\medskip

For every $p \in P$ we fix now a representation $p = g_p \ast s^{\epsilon_p} \ast h_p$, where $g_p,
h_p \in H, \epsilon_p \in \{-1,0,1\}$.

\medskip

{\bf Claim 2.} Let $p = g_p \ast s^{\epsilon_p} \ast h_p, \ \ q = g_q \ast s^{\epsilon_q} \ast h_q$
be in $P$. If $p \ast q \in P$ then either $\epsilon_p \epsilon_q = 0$, or $\epsilon_p = - \epsilon_q
\neq 0$ and $h_p \ast g_q \in A$ if $\epsilon_p = -1$, and $h_p \ast g_q \in B$ if $\epsilon_q = 1$.

\smallskip

Let $x^{-1} = p \ast q \in P$ and $x = g_x \ast s^{\epsilon_x} \ast h_x$. Assume that $\epsilon_p
\epsilon_q \neq 0$.

\smallskip

\begin{enumerate}
\item[(a)] $\epsilon_x \neq 0$

\smallskip

Consider an $s$-form
$$a = (g_p, s^{\epsilon_p}, h_p \ast g_q, s^{\epsilon_q}, h_q \ast g_x, s^{\epsilon_x}, h_x).$$
By Lemma \ref{le:p1}, $w(a)$ is defined and
$$w(a) = g_p \ast s^{\epsilon_p} \ast h_p \ast g_q \ast s^{\epsilon_q} \ast h_q \ast g_x \ast
s^{\epsilon_x} \ast h_x = \varepsilon.$$
Hence, $a$ is not reduced and either a subsequence
$$\{s^{\epsilon_p}, h_p \ast g_q, s^{\epsilon_q}\},$$
or a subsequence
$$\{s^{\epsilon_q}, h_q \ast g_x, s^{\epsilon_x}\}$$
is reducible. In the former case we are done, so assume that $\{s^{\epsilon_q}, h_q \ast g_x,
s^{\epsilon_x}\}$ can be reduced. Without loss of generality we can assume that $\epsilon_q = -1,\
\epsilon_x = 1,\ h_q \ast g_x \in A$. Hence,
$$s^{\epsilon_q} \ast h_q \ast g_x \ast s^{\epsilon_x} = g \in B$$
and we have
$$w(a) = g_p \ast s^{\epsilon_p} \ast h_p \ast g_q \ast g \ast h_x = \varepsilon.$$
Now, it follows $\epsilon_p = 0$ - a contradiction with our assumption.

\smallskip

\item[(b)] $\epsilon_x = 0$

\smallskip

Hence, $x = g \in H$ and we consider an $s$-form
$$a = (g_p, s^{\epsilon_p}, h_p \ast g_q, s^{\epsilon_q}, h_q \ast g).$$
By Lemma \ref{le:p1}, $w(a)$ is defined and
$$w(a) = g_p \ast s^{\epsilon_p} \ast h_p \ast g_q \ast s^{\epsilon_q} \ast h_q \ast g = \varepsilon.$$
\end{enumerate}

Now, the claim follows automatically.

\medskip

Below we call a tuple $y = (y_1, \ldots, y_k) \in P^k$ a {\em reduced} $P$-sequence if
$y_j \ast y_{j+1} \notin P$ for $j \in [1,k-1]$. Observe, that if $y = (y_1, \ldots, y_k)$ is a
reduced $P$-sequence and $y_j = g_j \ast s_i^{\epsilon_j} \ast h_j$ then either $k \leqslant 1$ or
$y$ has the following properties which follow from Claim 2:
\begin{enumerate}
\item[(a)] $\epsilon_j \neq 0$ for all $j \in [1,k]$,
\item[(b)] if $\epsilon_j = -1,\ \epsilon_{j+1} = 1$ then $h_j \ast g_{j+1} \notin A$ for $j \in [1,k-1]$,
\item[(c)] if $\epsilon_j = 1,\ \epsilon_{j+1} = -1$ then $h_j \ast g_{j+1} \notin B$ for $j \in [1,k-1]$.
\end{enumerate}
In particular, the $s$-form over $H$
$$p_y = (g_1,\ s^{\epsilon_1},\ h_1 \ast g_2,\ s^{\epsilon_2},\
\ldots,\ h_{n-1} \ast g_n,\ s^{\epsilon_k},\ h_n),$$
is reduced.

\medskip

To prove (1) observe first that $P^{-1} = P$. Now if $y_1, \ldots, y_k \in
P$ then $y_1 \ast \cdots \ast y_k = w(p_y)$, where $y = (y_1, \ldots, y_k)$. Hence, by Lemma \ref{le:p1},
the product $y_1 \ast y_2 \ast \cdots \ast y_k$ is defined in $CDR(\Zt,X)$ and it belongs to
$CDR(\Zt,X)$. It follows that $H^* = \langle P \rangle$ is a subgroup of $CDR(\Zt,X)$
which consists of all words $w(p)$, where $p$ ranges through all possible $s$-forms over $H$. Hence, (1)
is proved.

\smallskip

Now we prove (2). By Theorem 2, \cite{Rimlinger:1987(2)}, to prove that $P$ is a pregroup and the
inclusion $P \to H^*$ extends to an isomorphism $U(P) \simeq H^*$ it is enough to show that all
reduced $P$-sequences representing the same element have the same $P$-length.

\smallskip

Suppose two reduced $P$-sequences
$$(u_1, u_2, \ldots, u_k),\ (v_1, v_2, \ldots, v_n)$$
represent the same element $g \in H^*$. That is,
$$(u_1 \ast \cdots \ast u_k) \ast (v_1 \ast \cdots \ast v_n)^{-1} = \varepsilon.$$
We use induction on $k+n$ to show that $k = n$. Observe that $k = 0$ implies $n = 0$, otherwise we
get a contradiction with Lemma \ref{le:p1} (3). Hence, we can assume $k, n > 0$, that is, $k + n
\geqslant 2$. If the $P$-sequence
$$a = (u_1, \ldots, u_k, v_n^{-1}, \ldots, v_1^{-1})$$
is reduced then the underlying $s$-form is reduced and hence, by Lemma \ref{le:p1} (3)
$$w(a) = u_1 \ast \ldots \ast u_k \ast v_n^{-1} \ast \ldots \ast v_1^{-1} \neq \varepsilon.$$
Hence,
$$(u_1, \ldots, u_k, v_n^{-1}, \ldots, v_1^{-1})$$
is not reduced and $u_k \ast v_n^{-1} \in P$. If $u_k = g_1 \ast s^{\epsilon_1} \ast h_1,\ v_n =
g_2 \ast s^{\epsilon_2} \ast h_2$, where $g_i, h_i \in H$ and $\epsilon_i \in \{-1,0,1\},\ i = 1, 2$
then by  Claim 2 either $\epsilon_1 \epsilon_2 = 0$, or $\epsilon_1 = \epsilon_2 \neq 0$ and $h_1
\ast h_2^{-1} \in A$ if $\epsilon_1 = -1$, and $h_1 \ast h_2^{-1} \in B$ if $\epsilon_1 = 1$. In the
former case, for example, if $\epsilon_2 = 0$ then $n = 1,\ v_n \in H$ and $b = (u_1, \ldots, u_k
\ast v_n^{-1})$ is a reduced $P$-sequence such that $w(b) = \varepsilon$ - a contradiction with
Lemma \ref{le:p1} (3) unless $k = 1,\ u_1 \in H$. In the latter case, $u_k \ast v_n^{-1} \in H$ and
it follows that
$$(u_1, u_2, \ldots, u_{k-1} \ast (u_k \ast v_n^{-1})),\ (v_1, v_2, \ldots, v_{n-1})$$
represent the same element in $H^*$ while the sum of their lengths is less than $k+n$, so the result
follows by induction.

\medskip

Finally, to prove (3) observe first that $H$ embeds into $G$. We denote this embedding by $\theta$.
Now we define a map $\phi : P \rightarrow G$ as follows. For $g \ast s^\epsilon \ast h \in P$ put
$$g \ast s^\epsilon \ast h \ \stackrel{\phi}{\rightarrow}\ \theta(g)\ z^\epsilon\ \theta(h).$$
It follows from Claim 2 that $\phi$ is a morphism of pregroups. Since $H^* \simeq U(P)$, the
morphism $\phi$ extends to a unique homomorphism $\psi : H^* \rightarrow G$. We claim that $\psi$
is bijective. Indeed, observe first that $G = \langle H, z \rangle$. Now, since $\psi(s^\epsilon)
= z^\epsilon$ and $\psi = \phi = \theta$ on $H$, it follows that $\psi$ is onto. To see that $\psi$
is one-to-one it suffices to notice that if
$$(g_1 \ast s^{\epsilon_1} \ast h_1, g_2 \ast s^{\epsilon_2} \ast h_2, \ldots, g_m \ast s^{\epsilon_m}
\ast h_m)$$
is a reduced $P$-sequence then
$$y = (g_1, s^{\epsilon_1}, h_1 \ast g_2, s^{\epsilon_2}, \ldots, s^{\epsilon_m}, h_m)$$
is a reduced $s$-form and $w(y)^\psi \neq 1$ by Britton's Lemma (see, for example,
\cite{Magnus_Karras_Solitar:1977}). This proves that $\psi$ is an isomorphism, as required.
\end{proof}

\begin{theorem} \cite{KMRS:2012}
\label{th:Greg} Let $G = \langle H, z \mid z^{-1} A z = B \rangle$. Then, in the notation above,
the free length function on $L : G \rightarrow \mathbb{Z}^{n+1}$ induced by the isomorphism $\psi :
H^* \rightarrow G$ is regular.
\end{theorem}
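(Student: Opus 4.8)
The plan is to prove regularity of $L$ directly in the language of infinite words. By construction $L = |\cdot| \circ \psi$, where $|\cdot|$ is the standard $\mathbb{Z}^{n+1}$-valued length on $CDR(\Zt,X)$ and $\psi : H^* \to G$ is the isomorphism of Theorem \ref{th:U(P)}; since $\psi$ is an isometry, $L$ is regular on $G$ if and only if $|\cdot|$ restricted to $H^* = \langle P \rangle$ is regular, and by the $\Lambda$-word reformulation of the regularity axiom (L6) this amounts to the single closure property
$$ {\rm com}(g,f) \in H^* \quad \text{for all } g,f \in H^*. $$
Indeed, once ${\rm com}(g,f) \in H^*$ one has $g = {\rm com}(g,f) \circ \tilde g$ and $f = {\rm com}(g,f) \circ \tilde f$ with $\tilde g = {\rm com}(g,f)^{-1} \ast g$ and $\tilde f = {\rm com}(g,f)^{-1} \ast f$ automatically in $H^*$, and $|{\rm com}(g,f)| = c(g,f)$, which is exactly (L6). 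Note also that $H$ is regular by hypothesis, being a complete $\Z^n$-free group, so ${\rm com}_H$ of two elements of $H$ lies in $H$.

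First I would fix canonical forms for $g$ and $f$ using Lemma \ref{le:p1}(3): each element of $H^*$ is a $\circ$-concatenation $g = P_0 \circ S_1 \circ P_1 \circ \cdots \circ S_k \circ P_k$, where the blocks $P_i$ belong to $H$ (hence have height $\leqslant n$) and the $s$-blocks $S_i = u_i^{-N_i} \ast s^{\epsilon_i} \ast v_i^{-M_i}$ have height $n+1$, the non-cancellation conditions of Lemma \ref{le:p1}(3) guaranteeing uniqueness. The combinatorial input about $s = s_{u^\alpha,v^\alpha}$ that the argument rests on is that $s$ has every finite power $u^m$ as an initial subword and every $v^{-m}$ as a terminal subword, that $s^{-1}$ has every $v^{-m}$ as an initial subword, and that $u$ is not conjugate to $v^{-1}$ by admissibility, so in particular $u \neq v^{-1}$; together with the stabilization Lemmas \ref{le:stab0}--\ref{le:stab1} these control exactly how an element of $H$ can interact with the $u$- and $v$-powers flanking an occurrence of $s$.

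The core of the proof is then an induction on the total number of $s$-syllables in $g$ and $f$. In the base case $g,f \in H$ and ${\rm com}(g,f) = {\rm com}_H(g,f) \in H \subseteq H^*$ by regularity of $H$. For the inductive step I would compare the leading $H$-blocks $P_0$ and $Q_0$ and set $u_0 = {\rm com}_H(P_0,Q_0) \in H$. If $g$ and $f$ diverge strictly inside $P_0,Q_0$, the divergence occurs among letters of height $\leqslant n$ and ${\rm com}(g,f) = u_0 \in H^*$. If the leading $H$-blocks coincide, $P_0 = Q_0$, I would pass to the leading $s$-blocks $S_1, T_1$: using that $s$ and $s^{-1}$ have distinct initial behaviour (because $u \neq v^{-1}$) and the normal-form conditions forbidding $u^{\pm1}$/$v^{\pm1}$ as terminal/initial subwords of the adjacent $H$-pieces, either $S_1$ and $T_1$ agree as whole syllables -- in which case $P_0 \circ S_1 \in H^*$ is a common prefix that I strip off, applying the induction hypothesis to the shorter tails and the closure of $H^*$ under $\circ$-composition -- or they diverge, and the divergence is again localized inside a height-$\leqslant n$ stretch of $u$- or $v$-powers, so ${\rm com}(g,f)$ equals $P_0$ composed with a power of $u$ (or $v$), an element of $H$, hence of $H^*$. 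The remaining mixed case, where one leading $H$-block is a proper prefix of the other, is handled identically: the longer element enters its $s$-block while the shorter stays in $H$, forcing divergence within the height-$\leqslant n$ prefix of that $s$-block.

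I expect the main obstacle to be the book-keeping in this comparison of $s$-blocks: one must rule out any accidental agreement between the interior of an $H$-block (height $\leqslant n$) and the top-height content carried by $s$, and between an $s$-syllable and an $s^{-1}$-syllable. This is precisely where the admissibility hypotheses ($|u|=|v|$ and $u$ not conjugate to $v^{-1}$) and the stabilization Lemmas \ref{le:stab0}--\ref{le:stab1} are indispensable: they pin down how far the flanking $u$- and $v$-powers can reach before the letter $s$, of strictly larger height, appears, and thereby guarantee that every divergence is either a clean syllable boundary permitting the induction or an honest height-$\leqslant n$ divergence resolved by the regularity of $H$.
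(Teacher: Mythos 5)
Your global strategy is the same as the paper's: regularity of $L$ is reduced, via the isometry $\psi$, to showing that $H^* = \langle P \rangle$ is closed under taking common initial segments; the normal forms of Lemma \ref{le:p1}(3) are invoked; and divergences occurring among letters of height at most $n$ are resolved by the regularity of $H$. The paper does not run an induction on the number of $s$-syllables: it takes the minimal index $r$ such that ${\rm com}(g,h)$ sits inside the prefix of the normal form ending with the $r$-th $s$-block extended by a periodic tail, uses uniqueness of normal forms to conclude that the two normal forms coincide up to index $r$, and then exhibits ${\rm com}(g,h)$ as a concatenation $c_1 \circ c_2$; but that difference from your syllable-stripping induction is cosmetic.

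However, your trichotomy for comparing the leading $s$-blocks has a genuine hole. Suppose $P_0 = Q_0$ and the two leading $s$-blocks carry the same sign, $\epsilon_1 = \delta_1 = 1$, but are distinct words because their $v$-side exponents differ, $M_1 \neq P_1$. The blocks $u^{-N_1}\ast s\ast v^{-M_1}$ and $u^{-L_1}\ast s\ast v^{-P_1}$ begin with the same \emph{infinite} $u$-periodic word (deleting finitely many periods from an infinite periodic prefix changes nothing, so $N_1$ and $L_1$ are invisible at the start), hence $g$ and $f$ agree across the entire infinitesimal part of the block and across the height-$(n+1)$ jump inside $s$; their first disagreement occurs in the $v$-periodic region beyond the jump, or even later inside the following $H$-blocks. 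Consequently ${\rm com}(g,f)$ contains height-$(n+1)$ content, so it is \emph{not} of the form ``$P_0$ composed with a power of $u$ (or $v$)'' as you assert, and your induction cannot fire either, since there is no common whole syllable to strip. This is exactly the case the paper's argument is built for: cut both words inside their $s$-blocks at $u^{-N_1}\ast s\ast v^{-l}$ with $l \geqslant \max\{M_1,P_1\}$, so that ${\rm com}(g,f) = c_1 \circ c_2$, where $c_1 = P_0 \circ (u^{-N_1}\ast s \ast v^{-l}) \in H^*$ and $c_2 = {\rm com}\left(v^{\,l-M_1}\ast(\mbox{tail of } g),\ v^{\,l-P_1}\ast(\mbox{tail of } f)\right)$, the two arguments of the last ${\rm com}$ being elements of $H$, whence $c_2 \in H$ by regularity of $H$. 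Your handling of the opposite-sign and mixed cases (where admissibility, in particular $u$ not being conjugate to $v^{-1}$, forces a height-$\leqslant n$ divergence) is sound, but without this non-canonical cut at $v^{-l}$ and the $v$-shift of the tails, the proposal does not cover all pairs of elements of $H^*$.
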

\begin{proof} Observe that is is enough to show that the length function induced on $H^* = \langle P
\rangle$ from $CDR(\Zt,X)$ is regular.

Let $g,h \in H^*$. Then $g$ and $h$ can be written in the unique normal forms
$$g = g_1 \circ (u_1^{-N_1} \ast s^{\epsilon_1} \ast v_1^{-M_1}) \circ g_2 \circ \cdots \circ
(u_k^{-N_k} \ast s^{\epsilon_k} \ast v_k^{-M_k}) \circ g_{k+1},$$
$$h = h_1 \circ (w_1^{-L_1} \ast s^{\delta_1} \ast x_1^{-P_1}) \circ h_2 \circ \cdots \circ
(w_m^{-L_m} \ast s^{\delta_m} \ast x_m^{-P_m}) \circ h_{m+1},$$
where $N_j, M_j \geqslant 0,\ u_j = u, v_j = v$ if $\epsilon_j = 1$, and $N_j, M_j \leqslant 0,\ u_j = v, v_j
= u$ if $\epsilon_j = -1$ for $j \in [1,k];\ L_i, P_i \geqslant 0,\ w_i = u, x_i = v$ if $\delta_i = 1$,
and $L_i, P_i \leqslant 0,\ w_i = v, x_i = u$ if $\delta_i = -1$ for $i \in [1,m]$. Moreover, $g_1$ does
not have $u_1^{\pm 1}$ as a terminal subword, $g_j$ does not have $u_j^{\pm 1}$ as a terminal
subword for every $j \in [2,k]$, and $g_j$ does not have $v_{j-1}^{\pm 1}$ as an initial subword
for every $j \in [2,k]$, $g_{k+1}$ does not have $v_k^{\pm 1}$ as an initial subword; $h_1$ does
not have $w_1^{\pm 1}$ as a terminal subword, $h_i$ does not have $w_i^{\pm 1}$ as a terminal
subword for every $i \in [2,m]$, and $h_i$ does not have $x_{j-1}^{\pm 1}$ as an initial subword
for every $i \in [2,m]$, $h_{m+1}$ does not have $x_m^{\pm 1}$ as an initial subword.

\smallskip

If there exist $k_1,k_2 > 0$ such that
$$c = c(g,h) \leqslant \min\{|g_1 \circ u_1^{k_1}|,|h_1 \circ w_1^{k_2}|\}$$
then $com(g,h) = com(g_1 \circ u_1^{k_1}, h_1 \circ w_1^{k_2}) \in H$. Now, assume that $r \in [1,k]$
is the minimal natural number such that $com(g,h)$ is an initial subword of
$$f_1 = g_1 \circ (u_1^{-N_1} \ast s^{\epsilon_1} \ast v_1^{-M_1}) \circ g_2 \circ \cdots \circ
(u_r^{-N_r} \ast s^{\epsilon_r} \ast v_r^{-M_r}) \circ g_{r+1} \circ u_r^{p_1},$$
where $p_1 \in \mathbb{Z}$. Similarly, assume that $q \in [1,m]$ is the minimal natural number such
that $com(g,h)$ is an initial subword of
$$f_2 = h_1 \circ (w_1^{-L_1} \ast s^{\delta_1} \ast x_1^{-P_1}) \circ h_2 \circ \cdots \circ
(w_q^{-L_q} \ast s^{\delta_q} \ast x_q^{-P_q}) \circ h_{q+1} \circ w_q^{p_2},$$
where $p_2 \in \mathbb{Z}$. From uniqueness of normal forms it follows that $r = q$ and we have $g_i
= h_i,\ u_i = w_i,\ v_i = x_i,\ N_i = L_i,\ \epsilon_i = \delta_i,\ i \in [1,r]$ and $M_i = P_i,\ i
\in [1,r-1]$.

\smallskip

Without loss of generality we can assume $\epsilon_r = 1$. Hence, $v_r = x_r = v$.

Observe that $com(g,h)$ can be represented as a concatenation $com(g,h) = c_1 \circ c_2$, where
$$c_1 = g_1 \circ (u_1^{-N_1} \ast s^{\epsilon_1} \ast v_1^{-M_1}) \circ g_2 \circ \cdots \circ
(u^{-N_r} \ast s \ast v^{-l})$$
and $l \geqslant \max\{M_r,P_r\}$, and
$$c_2 = com(v^{l-M_r} \circ g_{r+1} \circ u_r^{p_1}, v^{l-P_r} \circ h_{r+1} \circ w_r^{p_2}).$$
Obviously, $c_1 \in H^*$. Also, $c_2 \in H$ since $v^{l-M_r} \circ g_{r+1} \circ u_r^{p_1},\
v^{l-P_r} \circ h_{r+1} \circ w_r^{p_2} \in H$ and the length function on $H$ is regular.
Hence, $com(g,h) \in H^*$.
\end{proof}

\subsection{Completions of $\mathbb{Z}^n$-free groups}
\label{subs:completions_Z^n}

Let $G$ be a finitely generated subgroup of $CDR(\mathbb{Z}^n, X)$, where $\mathbb{Z}^n$ is ordered
with respect to the right lexicographic order. Here we do not assume $X$ to be finite. We are going
to construct a finite alphabet $Y$ and a finitely generated group $H$ which is subgroup of
$CDR(\mathbb{Z}^n, Y)$ such that the length function on $H$ induced from $CDR(\mathbb{Z}^n, Y)$ is
regular and $G$ embeds into $H$ so that the length is preserved by the embedding. In other words, we
are going to construct a finitely generated {\em $\mathbb{Z}^n$-completion} of $G$ (see \cite{KMS:2011}).
All the proofs and details can be found in \cite{KMS:2011(2)}.

\smallskip

Consider a finitely generated $\mathbb{Z}^n$-free group $G$, where $n \in \mathbb{N}$.
Suppose $n > 1$ and consider the $\mathbb{Z}^n$-tree $(\Gamma_G, d)$ which arises from the embedding
of $G$ into $CDR(\mathbb{Z}^n, X)$.

\smallskip

We say that $p,q \in \Gamma_G$ are {\em $\mathbb{Z}^{n-1}$-equivalent} ($p \sim q$) if $d(p,q) \in
\mathbb{Z}^{n-1}$, that is, $d(p,q) = (a_1, \ldots, a_n),\ a_n = 0$. From metric axioms it follows
that ``$\sim$'' is an equivalence relation and every equivalence class defines a {\em
$\mathbb{Z}^{n-1}$-subtree} of $\Gamma_G$.

\smallskip

Let $\Delta_G = \Gamma_G / \sim$ and denote by $\rho$ the mapping $\Gamma_G \rightarrow \Gamma_G /
\sim$. It is easy to see that $\Delta_G$ is a simplicial tree. Indeed, define $\widetilde{d} :
\Delta_G \rightarrow \mathbb{Z}$ as follows:
\begin{equation}
\label{eq:new_length}
\forall\ \widetilde{p},\ \widetilde{q} \in \Delta_G:\ \ \widetilde{d}(\widetilde{p},\widetilde{q}) =
k\ \ {\rm iff}\ \ d(p,q) = (a_1, \ldots, a_n)\ \ {\rm and}\ \ a_n = k.
\end{equation}
From metric properties of $d$ it follows that $\widetilde{d}$ is a well-defined metric.

Since $G$ acts on $\Gamma_G$ by isometries then $p \sim q$ implies $g \cdot p \sim g \cdot q$ for
every $g \in G$. Moreover, if $d(p,q) = (a_1, \ldots, a_n)$ then $d(g \cdot p, g \cdot q) = (a_1,
\ldots, a_n)$. Hence, $\widetilde{d}(g \cdot \widetilde{p}, g \cdot \widetilde{q}) = \widetilde{d}(
\widetilde{p}, \widetilde{q})$, that is, $G$ acts on $\Delta_G$ by isometries, but the action is not
free in general. From Bass-Serre theory it follows that $\Psi_G = \Delta_G / G$ is a graph in which
vertices and edges correspond to $G$-orbits of vertices and edges in $\Delta_G$.

\begin{lemma} \cite{KMS:2011(2)}
\label{le:psi}
$\Psi_G$ is a finite graph.
\end{lemma}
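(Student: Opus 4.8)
The plan is to realize $\Psi_G=\Delta_G/G$ as the image of a single finite subtree of $\Delta_G$ under the quotient map, exploiting that $G$ is finitely generated and that $\Delta_G$ is a genuine simplicial tree. Since asserting that the graph $G\backslash\Delta_G$ is finite amounts to asserting that $G$ has only finitely many orbits of vertices and of edges on $\Delta_G$, it suffices to produce a finite subgraph $Y\subseteq\Delta_G$ whose $G$-translates cover $\Delta_G$.

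First I would record that $\Delta_G$ is spanned by a single $G$-orbit. Recall from the construction of $\Gamma_G$ that every point has the form $\langle\alpha,g\rangle$ with $\alpha\in[0,|g|]$, and that $g\cdot\epsilon=\langle|g|,g\rangle$ (see Proposition \ref{pr:action}); consequently $\langle\alpha,g\rangle$ lies on the geodesic $[\epsilon,g\cdot\epsilon]$, so $\Gamma_G=\bigcup_{g\in G}[\epsilon,g\cdot\epsilon]$. Writing $\widetilde\epsilon=\rho(\epsilon)$ and applying the collapsing morphism $\rho$, the image $\Delta_G=\rho(\Gamma_G)$ is the subtree spanned by the orbit $G\cdot\widetilde\epsilon$, since $\rho(g\cdot\epsilon)=g\cdot\widetilde\epsilon$ and $\rho$ carries $[\epsilon,g\cdot\epsilon]$ into the geodesic $[\widetilde\epsilon,g\cdot\widetilde\epsilon]$.

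Next, fix a finite generating set $S=\{g_1,\dots,g_k\}$ of $G$ with $S=S^{-1}$, and set $Y=\bigcup_{s\in S}[\widetilde\epsilon,s\cdot\widetilde\epsilon]$. Because $\widetilde d$ takes values in $\mathbb{Z}$, each segment $[\widetilde\epsilon,s\cdot\widetilde\epsilon]$ is a finite simplicial path, so $Y$ is a finite subtree containing both $\widetilde\epsilon$ and $s\cdot\widetilde\epsilon$ for every $s\in S$. I would then show that $GY=\bigcup_{g\in G}gY$ is a connected $G$-invariant subtree. Connectivity is the one step requiring a genuine argument: for $g=s_1\cdots s_m$ with $s_j\in S$, an induction on $m$ shows that the chain $Y,\ s_1Y,\ s_1s_2Y,\dots,gY$ is connected, because consecutive translates $(s_1\cdots s_{j-1})Y$ and $(s_1\cdots s_j)Y$ share the vertex $(s_1\cdots s_j)\cdot\widetilde\epsilon$ (the former contains it since $Y\ni s_j\cdot\widetilde\epsilon$, the latter since $Y\ni\widetilde\epsilon$). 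As a connected subset of a tree, $GY$ is a subtree, and it is visibly $G$-invariant.

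Finally I would combine the two pieces: $GY$ is a $G$-invariant subtree containing $G\cdot\widetilde\epsilon$, while $\Delta_G$ is the subtree spanned by $G\cdot\widetilde\epsilon$; hence $GY=\Delta_G$. The quotient map $\Delta_G\to\Psi_G$ therefore sends the finite subgraph $Y$ onto all of $\Psi_G$, so $\Psi_G$ has only finitely many vertices and edges. I expect the connectivity induction for $GY$ to be the only non-formal point; everything else is bookkeeping, once one has carefully justified that $\Delta_G$ is indeed spanned by the orbit of $\widetilde\epsilon$ and that the $\widetilde d$-finiteness of segments makes $Y$ a finite graph.
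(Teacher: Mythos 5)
Your proof is correct and is essentially the standard argument behind this lemma (the survey itself defers to \cite{KMS:2011(2)}, which uses the same cocompactness-from-finite-generation reasoning): the orbit of the collapsed base point $\widetilde\epsilon$ spans $\Delta_G$, a finite generating set yields a finite subtree $Y$ whose translates form a connected spanning subgraph, and a connected spanning subgraph of a simplicial tree is the whole tree. The only step you leave implicit --- that edge orbits, not just vertex orbits, are represented in $Y$ --- is exactly what your subtree equality $GY=\Delta_G$ delivers, since connected subgraphs of a simplicial tree are induced subgraphs and are therefore determined by their vertex sets.
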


From Lemma \ref{le:psi} it follows that the number of $G$-orbits of $\mathbb{Z}^{n-1}$-subtrees in
$\Gamma_G$ is finite and equal to $|V(\Psi_G)|$. So, let $|V(\Psi_G)| = m$ and ${\cal T}_1, \ldots,
{\cal T}_m$ be these $G$-orbits.

Consider $\Psi_G$. The set of vertices and edges of $\Psi_G$ we denote correspondingly by
$V(\Psi_G)$ and $E(\Psi_G)$ so that
$$\sigma : E(\Psi_G) \rightarrow V(\Psi_G), \ \ \tau : E(\Psi_G) \rightarrow V(\Psi_G),\ \ ^- :
E(\Psi_G) \rightarrow E(\Psi_G)$$
satisfy the following conditions:
$$\sigma(\bar{e}) = \tau(e),\ \tau(\bar{e}) = \sigma(e),\ \bar{\bar{e}}= e,\ \bar{e} \neq e.$$
Let ${\cal T}$ be a maximal subtree of $\Psi_G$ and let $\pi : \Delta_G \rightarrow \Delta_G / G =
\Psi_G$ be the canonical projection of $\Delta_G$ onto its quotient, so $\pi(v) = Gv$ and $\pi(e) =
G e$ for every $v \in V(\Delta_G),\ e \in E(\Delta_G)$. There exists an injective morphism of
graphs $\eta : {\cal T} \rightarrow \Delta_G$ such that $\pi \circ \eta = id_{\cal T}$ (see Section
8.4 of \cite{Cohen}), in particular $\eta({\cal T})$ is a subtree of $\Delta_G$. One can extend
$\eta$ to a map (which we again denote by $\eta$) $\eta: \Psi_G \rightarrow \Delta_G$ such that
$\eta$ maps vertices to vertices, edges to edges, and so that $\pi \circ \eta = id_{\Psi_G}$.
Notice, that in general $\eta$ is not a graph morphism. To this end choose an orientation $O$ of the
graph $\Psi_G$. Let $e \in O - {\cal T}$. Then there exists an edge $e^\prime \in \Delta_G$ such
that $\pi(e^\prime) = e$. Clearly, $\sigma(e^\prime)$ and $\eta(\sigma(e))$ are in the same
$G$-orbit. Hence $g \cdot \sigma(e^\prime) = \eta(\sigma(e))$ for some $g \in G$. Define $\eta(e) =
g \cdot e^\prime$ and $\eta(\bar{e}) = \overline{\eta(e)}$. Notice that vertices $\eta(\tau(e))$ and
$\tau(\eta(e))$ are in the same $G$-orbit. Hence there exists an element $\gamma_e \in G$ such that
$\gamma_e \cdot \tau(\eta(e)) = \eta(\tau(e))$.

Put
$$G_v = Stab_G(\eta(v)),\ G_e = Stab_G(\eta(e))$$
and define boundary monomorphisms as inclusion maps $i_e : G_e \hookrightarrow G_{\sigma(e)}$ for
edges $e \in {\cal T} \cup O$ and as conjugations by $\gamma_{\bar{e}}$ for edges $e \notin {\cal T}
\cup O$, that is,
\[ i_e(g) = \left\{ \begin{array}{ll}
\mbox{$g$,} & \mbox{if $e \in {\cal T} \cup O$,} \\
\mbox{$\gamma_{\bar{e}} g \gamma_{\bar{e}}^{-1}$,} & \mbox{if $e \notin {\cal T} \cup O$.}
\end{array}
\right. \]
According to the Bass-Serre structure theorem we have
\begin{equation}
\label{eq:presentation}
G \simeq \pi({\cal G},\Psi_G,{\cal T}) = \langle G_v \ (v \in V(\Psi_G)),\ \gamma_e\ (e \in
E(\Psi_G)) \mid rel(G_v),
\end{equation}
$$\gamma_e i_e(g) \gamma_e^{-1} = i_{\bar{e}}(g)\ (g \in G_e),\ \gamma_e \gamma_{\bar{e}} = 1,\
\gamma_e = 1\ (e \in {\cal T})\rangle.$$

\smallskip

Let ${\cal K} = \rho^{-1}(\eta({\cal T})),\ \overline{\cal K} = \rho^{-1}(\eta(\Psi_G))$, hence,
${\cal K},\ \overline{\cal K}$ are subtrees of $\Gamma_G$ such that ${\cal K} \subseteq
\overline{\cal K}$. Obviously $T_0 \subseteq {\cal K}$. Moreover, both ${\cal K}$ and
$\overline{\cal K}$ contain finitely many $\mathbb{Z}^{n-1}$-subtrees, and meet every $G$-orbit of
$\mathbb{Z}^{n-1}$-subtrees of $\Gamma_G$.

For every $v \in \Psi_G$ we have $Stab_G(\eta(v)) = Stab_G(T_{\eta(v)})$, where $\eta(v) =
\rho(T_{\eta(v)})$. Denote by $T_0$ the $\mathbb{Z}^{n-1}$-subtree containing $\varepsilon$.
Obviously, $Stab_G(T_0)$ is a subgroup of $CDR(\mathbb{Z}^{n-1}, X)$.

\begin{lemma} \cite{KMS:2011(2)}
\label{le:stab}
Let $T$ be a $\mathbb{Z}^{n-1}$-subtree of ${\cal K}$. Then
$$Stab_G(T) = f_T \ast K_T \ast f_T^{-1},$$
where $K_T$ is a subgroup of $CDR(\mathbb{Z}^{n-1}, X)$ (possibly trivial) and $f_T =
\mu([\varepsilon, x_T]) \in CDR(\mathbb{Z}^n, X)$. Moreover, is $Stab_G(T)$ is not trivial then $x_T
\in Axis(g) \cap T$ for some $g \in Stab_G(T)$.
\end{lemma}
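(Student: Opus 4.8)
The plan is to produce, for each $\mathbb{Z}^{n-1}$-subtree $T\subseteq\mathcal{K}$, a single word $f_T$ that simultaneously conjugates the whole setwise stabilizer $Stab_G(T)$ down into $CDR(\mathbb{Z}^{n-1},X)$, mimicking the fact recorded just before the lemma that $Stab_G(T_0)\leqslant CDR(\mathbb{Z}^{n-1},X)$ for the distinguished subtree $T_0\ni\varepsilon$. First I would dispose of the trivial case: if $Stab_G(T)=1$ there is nothing to prove, so I pick any $x_T\in T$, set $f_T=\mu([\varepsilon,x_T])$ and $K_T=\{\varepsilon\}$. For the main case I fix $1\neq g\in Stab_G(T)$. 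Since $G$ acts freely on $\Gamma_G$ (Proposition~\ref{pr:action}), $g$ is hyperbolic; and since $T$ is a $\langle g\rangle$-invariant subtree, Corollary~\ref{co:axis_2} guarantees $Axis(g)\cap T\neq\emptyset$. Choosing $x_T\in Axis(g)\cap T$ then yields the ``moreover'' clause immediately, and I set $f_T=\mu([\varepsilon,x_T])\in CDR(\mathbb{Z}^n,X)$, which is a reduced word by Lemma~\ref{le:paths}.

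The heart of the argument is the identity
$$\mu([x_T,h\cdot x_T])=f_T^{-1}\ast h\ast f_T\qquad(h\in Stab_G(T)),$$
which I would derive from equivariance of the path labeling (Lemma~\ref{le:label_edges}, extended to paths). Indeed $\mu([\varepsilon,x_T])=f_T$ gives $\mu([h\cdot\varepsilon,h\cdot x_T])=f_T$, while $\mu([\varepsilon,h\cdot\varepsilon])=h$ because $h\cdot\varepsilon=\langle|h|,h\rangle$ (Lemma~\ref{le:paths_2}); computing $\mu([x_T,h\cdot x_T])$ as the reduced product $\mu([x_T,\varepsilon])\ast\mu([\varepsilon,h\cdot x_T])=f_T^{-1}\ast(h\ast f_T)$ via Lemma~\ref{le:subword_prod} gives the displayed word, which is reduced since geodesic labels are reduced (Lemma~\ref{le:paths}). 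Now $h\cdot x_T\in T$ because $h$ stabilizes $T$, so $x_T$ and $h\cdot x_T$ lie in a single $\mathbb{Z}^{n-1}$-subtree, whence $d(x_T,h\cdot x_T)\in\mathbb{Z}^{n-1}$. Consequently $|f_T^{-1}\ast h\ast f_T|=d(x_T,h\cdot x_T)\in\mathbb{Z}^{n-1}$, so $f_T^{-1}\ast h\ast f_T\in R(\mathbb{Z}^{n-1},X)$; and because $x_T$ was chosen on $Axis(g)$, the element $f_T^{-1}\ast g\ast f_T$ is cyclically reduced of length $\|g\|$, which is exactly what certifies that these conjugates admit cyclic decompositions and hence lie in $CDR(\mathbb{Z}^{n-1},X)$.

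Setting $K_T=f_T^{-1}\ast Stab_G(T)\ast f_T$, I would then verify that conjugation by the fixed word $f_T$ respects the partial multiplication: each product $f_T^{-1}\ast h\ast f_T$ is defined by the previous paragraph, and $(f_T^{-1}\ast h_1\ast f_T)\ast(f_T^{-1}\ast h_2\ast f_T)=f_T^{-1}\ast(h_1 h_2)\ast f_T$ because the large cancellation forced by $|f_T^{-1}\ast h\ast f_T|\in\mathbb{Z}^{n-1}$ makes the central $f_T\ast f_T^{-1}$ cancel completely. Thus $K_T$ is closed under $\ast$ and inversion, i.e.\ a subgroup of $CDR(\mathbb{Z}^{n-1},X)$, and inverting the conjugation gives $Stab_G(T)=f_T\ast K_T\ast f_T^{-1}$. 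I expect the main obstacle to be precisely the bookkeeping in this last paragraph: showing that every product $f_T^{-1}\ast h\ast f_T$ is defined with no spurious cancellation and that $h\mapsto f_T^{-1}\ast h\ast f_T$ is a genuine isomorphism onto $K_T$. Here the choice of $x_T$ on an axis is what prevents $f_T$ from cancelling incompletely, and Lemma~\ref{le:subword_prod} together with the pregroup identities of Theorem~\ref{th:pre} should close the gap.
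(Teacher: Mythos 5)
The paper itself never proves this lemma (it is quoted from \cite{KMS:2011(2)} with the proofs deferred to that paper), so your argument can only be judged on its own terms. Your route is the natural one and most of it is sound: the trivial case, the ``moreover'' clause via Corollary \ref{co:axis_2}, the identity $\mu([x_T,h\cdot x_T])=f_T^{-1}\ast h\ast f_T$ obtained from equivariance of the labeling, the observation that $d(x_T,h\cdot x_T)\in\mathbb{Z}^{n-1}$ because $h$ preserves $T$, and the closure of $K_T$ under $\ast$ (which is cleanest if you phrase it as the general identity $\mu([x,y])\ast\mu([y,z])=\mu([x,z])$ applied to the points $x_T$, $h_1\cdot x_T$, $h_1h_2\cdot x_T$, rather than by appeal to ``large cancellation'') are all correct.

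The genuine gap is the sentence claiming that cyclic reducedness of the single element $f_T^{-1}\ast g\ast f_T$ ``is exactly what certifies that these conjugates admit cyclic decompositions.'' That is a non sequitur: the lemma requires \emph{every} element of $K_T=f_T^{-1}\ast Stab_G(T)\ast f_T$ to lie in $CDR(\mathbb{Z}^{n-1},X)$, and knowing that one element is cyclically reduced says nothing about whether a different conjugate $w_h=f_T^{-1}\ast h\ast f_T$ admits a cyclic decomposition; reducedness alone does not give cyclic decomposability (this is exactly the phenomenon of Example \ref{ex:2-torsion}). The repair, for arbitrary $1\neq h\in Stab_G(T)$, uses the same toolkit you already invoked: $h$ is hyperbolic since the action is free, and by Theorem \ref{le:hyperbolic} the geodesic $[x_T,h\cdot x_T]$ decomposes as $[x_T,q]\cup[q,h\cdot q]\cup[h\cdot q,h\cdot x_T]$, where $q$ is the bridge point on $A_h$. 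Equivariance of the labeling gives $w_h=c_h\circ u_h\circ c_h^{-1}$ with $c_h=\mu([x_T,q])$ and $u_h=\mu([q,h\cdot q])$, and $u_h$ is cyclically reduced because $q,h\cdot q,h^2\cdot q$ lie on $A_h$ in this order, so the label $u_h\circ u_h$ of the geodesic $[q,h^2\cdot q]$ is reduced. Finally, $|u_h|=\|h\|\leqslant d(x_T,h\cdot x_T)\in\mathbb{Z}^{n-1}$ and $2|c_h|=d(x_T,h\cdot x_T)-\|h\|\in\mathbb{Z}^{n-1}$, so by convexity of $\mathbb{Z}^{n-1}$ in $\mathbb{Z}^n$ both pieces are $\mathbb{Z}^{n-1}$-words and $w_h\in CDR(\mathbb{Z}^{n-1},X)$, as required. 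A related, smaller omission: the statement also asserts $f_T\in CDR(\mathbb{Z}^n,X)$, whereas Lemma \ref{le:paths} only yields $f_T\in R(\mathbb{Z}^n,X)$; this membership also needs a justification (it is here that the choice of $x_T$ on an axis, rather than an arbitrary point of $T$, is really used).
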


Let $e$ be an edge of $\Psi_G$ such that $e \in O,\ e \notin {\cal T}$. Let $v = \sigma(\eta(e)) =
\eta(\sigma(e)),\ w = \tau(\eta(e))$ and $u = \eta(\tau(e)) = \gamma_e \cdot w$. We have $u,v \in
\eta({\cal T}),\ w \notin \eta({\cal T})$. Hence,
$$\gamma_e\ Stab_G(w)\ \gamma_e^{-1} = Stab_G(u).$$
By definition we have $i_e(G_e) \subseteq G_v = Stab_G(T)$, where $T = \rho^{-1}(v)$ and
$i_{\bar{e}}(G_e) = \gamma_e G_e \gamma_e^{-1} \subseteq G_u = Stab_G(S)$, where $S = \rho^{-1}(u)$.
Thus, we have $i_e(G_e) = f_T \ast A \ast f_T^{-1},\ i_{\bar{e}}(G_e) = f_S \ast B \ast f_S^{-1}$,
where $A \leqslant K_T$ and $B \leqslant K_S$ are isomorphic abelian subgroups of
$CDR(\mathbb{Z}^{n-1}, X)$. So,
$$\gamma_e \ast (f_T \ast A \ast f_T^{-1}) \ast \gamma_e^{-1} = f_S \ast B \ast f_S^{-1}$$
and it follows that $f_S^{-1} \ast \gamma_e \ast f_T = r_e \in CDR(\mathbb{Z}^n, X)$ so that
$r_e \ast A \ast r_e^{-1} = B$. Thus, we have
$$\gamma_e = f_S \ast r_e \ast f_T^{-1}.$$
Observe that $r_e \in CDR(\mathbb{Z}^n, X) - CDR(\mathbb{Z}^{n-1}, X)$ because otherwise $\gamma_e
\cdot T = S$, that is, $u = v,\ S = T$ and thus $\gamma_e \in Stab_G(T)$ - a contradiction.

\subsubsection{Simplicial case}
\label{subs:n=1}

Let $G$ be a finitely generated subgroup of $CDR(\mathbb{Z},X)$. Hence, $\Gamma_G$ is a simplicial
tree and $\Delta = \Gamma_G / G$ is a folded $X$-labeled digraph (see \cite{Kapovich_Myasnikov:2002})
with labeling induced from $\Gamma_G$. $\Delta$ is finite which follows from the fact that $G$ is finitely
generated and from the construction of $\Gamma_G$. Moreover, $\Delta$ recognizes $G$ with respect
to some vertex $v$ (the image of $\varepsilon$) in the sense that $g \in CDR(\mathbb{Z},X)$ belongs
to $G$ if and only if there exists a loop in $\Delta$ at $v$ such that its label is exactly $g$.

The following lemma provides the required result.

\begin{lemma} \cite{KMS:2011(2)}
\label{le:n=1}
Let $G$ be a finitely generated subgroup of $CDR(\mathbb{Z},X)$. Then there exists a finite alphabet
$Y$ and an embedding $\phi : G \rightarrow H$, where $H = F(Y)$, inducing an embedding $\psi :
\Gamma_G \rightarrow \Gamma_H$ such that
\begin{enumerate}
\item[(i)] $|g|_G = |\phi(g)|_H$ for every $g \in G$,
\item[(ii)] if $A$ is a maximal abelian subgroup of $G$ then $\phi(A)$ is a maximal abelian subgroup
of $H$,
\item[(iii)] if $a$ and $b$ are non-$G$-equivalent ends of $\Gamma_G$ then $\psi(a)$ and $\psi(b)$
are non-$H$-equivalent ends of $\Gamma_H$,
\item[(iv)] if $A$ and $B$ are maximal abelian subgroups of $G$ which are not conjugate in $G$ then
$\phi(A)$ and $\phi(B)$ are not conjugate in $H$.
\end{enumerate}
\end{lemma}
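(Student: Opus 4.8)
The plan is to work entirely with the finite folded core graph $\Delta = \Gamma_G/\,G$ supplied by the discussion preceding the lemma, and to produce the embedding by \emph{relabeling} this graph rather than by any inclusion into the ambient free group. Concretely, choose an orientation of the geometric edges of $\Delta$ and introduce one fresh letter $y_e$ for each geometric edge $\{e,\bar e\}$, setting $Y=\{y_e\}$; since $G$ is finitely generated, $\Delta$ is finite, so $Y$ is finite. Define a new labeling $\mu'$ of $\Delta$ by $\mu'(e)=y_e$, $\mu'(\bar e)=y_e^{-1}$. Because distinct geometric edges now carry distinct letters, $\mu'$ is automatically folded, so the relabeled graph $\Delta'$ lifts to an isometric, label-preserving embedding $\psi:\Gamma_G\to\Gamma_H$ of $\Gamma_G$ (the universal cover of $\Delta$) into the Cayley tree $\Gamma_H$ of $H=F(Y)$, and reading $\mu'$ along the loop in $\Delta$ representing $g$ defines a map $\phi:G\to F(Y)$. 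Equivariance of $\mu'$ makes $\phi$ a homomorphism and folding makes $\psi$ injective, hence $\phi$ is injective and $\psi$ is $\phi$-equivariant. Condition (i) is then immediate: the $\Gamma_G$-geodesic $[\varepsilon,g\varepsilon]$ has $|g|_G$ edges, and folding guarantees that its $\mu'$-label is a reduced word of the same length, so $|\phi(g)|_H=|g|_G$.

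The heart of the argument is that each remaining property is really a statement about the \emph{abstract} finite graph $\Delta$, which is untouched by relabeling, together with the fact that an \emph{edge-injective} labeling translates the combinatorics of $\Delta'$ faithfully into $F(Y)$. I would set up this dictionary in both directions. On the $G$-side: $G$-equivalence of two ends of $\Gamma_G$ is equivalent to cofinality of the corresponding infinite reduced paths in $\Delta$ (lifting path-coincidences through the covering $\Gamma_G\to\Delta$), and conjugacy in $G$ of two maximal cyclic subgroups $\langle a\rangle,\langle b\rangle$ is equivalent to the loops $L_a,L_b$ (the cyclically reduced representatives of their axes) being equal in $\Delta$ up to cyclic rotation and orientation. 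On the $H$-side, because $\mu'$ is a bijection on geometric edges, two ends lying in $\psi(\Gamma_G)$ are $F(Y)$-equivalent iff their $Y$-labels are cofinal iff the underlying $\Delta'$-paths are cofinal, and $\phi(\langle a\rangle),\phi(\langle b\rangle)$ are conjugate in $F(Y)$ iff $L_{\phi(a)},L_{\phi(b)}$ agree up to rotation and orientation. Since $\Delta$ and $\Delta'$ share the same underlying graph and $\mu'$ is edge-injective, the combinatorial conditions on $\Delta$ and on $\Delta'$ coincide, so each of (iii) and (iv) follows as a chain of equivalences (in fact one obtains ``if and only if'', which is stronger than required).

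Condition (ii) is the case I expect to be the main obstacle, and it is where edge-injectivity is used essentially. Let $A=\langle a\rangle$ be maximal abelian in $G$, i.e.\ $a$ is not a proper power in $G$; I must show $\phi(a)$ is not a proper power in $F(Y)$, whence $C_{F(Y)}(\phi(a))=\langle\phi(a)\rangle=\phi(A)$ is maximal abelian. Suppose $\phi(a)=w^k$ with $k\geq 2$. Transporting through $\psi$, the axis $Axis(a)$ acquires a $\mu'$-label-period of size $\|a\|/k$; projected to $\Delta'$, the loop $L_a$ would be a label-power. The key point is that, with distinct letters on distinct edges, a label-period of $L_a$ forces a genuine period in the sequence of (edge, orientation) pairs, and such a period makes the length-$(\|a\|/k)$ sub-path close up into a loop $\gamma$ with $L_a=\gamma^k$ in $\Delta$ itself. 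Lifting $\gamma$ through $\pi_1(\Delta)\cong G$ produces $b\in G$ with $a$ conjugate to $b^k$, contradicting that $a$ is not a proper power; hence $\phi(a)$ is primitive.

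The delicate bookkeeping in (ii) — ruling out the \emph{accidental} label-periods that repeated letters create under the original labeling, handling the possible reuse of a single geometric edge within one fundamental domain of the axis, and pinning down the base vertex at which $\gamma$ closes up so that it genuinely represents an element of $G$ — is exactly the content that edge-injective relabeling is designed to control, and is precisely what the naive inclusion $G\hookrightarrow F(X)$ fails to provide. I would therefore devote most of the write-up to making the axis/end dictionary precise and to this primitivity argument, after which (i), (iii), (iv) fall out of the same edge-injectivity principle with only routine verification.
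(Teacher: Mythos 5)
Your proposal is correct, and it takes essentially the approach the paper intends: the survey states Lemma~\ref{le:n=1} without proof (quoting it from \cite{KMS:2011(2)}), but it sets the lemma up via the finite folded graph $\Delta = \Gamma_G/G$, and your relabeling of $\Delta$ with one fresh letter per geometric edge is exactly the relabeling technique the paper itself deploys in the general $\mathbb{Z}^n$ case (cf.\ Lemma~\ref{le:relabel}, Lemma~\ref{le:trivial_stab} and Corollary~\ref{co:label}). All four verifications go through as you outline them; in particular the edge-injectivity dictionary (labels determine oriented edges, so label-cofinality of rays and equality of cyclic label words pull back to path-cofinality and equality of immersed loops in $\Delta$) and the periodicity argument showing $\phi(a)$ is not a proper power in $F(Y)$ are sound.
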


Lemma \ref{le:n=1} can be generalized to the following result.

\begin{cor} \cite{KMS:2011(2)}
\label{co:n=1}
Let $G$ be a finitely generated subgroup of $CDR(\mathbb{Z},X)$. Assume that $\Gamma_G$ is embedded
into a $\mathbb{Z}$-tree $T$ whose edges are labeled by $X^{\pm}$ so that the action of $G$ on
$\Gamma_G$ extends to an action of $G$ on $T$, and there are only finitely many $G$-orbits of ends
of $T$ which belong to $T - \Gamma_G$. Then there exists a finite alphabet $Y$, a $\mathbb{Z}$-tree
$T'$ whose edges are labeled by $Y^{\pm}$, and a finitely generated subgroup $H \subseteq
CDR(\mathbb{Z},Y)$ such that $\Gamma_H$ is embedded into $T'$ so that the action of $H$ on
$\Gamma_H$ extends to a regular action of $H$ on $T'$. Also, there is an embedding $\theta: T \to
T'$, where $\theta(\Gamma_G) \subseteq \Gamma_H$, which indices an embedding $\phi : G \rightarrow
H$ such that
\begin{enumerate}
\item[(i)] $|g|_G = |\phi(g)|_H$ for every $g \in G$,
\item[(ii)] if $A$ ia a maximal abelian subgroup of $G$ then $\phi(A)$ is a maximal abelian
subgroup of $H$,
\item[(iii)] if $a$ and $b$ are non-$G$-equivalent ends of $T$ then $\theta(a)$ and $\theta(b)$ are
non-$H$-equivalent ends of $T'$.
\end{enumerate}
\end{cor}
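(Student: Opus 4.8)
The plan is to run the construction behind Lemma \ref{le:n=1} relative to the prescribed tree $T$ instead of relative to $\Gamma_G$, carrying the extra ends of $T$ through the completion so that condition (iii) survives; note that Lemma \ref{le:n=1} is exactly the case $T = \Gamma_G$. First I would record that $G$ still acts freely on $T$: every $1 \neq g \in G$ is hyperbolic on $\Gamma_G$ with axis contained in $\Gamma_G \subseteq T$, so by Lemma \ref{le:axis} it stays hyperbolic on $T$ with the same (positive) translation length, hence elliptic and inversion cases are excluded and the action on $T$ is free. Thus $G \backslash T$ is a folded, $X^{\pm}$-labeled graph, and the hypotheses — that $G$ is finitely generated (so $G\backslash \Gamma_G$ is finite) and that $T - \Gamma_G$ meets only finitely many $G$-orbits of ends — guarantee that $G\backslash T$ is given by finite combinatorial data: the finite core $G\backslash \Gamma_G$ with finitely many orbit-classes of rays attached.

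Next I would relabel over a finite alphabet. Although $X$ need not be finite, only finitely many edge-orbits occur in $G\backslash T$, so there is a finite alphabet $Y$ and a length-preserving, $G$-equivariant, still-folded relabeling of the edges of $T$ by $Y^{\pm}$; this is precisely the passage from $X$ to a finite $Y$ used in Lemma \ref{le:n=1}, now applied to all of $T$. The relabeling is arranged so that distinct $G$-orbits of ends of $T - \Gamma_G$ receive combinatorially distinct ray-labels, which is where the marked ends enter the picture. Keeping the same names, this gives a length-preserving embedding $G \hookrightarrow CDR(\mathbb{Z},Y)$.

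Then comes the completion. Following the proof of Lemma \ref{le:n=1}, I would enlarge $T$ to a $\mathbb{Z}$-tree $T'$ by attaching, $G$-equivariantly, new edges carrying fresh letters of $Y$ at the branch points, so that in the quotient every branch point becomes equivalent to the base orbit; concretely one arranges that the resulting subgroup $H \leqslant CDR(\mathbb{Z},Y)$ admits a basis $U$ all of whose elements of $U^{\pm 1}$ begin with pairwise distinct letters, which by Example \ref{ex:1b} is exactly regularity of the induced length function, i.e. regularity of the $H$-action on $T'$. The rays of $T$ to its prescribed ends are retained inside $T'$, so the inclusion $T \hookrightarrow T'$ furnishes the embedding $\theta$, inducing $\phi : G \to H$; and $\Gamma_H$, the subtree spanned by $H\cdot\varepsilon$, contains $\theta(\Gamma_G)$ since the latter is spanned by $\phi(G)\cdot\varepsilon \subseteq H\cdot\varepsilon$. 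Property (i) is immediate because the relabeling is length-preserving and the attached edges lie outside every geodesic between points of $G\cdot\varepsilon$; property (iii) holds because the prescribed ends are kept combinatorially distinct throughout; and property (ii) follows as in Lemma \ref{le:n=1}, once one checks the new edges are attached transversally to the existing axes so that no centralizer is enlarged.

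The hard part will be this completion step, where the three demands compete: forcing the action to be regular requires adding enough fresh edges to collapse all branch-point orbits onto the base orbit, yet they must be placed so as to neither enlarge any maximal abelian (necessarily cyclic) subgroup — else (ii) fails — nor merge two of the prescribed end-directions of $T$ — else (iii) fails. Verifying that the fresh edges can always be attached transversally to existing axes and away from the marked ends, while still producing the ``distinct initial letters'' basis of Example \ref{ex:1b}, is the delicate core of the argument; the finiteness of $G\backslash T$ established above is what turns this into a finite, and hence controllable, bookkeeping problem.
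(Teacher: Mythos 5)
The paper itself gives no proof of this corollary (it is quoted from \cite{KMS:2011(2)}, and the survey only remarks that Lemma \ref{le:n=1} ``can be generalized''), so I can only measure your proposal against the argument that Lemma \ref{le:n=1} points to. Note that in Lemma \ref{le:n=1} the target group is $H = F(Y)$, the \emph{whole} free group on the finite relabeling alphabet, and that is the key to the intended argument: take $T'$ to be the Cayley graph of $F(Y)$ (with the finitely many ray-orbits carried along), so that regularity of the $H$-action is automatic by Example \ref{ex:reg_len}. All of the content then sits in the relabeling of the finite quotient $G\backslash T$: give each edge-orbit of the finite part its own fresh letter, and label the finitely many ray-orbits by eventually periodic words in fresh letters with pairwise distinct tails. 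Foldedness of the relabeled quotient gives (i); since each letter occurs on a single edge-orbit, any period $v$ of $\phi(u)$ in $F(Y)$ lifts to an element of $G$ whose power is $u$, so images of maximal abelian subgroups cannot become proper powers, giving (ii); and two ends whose label words share a common tail must travel the same edge-orbits and hence lie in one $G$-orbit, giving (iii).

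Your proposal has a genuine gap exactly at its ``completion step,'' and it is not just an omission of routine detail. Attaching new edges to $T$ does not change the group $H$ or its orbits, so it cannot ``make every branch point equivalent to the base orbit''; to move branch points into the orbit of the base point you must adjoin new \emph{group elements}, and you never say what they are (``the resulting subgroup $H$'' is undefined). If one tries the natural choice --- adjoin, for each branch point $b$, the prefix element labeling $[\varepsilon, b]$ --- then $\Gamma_H$ grows beyond $\Gamma_G$ and acquires new branch points, so the procedure must be iterated and its termination requires an argument you do not supply. Moreover the difficulty you flag as ``the delicate core'' (balancing regularity against (ii) and (iii)) is self-inflicted: regularity is a property of the length function of $H$, i.e.\ of branch points of $\Gamma_H$, not of all of $T'$, and by taking $H$ to be the full free group $F(Y)$ as in Lemma \ref{le:n=1} it holds for free, while (ii) and (iii) are secured by the relabeling itself as above. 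As written, your text is a plan whose decisive step is both conceptually misdirected (tree-enlargement conflated with group-enlargement) and explicitly left unverified, so it does not constitute a proof.
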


\subsubsection{General case}
\label{subs:general}

Let $G$ be a finitely generated subgroup of $CDR(\mathbb{Z}^n, X)$ for some alphabet $X$. We are
going to use the notations introduced in Subsection \ref{subs:completions_Z^n}, that is, we assume
that ${\cal K},\ \Psi_G,\ \Delta_G$ etc. are defined for $G$ as well as the presentation
(\ref{eq:presentation}).

\smallskip

First of all, we relabel $\Gamma_G$ so that non-$G$-equivalent $\mathbb{Z}^{n-1}$-subtrees are
labeled by disjoint alphabets.

\smallskip

Recall that every edge $e$ in $\Gamma_G$ is labeled by a letter $\mu(e) \in X^\pm$. Let $T$ be a
$\mathbb{Z}^{n-1}$-subtree of ${\cal K}$ and $X_T$ a copy of $X$ (disjoint from $X$) so that we
have a bijection $\pi_T : X \rightarrow X_T$, where $\pi_T(x^{-1}) = \pi_T(x)^{-1}$ for every $x
\in X$. We assume $X_S \cap X_T = \emptyset$ for distinct $S, T \in {\cal K}$. Let $\Gamma'$ be a
copy of $\Gamma_G$ and $\nu : \Gamma' \rightarrow \Gamma_G$ a natural bijection (the bijection on
points naturally induces the bijection on edges). Denote $\varepsilon' = \nu^{-1}(\varepsilon)$.

Let $X' = \bigcup \{ X_T \mid T \in {\cal K} \}$. We introduce a labeling function $\mu' : E(\Gamma')
\rightarrow X'^{\pm}$ as follows: $\mu'(e) = \pi_T(\mu(\nu(e)))$ if $\nu(e) \in T$. $\mu'$ naturally
extends to the labeling of paths in $\Gamma'$. Now, if $V' = \nu^{-1}(V_G)$ then define
$$G' = \{ \mu'(p) \mid p = [\varepsilon', v']\ {\rm for\ some}\ v' \in V'\}.$$

\begin{lemma} \cite{KMS:2011(2)}
\label{le:relabel}
$G'$ is a subgroup of $CDR(\mathbb{Z}^n, X')$ which acts freely on $\Gamma'$ and there exists an
isomorphism $\phi : G \rightarrow G'$ such that $L_\varepsilon(g) = L_{\varepsilon'}(\phi(g))$.
\end{lemma}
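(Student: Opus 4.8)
The plan is to realize $G'$ as the length-preserving homomorphic image of $G$ obtained by recoloring the edge-labels of its universal tree, and then to read off all four assertions from the path/product machinery already developed for $\Gamma_G$. First I would make the relabeling precise on \emph{all} of $E(\Gamma')$, not merely on $\nu^{-1}({\cal K})$: for an edge $e$ of $\Gamma'$, the $\mathbb{Z}^{n-1}$-subtree of $\Gamma_G$ containing $\nu(e)$ lies in a unique $G$-orbit, which meets ${\cal K}$ in exactly one subtree $T(e)$ (here I use that $\eta({\cal T})$ is a transversal for the $G$-orbits of $\mathbb{Z}^{n-1}$-subtrees, $\Psi_G$ being finite by Lemma \ref{le:psi}), and I set $\mu'(e)=\pi_{T(e)}(\mu(\nu(e)))$. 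Since $\pi_{T(e)}$ depends only on the orbit, this is unambiguous, and it agrees with the stated formula whenever $\nu(e)\in T\in{\cal K}$.

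The crucial observation is that $\mu'$ stays equivariant for the $G$-action transported to $\Gamma'$ through $\nu$, written $g\ast x:=\nu^{-1}(g\cdot\nu(x))$. Indeed $\nu(g\ast e)=g\cdot\nu(e)$ lies in the $G$-image of the subtree of $\nu(e)$, hence in the same orbit, so $T(g\ast e)=T(e)$; combined with the $G$-equivariance of $\mu$ (Lemma \ref{le:label_edges}) this gives $\mu'(g\ast e)=\pi_{T(e)}(\mu(g\cdot\nu(e)))=\pi_{T(e)}(\mu(\nu(e)))=\mu'(e)$. Equally important, $\mu'$ keeps $\Gamma'$ \emph{deterministic}: since an edge has length $1_{\mathbb{Z}^n}\in E_{n-1}$, both its endpoints are $\sim$-equivalent, so all edges with a common origin lie in a single $\mathbb{Z}^{n-1}$-subtree and thus receive distinct letters from one injective $\pi_T$. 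Determinism yields the exact analogue of Lemma \ref{le:paths}: the $\mu'$-label of any geodesic of $\Gamma'$ is a reduced word, and a reduced word read from $\varepsilon'$ determines its endpoint.

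With these two facts I set $\phi(g)=\mu'\big([\varepsilon',\,g\ast\varepsilon']\big)$, where $g\ast\varepsilon'=\nu^{-1}(\langle|g|,g\rangle)$, so that $\phi(G)=\{\mu'(p)\mid p=[\varepsilon',v'],\ v'\in V'\}=G'$ by construction. Length preservation $|\phi(g)|=d(\varepsilon',g\ast\varepsilon')=d(\varepsilon,\langle|g|,g\rangle)=|g|$ is immediate because $\nu$ is an isometry, which already gives $L_{\varepsilon'}(\phi(g))=|\phi(g)|=|g|=L_\varepsilon(g)$ via Proposition \ref{pr:action}. The main content is that $\phi$ is a homomorphism into $(R(\mathbb{Z}^n,X'),\ast)$: the geodesic $[g\ast\varepsilon',\,(gh)\ast\varepsilon']=g\ast[\varepsilon',h\ast\varepsilon']$ carries label $\phi(h)$ by equivariance of $\mu'$, so concatenating it after $[\varepsilon',g\ast\varepsilon']$ spells the juxtaposition $\phi(g)\phi(h)$, and reducing this juxtaposition along the geodesic $[\varepsilon',(gh)\ast\varepsilon']$ realizes the cancellation exactly as in Lemma \ref{le:subword_prod}, giving $\phi(gh)=\phi(g)\ast\phi(h)$. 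Injectivity follows at once, since $\phi(g)=\phi(h)$ forces equal reduced labels from $\varepsilon'$, hence a common endpoint, hence $g=h$.

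It remains to place $G'$ in $CDR(\mathbb{Z}^n,X')$ and to produce a free action on $\Gamma'$. As a length-preserving homomorphism $\phi$ sends $\circ$-decompositions to $\circ$-decompositions (equality of lengths forbids cancellation), so the cyclic decomposition $g=c^{-1}\circ\bar g\circ c$ yields $\phi(g)=\phi(c)^{-1}\circ\phi(\bar g)\circ\phi(c)$, and $\phi(\bar g)$ is cyclically reduced by comparing its first and last letters (they are $\pi$-images of those of $\bar g$, hence fail to be mutually inverse whenever the originals do, and automatically so if drawn from different alphabets); thus $\phi(g)\in CDR(\mathbb{Z}^n,X')$, and being the isomorphic $\phi$-image of the group $G$, the set $G'$ is a subgroup of $CDR(\mathbb{Z}^n,X')$. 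Finally, transporting the $G$-action along $\nu$ equips $G'\cong G$ with a free isometric action on $\Gamma'$ satisfying $L_{\varepsilon'}(\phi(g))=|\phi(g)|$; freeness is inherited from Proposition \ref{pr:action} because $\nu$ is a $G$-equivariant bijection, and Proposition \ref{pr:universal} then identifies $\Gamma'$ with $\Gamma_{G'}$ if desired. The hard part is the second paragraph—checking that the orbit-wise recoloring remains equivariant and deterministic—since once that holds, the homomorphism, injectivity, and freeness statements are faithful copies of the arguments already established for $\Gamma_G$.
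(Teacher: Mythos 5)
The survey states this lemma without proof, deferring to \cite{KMS:2011(2)}, so there is no in-paper argument to measure yours against; judged on its own, your proof is correct and is assembled entirely from the paper's machinery (Lemmas \ref{le:psi}, \ref{le:label_edges}, \ref{le:paths} and Propositions \ref{pr:action}, \ref{pr:universal}). The genuinely valuable step is your first paragraph: as printed, the formula $\mu'(e)=\pi_T(\mu(\nu(e)))$ defines $\mu'$ only on edges lying over ${\cal K}$, whereas $G'$ is read off from paths $[\varepsilon',v']$ that leave ${\cal K}$; your orbit-wise extension (well defined because ${\cal K}$ meets every $G$-orbit of $\mathbb{Z}^{n-1}$-subtrees) is exactly the intended construction, and your two verifications --- equivariance of $\mu'$ via Lemma \ref{le:label_edges}, and persistence of foldedness because every edge has length $1_{\mathbb{Z}^n}\in E_{n-1}$, so all edges at a common vertex lie in a single $\mathbb{Z}^{n-1}$-subtree and are relettered by one injective $\pi_T$ --- are precisely what make the remaining claims routine. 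After that, the homomorphism property (backtracking at the median of $\varepsilon'$, $g\ast\varepsilon'$, $(gh)\ast\varepsilon'$ matching cancellation of labels), injectivity (a reduced label from $\varepsilon'$ determines its endpoint, and the action is free), membership in $CDR(\mathbb{Z}^n,X')$ (a length-preserving map sends $\circ$-decompositions to $\circ$-decompositions, with your two-case check on first and last letters covering whether the endpoint's subtree is in the orbit of $T_0$ or not), and freeness of the action transported along $\nu$ are faithful transcriptions of arguments already given for $\Gamma_G$, and all check out.
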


According to Lemma \ref{le:relabel} we have $\Gamma' = \Gamma_{G'}$. Observe that the structure of
$\mathbb{Z}^{n-1}$-trees in $\Gamma_{G'}$ is the same as in $\Gamma_G$. Hence, if ``$\sim$'' is a
$\mathbb{Z}^{n-1}$-equivalence of points of $\Gamma_{G'}$ then $\Delta_{G'} = \Gamma_{G'} / \sim$
and $\Psi_{G'} = \Delta_{G'} / G'$ are naturally isomorphic respectively to $\Delta_G = \Gamma_G /
\sim$ and $\Psi_G = \Delta_G / G$. So, with a slight abuse of notation let $X = X',\ G = G'$.

\smallskip

Next, we would like to refine the labeling so as to make the alphabet $X$ finite. To do this we have
to analyze the structure of the $\mathbb{Z}^{n-1}$-subtrees of ${\cal K}$.

\begin{lemma} \cite{KMS:2011(2)}
\label{le:trivial_stab}
Let $T$ be a $\mathbb{Z}^{n-1}$-subtree of ${\cal K}$ such that $Stab_G(T)$ is trivial. Then $T$
contains only finitely many branch-points and each branch-point of $T$ is of the form
$Y(\varepsilon, x, y)$, where $x, y \in \{x_S\ (S \in {\cal K}),\ \gamma_e^{\pm 1} \cdot \varepsilon
\ (e \in \Psi_G)\}$.
\end{lemma}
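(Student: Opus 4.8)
The plan is to combine the Bass--Serre description of $\Delta_G$ with the triviality of $Stab_G(T)$ to show that only finitely many $\mathbb{Z}^{n-1}$-subtrees meet $T$ across a single edge, and then to realize every branch point of $T$ as a median $Y(\varepsilon,x,y)$ of $\varepsilon$ and two of the prescribed distinguished points. Write $w=\rho(T)\in V(\Delta_G)$, $\bar w=\pi(w)\in V(\Psi_G)$, and $w_0=\rho(T_0)$. First I would record that $\Gamma_G$ is spanned by the orbit $G\cdot\varepsilon$ (as in the proof of Proposition~\ref{pr:action}), so every point of $T=\rho^{-1}(w)\cap\Gamma_G$ lies on a segment $[\varepsilon,g\varepsilon]\cap T$. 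Each such segment is the part of a $\Delta_G$-geodesic lying over $w$, hence its two endpoints are gate points where the geodesic crosses a bridge into an adjacent $\mathbb{Z}^{n-1}$-subtree (together with $\varepsilon$ itself in the single case $T=T_0$). Consequently $T$ is the convex hull, inside $\Gamma_G$, of $x_T$ together with the gates of the subtrees adjacent to $T$ in $\Delta_G$.

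The crucial step is to bound the number of adjacent subtrees. Since $G$ acts on $\Delta_G$ with finite quotient $\Psi_G$ (Lemma~\ref{le:psi}) and $Stab_G(w)=Stab_G(T)$ is trivial by hypothesis, the standard Bass--Serre count shows that the edges of $\Delta_G$ incident to $w$ map onto the finitely many edges of $\Psi_G$ incident to $\bar w$, the fibres of this map being the $Stab_G(w)$-orbits, which are singletons. Thus the map is a bijection, $w$ has finite degree, and in fact every edge at $w$ equals $\eta(\bar e)$ for an edge $\bar e$ of $\Psi_G$ at $\bar w$; therefore every subtree adjacent to $T$ lies in $\overline{\cal K}=\rho^{-1}(\eta(\Psi_G))$. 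Moreover, since $Stab_G(w_0)=Stab_G(T_0)$ is also trivial, the equation $g\,w_0=w$ has at most one solution $g\in G$, so $T$ contains at most one point of the orbit $G\cdot\varepsilon$ (and none unless $T=T_0$, in which case that point is $\varepsilon=x_{T_0}$). Hence $T$ is the convex hull of a finite set of points and so has only finitely many branch points.

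It remains to identify, for each outward branching direction at a branch point $b\in T$, a witness in the prescribed set. Such a direction continues across a gate $p$ into an adjacent subtree $S\subseteq\overline{\cal K}$. If $S\in{\cal K}$, I take the witness to be its distinguished point $x_S$, noting that $T$ separates $\varepsilon$ from $S$, so the geodesic $[\varepsilon,x_S]$ runs through $p$; if $S$ is one of the ``extra'' subtrees attached by an edge $e\in O\setminus{\cal T}$, I use the factorisation $\gamma_e=f_S\ast r_e\ast f_T^{-1}$ of the connecting element to see that $[\varepsilon,\gamma_e^{\pm1}\varepsilon]$ passes through $p$ on the same side of $b$, so $\gamma_e^{\pm1}\varepsilon$ serves as witness. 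Because each witness lies beyond its gate, the geodesics from $\varepsilon$ to two witnesses $x,y$ on distinct outward directions at $b$ diverge exactly at $b$, giving $b=Y(\varepsilon,x,y)$ with $x,y\in\{x_S\ (S\in{\cal K})\}\cup\{\gamma_e^{\pm1}\varepsilon\ (e\in\Psi_G)\}$, as required.

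I expect this last bookkeeping to be the main obstacle: one must match each adjacent subtree of $T$ to the correct element of $\{x_S\}\cup\{\gamma_e^{\pm1}\varepsilon\}$ and verify, through the explicit infinite-word formula $\gamma_e=f_S\ast r_e\ast f_T^{-1}$ and the behaviour of the height-$n$ piece $r_e$, that the chosen witness is separated from $\varepsilon$ by $b$ on the correct side. The first two steps are essentially soft (spanning of $\Gamma_G$ by the orbit, the Bass--Serre edge count, and the single-orbit-point observation), whereas this identification is where the concrete combinatorics of $CDR(\mathbb{Z}^n,X)$ and the connecting elements must be used carefully.
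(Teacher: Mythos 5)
The survey itself states this lemma without proof (it is imported from \cite{KMS:2011(2)}), so your argument can only be measured against the framework of Section~\ref{subs:completions_Z^n}. Measured that way, your overall route is the right one and almost certainly the intended one: (i) $\Gamma_G$ is spanned by $G\cdot\varepsilon$, so $T$ is the convex hull of the gates toward its adjacent $\mathbb{Z}^{n-1}$-subtrees, the projection of $\varepsilon$, and at most one point of $G\cdot\varepsilon$; (ii) triviality of $Stab_G(w)$, $w=\rho(T)$, makes the star of $w$ in $\Delta_G$ inject into the star of $\bar w$ in the finite graph $\Psi_G$ (Lemma~\ref{le:psi}), so $T$ has finitely many neighbours and hence finitely many branch points; (iii) each branch point is the median of $\varepsilon$ and two witnesses lying beyond distinct gates.

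Two of your intermediate claims, however, do not hold as stated. First, nothing gives you that $Stab_G(T_0)$ is trivial when $T\neq T_0$; the conclusion you want ($T$ meets $G\cdot\varepsilon$ in at most one point) is true, but because ${\cal K}$ contains exactly one tree per $G$-orbit, so $T\in{\cal K}$ lies in the orbit of $T_0$ only if $T=T_0$, in which case the orbit points in $T$ are $\{h\cdot\varepsilon \mid h\in Stab_G(T)\}=\{\varepsilon\}$ by the actual hypothesis. Second, and more seriously, the claim that ``every edge at $w$ equals $\eta(\bar e)$, hence every subtree adjacent to $T$ lies in $\overline{\cal K}$'' is false for reversed non-tree edges: if $f\in O\setminus{\cal T}$ has $\tau(f)=\bar w$, then $\eta(\bar f)=\overline{\eta(f)}$ has origin $\tau(\eta(f))=\gamma_f^{-1}\cdot w$, not $w$; the edge of $\Delta_G$ at $w$ in its $G$-orbit is $\gamma_f\cdot\overline{\eta(f)}$, whose other endpoint is $\gamma_f\cdot\eta(\sigma(f))$, so the corresponding neighbour is $\gamma_f\cdot\rho^{-1}(\eta(\sigma(f)))$, which in general is \emph{not} contained in $\overline{\cal K}$ (already for a one-loop $\Psi_G$, the two neighbours of $T$ are $\gamma_e^{-1}\cdot T'$ and $\gamma_e\cdot T'$, and the latter is outside $\overline{\cal K}$ since $\gamma_e^2\notin Stab_G(w)=1$). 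This asymmetry is exactly why the witness set in the statement contains both $\gamma_e\cdot\varepsilon$ and $\gamma_e^{-1}\cdot\varepsilon$: neighbours reached across $\eta(e)$ with $\sigma(e)=\bar w$ are witnessed by $\gamma_e^{-1}\cdot\varepsilon$, while neighbours reached across $\gamma_f\cdot\overline{\eta(f)}$ with $\tau(f)=\bar w$ are witnessed by $\gamma_f\cdot\varepsilon$; in both cases the $\Delta_G$-geodesic from $w_0$ to $\gamma_e^{-1}\cdot w_0$ (resp.\ $\gamma_f\cdot w_0$) passes through $w$ and exits by the stated edge, because along the concatenated path consecutive edges project alternately into ${\cal T}$ and onto non-tree edges of $\Psi_G$, so there is no backtracking. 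Since your final assignment does use both signs $\gamma_e^{\pm1}\cdot\varepsilon$, the error is repairable, but the collapsed case analysis is precisely the ``bookkeeping'' your last paragraph defers, and as written it would misidentify half of the neighbours and their witnesses.
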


In particular, from Lemma \ref{le:trivial_stab} it follows that every $\mathbb{Z}^{n-1}$-subtree $T$
of ${\cal K}$ with trivial stabilizer can be relabeled by a finite alphabet. Indeed, $T$ may be cut
at its branch-points into finitely many closed segments and half-open rays which do not contain any
branch-points. Then all these segments and rays can be labeled by different letters (all points in
each piece is labeled by one letter).

In the case of non-trivial stabilizer the situation is a little more complicated.

\begin{lemma} \cite{KMS:2011(2)}
\label{le:non_trivial_stab}
Let $T$ be a $\mathbb{Z}^{n-1}$-subtree of ${\cal K}$ such that $Stab_G(T) = f_T \ast K_T \ast
f_T^{-1}$ is non-trivial. Then $\Gamma_{K_T}$ embeds into $T$ (the base-point of $\Gamma_{K_T}$ is
identified with $x_T$), the action of $K_T$ on $\Gamma_{K_T}$ extends to the action of $K_T$ on
$T$ and the following hold
\begin{enumerate}
\item[(a)] every end of $T$ which does not belong to $\Gamma_{K_T}$ is $K_T$-equivalent to one of
the ends of a finite subtree which is the intersection of $T$ and the segments $[\varepsilon,
x_S],\ S \in {\cal K}$,
\item[(b)] every end $a$ of $T$ which does not belong to $\Gamma_{K_T}$ extends the axis of some
centralizer $C_a$ of $K_T$,
\item[(c)] there are only finitely many $K_T$-orbits of branch-points of $T$ which do not belong to
$\Gamma_{K_T}$,
\item[(d)] if $K_T \subset CDR(\mathbb{Z}^{n-1}, Y)$ for some finite alphabet $Y$ then the labeling
of $\Gamma_{K_T}$ by $Y$ can be $K_T$-equivariantly extended to a labeling of $T$ by a finite
extension $Y'$ of $Y$.
\end{enumerate}
\end{lemma}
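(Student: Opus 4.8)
The plan is to obtain the embedding in (1) directly from the universal property of $\Gamma_{K_T}$ and to treat the four structural assertions (a)--(d) as successive refinements of the description of $T$ outside the orbit-spanned core. First I would invoke Lemma \ref{le:stab}, which gives $Stab_G(T) = f_T \ast K_T \ast f_T^{-1}$ with $K_T \leqslant CDR(\mathbb{Z}^{n-1},X)$ and $f_T = \mu([\varepsilon,x_T])$, and records that $x_T$ lies on the axis of some element of $Stab_G(T)$. Conjugation by $f_T$ transports the action of $Stab_G(T)$ on $T$ to an action of $K_T$ on $T$; since $G$ acts freely on $\Gamma_G$ this action is free, and the computation $d(x_T,(f_T\ast k\ast f_T^{-1})\cdot x_T)=d(\varepsilon,k\cdot\varepsilon)=|k|$ shows that $x_T$ realises the induced length function. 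Because $T$ is a convex $\mathbb{Z}^{n-1}$-subtree of the $\mathbb{Z}^n$-tree $\Gamma_G$ it is itself a $\mathbb{Z}^{n-1}$-tree, so Proposition \ref{pr:universal} applies and yields a unique $K_T$-equivariant isometry $\Gamma_{K_T}\to T$ sending the base-point to $x_T$, whose image is the subtree spanned by the orbit $Stab_G(T)\cdot x_T$. This is exactly (1), and it identifies $\Gamma_{K_T}$ with the ``core'' of $T$.

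For (a) and (c) I would use that, by Lemma \ref{le:psi}, $\Psi_G$ is finite, so $T\cap\bigcup_{S\in{\cal K}}[\varepsilon,x_S]$ is an intersection of $T$ with finitely many geodesics, hence a finite subtree, and all branch-points occurring in it are of the restricted form $Y(\varepsilon,x,y)$ exactly as in Lemma \ref{le:trivial_stab}. Every point of $T$ not in the core lies on the geodesic from $x_T$ towards either a foot of a height-$n$ connecting edge leaving $T$ or an end of $\Gamma_G$; since ${\cal K}$ meets every $G$-orbit of $\mathbb{Z}^{n-1}$-subtrees and carries all the connecting data $\gamma_e$, each such external end is the image under an element of $Stab_G(T)$ (equivalently, under $K_T$ after conjugation by $f_T$) of one of the finitely many directions visible in $T\cap\bigcup_S[\varepsilon,x_S]$. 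This gives (a), and combining (a) with the finiteness of the branch set of the finite core gives the finiteness of $K_T$-orbits of external branch-points in (c).

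The assertion (b) is where the construction of connecting elements enters. An end $a$ of $T$ not in $\Gamma_{K_T}$ is the direction along which a height-$n$ edge $\gamma_e$ (or $r_e$) leaves $T$; by the definition of the connecting element $s_{u^\alpha,v^\alpha}$ and the commutation relations $[u^\alpha,u]=\varepsilon$ of Lemma \ref{le:cr_exp_1}, the initial segment of that edge lying inside $T$ is a ray reading powers of a generator $c$ of the associated edge-centraliser, so $a$ extends $Axis(c)$. Proposition \ref{pr:1} guarantees that the relevant $C_a=C_{K_T}(c)$ is an abelian subgroup of $K_T$, so $a$ indeed extends the axis of a centraliser of $K_T$, proving (b). I expect this identification of external ends with centraliser axes, together with the precise bookkeeping of which conjugate of $K_T$ the edge groups $i_e(G_e)$, $i_{\bar e}(G_e)$ correspond to in the presentation (\ref{eq:presentation}), to be the main obstacle, since it requires reading the Bass--Serre gluing of the $\mathbb{Z}^{n-1}$-subtrees at the level of infinite words.

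Finally, for (d) I would mimic the relabelling arguments of Lemma \ref{le:trivial_stab} and Corollary \ref{co:n=1}, now carried out $K_T$-equivariantly. The core $\Gamma_{K_T}$ retains its given $Y$-labelling; by (c) there are only finitely many $K_T$-orbits of external branch-points, hence finitely many orbits of appendages, and by (b) each unbounded appendage coincides with a ray along some $Axis(c)$ with $c\in K_T$ already labelled by $Y$, so it can be labelled periodically with its existing letters, while each of the finitely many orbits of bounded external segments receives a fresh letter. Choosing one representative per $K_T$-orbit and propagating the labelling by the action keeps it equivariant and uses only finitely many new symbols, so $Y'$ is a finite extension of $Y$, as required.
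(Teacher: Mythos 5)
You should first be aware that the survey itself gives no proof of this lemma: it is quoted from \cite{KMS:2011(2)}, so your attempt can only be measured against the argument that the paper's own framework dictates. Measured that way, your skeleton for the embedding and for parts (a), (c), (d) is the right one: Lemma \ref{le:stab}, the displacement computation, and Proposition \ref{pr:universal} do produce the $K_T$-equivariant isometry of $\Gamma_{K_T}$ onto the core of $T$, and finiteness of ${\cal K}$ together with the decomposition of geodesics $[\varepsilon, g\cdot\varepsilon]$ into $Stab_G(T)$-translates of the standard pieces is how (a) and (c) are obtained. One caveat even here: $f_T = \mu([\varepsilon,x_T])$ is in general \emph{not} an element of $G$ (the group is not assumed complete), so "conjugation by $f_T$ transports the basepoint" is not literally available; you must instead compute with path labels --- the label of $x_T$ is $f_T$, the label of $(f_T \ast k \ast f_T^{-1})\cdot x_T$ is $f_T \ast k$, and the cancellation bookkeeping then gives $d(x_T, (f_T\ast k \ast f_T^{-1})\cdot x_T) = |k|$. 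This is fixable, not fatal.

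The genuine gap is in your argument for (b), precisely the part you flagged as the main obstacle. You justify the key claim --- that the ray along which a height-$n$ element $\gamma_e$ (equivalently $r_e$) leaves $T$ reads powers of a generator of the edge centralizer --- "by the definition of the connecting element $s_{u^\alpha,v^\alpha}$" and Lemma \ref{le:cr_exp_1}. But $s_{u^\alpha,v^\alpha}$ and the $\Zt$-exponentiation lemmas belong to the \emph{later} completion construction of Section \ref{subs:main_constr}, where connecting elements are manufactured by hand so that their infinitesimal prefixes read powers of $u$. The lemma at hand concerns an arbitrary finitely generated $\Z^n$-free group, whose elements $r_e$ have no such prescribed form; that their exit rays follow axes of centralizers is exactly what must be proved, and the present lemma is itself an ingredient in the proof of Theorem \ref{th:completion}, so quoting that construction is circular. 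The argument the paper's framework supports runs instead through the edge stabilizers: the setup of Subsection \ref{subs:completions_Z^n} gives $r_e \ast A \ast r_e^{-1} = B$ with $A \leqslant K_T$, $B \leqslant K_S$ the two images of the edge group; when $A \neq 1$, take a cyclically reduced $a \in A$ of maximal height, note $ht(r_e) > ht(a)$ while $ht(r_e^{-1} \ast a \ast r_e) = ht(a) < ht(r_e)$, and apply Lemma \ref{le:0} to conclude $r_e = a^{\pm m} \circ r_m$ for every $m$; hence the exit ray runs along $Axis(a)$, which by Proposition \ref{pr:1} and Lemma \ref{le:cycl} is the axis of the centralizer $C_{K_T}(a)$, as (b) asserts. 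Note this argument genuinely uses non-triviality of the edge group, so the case of trivially-stabilized attaching edges needs separate discussion rather than the uniform treatment your sketch suggests.
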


\begin{cor} \cite{KMS:2011(2)}
\label{co:label}
If $G$ is a finitely generated subgroup of $CDR(\mathbb{Z}^n, X)$ then $X$ can be taken to be finite.
\end{cor}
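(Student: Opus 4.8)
The plan is to argue by induction on $n$, using the reduction of $\Gamma_G$ to its quotient simplicial tree $\Psi_G$ together with the finite-alphabet relabelling results already established for the $\mathbb{Z}^{n-1}$-subtrees.

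First I would record the base case $n=1$: here $\Gamma_G$ is a simplicial tree and Lemma \ref{le:n=1} (or its strengthening Corollary \ref{co:n=1}) produces a finite alphabet $Y$ and a length-preserving embedding of $G$ into $CDR(\mathbb{Z},Y)$, which is exactly the assertion. For the inductive step, assume the corollary holds for $\mathbb{Z}^{n-1}$. By Lemma \ref{le:psi} the graph $\Psi_G$ is finite, so there are only finitely many $G$-orbits of $\mathbb{Z}^{n-1}$-subtrees of $\Gamma_G$, and the finite subtree ${\cal K} = \rho^{-1}(\eta({\cal T}))$ meets each of these orbits exactly once. After applying Lemma \ref{le:relabel} I may assume that non-$G$-equivalent $\mathbb{Z}^{n-1}$-subtrees already carry pairwise disjoint alphabets; since every edge of $\Gamma_G$ has length $1_{\mathbb{Z}^n}=(1,0,\dots,0)\in E_{n-1}$, its endpoints are $\mathbb{Z}^{n-1}$-equivalent, so each edge lies inside a single $\mathbb{Z}^{n-1}$-subtree. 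Consequently relabelling all the $\mathbb{Z}^{n-1}$-subtrees relabels every edge of $\Gamma_G$, and by $G$-equivariance of edge labels (Lemma \ref{le:label_edges}) it suffices to relabel the finitely many representative subtrees sitting in ${\cal K}$, in a manner equivariant under their stabilizers.

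Next I would treat a representative $\mathbb{Z}^{n-1}$-subtree $T$ of ${\cal K}$ according to its stabilizer, which by Lemma \ref{le:stab} has the form $Stab_G(T)=f_T\ast K_T\ast f_T^{-1}$ with $K_T\leqslant CDR(\mathbb{Z}^{n-1},X)$. If $K_T$ is trivial, Lemma \ref{le:trivial_stab} says $T$ has only finitely many branch-points, each of the explicit form $Y(\varepsilon,x,y)$; cutting $T$ at these branch-points yields finitely many branch-free segments and rays, and assigning a single new letter to each piece relabels $T$ by a finite alphabet. If $K_T$ is non-trivial, then $K_T$ is a finitely generated subgroup of $CDR(\mathbb{Z}^{n-1},X)$, so the induction hypothesis furnishes a finite alphabet $Y_T$ with $K_T\subset CDR(\mathbb{Z}^{n-1},Y_T)$ and a length-preserving relabelling of $\Gamma_{K_T}$. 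Since $\Gamma_{K_T}$ embeds $K_T$-equivariantly in $T$ (Lemma \ref{le:non_trivial_stab}), part (d) of that lemma extends this finite relabelling $K_T$-equivariantly to a relabelling of all of $T$ by a finite extension $Y_T'$ of $Y_T$; parts (a)--(c) guarantee that the finitely many ends and branch-point orbits of $T$ outside $\Gamma_{K_T}$, which merely prolong axes of centralizers of $K_T$, cause no obstruction.

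Finally I would assemble the pieces: taking the union of the finitely many finite alphabets produced for the finitely many representatives gives a single finite alphabet $Y$. Because each representative was relabelled equivariantly under its own stabilizer and the representatives use disjoint alphabets, the labels propagate unambiguously to all $G$-translates, defining a new $G$-equivariant edge-labelling of $\Gamma_G$ by $Y^{\pm}$. Reading off path labels from the base-point, exactly as in Lemma \ref{le:relabel}, yields a subgroup $G''\subseteq CDR(\mathbb{Z}^n,Y)$ and a length-preserving isomorphism $G\cong G''$, which proves the corollary. The main obstacle is the claim, used in the non-trivial-stabilizer case, that each vertex group $K_T$ is itself finitely generated, for only then does the induction hypothesis apply; this I would derive from the fact that $G$ is the fundamental group of the finite graph of groups (\ref{eq:presentation}) whose edge groups are finitely generated (free abelian of finite rank by Proposition \ref{pr:1}), so that the vertex groups inherit finite generation. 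A secondary point requiring care is the verification that the per-$T$ equivariant relabellings glue consistently along the connecting elements $r_e$ (equivalently, that the axis-prolongations in Lemma \ref{le:non_trivial_stab}(b) receive matching labels from the two subtrees they join), which is precisely what disjointness of alphabets in Lemma \ref{le:relabel} and equivariance of $\mu$ in Lemma \ref{le:label_edges} are arranged to ensure.
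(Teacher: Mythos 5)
Your proof is correct and takes essentially the same route as the paper: the paper's own proof is a one-line citation of Lemmas \ref{le:trivial_stab} and \ref{le:non_trivial_stab}, applied exactly as you do --- finitely many orbit-representatives of $\mathbb{Z}^{n-1}$-subtrees in ${\cal K}$ (Lemma \ref{le:psi}), cutting at the finitely many branch-points in the trivial-stabilizer case, and the induction hypothesis on $n$ combined with part (d) of Lemma \ref{le:non_trivial_stab} in the non-trivial-stabilizer case, with equivariance propagating the finite labeling to all translates. Your explicit verification that $K_T$ is finitely generated (via the finite graph-of-groups decomposition (\ref{eq:presentation}) with finitely generated free abelian edge groups, Proposition \ref{pr:1}) is a necessary detail that the paper leaves implicit, and it is the right justification.
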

\begin{proof} Follows from Lemma \ref{le:trivial_stab} and Lemma \ref{le:non_trivial_stab}.
\end{proof}

For a non-linear $\mathbb{Z}^{n-1}$-subtree $T$ of ${\cal K}$ with a non-trivial stabilizer let
${\cal B(T)}$ be the set of representatives of branch-points of $T - \Gamma_{K_T}$. By Lemma
\ref{le:non_trivial_stab}, ${\cal B(T)}$ is finite and every branch-point of $T$ which does not belong to
$\Gamma_{K_T}$ is $K_T$-equivalent to a branch-point from ${\cal B(T)}$. Let
$${\cal D}(T) = \{ \mu([x_T,y]) \mid y \in {\cal B(T)} \}.$$
Observe that ${\cal D}(T)$ is a finite subset of $CDR(\mathbb{Z}^{n-1},X)$.

\smallskip

Let $g \in G$. Hence, $[\varepsilon, g \cdot \varepsilon]$ meets finitely many
$\mathbb{Z}^{n-1}$-subtrees $T_0, T_1, \ldots, T_k$, where $T(g)_0 = T_0$ and $T_i$ is adjacent to
$T_{i-1}$ for each $i \in [1,k]$. Observe that $T_0$ is $\mathbb{Z}^{n-1}$-subtree of ${\cal K}$. We
have
$$[\varepsilon, g \cdot \varepsilon] \subseteq [x_{T_0}, x_{T_1}] \cup \cdots \cup [x_{T_{k-1}},
x_{T_k}] \cup [x_{T_k}, g \cdot \varepsilon].$$
Now, there exists $g_0 \in Stab_G(T_0)$ and a $\mathbb{Z}^{n-1}$-subtree $S_1$ of ${\cal K}$
adjacent to $T_0$ such that $T_1 = g_0 \cdot S_1$. Next, there exists $g_1 \in Stab_G(T_1)$ and a
$\mathbb{Z}^{n-1}$-subtree $S_2$ of ${\cal K}$ adjacent to $S_1$ such that $T_2 = (g_1 g_0) \cdot
S_2$, and so on. After $k$ steps we find a sequence of $\mathbb{Z}^{n-1}$-subtrees $S_0, S_1,
\ldots, S_k$ from ${\cal K}$, where $S_0 = T_0, S_i\ {\rm is\ adjacent\ to}\ S_{i-1},\ i \in [1,k]$
and $T_i = (g_{i-1} \cdots g_0) \cdot S_i$, where $g_i \in Stab_G(T_i)$. Hence,
$$[\varepsilon, g \cdot \varepsilon] \subseteq [x_{T_0}, g_0 \cdot x_{T_0}] \cup [g_0 \cdot x_{T_0},
g_0 \cdot x_{S_1}] \cup [g_0 \cdot x_{S_1}, x_{T_1}] \cup [x_{T_1}, (g_1 g_0) \cdot x_{S_1}]$$
$$\cup [(g_1 g_0) \cdot x_{S_1}, (g_1 g_0) \cdot x_{S_2}] \cup \cdots \cup [(g_{k-1} \cdots g_0)
\cdot x_{S_{k-1}}, (g_{k-1} \cdots g_0) \cdot x_{S_k}]$$
$$\cup [(g_{k-1} \cdots g_0) \cdot x_{S_k}, x_{T_k}] \cup [x_{T_k}, (g_k \cdots g_0) \cdot x_{S_k}],$$
where $(g_k \cdots g_0) \cdot x_{S_k} = g \cdot \varepsilon$.

Since
$$\mu([p,  q]) = \mu(g \cdot [p,  q]) = \mu([g \cdot p,  g \cdot q])$$
and
$$[(g_{i-1} \cdots g_0) \cdot x_{S_{i-1}}, (g_{i-1} \cdots g_0) \cdot x_{S_i}] = (g_{i-1} \cdots g_0)
\cdot [x_{S_{i-1}}, x_{S_i}]$$
for $i \in [1,k]$, then
$$\mu([(g_{i-1} \cdots g_0) \cdot x_{S_{i-1}}, (g_{i-1} \cdots g_0) \cdot x_{S_i}]) =
\mu([x_{S_{i-1}}, x_{S_i}]).$$
Also, observe that for any $i \in [1,k]$
$$[(g_{i-1} \cdots g_0) \cdot x_{S_i}, x_{T_i}] \cup [x_{T_i}, (g_i \cdots g_0) \cdot x_{S_i}]$$
is a path in $T_i$, where $(g_{i-1} \cdots g_0) \cdot x_{S_i}$ and $(g_i \cdots g_0) \cdot x_{S_i}$
are $Stab_G(T_i)$-equivalent to $x_{T_i}$. So, it follows that
$$\mu([x_{T_i}, (g_{i-1} \cdots g_0) \cdot x_{S_i}]) = f_i \in K_{T_i},\ \ \mu([x_{T_i},
(g_i \cdots g_0) \cdot x_{S_i}]) = h_i \in K_{T_i}.$$
Also, observe that $g_0 = \mu([x_{T_0}, g_0 \cdot x_{T_0}])$. Eventually, we have
$$g = g_0 \ast c_{S_0, S_1} \ast (f_1^{-1} \ast h_1) \ast c_{S_1, S_2} \ast \cdots \ast c_{S_{k-1},
S_k} \ast (f_k^{-1} \ast h_k),$$
where $c_{S_{i-1}, S_i}$ is the label of the path $[x_{S_{i-1}}, x_{S_i}]$ and the product on the
right-hand side is defined in $CDR(\mathbb{Z}^n,X)$.

\smallskip

Now we are ready to perform the inductive step.

\begin{theorem} \cite{KMS:2011(2)}
\label{th:completion}
Let $G$ be a finitely generated subgroup of $CDR(\mathbb{Z}^n, X)$ (assume that ${\cal K},\
\Psi_G,\ \Delta_G$ etc. are defined for $G$ as above). Suppose that for every non-linear
$\mathbb{Z}^{n-1}$-subtree $T$ of ${\cal K}$ with a non-trivial stabilizer there exists
\begin{enumerate}
\item[(a)] an alphabet $Y(T)$,
\item[(b)] a $\mathbb{Z}^{n-1}$-tree $T'$ whose edges are labeled by $Y(T)$,
\item[(c)] a finitely generated group $H_T \subset CDR(\mathbb{Z}^{n-1}, Y(T))$
\end{enumerate}
such that $\Gamma_{H_T}$ is embedded into $T'$ and the action of $H_T$ on $\Gamma_{H_T}$ extends to
a regular action of $H_T$ on $T'$. Moreover, assume that there is an embedding $\psi_T: T
\rightarrow T'$, where $\psi_T(\Gamma_{K_T}) \subseteq \Gamma_{H_T}$, which induces an embedding
$\phi_T : K_T \rightarrow H_T$, and such that if $a$ and $b$ are non-$K_T$-equivalent ends of $T$
then $\psi_T(a)$ and $\psi_T(b)$ are non-$H_T$-equivalent ends of $\psi_T(T)$.

Then there exists an embedding of ${\cal D}(T),\ T \in {\cal K}$ into $CDR(\mathbb{Z}^n, Y)$,
where $Y$ is a finite alphabet containing $\bigcup_{T \in {\cal K}} Y(T)$ such that
\begin{enumerate}
\item[(i)] $\{ H(T),\ {\cal D}(T), \{c_{x_T, x_S} \mid S\ {\rm is\ adjacent\ to}\ T\ {\rm in}\
{\cal K} \} \mid T \in {\cal K} \}$ generates a group $H$ of $CDR(\mathbb{Z}^n, Y)$ which acts
regularly on $\Gamma_H$ with respect to $\varepsilon_H$,
\item[(ii)] there exists an embedding $\psi: \Gamma_G \rightarrow \Gamma_H,\ \psi(\varepsilon_G) =
\varepsilon_H$ which induces an embedding $\phi : G \rightarrow H$, such that if $a$ and $b$ are
non-$G$-equivalent ends of $\Gamma_G$ then $\psi(a)$ and $\psi(b)$ are non-$H$-equivalent ends of
$\psi(\Gamma_G)$.
\end{enumerate}
\end{theorem}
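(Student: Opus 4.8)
The plan is to prove this by induction on $n$, realizing $H$ as a tower of length-preserving HNN extensions, amalgams and centralizer extensions built over the completed vertex subtrees, and then invoking the regularity-preservation results of Subsection \ref{subs:main_constr}. The geometric picture is that $\Gamma_G$ is assembled from its $\mathbb{Z}^{n-1}$-subtrees, one for each vertex of the simplicial quotient $\Delta_G$, and the Bass-Serre presentation (\ref{eq:presentation}) records how the representatives $T \in {\cal K}$ are glued by the connecting elements $\gamma_e$. First I would replace each representative subtree $T$ by a regular completion: for $T$ with trivial stabilizer, Lemma \ref{le:trivial_stab} shows $T$ has only finitely many branch-points, so it is relabelled over a finite alphabet outright; for a linear $T$ with non-trivial stabilizer the stabilizer is a single centralizer and $T$ is a neighbourhood of its axis, handled directly as in the simplicial case (Corollary \ref{co:n=1}); for a non-linear $T$ with non-trivial stabilizer the hypothesis supplies $Y(T)$, the completed tree $T'$, the group $H_T$, and the length-preserving, end-separating embeddings $\psi_T, \phi_T$. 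I would then fix a finite alphabet $Y \supseteq \bigcup_{T \in {\cal K}} Y(T)$, enlarged by finitely many fresh letters, and use the equivariant labelling extension of Lemma \ref{le:non_trivial_stab}(d) to embed the finite sets ${\cal D}(T)$ and the connecting labels $c_{x_T, x_S}$ into $CDR(\mathbb{Z}^n, Y)$ compatibly.

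Next I would construct the new connecting elements. Each edge $e \notin {\cal T}$ of $\Psi_G$ carries $\gamma_e = f_S \ast r_e \ast f_T^{-1}$, where by Lemma \ref{le:stab} the factor $r_e$ conjugates an abelian subgroup $A \leqslant K_T$ onto $B \leqslant K_S$; under the completions these become $\phi_T(A) \leqslant H_T$ and $\phi_S(B) \leqslant H_S$, and since $\phi_T, \phi_S$ preserve length the induced isomorphism $\phi_T(A) \to \phi_S(B)$ is a length isomorphism. Choosing a generator $u$ of maximal height in $\phi_T(A)$ and its image $v$ in $\phi_S(B)$, the pair $\{u,v\}$ is admissible: $|u| = |v|$ is immediate, and non-conjugacy of $u$ to $v^{-1}$ is inherited from condition (3) of Theorem \ref{th:main2} for the original identification, transported through the end-separating embeddings. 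Hence the connecting element $s = s_{u^\alpha, v^\alpha}$ of Subsection \ref{subs:main_constr}, with $\alpha = t^{\,(n-1)-ht(A)}$, is defined in $CDR(\mathbb{Z}^n, Y)$ and raises the height from $n-1$ to $n$. Assembling these data along $\Psi_G$, the group $H$ generated by $\{H_T, {\cal D}(T), c_{x_T,x_S} \mid T \in {\cal K}\}$ is obtained from the $H_T$ by exactly the sequence of length-preserving separated HNN extensions, amalgams and centralizer extensions dictated by (\ref{eq:presentation}); applying Theorem \ref{th:U(P)} (and Theorem \ref{main4}) at each step shows that $H$ embeds as a subgroup of $CDR(\mathbb{Z}^n, Y)$, and that the sets ${\cal D}(T)$ together with the $c_{x_T,x_S}$ record precisely the attaching directions, so this is indeed the stated generating set.

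Then I would define the embedding $\psi : \Gamma_G \to \Gamma_H$ with $\psi(\varepsilon_G) = \varepsilon_H$ by sending each $\mathbb{Z}^{n-1}$-subtree $T$ of $\Gamma_G$ into the corresponding $\Gamma_{H_T} \subseteq \Gamma_H$ via $\psi_T$, and mapping each connecting path to the image of the appropriate $s$. The decomposition of an arbitrary geodesic $[\varepsilon, g \cdot \varepsilon]$ through the intermediate points $x_{T_i}$ and $x_{S_i}$, worked out just before the statement, shows that $\psi$ is well defined, $G$-equivariant, and length-preserving, yielding the induced embedding $\phi : G \to H$ of conclusion (ii).

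The hard part will be regularity of the action of $H$ on $\Gamma_H$, i.e. conclusion (i), which amounts to checking that every branch-point of $\Gamma_H$ lies in a single $H$-orbit. Internal branch-points of each completed subtree are $H_T$-equivalent by regularity of the $H_T$-action (hypothesis, or Corollary \ref{co:n=1} in the base case), and at the simplicial level each HNN/amalgam/centralizer step preserves regularity of the induced length function by Theorem \ref{th:Greg}; the delicate point is the compatibility of these two sources of branch-points, which is exactly where Lemma \ref{le:non_trivial_stab}(a)--(c) and the end-separation carried by $\psi_T$ are used. The end-separation prevents two non-$G$-equivalent attaching ends from being identified after completion, so that no branch-point is created off the expected orbit, while the admissibility of every connecting pair $\{u,v\}$ guarantees, via Theorem \ref{th:U(P)}, that the HNN/amalgam steps neither collapse nor fold. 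Granting this, all branch-points of $\Gamma_H$ are $H$-equivalent, the action is regular, and the end-preservation asserted in (ii) follows from the end-preservation of each $\psi_T$ together with the separation built into the $s_{u^\alpha,v^\alpha}$.
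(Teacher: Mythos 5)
Your opening moves (relabelling the trivial-stabilizer subtrees via Lemma \ref{le:trivial_stab}, invoking the hypotheses for the non-linear ones, and defining $\psi$ via the geodesic decomposition through the points $x_{T_i}, x_{S_i}$) do match the actual proof, but the core of your construction has a genuine gap: the theorem cannot be obtained by assembling the $H_T$ into an HNN/amalgam tower via Theorems \ref{th:U(P)}, \ref{main4} and \ref{th:Greg}. First, the hypotheses of that machinery are not available: the edge groups of the splitting (\ref{eq:presentation}) are merely abelian subgroups $A \leqslant K_T$, $B \leqslant K_S$, and nothing in the hypotheses of Theorem \ref{th:completion} makes $\phi_T(A)$, $\phi_S(B)$ maximal abelian (centralizers), cyclically reduced, or subject to the condition that $a$ is not conjugate to $\phi(a)^{-1}$; your plan to inherit the latter from condition (3) of Theorem \ref{th:main2} is circular, since that theorem describes \emph{complete} $\mathbb{Z}^n$-free groups and $G$ is precisely the group whose completion we are constructing. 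Second, those theorems produce the wrong object: Theorem \ref{main4} manufactures a stable letter $s = s_{u^\alpha,v^\alpha}$ whose height strictly exceeds that of the entire base group, so the resulting length function takes values in $\mathbb{Z}^{n+1}$, and all the supporting lemmas (\ref{le:stab0}, \ref{le:stab1}, \ref{le:p1}) depend on that special shape of $s$ (powers of $u$ as initial segments, powers of $v$ as terminal segments, $s$ never a subword of a base-group element). Here, by contrast, $H$ must sit inside $CDR(\mathbb{Z}^n, Y)$ for the \emph{same} $n$, and the connecting elements must be the already-existing path labels $c_{x_T,x_S}$ of $\Gamma_G$ --- arbitrary height-$n$ words, not of the form $s_{u^\alpha,v^\alpha}$ --- so the $s$-form/pregroup apparatus simply does not apply to them. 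Finally, the graph-of-groups assembly results you would need (Theorems \ref{th:amalgam_Z^n}, \ref{th:HNN_Z^n}) are themselves proved using Theorem \ref{co:main}, whose inductive step is exactly Theorem \ref{th:completion}; invoking them here is circular. (Relatedly, your outer induction on $n$ is misplaced: the induction lives in Theorem \ref{co:main}, and the present theorem is its inductive step, the inductive hypothesis being packaged as assumptions (a)--(c).)

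The regularity statement you defer as ``the delicate point'' is in fact the entire content of conclusion (i), and the proof closes it by a mechanism absent from your proposal: disjointness of alphabets. After relabelling (Corollary \ref{co:label}, Lemma \ref{le:relabel}) so that non-$G$-equivalent $\mathbb{Z}^{n-1}$-subtrees of ${\cal K}$ carry disjoint finite alphabets, every gluing product is automatically reduced, $h \ast c^{-1}_{x_T,x_S} = h \circ c^{-1}_{x_T,x_S}$, and the end-separation hypothesis on $\psi_T$ guarantees that for adjacent $S_1, S_2$ the word $com(h_1 \ast c_{x_T,x_{S_1}}, h_2 \ast c_{x_T,x_{S_2}})$ is defined and is computed inside the tree of $H(T)$. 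Consequently, for each $T$ the set $\{H(T), {\cal D}(T), c_{x_T,x_S}\}$ \emph{directly} generates a subgroup $H'(T)$ of $CDR(\mathbb{Z}^n, Q)$ acting regularly on its tree, the union of the $H'(T)$ generates $H \leqslant CDR(\mathbb{Z}^n, Y)$, and regularity of $H$ is then a one-line observation: for any $f,g \in H$ the median $Y(\varepsilon_H, f \cdot \varepsilon_H, g \cdot \varepsilon_H)$ lies in a single subtree $\Gamma_{H'(T)}$, where regularity already holds. The containment $G \leqslant \langle K_T,\ {\cal D}(T) \mid T \in {\cal K} \rangle \leqslant H$ and the end-separation in (ii) then follow from the geodesic decomposition computed before the theorem together with a lifting argument through $\Delta_G$ --- with no pregroups, no HNN extensions, and no new stable letters anywhere in the argument.
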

\begin{proof} First of all, by Corollary \ref{co:label} we can assume $X$ to be finite. Hence, we
can assume that any two distinct $\mathbb{Z}^{n-1}$-subtrees $S$ and $T$ of ${\cal K}$ are labeled
distinct alphabets $X(S)$ and $X(T)$. Next, by Lemma \ref{le:trivial_stab}, in each
$\mathbb{Z}^{n-1}$-subtree $S$ of ${\cal K}$ with trivial stabilizer there are only finitely many
branch-points, so we can cut $S$ along these branch-points, obtain finitely many closed and
half-open segments, and relabel them by a finite alphabet. Thus we can assume all this to be done
already.

Let $T$ be a non-linear $\mathbb{Z}^{n-1}$-subtree of ${\cal K}$ with a non-trivial stabilizer.
Observe that by Lemma \ref{le:non_trivial_stab} every end $a$ of $T$ either is an end of $\Gamma_{K_T}$, or
$a = g \cdot b$, where $b$ is from a finite list of representatives of orbits of ends of $T -
\Gamma_{K_T}$.

By the assumption, $T$ embeds into $T'$ labeled by $Y(T)$, while $\Gamma_{K_T}$ embeds into
$\Gamma_{H(T)}$, where $H(T)$ acts regularly on $T'$. It follows that for every branch-point $b$ of
$T$ the label of $\psi_T([x_T,b])$ defines an element of $H(T)$. In particular, the label of
$\psi_T(d)$ belongs to $H(T)$ for every $d \in {\cal D}(T)$. Moreover, if $S_1, S_2$ are
$\mathbb{Z}^{n-1}$-subtrees of ${\cal K}$ adjacent to $T$ and $a_{S_1}, a_{S_2}$ are the
corresponding ends of $T$ then $a_{S_1}$ is not $H(T)$-equivalent to $a_{S_2}$. So, by the
assumption, $a_{S_1}$ is not $H(T)$-equivalent to $a_{S_2}$ and it follows that
$$(h_1 \cdot \theta([x_T,x_{S_1}] \cap T)) \cap (h_2 \cdot \theta([x_T,x_{S_2}] \cap T))$$
is a closed segment of $T'$, hence,
$$com(h_1 \ast c_{x_T, x_{S_1}}, h_2 \ast c_{x_T,x_{S_2}}),$$
is defined in $CDR(\mathbb{Z}^{n-1}, Y(T))$. Since $X(T) \cap X(S) = \emptyset$ then $h \ast
c^{-1}_{x_T,x_S} = h \circ c^{-1}_{x_T,x_S}$ for every $\mathbb{Z}^{n-1}$-subtree $S$ of
${\cal K}$ adjacent to $T$. Thus,
$$\{ H(T),\ {\cal D}(T), \{c_{x_T, x_S} \mid S\ {\rm is\ adjacent\ to}\ T\ {\rm in}\
{\cal K} \}\},$$
which is finite, generates a subgroup $H'(T)$ in $CDR(\mathbb{Z}^n, Q)$, where
$$Q = \bigcup_{T \in {\cal K}} Y(T),$$
so that $T$ embeds into $\Gamma_{H'(T)}$. Moreover, $H'(T)$ acts regularly on $\Gamma_{H'(T)}$.

Now, from the fact that alphabet $X(T)$ is disjoint from $X(S)$ if $T$ is not $G$-equivalent to
$S$ it follows that $\{H'(T) \mid T \in {\cal K} \}$ generates a subgroup $H$ of
$CDR(\mathbb{Z}^n,Y)$, where $Y$ is a finite alphabet containing $Q$. Observe that $\Gamma_{H'(T)}$
embeds into $\Gamma_H$ for each $T \in {\cal K}$. Moreover, for every $f, g \in H$ we have $w =
Y(\varepsilon_H,\ f \cdot \varepsilon_H,\ g \cdot \varepsilon_H)$ belongs to one of the subtrees
$\Gamma_{H'(T)}$, hence $[\varepsilon_H,w]$ defines an element of $H'(T) \subset H$. That is,
$H$ acts regularly on $\Gamma_H$.

Next, since
$$G \leqslant \langle K_T,\ \{ {\cal D}(T) \mid T \in {\cal K} \} \rangle \leqslant H$$
then $G$ embeds into $H$.

Finally, every end $a$ of $\Gamma_G$ uniquely corresponds to an end in $\Delta_G$. Every end of
$\Delta_G$ can be viewed as a reduced infinite path $p_a$ in $\Delta_G$ originating at $v \in
\Delta_G$ which is the image of $\varepsilon \in \Gamma_G$. Observe that two ends $a$ and $b$ of
$\Gamma_G$ are $G$-equivalent if and only if $\pi(p_a) = \pi(p_b)$ in $\Psi_G$.

Denote $\Delta_H = \Gamma_H / \sim$, where ``$\sim$'' is the equivalence of $\mathbb{Z}^{n-1}$-close
points. Since $\psi : \Gamma_G \rightarrow \Gamma_H$ is an embedding then $\Delta_G$ embeds into
$\Delta_H$ and with an abuse of notation we are going to denote this embedding by $\psi$. Let $w =
\psi(v)$.

Let $a$ and $b$ be non-$G$-equivalent ends of $\Gamma_G$ and let
$$p_a = v\ v_1\ v_2 \cdots,\ \ p_b = v\ u_1\ u_2 \cdots.$$
Assume that $\psi(a)$ and $\psi(b)$ are $H$-equivalent in $\Gamma_H$, that is, there exists $h \in
H$ such that $h \cdot p_{\psi(a)} = p_{\psi(b)}$. Since $p_{\psi(a)}$ and $p_{\psi(b)}$ have the
same origin $w$ then $h \cdot w = w$, that is, $h \in Stab_H(T'_0)$, where $T'_0$ is a
$\mathbb{Z}^{n-1}$-subtree of $\Gamma_H$ containing $\psi(T_0)$. Moreover, if $e_1 = (w,\psi(v_1)),\
f_1 = (w,\psi(u_1))$ then $h \cdot e_1 = f_1$ and it follows that $h \cdot a_1 = b_1$, where $a_1$
and $b_1$ are ends of $\psi(T_0)$ corresponding to $e_1$ and $f_1$. By the assumption of the theorem
there exists $\phi(g_1) \in Stab_{\phi(G)}(\psi(T_0))$ such that $\phi(g_1) \cdot a_1 = b_1$, so,
$\phi(g_1) \cdot \psi(v_1) = \psi(u_1)$. Since $\phi : G \rightarrow H$ and $\psi: \Gamma_G
\rightarrow \Gamma_H$ are embeddings, it follows that $g_1 \cdot v_1 = u_1$ and the images of
$\pi(u_1) = \pi(v_1)$ in $\Delta_G$.

Continuing in the same way we obtain $\pi(u_i) = \pi(v_i),\ i \geqslant 1$ in $\Delta_G$, so, $a$ and
$b$ are $G$-equivalent which gives a contradiction with the assumption that $\psi(a)$ and $\psi(b)$
are $H$-equivalent in $\Gamma_H$.
\end{proof}

\begin{theorem} \cite{KMS:2011(2)}
\label{co:main}
Let $G$ be a finitely generated subgroup of $CDR(\mathbb{Z}^n, X)$, where $X$ is arbitrary. Then
there exists a finite alphabet $Y$ and an embedding $\phi : G \rightarrow H$, where $H$ is a
finitely generated subgroup of $CDR(\mathbb{Z}^n, Y)$ with a regular length function, such that
\begin{enumerate}
\item[(a)] $|g|_G = |\phi(g)|_H$ for every $g \in G$,
\item[(b)] if $A$ ia a maximal abelian subgroup of $G$ then $\phi(A)$ is a maximal abelian
subgroup of $H$,
\item[(c)] if $A$ and $B$ are maximal abelian subgroups of $G$ which are non-conjugate in $G$
then $\phi(A)$ and $\phi(B)$ are non-conjugate in $H$.
\end{enumerate}
\end{theorem}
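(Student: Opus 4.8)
The plan is to argue by induction on $n$, extracting from the structural analysis of $\Gamma_G$ carried out above exactly the data that Theorem \ref{th:completion} consumes, and then translating its conclusions (i)--(ii) into (a)--(c). For the base case $n=1$ the group $G$ is a finitely generated subgroup of $CDR(\mathbb{Z},X)$, and the required embedding is furnished by Lemma \ref{le:n=1}: it produces a finite alphabet $Y$ and an embedding $\phi:G\to H=F(Y)$ for which (i) is (a), and whose properties (ii) and (iv) are exactly (b) and (c). Since the standard length function on a free group is regular (Example \ref{ex:reg_len}), the base case is complete. Corollary \ref{co:n=1} records the refinement that will actually be used, namely that the same conclusion persists when $\Gamma_G$ is embedded in a larger $\mathbb{Z}$-tree with only finitely many $G$-orbits of extra ends.

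For the inductive step, assume the statement for $\mathbb{Z}^{n-1}$ and let $G\le CDR(\mathbb{Z}^n,X)$ be finitely generated. First I would apply Corollary \ref{co:label} to reduce to the case that $X$ is finite. Next I would form $\Delta_G=\Gamma_G/\!\sim$ and $\Psi_G=\Delta_G/G$, a finite graph by Lemma \ref{le:psi}, together with the fundamental domain $\mathcal{K}$ and the graph-of-groups data in (\ref{eq:presentation}). By Lemma \ref{le:stab} every vertex stabilizer is conjugate to a subgroup $K_T\le CDR(\mathbb{Z}^{n-1},X)$, and these $K_T$ are finitely generated $\mathbb{Z}^{n-1}$-free groups (finiteness of $\Psi_G$ together with finite generation of $G$ yields finite generation of the vertex groups via the induced splitting). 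I would then apply the induction hypothesis to each non-linear $K_T$ with non-trivial stabilizer, obtaining a finite alphabet $Y(T)$, a finitely generated $H_T\le CDR(\mathbb{Z}^{n-1},Y(T))$ acting regularly on $\Gamma_{H_T}$, and a length-preserving embedding $\phi_T:K_T\to H_T$ preserving maximal abelian subgroups and their conjugacy classes.

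The crucial step is to manufacture the ambient-tree data demanded by Theorem \ref{th:completion} out of this. Here Lemma \ref{le:non_trivial_stab} is decisive: $\Gamma_{K_T}$ embeds into $T$ with only finitely many $K_T$-orbits of extra ends, each extending the axis of some centralizer $C_a\le K_T$. Since $\phi_T(C_a)$ is again maximal abelian in $H_T$ by property (b), its axis is a full line in $\Gamma_{H_T}$, so each extra end of $T$ can be sent into that line; taking $T'=\Gamma_{H_T}$ this extends $\Gamma_{K_T}\hookrightarrow\Gamma_{H_T}$ to an embedding $\psi_T:T\to T'$, and non-conjugacy of distinct centralizers (property (c)) forces non-$K_T$-equivalent ends to non-$H_T$-equivalent ends, which is precisely the hypothesis of Theorem \ref{th:completion}. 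Applying that theorem gives a finite alphabet $Y$, a finitely generated $H\le CDR(\mathbb{Z}^n,Y)$ acting regularly on $\Gamma_H$ (conclusion (i)), and a length-preserving embedding $\phi:G\to H$ together with an end-faithful $\psi:\Gamma_G\to\Gamma_H$ (conclusion (ii)). Property (a) then follows at once from $\psi$ being an isometry fixing base points, as $|g|_G=d(\varepsilon_G,g\cdot\varepsilon_G)=d(\varepsilon_H,\phi(g)\cdot\varepsilon_H)=|\phi(g)|_H$. For (b) and (c) I would invoke the correspondence in $\mathbb{Z}^n$-free groups between maximal abelian subgroups and axes (Proposition \ref{pr:1} and the theory of a single isometry, Subsection \ref{subs:single_isom}): a maximal abelian $A\le G$ is the full stabilizer of its axis $\ell_A$, and two such subgroups are conjugate exactly when their axes, equivalently their pairs of ends, are equivalent; end-faithfulness of $\psi$ then gives (c), while maximality of $\phi(A)$ in (b) follows since any abelian subgroup of $H$ properly containing $\phi(A)$ would share the axis $\psi(\ell_A)$ and hence arise inside the regularly acting completion of the vertex group carrying $\ell_A$, where maximality was preserved by induction.

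I expect the main obstacle to be exactly this middle step: verifying that the plain induction hypothesis supplies the ambient-tree hypotheses of Theorem \ref{th:completion}. Concretely, one must use Lemma \ref{le:non_trivial_stab} to control all extra ends of $T$, check that the end-to-axis assignment is injective on orbits and end-faithful, and, in the last line of (b), rule out spurious maximal abelian subgroups of $H$ appearing along $\psi(\ell_A)$ that were absent from the vertex-group completions. The remaining steps are either direct citations of earlier results or routine isometry bookkeeping.
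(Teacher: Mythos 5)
Your top-level route is the same as the paper's: the paper's entire proof of Theorem \ref{co:main} is ``induction on $n$; for $n=1$ use Lemma \ref{le:n=1}; the inductive step follows from Theorem \ref{th:completion}.'' You also correctly located the only nontrivial point, namely that the hypotheses of Theorem \ref{th:completion} ask for more than the bare statement being proved: they require, for each non-linear subtree $T \in {\cal K}$ with non-trivial stabilizer, an ambient $\mathbb{Z}^{n-1}$-tree $T'$ on which $H_T$ acts regularly and an end-faithful embedding $\psi_T : T \to T'$ of the whole subtree $T$, not merely an embedding $K_T \to H_T$ of groups. The problem is that your repair of this mismatch fails. You take $T' = \Gamma_{H_T}$ and claim the extra ends of $T$ ``can be sent into'' axes of $\phi_T(C_a)$; but the bare induction hypothesis gives no guarantee that $\Gamma_{H_T}$ has any room for the part of $T$ lying outside $\Gamma_{K_T}$. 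Concretely, let $G = \langle a,\, sbs^{-1} \rangle$ inside the group of Example \ref{ex:centr_ext_inf_words} (with $u = ab$, say). The subtree $T_0$ through $\varepsilon$ has $K_{T_0} = \langle a \rangle$, so $\Gamma_{K_{T_0}}$ is a line, while $T_0$ itself is that line with infinitely many rays attached (the initial, height-one portions of the segments $[\varepsilon, g\varepsilon]$ for $g$ involving $sbs^{-1}$, which run along $(ab)^{\infty}$). Since $\langle a \rangle$ already carries a regular free length function, the induction hypothesis (which is all you assume) is satisfied by $H_{T_0} = \langle a \rangle$ itself, whose universal tree $\Gamma_{H_{T_0}}$ is again a line: no embedding $\psi_{T_0} : T_0 \to \Gamma_{H_{T_0}}$ exists at all, end-faithful or not. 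Your appeal to Lemma \ref{le:non_trivial_stab}(b) cannot rescue this, because even when an extra end sits over the axis of a centralizer, the extra branch points and rays of $T$ are actual points needing actual room in $T'$, and an axis that is already a full line in $\Gamma_{H_T}$ provides none.

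The correct fix --- and what the cited paper \cite{KMS:2011(2)} does --- is to strengthen the statement carried through the induction to the ambient-tree form, exactly the pattern of Corollary \ref{co:n=1} at level $1$: the completion $H_T$ comes \emph{together with} a possibly larger tree $T' \supseteq \Gamma_{H_T}$ on which $H_T$ still acts regularly and into which the given ambient tree embeds end-faithfully. With that as the induction hypothesis, the hypotheses of Theorem \ref{th:completion} are available verbatim and nothing has to be manufactured; your attempt to derive them from the weaker statement is exactly the step that breaks. A secondary gap: properties (b) and (c) of Theorem \ref{co:main} are not among the stated conclusions (i)--(ii) of Theorem \ref{th:completion}, so they do need an argument, and your end/axis correspondence is the right idea for (c); but for (b) end-faithfulness alone is not enough --- the completion adjoins connecting elements and the elements of ${\cal D}(T)$, and adjoining elements can destroy maximality of an abelian subgroup (compare Theorem \ref{main4}: in $\langle H, z \mid z^{-1}Az = B\rangle$ with $A = B$ the subgroup $A$ stops being maximal abelian). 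So (b) must be verified against the explicit generating set of $H$ produced in Theorem \ref{th:completion}, not deduced from the tree embedding alone.
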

\begin{proof} We use the induction on $n$. If $n = 1$ then the result follows from Lemma \ref{le:n=1}.
Finally, the induction step follows from Theorem \ref{th:completion}.
\end{proof}

As a simple corollary of the above theorem we get the following result.

\begin{theorem} \cite{KMS:2011(2)}
\label{th:embed_complete}
Every finitely generated $\Z^n$-free  group $G$ has a length-preserving embedding into a
finitely generated complete $\Z^n$-free group $H$.
\end{theorem}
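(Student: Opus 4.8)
The plan is to deduce this from the completion theorem (Theorem \ref{co:main}), which carries all of the genuine work; what remains is to package a finitely generated $\Z^n$-free group as a finitely generated subgroup of $CDR(\Z^n, X)$ and then to recognize the output of Theorem \ref{co:main} as a complete $\Z^n$-free group in the sense of Subsection \ref{subs:regular_Z^n}.

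First I would pass from the free action to a length function and then to infinite words. Since $G$ is $\Z^n$-free, it acts freely and without inversions on a $\Z^n$-tree; fixing a base point $x$ gives a length function based at $x$, $L = l_x : G \to \Z^n$, which satisfies axioms (L1)--(L4), and freeness of the action forces $l_x(g^2) > l_x(g)$ for all $g \neq 1$, i.e. axiom (L5). Because $\Z^n$ with the right lexicographic order is discretely ordered, with minimal positive element $(1,0,\ldots,0)$, Chiswell's embedding theorem (Theorem \ref{chis}) yields a length-preserving embedding $G \hookrightarrow CDR(\Z^n, X)$ for some, a priori infinite, set $X$. Thus $G$ may be regarded as a finitely generated subgroup of $CDR(\Z^n, X)$ whose length is the standard length $|\cdot|$ of infinite words.

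Next I would simply apply Theorem \ref{co:main} to this subgroup. It produces a finite alphabet $Y$, a finitely generated subgroup $H \leqslant CDR(\Z^n, Y)$ whose induced length function is regular, and an embedding $\phi : G \to H$ with $|g|_G = |\phi(g)|_H$ for every $g \in G$; that is, $\phi$ is length-preserving. It then remains to check that $H$ is a complete $\Z^n$-free group. By construction $H$ is finitely generated; as a subgroup of $CDR(\Z^n, Y)$ it carries the standard length function, which is a \emph{free} Lyndon length function with values in $\Z^n$ by Theorem \ref{co:3.1}; and this length function is \emph{regular} by the conclusion of Theorem \ref{co:main}. By the definition recalled in Subsection \ref{subs:regular_Z^n}, a finitely generated group equipped with a free regular length function in $\Z^n$ is exactly a complete $\Z^n$-free group (the associated action of $H$ on its universal tree $\Gamma_H$ is free by Proposition \ref{pr:action} and regular since $L$ is). Hence $\phi : G \to H$ is the required length-preserving embedding of $G$ into a finitely generated complete $\Z^n$-free group.

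The only genuinely hard step --- making the alphabet finite while simultaneously arranging the action to be regular and controlling maximal abelian subgroups --- is precisely the content of Theorem \ref{co:main}, whose proof runs by induction on $n$ (base case Lemma \ref{le:n=1}, inductive step Theorem \ref{th:completion}). At the level of this corollary the one point to be careful about is that the realization $G \leqslant CDR(\Z^n, X)$ is legitimate for an arbitrary, possibly infinite, alphabet $X$, so that Theorem \ref{co:main}, which is stated for arbitrary $X$, genuinely applies; everything else is bookkeeping, identifying the definitional content of ``complete $\Z^n$-free'' with the three properties (finite generation, freeness, regularity) already delivered by the completion theorem.
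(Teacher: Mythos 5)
Your proposal is correct and takes essentially the same route as the paper: there, Theorem \ref{th:embed_complete} is stated as ``a simple corollary'' of Theorem \ref{co:main}, with the passage through Chiswell's embedding of $G$ into $CDR(\Z^n,X)$ and the identification of ``finitely generated with a free regular $\Z^n$-valued length function'' with ``complete $\Z^n$-free'' left implicit. Your write-up merely makes those bookkeeping steps explicit, which is exactly where the paper intends the content to lie (Lemma \ref{le:n=1} and Theorem \ref{th:completion} carrying the real work).
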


\subsection{Description of $\mathbb{Z}^n$-free groups}
\label{subs:description_Z^n}

Given two $\Zt$-free groups $G_1,\ G_2$ and maximal abelian subgroups $A \leqslant G_1,\ B \leqslant
G_2$ such that
\begin{enumerate}
\item[(a)] $A$ and $B$ are cyclically reduced with respect to the corresponding embeddings of $G_1$
and $G_2$ into infinite words,
\item[(b)] there exists an isomorphism $\phi : A \rightarrow B$ such that $|\phi(a)| = |a|$ for any $a \in A$.
\end{enumerate}
Then we call the amalgamated free product
$$\langle G_1, G_2 \mid A \stackrel{\phi}{=} B \rangle$$
the {\em length-preserving amalgam} of $G_1$ and $G_2$.

Given a $\Zt$-free group $H$ and non-conjugate maximal abelian subgroups $A, B \leqslant H$ such that
\begin{enumerate}
\item[(a)] $A$ and $B$ are cyclically reduced with respect to the embedding of $H$ into infinite words,
\item[(b)] there exists an isomorphism $\phi : A \rightarrow B$ such that $|\phi(a)| = |a|$ and
$a$ is not conjugate to $\phi(a)^{-1}$ in $H$ for any $a \in A$.
\end{enumerate}
Then we call the HNN extension
$$\langle H, t \mid t^{-1} A t = B \rangle$$
the {\em length-preserving separated HNN extension} of $H$.

As a corollary of Theorem \ref{th:main2} and Theorem \ref{main4} we get the description of
complete $\Z^n$-free groups in the following form.

\begin{theorem}
\label{th:complete_desrc}
A finitely generated group $G$ is complete $\Z^n$-free if and only if it can be obtained from free
groups by finitely many length-preserving separated HNN extensions and centralizer extensions.
\end{theorem}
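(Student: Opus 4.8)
The plan is to prove the two directions separately, both of which are already essentially packaged in the machinery developed earlier in the section. For the forward direction (``complete $\Z^n$-free $\Rightarrow$ obtainable by the stated construction''), I would argue by induction on $n$. Given a finitely generated complete $\Z^n$-free group $G$, Theorem \ref{th:main2} produces the finite series $G_1 < G_2 < \cdots < G_n = G$ in which $G_1$ is a free group of finite rank and each $G_{i+1}$ is built from $G_i$ by finitely many HNN-extensions whose associated subgroups are cyclically reduced centralizers, satisfying conditions (1)--(5). Conditions (2) and (3) are precisely the requirements $|\phi_{i,j}(w)| = |w|$ and non-conjugacy of $w$ to $\phi_{i,j}(w)^{-1}$ that define a \emph{length-preserving separated HNN extension}. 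I would then only need to observe that each such HNN step is either a length-preserving separated HNN extension (when the two associated subgroups are non-conjugate) or, in the degenerate case $\phi_{i,j} = \mathrm{id}$ with $A = B$, a \emph{centralizer extension}; condition (4) of Theorem \ref{th:main2} guarantees that the associated subgroups are either equal or non-conjugate, which is exactly the dichotomy separating these two cases.

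For the converse direction, I would show that the class of finitely generated complete $\Z^n$-free groups is closed under both operations. Here the key input is Theorem \ref{main4} (equivalently Theorems \ref{th:U(P)} and \ref{th:Greg}), which states that if $H$ is a finitely generated complete $\Z^n$-free group and $A, B$ are cyclically reduced centralizers admitting a length-preserving isomorphism $\phi$ with $a \not\sim \phi(a)^{-1}$, then $G = \langle H, z \mid z^{-1} A z = B\rangle$ is again a finitely generated complete $\Z^{n+1}$-free group with length function extending that of $H$. This covers the length-preserving separated HNN extension verbatim. For the centralizer extension $\langle H, t \mid t^{-1} A t = A\rangle$ (where $\phi = \mathrm{id}$), I would need to check that the hypotheses of Theorem \ref{main4} still apply; condition (2) holds trivially since $\phi$ is the identity, and condition (1) — that $a$ is not conjugate to $a^{-1}$ in $H$ — follows from the fact that $H$ is $\Z^{n}$-free and hence has the CSA property and is torsion-free, so a non-trivial element is never conjugate to its own inverse inside a maximal abelian (self-normalizing) subgroup.

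Starting the induction from a free group $F$ of finite rank, which is complete $\Z$-free by Example \ref{ex:reg_len}, and applying these two closure results finitely many times yields a finitely generated complete $\Z^n$-free group for the appropriate $n$, completing the converse.

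\textbf{The main obstacle.} The delicate point, and the step I expect to require the most care, is the degenerate centralizer-extension case in \emph{both} directions. In the forward direction one must correctly recognize when an HNN step coming from Theorem \ref{th:main2} is a genuine centralizer extension versus a separated one, and verify that conditions (1)--(5) exclude any intermediate ``partially identified'' configuration that would fit neither template. In the converse direction, the hypotheses of Theorem \ref{main4} were stated for \emph{non-conjugate} $A, B$ (the separated case), so applying it to the centralizer extension where $A = B$ requires re-examining the proof of Theorem \ref{main4}: one must confirm that the admissibility of the pair $\{u, v = \phi(u)\}$ and the construction of the connecting element $s_{u^\alpha, v^\alpha}$ — and in particular Lemma \ref{le:4.1} and the reduced $s$-form analysis of Lemma \ref{le:p1} — go through unchanged when $u = v$, which hinges entirely on verifying that condition (1) ($u$ not conjugate to $u^{-1}$) genuinely holds, as argued above via the CSA/torsion-free property of $\Z^n$-free groups.
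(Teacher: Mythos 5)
Your proposal is correct and follows exactly the paper's own derivation: the paper states Theorem \ref{th:complete_desrc} as an immediate corollary of Theorem \ref{th:main2} (forward direction) and Theorem \ref{main4} (converse, applied iteratively starting from a free group), which is precisely your plan. Both of your flagged obstacles in fact dissolve: Theorem \ref{main4} is \emph{not} stated for non-conjugate $A, B$ --- its only hypotheses are the two conditions on $\phi$, so it covers the centralizer extension $A = B$, $\phi = \mathrm{id}$ verbatim (this is exactly how the paper uses it in Theorem \ref{th:centr_ext_Z^n}), with your CSA/torsion-free argument for condition (1) being the needed input; and in the forward direction the ``partially identified'' configuration $C_{i,j} = \phi_{i,j}(C_{i,j})$ with $\phi_{i,j} \neq \mathrm{id}$ cannot occur, since a length-preserving automorphism of an abelian subgroup, which embeds into $\Z^n$ via the length function (Proposition \ref{pr:1}), must be $\pm\,\mathrm{id}$, and $-\mathrm{id}$ is excluded by condition (3) of Theorem \ref{th:main2}.
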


Using the results of previous Subsection \ref{subs:completions_Z^n} we can prove a theorem similar
to Theorem \ref{th:complete_desrc} for the class of finitely generated $\Zt$-free groups (not
necessarily complete). For that we need the following theorems.

\begin{theorem}
\label{th:amalgam_Z^n}
Let $G_1$ and $G_2$ be finitely generated $\Z^n$-free groups. Then the length-preserving amalgam
$$G = \langle G_1, G_2 \mid A \stackrel{\phi}{=} B \rangle$$
of $G_1$ and $G_2$ is a finitely generated $\Z^{n^\prime}$-free group and the length function on $G$ 
extends the ones on $G_1$ and $G_2$.
\end{theorem}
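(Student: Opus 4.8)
The plan is to reduce the amalgam to a length-preserving separated HNN extension of a complete $\Z^n$-free group, so that Theorem \ref{main4} applies, and then to recover $G$ as a finitely generated subgroup of the resulting $\Zt$-free group. First I would normalise the setup: after replacing $\Z^{n_1}$ and $\Z^{n_2}$ by a common $\Z^n$ (using that each $\Z^{n_i}$ embeds as an ordered group in $\Z^n$ for $n=\max\{n_1,n_2\}$, and that $\Lambda$-freeness is preserved under such an embedding), both $G_1$ and $G_2$ are $\Z^n$-free. The free product $D=G_1\ast G_2$ is then finitely generated and $\Z^n$-free, by closure of the class of $\Z^n$-free groups under free products (cf. Example \ref{ex:7}). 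Inside $D$ the subgroups $A\le G_1$ and $B\le G_2$ remain maximal abelian (in a free product the centraliser of a nontrivial element of a factor lies in that factor, so $C_D(a)=C_{G_1}(a)=A$ for $a\in A\setminus\{1\}$, using the CSA property of $G_1$), and $A$, $B$ are non-conjugate in $D$ (an element of $G_1$ cannot be conjugate in $D$ to a nontrivial element of $G_2$).

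Next I would embed $D$ length-preservingly into a finitely generated complete $\Z^n$-free group $\widehat H$ by Theorem \ref{co:main} (equivalently Theorem \ref{th:embed_complete}), chosen so that the images $\widehat A,\widehat B$ of $A,B$ are maximal abelian and non-conjugate in $\widehat H$; by conjugating (Remark \ref{rem:4.2}, using Lemma \ref{le:cycl} and regularity as in the proof of Theorem \ref{th:main2}) I may assume $\widehat A,\widehat B$ are cyclically reduced. The isomorphism $\phi$ induces $\widehat\phi:\widehat A\to\widehat B$ with $|\widehat\phi(\widehat a)|=|\widehat a|$. The remaining hypothesis of Theorem \ref{main4}, that $\widehat a$ is not conjugate to $\widehat\phi(\widehat a)^{-1}$ in $\widehat H$, follows from non-conjugacy of $\widehat A$ and $\widehat B$: a conjugacy $g^{-1}\widehat a g=\widehat\phi(\widehat a)^{-1}$ would, by the CSA property, conjugate $\widehat A=C_{\widehat H}(\widehat a)$ onto $\widehat B=C_{\widehat H}(\widehat\phi(\widehat a))$, a contradiction. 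Hence by Theorem \ref{main4} the separated HNN extension
$$\widehat E=\langle \widehat H, t\mid t^{-1}\widehat A t=\widehat B\rangle$$
is a finitely generated complete $\Zt$-free group whose length function extends that of $\widehat H$; in fact its defining infinite word has height $n+1$, so $\widehat E$ is $\Z^{n+1}$-free.

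Then I would realise $G$ as a subgroup of $\widehat E$ in two stages. The amalgam embeds into the HNN extension of the free product by the classical map $G=G_1\ast_A G_2\hookrightarrow E=\langle D,t\mid t^{-1}A t=B\rangle$ sending $G_1$ identically and $G_2$ to $tG_2t^{-1}$ (well-defined since $i_A(a)=t\,i_B(\phi(a))\,t^{-1}$ in $E$, and injective by Britton's Lemma). Since $A=D\cap\widehat A$ and $B=D\cap\widehat B$ (each intersection is an abelian subgroup containing the maximal abelian $A$, resp. $B$) and $\widehat\phi$ restricts to $\phi$, the HNN extension $E$ embeds into $\widehat E$; this is an instance of the induced-splitting theorem (Theorem \ref{induced_split}) applied to the Bass--Serre tree of $\widehat E$ and a suitable $E$-invariant subtree. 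Composing, $G$ embeds into $\widehat E$, and the embedding is length-preserving along the whole chain $G_i\hookrightarrow D\hookrightarrow\widehat H\hookrightarrow\widehat E$. As a finitely generated subgroup of the $\Z^{n+1}$-free group $\widehat E$, the group $G$ is itself $\Z^{n+1}$-free (closure under subgroups), and the induced length function restricts on each $G_i$ to its original one; thus $n'=n+1$ works.

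The main obstacle I expect is the second embedding step: verifying that the abstract HNN extension $E$ over the subgroup $D$ sits inside $\widehat E$, i.e. that the associated subgroups match up ($A=D\cap\widehat A$, $B=D\cap\widehat B$) and that no unexpected identifications occur. The supporting facts---that the completion $\widehat H$ keeps $A,B$ maximal abelian and non-conjugate, and that the separated/``not conjugate to the inverse'' condition transfers from $D$ to $\widehat H$ via CSA---are the other points requiring care, but they are each reduced above to a single clean statement.
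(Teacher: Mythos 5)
Your proposal is correct and follows essentially the same route as the paper's proof: form the free product $G_1 \ast G_2$, embed it length-preservingly into a finitely generated complete $\Z^n$-free group via Theorem \ref{co:main}, apply Theorem \ref{main4} to get the separated HNN extension, and recover $G$ as the subgroup generated by (a conjugate of) one factor together with the other -- the paper uses $\langle t^{-1}\widehat{G_1}t, \widehat{G_2}\rangle$, which is your embedding up to conjugation by $t$. The extra details you supply (cyclic reducedness via Remark \ref{rem:4.2}, the CSA argument showing $\widehat a$ is not conjugate to $\widehat\phi(\widehat a)^{-1}$, and the Britton's-lemma verification that the intermediate HNN extension injects, which is immediate since $\widehat A, \widehat B$ are by Theorem \ref{co:main} the images of $A, B$ themselves) are points the paper leaves implicit, so your writeup is if anything more careful than the original.
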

\begin{proof} Both $G_1$ and $G_2$ a finitely generated $\Zt$-free. Hence, the free product $P =
G_1 \ast G_2$ is also finitely generated $\Zt$-free (see Example \ref{exam:free_prod} and
\cite[Proposition 5.1.1]{Chiswell:2001}) and canonical embeddings of $G_1$ and $G_2$ into $P$
preserve length. Note that $A$ and $B$ are non-conjugate maximal abelian subgroups of $P$.

Next, by Theorem \ref{co:main}, there exists a finitely generated complete $\Zt$-free group $H$ such
that $P$ embeds into $H$ and the embedding preserves length. Moreover, $A$ and $B$ stay
non-conjugate maximal abelian in $H$.

By Theorem \ref{main4}, the HNN extension $K = \langle H, t \mid t^{-1} A t = B \rangle$ is a finitely
generated complete $\Zt$-free group. Observe that $K$ contains a subgroup $K_0 = \langle t^{-1}
\widehat{G_1} t, \widehat{G_2} \rangle$, where $\widehat{G_i}$ denotes the copy of $G_i$ through the
embedding $G_i \hookrightarrow P \hookrightarrow K$. Finally, it is easy to see that $K_0$ is
isomorphic to $G$.
\end{proof}

\begin{theorem}
\label{th:HNN_Z^n}
Let $H$ be a finitely generated $\Z^n$-free group. Then the length-preserving separated HNN extension
$$G = \langle H, t \mid t^{-1} A t = B \rangle$$
of $H$ is a finitely generated $\Z^{n^\prime}$-free group and the length function on $G$ extends the 
one on $H$.
\end{theorem}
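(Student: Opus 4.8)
The plan is to reduce to the complete case already handled by Theorem \ref{main4}, exactly as in the proof of the amalgam statement Theorem \ref{th:amalgam_Z^n}, and then to recover $G$ as a subgroup of the resulting complete $\Zt$-free group. Since $H$ is a finitely generated $\Z^n$-free group it is realized as a subgroup of $CDR(\Z^n,X)$ through its embedding into infinite words, and $A,B$ are non-conjugate maximal abelian subgroups of $H$ equipped with the isomorphism $\phi$ satisfying $|\phi(a)|=|a|$ and $a\not\sim\phi(a)^{-1}$, as required by the definition of a length-preserving separated HNN extension.

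First I would apply Theorem \ref{co:main} to embed $H$ length-preservingly into a finitely generated complete $\Z^n$-free group $\widehat H\subseteq CDR(\Z^n,Y)$ with $Y$ finite, via an embedding $\iota$. By clause (b) of that theorem the images $\iota(A),\iota(B)$ are again maximal abelian in $\widehat H$ (so they are not merely contained in, but equal to, maximal abelian subgroups), and by clause (c) they remain non-conjugate in $\widehat H$. Because $\widehat H$ is $\Z^n$-free it is CSA and commutation is transitive, so each of $\iota(A),\iota(B)$ is the centralizer of any of its non-trivial elements; after conjugating, which by Remark \ref{rem:4.2} only replaces the HNN extension by an isomorphic one, I may assume both are cyclically reduced centralizers. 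The isomorphism $\phi$ transports through $\iota$ to a length-preserving isomorphism $\iota(A)\to\iota(B)$. The non-conjugacy hypothesis then follows formally: if some non-trivial $a\in\iota(A)$ had $g^{-1}ag=\phi(a)^{-1}$ in $\widehat H$, then $g^{-1}\iota(A)g=g^{-1}C(a)g=C(\phi(a)^{-1})=C(\phi(a))=\iota(B)$, contradicting the non-conjugacy of $\iota(A)$ and $\iota(B)$. Thus all hypotheses of Theorem \ref{main4} hold for $\widehat H,\iota(A),\iota(B)$, and it produces the finitely generated complete group $\widehat G=\langle \widehat H, z\mid z^{-1}\iota(A)z=\iota(B)\rangle$, which by Theorem \ref{th:Greg} carries a regular free length function with values in $\Z^{n+1}$.

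Next I would identify $G$ with the subgroup $\langle \iota(H),z\rangle$ of $\widehat G$. There is an obvious homomorphism $G\to\widehat G$ sending $H$ to $\iota(H)$ and $t\mapsto z$, and it respects the defining relation since $z^{-1}\iota(a)z=\iota(\phi(a))$ for $a\in A$. Injectivity is where the one genuinely delicate point lies: I must know that the associated subgroups of $\widehat G$ meet $\iota(H)$ in exactly $\iota(A)$ and $\iota(B)$, that is $\iota^{-1}(\iota(A))=A$ and $\iota^{-1}(\iota(B))=B$, so that a Britton-reduced word of $G$ maps to a Britton-reduced word of $\widehat G$; this is guaranteed precisely because clause (b) of Theorem \ref{co:main} keeps $\iota(A),\iota(B)$ maximal abelian rather than enlarging them. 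Britton's Lemma (see \cite{Magnus_Karras_Solitar:1977}) then yields injectivity. Finally, $G\hookrightarrow\widehat G$ exhibits $G$ as a finitely generated subgroup of a $\Z^{n+1}$-free group, hence $G$ is $\Z^{n+1}$-free since the class of $\Lambda$-free groups is closed under subgroups (see Section \ref{subs:lambda-free}); and since $\iota$ preserves length and the length function of $\widehat G$ extends that of $\widehat H$, the length function induced on $G$ restricts to the original one on $H$. The main obstacle is therefore the careful verification of the three input conditions of Theorem \ref{main4} inside $\widehat H$ together with the exact matching of associated subgroups needed for the Britton-normal-form argument; everything else is the routine transport of data along the completion embedding.
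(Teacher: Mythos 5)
Your proposal is correct and takes essentially the same route as the paper's own proof: embed $H$ length-preservingly into a finitely generated complete $\Z^n$-free group via Theorem \ref{co:main}, verify the hypotheses of Theorem \ref{main4} for the images of $A$ and $B$ (deriving the non-conjugacy of $a$ with $\phi(a)^{-1}$ from the CSA property exactly as you do), apply Theorem \ref{main4} to obtain the complete HNN extension, and recover $G$ as the subgroup $\langle \psi(H), t \rangle$ inside it. Your extra details (cyclic reduction of the centralizers via Remark \ref{rem:4.2} and the Britton's-Lemma argument for injectivity of $G \to \widehat{G}$) simply make explicit what the paper leaves implicit.
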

\begin{proof} The proof is very similar to the one of Theorem \ref{th:amalgam_Z^n}.

By Theorem \ref{co:main}, there exists a finitely generated complete $\Zt$-free group $P$ and an
embedding $\psi : H \to P$ such that $\psi$ preserves length and $\psi(A), \psi(B)$ are non-conjugate
maximal abelian in $P$. Observe that since $\psi(A)$ and $\psi(B)$ are isomorphic to the original subgroups
$A$ and $B$, and $\phi : A \to B$ is an isomorphism, there exists a natural isomorphism from
$\psi(A)$ to $\psi(B)$ which we also denote by $\phi$. Since $\psi$ preserves length, we have
$|\phi(a)| = |a|$ for any $a \in \psi(A)$. Next, if there exists $g \in P$ such that $g^{-1} a g = \phi(a)$
for some $a \in \psi(A)$ then from the CSA property of $P$ we get $g^{-1} \psi(A) g = \psi(B)$, that is,
$\psi(A)$ and $\psi(B)$ are conjugate - a contradiction. It follows that both conditions (a) and (b)
hold for $\psi(A)$ and $\psi(B)$.

By Theorem \ref{main4}, the HNN extension $K = \langle P, t \mid t^{-1} \psi(A) t = \psi(B) \rangle$
is a finitely generated complete $\Zt$-free group. Observe that $K$ contains a subgroup $K_0 =
\langle \psi(H), t \rangle$ which is isomorphic to $G$.
\end{proof}

\begin{theorem}
\label{th:centr_ext_Z^n}
Let $H$ be a finitely generated $\Z^n$-free group  and let $A$ be a maximal abelian subgroup of $H$.
Then the centralizer extension
$$G = \langle H, t \mid t^{-1} A t = A \rangle$$
is a finitely generated $\Z^{n^\prime}$-free group and the length function on $G$ extends the one on $H$.
\end{theorem}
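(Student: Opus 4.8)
The plan is to follow the template already used for the two preceding results, Theorem~\ref{th:amalgam_Z^n} and Theorem~\ref{th:HNN_Z^n}: pass to a complete model, carry out the HNN construction there by invoking Theorem~\ref{main4}, and then recognize $G$ as a subgroup of the resulting group. Concretely, I would first apply Theorem~\ref{co:main} to obtain a finitely generated complete $\Z^n$-free group $P$ together with a length-preserving embedding $\psi : H \to P$ under which $\psi(A)$ is a maximal abelian subgroup of $P$; since in a $\Z^n$-free group every maximal abelian subgroup is the centralizer of any of its nontrivial elements, $\psi(A)$ qualifies as a centralizer in the sense of Theorem~\ref{main4}. As in the proof of Theorem~\ref{th:main2}, I may then assume $\psi(A)$ is cyclically reduced: by Lemma~\ref{le:cycl} all elements of the abelian subgroup $\psi(A)$ share a common conjugating element $c$ in their cyclic decompositions, and by regularity of $P$ we have $c \in P$, so replacing $\psi(A)$ by $c \ast \psi(A) \ast c^{-1}$ (which does not change the isomorphism type of the centralizer extension, by Remark~\ref{rem:4.2}) reduces to the cyclically reduced case.

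Next I would verify that the identity isomorphism $\phi = \mathrm{id} : \psi(A) \to \psi(A)$ satisfies the hypotheses of Theorem~\ref{main4} with $A = B = \psi(A)$. Condition~(2), that $|\phi(a)| = |a|$, is immediate because $\phi$ is the identity. It is worth emphasizing that Theorem~\ref{main4} does \emph{not} require $A$ and $B$ to be non-conjugate, so the degenerate case $A = B$ is permitted; the delicate bookkeeping with attached elements (Remark~\ref{rem:attached}), which in the separated setting forces the two associated subgroups to carry no conflicting attached elements, is internal to the proof of Theorem~\ref{main4} and is handled there by replacing the two copies of $\psi(A)$ with suitable \emph{independent} conjugates via Remark~\ref{rem:4.2}. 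Thus the only hypothesis of Theorem~\ref{main4} that genuinely needs to be established is condition~(1): that no nontrivial $a \in \psi(A)$ is conjugate to $\phi(a)^{-1} = a^{-1}$ in $P$.

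This last point is the main obstacle, since for $A = B$ condition~(1) is no longer a standing hypothesis but a statement that must be extracted from the algebra of $\Z^n$-free groups. I would prove the clean general fact that in any $\Lambda$-free group no nontrivial element is conjugate to its own inverse. Indeed, suppose $g^{-1} a g = a^{-1}$ with $a \neq 1$. Then the two-generator subgroup $\langle a, g \rangle \leqslant P$ is a homomorphic image of the Klein bottle group $\langle a, g \mid g^{-1} a g = a^{-1} \rangle$, which is metabelian and hence solvable, so $\langle a, g \rangle$ is solvable. By the basic properties of $\Lambda$-free groups (Section~\ref{subs:lambda-free}, item~(f)) every solvable subgroup is abelian, whence $a^{-1} = g^{-1} a g = a$ and so $a^2 = 1$; since $\Lambda$-free groups are torsion-free (Section~\ref{subs:lambda-free}, item~(c)), this forces $a = 1$, a contradiction. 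As $P$ is $\Z^n$-free, this applies to $P$ and establishes condition~(1). With every hypothesis verified, Theorem~\ref{main4} gives that $K = \langle P, t \mid t^{-1} \psi(A) t = \psi(A) \rangle$ is a finitely generated complete $\Z^{n'}$-free group (with $n' = n+1$, matching the height $n+1$ of the connecting element $s$) whose length function extends that of $P$. Finally, the subgroup $\langle \psi(H), t \rangle \leqslant K$ is isomorphic to $G$, and being a subgroup of a $\Z^{n'}$-free group it is itself $\Z^{n'}$-free (Section~\ref{subs:lambda-free}, item~(a)) and finitely generated, with the restricted length function extending that of $H$, as required.
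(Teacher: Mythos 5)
Your proof is correct and follows the same route as the paper's own proof: embed $H$ into a finitely generated complete $\Z^n$-free group $P$ via Theorem \ref{co:main}, apply Theorem \ref{main4} to $K = \langle P, t \mid t^{-1} \psi(A) t = \psi(A) \rangle$, and identify $G$ with the subgroup $\langle \psi(H), t \rangle$ of $K$. The only difference is one of detail: the paper invokes Theorem \ref{main4} without comment, whereas you explicitly verify its hypotheses (cyclic reducedness via Lemma \ref{le:cycl}, regularity and Remark \ref{rem:4.2}, and the fact that no nontrivial element of a $\Lambda$-free group is conjugate to its own inverse, via the solvable-subgroup argument together with torsion-freeness) --- a worthwhile addition, since when $A = B$ the non-conjugacy condition is no longer a standing hypothesis and the CSA non-conjugacy argument used for Theorem \ref{th:HNN_Z^n} does not transfer.
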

\begin{proof} By Theorem \ref{co:main}, there exists a finitely generated complete $\Zt$-free group
$P$ and an embedding $\psi : H \to P$ such that $\psi$ preserves length and $\psi(A)$ is maximal
abelian in $P$.

By Theorem \ref{main4}, the HNN extension $K = \langle P, t \mid t^{-1} \psi(A) t = \psi(A) \rangle$
is a finitely generated complete $\Zt$-free group. Observe that $K$ contains a subgroup $K_0 =
\langle \psi(H), t \rangle$ which is isomorphic to $G$.
\end{proof}

\begin{theorem}
\label{th:Z^n_desrc}
A finitely generated group $G$ is $\Z^n$-free if and only if it can be obtained from free groups by 
a finite sequence of length-preserving amalgams, length-preserving separated HNN extensions, and 
centralizer extensions.
\end{theorem}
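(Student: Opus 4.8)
The plan is to prove both implications, deriving the ``if'' direction directly from the three closure theorems and the ``only if'' direction by an induction on the height $n$ that reduces the general case to the complete case treated in Theorem \ref{th:complete_desrc}. For the ``if'' direction, a free group carries the standard $\Z$-valued length function (Example \ref{exam:free_gr}), hence is $\Z$-free, and since $\Z$ embeds as an ordered group into $\Z^n$ it is $\Z^n$-free by the closure of $\Lambda$-freeness under ordered extensions of $\Lambda$. Now Theorems \ref{th:amalgam_Z^n}, \ref{th:HNN_Z^n} and \ref{th:centr_ext_Z^n} assert precisely that the class of finitely generated $\Z^{n'}$-free groups (with $n'$ allowed to grow) is closed under length-preserving amalgams, length-preserving separated HNN extensions, and centralizer extensions, and that the length function always extends. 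A straightforward induction on the number of operations in the construction sequence then shows that any group built from free groups by finitely many such operations is finitely generated $\Z^{n'}$-free for a suitable $n'$, which is the desired conclusion.

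For the ``only if'' direction I would argue by induction on $n$. The base case $n=1$ is Lyndon's theorem (Theorem \ref{th:lyndon}): a finitely generated $\Z$-free group is free, and free groups are the base objects of the construction. For the inductive step, let $G$ be finitely generated $\Z^n$-free. By Theorem \ref{th:embed_complete} I may embed $G$ length-preservingly into a finitely generated complete $\Z^n$-free group $H$, and by Theorem \ref{th:main} this group is a multiple HNN extension $H=\langle H_{0},Y\mid t^{-1}C_{H_0}(u_t)t=C_{H_0}(v_t)\rangle$ of a complete $\Z^{n-1}$-free group $H_{0}$ with finitely generated maximal abelian associated subgroups. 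This presentation is the fundamental group of a finite graph of groups with vertex group $H_{0}$ and abelian edge groups, so $H$ acts on the corresponding simplicial Bass--Serre tree $T$. Restricting this action to $G\leqslant H$ and applying the induced splitting theorem (Theorem \ref{induced_split}) exhibits $G$ as the fundamental group of a finite graph of groups whose vertex groups have the form $G\cap H_{0}^{h}$ and whose edge groups have the form $G\cap E^{h}$ for the abelian edge groups $E$ of $H$.

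It then remains to verify that this induced splitting is assembled from the three permitted operations, which is where the substance of the proof lies. Each vertex group $G\cap H_{0}^{h}$ is a subgroup of a conjugate of $H_{0}$, hence $\Z^{n-1}$-free by closure of $\Z^{n-1}$-freeness under subgroups; once its finite generation is established, the induction hypothesis expresses it from free groups by the three operations. The edge groups are finitely generated abelian of finite rank (Proposition \ref{pr:1}), and the malnormality furnished by the CSA property of $\Z^n$-free groups forces each $G\cap E^{h}$ to be maximal abelian in the adjacent vertex group and constrains the conjugacy relations among them; this is exactly what guarantees that every edge of the graph of groups for $G$ realizes either a length-preserving amalgam, a length-preserving separated HNN extension, or a centralizer extension, precisely as in the analogous statement for limit groups (Corollary \ref{co:embKMR_4}). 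Reassembling $G$ along its finite graph of groups from the inductively constructed vertex groups then puts $G$ in the desired form and closes the induction. I expect the main obstacle to be the finiteness and maximality bookkeeping: proving that the vertex groups $G\cap H_{0}^{h}$ are finitely generated (which I would obtain from the finiteness of the number of vertex and edge orbits, in the spirit of Lemma \ref{le:psi}, together with finite generation of the abelian edge groups and the finite presentability of finitely generated $\Z^n$-free groups), and checking that the edge groups sit as maximal abelian subgroups satisfying the cyclic-reducedness and non-conjugacy hypotheses (a) and (b) in the definitions of the two length-preserving constructions. Matching these conditions to the three specific operations, rather than merely to abelian splittings, is the delicate part of the argument.
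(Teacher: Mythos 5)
Your proposal follows essentially the same route as the paper: the ``if'' direction via the closure Theorems \ref{th:amalgam_Z^n}, \ref{th:HNN_Z^n} and \ref{th:centr_ext_Z^n}, and the ``only if'' direction by embedding $G$ length-preservingly into a finitely generated complete $\Z^n$-free group (Theorem \ref{th:embed_complete}) and then using Bass--Serre theory on the HNN hierarchy of the complete group (Theorems \ref{th:main}, \ref{th:complete_desrc}). In fact the paper's own proof is exactly this argument compressed into a single sentence, so the bookkeeping you flag at the end (finite generation of the induced vertex groups, and matching the induced edge groups to the three specific operations) is precisely what the paper also leaves implicit.
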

\begin{proof} Since every finitely generated $\Zt$-free group embeds into a finitely generated
complete $\Zt$-free group, from Bass-Serre Theory we get the required.
\end{proof}

\section{Elimination process over finitely presented $\Lambda$-free groups}
\label{sec:proc}

In this section  we will describe the Elimination process over finitely presented $\Lambda$-free
groups which we will use in Section \ref{sec:struct_th_lambda} to prove Theorems \ref{th:main1},
\ref{th:main3}, \ref{th:main2}. From now on we assume that $G = \langle X \mid R \rangle$ is a
finitely presented group which acts freely and regularly on a $\Lambda$-tree, where $\Lambda$ is a
discretely ordered abelian group, or, equivalently, $G$ can be represented by $\Lambda$-words over
some alphabet $Z$ and the length function on $G$ induced from $CDR(\Lambda, Z)$ is regular. Let us
fix the embedding $\xi : G \hookrightarrow CDR(\Lambda ,Z)$ for the rest of this section. For all the
details please refer to \cite{KMS:2011(3)}.

\subsection{The notion of a generalized equation}
\label{subs:notion_gen_eq}

\begin{definition}
A {\em combinatorial generalized equation} $\Omega$ (which is convenient to visualize as shown on
the picture below) consists of the following objects.

\begin{figure}[htbp]
\label{pic:example}
\centering{\mbox{\psfig{figure=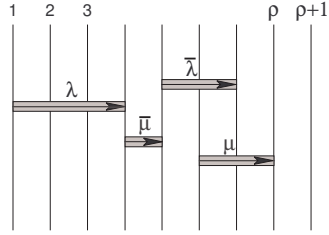}}}
\caption{A typical generalized equation.}
\end{figure}

\begin{enumerate}
\item A finite set of {\em bases} ${\mathcal M} = BS(\Omega)$. The set of bases ${\mathcal M}$
consists of $2n$ elements ${\mathcal M} = \{\mu_1, \ldots, \mu_{2n}\}$. The set ${\mathcal M}$ comes
equipped with two functions: a function $\varepsilon: {\mathcal M} \rightarrow \{1,-1\}$ and an
involution $\Delta : {\mathcal M} \rightarrow {\mathcal M}$ (that is, $\Delta$ is a bijection such
that $\Delta^2$ is an identity on ${\mathcal M}$). Bases $\mu$ and $\overline\mu = \Delta(\mu)$ are
called {\it dual bases}. We denote bases by letters $\mu, \lambda$, etc.

\item A set of {\em boundaries} $BD = BD(\Omega) = \{1, 2, \ldots, \rho + 1\}$, that is, integer
points of the interval $I = [1, \rho + 1]$. We use letters $i,j$, etc. for boundaries.

\item Two functions $\alpha : BS \rightarrow BD$ and $\beta : BS \rightarrow BD$. We call
$\alpha(\mu)$ and $\beta(\mu)$ the {\em initial and terminal boundaries} of the base $\mu$ (or
endpoints of $\mu$). These functions satisfy the following conditions: for every base $\mu \in BS$:
$\alpha(\mu) < \beta(\mu)$ if $\varepsilon(\mu) = 1$ and $\alpha(\mu) > \beta(\mu)$ if
$\varepsilon(\mu) = -1$.

\item A set of {\em boundary connections} $(p,\lambda , q),$ where $p$ is a boundary on $\lambda$
(that is a number between $\alpha(\lambda)$ and $\beta(\lambda)$) and $q$ on $\bar\lambda$. In this
case we say that $p$ and $q$ are $\lambda$-tied. If $(p,\lambda ,q)$ is a boundary connection then
$(q,\overline\lambda , p)$ is also a boundary connection. (The meaning of boundary connections will
be explained in the transformation (ET5)).
\end{enumerate}
\end{definition}

With a combinatorial generalized equation $\Omega$ one can canonically associate a system of
equations in {\em variables} $h = (h_1, \ldots, h_\rho)$ (variables $h_i$ are also called {\it
items}). This system is called a {\em generalized equation}, and (slightly abusing the terminology)
we denote it by the same symbol $\Omega$, or $\Omega(h)$ specifying the variables it depends on. The
generalized equation $\Omega$ consists of the following two types of equations.

\begin{enumerate}
\item Each pair of dual  bases $(\lambda, \overline\lambda)$ provides an equation
$$[h_{\alpha(\lambda)} h_{\alpha(\lambda) + 1} \ldots h_{\beta(\lambda ) - 1}]^{\varepsilon
(\lambda)} = [h_{\alpha(\overline\lambda)} h_{\alpha(\overline\lambda) + 1} \ldots h_{\beta(\overline
\lambda ) - 1}]^{\varepsilon(\overline\lambda)}.$$
These equations are called {\em basic equations}.

\item Every boundary connection $(p,\lambda,q)$ gives rise to a {\em boundary equation}
$$[h_{\alpha(\lambda)} h_{\alpha(\lambda) + 1} \cdots h_{p-1}] = [h_{\alpha(\overline\lambda)}
h_{\alpha(\overline\lambda ) + 1} \cdots h_{q-1}],$$
if $\varepsilon(\lambda) = \varepsilon(\overline\lambda)$ and
$$[h_{\alpha (\lambda )} h_{\alpha(\lambda ) + 1} \cdots h_{p-1}] = [h_{q} h_{q+1} \cdots h_{\alpha (
\overline\lambda)-1}]^{-1} ,$$
if $\varepsilon(\lambda)= -\varepsilon(\overline\lambda).$
\end{enumerate}

\begin{remark}
We  assume that every generalized equation comes from a combinatorial one.
\end{remark}

Given a generalized equation $\Omega(h)$ one can define the {\em group of $\Omega(h)$}
$$G_\Omega = \langle h \mid \Omega(h) \rangle.$$

\begin{definition}
Let $\Omega(h) = \{L_1(h) = R_1(h), \ldots, L_s(h) = R_s(h)\}$ be a generalized equation in
variables $h = (h_1, \ldots, h_\rho)$. A set $U = (u_1, \ldots, u_\rho) \subseteq R(\Lambda,Z)$
of nonempty $\Lambda$-words is called a {\em solution} of $\Omega$ if:
\begin{enumerate}
\item all words $L_i(U), R_i(U)$ are reduced,
\item $L_i(U) =  R_i(U),\ i \in [1,s]$.
\end{enumerate}
\end{definition}

Observe that a solution $U$ of $\Omega(h)$ defines a homomorphism $\xi_U : G_\Omega \to R(\Lambda,Z)$
induced by the mapping $h_i \to u_i,\ i \in [1,\rho]$ since after this substitution all the equations
of $\Omega(h)$ turn into identities in $R(\Lambda,Z)$.

If we specify a particular solution $U$ of a generalized equation $\Omega$ then we use a pair
$(\Omega, U)$.

\begin{definition}
A cancelation table $C(U)$ of a solution $U = (u_1, \ldots, u_\rho)$ is defined as follows
$$C(U) = \{ h_i^\epsilon h_j^\sigma \mid {\rm there\ is\ cancelation\ in\ the\ product}\ u_i^\epsilon
\ast u_j^\sigma,\ {\rm where}\  \epsilon, \sigma = \pm 1 \}.$$
\end{definition}

\begin{definition}
A solution $U^+$ of a generalized equation $\Omega$ is called {\em consistent} with a solution $U$
if $C(U^+) \subseteq C(U)$.
\end{definition}

\subsection{From a finitely presented group to a generalized equation}
\label{subs:constr_ge}

Recall that $G = \langle X \mid R \rangle$ is finitely presented and let $X = \{x_1, \ldots, x_n\}$
and $R = \{r_1(X),\ldots, r_m(X)\}$. Adding, if necessary, auxiliary generators, we can assume that
every relator involves at most three generators.

Since $\xi$ is a homomorphism it follows that after the substitution $x_i \rightarrow \xi(x_i),\ i
\in [1,n]$ all products $r_i(\xi(X)),\ i \in [1,m]$ cancel out. Hence, we have finitely many {\em
cancelation diagrams} over $CDR(\Lambda, Z)$, which give rise to a generalized equation $\Omega$
corresponding to the embedding $\xi : G \hookrightarrow CDR(\Lambda, Z)$.

The precise definition and all the details concerning cancelation diagrams over $CDR(\Lambda, Z)$
can be found in \cite{Khan_Myasnikov_Serbin:2007}. Briefly, a cancelation diagram for $r_i(\xi(X))$
can be viewed as a finite directed tree $T_i$ in which every positive edge $e$ has a label $\lambda_e$
so that every occurrence $x^\delta,\ \delta \in \{-1,1\}$ of $x \in X$ in $r_i$ corresponds to a
reduced path $e_1^{\epsilon_1} \cdots e_k^{\epsilon_k}$, where $\epsilon_i \in \{-1,1\}$, in $T_i$ and
$\xi(x^\delta) = \lambda_{e_1}^{\epsilon_1} \circ \ldots \circ \lambda_{e_k}^{\epsilon_k}$. In other
words, each $\lambda_e$ is a piece of some generator of $G$ viewed as a $\Lambda$-word. Moreover,
we assume that $|\lambda_e|$ is known (since we know the homomorphism $\xi$).

\begin{figure}[htbp]
\label{triangle}
\centering{\mbox{\psfig{figure=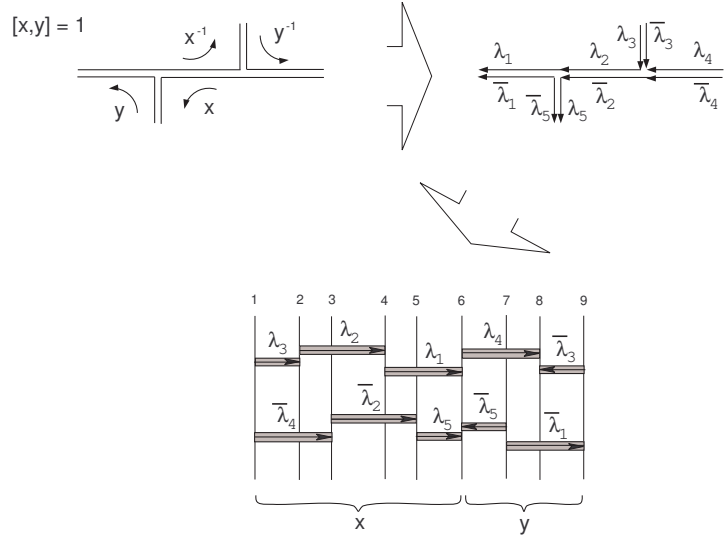,height=3in}}}
\caption{From the cancelation diagram for the relation $[x,y] = 1$ to the generalized equation.}
\end{figure}

Now we would like to construct a generalized equation $\Omega_i$ corresponding to $T_i$.
Denote by $X(T_i)$ all generators of $G$ which appear in $r_i$. Next, consider a segment $J$ in
$\Lambda$ of length
$$\sum_{x \in X(T_i)} |\xi(x)|$$
which is naturally divided by the lengths of $\xi(x),\ x \in X(T_i)$ into subsegments with respect to
any given order on $X(T_i)$. Since every $\xi(x),\ x \in X(T_i)$ splits into at least one reduced
product $\lambda_{e_1}^{\epsilon_1} \circ \ldots \circ \lambda_{e_k}^{\epsilon_k}$, every such
splitting gives a subdivision of the corresponding subsegment of $J$. Hence, we subdivide $J$
using all product representations of all $\xi(x),\ x \in X(T_i)$. As a result we obtain a
subdivision of $J$ into $\rho_i$ items whose endpoints become boundaries of $\Omega_i$.
Observe that each $\lambda_e$ appears exactly twice in the products representing some $\xi(x),\ x
\in X(T_i)$ and each such entry covers several adjacent items of $J$. This pair of entries defines
a pair of dual bases $(\lambda_e, \overline{\lambda_e})$.
Hence,
$${\mathcal M}_i = BS(\Omega_i) = \{ \lambda_e,\ \overline{\lambda_e} \mid e \in E(T_i)\}.$$
$\epsilon(\lambda_e)$ depends on the sign of $\lambda_e$ in the corresponding product representing
a variable from $X(T_i)$ (similarly for $\overline{\lambda_e}$).

In the same way one can construct $T_i$ and the corresponding $\Omega_i$ for each $r_i,\ i \in [1,m]$.
Combining all combinatorial generalized equations $\Omega_i,\ i \in [1,m]$ we obtain the equation
$\Omega$ with items $h_1,\ldots,h_\rho$ and bases ${\mathcal M} = \cup_i {\mathcal M}_i$. By definition
$$G_\Omega = \langle h_1, \ldots, h_\rho \mid \Omega(h_1,\ldots,h_\rho)\rangle.$$
At the same time, since each item can be obtained in the form
$$(\lambda_{i_1}^{\epsilon_1} \circ \ldots \circ \lambda_{i_k}^{\epsilon_k}) \ast
(\lambda_{j_1}^{\delta_1} \circ \ldots \circ \lambda_{j_l}^{\delta_l})^{-1},$$
it follows that $G_\Omega$ can be generated by ${\mathcal M}$ with the relators obtained by
rewriting $\Omega(h_1,\ldots,h_\rho)$ in terms of ${\mathcal M}$.

It is possible to transform the presentation $\langle h_1, \ldots, h_\rho \mid \Omega\rangle$ into
$\langle X \mid R \rangle$ using Tietze transformations as follows. From the cancelation diagrams
constructed for each relator in $R$ it follows that $x_i = w_i(h_1,\ldots,h_\rho) = w_i(\overline{h}
),\ i \in [1,n]$. Hence
$$\langle h_1, \ldots, h_\rho \mid \Omega\rangle \simeq \langle h_1, \ldots, h_\rho, X \mid \Omega
\cup \{x_i = w_i(\overline{h}),\ i \in [1,n]\} \rangle.$$
Next, from the cancelation diagrams it follows that $R$ is a set of consequences of $\Omega \cup
\{x_i = w_i(\overline{h}),\ i \in [1,n]\}$, hence,
$$\langle h_1, \ldots, h_\rho \mid \Omega\rangle \simeq \langle h_1, \ldots, h_\rho, X \mid \Omega
\cup \{x_i = w_i(\overline{h}),\ i \in [1,n]\} \cup R \rangle.$$
Finally, since the length function on $G$ is regular, for each $h_i$ there exists a word $u_i(X)$
such that $h_i = u_i(\xi(X))$ and all the equations in $\Omega \cup \{x_i = w_i(\overline{h}),\ i
\in [1,n]\}$ follow from $R$ after we substitute $h_i$ by $u_i(X)$ for each $i$. It follows that
$$\langle h_1, \ldots, h_\rho, X \mid \Omega \cup \{x_i = w_i(\overline{h}),\ i \in [1,n]\} \cup R
\rangle$$
$$\simeq \langle h_1, \ldots, h_\rho, X \mid \Omega \cup \{x_i = w_i(\overline{h}),\ i \in [1,n]\}
\cup R \cup \{h_j = u_j(X),\ j \in [1,\rho]\} \rangle$$
$$ \simeq \langle X \mid R \rangle.$$
It follows that $G \simeq G_\Omega$.

Let $\widetilde G$ be a finitely presented group with a free length function in $\Lambda$ (not
necessary regular). It can be embedded isometrically in the group $\widehat G$ with a free regular
length function in $\Lambda$ by \cite{Chiswell_Muller:2010}. That group can be embedded in 
$R(\Lambda', X)$. When we
make a generalized equation $\Omega$ for $\widetilde G$, we have to add only finite number of
elements from $\widehat G$. Let $G$ be a subgroup generated in $\widehat G$ by $\widetilde G$ and
these elements. Then $G$ is the quotient of $G_{\Omega}$ containing $\widetilde G$ as a subgroup.

\subsection{Elementary transformations}
\label{se:5.1}

In this subsection we describe {\em elementary transformations} of generalized equations. Let
$(\Omega, U)$ be a generalized equation together with a solution $U$. An elementary transformation
(ET) associates to a generalized equation $(\Omega, U)$ a generalized equation $(\Omega_1, U_1)$ and
an epimorphism $\pi : G_\Omega \rightarrow G_{\Omega_1}$ such that for the solution $U_1$ the
following diagram commutes

\[\begin{diagram}
\label{diag:1} \node{G_\Omega} \arrow{e,t}{\pi}
\arrow{s,l}{\xi_U} \node{G_{\Omega_1}}
\arrow{sw,r}{\xi_{U_1}}\\
\node{R(\Lambda,Z)}
\end{diagram}
\]

One can view (ET) as a mapping $ET : (\Omega,U) \rightarrow (\Omega_1, U_1)$.

\begin{enumerate}
\item[(ET1)] {\em (Cutting a base (see Fig. \ref{ET1}))}.
Let $\lambda$ be a base in $\Omega$ and $p$ an internal boundary of $\lambda$ (that is, $p \neq
\alpha(\lambda), \beta(\lambda)$) with a boundary connection $(p, \lambda, q)$. Then we cut the
base $\lambda$ at $p$ into two new bases $\lambda_1$ and $\lambda_2$, and cut $\overline\lambda$ at
$q$ into the bases $\overline\lambda_1$ and $\overline\lambda_2$.

\begin{figure}[htbp]
\centering{\mbox{\psfig{figure=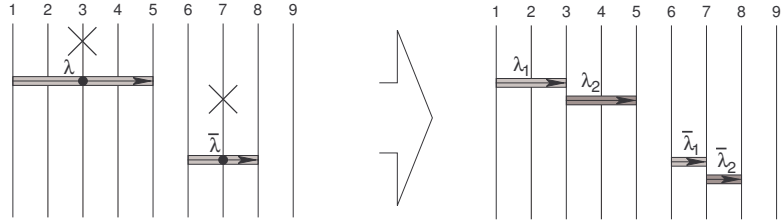,height=1.4in}}}
\caption{Elementary transformation (ET1).}
\label{ET1}
\end{figure}

\item[(ET2)] {\em (Transfering a base (see Fig. \ref{ET2}))}.
If a base $\lambda$ of $\Omega$ contains a base $\mu$ (that is, $\alpha(\lambda) \leqslant \alpha(\mu) <
\beta(\mu) \leqslant \beta(\lambda)$) and all boundaries on $\mu$ are $\lambda$-tied by boundary some
connections then we transfer $\mu$ from its location on the base $\lambda$ to the corresponding
location on the base $\overline\lambda$.

\begin{figure}[htbp]
\centering{\mbox{\psfig{figure=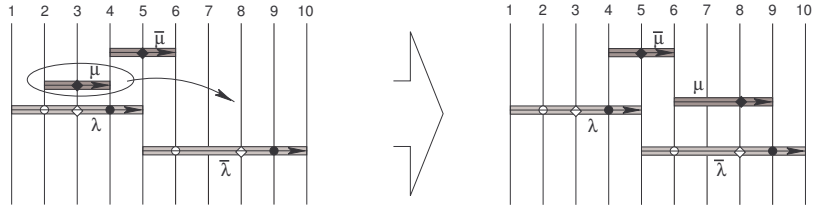,height=1.3in}}}
\caption{Elementary transformation (ET2).}
\label{ET2}
\end{figure}

\item[(ET3)] {\em (Removal of a pair of  matched bases (see Fig. \ref{ET3}))}.
If the bases $\lambda$ and $\overline\lambda$ are {\em  matched} (that is, $\alpha(\lambda) =
\alpha(\overline\lambda), \beta(\lambda) = \beta(\overline\lambda)$) then we remove $\lambda,
\overline\lambda$ from $\Omega$.

\begin{figure}[htbp]
\centering{\mbox{\psfig{figure=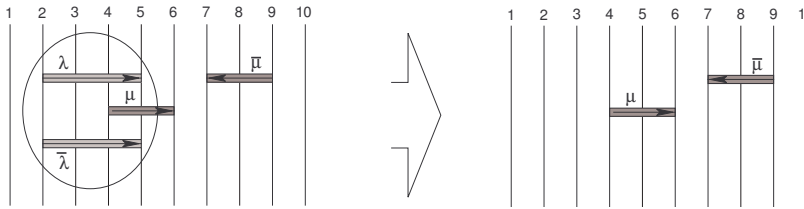,height=1.3in}}}
\caption{Elementary transformation (ET3).}
\label{ET3}
\end{figure}

\begin{remark}
Observe, that $\Omega$ and $\Omega_1$, where $\Omega_1 = ETi(\Omega)$ for $i \in \{1,2,3\}$ have the
same set of variables $h$ and the bijection $h_i \rightarrow h_i,\ i \in [1,\rho]$ induces an
isomorphism $G_\Omega \rightarrow G_{\Omega_1}$. Moreover, $U$ is a solution of $\Omega$ if and only
if $U$ is a solution of $\Omega_1$.
\end{remark}

\item[(ET4)] {\em (Removal of a lone base (see Fig. \ref{ET4}))}.
Suppose, a base $\lambda$ in $\Omega$ does not {\em intersect} any other base, that is, the items
$h_{\alpha(\lambda)}, \ldots, h_{\beta(\lambda) - 1}$ are contained only inside of the base
$\lambda$.

\begin{figure}[htbp]
\centering{\mbox{\psfig{figure=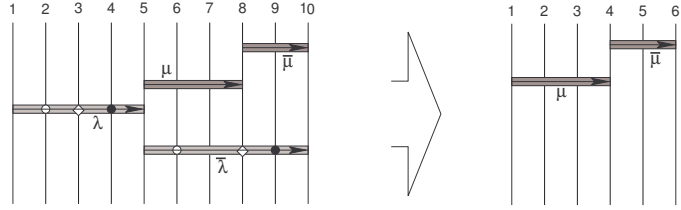,height=1.3in}}}
\caption{Elementary transformation (ET4).}
\label{ET4}
\end{figure}

Suppose also that all boundaries in $\lambda$ are $\lambda$-tied, that is, for every $i$
($\alpha(\lambda) < i \leqslant \beta(\lambda) - 1$) there exists a boundary $b(i)$ such that $(i,
\lambda, b(i))$ is a boundary connection in $\Omega$. Then we remove the pair of bases $\lambda$ and
$\overline\lambda$ together with all the boundaries $\alpha(\lambda) + 1, \ldots, \beta(\lambda) - 1$
(and rename the rest $\beta(\lambda) - \alpha(\lambda) - 1$ of the boundaries correspondingly).

We define the isomorphism $\pi: G_{\Omega} \rightarrow G_{\Omega_1}$ as follows:
$$\pi(h_j) = h_j\ {\rm if}\ j < \alpha(\lambda)\ {\rm or}\ j \geqslant \beta(\lambda)$$
\[\pi(h_i) = \left\{\begin{array}{ll}
h_{b(i)} \cdots h_{b(i) - 1}, & if\ \varepsilon(\lambda) = \varepsilon(\overline\lambda),\\
h_{b(i)} \cdots h_{b(i-1) - 1},& if\ \varepsilon(\lambda) = -\varepsilon(\overline\lambda)
\end{array}
\right.
\]
for $\alpha + 1 \leqslant i \leqslant \beta(\lambda) - 1$.

\begin{figure}[htbp]
\centering{\mbox{\psfig{figure=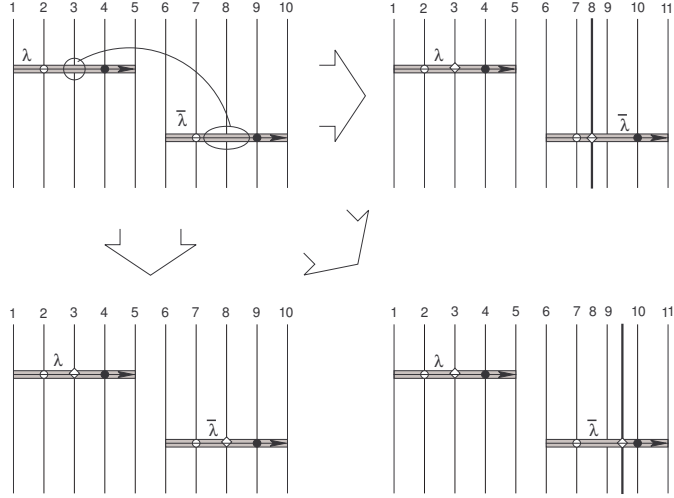,height=2.8in}}}
\caption{Elementary transformation (ET5).}
\label{ET5}
\end{figure}

\item[(ET5)] {\em (Introduction of a boundary (see Fig. \ref{ET5}))}. Suppose a point $p$ in a base
$\lambda$ is not $\lambda$-tied. The transformation (ET5) $\lambda$-ties it. To this end, denote by
$u_\lambda$ the element of $CDR(\Lambda ,Z)$ corresponding to $\lambda$ and let $u'_\lambda$ be the
beginning of this word ending at $p$. Then we perform one of the following two transformations
according to where the end of $u'_\lambda$ on $\overline\lambda$ is situated:

\begin{enumerate}
\item If the end of $u'_\lambda$ on $\overline\lambda$ is situated on the boundary $q$ then we
introduce the boundary connection $(p, \lambda ,q)$. In this case the corresponding isomorphism
$\pi : G_\Omega \rightarrow G_{\Omega_1}$ is induced by the bijection $h_i \rightarrow h_i,\ i \in
[1,\rho]$. (If we began with the group $\tilde G$ with non-regular length function,
 this is the only place where  $\pi : G_\Omega \rightarrow G_{\Omega_1}$ may be a proper
epimorphism, but its restriction on $\tilde G$ is still an isomorphism.)

\item If the end of $u'_\lambda$ on $\overline\lambda$ is situated between $q$ and $q+1$ then we
introduce a new boundary $q'$ between $q$ and $q+1$ (and rename all the boundaries), and also
introduce a new boundary connection $(p, \lambda, q')$. In this case the corresponding isomorphism
$\pi: G_\Omega \rightarrow G_{\Omega_1}$ is induced by the map $\pi(h) = h$, if $h \neq h_q$, and
$\pi(h_q) = h_{q'} h_{q'+1}$.
\end{enumerate}
\end{enumerate}

\subsection{Derived transformations and auxiliary transformations}
\label{se:5.2half}

In this section we define complexity of a generalized equation and describe several useful
``derived'' transformations of generalized equations. Some of them can be realized as finite
sequences of elementary transformations, others result in equivalent generalized equations but
cannot be realized by finite sequences of elementary moves.

A boundary is {\em open} if it is an internal boundary of some base, otherwise it is {\em closed}.
A section $\sigma = [i, \ldots, i + k]$ is said to be {\em closed} if the boundaries $i$ and $i+k$
are closed and all the boundaries between them are open.

Sometimes it will be convenient to subdivide all sections of $\Omega$ into {\em active} (denoted
$A\Sigma_\Omega$) and {\em non-active} sections. For an item $h$ denote by $\gamma(h)$ the number of
bases containing $h$. An item $h$ is called {\em free} is it meets no base, that is, if $\gamma(h) =
0$. Free variables are transported to the very end of the interval behind all items in $\Omega$ and
they become non-active.

\begin{enumerate}
\item[(D1)] {\em (Closing a section)}.
Let $\sigma$ be a section of $\Omega$. The transformation (D1) makes the section $\sigma$ closed.
Namely, (D1) cuts all bases in $\Omega$ through the end-points of $\sigma$.

\item[(D2)] {\em (Transporting a closed section)}.
Let $\sigma$ be a closed section of a generalized equation $\Omega$. We cut $\sigma$ out of the
interval $[1,\rho_\Omega]$ together with all the bases on $\sigma$ and put $\sigma$ at the end of the
interval or between any two consecutive closed sections of $\Omega$. After that we correspondingly
re-enumerate all the items and boundaries of the latter equation to bring it to the proper form.
Clearly, the original equation $\Omega$ and the new one $\Omega'$ have the same solution sets and
their coordinate groups are isomorphic

\item[(D3)] {\em (Moving free variables to the right)}.
Suppose that $\Omega$ contains a free variable $h_q$ in an active section. Here we close the section
$[q, q + 1]$ using (D1), transport it to the very end of the interval behind all items in $\Omega$
using (D2). In the resulting generalized equation $\Omega'$ the transported section becomes a
non-active section.

\item[(D4)] {\em (Deleting a complete base)}.
A base $\mu$ of $\Omega$ is called {\em complete} if there exists a closed section $\sigma$ in
$\Omega$ such that $\sigma = [\alpha(\mu), \beta(\mu)]$.

Suppose $\mu$ is an active  complete base of $\Omega$ and $\sigma$ is a closed section such that
$\sigma = [\alpha(\mu),\beta(\mu)]$. In this case using (ET5), we transfer all bases from $\mu$ to
$\overline\mu$, then using (ET4) we remove the lone base $\mu$ together with the section
$\sigma$.

\item[(D5)]  {\em (Linear elimination)}.

We first explain how to find {\em the kernel of the generalized equation}.
We will give a definition of eliminable base for an equation $\Omega$ that does not have any boundary
connections. An active base $\mu \in A\Sigma_\Omega$ is called {\em eliminable} if at least one of
the following holds

\begin{enumerate}
\item[(a)] $\mu$ contains an item $h_i$ with $\gamma(h_i) = 1$,

\item[(b)] at least one of the boundaries $\alpha(\mu), \beta(\mu)$ is different from $1, \rho +1$
and does not touch any other base (except for $\mu$).
\end{enumerate}

The process of finding the kernel works as follows. We cut the bases of $\Omega$ along all the
boundary connections thus obtaining the equation without boundary connections, then
consequently remove eliminable bases until no eliminable base is left in the equation. The
resulting generalized equation is called the {\em kernel} of $\Omega$ and we denote
it by $Ker(\Omega)$. One can show that it does not depend on a particular removal
process.  We say that an item $h_i$ {\it belongs to
the kernel} ($h_i \in Ker(\Omega)$), if $h_i$ belongs to at least one base in the kernel.
Notice that the kernel can be empty.

\begin{lemma}\cite{KMS:2011(3)}
\label{7-10}
If $\Omega $ is a generalized equation, then
$$G_\Omega \simeq G_{{Ker(\Omega)}} \ast F(K)$$
where $F(K)$ is a free group on $K$. The set $K$ can be empty.
\end{lemma}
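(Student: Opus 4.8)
The plan is to prove the statement by reducing to the case in which $\Omega$ carries no boundary connections, and then to peel off the eliminable bases one at a time, recording each removal as an isomorphism of coordinate groups possibly followed by the splitting off of a single infinite cyclic free factor.

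First I would dispose of the boundary connections. Cutting a base is the elementary transformation (ET1), and by the Remark following (ET3) each application of (ET1) induces an isomorphism $G_\Omega \simeq G_{\Omega_1}$ fixing the variables. Since the kernel is by definition computed on the equation $\Omega'$ obtained from $\Omega$ by cutting along every boundary connection, and since that cutting is a finite sequence of (ET1)'s, we get $G_\Omega \simeq G_{\Omega'}$ and $Ker(\Omega) = Ker(\Omega')$. Thus it suffices to prove $G_{\Omega'} \simeq G_{Ker(\Omega')} \ast F(K)$ for a generalized equation $\Omega'$ carrying only basic equations, so that $G_{\Omega'} = \langle h_1, \ldots, h_\rho \mid \{B_\lambda\} \rangle$, where $B_\lambda$ ranges over one representative of each dual pair.

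Next I would run the removal process as an induction on the number of eliminable bases. The pivotal observation at the group level is that an item $h_j$ with $\gamma(h_j) = 0$ occurs in no relator, whence $G_{\Omega'} \simeq \langle (\text{remaining items}) \mid \ldots \rangle \ast \langle h_j \rangle$, contributing one infinite cyclic free factor. For the inductive step I remove one eliminable base $\mu$ (together with $\overline\mu$ and the basic equation $B_\mu$) to obtain $\Phi_1$, and claim $G_\Phi \simeq G_{\Phi_1} \ast F_\mu$ with $F_\mu$ free of finite rank. In case (a), where $\mu$ contains an item $h_i$ with $\gamma(h_i) = 1$, the item $h_i$ occurs exactly once and only in $B_\mu$, so $B_\mu$ can be solved for $h_i$; the Tietze transformation deleting the generator $h_i$ and the relation $B_\mu$ is an isomorphism onto $\langle (\text{items} \setminus h_i) \mid \{ B_\lambda : \lambda \neq \mu \} \rangle$. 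In case (b), where an endpoint of $\mu$ different from $1, \rho + 1$ meets no other base, the base is isolated by (ET1) and deleted as a lone base by (ET4), whose associated map is again an explicit isomorphism. In either case, deleting $B_\mu$ lowers by one the base count of the items formerly covered by $\mu$ and $\overline\mu$; those items whose $\gamma$ thereby becomes $0$ split off as the free factor $F_\mu$ by the observation above, while the remaining data constitute $\Phi_1$.

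Finally I would assemble the induction. Because the kernel is independent of the removal process (as noted in the text preceding the lemma), the chain of removals terminates in $Ker(\Omega')$ together with a collection of items lying in no kernel base. Composing the isomorphisms from the inductive steps yields $G_{\Omega'} \simeq G_{Ker(\Omega')} \ast F(K)$, where $K$ is the (possibly empty) set of all items freed along the way and $F(K)$ is free on $K$; the freed items are distinct and survive in no relation, so the free factors are genuinely free and independent. The main obstacle I anticipate is the bookkeeping in the inductive step: one must verify that removing an eliminable base never introduces a new relation nor alters the surviving basic equations, so that its only group-theoretic effect is a Nielsen-type elimination of a single generator or the exposure of free generators. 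Making case (b) reduce cleanly to a lone-base deletion via (ET4), and confirming that the accumulated freed items are exactly the non-kernel items not already absorbed by the Tietze eliminations of case (a), is where the argument will require the most care.
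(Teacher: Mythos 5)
Your overall skeleton --- first cut along all boundary connections (each cut is an (ET1), which by the Remark following (ET3) induces an isomorphism of coordinate groups, and this matches how the kernel is defined), then induct on removals of eliminable bases, treating each removal as a Tietze move and collecting the items left uncovered into a free factor --- is the standard route, and your case (a) is correct: an item $h_i$ with $\gamma(h_i)=1$ occurs exactly once, in $B_\mu$ alone, so deleting the generator $h_i$ together with the relation $B_\mu$ is a Tietze isomorphism. (Note the survey itself contains no proof of this lemma; it is quoted from \cite{KMS:2011(3)}, and the argument there is of exactly this Tietze type.)

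The genuine gap is your case (b). You propose to ``isolate'' $\mu$ by (ET1) and then delete it as a lone base by (ET4), and neither move is available. (ET1) cuts a base only at a boundary carrying a boundary connection; after your first step no boundary connections remain, and the only mechanism for creating one is (ET5), which is defined relative to a solution $U$ (one must know where the point $p$ of $\mu$ lands on $\overline\mu$), whereas the lemma is a statement about the abstract group $G_\Omega$ with no solution in sight. (ET4) requires that no other base intersect $\mu$ and that every boundary inside $\mu$ be $\mu$-tied; condition (b) gives nothing of the sort --- it only says that one endpoint $j \neq 1, \rho+1$ of $\mu$ is not an endpoint of any other base, while $\mu$ itself may be overlapped by arbitrarily many bases. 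In particular a case (b) removal need not uncover a single item, so the pattern ``one relation dies, some items become free generators'' that drives your induction is simply absent in this case. The correct treatment is different: since no base other than $\mu$ has an endpoint at $j$, every other base (including $\overline\mu$) covering one of the two items $h_{j-1}, h_j$ flanking $j$ covers both of them, consecutively; hence in every relation except $B_\mu$ these two generators occur only in the block $h_{j-1}h_j$. After the change of generators $u = h_{j-1}h_j$ (keeping the flanking item that lies on $\mu$), that item occurs exactly once in the whole presentation, namely in $B_\mu$, and the same Tietze elimination as in case (a) applies. So case (b) is a case-(a) elimination in disguise, after a change of variables; the price is that your final bookkeeping --- which surviving generators are free, and how the surviving relations present $G_{Ker(\Omega)}$ --- must be carried out in terms of these merged blocks rather than the original items, and that is exactly where the care you promise in your closing paragraph actually has to be spent.
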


Suppose that in $\Omega$ there is $h_i$ in an active section with $\gamma(h_i) = 1$ and such that
$|h_i|$ is comparable with the length of the active section. In this case we say that $\Omega$ is
{\em linear in $h_i$}.

If $\Omega$ is linear in $h_i$ in an active section such that both boundaries $i$ and $i+1$ are
closed then we remove the closed section $[i,i+1]$ together with the lone base using (ET4).

If there is no such $h_i$ but $\Omega$ is linear in some $h_i$ in an active section such that one
of the boundaries $i,i+1$ is open, say $i+1$, and the other is closed, then we perform (ET5) and
$\mu$-tie $i+1$ through the only base $\mu$ it intersects. Next, using (ET1) we cut $\mu$ in $i+1$
and then we delete the closed section $[i,i+1]$ by (ET4).

Suppose there is no $h_i$ as above but $\Omega$ is linear in some $h_i$ in an active section such
that both boundaries $i$ and $i+1$ are open. In addition, assume that there is a closed section
$\sigma$ containing exactly two (not matched) bases $\mu_1$ and $\mu_2$, such that $\sigma =
\sigma(\mu_1) = \sigma(\mu_2)$ and in the generalized equation $\widetilde{\Omega}$ (see the derived
transformation (D3)) all the bases obtained from $\mu_1,\mu_2$ by (ET1) in constructing
$\widetilde{\Omega}$ from $\Omega$, do not belong to the kernel of $\widetilde{\Omega}$. Here, using
(ET5), we $\mu_1$-tie all the boundaries inside of $\mu_1$, then using (ET2) we transfer $\mu_2$ onto
$\overline\mu_1$, and remove $\mu_1$ together with the closed section $\sigma$ using (ET4).

Suppose now that $\Omega$  satisfies the first assumption of the previous paragraph and does not
satisfy the second one. In this event we close the section $[i, i+1]$ using (D1) and remove it
using (ET4).

\begin{lemma} \cite{KMS:2011(3)}
\label{lin-el}
Suppose that the process of linear elimination continues infinitely and there is a corresponding
sequence of generalized equations
$$\Omega \rightarrow \Omega_1 \rightarrow \cdots \rightarrow \Omega_k \rightarrow \cdots.$$
Then
\begin{enumerate}
\item (\cite[Lemma 15]{Kharlampovich_Myasnikov:1998(2)}) The number of different generalized equations that appear in the
process is finite. Therefore some generalized equation appears in this process infinitely many
times.

\item (\cite[Lemma 15]{Kharlampovich_Myasnikov:1998(2)}) If $\Omega_j = \Omega_k,\ j < k$ then $\pi(j,k)$ is an isomorphism,
invariant with respect to the kernel, namely $\pi(j,k)(h_i) = h_i$ for any variable $h_i$ that
belongs to some base in $Ker(\Omega)$.

\item (\cite[Lemma 7]{KMS:2011(3)})The interval for the equation $\Omega_j$ can be divided into two disjoint parts, each being
the union of closed sections, such that one part is a generalized equation $Ker(\Omega)$ and the
other part is non-empty and corresponds to a generalized equation $\Omega'$, such that $G_{\Omega'}
= F(K)$ is a free group on variables $K$ and $G_\Omega = G_{Ker(\Omega)} \ast F(K)$.
\end{enumerate}
\end{lemma}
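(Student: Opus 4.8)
The final statement is Lemma (lin-el), which asserts three properties about an infinite linear-elimination process. Let me sketch how I would prove each part.

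\textbf{Setup and part (1).} The plan is to observe that the linear elimination transformations (cases of D5) never increase the combinatorial complexity of the generalized equation: each step either deletes a closed section together with a lone base, or transfers and deletes a base, so the total number of bases $|BS(\Omega)|$ and the number of boundaries $\rho$ are bounded along the whole sequence. First I would fix the numerical invariants carried by a combinatorial generalized equation — the number of bases $2n$, the number of boundaries $\rho+1$, the functions $\varepsilon$, $\Delta$, $\alpha$, $\beta$ — and argue that linear elimination steps do not enlarge any of them. Since up to renaming of boundaries there are only finitely many combinatorial generalized equations with a bounded number of bases and boundaries, only finitely many distinct $\Omega_k$ can occur. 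By the pigeonhole principle, if the sequence is infinite then some equation recurs infinitely often. This is essentially the argument of \cite[Lemma 15]{Kharlampovich_Myasnikov:1998(2)}, which I would cite and adapt to the $\Lambda$-word setting.

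\textbf{Part (2).} Here the goal is to show that when $\Omega_j = \Omega_k$ for $j<k$, the composite epimorphism $\pi(j,k): G_{\Omega_j} \to G_{\Omega_k}$ is an isomorphism fixing every kernel variable. The key structural fact is that the transformations used in linear elimination are all elementary transformations (ET1)--(ET5) restricted to bases outside $Ker(\Omega)$, and by the remark following (ET3) and the discussion of (ET5) each such transformation induces an isomorphism of coordinate groups that is the identity on items belonging to kernel bases. Thus I would first check that linear elimination never touches a kernel base — by definition of the kernel via the eliminable-base removal process, a base that survives into $Ker(\Omega)$ is not eliminable and is never selected by D5. Then $\pi(j,k)$ is a composite of kernel-invariant isomorphisms, hence itself a kernel-invariant isomorphism; equality $\Omega_j=\Omega_k$ forces it to fix each $h_i$ in a kernel base. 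Again the template is \cite[Lemma 15]{Kharlampovich_Myasnikov:1998(2)}.

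\textbf{Part (3) and the main obstacle.} The plan is to invoke Lemma \ref{7-10}, which gives $G_\Omega \simeq G_{Ker(\Omega)} \ast F(K)$, and to realize this splitting geometrically at the level of the interval. I would argue that after sufficiently many linear-elimination steps the interval of $\Omega_j$ partitions into closed sections: those covered by surviving kernel bases, which reassemble into a generalized equation equal to $Ker(\Omega)$, and the remaining closed sections, which after linear elimination carry only items $h_i$ with $\gamma(h_i)=1$ (the lone/free items produced by the elimination), so their coordinate group is free on the set $K$ of these items, giving $G_{\Omega'}=F(K)$. The nonemptiness of the second part follows from the assumption that the process continues infinitely: an infinite process must keep deleting material outside the kernel, so the complement of $Ker(\Omega)$ cannot be empty. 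I expect the \emph{main obstacle} to be making rigorous the claim that the non-kernel part is genuinely a disjoint union of closed sections forming a free generalized equation — one must track carefully that D5 steps close sections (via D1) and move free variables to the right (via D3) so that kernel and non-kernel items become separated by closed boundaries rather than interleaved, and that no base straddles the two parts. This separation is exactly the content cited as \cite[Lemma 7]{KMS:2011(3)}, so I would reduce part (3) to that lemma once the kernel-invariance of part (2) is established.
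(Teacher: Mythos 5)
A preliminary remark: this survey never proves Lemma \ref{lin-el} itself --- the statement is reproduced from the literature, with parts (1)--(2) attributed to \cite[Lemma 15]{Kharlampovich_Myasnikov:1998(2)} and part (3) to \cite[Lemma 7]{KMS:2011(3)} --- so there is no in-paper argument to measure your proposal against, only the arguments of those sources. Your part (1) is the intended argument (all combinatorial parameters stay bounded along the sequence, hence only finitely many equations can occur, hence pigeonhole), with the caveat that boundedness holds per full (D5) step rather than per elementary transformation: inside a single step, (ET5) adds a boundary and (ET1) adds a pair of bases before (ET4) removes a pair and an item, so the counts are preserved only after the whole composite, not monotonically. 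Your part (3) is likewise the intended reduction: Lemma \ref{7-10} gives $G_\Omega \simeq G_{Ker(\Omega)} \ast F(K)$, the separation of the interval into kernel and free closed sections is exactly the cited \cite[Lemma 7]{KMS:2011(3)}, and non-emptiness of the free part follows because an infinite elimination process requires eliminable (hence non-kernel) bases to exist.

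The genuine gap is in part (2). You claim that each elementary transformation used in linear elimination ``induces an isomorphism of coordinate groups that is the identity on items belonging to kernel bases,'' and you obtain kernel-invariance of $\pi(j,k)$ by composing. This fails at the level of individual steps, and in fact is not even well formed, since the intermediate equations have different item sets. Concretely, in the second case of (D5) one applies (ET5) to $\mu$-tie the open boundary $i+1$ through the base $\mu$ covering $h_i$; in case (b) of (ET5) this creates a new boundary $q'$ inside an item $h_q$ of the dual base $\overline{\mu}$, and the induced map is $\pi(h_q)=h_{q'}h_{q'+1}$. The base $\mu$ is eliminable, but pieces of its dual $\overline{\mu}$ can perfectly well survive the kernel computation (which first cuts along boundary connections and only then removes eliminable pieces), and $h_q$ may also lie under other kernel bases; so $h_q$ can be a kernel item, and the step map is then not the identity on it. The actual content of \cite[Lemma 15]{Kharlampovich_Myasnikov:1998(2)} is that when the same equation recurs, $\Omega_j=\Omega_k$, these subdivisions recombine so that the \emph{composite} $\pi(j,k)$ fixes every kernel item; proving this requires explicitly tracking how kernel items are cut and reassembled across the cycle, and cannot be deduced from step-wise invariance, which is simply false. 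The same issue undercuts your preliminary claim that (D5) ``never touches'' a kernel base: it only avoids \emph{selecting} kernel bases for elimination, but it freely modifies the boundary structure on their duals.
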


\item[(D6)] {\em (Tietze cleaning)}. Suppose that in $\Omega$ is linear in some $h_i$ in an active section  such that
$|h_i|$ is comparable with the length of the active section.
This transformation consists of four transformations performed consecutively
\begin{enumerate}
\item[(a)] linear elimination: if the process of linear elimination goes infinitely we replace the
equation by its kernel,
\item[(b)] deleting all pairs of matched bases,
\item[(c)] deleting all complete bases,
\item[(d)] moving all free variables to the right.
\end{enumerate}

\item[(D7)] {\em (Entire transformation)}.
We need a few definitions. A base $\mu$ of the equation $\Omega$ is called a {\em leading} base if
$\alpha(\mu) = 1$. A leading base is said to be {\it maximal} (or a {\em carrier base}) if
$\beta(\lambda) \leqslant \beta(\mu)$ for any other leading base $\lambda$. Let $\mu$ be a carrier base
of $\Omega$. Any active base $\lambda \neq \mu$ with $\beta(\lambda) \leqslant \beta(\mu)$ is called a
{\em transfer} base (with respect to $\mu$).

Suppose now that $\Omega$ is a generalized equation with $\gamma(h_i) \geqslant 2$ for each $h_i$ in
the active part of $\Omega$ and such that $|h_i|$ is comparable with the length of the active part.
{\em Entire transformation} is a sequence of elementary transformations which are performed as
follows. We fix a carrier base $\mu$ of $\Omega$. We transfer all transfer bases from $\mu$ onto
$\overline \mu$. Now, there exists some $i < \beta(\mu)$ such that $h_1, \ldots, h_i$ belong to only
one base $\mu$, while $h_{i+1}$ belongs to at least two bases. Applying (ET1) we cut $\mu$ along the
boundary $i+1$. Finally, applying (ET4) we delete the section $[1,i+1]$.
\end{enumerate}

\subsection{Complexity of a generalized equation and Delzant-Potyagailo complexity $c(G)$ of a group $G$}
\label{subs:complexity}

Denote by $\rho_A$ the number of variables $h_i$ in all active sections of $\Omega$, by $n_A =
n_A(\Omega)$ the number of bases in all active sections of $\Omega$, by $\nu'$ the number of open
boundaries in the active sections, and by $\sigma'$ the number of closed boundaries in the active
sections.

For a closed section $\sigma \in \Sigma_\Omega$ denote by $n(\sigma), \rho(\sigma)$ the number of
bases and, respectively, variables in $\sigma$.
$$\rho _A = \rho_A(\Omega) = \sum_{\sigma \in A\Sigma_\Omega} \rho(\sigma),$$
$$n_A = n_A(\Omega) = \sum_{\sigma \in A\Sigma_\Omega} n(\sigma).$$

The {\em complexity} of the  generalized  equation $\Omega $ is the number
$$\tau = \tau (\Omega) = \sum_{\sigma \in A\Sigma_\Omega} \max\{0, n(\sigma) - 2\}.$$

Notice that the entire transformation (D7) as well as the cleaning process (D4) do not increase
complexity of equations.

Below we recall Delzant-Potyagailo's result (see \cite{Delzant_Potyagailo:2001}). A family ${\cal C}$ of subgroups of a
torsion-free group $G$ is called {\em elementary} if
\begin{enumerate}
\item[(a)] ${\cal C}$ is closed under taking subgroups and conjugation,
\item[(b)] every $C \in {\cal C}$ is contained in a maximal subgroup $\overline{C} \in {\cal C}$,
\item[(c)] every $C \in {\cal C}$ is small (does not contain $F_2$ as a subgroup),
\item[(d)] all maximal subgroups from ${\cal C}$ are malnormal.
\end{enumerate}
$G$ admits a {\em hierarchy} over ${\cal C}$ if the process of decomposing $G$ into an amalgamated
product or an HNN-extension over a subgroup from ${\cal C}$, then decomposing factors of $G$ into
amalgamated products and/or HNN-extensions over a subgroup from ${\cal C}$ etc. eventually stops.

\begin{prop} (\cite{Delzant_Potyagailo:2001})
If $G$ is a finitely presented group without $2$-torsion and ${\cal C}$ is a family of elementary
subgroups of $G$ then $G$ admits a hierarchy over ${\cal C}$.
\end{prop}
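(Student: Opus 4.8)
The goal is to show that the hierarchy-building process terminates, and the natural strategy is to attach to each finitely presented group a well-founded complexity that strictly drops at each splitting. First I would make the object of study precise: a hierarchy over ${\cal C}$ is a rooted tree whose root is $G$ and whose children at any node $H$ are the vertex groups of a nontrivial \emph{reduced} splitting of $H$ (as an amalgam or HNN-extension) over some subgroup in ${\cal C}$. Since every such splitting produces only finitely many vertex groups, the tree is locally finite, so by K\"onig's lemma it suffices to rule out an infinite descending branch $G = H_0 > H_1 > H_2 > \cdots$, where each $H_{k+1}$ is a proper vertex group of a reduced ${\cal C}$-splitting of $H_k$.

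The core of the argument is the Delzant--Potyagailo complexity $c(\cdot)$ introduced in this section, which I would define from a finite presentation complex of the group (essentially counting its $2$-cells, or equivalently via a relative Euler characteristic). The key claim to establish is monotonicity together with eventual strict decrease: each vertex group $H_{k+1}$ of a reduced ${\cal C}$-splitting of $H_k$ is again finitely presented with $c(H_{k+1}) \le c(H_k)$, and $c(H_{k+1}) < c(H_k)$ holds except in a controlled list of degenerate configurations. The remaining task is then to show these degenerate configurations cannot be chained infinitely, which is exactly where the defining properties of an elementary family are used.

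Properties (c) and (d) --- smallness and malnormality of the maximal subgroups --- are the crucial inputs. Smallness (no $F_2$ in edge groups) places us in the regime of \emph{small} splittings, so that the accessibility theorem of Bestvina--Feighn for finitely presented groups gives a uniform bound on the number of edge-orbits in any reduced such splitting. Malnormality of the maximal element $\overline{C} \in {\cal C}$ containing a given edge group makes each splitting \emph{acylindrical}: a nontrivial element fixes at most a bounded segment of the Bass--Serre tree, which prevents the complexity from remaining constant through an infinite sequence of refinements. Closure under subgroups and conjugation (property (a)) together with the existence of maximal members (property (b)) are what let these local estimates be assembled consistently along the whole branch, forcing $c(H_k)$ to strictly decrease infinitely often unless the branch terminates --- a contradiction with $c \in \mathbb{N}$.

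The principal obstacle I anticipate is the accessibility estimate itself: establishing that $c$ is well-defined, never increases under passing to a vertex group, and must strictly decrease outside a recurrence-free set of exceptions. This demands combining the acylindricity furnished by malnormality with the Bestvina--Feighn machine \emph{uniformly} across all nodes of the hierarchy, rather than one splitting at a time, and it is precisely this global coherence (not the individual splittings) that the naive iteration of accessibility fails to supply. It is also here that the hypothesis of no $2$-torsion enters: it guarantees that the relevant actions on Bass--Serre trees are without inversions, so that the Bass--Serre structure theorem and the acylindricity bounds apply without the degeneracies that an order-two element acting as an inversion, or stabilizing an unexpected segment, would create.
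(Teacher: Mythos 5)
You should first be aware that the paper does not prove this statement at all: it is quoted, with attribution, from \cite{Delzant_Potyagailo:2001}, so there is no in-paper argument to compare yours against. Judged on its own merits, your proposal is a plan rather than a proof, and the plan defers exactly the content of the theorem. You never define the complexity $c(\cdot)$ precisely, never prove that it is non-increasing when passing to a vertex group of a reduced ${\cal C}$-splitting, never enumerate the ``degenerate configurations'' in which it fails to drop, and never prove that those configurations cannot be chained infinitely; each of these is announced as ``the key claim to establish'' or ``the remaining task.'' The machinery you propose to substitute for these steps cannot fill the hole. Bestvina--Feighn accessibility bounds the number of edge orbits in a \emph{single} reduced decomposition of a finitely presented group over small subgroups; a hierarchy is not a nested sequence of decompositions of $G$, because a splitting of a vertex group $H_{k+1}$ refines a splitting of $H_k$ only when the incident edge groups of $H_k$'s splitting are elliptic in the splitting of $H_{k+1}$, and this ellipticity fails in general. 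You even acknowledge this (``global coherence \ldots that the naive iteration of accessibility fails to supply''), but you then supply nothing in its place, which means the argument has no engine: K\"onig's lemma reduces the theorem to excluding an infinite branch, and nothing you establish excludes one.

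Two further points of caution. First, the paper itself remarks (just after Theorem \ref{co:main5}) that the published proof of hierarchical accessibility in \cite{Delzant_Potyagailo:2001} has a gap precisely in the HNN-extension case, and the paper is careful to invoke only ``the correct part'' of that work, namely Proposition \ref{DP}. Your sketch treats amalgams and HNN extensions uniformly and says nothing specific about the HNN case, which is exactly where the known difficulty lies; any genuine proof would have to confront it. Second, your explanation of the no-$2$-torsion hypothesis (actions without inversions) is not convincing as stated, since inversions can always be removed by subdividing the Bass--Serre tree; in Delzant--Potyagailo's actual argument the hypothesis enters through the geometry of the patterns/tracks in the presentation $2$-complex, not through inversions, so this step of your outline would also need to be reworked.
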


\begin{cor} \cite{KMS:2011(3)}
If $G$ is a finitely presented $\Lambda$-free group then $G$ admits a hierarchy over the family of
all abelian subgroups.
\end{cor}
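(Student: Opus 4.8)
The plan is to derive the statement directly from the Delzant--Potyagailo Proposition quoted immediately above, applied with $G$ the given finitely presented $\Lambda$-free group and ${\cal C}$ the family of all abelian subgroups of $G$. The whole task then reduces to checking the two hypotheses of that Proposition: that $G$ has no $2$-torsion, and that ${\cal C}$ is an elementary family in the sense of conditions (a)--(d) in its statement.

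First I would dispose of the torsion hypothesis. By the basic properties of $\Lambda$-free groups collected in the theorem of Subsection \ref{subs:lambda-free} (property (c)), every $\Lambda$-free group is torsion-free; in particular $G$ has no $2$-torsion.

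Next I would verify that ${\cal C}$ is elementary. Condition (a) is immediate, since every subgroup of an abelian group is abelian and every conjugate of an abelian subgroup is abelian. Condition (c) is equally immediate: a free group $F_2$ of rank two is non-abelian, so no abelian group can contain it, whence every member of ${\cal C}$ is small. For condition (b) I would invoke the transitivity of commutation on non-trivial elements (property (e) of the same theorem): if $A \in {\cal C}$ is non-trivial and $1 \neq a \in A$, then every element of $A$ commutes with $a$, so $A \leqslant C_G(a)$, and transitivity of commutation forces $C_G(a)$ itself to be abelian, hence maximal abelian; thus $\overline{A} = C_G(a) \in {\cal C}$ is a maximal member of ${\cal C}$ containing $A$ (the trivial subgroup is contained in any such $C_G(a)$, and if $G$ is itself abelian it serves as its own maximal member). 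Finally, condition (d) is precisely the CSA property (property (d)): every maximal abelian subgroup $A$ of $G$ is malnormal, i.e. $A^g \cap A = 1$ for all $g \notin A$, and by the previous identification the maximal members of ${\cal C}$ are exactly such centralizers.

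With all four conditions verified and $G$ free of $2$-torsion, the Proposition applies and produces a hierarchy of $G$ over ${\cal C}$, which is exactly a hierarchy over the family of all abelian subgroups, as claimed. The only point demanding a little care is condition (b), where one must match the maximal members of ${\cal C}$ with centralizers of non-trivial elements and know that those centralizers are abelian; but this is precisely what transitivity of commutation supplies, and it is also what makes the malnormality in (d) apply to the correct subgroups, so no genuine obstacle arises.
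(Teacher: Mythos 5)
Your proposal is correct and is precisely the argument the paper intends: the corollary is stated as an immediate application of the Delzant--Potyagailo Proposition, with torsion-freeness, the CSA property, and transitivity of commutation (properties (c), (d), (e) of the theorem in Subsection \ref{subs:lambda-free}) supplying exactly the hypotheses you verify. Your identification of the maximal members of the family with centralizers of non-trivial elements, which makes both condition (b) and the malnormality in (d) go through, is the right (and only needed) point of care.
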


There is a notion of complexity of a group $G$ defined in \cite{Delzant_Potyagailo:2001} and denoted by $c(G)$. We will
only use the following statement that follows from there.

\begin{prop} (\cite{Delzant_Potyagailo:2001})
\label{DP} If $G$ is a non-trivial free product of finitely presented groups $G_1$ and $G_2$. Then $c(G_i)< c(G), i=1,2.$
Let $G$ be a finitely presented freely indecomposable $\Lambda$-free group (therefore CSA). Let $\Gamma$ be an abelian decomposition of $G$
as a fundamental group of a graph of groups with at least two vertices with non-cyclic vertex groups, maximal abelian subgroups being elliptic, and with each edge group being
maximal abelian at least in one of its vertex groups. Then for each vertex group $G_v$, $c(G_v) <
c(G)$.
\end{prop}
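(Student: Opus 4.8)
The plan is to deduce both statements directly from the behaviour of the Delzant--Potyagailo complexity $c$ under reduced splittings over an elementary family, so the first task is to check that such a family is genuinely available for $G$. By the structure theorem for $\Lambda$-free groups quoted above, $G$ is torsion-free and enjoys the CSA property; hence the collection $\mathcal{A}$ of all abelian subgroups of $G$ satisfies the four axioms of an elementary family in the sense of Delzant--Potyagailo: it is closed under conjugation and passage to subgroups, every abelian subgroup lies in a maximal abelian subgroup, every abelian subgroup is small since it cannot contain $F_2$, and the maximal abelian subgroups are malnormal precisely because $G$ is CSA. With $\mathcal{A}$ fixed as the elementary family, $c(G)$ is defined and, by Delzant--Potyagailo, is a well-ordered invariant that does not increase when $G$ is replaced by a vertex group of a reduced graph-of-groups decomposition over $\mathcal{A}$; the whole content of the proposition is that under the stated non-degeneracy hypotheses the inequality becomes strict.

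For the first assertion I would regard a non-trivial free product $G = G_1 \ast G_2$ as a graph of groups with two vertices carrying $G_1$ and $G_2$ and a single edge with trivial edge group. The trivial subgroup belongs to $\mathcal{A}$, so this is a legitimate decomposition over the elementary family, and it is reduced since both factors are proper. The required inequality $c(G_i) < c(G)$ is then the free-product (Grushko-type) additivity of $c$ established by Delzant--Potyagailo: a proper free factor strictly lowers the complexity. The only thing to verify is that the factors are genuinely proper, which is exactly the hypothesis that the free product is non-trivial.

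For the second assertion I would first normalise $\Gamma$ to a reduced decomposition over $\mathcal{A}$, and the hypotheses do most of this automatically. Since each edge group is maximal abelian in at least one of its endpoints, no edge can be folded or collapsed into an adjacent vertex group; and since the maximal abelian subgroups are elliptic, the decomposition is compatible with $\mathcal{A}$. Once $\Gamma$ is reduced, the monotonicity theorem of Delzant--Potyagailo gives $c(G_v) \le c(G)$ for every vertex group $G_v$, with strict inequality as soon as the ambient splitting is essential at $v$. The role of the hypothesis that at least two vertices carry non-cyclic groups is exactly to force essentiality and to prevent a single $G_v$ from coinciding with $G$: if some $G_v$ equalled $G$ the graph would have to collapse to a point, contradicting the simultaneous freely indecomposable hypothesis and the presence of a genuine multi-vertex abelian decomposition.

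The delicate point, and the main obstacle, will be matching the combinatorial hypotheses of the proposition to the precise reducedness conditions under which Delzant--Potyagailo prove the \emph{strict} decrease, and in particular excluding the degenerate possibility that the decomposition is trivial from the viewpoint of $c$, with one vertex silently absorbing all of the complexity. I expect to handle this by combining malnormality of maximal abelian subgroups --- which forbids an edge group from coinciding with a whole vertex group whenever the opposite endpoint is non-cyclic --- with the two-non-cyclic-vertex hypothesis, so that each $G_v$ is a proper, malnormally embedded factor of an essential splitting; the strict inequality $c(G_v) < c(G)$ then follows from the very decrease lemma that underlies hierarchical accessibility. Verifying that these conditions place $\Gamma$ in the normal form demanded by the complexity estimate is the single step that cannot be reduced to a citation and must be carried out by hand.
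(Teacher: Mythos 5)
Your proposal takes essentially the same route as the paper: the paper offers no independent argument for this proposition, stating it as a direct consequence of Delzant--Potyagailo's complexity theory, which is exactly what you do (the abelian subgroups of a torsion-free CSA group form an elementary family, and the strict decrease of $c$ under non-trivial free splittings and under reduced splittings over elementary subgroups is imported from their paper). The one caveat you could not have known blind is that the authors note elsewhere that Delzant--Potyagailo's accessibility argument has a gap in the HNN-extension case, so the proposition is deliberately phrased (at least two vertices with non-cyclic vertex groups, each edge group maximal abelian in at least one endpoint) to invoke only the correct part of that paper; the verification you defer to the end --- matching $\Gamma$ to their reducedness hypotheses --- is precisely where that restriction must be respected.
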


\subsection{Rewriting process for $\Omega$}
\label{se:5.2}

In this section we describe a rewriting process (elimination process) for a generalized equation
$\Omega$ and its solution corresponding to $G$ . Performing the elimination process we eventually detect a decomposition
of $G$ as a free product or (if it is freely indecomposable) as the fundamental group of a graph of
groups with vertex groups of three types: QH vertex groups,  abelian vertex groups (corresponding to
periodic structures, see below), non-QH, non-abelian vertex groups (we will call them {\em weakly
rigid} meaning that we do not split them in this particular decomposition). We also can detect
splitting of $G$ as an HNN-extension with stable letter infinitely longer than generators of the
abelian associated subgroups. After obtaining such a decomposition we continue the elimination
process with the generalized equation corresponding to free factors of $G$  or to weakly rigid
subgroups of $G$ (we will show that this generalized equation can be naturally obtained from the
generalized equation $\Omega$.) The Delzant-Potyagailo complexity of factors in a free decomposition
and complexity of weakly rigid subgroups is smaller than the complexity of $G$. In the case of an HNN extension we will show that the complexity $\tau$ of the generalized
equation corresponding to a weakly rigid subgroup is smaller that the complexity of $\Omega$.

We assume that $\Omega$ is in standard form, namely, that transformations (ET3), (D3) and (D4) have
been applied to $\Omega$ and that on each step we apply them to the generalized equation before
applying any other transformation.

Let $\Omega $ be a generalized equation. We construct a path $T(\Omega)$ (with associated structures),
as a directed path oriented from the root $v_0$, starting at $v_0$ and proceeding by induction on the
distance $n$ from the root.

We start with a general description of the path $T(\Omega)$. For each vertex $v$ in $T(\Omega)$ there
exists a unique generalized equation $\Omega_v$ associated with $v$. The initial equation $\Omega$ is
associated with the root $v_0$, $\Omega_{v_0} = \Omega$. In addition there is a homogeneous system of
linear equations $\Sigma_v$ with integer coefficients on the lengths of  variables of $\Omega _v$.
We take $\Sigma_{v_0}$ to be empty. For each edge $v \to v'$ (here $v$ and $v'$ are the origin and
the terminus of the edge) there exists an epimorphism $\pi(v,v') : G_{\Omega_v} \to G_{\Omega_v'}$
associated with $v \to v'$. If $\Omega$ was constructed for the group $G$ with regular free length
function, then  $\pi(v,v')$ is an isomorphism. If $\Omega$ was constructed for the group $\tilde G$
with free but not regular length function, then $\pi(v,v')$ is a monomorphism on $\tilde G$.

If
$$v_0 \rightarrow v_1 \rightarrow \cdots \rightarrow v_s \rightarrow u$$
is a subpath of $T(\Omega)$, then by $\pi(v,u)$ we denote composition of corresponding isomorphisms
$$\pi(v,u) = \pi(v,v_1) \circ \cdots \circ \pi(v_s,u).$$

If $v \to v'$ is an edge then there exists a finite sequence of elementary or derived transformations
from $\Omega_v$ to $\Omega_{v'}$ and the isomorphism $\pi(v,v')$ is a composition of the
isomorphisms corresponding to these transformations. We also assume that active (and non-active)
sections in $\Omega_{v'}$ are naturally inherited from $\Omega_v$, if not said otherwise. Recall that
initially all sections are active.

Suppose the path $T(\Omega)$ is constructed by induction up to  level $n$ and suppose $v$ is a
vertex at distance $n$ from the root $v_0$. We describe now how to extend the path from $v$. The
construction of the outgoing edge at $v$ depends on which case described below takes place at
vertex $v$. We say that two elements are comparable (or their lengths are comparable) if they have
the same height. There are three possible cases.

\begin{itemize}
\item {\bf Linear case:} there exists $h_i$ in the active part such that $|h_i|$ is comparable with
the length of the active part and $\gamma(h_i) = 1$.

\item {\bf Quadratic and almost quadratic case:} $\gamma(h_i) = 2$ for all $h_i$ in the active part
such that $|h_i|$ is comparable with the length of the active part.

\item {\bf General JSJ case:} $\gamma(h_i) \geqslant 2$ for all $h_i$ in the active part such that
$|h_i|$ is comparable with the length of the active part, and there exists such $h_i$ that $\gamma(h_i)
> 2$.
\end{itemize}

\subsubsection{Linear case}
\label{linear_case}

We apply Tietze cleaning (D6) at the vertex $v_n$ if it is possible. We re-write the system of linear
equations $\Sigma_{v_n}$  in new variables and obtain a new system $\Sigma_{v_{n+1}}$.

If $\Omega_{v_{n+1}}$ splits into two parts, $\Omega_{v_{n+1}}^{(1)} = Ker(\Omega_{v_n})$ and
$\Omega_{v_{n+1}}^{(2)}$ that corresponds to a free group $F(K)$, then when we put the free group section
$\Omega_{v_{n+1}}^{(2)}$ into a non-active part we decrease both  complexities $\tau$ and Delzant-Potyagailo's complexity $c$. It may happen that
the kernel is empty, then the process terminates.

If it is impossible to apply Tietze cleaning (that is $\gamma(h_i) \geqslant 2$ for any $h_i$ in the
active part of $\Omega_v$ comparable to the length of the active part), we apply the entire
transformation.

{\bf Termination condition:} $\Omega_v$ does not contain active sections. In this case the vertex
$v$ is called a {\it leaf} or an {\it end vertex}.

\subsubsection{Quadratic and almost quadratic case}
\label{quad_case}

Suppose that $\gamma_i = 2$ for each $h_i$ in the active part comparable with the length of the
active part of $\Omega_v$. First of all, we fill in all the $h_i's$ in the active part such that
$\gamma_i = 1$ by new (infinitely short) bases $\mu$ such that $\overline\mu$ covers a new variable
that we add to the non-active part.

We apply the entire transformation (D7), then apply Tietze cleaning (D6), if possible, then again
apply entire transformation, etc. In this process we, maybe, will remove some pairs of matching bases
decreasing the complexity $\tau$. Eventually we either end up with empty active part or the process
will continue infinitely, and the number of bases in the active part will be constant.

\begin{lemma} \cite{KMS:2011(3)}
\label{le:43}
If a closed section $\sigma$ has quadratic-coefficient bases, and the entire transformation goes
infinitely, then after a finite number of steps there will be quadratic bases belonging to $\sigma$
that have length infinitely larger than all participating quadratic coefficient bases on $\sigma$.
\end{lemma}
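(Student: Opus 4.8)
The plan is to argue inside the ordered group $\Lambda$ by projecting the lengths onto the top Archimedean layer occupied by the coefficient bases, and to reduce the statement to the fact that an entire-transformation branch confined to a single Archimedean layer cannot be infinite. First I would record the features of the quadratic regime already available at the vertex $v$: after the filling step described at the start of Subsection~\ref{quad_case}, every item of the active part comparable with the active length is covered by exactly two bases; the entire transformation (D7) keeps the number of bases constant and does not increase the complexity $\tau$; and, arguing exactly as for the linear regime in Lemma~\ref{lin-el}(1), only finitely many combinatorial generalized equations occur along the infinite branch, so the sequence is eventually periodic in its combinatorial type. Let $k$ be the maximal height $ht(\mu)$ among the participating quadratic coefficient bases $\mu$ on $\sigma$; the goal is to produce, after finitely many steps, a quadratic base on $\sigma$ of height strictly greater than $k$.

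Next I would assume the contrary, namely that along the whole infinite branch every quadratic base comparable with the active length keeps height at most $k$. Then all bases on $\sigma$, and all carrier bases used by the entire transformation, lie in the convex subgroup $\Lambda_k$. Composing $|\cdot|$ with the canonical order-homomorphism $\Lambda_k \to \Lambda_k/\Lambda_{k-1}$ onto the (Archimedean) top quotient gives a length $\lambda_k$ on the items that is additive on $\circ$-products, exactly as the function $\lambda$ of Subsection~\ref{subs:regular_Z^n}. Under $\lambda_k$ the coefficient bases acquire fixed, positive, Archimedean values, and each entire transformation of $\Omega_v$ induces an entire transformation of the projected quadratic equation: a carrier base $\mu$ with $\alpha(\mu)=1$ is chosen, all transfer bases are carried onto $\overline\mu$, and the prefix covered solely by $\mu$ is deleted. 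Thus the projected process is a genuine entire-transformation branch of a quadratic generalized equation whose coefficients have constant length and whose base count and complexity are constant.

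I would then invoke the Archimedean (surface/interval-exchange) analysis of the entire transformation. Such a branch cannot be infinite unless the mobile quadratic bases come to dominate every coefficient base: if their $\lambda_k$-lengths stayed bounded in terms of the fixed coefficient lengths, then by constancy of the base count and of $\tau$ the branch would eventually close up a matched or complete base and be terminated by (ET3) or (D4) before any further transformation, contradicting infiniteness. Hence some quadratic base $\nu$ must satisfy $\lambda_k(\nu) > m\,\lambda_k(\mu)$ for every coefficient base $\mu$ and every $m\in\mathbb{N}$. By the Archimedean characterization of convex subgroups recalled before Lemma~\ref{le:discrete}, this forces $|\nu|$ to lie outside $\Lambda_k$, i.e.\ $ht(\nu) > k$, contradicting the assumption. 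Therefore the assumption fails and a quadratic base of height exceeding $k$ appears after finitely many steps.

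The main obstacle is the passage carried out in the third paragraph: turning the combinatorial behaviour of the entire transformation within the fixed Archimedean layer into the clean dichotomy ``terminate, or have a mobile base dominate all coefficients'', and then lifting ``domination in $\Lambda_k/\Lambda_{k-1}$'' to an honest jump into a larger convex subgroup of $\Lambda$. Making this rigorous requires careful bookkeeping, over one full period of the eventually periodic process, of where each carrier base is cut relative to the coefficient bases, together with a check that the infinitely short filled-in bases and the free variables moved to the non-active part by (D3) do not interfere with the projected dynamics.
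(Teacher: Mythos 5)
Your reduction---replace ``infinitely larger'' by a jump of height, assume toward a contradiction that all quadratic bases stay inside the convex subgroup $\Lambda_k$ determined by the top coefficient height, and project to the Archimedean quotient---is a reasonable framing, and it is consistent with how the paper itself handles Archimedean quotients and infinitesimals (e.g., in the proof of Theorem \ref{th:main3}). But the proposal has a genuine gap precisely where all of the mathematical content sits: the claim that an entire-transformation branch in the Archimedean projection, with coefficient bases of fixed positive projected length, ``cannot be infinite unless the mobile quadratic bases come to dominate every coefficient base.'' In the Archimedean setting this claim \emph{is} the lemma, and your justification for it---that boundedness of the $\lambda_k$-lengths together with constancy of the base count and of $\tau$ would force a matched or complete base, hence termination by (ET3)/(D4)---is a non sequitur. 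Nothing in the definitions of (D7), (ET3), (D4) manufactures a matched or complete base out of boundedness of lengths. Indeed, in the pure quadratic (surface) case the entire transformation does run forever over $\mathbb{R}$ with constant base count and constant complexity and never produces a matched pair (this is exactly the behaviour the paper relies on in Subsection \ref{quad_case} and in the proof of Theorem \ref{th:Rips}); hence any correct argument must use the presence of the coefficient bases in an essential, quantitative way, and your proposed mechanism never does.

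There is also an internal inconsistency in the dichotomy itself. The quotient you project to is Archimedean, so no element of it can exceed every integer multiple of a fixed positive element; moreover the projected lengths of all bases in $\sigma$ are bounded above by the projected length of the section, which is non-increasing under (D7). So the ``domination'' horn $\lambda_k(\nu) > m\,\lambda_k(\mu)$ for all $m$ can never occur, and under your standing assumption $ht(\nu) \leqslant k$ the length $|\nu|$ lies in $\Lambda_k$ by definition, so the final ``lift'' to a height jump is circular: if your termination claim were actually proved, the argument would end right there (the projected process terminates, contradicting infiniteness), and the domination horn would be vacuous. Finally, the projection step is itself degenerate: items and bases of height $<k$ project to length $0$, so the projected data is not a generalized equation in the paper's sense (items must be nonempty), and infinitely many (D7)-steps may delete prefixes of infinitesimal length, i.e., project to the identity, so the projected ``process'' can stall for arbitrarily long stretches; one cannot simply invoke an Archimedean analysis of entire transformations on it. You flag this passage yourself as the main obstacle, and that instinct is right---but it is not bookkeeping, it is the theorem.
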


If $\sigma$ does not have quadratic-coefficient bases then $G_{R(\Omega_v)}$ splits as a free product
with one factor being a closed surface group or a free group.
We move $\sigma$ into a non-active part and thus decrease the complexity $\tau$.

We repeat the
described transformation until there is no quadratic base on the active part that has length comparable
with the length of the remaining active part. Then we  consider the remaining generalized equation
in the active part. We remove from the active part doubles of all quadratic coefficient bases that
belong to non-active part (doing this we may create new boundaries). We will remember the
relations corresponding to these pairs of bases. In this case the remaining generalized equation has
smaller complexity $\tau$. Relations corresponding to the quadratic sections that we made non-active show
that $G_{\Omega}$ is an HNN-extension of the subgroup generated by the variables in the active part
and (maybe) a free group. Removing these double bases we have to add equations to $\Sigma_{v_{i+1}}$
that guarantee that the associated cyclic subgroups are generated by elements of the same length.

\subsubsection{General JSJ-case}
\label{gen_jsj_case}

Generalized equation $\Omega_v$ satisfies the condition $\gamma_i \geqslant 2$ for each $h_i$ in the
active part such that $|h_i|$ is comparable with the length of the active part, and $\gamma_i > 2$
for at least one such $h_i$. First of all, we fill in all the $h_i's$ in the active part such that
$\gamma_i = 1$ by new (infinitely short) bases with doubles corresponding to free variables in the
non-active part. We apply the transformation (D1) to close the quadratic part and put it in front of
the interval.

\paragraph{(a) QH-subgroup case.}

Suppose that the entire transformation of the quadratic part (D7) goes infinitely. Then the quadratic
part of $\Omega_v$ (or the initial  section from the beginning of the  quadratic part until the first
base on the quadratic part that does not participate in the entire transformation) corresponds to a
QH-vertex or to the representation of $G_\Omega$ as an HNN-extension, and there is a quadratic base
(on this section) that is infinitely longer than all the quadratic coefficient bases (on this section).
We work with the quadratic part the same way as in the quadratic case until there is no quadratic
base satisfying the condition above. Then we make the quadratic section non-active, and consider the
remaining generalized equation where we remove doubles of all the quadratic coefficient bases. We
certainly have to remember that the bases that we removed express some variables in the quadratic
part (that became non-active) in the variables in the active part. We have to add an equation to
$\Sigma_{v_{i+1}}$ that guarantees that the associated cyclic subgroups are generated by elements of
the same length. In this case the subgroup of $G_\Omega$ that is isomorphic to the coordinate group
of the new generalized equation in active part is a vertex group in an abelian splitting of
$G_\Omega$ and has smaller Delzant--Potyagailo complexity.

\paragraph{(b) QH-shortening}

\begin{lemma} \cite{KMS:2011(3)}
Suppose that the quadratic part of $\Omega$ does not correspond to the HNN-splitting of $G_\Omega$
(there are only quadratic coefficient bases), or, we cannot apply the entire transformation to the
quadratic part infinitely. In this case either $G_{\Omega}$ is a non-trivial free product or, applying
the automorphism of $G_\Omega$, one can replace the words corresponding to the quadratic bases in
the quadratic part by their automorphic images such that in the new solution $H^+$ of $\Omega$ the length of the
quadratic part is bounded by some function $f_1(\Omega)$ times the length of the non-quadratic part.
Solution $H^+$ can be chosen consistent with $H$.
\end{lemma}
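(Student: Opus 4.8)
The plan is to realize every step of the entire transformation (D7) on the quadratic part as a \emph{canonical automorphism} of $G_\Omega$ that fixes the non-quadratic part pointwise, and then to use the termination hypothesis to bound the length of the quadratic section after composing all these automorphisms. First I would record the combinatorial picture: the quadratic part is a closed section in which every item $h_i$ is covered by exactly two bases, so the pairs of dual bases split into \emph{genuine quadratic bases} (both copies lying in the active section) and \emph{quadratic-coefficient bases} (whose dual lies in a non-active, coefficient section). The automorphisms referred to in the statement are precisely the Nielsen/Dehn-twist type automorphisms generated by the carrier-base transfers that (D7) performs: choosing a carrier base $\mu$, transferring all transfer bases onto $\overline{\mu}$, cutting $\mu$ by (ET1), and deleting the initial block by (ET4). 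Each such step only \emph{re-reads} the given solution through (ET1)--(ET5) without creating any new cancellation, so it cannot enlarge the cancellation table $C(\cdot)$.

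Next I would set up the dichotomy. If the closed quadratic section carries no quadratic-coefficient base at all, then it detaches: as in the quadratic-case analysis, $G_\Omega$ splits as a \emph{nontrivial free product} with one factor a closed surface group or a free group, and we are in the first conclusion. So assume the section is bound to the coefficient part by at least one quadratic-coefficient base. Via the fixed embedding $\xi : G_\Omega \hookrightarrow CDR(\Lambda,Z)$ and the universal tree $\Gamma_{G_\Omega}$ of Proposition~\ref{pr:lambda_tree}, this section is exactly the QH subgroup acting on $\Gamma_{G_\Omega}$ with the coefficient bases serving as boundary data, and the only transformation that can continue on it is the entire transformation. Thus the whole argument reduces to controlling how long (D7) can be iterated.

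For the shortening conclusion I would argue by the contrapositive of Lemma~\ref{le:43}. If the entire transformation on the quadratic section ran infinitely, that lemma would produce a quadratic base infinitely longer than all participating quadratic-coefficient bases, i.e. exactly the QH/HNN situation excluded by the hypothesis and treated in case~(a); likewise, when the quadratic part carries only quadratic-coefficient bases there is no genuine carrier to drive the process indefinitely. Hence under either hypothesis (D7) terminates after finitely many steps, say $k$. Let $\alpha$ be the composition of the $k$ canonical automorphisms and set $H^+ = \xi_H\circ\alpha$, read off as a tuple of $\Lambda$-words. In the terminal generalized equation no quadratic base is a carrier base longer than the coefficient bases, so the length of each quadratic base, hence the total length of the quadratic section, is at most $|BS(\Omega)|$ times the maximal coefficient length; this is bounded by $f_1(\Omega)$ times the length of the non-quadratic part, for an explicit $f_1$ depending only on the number of bases and items of $\Omega$. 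Since $\alpha$ fixes the non-quadratic part, $H^+$ is again a solution of $\Omega$, and since each constituent step only re-reads words, $C(H^+)\subseteq C(H)$, so $H^+$ is consistent with $H$.

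The main obstacle will be the shortening step in the genuinely non-Archimedean regime $\Lambda\neq\mathbb{Z}$: one must verify that each canonical automorphism acts on the $\Lambda$-words representing the quadratic bases as a length-monotone operation (using the pregroup structure and the $\Zt$-exponentiation machinery) and, crucially, that termination of (D7) forces a \emph{linear} bound on the carrier base rather than permitting a hidden ``periodic'' continuation that would belong to case~(a). Establishing that the negation of Lemma~\ref{le:43} yields the stated bound $f_1(\Omega)$, and that the relevant mapping-class-type automorphisms genuinely lie in $\mathrm{Aut}(G_\Omega)$ fixing the coefficients, is the technical heart of the proof; everything else is bookkeeping with (ET1)--(ET5) together with the complexity control of Proposition~\ref{DP}.
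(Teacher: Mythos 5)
Your reduction of both hypotheses to ``(D7) terminates in finitely many steps'' via the contrapositive of Lemma~\ref{le:43} is reasonable, and the free-product alternative and the consistency claim $C(H^+)\subseteq C(H)$ match the quadratic-case discussion in the paper. But the core of your argument has a genuine gap, and it is exactly where the real work lies. The steps of the entire transformation are \emph{not} automorphisms of $G_\Omega$: each elementary or derived transformation furnishes an isomorphism (in general an epimorphism) $\pi(v_i,v_{i+1}) : G_{\Omega_{v_i}} \to G_{\Omega_{v_{i+1}}}$ between coordinate groups of \emph{different} generalized equations. The composition of your $k$ steps is therefore an isomorphism $G_\Omega \to G_{\Omega_k}$ onto the coordinate group of the terminal equation, not an element of $\mathrm{Aut}(G_\Omega)$, so ``$H^+=\xi_H\circ\alpha$'' is not a solution of $\Omega$ and the statement's conclusion is not reached. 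Genuine automorphisms of $G_\Omega$ fixing the coefficient part (the canonical, Dehn-twist-type automorphisms the lemma refers to) arise only from \emph{cycles} in the process: since (D7) does not increase complexity and there are only finitely many generalized equations of bounded complexity, a sufficiently long run must revisit the same equation, and the composition of isomorphisms around such a loop is an automorphism. That pigeonhole/repetition mechanism is entirely absent from your proposal.

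The second, related gap is the bound $f_1(\Omega)$ itself. Termination of (D7) after $k$ steps cannot give a bound depending only on $\Omega$, because $k$ depends on the solution $H$ (and in the non-Archimedean setting even a terminating run can be arbitrarily long relative to the coefficient lengths); your asserted inequality ``each quadratic base has length at most $|BS(\Omega)|$ times the maximal coefficient length'' concerns the terminal equation, is not proved, and does not transfer back to a solution of the original $\Omega$ through the chain of isomorphisms. The argument in the cited source \cite{KMS:2011(3)} (as in the classical quadratic-equation shortening going back to the $\mathbb{Z}$-machines) runs the other way: one considers the orbit of $\xi_H$ under the group of canonical automorphisms, chooses $H^+$ with \emph{minimal} quadratic length in this orbit, and shows that if the bound $f_1(\Omega)\cdot(\text{length of non-quadratic part})$ failed for $H^+$, then (D7) could be iterated long enough to force a repetition of equations, and the resulting cycle automorphism would strictly shorten the quadratic part, contradicting minimality. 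This minimality-plus-pigeonhole structure is the heart of the proof; without it your proposal reduces to bookkeeping that cannot produce either the automorphism or the uniform bound.
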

If there is a matching pair, we replace $G_{\Omega}$ by the group obtained by removing a cyclic free
factor corresponding to a matching pair. We also replace $\Omega$ by the generalized equation
obtained by removing the matching pair. The Delzant--Potyagailo complexity decreases.
\paragraph{(c) Abelian splitting: short shift.}

\begin{prop}
\label{PerSt}\cite{KMS:2011(3)}
Suppose $\Omega_v$ satisfies the following condition: the carrier base $\mu$ of the equation $\Omega_v$
intersects with its dual $\overline\mu$ (form an overlapping pair) and is at least twice longer than
$|\alpha(\overline\mu) - \alpha(\mu)|$. Then $G_{\Omega_v}$ either splits as a fundamental group of
a graph of groups that has a free abelian vertex group or splits as an HNN-extension with abelian
associated subgroups.
\end{prop}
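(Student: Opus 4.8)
The plan is to show that the overlapping hypothesis forces a \emph{periodicity} on the carrier section and then to read an abelian splitting of $G_{\Omega_v}$ off the underlying action on the $\Lambda$-tree. First I would record the combinatorics. Let $\delta = |\alpha(\overline\mu) - \alpha(\mu)|$ be the shift and let $w$ denote the $\Lambda$-word carried by the carrier base $\mu$ in a fixed solution $U$ of $\Omega_v$. The basic equation attached to the dual pair $(\mu,\overline\mu)$ asserts that $w$, read from $\alpha(\mu)$, coincides with the word read from $\alpha(\overline\mu)$; since $\mu$ and $\overline\mu$ overlap this says $w(x) = w(x+\delta)$ throughout the overlap. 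The hypothesis that $\mu$ is at least twice longer than $\delta$ guarantees that the overlap has length at least $|w| - \delta \geqslant \delta$, so that the periodicity propagates: there is a cyclically reduced $P$ with $|P| = \delta$ such that $w$ is an initial subword of the periodic word generated by $P$, and every item of the carrier section is covered by this period. This is exactly the type of statement controlled by Lemma \ref{le:LS} (applied to the period and its $\delta$-shift). Passing to the primitive root I would write $P = p^{k}$ with $p$ not a proper power, a cyclically reduced $\Lambda$-word whose conjugacy behaviour is pinned down by Lemma \ref{le:cycl}.

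Next I would interpret the period group-theoretically. Since $G$ acts freely on $\Gamma$ (equivalently $G \leqslant CDR(\Lambda,Z)$), the element $p$ is hyperbolic with axis $A_p$, and the periodicity means that the portion of the solution lying in the carrier section is arranged along a translate of $A_p$. By Proposition \ref{pr:1} the centralizer $C = C_G(p)$ is abelian (free abelian of finite rank when $\Lambda = \Z^n$); moreover, by the CSA property of $\Lambda$-free groups, $C$ is the unique maximal abelian subgroup containing $p$ and is malnormal. Collecting the bases of $\Omega_v$ that protrude from the periodic part, I would show that they attach to the periodic segment at only finitely many $C$-orbits of boundary points. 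This finite attaching data, together with the translation action of $p$, is the \emph{periodic structure} associated to $\mu$.

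Finally I would extract the splitting via Bass--Serre theory. The periodic structure exhibits $G_{\Omega_v}$ as acting on a subtree of $\Gamma$ on which $C$ acts with the line $A_p$ as minimal invariant subtree; applying Theorem \ref{pr:structure} together with the induced-splitting Theorem \ref{induced_split} to this action yields a graph-of-groups decomposition whose period-carrying vertex group is the free abelian group $C$. Two cases then arise according to the attaching data at the two ends of the periodic segment. If these two pieces of boundary data are not $G$-conjugate, the decomposition is a genuine graph of groups with the free abelian vertex group $C$, giving the first alternative. If instead the periodic section folds back onto itself, so that the shift realizes a stable letter identifying the two ends, then malnormality of $C$ forces the identified subgroups $A,B \leqslant C$ to be abelian, and $G_{\Omega_v}$ appears as an HNN-extension $\langle H_0, t \mid t^{-1}At = B\rangle$ with abelian associated subgroups, giving the second alternative.

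The main obstacle will be this last step: turning the combinatorial periodic structure into a precise algebraic decomposition while proving that the associated (edge) subgroups are exactly abelian and correctly dichotomizing the two cases. The delicate points are (i) verifying that the protruding bases meet the axis in only finitely many $C$-orbits, so that the quotient graph is finite, and (ii) using malnormality of $C$ (the CSA property) to guarantee that in the folded case the associated subgroups land inside a single conjugate of $C$ and are therefore abelian. By contrast, the periodicity step is routine combinatorics on $\Lambda$-words, and the hyperbolicity and centralizer facts are already supplied by Proposition \ref{pr:1} and the structure theory of $\Lambda$-free groups; the weight of the argument lies in organizing the periodic structure so that Bass--Serre theory applies cleanly.
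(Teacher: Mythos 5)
Your first half --- extracting the period of length $\delta$ from the overlap, passing to the primitive root $p$, and noting via Proposition \ref{pr:1} and the CSA property that $C = C_G(p)$ is abelian and malnormal --- is sound, and it is indeed the entry point of the periodic-structures technique that the paper relies on. The gap is in the step where the splitting is actually produced. You propose to apply Theorem \ref{pr:structure} together with Theorem \ref{induced_split} to the action of $G_{\Omega_v}$ on (a subtree of) $\Gamma$. But those theorems concern simplicial $G$-trees, and the only action available here is the \emph{free} action on the $\Lambda$-tree $\Gamma$: every point stabilizer is trivial, so Bass-Serre theory applied to this action can never output a decomposition with a free abelian vertex group or abelian associated subgroups. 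Converting the free $\Lambda$-action into a simplicial (or lower-rank) action with abelian stabilizers is not a formality one can delegate to ``the periodic structure exhibits an action''; it is precisely the content of the proposition, and of the general problem of extracting Bass-Serre-type information from non-Archimedean actions. At its crucial point your argument is therefore circular: the object to which Bass-Serre theory is supposed to be applied is never constructed.

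What the paper actually does (in \cite{KMS:2011(3)}, following Razborov's periodic structures as in \cite{Kharlampovich_Myasnikov:1998(2)}) is purely combinatorial and never leaves the generalized equation: one fixes the period, collects the bases and closed sections lying in the periodized part into a periodic structure, forms a finite graph whose vertices are boundaries and whose edges come from items and bases of that structure, and proves that the subgroup of $G_{\Omega_v}$ generated by the period together with the elements reading the cycles of this graph is free abelian. The splitting is then read off the presentation of $G_{\Omega_v}$: generically this abelian subgroup is a vertex group of a graph-of-groups decomposition, while in the remaining case some base of the periodic structure serves as a stable letter (infinitely longer than the generators of the associated abelian subgroups), which yields the HNN alternative --- and removing that base is exactly what lowers the complexity $\tau$ in the elimination process. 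Your case division (``the two ends of the periodic segment are $G$-conjugate or not'') does not match this dichotomy; moreover your claims that the protruding bases meet the axis in finitely many $C$-orbits, and that the orientation-reversing overlap $\varepsilon(\mu) = -\varepsilon(\overline\mu)$ can be disregarded, are asserted rather than proved. To repair the proposal you would need to replace the Bass-Serre step by this combinatorial construction of the abelian subgroup from cycles of the periodic structure, from which the decomposition of $G_{\Omega_v}$ is obtained directly.
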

The proof is given in \cite{KMS:2011(3)}, it uses the technique of so-called periodic structures
introduced by Razborov in his Ph.D thesis and almost repeats the proof given in
\cite{Kharlampovich_Myasnikov:1998(2)} to show that the coordinate group of a generalized equation
splits in this case  as a fundamental group of a graph of groups that has a free abelian vertex
group or splits as an HNN-extension with abelian associated subgroups. In the HNN-extension case, the base group is the coordinate group of the generalized equation obtained from the original by removing the corresponding stable letter. This reduces the complexity $\tau$ of the generalized equation.

\paragraph{(d) Abelian splitting: long shift.} If $\Omega$ does not satisfy the conditions of
(a)--(c), we perform QH-shortening, then apply the entire transformation and then, if possible, the
transformation (D6).

\begin{lemma} \cite{KMS:2011(3)}
\label{3.2}
Let
$$v_1 \rightarrow v_2 \rightarrow \cdots \rightarrow v_r \rightarrow \cdots$$
be an infinite path in $T(\Omega)$. Then there exists a natural number $N$ such that all the
generalized equations in vertices $v_n,\ n \geqslant N$ satisfy the general JSJ-case (d).
\end{lemma}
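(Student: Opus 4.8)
The plan is to attach to every vertex $v$ of the path the lexicographically ordered pair
$$\mathfrak{c}(v) = \bigl(c(G_{\Omega_v}),\ \tau(\Omega_v)\bigr) \in \mathbb{N} \times \mathbb{N},$$
where $c$ is the Delzant--Potyagailo complexity and $\tau$ is the complexity of the generalized equation, and to show that along the edges of $T(\Omega)$ this pair never increases, while it strictly decreases at every vertex that is not of type (d). Since $\mathbb{N}\times\mathbb{N}$ with the lexicographic order is well-ordered, a non-increasing sequence can strictly decrease only finitely many times; hence only finitely many vertices of the infinite path can fail to be of type (d), which is precisely the assertion with $N$ chosen just past the last such vertex.

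First I would verify monotonicity of $\mathfrak{c}$. Every edge of $T(\Omega)$ is a composition of the elementary transformations (ET1)--(ET5) and of the derived transformations; for all of these the associated homomorphism $\pi(v,v')$ is an isomorphism of $G_{\Omega_v}$ (or, in the non-regular case, an isomorphism on the distinguished subgroup $\widetilde G$), so $c(G_{\Omega_v})$ is left unchanged unless the step detects a splitting and passes to a free factor or to a proper vertex group. In that situation Proposition \ref{DP} gives $c(G_v) < c(G)$ for each such vertex group of the abelian decompositions, and $c$ likewise drops for the factors of a free decomposition, so the first coordinate strictly decreases. When $c$ is unchanged, the second coordinate is governed by the observation that the entire transformation (D7) and the cleaning process (D4) do not increase $\tau$, while moving a closed section into the non-active part merely removes bases from the active sections and hence cannot raise $\tau$ either. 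Thus $\mathfrak{c}$ is non-increasing along every edge.

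Next I would check that each case other than (d) forces a strict drop of $\mathfrak{c}$. In the linear case a successful Tietze cleaning splits off a free factor $F(K)$ and simultaneously lowers both $c$ and $\tau$; if the kernel becomes empty the path terminates, contradicting its infiniteness, so along an infinite path the linear case always produces such a drop. In the quadratic and almost-quadratic case the section moved to the non-active part removes at least one pair of bases and strictly lowers $\tau$, Lemma \ref{le:43} guaranteeing that the infinite entire transformation actually exposes such a section. In the general JSJ subcases (a) and (b) a QH-vertex group, respectively a nontrivial free factor, is detected and the process continues with a subgroup of strictly smaller Delzant--Potyagailo complexity, so $c$ drops; in subcase (c) the short-shift abelian splitting of Proposition \ref{PerSt} presents $G_{\Omega_v}$ as an HNN-extension whose base group is the coordinate group of the equation obtained by deleting the stable letter, strictly reducing $\tau$. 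In every one of these cases $\mathfrak{c}(v') < \mathfrak{c}(v)$ in the lexicographic order.

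Combining these two steps, the sequence $\mathfrak{c}(v_1) \geqslant \mathfrak{c}(v_2) \geqslant \cdots$ is non-increasing and strictly decreasing whenever $v_n$ is not of type (d); as $\mathbb{N}\times\mathbb{N}$ admits no infinite strictly descending chain, only finitely many $v_n$ can be of a type other than (d), and taking $N$ one more than the largest such index gives the claim. I expect the main obstacle to lie in the monotonicity bookkeeping of the second step: one must confirm that case (d) itself (QH-shortening, then the entire transformation, then, when possible, (D6)) never raises $\tau$ — QH-shortening is the delicate point, since it alters the solution by an automorphism and could a priori reintroduce bases — and that the trichotomy Linear/Quadratic/General-JSJ together with the subcases (a)--(d) is genuinely exhaustive, so that a vertex not of type (d) really does fall into one of the complexity-decreasing cases. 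Checking that every degenerate sub-possibility (empty kernel, absence of quadratic-coefficient bases, surviving matching pairs) still fits this dichotomy is where the detailed analysis of the construction in Section \ref{se:5.2} must be invoked.
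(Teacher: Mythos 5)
Your overall strategy (track the Delzant--Potyagailo complexity $c$ and the equation complexity $\tau$ simultaneously, show the pair never increases and strictly drops at every non-(d) vertex, then invoke well-ordering) is the same one the paper uses, but your lexicographic order is backwards, and this is exactly where the argument breaks. Putting $c$ in the first coordinate requires that $c$ be non-increasing across \emph{every} edge, in particular across case (c) edges. At a case (c) edge the group $G_{\Omega_v}$ is replaced by the base group of an HNN-extension with abelian associated subgroups (the coordinate group of the equation with the stable letter removed). Proposition \ref{DP} gives no information here: it applies to free products and to abelian decompositions with at least two vertices with non-cyclic vertex groups, not to HNN-extensions -- and the paper explicitly flags that Delzant--Potyagailo's treatment of the HNN case is the gap in their accessibility proof. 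So your claims that ``$c$ likewise drops'' when passing to this base group, and hence that $\mathfrak{c}(v')<\mathfrak{c}(v)$ in case (c), are unjustified; for all you know the first coordinate jumps up there, and a non-increasing sequence you never established cannot be fed into the well-ordering argument. Your own list of ``delicate points'' (case (d) raising $\tau$, exhaustiveness of the trichotomy) misses this, which is the one real obstruction.

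The repair is to swap the coordinates, which is in effect what the paper does. The quantity $\tau$ never increases along \emph{any} edge: the entire transformation (D7) and cleaning (D4) do not raise it, and at free splittings, case (a), and QH-shortening the new generalized equation is obtained from the old one by deleting bases and/or deactivating sections, which can only lower $\tau$. Hence $\tau$ stabilizes after finitely many steps of the infinite path; beyond that point no case (c) (or quadratic-type) vertex can occur, since each of those forces a strict drop of $\tau$. After stabilization, every remaining non-(d) vertex changes the group only by passing to a free factor, a vertex group of a non-trivial abelian splitting, or by removing a matched cyclic free factor, and in each of these Proposition \ref{DP} (or the free-product part of it) gives a strict decrease of $c$, while at (d) vertices the group is unchanged up to isomorphism and $c$ is constant. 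So only finitely many non-(d) vertices occur after stabilization, and only finitely many before it, which yields $N$. Equivalently: your argument goes through verbatim with the pair $(\tau(\Omega_v),\, c(G_{\Omega_v}))$ ordered lexicographically with $\tau$ first, since then case (c) is handled entirely by the first coordinate and $c$ is only ever consulted on edges where Proposition \ref{DP} genuinely applies.
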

\begin{proof} Indeed, the Tietze cleaning either replaces the group by its proper free factor or
decreases the complexity. Every time when the case (a) holds we replace $G$ by some vertex group in
a non-trivial abelian splitting of $G$. This can be done only finitely many times
\cite{Delzant_Potyagailo:2001}. Every time when case (c) takes place, we decrease the complexity
$\tau$.
\end{proof}

\begin{prop}
\label{(d)} \cite{KMS:2011(3)}
The general JSJ case (d) cannot be repeated infinitely many times.
\end{prop}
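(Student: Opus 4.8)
The plan is to argue by contradiction: suppose case (d) is applied infinitely often. By Lemma \ref{3.2} there is a vertex $v_N$ beyond which every generalized equation $\Omega_{v_n}$, $n\geq N$, falls under case (d), so I restrict attention to this infinite tail $v_N\to v_{N+1}\to\cdots$. The first step is to normalize the complexity. Recall that the entire transformation (D7) and the cleaning transformation (D6) never increase the complexity $\tau$, and that the only moves which strictly decrease $\tau$, lower the Delzant--Potyagailo complexity, or produce a proper free/abelian splitting occur in the other subcases of the general JSJ step --- the QH-subgroup case (a), the QH-shortening case (b), the short-shift abelian splitting (c), and the Tietze-cleaning free-factor split. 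Since none of these occurs on the tail, $\tau(\Omega_{v_n})$ is non-increasing and integer-valued, hence eventually constant; discarding an initial segment I may assume $\tau$ is constant along the whole tail. Constancy of $\tau$, together with the fact that no base is ever removed by matching or free-factor extraction on the tail, pins down the number $n_A$ of active bases and keeps the combinatorial type of $\Omega_{v_n}$ among finitely many possibilities of bounded complexity.

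Next I introduce the well-founded measure that drives termination. By Theorem \ref{co:main1} the subgroup of $\Lambda$ generated by the lengths of our fixed solution is finitely generated, so I may and do take $\Lambda=\mathbb{Z}^n$ with the right lexicographic order and let $\lambda=\pi_n\circ|\cdot|$ be the projection onto the top Archimedean quotient, exactly as in the $\mathbb{Z}^n$-machine. The items which are ``comparable with the length of the active part'' are precisely those of top height, and their total $\lambda$-length $\Lambda_A(\Omega_{v_n})=\sum_{h\ \mathrm{active,\ top\ height}}\lambda(h)$ is a non-negative integer. Because every step on the tail only deletes sections of the interval --- after QH-shortening has bounded the quadratic part by $f_1(\Omega)$ times the rigid part, so that it cannot reintroduce top-height length --- the quantity $\Lambda_A$ is non-increasing. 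The crux is the dichotomy coming from the defining condition of case (d): at each step the carrier base $\mu$ either fails to overlap its dual $\overline\mu$, or overlaps it with a \emph{long shift}, i.e.\ $\mu$ is shorter than twice $|\alpha(\overline\mu)-\alpha(\mu)|$. In either situation the front section $[1,i+1]$ deleted by (D7) has $\lambda$-length a definite positive fraction of the carrier, so $\Lambda_A$ strictly drops. A strictly decreasing sequence of non-negative integers is finite, whence the lexicographic pair $(\tau,\Lambda_A)\in\mathbb{N}\times\mathbb{N}$ strictly decreases at each step of the tail, contradicting that the tail is infinite.

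The delicate point --- and where the periodic-structure machinery behind Proposition \ref{PerSt} must be invoked --- is precisely the verification that the long shift forces a strict decrease rather than a stationary, periodic regime. If, on the contrary, the carrier repeatedly overlapped its dual with a \emph{short} shift so that the deleted front stayed negligible in $\lambda$-length, the iteration would build a periodic structure in the sense of Razborov, and by Proposition \ref{PerSt} the group $G_{\Omega_{v_n}}$ would split over an abelian subgroup (as a graph of groups with a free abelian vertex, or as an HNN-extension), i.e.\ we would be in case (c); but case (c) strictly decreases $\tau$, contradicting constancy of $\tau$ on the tail. Thus the main obstacle is establishing this quantitative trichotomy uniformly along the tail --- short shift $\Rightarrow$ periodic structure $\Rightarrow$ case (c), versus long shift $\Rightarrow$ strict drop of $\Lambda_A$ --- using the bound supplied by QH-shortening to control the quadratic contribution and the well-foundedness of $(\tau,\Lambda_A)$ to conclude finiteness. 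Once this trichotomy is in place the proposition follows, since every alternative either reduces $\tau$ (impossible on the tail) or reduces $\Lambda_A$ (possible only finitely often).
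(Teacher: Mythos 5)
Your reduction to $\Lambda=\Z^n$ is circular, and it is also the step on which everything else rests. You invoke Theorem \ref{co:main1} (finite generation of the subgroup $\Lambda_0$ generated by the lengths), but that theorem is extracted from the proof of Theorem \ref{th:main3}, which in turn depends on the termination of the elimination process, i.e.\ on Lemma \ref{3.2} together with the very proposition you are trying to prove; the survey states the present proposition without proof precisely because the (non-circular) argument is carried out in \cite{KMS:2011(3)}. Moreover, even granting finite generation, you cannot pass to $\Z^n$ with the lexicographic order: by (\ref{eq:order-convex}) a finitely generated ordered abelian group is a lexicographic sum of \emph{Archimedean} groups, and these factors may be dense subgroups of $\R$ (for instance $\Z+\Z\sqrt{2}$); discreteness, which Corollary \ref{co:discrete} would require, is simply not available. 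Consequently your measure $\Lambda_A$ takes values in the non-negative reals, not the non-negative integers, and a strictly decreasing sequence of non-negative reals can be infinite. This is not a technicality: it is exactly the difficulty that separates the $\Lambda$-case (already the $\R$-case) from the classical Makanin--Razborov process over $\Z$, where integer lengths make such counting arguments work.

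The decrease mechanism itself is also broken. In (D7) the deleted section is $[1,i+1]$, where $i+1$ is the first boundary covered by at least two bases after the transfers. The long-shift hypothesis of case (d) constrains the carrier $\mu$ and its dual $\overline\mu$, but says nothing about where this first boundary lies: any non-leading base beginning at boundary $2$, or a first item $h_1$ of lower height, forces the deleted section to be $[1,2]$, whose top-height $\lambda$-length can be arbitrarily small or zero. So $\Lambda_A$ need not drop by ``a definite positive fraction of the carrier''---it need not drop at all at a given step---and even at the favorable steps the carrier shrinks along the process, so the decrements admit no uniform positive lower bound; an entire transformation with decaying lengths can indeed run forever, as Lemma \ref{le:43} shows happens in quadratic sections. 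Your trichotomy ``short shift $\Rightarrow$ periodic structure $\Rightarrow$ case (c), long shift $\Rightarrow$ strict drop'' is the right first instinct---distinguishing (c) from (d) is exactly the point of Proposition \ref{PerSt}---but its second half is unproved and, as stated, false; the actual proof in \cite{KMS:2011(3)} does not run on a na\"{\i}ve length count, it works level-by-level with heights, periodic structures and the induced action on the quotient tree, which is the machinery your argument would need to import to close the gap.
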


\section{Structure theorems for $\Lambda$-free groups}
\label{sec:struct_th_lambda}

\subsection{Finitely generated $\mathbb{R}$-free groups (Rips' Theorem)}
\label{subs:rips_th}

R. Lyndon in \cite{Lyndon_Schupp:2001} conjectured that any group acting freely on an $\mathbb{R}$-tree
can be embedded into a free product of finitely many copies of $\mathbb{R}$. Counterexamples were
initially given in \cite{Alperin_Moss:1985} and \cite{Promislow:1985}.

Later, J. Morgan and P. Shalen in \cite{Morgan_Shalen:1991} showed that the fundamental groups of
closed surfaces (except non-orientable of genus $1,2$ and $3$) are $\mathbb{R}$-free. Since such
groups are not free products of subgroups of $\mathbb{R}$ this gives a wide class of counterexamples
to Lyndon's Conjecture.

In 1991 I. Rips came with an idea of a proof of the Morgan and Shalen conjecture about finitely generated $\mathbb{R}$-free groups. This result can be formulated
as follows.

\begin{theorem} \cite{GLP:1994},\cite{Bestvina_Feighn:1995} (Rips' Theorem)
\label{th:Rips}
Let $G$ be a finitely generated group acting freely and without inversions on an $\mathbb{R}$-tree.
Then $G$ can be written as a free product $G = G_1 \ast \cdots \ast G_n$ for some integer $n
\geqslant 1$, where each $G_i$ is either a finitely generated free abelian group, or the
fundamental group of a closed surface.
\end{theorem}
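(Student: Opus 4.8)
The plan is to run the Rips--Bestvina--Feighn machine on a band complex resolving the action and then to show that freeness forbids the exotic pieces of its output. First I would reduce to the freely indecomposable case. Replacing the $\mathbb{R}$-tree by its unique minimal $G$-invariant subtree, which still carries a free action, I may assume the action is minimal. If $G \simeq A \ast B$ nontrivially, then $A$ and $B$ act freely on their minimal invariant subtrees, so it suffices to prove the statement for freely indecomposable groups: a finitely generated group has bounded free-product length by Grushko's theorem, so finitely many such splittings exhaust $G$ and produce the desired free factors $G_1, \dots, G_n$. Note that a finitely generated free group is a free product of copies of $\mathbb{Z}$, each free abelian of rank one, so free factors of this type are already accounted for. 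From now on I assume $G$ is freely indecomposable with nontrivial minimal action.

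Next I would build a finite band complex $K$ carrying a transversely measured lamination $L$ that resolves the action, together with an equivariant map $\phi \colon \widetilde{K} \to \Gamma$ collapsing leaves to points, exactly as in the Morgan--Shalen construction underlying the machine; since $G$ is finitely generated one obtains such a $K$ with $\pi_1(K) \simeq G$. Because the action is free, all arc stabilizers are trivial, so the action is stable in the strongest sense and the machine applies. Running the Rips--Bestvina--Feighn machine converts $K$ into a normal form $K'$ (still with $\pi_1(K') \simeq G$) whose lamination splits into a disjoint union of sub-laminations of simplicial, surface, toral, and thin type, each inducing a sub-complex; these assemble $G$ as the fundamental group of a graph of groups whose vertex groups are read off from the components, in the spirit of Theorem \ref{th:Best_Feighn}.

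The hard part will be eliminating the thin (Levitt) components, which is the technical core of \cite{GLP:1994} and \cite{Bestvina_Feighn:1995}. A thin component carries an exotic system of isometries whose associated subgroup necessarily acts with a nontrivial point or arc stabilizer, equivalently, it fails to act freely, contradicting the hypothesis that every nontrivial element of $G$ is hyperbolic. I would therefore argue that a free action cannot resolve a thin piece, so that no thin component survives in $K'$. This is the step I expect to consume the bulk of the work and where the delicate combinatorics of the machine's ``moves'' is unavoidable.

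Granting the absence of thin components, only simplicial, surface, and toral pieces remain. Surface components produce closed-surface vertex groups, consistently $\mathbb{R}$-free by Theorem \ref{th:Morgan_Shalen}; toral components produce finitely generated free abelian vertex groups, which are abelian of finite rank in accordance with Harrison's theorem \cite{Harrison:1972}. Finally, freeness forces every edge stabilizer in the simplicial part to be trivial, so each simplicial edge would yield a free-product splitting of $G$; since $G$ is freely indecomposable there can be no such edge, and the graph of groups collapses to a single vertex. Hence a freely indecomposable finitely generated $\mathbb{R}$-free group is either a closed-surface group or a finitely generated free abelian group, and combined with the reduction of the first paragraph this yields the stated free-product decomposition $G = G_1 \ast \cdots \ast G_n$.
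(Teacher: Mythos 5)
Your overall route is not the one this paper takes: here Rips' Theorem is deduced from the elimination process of Section \ref{sec:proc} applied to a generalized equation built from an embedding of $G$ into $CDR(\mathbb{Z}\oplus\mathbb{R},X)$, with the finitely generated case obtained at the very end from the finitely presented one by a chain-of-quotients argument. You instead sketch the classical Gaboriau--Levitt--Paulin / Bestvina--Feighn proof. That would be a legitimate alternative, but your version of its technical core is wrong. You claim that a thin (exotic, Levitt-type) component ``necessarily acts with a nontrivial point or arc stabilizer'' and hence cannot occur when the action is free. This is false. The structural fact proved in \cite{Bestvina_Feighn:1995} is that a thin component causes the fundamental group to split over an arc stabilizer \emph{of that component}; when the action is free this stabilizer is trivial, so a thin piece produces a free-product splitting rather than a contradiction. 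Indeed, Levitt exhibited exotic (thin) free actions of free groups on $\mathbb{R}$-trees: the leaves there are simply connected, so point stabilizers vanish, arc stabilizers are trivial, and nothing contradicts freeness --- nor Rips' Theorem, since a free group is a free product of copies of $\mathbb{Z}$. The actual argument must let thin components contribute free factors and then terminate the resulting induction by an accessibility/complexity count, which is exactly what \cite{GLP:1994} and \cite{Bestvina_Feighn:1995} do; your proposed shortcut deletes precisely the part of the proof you yourself identify as the hard one.

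There is a second gap at the start of your machine argument: a finite band complex $K$ with $\pi_1(K)\simeq G$ resolving the action exists when $G$ is finitely \emph{presented}, not merely finitely generated (this is how the construction is set up in Section \ref{sec:rbf}, and Theorem \ref{th:Best_Feighn} is stated for finitely presented groups). For a finitely generated $G$ one only obtains a finite complex whose fundamental group surjects onto $G$, and the machine then decomposes the wrong group. Closing this gap requires either the exhaustion-by-finite-subtrees argument of \cite{GLP:1994}, or Guirardel's finitely generated version (Theorem \ref{th:Guirardel_0}, whose hypotheses hold here since a free action is stable with trivial arc stabilizers), or the device used in this paper: prove the statement for finitely presented groups first, then pass to a finitely generated group by adding its relators one at a time and using that a strictly descending chain of quotients, each a free product of free abelian and closed surface groups, must terminate.
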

\begin{proof}
To prove the theorem we are going to use the techniques of Section \ref{sec:proc}. In this case
$\Lambda = \mathbb R$. Suppose $G$ is a finitely presented group with free Lyndon length function
$L$ in $\mathbb R$. By Theorem \ref{th:main4}, $G$ can be embedded into a finitely presented group
with a free regular length function in $\mathbb R$, so we assume from the beginning that $G$ has a
free regular length function in $\mathbb R$. By Corollary \ref{chis-cor} there exists an embedding
$\psi : G \rightarrow CDR(\mathbb Z \oplus \mathbb R, X)$ such that $|\psi(g)| = (0, L(g))$ for
any $g \in G$.

We construct the generalized  equation $\Omega$ for $G$ and apply the elimination process $\Omega$.
Linear case always splits off free factors of $G$. Quadratic case, almost quadratic case, general
JSJ case \ref{gen_jsj_case} (a) will produce closed surface groups factors. Indeed, in these cases,
by Lemma \ref{le:43}, the height of some quadratic bases  is higher that the height of quadratic
coefficient bases. Since all the bases have length $(0,r), r \in \mathbb R$, there are no quadratic
coefficient bases in these cases. General JSJ case (c) for $\Lambda$ produces abelian vertex groups
corresponding to periodic structures and HNN-extensions with stable letter infinitely longer than
the generators of associated abelian subgroups. Since $\Lambda = \mathbb R$, we do not have such
HNN-extensions. If the edge group were non-trivial, then applying automorphisms of $G$ we could
shorten generators of the abelian vertex group. Namely, there exists a number $N$ depending only on
$\Omega$ such that the carrier base $\mu$ of the current equation $\Omega_v$ intersects with its
double and in no longer than $N |\alpha(\overline\mu) - \alpha(\mu)|$. This situation is similar
to the general JSJ case (d), because the length of $\mu$ is bounded in terms of  $|\alpha(\overline
\mu) - \alpha(\mu)|$. One can similarly prove that it cannot be repeated infinitely many times.
Therefore, the edge group of the abelian vertex group is trivial.

We have shown that $G$ is a free product of free abelian groups and closed surface groups (notice
that a free group is also a free product of free abelian (cyclic) groups). Therefore any subgroup
of $G$ has the same structure. Since there is no proper infinite chain of quotients of $G$ that are
also free products of free abelian groups and closed surface groups, we can prove the theorem for a
finitely generated group $\bar G$ by adding relations of $G$ one-by-one and considering the chain of
finitely presented quotients.
\end{proof}

Rips' Theorem does not hold for infinitely generated groups. Counterexamples were given in
\cite{Dunwoody:1997} and \cite{Zastrow:1998}. In particular, Dunwoody showed that both groups
$$G_1 = \langle a_1, b_1, a_2, b_2, \ldots \mid b_1 = [a_2, b_2],\ b_2 = [a_3, b_3], \ldots \rangle$$
and
$$G_2 = \langle a_1, b_1, a_2, b_2, \ldots \mid b_1 = a_2^2 b_2^2,\ b_2 = a_3^2 b_3^2, \ldots
\rangle$$
are $\mathbb{R}$-free but cannot be decomposed  as free products of surface groups and subgroups of
$\mathbb{R}$.

Recently, Berestovskii and Plaut gave a new method to construct $\R$-free groups \cite{Berestovskii_Plaut:2009}. In particular, they provide new examples of $\R$-free groups that are not free products of free abelian and surface groups. In fact, some of these groups are locally free but not free (so, obviously, not subgroups of free products of free abelian and surface groups).

\subsection{Finitely generated $\mathbb{R}^n$-free groups}
\label{se:R^n-free}

In 2004 Guirardel proved the following result that reveals the structure of finitely generated
$\mathbb{R}^n$-free groups, which is reminiscent of the Bass' structural theorem for $\Z^n$-free
groups. This is not by chance, since every $\Z^n$-free group is also $\R^n$-free, and ordered
abelian groups $\Z^n$ and $\R^n$ have a similar convex subgroup structure.  However, it is worth to
point out that the original Bass argument for $\Lambda = \Z \oplus \Lambda_0$ does not work in the
case of $\Lambda = \R \oplus \Lambda_0$.

\begin{theorem} \cite{Guirardel:2004}
\label{th:Guirardel}
Let $G$ be a finitely generated, freely indecomposable $\mathbb{R}^n$-free group. Then $G$ can be
represented as the fundamental group of a finite graph of groups, where edge groups are cyclic and
each vertex group is a finitely generated $\mathbb{R}^{n-1}$-free.
\end{theorem}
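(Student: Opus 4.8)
The plan is to reduce the $\mathbb{R}^n$-free case to the well-developed theory of group actions on $\mathbb{R}$-trees by collapsing the top layer of the $\Lambda$-tree. By the equivalence of the three viewpoints established in Section \ref{sec:equiv}, $G$ acts freely, and we may take the action minimal, on an $\mathbb{R}^n$-tree $(X,d)$; write $\Lambda = \mathbb{R}^n$ with its right lexicographic order and let $\Lambda_0 \cong \mathbb{R}^{n-1}$ be the largest proper convex subgroup, so that $\Lambda/\Lambda_0 \cong \mathbb{R}$ is archimedean. Exactly as in the $\mathbb{Z}^n$ construction of Subsection \ref{subs:completions_Z^n}, I would declare $p \sim q$ whenever $d(p,q)\in\Lambda_0$; each class is an $\mathbb{R}^{n-1}$-subtree of $X$, and the projection $\Lambda\to\Lambda/\Lambda_0$ turns the quotient $Y = X/\!\sim$ into an $\mathbb{R}$-tree on which $G$ acts by isometries (the analogue of Lemma \ref{le:psi}, with $\mathbb{R}$ replacing $\mathbb{Z}$). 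The action on $Y$ is no longer free, and the whole point is to read a graph-of-groups decomposition of $G$ off of it.

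First I would compute the relevant stabilizers. A point of $Y$ is an $\mathbb{R}^{n-1}$-subtree $T\subseteq X$, and $\mathrm{Stab}_G(T)$ acts on $T$ by the restriction of the free $G$-action; hence each point stabilizer is $\mathbb{R}^{n-1}$-free, and finitely generated because the quotient graph $Y/G$ is finite (again in the spirit of Lemma \ref{le:psi}). For the edge groups I would analyze arc stabilizers of $Y$: an element fixing a nondegenerate arc of $Y$ translates the corresponding band of $\mathbb{R}^{n-1}$-subtrees by an amount lying in $\Lambda_0$, so it has height at most $n-1$; using commutative transitivity and the CSA property of $\Lambda$-free groups together with Proposition \ref{pr:1}, all such elements sharing an arc lie in a single abelian (and, after passing to the appropriate convex level, cyclic) subgroup. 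This is what will ultimately force the edge groups to be cyclic.

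Next I would feed the action $G\curvearrowright Y$ into the structure theory for finitely generated groups acting on $\mathbb{R}$-trees. Since finitely generated $\mathbb{R}^n$-free groups are finitely presented, and since the arc stabilizers are controlled as above, I would verify the ascending chain condition and the hypotheses on unstable arcs and then invoke Theorem \ref{th:Guirardel_0} (or, in the stable case, Theorem \ref{th:Best_Feighn}). Free indecomposability guarantees that the resulting splitting is over arc stabilizers rather than a free product, and that $G$ is the fundamental group of a finite graph of groups. The vertex groups are either the point stabilizers found above, which are $\mathbb{R}^{n-1}$-free, or the surface-type and axial vertex groups produced by the Rips machine; the latter are $\mathbb{R}$-free by Rips' Theorem \ref{th:Rips_0}, hence $\mathbb{R}^{n-1}$-free since $\mathbb{R}$ embeds in $\mathbb{R}^{n-1}$ for $n\ge 2$. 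The edge groups, being arc-stabilizer-by-cyclic with the arc stabilizers cyclic, are then cyclic.

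The main obstacle will be exactly this edge-group control: the Rips--Bestvina--Feighn output splits $G$ over an extension ``$E$-by-cyclic'', and the real work is to show that in the present situation $E$ is itself cyclic (or trivial), so that the edge groups are genuinely cyclic, and to check the stability needed to apply the machine to the non-free action $G\curvearrowright Y$. A secondary difficulty is the bookkeeping ensuring finiteness of the graph and that the surface and axial pieces assemble into finitely generated $\mathbb{R}^{n-1}$-free vertex groups; I expect this to follow from a descending-complexity argument in the spirit of the elimination process of Section \ref{sec:proc}, but it must be arranged so as not to disturb the $\mathbb{R}^{n-1}$-freeness of the vertex groups.
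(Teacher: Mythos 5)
The paper itself contains no proof of Theorem \ref{th:Guirardel}: it is quoted from \cite{Guirardel:2004}, so your proposal can only be judged on its own merits and against Guirardel's actual argument, whose skeleton (collapse the infinitesimal directions, study the induced action on an $\mathbb{R}$-tree, feed it into the structure theory of such actions) you have reproduced correctly. The genuine gap sits exactly at the step you flag as ``the main obstacle'': control of the edge groups. Your mechanism --- CSA and commutative transitivity give abelian arc stabilizers, which ``after passing to the appropriate convex level'' become cyclic --- does not work: abelian subgroups of an $\mathbb{R}^n$-free group merely embed in $\mathbb{R}^n$ and can perfectly well be free abelian of rank greater than one (Example \ref{exam:ab_act} shows $\mathbb{Z}\times\mathbb{Z}$ is already $\mathbb{R}$-free), so ``abelian'' is strictly weaker than ``cyclic'' and nothing in your argument promotes one to the other. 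What actually closes the gap is a much stronger fact: the stabilizer of any nondegenerate arc of $Y$ is \emph{trivial}. Indeed, if $1 \neq g$ fixes a point $\bar{x} \in Y$, i.e.\ stabilizes the corresponding infinitesimal fiber $T_x \subseteq X$, then $g$ is hyperbolic on $X$ (the action there is free), and since $T_x$ is a nonempty $g$-invariant subtree, its axis satisfies $A_g \subseteq T_x$ (Corollary \ref{co:axis_2} together with convexity of the fiber); hence $g$ cannot also stabilize a disjoint fiber $T_y$, so no nontrivial element fixes two distinct points of $Y$. With trivial arc stabilizers the ascending chain condition and the unstable-arc hypotheses of Theorem \ref{th:Guirardel_0} hold vacuously, and the splitting that theorem produces is over a (trivial)-by-cyclic, i.e.\ cyclic, subgroup --- this is precisely where the cyclic edge groups come from, and it is missing from your proposal.

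A second flaw is circularity: your premise that finitely generated $\mathbb{R}^n$-free groups are finitely presented is, in this paper, Corollary \ref{co:Guirardel_1}, which is \emph{deduced from} Theorem \ref{th:Guirardel} by induction on $n$. Consequently the Bestvina--Feighn route (Theorem \ref{th:Best_Feighn}), which requires finite presentation, is not available; one must use a structure theorem valid for merely finitely generated groups, namely Theorem \ref{th:Guirardel_0}, and the need for such a theorem is the real content of Guirardel's contribution. Finally, a single application of Theorem \ref{th:Guirardel_0} yields one splitting over a cyclic group, not yet the full finite graph of groups in which every vertex group is elliptic (hence a point stabilizer and so $\mathbb{R}^{n-1}$-free); an accessibility argument is needed to iterate, and your appeal to a ``descending-complexity argument'' would have to be made precise --- but this is secondary to the two problems above.
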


In fact, there is a more detailed version  of this result,  Theorem 7.2 in \cite{Guirardel:2004},
which is rather technical, but gives more for applications. Observe also that neither Theorem
\ref{th:Guirardel} nor the more detailed version of it,  does not "characterize" finitely generated
$\R^n$-free groups, i.e. the converse of the theorem does not hold. Nevertheless, the result is
very powerful and gives several important corollaries.

\begin{cor} \cite{Guirardel:2004}
\label{co:Guirardel_1}
Every finitely generated $\mathbb{R}^n$-free group is finitely presented.
\end{cor}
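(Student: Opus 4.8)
The plan is to argue by induction on $n$, using Theorem~\ref{th:Guirardel} to descend from $\mathbb{R}^n$ to $\mathbb{R}^{n-1}$ and Rips' Theorem (Theorem~\ref{th:Rips}) as the base case. Let $G$ be a finitely generated $\mathbb{R}^n$-free group. First I would reduce to the freely indecomposable case: by Grushko's theorem $G$ splits as a free product $G = H_1 \ast \cdots \ast H_k \ast F$ of finitely many finitely generated freely indecomposable factors $H_i$ and a finitely generated free group $F$. Each $H_i$, being a subgroup of $G$, is again $\mathbb{R}^n$-free, since the class of $\Lambda$-free groups is closed under taking subgroups (see Section~\ref{subs:lambda-free}), and $F$ is finitely presented. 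A free product of finitely many finitely presented groups is finitely presented, so it suffices to prove that each freely indecomposable factor $H_i$ is finitely presented.

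For the base case $n = 1$, a finitely generated $\mathbb{R}$-free group is, by Rips' Theorem, a free product of finitely many finitely generated free abelian groups and closed surface groups. Each such factor is finitely presented, hence so is the free product. (One may equally start the induction at $n=0$, where an $\mathbb{R}^0$-free group is trivial.)

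For the inductive step, assume every finitely generated $\mathbb{R}^{n-1}$-free group is finitely presented, and let $G$ be finitely generated, freely indecomposable, and $\mathbb{R}^n$-free. By Theorem~\ref{th:Guirardel}, $G$ is the fundamental group of a finite graph of groups $(\mathcal{G}, Y)$ in which every edge group is cyclic and every vertex group is a finitely generated $\mathbb{R}^{n-1}$-free group. By the inductive hypothesis each vertex group is finitely presented, and each cyclic edge group is finitely generated. I would then invoke the standard fact that the fundamental group of a finite graph of groups with finitely presented vertex groups and finitely generated edge groups is finitely presented: using the presentation recalled in Section~\ref{subs:graphs}, one takes finite presentations of the finitely many vertex groups, one stable letter $t_e$ per edge, the finitely many relations $t_e i_e(g) t_e^{-1} = i_{\bar{e}}(g)$ with $g$ ranging over a finite generating set of each edge group, together with $t_e t_{\bar{e}} = 1$ and $t_e = 1$ for $e$ in a maximal subtree. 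This yields a finite set of generators and relations, so $G$ is finitely presented, completing the induction.

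The point to emphasize is that essentially all of the real content is already packaged into Theorem~\ref{th:Guirardel}: once one knows that the vertex groups of the cyclic splitting are themselves finitely generated and $\mathbb{R}^{n-1}$-free, the corollary is a routine induction, and the only genuine verifications left are the graph-of-groups finite-presentation lemma and the reduction to the freely indecomposable case. Neither presents a real obstacle; the one subtlety worth checking is that \emph{finite generation} of the edge groups already suffices to keep the relation set finite, so that no hidden infinite family of conjugation relations arises.
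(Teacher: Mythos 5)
Your proof is correct and takes essentially the same route as the paper, which disposes of this corollary in a single sentence: induction on $n$ using Theorem~\ref{th:Guirardel} together with ``elementary properties of free constructions.'' The details you supplied --- the Grushko reduction to the freely indecomposable case (needed because Theorem~\ref{th:Guirardel} is stated only for freely indecomposable groups), Rips' Theorem as the base case, and the finite presentability of fundamental groups of finite graphs of groups with finitely presented vertex groups and finitely generated edge groups --- are exactly the elementary facts the paper leaves implicit.
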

This comes from Theorem \ref{th:Guirardel} and elementary properties of free constructions by
induction on $n$.

Theorem \ref{th:Guirardel} and the Combination Theorem for
relatively hyperbolic groups proved by F. Dahmani in  \cite{Dahmani:2003} imply the following.

\begin{cor}
\label{co:Guirardel_2}
Every finitely generated $\mathbb{R}^n$-free group is hyperbolic relative to its non-cyclic
abelian subgroups.
\end{cor}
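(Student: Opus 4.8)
The plan is to argue by induction on $n$, using Guirardel's structural result (Theorem \ref{th:Guirardel}) to peel off one convex subgroup at a time and Dahmani's combination theorem \cite{Dahmani:2003} to reassemble the pieces while keeping control of relative hyperbolicity. Throughout, the peripheral structure will be the family of maximal non-cyclic abelian subgroups; recall that in a $\Lambda$-free group every abelian subgroup is free abelian of finite rank, since it embeds in $\Lambda$, and that maximal abelian subgroups are malnormal by the CSA property. These two facts are what make the non-cyclic abelian subgroups a legitimate almost malnormal peripheral family.

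For the base case $n = 1$, a finitely generated $\mathbb{R}$-free group is, by Rips' Theorem (Theorem \ref{th:Rips}), a free product $G = G_1 \ast \cdots \ast G_k$ of finitely generated free abelian groups and closed surface groups. Surface groups and free groups are word-hyperbolic, a free abelian group of rank at least two is its own unique non-cyclic abelian subgroup, and relative hyperbolicity is preserved under finite free products when the peripheral family is taken to be the union of the peripheral families of the factors. Hence $G$ is hyperbolic relative to its non-cyclic abelian subgroups, which are exactly the conjugates of the abelian free factors of rank $\geq 2$.

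For the inductive step, assume the statement for finitely generated $\mathbb{R}^{n-1}$-free groups. Given a finitely generated $\mathbb{R}^n$-free group $G$, first pass to its Grushko decomposition $G = H_1 \ast \cdots \ast H_m \ast F$ into freely indecomposable non-free factors and a free group; since relative hyperbolicity is stable under free products, it suffices to treat a single freely indecomposable factor $H_j$, which is again finitely generated and $\mathbb{R}^n$-free. Apply Theorem \ref{th:Guirardel} to write $H_j$ as the fundamental group of a finite graph of groups with cyclic edge groups and finitely generated $\mathbb{R}^{n-1}$-free vertex groups. By the inductive hypothesis each vertex group is hyperbolic relative to its non-cyclic abelian subgroups. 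Because the edge groups are cyclic and $H_j$ is CSA, the maximal non-cyclic abelian subgroups of $H_j$ are conjugates of maximal non-cyclic abelian subgroups of the vertex groups, so the peripheral structures are compatible along the graph. Feeding this data into Dahmani's combination theorem \cite{Dahmani:2003} yields that $H_j$, and hence $G$, is hyperbolic relative to its non-cyclic abelian subgroups.

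The hard part will be checking, at each amalgamation or HNN edge, that the hypotheses of the combination theorem genuinely hold for the cyclic edge groups. Concretely one must verify that each cyclic edge group is relatively quasiconvex in the adjacent vertex groups --- that is, either elliptic and conjugate into a peripheral non-cyclic abelian subgroup, or loxodromic and undistorted --- and that the peripheral family is almost malnormal. This is where the CSA property and commutative transitivity of $\Lambda$-free groups are essential: they guarantee that no two distinct conjugates of peripheral subgroups share a nontrivial element and that gluing along a cyclic group cannot fuse two non-cyclic abelian subgroups into a larger one, so that the induced peripheral structure on $H_j$ is precisely the family of conjugates of the vertex peripheral subgroups. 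Once malnormality of the peripheral structure and quasiconvexity of the cyclic edge groups are established, Dahmani's theorem applies and the induction closes.
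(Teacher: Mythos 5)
Your route --- induction on $n$, with Theorem \ref{th:Guirardel} providing the splitting and Dahmani's combination theorem \cite{Dahmani:2003} reassembling it --- is exactly the paper's (the paper offers nothing beyond this citation), but your execution of the inductive step has a genuine flaw: the claim that CSA forces ``the induced peripheral structure on $H_j$'' to be ``precisely the family of conjugates of the vertex peripheral subgroups.'' That is false for graphs of groups with cyclic edge groups. Whenever a stable letter $t$ of an HNN edge commutes with the cyclic edge group $C=\langle u\rangle$ (a centralizer-type edge), the subgroup $\langle u,t\rangle\cong\Z^2$ is a maximal non-cyclic abelian subgroup of $H_j$ that is not conjugate into any vertex group; and if instead one writes the same group as an amalgam $G_v\ast_C(C\times\Z)$ so as to make this $\Z^2$ elliptic, then the edge group $C$ becomes an infinite, infinite-index subgroup of the peripheral $\Z^2$, hence not fully quasi-convex. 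Such edges are unavoidable for $\R^n$-free groups: $G=\langle a,b,t\mid [[a,b],t]=1\rangle$ is $\Z^2$-free (an extension of a centralizer of $F(a,b)$, cf.\ Example \ref{ex:centr_ext_inf_words}), hence $\R^2$-free, and it is freely indecomposable because $[a,b]$ is not conjugate into any proper free factor of $F(a,b)$ while $\Z^2$ has no free splittings; a decomposition as in Theorem \ref{th:Guirardel} is the HNN extension with free vertex group $F(a,b)$ and edge group $\langle [a,b]\rangle$, so your prescription yields the empty peripheral family, and your induction would conclude that $G$ is word-hyperbolic --- impossible, since $G\supset\Z^2$.

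The same point defeats your verification plan for Dahmani's hypotheses: at a centralizer-type edge the Bass--Serre action is not acylindrical ($u$ fixes the whole $\langle t\rangle$-axis of edges, since $\mathrm{Stab}(t^n e)=t^n\langle u\rangle t^{-n}=\langle u\rangle$ for all $n$), so the ``fully quasi-convex edge groups in an acylindrical graph'' case of the combination theorem never applies there, no matter how the decomposition is presented. The repair is to enlarge the peripheral family: a Bass--Serre argument shows that any non-elliptic abelian subgroup of $H_j$ preserves an axis and is an extension of a subgroup of a (cyclic) edge group by $\Z$, hence is of the form $K\times\Z$ with $K$ cyclic; so the correct peripheral structure consists of the conjugates of the vertex peripherals \emph{together with} these rank-two groups $C\times\Z$ arising from centralizer-type edges. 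With this family one must invoke the part of Dahmani's combination theorem that permits gluing along parabolic subgroups (this is precisely how \cite{Dahmani:2003} handles extensions of centralizers in his treatment of limit groups), after which your induction closes.
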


A lot is known about groups which are hyperbolic relative to its maximal abelian subgroups ({\em toral
relatively hyperbolic groups}), so all of this applies to $\R^n$-free groups. We do not mention any of
these results here, because we discuss their much more general  versions in the next section in the
context of $\Lambda$-free groups for arbitrary $\Lambda$.

\subsection{Finitely presented $\Lambda$-free groups}
\label{subs:lambda-free-gps}

In this section we discuss recent results obtained on finitely presented $\Lambda$-free groups for an
arbitrary abelian ordered group $\Lambda$. Notice that finitely generated $\R^n$-free (or $\Z^n$-free) groups are
finitely presented, so all the results below apply to arbitrary finitely generated $\R^n$-free (or $\Z^n$-free) groups.

The elimination process (the $\Lambda$-Machine)  developed in  Section \ref{sec:proc} allows one to
prove the following
theorems.

\begin{theorem} [The Main Structure Theorem \cite{KMS:2011(3)}]
\label{th:main1}
Any finitely presented group $G$ with a regular free length function in an ordered abelian group
$\Lambda$ can be represented as a union of a finite series of groups
$$G_1 < G_2 < \cdots < G_n = G,$$
where
\begin{enumerate}
\item $G_1$ is a free group,
\item $G_{i+1}$ is obtained from $G_i$ by finitely many HNN-extensions in which associated subgroups
are maximal abelian, finitely generated, and length isomorphic as subgroups of $\Lambda$.
\end{enumerate}
\end{theorem}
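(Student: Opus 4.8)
The plan is to run the elimination process (the $\Lambda$-Machine) of Section~\ref{sec:proc} on the generalized equation $\Omega$ built from the finite presentation of $G$, and to read off the claimed series $G_1 < \cdots < G_n = G$ from the terminating behaviour of that process. Since $G$ has a \emph{regular} free length function, by the construction of Subsection~\ref{subs:constr_ge} we may fix the embedding $\xi : G \hookrightarrow CDR(\Lambda, Z)$ and produce a generalized equation $\Omega$ with $G \simeq G_\Omega$; moreover, in the regular case every edge epimorphism $\pi(v,v')$ along the path $T(\Omega)$ is an \emph{isomorphism}, so no information about $G$ is lost as we descend the tree. The bookkeeping devices are the two complexities attached to each vertex: the generalized-equation complexity $\tau(\Omega_v)$ and the Delzant--Potyagailo complexity $c(G_{\Omega_v})$.

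First I would organize the argument around the three cases of the rewriting process at a vertex $v$ (linear, quadratic/almost-quadratic, general JSJ). In the \textbf{linear case} the Tietze cleaning~(D6) together with Lemma~\ref{7-10} and Lemma~\ref{lin-el}(3) splits off a free factor $F(K)$, so $G_{\Omega_v} \simeq G_{Ker(\Omega_v)} \ast F(K)$; this strictly lowers $c$ (Proposition~\ref{DP}) and corresponds to a free-product decomposition. In the \textbf{quadratic/general JSJ(a)--(c)} cases one extracts, respectively, closed-surface-type QH vertex groups, abelian vertex groups coming from periodic structures (Proposition~\ref{PerSt}), or HNN-extensions over abelian edge groups; each such move either lowers $c$ (passing to a vertex group of an abelian splitting, via Proposition~\ref{DP}) or, in the HNN/periodic-structure situation, strictly lowers $\tau$ because a stable letter is removed. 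The key analytic input is Lemma~\ref{3.2}, which guarantees that after finitely many steps every vertex falls into the general JSJ case~(d), and Proposition~\ref{(d)}, which asserts that case~(d) cannot recur infinitely often. Together these force $T(\Omega)$ to be \emph{finite}, terminating at leaves whose active part is empty.

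The series is then assembled by induction on the well-founded pair $(c, \tau)$ ordered lexicographically. At each descent step the machine exhibits $G_{\Omega_v}$ either as a free product, or as the fundamental group of a graph of groups whose edge groups are maximal abelian and whose ``new'' pieces are HNN-extensions with associated subgroups that are maximal abelian, finitely generated, and \emph{length isomorphic} (the last property is exactly what the added equations to $\Sigma_v$ enforce, guaranteeing that associated cyclic/abelian subgroups are generated by elements of equal length). Since each HNN stable letter is adjoined on top of a group of strictly smaller complexity, stacking these extensions from the innermost (a free group, by Lyndon's Theorem~\ref{th:lyndon} applied to the $\mathbb{Z}$-valued length at the bottom of the convex chain) outward produces precisely $G_1 < G_2 < \cdots < G_n = G$ with the stated properties, $G_1$ free and each $G_{i+1}$ obtained from $G_i$ by finitely many HNN-extensions over maximal abelian, finitely generated, length-isomorphic subgroups.

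\emph{The main obstacle} I expect is termination, i.e. establishing the finiteness of $T(\Omega)$ rigorously: one must verify that the two complexity measures $(c,\tau)$ genuinely decrease at \emph{every} non-(d) step and that the residual general JSJ case~(d) does not loop forever. This is the content of Lemma~\ref{3.2} and Proposition~\ref{(d)}, whose proofs rest on the periodic-structure machinery (originating with Razborov) and on the Delzant--Potyagailo hierarchy bound for finitely presented groups without $2$-torsion; controlling the interaction of these two bounds simultaneously---ensuring that shortening automorphisms in QH-shortening do not inflate $\tau$ and that long-shift steps in case~(d) are exhausted---is the delicate part. A secondary technical point is confirming that the associated subgroups produced at each HNN step are genuinely \emph{maximal} abelian (using the CSA property from the list of $\Lambda$-free group properties) and \emph{length isomorphic}, rather than merely abelian; this is where the equations recorded in $\Sigma_v$ and the regularity of the length function must be invoked carefully.
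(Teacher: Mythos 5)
Your proposal is correct and follows essentially the same route as the paper's own argument: run the elimination process on the generalized equation $\Omega$ with $G \simeq G_\Omega$ (using regularity), obtain at each stage either a free splitting, a QH vertex group, an abelian vertex group from a periodic structure, or an HNN-extension over abelian subgroups, and guarantee termination because the Delzant--Potyagailo complexity $c$ drops in the splitting cases (Proposition~\ref{DP}) while $\tau$ drops in the HNN/periodic-structure case, with the length-isomorphism of associated subgroups enforced by the recorded linear system $\Sigma_{complete}$. The paper's hint is organized identically (it does not spell out the lexicographic $(c,\tau)$ induction, but that is exactly the implicit well-foundedness argument), so there is nothing substantive to add or correct.
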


\begin{theorem} \cite{KMS:2011(3)}
\label{th:main3}
Any finitely presented $\Lambda$-free groups is  $\R^n$-free.
\end{theorem}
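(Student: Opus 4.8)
The plan is to reduce the statement to two facts: that the length function of a finitely presented $\Lambda$-free group only ``sees'' a finitely generated subgroup of $\Lambda$, and that every finitely generated ordered abelian group embeds into $\R^n$ as an ordered group. Granting these, the conclusion is immediate. First I would pass to a regular action: given a finitely presented group $\widetilde G$ with a free length function $l:\widetilde G\to\Lambda$, Theorem~\ref{th:main4} embeds it isometrically into a finitely presented group $G$ carrying a free \emph{regular} length function in the same $\Lambda$. Since the class of $\Lambda$-free groups is closed under taking subgroups (Section~\ref{subs:lambda-free}), it suffices to prove that $G$ is $\R^n$-free; the original $\widetilde G$ then inherits the property as a subgroup.

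Next I would show that $\Lambda_0=\langle l(G)\rangle\leqslant\Lambda$ is finitely generated (this is exactly the content of Theorem~\ref{co:main1}). Here I would invoke the Main Structure Theorem~\ref{th:main1}, which presents $G$ as a union $G_1<\cdots<G_m=G$, where $G_1$ is free of finite rank and each $G_{i+1}$ is obtained from $G_i$ by finitely many HNN-extensions whose associated subgroups are finitely generated, maximal abelian, and length isomorphic. Because the construction at each of the finitely many stages is governed by finite data --- a finite generating set, finitely many stable letters, and finitely generated abelian associated subgroups, each of which embeds in $\Lambda$ --- and because in the pregroup normal forms underlying this decomposition the length is additive over the reduced pieces (the analogue of Theorem~\ref{pr:pregroup}(3)), every value $l(g)$ lies in the subgroup generated by the lengths of the free generators of $G_1$, the generators of the associated abelian subgroups, and the stable letters. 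This is a finite list, so $\Lambda_0$ is finitely generated.

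Finally, I would appeal to the structure of finitely generated ordered abelian groups recalled in Section~\ref{subs:ord_ab}: $\Lambda_0$ has a finite complete chain of convex subgroups $0=A_0<A_1<\cdots<A_n=\Lambda_0$, it is isomorphic as an ordered group to the right-lexicographic sum \eqref{eq:order-convex}, and each archimedean quotient $A_i/A_{i-1}$ embeds into $\R$. Hence $\Lambda_0$ embeds, as an ordered abelian group, into $\R^n$ with the right-lexicographic order. Since $G$ is $\Lambda_0$-free and $\Lambda_0\hookrightarrow\R^n$ as ordered groups, $G$ is $\R^n$-free by closure of the class under ordered embeddings of the value group (Section~\ref{subs:lambda-free}), which finishes the argument.

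The hard part will be the finite generation of $\Lambda_0$. The delicate point is that $l$ is not a homomorphism, so $l(G)$ need not be contained in the subgroup generated by the lengths of a fixed finite generating set; one genuinely needs the decomposition supplied by the elimination process of Section~\ref{sec:proc} to control it. Concretely, I expect the real work to lie in justifying that the $\Lambda$-machine terminates and activates only finitely many convex layers of $\Lambda$ --- this is where Lemma~\ref{3.2}, Proposition~\ref{(d)} (termination of the general JSJ case~(d)), and the complexity bookkeeping of Section~\ref{subs:complexity} enter --- so that the heights of the stable letters and the associated abelian data stay bounded and finitely generated throughout.
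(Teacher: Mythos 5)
The weight-bearing step of your plan --- finite generation of $\Lambda_0=\langle l(G)\rangle$, i.e.\ Theorem \ref{co:main1} --- is exactly the step you do not actually prove, and the route you sketch for it does not work. You propose to deduce it from the \emph{statement} of the Main Structure Theorem \ref{th:main1}, but that statement carries no information about where the length function takes its values: knowing that the associated subgroups are finitely generated, maximal abelian and ``length isomorphic'' does not locate the lengths of the stable letters, and the additivity you invoke (``the analogue of Theorem \ref{pr:pregroup}(3)'') is proof-level information which, even in the $\Z^n$-case where it is actually established, is additivity of the top-height projection $\lambda$ only, not of the full $\Lambda$-valued length. Worse, your induction has no base: nothing in Theorem \ref{th:main1} tells you that $\langle l(G_1)\rangle$ is finitely generated for the free group $G_1$. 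A finite-rank free group with a free (even regular) Lyndon length function in $\Lambda$ is already a nontrivial instance of Theorem \ref{co:main1} --- products of generators cancel along common initial segments whose lengths are not a priori confined to any finitely generated subgroup --- so the argument is circular precisely where the difficulty sits. Your closing paragraph correctly senses that termination of the $\Lambda$-machine (Lemma \ref{3.2}, Proposition \ref{(d)}) is not enough and that one must show only finitely many convex ``layers'' of $\Lambda$ are activated; but that is precisely the content of the paper's level-by-level analysis --- the archimedean quotients $\Lambda_1/\Lambda'$, the induced $\R$-trees $\widehat{T}$, superstability (Lemma \ref{le:62}), Proposition \ref{quad-main0} and especially Lemma \ref{quad}, which rules out elements strictly between the infinitesimal and non-infinitesimal scales --- and your proposal offers no substitute for it.

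The remainder of your plan is correct, and it is genuinely different from (and slicker than) the way the paper finishes. After regularization via Theorem \ref{th:main4}, regularity gives $c(f,g)=l({\rm com}(f,g))\in l(G)$, so (L4) holds for $l:G\to\Lambda_0$; then, granted Theorem \ref{co:main1}, the structure theory of Section \ref{subs:ord_ab} (finite chain of convex subgroups, the ordered decomposition \eqref{eq:order-convex}, archimedean quotients embedding in $\R$) embeds $\Lambda_0$ order-preservingly into $\R^n$, and Section \ref{subs:lambda-free} (closure under subgroups and under order-embeddings of the value group) yields the theorem. The paper never argues this way: it does not embed $\Lambda_0$ into $\R^n$ at all, but instead \emph{redefines} the lengths of the finitely many bases of $\Omega_{fin}$ inside $\R^k$, inducting on the height hierarchy, and then reads off Theorem \ref{co:main1} as a byproduct --- which is why it stresses that Theorem \ref{co:main1} ``comes from the proof of Theorem \ref{th:main3} (not the statement of the theorem itself).'' Your proposal inverts this logical order, and the inversion is only legitimate if step 2 is replaced by essentially the paper's own argument, at which point Theorem \ref{th:main3} has already been proved.
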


In his book \cite{Chiswell:2001} Chiswell (see also \cite{Remeslennikov:1989})  asked the following 
principal question (Question 1, page 250): If $G$ is a finitely generated $\Lambda$-free group, is 
$G$ $\Lambda_0$-free for some finitely generated abelian ordered group $\Lambda_0$? The following 
result answers this question in  the affirmative in the strongest form. It comes from the proof of 
Theorem \ref{th:main3} (not the statement of the theorem itself).

\begin{theorem}
\label{co:main1}
Let $G$ be a finitely presented group with a free Lyndon length function $l : G \to \Lambda$. Then
the subgroup $\Lambda_0$ generated by $l(G)$ in $\Lambda$ is finitely generated.
\end{theorem}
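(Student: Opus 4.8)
The plan is to obtain this as a by-product of the proof of Theorem \ref{th:main3}: reduce to the regular case and then read the value group off the structure produced by Theorem \ref{th:main1}. First I would remove the regularity hypothesis. By Theorem \ref{th:main4} there is an isometric embedding of $G$ into a finitely presented group $\widehat G$ carrying a free \emph{regular} length function $\widehat l:\widehat G\to\Lambda$; isometric means $\widehat l$ restricts to $l$ on $G$, so $l(G)\subseteq \widehat l(\widehat G)$ and hence $\Lambda_0=\langle l(G)\rangle\leqslant \langle \widehat l(\widehat G)\rangle$. Since $\Lambda$ is an ordered abelian group it is torsion-free, so any finitely generated subgroup of $\Lambda$ is free abelian of finite rank and therefore \emph{all} of its subgroups are again finitely generated. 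Consequently it suffices to prove that $\langle \widehat l(\widehat G)\rangle$ is finitely generated, and I may replace $G$ by $\widehat G$ and assume from now on that $l$ itself is regular.

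With $l$ regular, Theorem \ref{th:main1} presents $G$ as a finite union $G_1<G_2<\cdots<G_n=G$, where $G_1$ is a free group of finite rank and each $G_{i+1}$ is built from $G_i$ by finitely many HNN-extensions whose associated subgroups are finitely generated, maximal abelian, and length-isomorphic as subgroups of $\Lambda$. I would then prove by induction on $i$ that $\Lambda_0^{(i)}:=\langle l(G_i)\rangle$ is finitely generated; the case $i=n$ is the theorem. For the base case, $G_1$ is a finitely generated free group with a regular free length function, which I would realize inside $CDR(\Lambda,X)$ (via Theorem \ref{th:lyndon} after passing to $\Lambda'=\mathbb Z\oplus\Lambda$, or directly). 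The point is that the length of a reduced $\Lambda$-word is additive along its syllables, and the longest common initial segment of any two elements is, by regularity, itself the length of an element of $G_1$; running the reduction on a finite basis then shows $\langle l(G_1)\rangle$ is generated by the lengths of that finite basis, hence finitely generated.

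For the inductive step I would use the normal form for the HNN-extensions of Theorem \ref{th:main1}, which is the exact analogue of Lemma \ref{le:p1} and Theorem \ref{th:U(P)}. Writing the stable letters of the step $G_i\rightsquigarrow G_{i+1}$ as $t_1,\dots,t_m$ and the associated finitely generated abelian subgroups as $A_j,B_j$, every $g\in G_{i+1}$ has a reduced form in which the length is additive (cf.\ Theorem \ref{pr:pregroup}(3) and Theorem \ref{th:Greg}). Thus $l(g)$ is a sum of lengths of $G_i$-syllables, which lie in $\Lambda_0^{(i)}$, together with integer multiples of the $|t_j|$ and of the lengths of the finitely many generators of $A_j,B_j$ (these last lengths already lying in $\Lambda_0^{(i)}$, since $A_j\leqslant G_i$). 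Therefore $\Lambda_0^{(i+1)}=\langle \Lambda_0^{(i)},\,|t_1|,\dots,|t_m|\rangle$, which is finitely generated whenever $\Lambda_0^{(i)}$ is. This completes the induction and yields the theorem.

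The main obstacle is the length-additivity control at each HNN step: one must verify that the cancellation occurring when $G_i$-syllables meet the stable letters $t_j^{\pm1}$, and in particular the interaction with the powers $u^{N},v^{M}$ of the generators of the associated subgroups that appear in the normal form, produces no lengths outside the finitely generated subgroup spanned by $\Lambda_0^{(i)}$ and the $|t_j|$. This is precisely where regularity (completeness) is indispensable, as it is what guarantees the pregroup normal form and the additivity of $l$ along reduced forms; the base case is a microcosm of the same difficulty, since an a priori arbitrary free group could carry a length function with an enormous value group, and it is only finite generation together with regularity that confines $\langle l(G_1)\rangle$ to finite rank.
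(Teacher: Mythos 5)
Your overall route is the intended one: the paper itself obtains this theorem by first passing to the regular case via Theorem \ref{th:main4} and then extracting finite generation from the structure produced by the elimination process, and your reduction step (isometric embedding, then the fact that a subgroup of a finitely generated abelian group is finitely generated) is correct. However, the paper is explicit that the result ``comes from the proof of Theorem \ref{th:main3} (not the statement of the theorem itself)'', and that caveat is exactly where your argument has genuine gaps. You run the induction off the \emph{statement} of Theorem \ref{th:main1}, which does not supply the three facts your induction consumes: it does not say that each $G_i$ is finitely generated (nor that $G_1$ has finite rank), it does not say that the restriction of $l$ to $G_i$ is again regular (regularity is not inherited by subgroups in general, cf.\ Example \ref{ex:1b}), and, most importantly, it asserts no length-additivity of normal forms for the HNN-extensions producing $G_{i+1}$ from $G_i$, i.e.\ the key inclusion $l(G_{i+1}) \subseteq \langle\, l(G_i), |t_1|, \ldots, |t_m| \,\rangle$. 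The results you cite for additivity (Theorem \ref{pr:pregroup}(3), Lemma \ref{le:p1}, Theorems \ref{th:U(P)} and \ref{th:Greg}) are proved only for complete $\Z^n$-free groups and $\Zt$-words; their analogues over an arbitrary ordered abelian group $\Lambda$ are precisely what must be dug out of the proofs of Theorems \ref{th:main1} and \ref{th:main3} (the terminal generalized equation $\Omega_{fin}$ with finitely many items, the system $\Sigma_{complete}$, and the cancellation-free normal forms it yields), so quoting the $\Z^n$ statements does not close the step. The same applies to regularity passing down the hierarchy: this is proved in the $\Z^n$ case (end of the proof of Theorem \ref{th:main}) but for general $\Lambda$ lives inside the proof, not the statement.

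The base case is the second genuine gap, and it is not a small one: that a free group of finite rank with a free (even regular) length function in an arbitrary $\Lambda$ has finitely generated value group is itself a special case of the theorem being proved, and your justification --- ``running the reduction on a finite basis'' --- presupposes a Nielsen-type reduction that terminates. Over $\Z^n$ the paper gets termination (Proposition \ref{pr:min}) by inducting on the $\lambda$-length, i.e.\ the top $\Z$-coordinate; over a general ordered abelian group there is no such well-ordered quantity to induct on, which is exactly why the paper abandons Nielsen reduction in favor of the elimination process with its complexity functions ($\tau$ and the Delzant--Potyagailo complexity). To repair the proposal you would either have to re-prove, over general $\Lambda$, the regularity-inheritance, finite generation, and normal-form additivity along the hierarchy (which amounts to re-running the relevant parts of the proof of Theorem \ref{th:main1}), or argue as the paper does: the elimination process terminates in a generalized equation with finitely many items, every element of $G$ admits a normal form along which lengths add, and hence $l(G)$ lies in the subgroup of $\Lambda$ generated by the finitely many item lengths.
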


\begin{theorem} \cite{KMS:2011(3)}
\label{th:main4}
Any finitely presented group $\widetilde G$ with a free length function in an ordered abelian group
$\Lambda$ can be isometrically embedded into a finitely presented group $G$ that has a free regular
length function in $\Lambda$.
\end{theorem}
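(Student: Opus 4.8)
The plan is to realise $\widetilde G$ inside an ambient \emph{regular} group and then carve out of it a finitely presented regular subgroup still containing $\widetilde G$, using the generalized equation attached to the given finite presentation as the combinatorial bookkeeping device. First I would apply \cite{Chiswell_Muller:2010} to embed $\widetilde G$ isometrically into a group $\widehat G$ carrying a free \emph{regular} length function with values in $\Lambda$, and then use Corollary~\ref{chis-cor} together with Theorem~\ref{chis-cor-1} to embed $\widehat G$ into $CDR(\Lambda', X)$, where $\Lambda' = \mathbb Z \oplus \Lambda$ is discretely ordered and the images of elements of $\widetilde G$ have length in the sublattice $\{0\}\oplus\Lambda \cong \Lambda$. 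The only new content beyond \cite{Chiswell_Muller:2010} is \emph{finite presentability}: the group $\widehat G$ may be infinitely generated, and a finitely generated subgroup of a regular group is in general neither regular nor finitely presented, so the whole problem is to adjoin the \emph{right} finitely many elements of $\widehat G$ to $\widetilde G$.

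Next I would run the construction of Subsection~\ref{subs:constr_ge}: from the finite presentation $\langle X \mid R\rangle$ of $\widetilde G$ and the cancelation diagrams of the relators in $CDR(\Lambda', X)$ one reads off a generalized equation $\Omega$ whose items $h_1,\dots,h_\rho$ are the finitely many pieces, i.e. common initial segments, forced by the relations, each realised as an element of $\widehat G$. I would then set
$$G = \langle\, \widetilde G,\ h_1,\dots,h_\rho\,\rangle \leqslant \widehat G,$$
which is finitely generated; the inclusion $\widetilde G \hookrightarrow G$ is isometric, being a restriction of the isometric embedding $\widetilde G \hookrightarrow \widehat G$, and since each generator of $\widetilde G$ is a product of items one in fact has $G = \langle h_1,\dots,h_\rho\rangle$. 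It then remains to prove two things: that $G$ is regular and that $G$ is finitely presented.

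For regularity I would argue geometrically. By Chiswell's construction $G$ acts freely on a $\Lambda$-tree $\Gamma$ with base point $x$ realising the length function, and by the lemmas of Subsection~\ref{subse:regact} this free action is regular as soon as all branch points of the minimal subtree spanned by $G\cdot x$ lie in a single $G$-orbit. The items $h_i$ were chosen precisely so that the translates $h_i\cdot x$ realise the branch points produced by the finitely many relators; the aim is to show that adjoining them places every branch point into the orbit of $x$. Once $G$ is known to be finitely generated and regular, finite presentability should follow by closing the loop with the regular-case computation of Subsection~\ref{subs:constr_ge}: that Tietze-transformation argument uses regularity to express each item as a word in the generators and to deduce all relations of the group from $\Omega$, thereby identifying $G$ with the coordinate group $G_\Omega = \langle h \mid \Omega\rangle$, which is finitely presented by construction.

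The main obstacle is the regularity step, and specifically the danger that adjoining the $h_i$ creates \emph{new} branch points lying outside $G\cdot x$, which would force an unbounded regress of further enlargements. Controlling this is exactly the purpose of the elimination process of Section~\ref{sec:proc}: applying it to $\Omega$ and invoking the termination statements (Lemma~\ref{3.2} and Proposition~\ref{(d)}) shows that the branch-point structure stabilises after finitely many steps, so that a single finite enlargement indeed suffices and no proper collapsing of the type recorded in the (ET5) remark survives in $G$. A secondary, purely bookkeeping difficulty is to keep the resulting length function valued in $\Lambda$ rather than in $\Lambda'$: one checks that all adjoined common initial segments have length in $\{0\}\oplus\Lambda$, which holds because for $g,h\in\widetilde G$ the element $g^{-1}h$ again lies in $\widetilde G$, forcing $c(g,h)\in\{0\}\oplus\Lambda$ throughout the process.
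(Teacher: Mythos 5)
Your setup (the isometric embedding of $\widetilde G$ into a regular group $\widehat G$ via \cite{Chiswell_Muller:2010}, the passage to $CDR(\Lambda',X)$, the generalized equation $\Omega$, and the group $G=\langle \widetilde G, h_1,\dots,h_\rho\rangle\leqslant\widehat G$) coincides with the paper's, but the two steps you defer as ``remains to prove'' are exactly where the route breaks down, and the paper resolves them in the opposite order. First, regularity: nothing in the construction guarantees, and you do not show, that adjoining the finitely many items arising from the relator diagrams puts every branch point of the minimal $G$-tree into the orbit of the base point. Branch points come from triples $Y(x,gx,hx)$ for arbitrary pairs $g,h\in G$, and once the $h_i$ are adjoined such pairs include all products of items, not just the configurations recorded in the cancelation diagrams of the finitely many relators of $\widetilde G$. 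You name this danger but dispose of it by citing Lemma~\ref{3.2} and Proposition~\ref{(d)}; those statements assert termination of the rewriting of generalized equations, and say nothing about branch-point orbits of the fixed subgroup $G\leqslant\widehat G$, so the regress you worry about is not actually blocked by them.

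Second, finite presentability: even granting regularity, the identification $G\cong G_\Omega$ does not follow. The Tietze argument of Subsection~\ref{subs:constr_ge} works when $\Omega$ is built from a presentation of the regular group itself, so that every relation of that group is a consequence of the relators encoded in $\Omega$. Here $\Omega$ encodes only the relators of $\widetilde G$, while $G$ may satisfy additional relations among the items coming from coincidences inside $\widehat G$; this is why the paper states only that $G$ is a \emph{quotient} of $G_\Omega$ (end of Subsection~\ref{subs:constr_ge}), and why (ET5) is flagged as the place where the maps $G_{\Omega_v}\to G_{\Omega_{v'}}$ can be proper epimorphisms whose restriction to $\widetilde G$ is still injective. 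The paper's actual proof runs the elimination process on $\Omega$ and takes for the ambient group the terminal group of that process, as in Remark~\ref{nonreg1}: a finite tower $G_1<G_2<\cdots<G_n=G$ with $G_1$ free and each $G_{i+1}$ obtained from $G_i$ by finitely many HNN-extensions along finitely generated maximal abelian, length-isomorphic subgroups. Finite presentability is read off this structure, and regularity (first of the redefined length function in $\mathbb{R}^n$, then of the length function in $\Lambda$) is proved by induction along the tower. In other words, regularity is a \emph{consequence} of the structure produced by the elimination process, not an input from which finite presentability is derived; your proposal inverts this dependency, and neither half of the inverted argument is supplied.
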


The following result automatically follows from Theorem \ref{th:main1} and Theorem \ref{th:main4} by
simple application of Bass-Serre Theory.

\begin{theorem}
\label{co:main5}
Any finitely presented $\Lambda$-free group $G$ can be obtained from free groups by a finite sequence 
of amalgamated free products and HNN extensions along maximal abelian subgroups, which are free 
abelain groups of finite rank.
\end{theorem}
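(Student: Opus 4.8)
The plan is to derive the statement from the structure theorem for the \emph{regular} case (Theorem \ref{th:main1}) together with the regular completion (Theorem \ref{th:main4}), and then to pass from the completion down to the given group by Bass--Serre theory. Write $\widetilde G$ for the given finitely presented $\Lambda$-free group. First I would invoke Theorem \ref{th:main4} to embed $\widetilde G$ isometrically into a finitely presented group $G$ carrying a free \emph{regular} length function in $\Lambda$. Theorem \ref{th:main1} then presents $G$ as a union of a chain $G_1 < G_2 < \cdots < G_n = G$, in which $G_1$ is free and each $G_{i+1}$ is obtained from $G_i$ by finitely many HNN-extensions whose associated subgroups are maximal abelian, finitely generated, and length isomorphic as subgroups of $\Lambda$. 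Since any subgroup of the ordered group $\Lambda$ is torsion-free (and abelian subgroups of $\Lambda$-free groups embed into $\Lambda$), each such associated subgroup is a finitely generated torsion-free abelian group, hence free abelian of finite rank; this already yields the desired description for $G$ itself, using HNN-extensions only.

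The remaining task is to transfer the conclusion from $G$ to the subgroup $\widetilde G$, and here I would argue by induction on the length $n$ of the chain. When $n=1$ the group $G$ is free, so $\widetilde G \leqslant G$ is free by the Nielsen--Schreier theorem (Remark \ref{co:free}), and there is nothing to prove. For the inductive step, the passage $G_{n-1} \hookrightarrow G_n = G$ exhibits $G$ as the fundamental group of a graph of groups with single vertex group $G_{n-1}$ and finitely many loop edges, the edge groups being the maximal abelian associated subgroups; let $T$ be the corresponding Bass--Serre tree (Example \ref{ex:hnn}, Theorem \ref{pr:structure}). The subgroup $\widetilde G$ acts on $T$, and by the induced-splitting theorem (Theorem \ref{induced_split}) $\widetilde G$ is the fundamental group of a graph of groups whose vertex groups are the intersections $\widetilde G \cap g G_{n-1} g^{-1}$ and whose edge groups are the intersections $\widetilde G \cap g A g^{-1}$ with $A$ maximal abelian in $G_{n-1}$.

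Each edge group of this induced splitting is a subgroup of a conjugate of a free abelian group of finite rank, hence is itself free abelian of finite rank; after suitable collapsing one may assume it is maximal abelian in its adjacent vertex groups, which follows from the CSA (malnormality) property of maximal abelian subgroups of $\Lambda$-free groups. Each vertex group $\widetilde G \cap g G_{n-1} g^{-1}$ inherits a free length function into $\Lambda$ as a subgroup of a conjugate of $G_{n-1}$, so by the inductive hypothesis applied to the chain of length $n-1$ it is itself built from free groups by amalgamated products and HNN-extensions of the required type. Assembling the induced graph of groups from these vertex and edge groups then exhibits $\widetilde G$ in the claimed form; note that when one passes to a subgroup both amalgamated products and HNN-extensions appear, which is precisely why the statement allows both operations rather than HNN-extensions alone.

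The main obstacle I anticipate is the finiteness bookkeeping underlying the Bass--Serre step: one must ensure that the induced graph of groups of $\widetilde G$ is \emph{finite} and that its vertex groups are \emph{finitely generated}, so that the induction is legitimate and terminates. This is where finite presentation of $\widetilde G$ is essential, via accessibility over the (slender, abelian) edge groups, which in turn rests on $\widetilde G$ being $\mathbb{R}^n$-free (Theorem \ref{th:main3}) and on the structural results of Guirardel (Theorem \ref{th:Guirardel}, Corollary \ref{co:Guirardel_1}); together these bound the rank of abelian subgroups and guarantee finiteness of the decomposition. Once finiteness and finite generation are secured, the verification that every associated subgroup is maximal abelian and free abelian of finite rank is routine, and the theorem follows.
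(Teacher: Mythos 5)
Your overall route is exactly the paper's: embed $\widetilde G$ isometrically into a finitely presented group $G$ carrying a regular free length function (Theorem \ref{th:main4}), apply the structure theorem (Theorem \ref{th:main1}) to $G$, observe that the finitely generated torsion-free abelian associated subgroups are free abelian of finite rank, and then pull the structure back to the subgroup $\widetilde G$ via Bass--Serre theory, inducting on the length of the chain $G_1 < G_2 < \cdots < G_n = G$. Your first three paragraphs are correct and coincide with the paper's (deliberately terse) proof, including the use of induced splittings (Theorem \ref{induced_split}) and the CSA property to see that nontrivial induced edge groups are maximal abelian in the adjacent vertex groups.

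The problem is your final paragraph, which misdiagnoses where the finiteness comes from. You claim it requires finite presentation of $\widetilde G$ together with accessibility over slender edge groups, Theorem \ref{th:main3}, and Guirardel's results; none of this is needed, and the accessibility route is precisely the one the paper warns against: immediately after the theorem it notes that Delzant--Potyagailo hierarchical accessibility has a gap in the HNN case and offers it only as a hypothetical alternative proof. The finiteness you need is elementary Bass--Serre theory: (i) since $\widetilde G$ is finitely generated, its minimal invariant subtree in the Bass--Serre tree of $G_n$ over $G_{n-1}$ has finite quotient graph, so the induced splitting is finite; (ii) the induced edge groups are subgroups of conjugates of finitely generated free abelian groups, hence free abelian of finite rank; (iii) a finitely generated group which is the fundamental group of a finite graph of groups with finitely generated edge groups has finitely generated vertex groups, so the vertex groups $\widetilde G \cap g G_{n-1} g^{-1}$ are finitely generated and the inductive hypothesis (formulated for finitely generated subgroups of the $k$-th term of the chain) applies; and (iv) the induction terminates trivially because it runs on the fixed finite chain length $n$ supplied by Theorem \ref{th:main1}, not on any hierarchy whose termination would need to be justified. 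Finite presentation of $\widetilde G$ is used only once, at the outset, to invoke Theorems \ref{th:main4} and \ref{th:main1}; the Bass--Serre descent itself needs only finite generation.
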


This theorem would have another proof provided Delzant-Potyagailo's proof of hierarchical accessibility
\cite{Delzant_Potyagailo:2001} were correct (it has a gap in the case of HNN extension). Indeed, a 
finitely presented group acting freely on a $\Lambda$-tree has a stable action on an $\R$-tree.
The proof is the same as the proof of Fact 5.1 in \cite{Guirardel:2004}. Therefore $G$ splits over a finitely generated 
abelian group \cite{Bestvina_Feighn:1995}. Then we could apply the result about hierarchical 
accessibility if it was available. Starting with this hierarhy one could obtain the one with edge groups 
maximal abelian.

The following result concerns with  abelian subgroups of $\Lambda$-free groups. For $\Lambda = \Z^n$ 
it follows from the main structural result for $\Z^n$-free groups and \cite{KMRS:2008}, for $\Lambda = 
\R^n$ it was proved in \cite{Guirardel:2004}. The statement 1) below answers to Question 2 (page 250) 
from \cite{Chiswell:2001} in the affirmative for finitely presented $\Lambda$-free groups.

\begin{theorem}
\label{co:main1b}
Let $G$ be a finitely presented $\Lambda$-free group. Then:
\begin{itemize}
\item [1)] every abelian subgroup of $G$ is a free abelian group of finite rank, which is uniformly 
bounded from above by the rank of the abelianization of $G$.
\item [2)] $G$ has only finitely many conjugacy classes of
maximal non-cyclic abelian subgroups,
\item [3)] $G$  has a finite classifying space and the cohomological dimension of $G$ is at most
$\max \{2, r\}$ where $r$  is the maximal rank of an abelian subgroup of $G$.
\end{itemize}
\end{theorem}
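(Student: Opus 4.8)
The plan is to derive all three statements from the structural description already established, principally Theorem~\ref{co:main5} (every such $G$ is built from free groups by amalgams and HNN extensions with maximal abelian, finitely generated free abelian associated subgroups), Theorem~\ref{th:main3} (every finitely presented $\Lambda$-free group is $\R^n$-free), and the resulting graph-of-groups / graph-of-spaces picture. Throughout I use that $G$ is torsion-free and CSA, and that every abelian subgroup of a $\Lambda$-free group embeds into $\Lambda$.

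For (1), I would first pass to the freely indecomposable free factors of $G$ by Grushko's theorem, since every abelian subgroup is conjugate into a factor and both the maximal abelian rank and $\mathrm{rank}(G^{ab})$ are controlled by the factors. Fix an abelian $A \le G$. Realizing $G$ as $\pi_1$ of the finite graph of groups of Theorem~\ref{co:main5} and letting $A$ act on its Bass--Serre tree, $A$ is either elliptic---hence conjugate into a free vertex group and therefore infinite cyclic---or non-elliptic; in the latter case malnormality of the maximal abelian edge groups (CSA) forces $A$ to stabilize a line and to be contained, up to conjugacy, in an edge group together with a translation part, so $A$ is finitely generated free abelian. Combined with the embedding $A \hookrightarrow \R^n$ this shows $A$ is free abelian of finite rank. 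For the bound by $\mathrm{rank}(G^{ab})$ I would induct along the series $G_1 < \cdots < G_n = G$ of Theorem~\ref{th:main1}, checking that each HNN step which raises the rank of a maximal abelian subgroup by $k$ also raises the first Betti number by at least $k$ (a centralizer extension adds a commuting generator and so contributes $1$ to both, while a separated extension enlarges no abelian subgroup). The homological bookkeeping here is delicate and must be done via Mayer--Vietoris for the HNN/amalgam step.

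For (2), the cleanest route is relative hyperbolicity: by Theorem~\ref{th:main3} the group $G$ is finitely generated $\R^n$-free, so by Corollary~\ref{co:Guirardel_2} it is hyperbolic relative to a family $\mathcal P$ of its non-cyclic abelian subgroups which is finite up to conjugacy. In a toral relatively hyperbolic group every non-cyclic abelian subgroup is parabolic, hence conjugate into a member of $\mathcal P$; since $\mathcal P$ is finite up to conjugacy, so is the set of maximal non-cyclic abelian subgroups. An alternative inductive proof runs through Guirardel's Theorem~\ref{th:Guirardel}: the edge groups there are cyclic, so non-cyclic abelian subgroups are elliptic and conjugate into the $\R^{n-1}$-free vertex groups, and one inducts on $n$ with the finitely many free factors as the base.

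For (3), a finite classifying space is obtained by turning the decomposition of Theorem~\ref{co:main5} into a finite graph of aspherical spaces: each free vertex group contributes a finite graph, each free abelian edge group of rank $k$ a $k$-torus, and the edge maps are $\pi_1$-injective, so the total space is a finite aspherical complex, i.e. a finite $K(G,1)$; in particular $G$ is torsion-free of type $FL$. The cohomological dimension bound is the main obstacle. The naive Mayer--Vietoris estimate along the decomposition only yields $\mathrm{cd}(G) \le \max\{\,\mathrm{cd}(\text{vertex groups}),\ \mathrm{cd}(\text{edge group})+1\,\} \le r+1$, one more than claimed. To obtain $\max\{2,r\}$ I would argue, as in the $\Z^n$ case of \cite{KMRS:2008}, that the extra dimension coming from a maximal abelian edge group of full rank $r$ is always absorbed: for a centralizer extension the rank-$r$ edge group $A$ sits inside the rank-$(r{+}1)$ abelian subgroup $A\times\langle t\rangle$ of cohomological dimension $r+1$, so $\mathrm{cd}(A)+1$ coincides with the cd of an existing vertex group; for a separated extension one shows the relevant restriction map $H^{r}(H) \to H^{r}(A)$ is surjective, annihilating the Mayer--Vietoris contribution in degree $r+1$. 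Establishing this surjectivity---equivalently, that no new top-degree cohomology is created by gluing along a malnormal maximal abelian subgroup---is the heart of the argument and is where I expect the real work to lie.
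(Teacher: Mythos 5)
Your proposal follows the same two routes the paper itself indicates: the paper's entire proof is a two-line citation, deriving the theorem either from the structure theorem (Theorem \ref{th:main1}) ``by the standard properties of free product with amalgamation and HNN-extensions,'' or from Theorem \ref{th:main3} plus the corresponding results for $\R^n$-free groups in \cite{Guirardel:2004}. Your part (2) is exactly the second route and is correct. However, your attempt to actually carry out the structural route in part (1) has a genuine gap at the step you yourself call delicate. Your induction rests on the claim that the inequality $\mathrm{rank}(A) \le \mathrm{rank}(G^{ab})$ is preserved because centralizer extensions raise both sides by one while ``a separated extension enlarges no abelian subgroup.'' The latter is true but does not preserve the inequality: for a separated HNN extension $G = \langle H, t \mid t^{-1} A t = B \rangle$ with $\mathrm{rank}(A) = m$, the abelianization is $\bigl(H^{ab}/\langle \bar{a} - \overline{\phi(a)} : a \in A \rangle\bigr) \oplus \Z$, so the first Betti number can \emph{drop} by as much as $m-1$ when $m \ge 2$ (and similarly for length-preserving amalgams), while the maximal abelian rank stays the same. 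So step-by-step monotonicity fails exactly at these steps, and one needs a finer accounting (e.g., tracking how the stable letters of the centralizer extensions that produced a given maximal abelian subgroup survive into the final abelianization). A smaller point: Theorem \ref{co:main5} produces a hierarchy, not a single graph of groups with free vertex groups, so your elliptic case (``conjugate into a free vertex group, hence cyclic'') must be run as an induction on the hierarchy length.

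In part (3) the situation is similar: you correctly identify that the naive Mayer--Vietoris estimate only gives $\mathrm{cd}(G) \le r+1$ and that the entire content of the bound $\max\{2,r\}$ is the surjectivity of the restriction $H^{r}(H) \to H^{r}(A)$ at a separated extension over a rank-$r$ edge group --- but you leave precisely that step unproven, so the proposal does not yet establish (3). Both gaps can be closed the way the paper closes them: by Theorem \ref{th:main3} every finitely presented $\Lambda$-free group is $\R^n$-free, and statements (1)--(3) for finitely generated $\R^n$-free groups are proved in \cite{Guirardel:2004} (with the $\Z^n$ case in \cite{KMRS:2008}); alternatively, the homological work you defer is exactly what those references carry out.
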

\begin{proof}
It comes from Theorem \ref{th:main1} by the standard properties of free product with amalgamation
and HNN-extensions. Another way to prove the theorem is to notice that finitely presented 
$\Lambda$-free groups are $\R^n$-free (Theorem \ref{th:main3}) and then apply the corresponding 
results for $\R^n$-free groups from \cite{Guirardel:2004}.
\end{proof}

\begin{theorem}
\label{co:Lambda_Guirardel_2}
Every finitely presented  $\Lambda$-free group is hyperbolic relative to its non-cyclic abelian
subgroups.
\end{theorem}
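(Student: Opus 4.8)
The plan is to reduce immediately to the $\R^n$-free case, where the statement is already available. First I would invoke Theorem \ref{th:main3}: since $G$ is finitely presented and $\Lambda$-free, it is $\R^n$-free for some $n \in \N$. Being finitely presented, $G$ is in particular finitely generated, so it falls, verbatim, into the class of finitely generated $\R^n$-free groups.

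Second, I would apply Corollary \ref{co:Guirardel_2}, which asserts precisely that every finitely generated $\R^n$-free group is hyperbolic relative to its non-cyclic abelian subgroups. That corollary is itself obtained from Guirardel's structure theorem (Theorem \ref{th:Guirardel}) together with Dahmani's combination theorem \cite{Dahmani:2003} for relatively hyperbolic groups; since $G$ now satisfies its hypotheses exactly, the conclusion transfers directly.

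The only point needing a little care is that the peripheral collection produced by the $\R^n$-free machinery coincides with ``the non-cyclic abelian subgroups of $G$'' as a genuine, finite relatively hyperbolic structure. Here I would appeal to Theorem \ref{co:main1b}: part (1) guarantees that every abelian subgroup of $G$ is free abelian of finite rank, so ``non-cyclic abelian'' is an intrinsic and well-behaved notion, while part (2) supplies finitely many conjugacy classes of maximal non-cyclic abelian subgroups, which is exactly the finiteness a peripheral structure requires. Because the free $\Lambda$-action descends, through Theorem \ref{th:main3}, to the free $\R^n$-action used by Guirardel, the maximal non-cyclic abelian subgroups are literally the same groups under either viewpoint, so no discrepancy between the two peripheral collections can arise.

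Finally, I would note where the real difficulty lies. In this formulation the hard work is already absorbed into Theorem \ref{th:main3} (the $\Lambda$-elimination process of Section \ref{sec:proc}) and into the Guirardel--Dahmani input behind Corollary \ref{co:Guirardel_2}; granting those, the present statement is a short transfer argument. The main obstacle, were one to attempt a self-contained proof over an arbitrary $\Lambda$, would be re-running the relative-hyperbolicity argument directly: one would take the abelian hierarchy of Theorem \ref{co:main5} and feed each amalgam or HNN splitting into Dahmani's combination theorem by induction on the length of the hierarchy, verifying at every stage that the edge groups are abelian and malnormal in the adjacent factors (using the CSA property from the $\Lambda$-tree theory). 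I expect that checking the combination-theorem hypotheses uniformly along the whole hierarchy---rather than the $\R^n$ reduction itself---would be the genuinely delicate step, which is exactly why routing through Theorem \ref{th:main3} is the preferable path.
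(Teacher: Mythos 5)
Your proposal is correct and follows exactly the paper's own argument: the paper proves this theorem by citing Theorem \ref{th:main3} (finitely presented $\Lambda$-free implies $\R^n$-free) together with Corollary \ref{co:Guirardel_2}, which is precisely your route, and it even notes your ``self-contained'' alternative (the structural hierarchy plus Dahmani's combination theorem) as a second option. Your extra care about matching the peripheral collections is a reasonable elaboration, but no new ideas beyond the paper's proof are needed.
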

\begin{proof}
It follows from Theorem \ref{th:main3} and Corollary \ref{co:Guirardel_2} on $\R^n$-free groups, or
directly from the structural Theorem \ref{th:main1} and  the Combination Theorem for relatively
hyperbolic groups \cite{Dahmani:2003}.
\end{proof}

The following results answers affirmatively in the strongest form to the Problem (GO3) from the Magnus
list of open problems \cite{BaumMyasShpil:2002} in the case of finitely presented groups.

\begin{cor}
\label{co:Lambda_Guirardel_3}
Every finitely presented  $\Lambda$-free group is biautomatic.
\end{cor}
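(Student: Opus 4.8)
The plan is to derive biautomaticity from relative hyperbolicity together with the very rigid structure of the parabolic subgroups, rather than from the virtual specialness (Corollary~\ref{co:Lambda_special}), which comes later and would only place $G$ as a \emph{subgroup} of a biautomatic group without returning biautomaticity of $G$ itself. By Theorem~\ref{co:Lambda_Guirardel_2} the group $G$ is hyperbolic relative to the family $\mathcal{P}$ of its maximal non-cyclic abelian subgroups. By Theorem~\ref{co:main1b}(1) each member of $\mathcal{P}$ is a finitely generated free abelian group of finite rank, and by Theorem~\ref{co:main1b}(2) there are only finitely many conjugacy classes of such subgroups. Thus $(G,\mathcal{P})$ is a relatively hyperbolic pair with finitely many peripheral conjugacy classes, each peripheral subgroup being isomorphic to some $\mathbb{Z}^m$.

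First I would record that every finitely generated free abelian group $\mathbb{Z}^m$ is biautomatic, via the standard structure whose language consists of the geodesic words in a fixed basis. This structure is geodesic and prefix-closed, which is exactly the kind of regularity one needs in order to feed it into the combination machinery for relatively hyperbolic groups. Next I would invoke the transfer theorem: if $G$ is hyperbolic relative to a finite collection of biautomatic subgroups satisfying the standard compatibility hypotheses on the peripheral structures, then $G$ is itself biautomatic. The canonical reference is Rebbechi's theorem (from his thesis on algorithmic properties of relatively hyperbolic groups), which establishes precisely that a relatively hyperbolic group whose parabolic subgroups are biautomatic is biautomatic; applying it to $(G,\mathcal{P})$ equipped with the abelian biautomatic structures from the previous step yields a biautomatic structure on $G$.

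The main obstacle will be checking that the hypotheses of the transfer theorem hold in the exact form required, in particular that the chosen biautomatic structures on the $\mathbb{Z}^m$ assemble into a structure on $G$ that fellow-travels correctly under peripheral left multiplication (the coset fellow-traveling condition built into the relatively hyperbolic combination). This is where the finiteness of the number of conjugacy classes of parabolics (Theorem~\ref{co:main1b}(2)) and the CSA/malnormality of maximal abelian subgroups (recorded earlier in the $\Lambda$-free setting) enter: they guarantee that $\mathcal{P}$ is a genuine finite peripheral structure and that distinct parabolics intersect trivially and are non-conjugate, so the horoball combinatorics of the relatively hyperbolic structure can be grafted onto the abelian normal forms without interference between different parabolics.

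Once this compatibility is verified, the conclusion is immediate. I would finish by noting the alternative routes, both of which reach the same conclusion and can be cited as corroboration: one may instead cite the general statement that toral relatively hyperbolic groups are biautomatic (Antol\'in--Ciobanu), since by Theorem~\ref{co:main1b} the peripheral subgroups are finitely generated free abelian, placing $G$ among the toral relatively hyperbolic groups. Either way, the key inputs are supplied entirely by Theorems~\ref{co:Lambda_Guirardel_2} and \ref{co:main1b}, so the proof is short modulo the cited biautomaticity-transfer result.
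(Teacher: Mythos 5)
Your proposal is correct and takes essentially the same route as the paper: the paper's proof is precisely ``relative hyperbolicity (Theorem~\ref{co:Lambda_Guirardel_2}) plus Rebbechi's transfer theorem,'' and your additional verification that the peripheral subgroups are finitely generated free abelian of finite rank, hence biautomatic, via Theorem~\ref{co:main1b} merely makes explicit hypotheses the paper leaves implicit. The alternative toral-relatively-hyperbolic citation you offer is corroboration of the same argument, not a different one.
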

\begin{proof} It follows from Theorem \ref{co:Lambda_Guirardel_2} and Rebbechi's result
\cite{Rebbechi:2001}.
\end{proof}

\begin{theorem}
\label{th:Lambda_quasi-convex_hierarchy}
Every finitely presented $\Lambda$-free group $G$ has a quasi-convex hierarchy.
\end{theorem}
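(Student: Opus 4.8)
The plan is to read a hierarchy directly off the structural description of $G$ and then certify that every edge group that occurs in it is relatively quasi-convex. First I would invoke Theorem \ref{co:main5}: any finitely presented $\Lambda$-free group $G$ is obtained from free groups by a finite sequence of amalgamated free products and HNN extensions whose associated (edge) subgroups are maximal abelian and free abelian of finite rank. Unwinding this sequence produces a finite splitting hierarchy for $G$ whose leaves are free groups, and a free group trivially possesses a quasi-convex hierarchy (it splits as a free product with trivial edge groups, terminating at infinite cyclic or trivial groups). Thus the combinatorial skeleton of the hierarchy is already in hand, and the remaining content is entirely the quasi-convexity of the abelian edge groups at each stage.

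Next I would bring in relative hyperbolicity. By Theorem \ref{co:Lambda_Guirardel_2}, $G$ is hyperbolic relative to the family of its non-cyclic abelian subgroups, that is, $G$ is toral relatively hyperbolic. In such a group the peripheral subgroups — precisely the maximal non-cyclic abelian subgroups — are relatively quasi-convex by definition, and in fact every finitely generated abelian subgroup is relatively quasi-convex: a non-cyclic one is conjugate into a peripheral subgroup, while a cyclic one is either parabolic (hence contained in a peripheral subgroup) or loxodromic (hence undistorted and relatively quasi-convex). Since the edge groups supplied by Theorem \ref{co:main5} are finitely generated free abelian, each of them is relatively quasi-convex in $G$.

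To turn this into a genuine quasi-convex hierarchy I would argue by induction, so that the quasi-convexity condition is verified not just in $G$ but at every node of the tree. The vertex groups arising in the structural series of Theorem \ref{th:main1} are again finitely presented $\Lambda$-free groups, hence $\R^n$-free by Theorem \ref{th:main3} and again toral relatively hyperbolic, so the same quasi-convexity analysis applies to the splittings occurring inside them. Termination is immediate from the finiteness of the structural series; alternatively one tracks the Delzant--Potyagailo complexity, which strictly drops when passing to a free factor or to a vertex group of an abelian decomposition (Proposition \ref{DP}), so the recursion cannot continue indefinitely.

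I expect the main obstacle to lie in Steps~2--3 done uniformly down the hierarchy: establishing that relative quasi-convexity of the abelian edge groups \emph{persists} when one descends to the vertex groups and is compatible with their peripheral structures. Concretely, one must check that an edge group which is relatively quasi-convex in $G$ remains relatively quasi-convex inside the vertex group in which it serves as an edge group, and that the peripheral subgroups of each vertex group are correctly identified with subgroups of the peripheral subgroups of $G$. This calls for the standard package on combination and restriction of relatively quasi-convex subgroups in toral relatively hyperbolic groups; once that is in place, the hierarchy read off from Theorem \ref{co:main5} is quasi-convex and the theorem follows.
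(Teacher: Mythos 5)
Your proposal is correct in outline, but it certifies quasi-convexity by a genuinely different mechanism than the paper. Both arguments share the same skeleton: extract the finite hierarchy from Theorem \ref{co:main5} and induct, using the fact that every group occurring in the sequence is again a finitely presented $\Lambda$-free group. The divergence is in how the abelian edge groups are shown to be quasi-convex at each node. The paper uses biautomaticity: each group $H$ in the sequence is biautomatic by Corollary \ref{co:Lambda_Guirardel_3}, so by \cite{Bridson_Haefliger:1999} its maximal abelian subgroups are regular and quasi-convex; it then runs an explicit case analysis to show the edge group $A$ really is maximal abelian in $H$ --- via \cite[Lemma 2]{GKM} for a separated HNN extension and \cite[Theorem 4.5]{Magnus_Karras_Solitar:1977} for an amalgam --- with a special argument in the centralizer-extension case $A^g = B$, where $A$ fails to be maximal abelian in $H$ but embeds as a direct factor of the maximal abelian subgroup $C = \langle A, s \rangle$ (\cite[Lemma 3]{KMRS:2008}), hence is still quasi-convex. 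You instead invoke toral relative hyperbolicity (Theorem \ref{co:Lambda_Guirardel_2}) and the fact that every finitely generated abelian subgroup of a toral relatively hyperbolic group is relatively quasi-convex (parabolic ones as subgroups of peripherals, cyclic loxodromic ones as undistorted). What your route buys: you never need to decide whether the edge group remains maximal abelian upstairs, so the paper's three-case analysis --- in particular the centralizer-extension subtlety --- disappears, and your notion of quasi-convexity is the one that plugs directly into the relatively hyperbolic form of Wise's machinery used in Corollary \ref{co:Lambda_special}. What the paper's route buys: quasi-convexity is certified by elementary, self-contained case-by-case arguments, with no appeal to the combination/restriction theory of relatively quasi-convex subgroups. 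One remark on your closing paragraph: the ``persistence'' obstacle you flag largely dissolves under the induction you yourself set up --- since each vertex group is itself a finitely presented $\Lambda$-free group, you can certify relative quasi-convexity of its edge groups \emph{intrinsically}, in that vertex group's own peripheral structure, exactly as the paper re-applies biautomaticity at every node; no restriction of a quasi-convexity statement from $G$ down to its vertex groups is actually required.
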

\begin{proof} By Theorem \ref{co:main5}, $G$ can be obtained by a finite sequence ${\cal S}$ of amalgamated
free products and HNN extensions along maximal abelian subgroups starting from free groups. Hence,
each group $H$ in this sequence is either an amalgamated free product of $H_1 \ast_{A=B} H_2$, or
an HNN extension $K \ast_{A^t = B}$, where the groups $H_1, H_2, K$ are constructed on previous steps.
To prove the corollary we assume that all groups involved in the construction of $H$ have quasi-convex
hierarchies and we have to show that $A$ is quasi-convex in $H$ both when $H = H_1 \ast_{A=B} H_2$
and $H = K \ast_{A^t = B}$.

Note that each group in the list $\{H_1, H_2, K\}$ is $\Lambda$-free, hence, biautomatic by Corollary
\ref{co:Lambda_Guirardel_3}, and from \cite{Bridson_Haefliger:1999} it follows that all their maximal
abelian subgroups are regular and quasi-convex in the corresponding groups.

Consider these cases.

\smallskip

{\bf Case I.} $H = \langle K, t \mid t^{-1} A t = B \rangle$, where $A$ and $B$ are maximal abelian
subgroups of $K$.

\smallskip

Suppose $A$ and $B$ are not conjugate in $K$. Then by \cite[Lemma 2]{GKM} both are maximal abelian
in $H$ and therefore quasi-convex there.

If $A^g = B$ for some $g \in K$, then we have
$$H \simeq \langle K, s \mid s^{-1} A s = A \rangle,$$
where $s$ represents $t g^{-1} \in H$ and $A$ embeds into a maximal free abelian subgroup $C =
\langle A, s\rangle$ as a direct factor (see \cite[Lemma 3]{KMRS:2008}). Hence, $A$ is quasi-convex
in $C$ and, hence, in $H$.

\smallskip

{\bf Case II.} $H = H_1 \ast_{A=B} H_2$, where $A$ and $B$ are maximal abelian subgroups respectively
of $H_1$ and $H_2$.

\smallskip

From \cite[Theorem 4.5]{Magnus_Karras_Solitar:1977} it follows that both $A$ and $B$ are maximal abelian
in $H$ and therefore quasi-convex there.

\end{proof}

Theorems \ref{th:Lambda_quasi-convex_hierarchy} and \ref{co:Lambda_Guirardel_2} imply  the following 
result (the argument is straightforward but rather technical and we omit it, see also Lemma 17.10 in  
\cite{Wise:2011}, though there is no proof there neither).

\begin{theorem}
\label{co:Lambda_quasi-convex_hierarchy_1}
Every finitely presented  $\Lambda$-free group $G$  is locally undistorted, that is, every finitely 
generated subgroup of $G$ is quasi-isometrically embedded into $G$.
\end{theorem}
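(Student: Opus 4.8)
The plan is to run an induction along the quasi-convex hierarchy furnished by Theorem \ref{th:Lambda_quasi-convex_hierarchy} (equivalently, the decomposition of $G$ provided by Theorem \ref{co:main5}), using the relatively hyperbolic geometry of $G$ as the engine that turns quasiconvexity into undistortion. First I would record the two structural facts that make the metric estimates possible: by Theorem \ref{co:Lambda_Guirardel_2}, $G$ is hyperbolic relative to its non-cyclic abelian subgroups, and by Theorem \ref{co:main1b}(1) every abelian subgroup of $G$ is free abelian of finite rank. Free abelian subgroups are undistorted in any relatively hyperbolic group in which they are peripheral, so the peripheral subgroups of $G$ are undistorted — this is exactly the hypothesis under which relative quasiconvexity implies quasi-isometric embedding (Hruska's undistortion theorem; compare Lemma 17.10 in \cite{Wise:2011}). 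Thus it suffices to prove that every finitely generated subgroup $H \leqslant G$ is relatively quasiconvex.

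Next I would set up the induction. By Theorem \ref{co:main5}, $G$ is assembled from free groups by finitely many amalgamated products and HNN extensions whose edge groups are maximal abelian, free abelian of finite rank; by the proof of Theorem \ref{th:Lambda_quasi-convex_hierarchy} these edge groups are quasi-convex (indeed regular, via biautomaticity, Corollary \ref{co:Lambda_Guirardel_3}) and, being free abelian, undistorted. I induct on the number of steps in this hierarchy. In the base case $G$ is free and every finitely generated subgroup is quasiconvex, hence undistorted, by the classical theory of subgroups of free groups. For the inductive step write $G = G_1 \ast_{A=B} G_2$ or $G = K \ast_{A^t=B}$, let $T$ be the Bass--Serre tree of this splitting, and let $H\leqslant G$ be finitely generated. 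Applying the induced-splitting theorem (Theorem \ref{induced_split}) to the action of $H$ on $T$ expresses $H$ as the fundamental group of a graph of groups whose vertex groups are of the form $H\cap gG_ig^{-1}$ and whose edge groups are of the form $H\cap gAg^{-1}$; the latter are abelian and undistorted, and all of these subgroups are finitely generated because $G$ is coherent (every finitely generated subgroup of $G$ is again finitely generated $\Lambda$-free, hence $\R^n$-free by Theorem \ref{th:main3}, hence finitely presented by Corollary \ref{co:Guirardel_1}). By the inductive hypothesis the vertex groups are undistorted, and they are relatively quasiconvex in $G_i$, hence in $G$.

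Finally I would glue these pieces together. The point is that $H$ is built from relatively quasiconvex vertex groups along undistorted abelian edge groups, across a malnormal hierarchy (the maximal abelian edge groups are malnormal by the CSA property, Theorem part (d), and carry no Baumslag--Solitar relations, Theorem part (h)); a combination-theorem argument for relatively hyperbolic groups then shows that the inclusion $H\hookrightarrow G$ is relatively quasiconvex, and therefore a quasi-isometric embedding by the undistortion of the peripheral subgroups. Controlling this last step — tracking geodesics of $H$ through the Bass--Serre tree and bounding their distortion in the relative metric of $G$, while verifying that no new distortion is introduced at the abelian edge groups — is the main obstacle and the \emph{technical} content the statement alludes to. Here malnormality and the absence of non-trivial Baumslag--Solitar subgroups are exactly what guarantee that the edge embeddings are isometric and the gluings are acylindrical, so that the quasiconvexity of the pieces survives into $G$. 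Running the induction to the top of the hierarchy then yields that every finitely generated subgroup of $G$ is quasi-isometrically embedded, which is the assertion.
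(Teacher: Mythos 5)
Your proposal follows precisely the route the paper itself indicates: the paper derives this theorem from Theorems \ref{th:Lambda_quasi-convex_hierarchy} and \ref{co:Lambda_Guirardel_2} and then explicitly \emph{omits} the argument, calling it ``straightforward but rather technical'' and noting that even Lemma 17.10 of \cite{Wise:2011} is stated there without proof. So your induction along the quasi-convex hierarchy, with relative quasiconvexity upgraded to undistortion via the undistorted free-abelian peripherals, is not a different approach but a fleshing-out of exactly the omitted argument, and its skeleton is sound; the combination step you single out as the technical core is indeed the part that neither you nor the paper proves, though results of Martinez-Pedroza/Bigdely--Wise type on combinations of relatively quasiconvex subgroups along parabolic edge groups do supply it.

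Two local justifications in your write-up need repair, though both conclusions are salvageable. First, Theorem \ref{th:main3} has finite \emph{presentation} as a hypothesis, so you cannot apply it to a finitely generated subgroup $H$ to conclude $H$ is $\R^n$-free; instead apply it to $G$ itself, use that $\R^n$-freeness passes to subgroups, and then Corollary \ref{co:Guirardel_1} gives that $H$ is finitely presented --- that is the correct derivation of coherence. Second, coherence does not give finite generation of the vertex groups $H \cap gG_ig^{-1}$ of the induced splitting (coherence upgrades finitely generated subgroups to finitely presented ones; it says nothing about whether these intersections are finitely generated in the first place). The correct argument: the edge groups $H \cap gAg^{-1}$ are subgroups of free abelian groups of finite rank (Theorem \ref{co:main1b}), hence finitely generated, and a finitely generated group acting on a tree decomposes over its minimal invariant subtree as a \emph{finite} graph of groups whose vertex groups are finitely generated whenever the edge groups are; only then can you invoke the inductive hypothesis on those vertex groups.
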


Since a finitely generated $\R^n$-free group $G$ is hyperbolic relative to to its non-cyclic
abelian subgroups and $G$ admits a  quasi-convex hierarchy then  recent results of D. Wise
\cite{Wise:2011} imply the following.

\begin{cor}
\label{co:Lambda_special}
Every finitely presented $\Lambda$-free group $G$ is  virtually special, that is, some subgroup of
finite index in $G$ embeds into a right-angled Artin group.
\end{cor}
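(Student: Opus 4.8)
The plan is to reduce the statement to a direct application of Wise's virtual specialness theorem for relatively hyperbolic groups admitting a quasi-convex hierarchy, after verifying its two hypotheses for $G$. First I would record that by Theorem \ref{co:Lambda_Guirardel_2} the group $G$ is hyperbolic relative to its non-cyclic abelian subgroups, and that by Theorem \ref{co:main1b}(1) these peripheral subgroups are free abelian of finite rank. Since a finitely generated free abelian group is itself a right-angled Artin group, hence special, the peripheral structure of $G$ is of exactly the kind required by the relative version of Wise's machinery: the peripheral subgroups are (virtually) special, and their specialness is compatible with the ambient relatively hyperbolic structure.

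Next I would invoke Theorem \ref{th:Lambda_quasi-convex_hierarchy}, which provides a quasi-convex hierarchy for $G$: by Theorem \ref{co:main5} the group is built by a terminating sequence of amalgamated products and HNN extensions along maximal abelian subgroups, starting from free groups, and Theorem \ref{th:Lambda_quasi-convex_hierarchy} ensures that every splitting subgroup occurring in this sequence is quasi-convex. To feed this into Wise's theorem I must upgrade ordinary quasi-convexity to relative quasiconvexity with respect to the peripheral structure above. Here Theorem \ref{co:Lambda_quasi-convex_hierarchy_1} (local undistortion) guarantees that every finitely generated subgroup of $G$ is quasi-isometrically embedded, so in particular the edge groups of the hierarchy are undistorted; combined with the malnormality of maximal abelian subgroups coming from the CSA property of $\Lambda$-free groups (item (d) of the basic properties in Subsection \ref{subs:lambda-free}), this yields that each splitting is along a relatively quasiconvex, almost malnormal subgroup, as Wise's hypotheses demand.

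Finally I would apply the main result of \cite{Wise:2011}: a group that is hyperbolic relative to virtually abelian subgroups and that admits a quasi-convex hierarchy is virtually (sparse) special. This immediately produces a subgroup of finite index in $G$ that embeds into a right-angled Artin group, which is the desired conclusion.

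The main obstacle I anticipate is the bookkeeping at the interface with Wise's framework rather than any new mathematical input: one must confirm that the hierarchy furnished by Theorem \ref{th:Lambda_quasi-convex_hierarchy} meets Wise's precise technical requirements — \emph{relative} quasiconvexity (not merely quasiconvexity) of the edge groups, almost malnormality of the peripheral and edge subgroups, and termination of the hierarchy at special pieces. Each of these is available from the structural results already established, namely the CSA/malnormality properties of $\Lambda$-free groups, local undistortion (Theorem \ref{co:Lambda_quasi-convex_hierarchy_1}), and the termination of the hierarchy at free groups (Theorem \ref{co:main5}); the real care lies in assembling them into the exact hypotheses of the relatively hyperbolic virtual specialness theorem.
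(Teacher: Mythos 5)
Your proposal follows exactly the paper's own route: the paper deduces Corollary \ref{co:Lambda_special} by combining Theorem \ref{co:Lambda_Guirardel_2} (relative hyperbolicity with respect to the non-cyclic abelian subgroups) with Theorem \ref{th:Lambda_quasi-convex_hierarchy} (existence of a quasi-convex hierarchy) and then invoking Wise's results from \cite{Wise:2011}. Your additional verification of Wise's hypotheses (free abelian peripherals via Theorem \ref{co:main1b}, malnormality via the CSA property, undistortion via Theorem \ref{co:Lambda_quasi-convex_hierarchy_1}) is sound bookkeeping that the paper leaves implicit, so the argument is correct and essentially identical.
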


In his book \cite{Chiswell:2001} Chiswell posted Question 3 (page 250): Is every $\Lambda$-free group
orderable, or at least  right-orderable? The following result answers in the affirmative to Question 3
(page 250) from \cite{Chiswell:2001} in the case of finitely presented groups.

\begin{theorem}
\label{co:Lambda_special_0}
Every finitely presented $\Lambda$-free group is right orderable.
\end{theorem}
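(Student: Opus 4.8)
The plan is to deduce right orderability from local indicability via the classical criterion of Burns and Hale: a group is right orderable provided every nontrivial finitely generated subgroup admits an epimorphism onto a nontrivial right orderable group. Taking $\mathbb{Z}$ as target, it suffices to prove that every nontrivial finitely generated subgroup $H \leqslant G$ surjects onto $\mathbb{Z}$, i.e. has abelianization of positive free rank. Thus the whole statement reduces to a \emph{local indicability} lemma, from which right orderability of $G$ follows formally.

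First I would record that the finitely generated subgroups to which the criterion is applied are again of the controllable type. By Theorem \ref{th:main3} the group $G$ is $\mathbb{R}^n$-free, and since the class of $\Lambda$-free groups is closed under taking subgroups, every finitely generated $H \leqslant G$ is itself a finitely generated $\mathbb{R}^n$-free group. Consequently, by Corollary \ref{co:Guirardel_1}, each such $H$ is finitely presented and torsion free, so the structural machinery for $\mathbb{R}^n$-free groups applies to $H$ as well; this is what lets the induction below close up.

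The core step is the indicability lemma, proved by induction on $n$. If $H$ is freely decomposable, write $H = H_1 \ast \cdots \ast H_k$ with each factor freely indecomposable (Grushko); since $H$ surjects onto any nontrivial factor, it suffices to treat freely indecomposable $H$. If $H$ is abelian, it is free abelian of finite positive rank by Theorem \ref{co:main1b} and surjects onto $\mathbb{Z}$. Otherwise I would apply Guirardel's Theorem \ref{th:Guirardel} to present $H$ as the fundamental group of a finite graph of groups with infinite cyclic edge groups and finitely generated $\mathbb{R}^{n-1}$-free vertex groups, to which the induction hypothesis applies. If the underlying graph contains a loop, the associated HNN stable letter yields a homomorphism $H \to \mathbb{Z}$ killing every vertex group (consistent with the conjugacy relations, since these map to $0$), and this map is onto. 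If the graph is a tree, I would build $H$ up through successive amalgamations $A \ast_C B$ along cyclic $C$ and compute $H^{\mathrm{ab}}$: identifying the images of a single cyclic edge group in the two vertex abelianizations drops the total free rank by at most one, so $\operatorname{rank}(H^{\mathrm{ab}}) \geqslant \operatorname{rank}(A^{\mathrm{ab}}) + \operatorname{rank}(B^{\mathrm{ab}}) - 1 \geqslant 1$ by positivity of the vertex ranks (induction hypothesis). Hence $H$ surjects onto $\mathbb{Z}$ in every case.

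The main obstacle is precisely the tree (amalgam) case of this induction: one must ensure that the first Betti number survives the identification imposed by the edge groups. This is where the hypotheses that edge groups are \emph{cyclic} (free rank one) and that each edge group is maximal abelian in at least one vertex group — together with the CSA/malnormality of $\Lambda$-free groups — are used, to guarantee that the edge subgroups embed as genuine rank-one summands modulo torsion and that the displayed rank inequality is correct. Once the indicability lemma is established for all finitely generated subgroups, the Burns--Hale criterion yields that $G$ is right orderable, completing the proof.
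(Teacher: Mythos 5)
Your route is genuinely different from the paper's, which disposes of this theorem in one line: by Theorem \ref{th:main3} every finitely presented $\Lambda$-free group is $\R^n$-free, and Chiswell \cite{Chiswell:2013} proved that finitely generated $\R^n$-free groups are right orderable. You instead aim at the stronger property of local indicability and then apply the Burns--Hale criterion. Much of your skeleton is sound: subgroups of $\Lambda$-free groups are $\Lambda$-free, so Theorem \ref{th:main3} and Corollary \ref{co:Guirardel_1} do put every nontrivial finitely generated subgroup back in the class of finitely generated $\R^n$-free groups; the Grushko reduction to freely indecomposable subgroups is fine; a non-tree edge in a graph-of-groups decomposition does give a surjection onto $\Z$; and for a tree of groups with cyclic edge groups the estimate $\operatorname{rank}(H^{\mathrm{ab}}) \geqslant \sum_v \operatorname{rank}(A_v^{\mathrm{ab}}) - |E| \geqslant |V| - |E| = 1$ is valid, \emph{provided} every vertex group has abelianization of positive free rank.

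The genuine gap is at the bottom of your induction, which as written has no anchor. Theorem \ref{th:Guirardel} cannot be invoked at $n=1$: read literally it would make the vertex groups $\R^0$-free, i.e.\ trivial, hence the edge groups trivial, hence every freely indecomposable finitely generated $\R$-free group would be $\Z$ --- contradicted by the surface groups of Theorem \ref{th:Morgan_Shalen}. (The statement of Theorem \ref{th:Guirardel} is a simplification usable only for $n \geqslant 2$, where surface groups and finitely generated free abelian groups are themselves $\R^{n-1}$-free and so get absorbed into the vertex groups; Guirardel's actual decomposition produces surface-type and abelian vertex pieces, which your induction never confronts.) To close the argument you must base the induction at $n=1$ on Rips' Theorem (Theorem \ref{th:Rips_0}): a nontrivial freely indecomposable finitely generated $\R$-free group is free abelian of finite rank or a non-exceptional closed surface group, and the latter are indicable since the abelianization of an orientable surface group of genus $g \geqslant 1$ has rank $2g$, while that of a non-orientable one of genus $k \geqslant 4$ has rank $k-1$. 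With that base case (and after pruning trivial vertex groups, which would otherwise spoil the per-vertex rank bound), your induction does close. A minor correction as well: your appeal to CSA/malnormality and maximality of edge groups to justify the rank inequality is spurious --- quotienting $A^{\mathrm{ab}} \oplus B^{\mathrm{ab}}$ by the cyclic subgroup generated by $(\bar{c}_A, -\bar{c}_B)$ lowers free rank by at most one no matter how the edge group sits inside the vertex groups.
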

\begin{proof} I.Chiswell proved that every finitely generated $\R^n$-free group is right orderable
\cite{Chiswell:2013}, so the result now follows from Theorem \ref{th:main3}.
\end{proof}

The following addresses Chiswell's question whether $\Lambda$-free groups are orderable or not.
\begin{theorem}
\label{co:Lambda_special_1}
Every finitely presented $\Lambda$-free group is virtually orderable, that is, it contains an orderable
subgroup of finite index.
\end{theorem}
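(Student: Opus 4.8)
The plan is to derive virtual orderability directly from the virtual specialness already established in Corollary \ref{co:Lambda_special}, the point being that right-angled Artin groups are not merely right-orderable but \emph{bi-orderable}, and that bi-orderability passes freely to subgroups. This is exactly the extra ingredient that distinguishes the present statement from the right-orderability of Theorem \ref{co:Lambda_special_0}.

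First I would invoke Corollary \ref{co:Lambda_special}: since $G$ is a finitely presented $\Lambda$-free group, it is virtually special, so there is a subgroup $H \leqslant G$ of finite index together with an embedding $H \hookrightarrow A$ into a right-angled Artin group $A$. This reduces the problem to producing a two-sided order on $H$.

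Next I would use the fact that every right-angled Artin group admits a bi-invariant (two-sided) linear order. This is the theorem of Duchamp and Thibon on simple orderings of free partially commutative groups; alternatively it follows from the classical observation that right-angled Artin groups are residually torsion-free nilpotent, that finitely generated torsion-free nilpotent groups are bi-orderable, and that a group residually in a class of bi-orderable groups is itself bi-orderable. Hence $A$ carries a bi-invariant order $\leqslant_A$. Since the restriction of a bi-invariant order to a subgroup is again a bi-invariant order, the embedding $H \hookrightarrow A$ equips $H$ with a two-sided order, so $H$ is orderable. As $[G : H] < \infty$, the subgroup $H$ witnesses that $G$ is virtually orderable, which is the desired conclusion.

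The main obstacle, and the only genuinely new input beyond the structural machinery of the paper, is that one must appeal to \emph{bi}-orderability of right-angled Artin groups rather than mere right-orderability; using only right-orderability would recover Theorem \ref{co:Lambda_special_0} but not the present statement. Thus the substantive step is the Duchamp--Thibon orderability of RAAGs (equivalently, their residual torsion-free nilpotence), after which the passage to subgroups and finite index is routine.
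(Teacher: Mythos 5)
Your proposal is correct and follows essentially the same route as the paper: invoke Corollary \ref{co:Lambda_special} to get a finite-index subgroup $H \leqslant G$ embedded in a right-angled Artin group, note that right-angled Artin groups are (two-sided) orderable, and restrict the order to $H$. The only cosmetic difference is the justification of orderability of right-angled Artin groups --- the paper deduces it from residual torsion-free nilpotence (Duchamp--Krob) via residual $p$-finiteness and Rhemtulla's theorem, while you cite Duchamp--Thibon or the torsion-free nilpotent route directly; both rest on the same underlying fact.
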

\begin{proof}
Indeed, in \cite{Duchamp_Krob:1992}, G. Duchamp and D. Krob show that right-angled Artin groups
are residually torsion free nilpotent. Hence, right-angled Artin groups are residually $p$-groups
for any $p$, so, they are orderable (see \cite{Rhemtulla:1973}). Now the result follows form
Corollary \ref{co:Lambda_special}.
\end{proof}

Note that one cannot remove ``virtually'' in the formulation of Theorem \ref{co:Lambda_special_1}
since S. Rourke recently showed that there are finitely presented $\Lambda$-free (even $\Z^n$-free)
groups which are not orderable (see \cite{Rourke:2012}).

Since right-angled Artin groups are linear (see \cite{Humphries:1994, Hsu_Wise:1999, DJ:2000} and
the class of linear groups is closed under finite extension we get the following

\begin{theorem}
\label{co:Lambda_quasi-convex_hierarchy_2}
Every finitely presented  $\Lambda$-free group is  linear.
\end{theorem}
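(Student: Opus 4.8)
The plan is to combine virtual specialness, already obtained in Corollary \ref{co:Lambda_special}, with the linearity of right-angled Artin groups and a standard induced-representation argument, using the torsion-freeness of $\Lambda$-free groups to guarantee faithfulness of the resulting representation.

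First I would apply Corollary \ref{co:Lambda_special} to produce a finite-index subgroup $H \leqslant G$ together with an embedding $\iota : H \hookrightarrow A$ into a right-angled Artin group $A$. By the results cited in \cite{Humphries:1994, Hsu_Wise:1999, DJ:2000}, the group $A$ admits a faithful representation $A \hookrightarrow GL_n(\mathbb{Z})$ for a suitable $n$; composing with $\iota$ yields a faithful representation $\rho : H \to GL_n(\mathbb{Q})$, so $H$ is linear in characteristic $0$.

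Next I would pass from $H$ back to $G$. Replacing $H$ by its normal core $H_G = \bigcap_{g \in G} gHg^{-1}$ — which is still of finite index and still linear, being a subgroup of $H$ — I may assume $H \trianglelefteq G$ with $[G:H] = m < \infty$. Form the induced representation $\sigma = \mathrm{Ind}_H^G \rho : G \to GL_{nm}(\mathbb{Q})$. Since $H$ is normal, the restriction of $\sigma$ to $H$ decomposes as $\bigoplus_{g \in G/H} \rho^{g}$, where each conjugate $\rho^{g}(h) = \rho(g^{-1} h g)$ is again faithful; hence $\ker \sigma \cap H = \bigcap_{g} \ker \rho^{g} = 1$, so $\ker \sigma$ injects into $G/H$ and is therefore finite.

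The final and only delicate point is faithfulness, and here the hypothesis does the work for free: since $G$ is $\Lambda$-free it is torsion-free, so its finite normal subgroup $\ker \sigma$ must be trivial. Thus $\sigma$ is a faithful finite-dimensional representation of $G$ over $\mathbb{Q}$, and $G$ is linear. I do not anticipate any genuine obstacle: every ingredient is either quoted from the literature or a routine Clifford-theory computation, and torsion-freeness neatly removes the usual subtlety concerning the kernel of an induced representation.
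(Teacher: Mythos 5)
Your proposal is correct and takes essentially the same route as the paper: Corollary \ref{co:Lambda_special} supplies a finite-index subgroup embedded in a right-angled Artin group, RAAGs are linear by the cited references, and linearity is closed under finite extensions — the paper simply quotes this closure property where you prove it via induced representations. One minor remark: the appeal to torsion-freeness is harmless but unnecessary, since the induced representation of a faithful representation is automatically faithful (an element of its kernel must fix every coset $gH$ and satisfy $\rho(g^{-1}xg)=1$ for all $g$, forcing $x=1$), so the argument works for arbitrary finite extensions of linear groups.
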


Since every linear group is residually finite we get the following.

\begin{cor}
\label{co:Lambda_quasi-convex_hierarchy_3}
Every finitely presented  $\Lambda$-free group is residually finite.
\end{cor}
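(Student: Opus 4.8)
The plan is to deduce residual finiteness directly from the linearity already established for these groups, together with Mal'cev's classical theorem that finitely generated linear groups are residually finite. Let $G$ be a finitely presented $\Lambda$-free group. Being finitely presented, $G$ is in particular finitely generated, so by Theorem \ref{co:Lambda_quasi-convex_hierarchy_2} there is a field $K$, an integer $n$, and an embedding $\rho : G \hookrightarrow GL_n(K)$. Thus it suffices to show that every finitely generated subgroup of $GL_n(K)$ is residually finite, and the final statement will follow at once.

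For this I would run the standard Mal'cev argument. Fix a nontrivial $g \in G$; the goal is a homomorphism from $G$ onto a finite group that does not kill $g$. Let $R \leqslant K$ be the subring generated by the finitely many matrix entries of the images $\rho(x)$ of a finite generating set of $G$, their inverses, and the entries of $\rho(g)$. Then $R$ is a finitely generated commutative integral domain and $\rho(G) \leqslant GL_n(R)$. Since $g \neq 1$, some entry of $\rho(g) - I_n$ is a nonzero element $r \in R$. In a finitely generated integral domain the intersection of all maximal ideals with finite residue field is trivial, so one can choose a maximal ideal $\mathfrak{m}$ of $R$ with $R/\mathfrak{m}$ finite and $r \notin \mathfrak{m}$. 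Reduction modulo $\mathfrak{m}$ gives a ring map $R \to R/\mathfrak{m}$, hence a group homomorphism $GL_n(R) \to GL_n(R/\mathfrak{m})$ into a finite group; its composition with $\rho$ sends $g$ to a nontrivial element. As $g$ was arbitrary, $G$ is residually finite.

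Essentially all of the content here lies outside the final step: the hard work is already carried out in proving linearity (Theorem \ref{co:Lambda_quasi-convex_hierarchy_2}), which rests in turn on virtual specialness (Corollary \ref{co:Lambda_special}) and on the linearity of right-angled Artin groups. Given that input, the only point that requires any care is the commutative-algebra fact that a nonzero element of a finitely generated integral domain survives in some finite residue field; this is uniform in the characteristic of $K$ and presents no real obstacle. No appeal to the underlying $\Lambda$-tree structure is needed at this stage, so the corollary is genuinely immediate once linearity is in hand.
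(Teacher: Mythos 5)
Your proof is correct and follows exactly the same route as the paper: the paper deduces the corollary from Theorem \ref{co:Lambda_quasi-convex_hierarchy_2} (linearity) by invoking the fact that (finitely generated) linear groups are residually finite, which is precisely Mal'cev's theorem that you spell out in detail. The only difference is that you write out the standard Mal'cev argument explicitly (and correctly note the finite generation hypothesis, which the paper's phrasing glosses over), whereas the paper simply cites the fact.
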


It is known that linear groups are equationally Noetherian (see
\cite{Baumslag_Miasnikov_Remeslennikov:1999} for discussion on equationally Noetherian groups),
therefore the following result holds.
\begin{cor}
\label{co:Lambda_quasi-convex_hierarchy_4}
Every finitely presented $\Lambda$-free group is equationally Noetherian.
\end{cor}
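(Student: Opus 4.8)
The plan is to deduce the corollary directly from the linearity of $G$ established in Theorem \ref{co:Lambda_quasi-convex_hierarchy_2}, together with the classical fact that linear groups are equationally Noetherian. First I would invoke Theorem \ref{co:Lambda_quasi-convex_hierarchy_2} to fix a faithful embedding $G \hookrightarrow GL_m(K)$ for some field $K$ and some $m \geq 1$; since $G$ is a finitely presented $\Lambda$-free group, such an embedding is available.

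Next I would recall the relevant notion: a group $H$ is equationally Noetherian if for every $n$ and every system $S(x_1,\ldots,x_n) = 1$ of equations with coefficients in $H$ there is a finite subsystem $S_0 \subseteq S$ with the same solution set in $H^n$; equivalently, the Zariski topology on $H^n$ is Noetherian for each $n$. The key input is that $GL_m(K)$ is equationally Noetherian for every field $K$. This is seen by realizing the solution set of a single equation $w(x_1,\ldots,x_n) = 1$ inside the affine space $(M_m(K))^n$ (with the invertibility constraints encoded through auxiliary determinant variables) as a Zariski-closed set defined by polynomial equations over $K$; the Hilbert Basis Theorem then forces any intersection of such closed sets to coincide with the intersection of finitely many of them, which is precisely equational Noetherianity. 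I would cite \cite{Baumslag_Miasnikov_Remeslennikov:1999} for this.

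Finally I would use that the property of being equationally Noetherian passes to subgroups: any system of equations with coefficients in $G$ is in particular a system with coefficients in $GL_m(K)$, and a finite equivalent subsystem inside $GL_m(K)$ restricts to a finite equivalent subsystem for the solutions lying in $G^n$. Since $G$ embeds into $GL_m(K)$ and the latter is equationally Noetherian, $G$ is equationally Noetherian, as claimed.

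The argument involves no genuine obstacle, as every ingredient is already in hand; the only care required lies in the two cited black boxes. Specifically, one must check that it is $GL_m(K)$ rather than merely $M_m(K)$ that is treated, so that the determinant is genuinely inverted, and one must verify that equational Noetherianity descends to the subgroup $G$. Both points are standard and are spelled out in \cite{Baumslag_Miasnikov_Remeslennikov:1999}.
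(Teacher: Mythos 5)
Your proposal is correct and follows exactly the paper's route: the paper deduces the corollary from Theorem \ref{co:Lambda_quasi-convex_hierarchy_2} (linearity of finitely presented $\Lambda$-free groups) together with the fact, cited to \cite{Baumslag_Miasnikov_Remeslennikov:1999}, that linear groups are equationally Noetherian. You merely spell out the two black boxes (the Hilbert Basis Theorem argument for $GL_m(K)$ and the descent of equational Noetherianity to subgroups), which the paper leaves implicit.
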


{\bf Hint of the proof of Theorem \ref{th:main1}.}  We perform the elimination process for the generalized
equation $\Omega = \Omega_{v_0}$ corresponding to $G$. If the process goes infinitely we obtain one
of the following:
\begin{itemize}
\item free splitting of $G$ with at least on non-trivial free factor \ref{linear_case} and, maybe, some surface
group factor \ref{quad_case},

\item a decomposition of $G$ as the fundamental group of a graph of groups with QH-vertex groups
corresponding to quadratic sections \ref{gen_jsj_case} (a),

\item decomposition of $G$ as the fundamental group of a graph of groups with abelian vertex groups
corresponding to periodic structures (see Proposition \ref{PerSt} in \ref{gen_jsj_case} (c) and HNN-extensions
with stable letters infinitely longer than the generators of the abelian associated subgroups
\ref{gen_jsj_case} (c).
\end{itemize}
Then we continue the elimination process with the generalized equation $\Omega_{v_1}$ where the
active part corresponds to weakly rigid subgroups. In the case of an HNN-extension, $\Omega_{v_1}$ is 
obtained from $\Omega_{v_0}$  by removing a pair of bases from the generalized equation corresponding 
to the periodic structure (and this equation has the same complexity as $\Omega_{v_0}$. This means that 
the complexity $\tau(\Omega_{v_1})$ is smaller than $\tau(\Omega_{v_0}).$ In the other cases the 
Delzant--Potyagailo complexity of weakly rigid subgroups is smaller by Proposition \ref{DP} (which follows 
from the correct part of the paper \ref{DP}. Therefore this procedure stops. At the end we obtain some 
generalized equation $\Omega_{v_{fin}}$ with all non-active sections. Continuing the elimination process 
till the end, we obtain, in addition, a complete system of linear equations with integer coefficients 
$\Sigma_{complete}$ on the lengths of  items $h_i$'s that is automatically satisfied and guarantees 
that the associated maximal abelian subgroups are length-isomorphic.

This completes the proof of Theorem \ref{th:main1}.
\begin{remark}
\label{nonreg1}
If we begin with the group $\widetilde G$  with free but not necessary
regular length function in $\Lambda$ then in the Elimination process we work with the generalized
equation $\Omega$ and add a finite number of elements from $\widehat G$ (see the end of Section 
\ref{subs:constr_ge}). Thus we got an embedding of $\widetilde G$ in a group that can be represented 
as a union of a finite series of groups
$$G_1 < G_2 < \cdots < G_n = G,$$
where
\begin{enumerate}
\item $G_1$ is a free group,
\item $G_{i+1}$ is obtained from $G_i$ by finitely many HNN-extensions in which associated subgroups
are maximal abelian, finitely generated, and length isomorphic as subgroups of $\Lambda$.
\end{enumerate}
\end{remark}
\begin{remark}
\label{fin}
As a result of the Elimination process, the equation $\Omega_{v_{fin}}$ (we will denote it
$\Omega_{fin}$) is defined on the multi-interval $I$, that is, a union of closed sections which have
a natural hierarchy: a section $\sigma_1$ is smaller than a section $\sigma_2$ if the largest base
on $\sigma_2$ is infinitely larger than the largest base on $\sigma_1$.

The lengths of bases satisfy the system of linear equations $\Sigma_{complete}$.
\end{remark}

{\bf Hint of the Proof of Theorem \ref{th:main3}.}
Suppose $G$, as above, is a finitely presented $\Lambda$-free group. By Theorem \ref{th:main4} we may assume that the action of $G$  is regular. Let $\Omega$ be a generalized
equation for $G$ corresponding to the union of closed sections $I$, $G = \langle \mathcal M \mid
\Omega({\mathcal M}) \rangle$.  Consider the Cayley graph $X = Cay(G, \cal M)$ of $G$ with respect to
the generators ${\cal M}$. Assign to edges of $Cay(G, \cal M)$ their lengths
in $\Lambda$ (and infinite words that represent the generators) and consider edges as closed intervals in $\Lambda$ of the corresponding length. For
each relation between bases of $\Omega$, $\lambda_{i_1} \cdots \lambda_{i_k} = \lambda_{j_1} \cdots
\lambda_{j_m}$ (without cancelation) there is a loop in $X$ labeled by this relation. Then the path
 labeled by $\lambda_{i_1} \cdots \lambda_{i_k}$ has the same length in
$\Lambda$ as the path labeled by $\lambda_{j_1} \cdots \lambda_{j_m}$. If $x$ is a point on the path
$\lambda_{i_1} \cdots \lambda_{i_k}$ at the distance $d \in \Lambda$ from the beginning of the path,
and $x'$ is a point on the path $\lambda_{j_1} \cdots \lambda_{j_m}$ at the distance $d$ from the
beginning, then we say that {\em $x$ and $x'$ are in the same leaf}. In other words, after we
substitute generators in $\mathcal M$ by their infinite word representations, we ``fold'' loops
into segments. We consider the equivalence
relation between points on the edges of $X$ generated by all such pairs $x \sim x'$. Equivalence classes of this
relation are called {\em leaves}. We also glue an arc isometric to a unit interval between each $x$
and $x'$. Let ${\cal F}$ be the foliation (the set of leaves). One can define a foliated complex
$\Sigma = \Sigma(X, {\cal F})$ associated to $X$ as a pair $(X, {\cal F})$. The paths in $\Sigma$ can travel vertically (along the leaves) and horizontally
(along the intervals in $\Lambda$). The length of a path $\gamma$ in $\Sigma (X, {\cal F})$ (denoted
$\|\gamma\|$) is the sum of the lengths of horizontal intervals in $\gamma$. Therefore  $\Sigma$  is a graph with points on horizontal intervals being vertices. We now identify all points of  $\Sigma$ that belong to the same leaf (identify vertices in the leaf and points on the vertical edges of this leaf in $\Sigma$). We denote by $T$ the obtained graph. A reduced path in $T$ is a path without backtracking. We call  removing of a backtracking in $T$ a reduction of a path.
\begin{lemma}\cite{KMS:2011(3)}\label{le:19.1} The image of a loop in $\Sigma$ can be reduced to a point in $T$.
\end{lemma}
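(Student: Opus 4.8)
The plan is to identify the graph $T$ with the subtree of the universal $\Lambda$-tree $\Gamma_G$ (constructed in Subsection~\ref{sec:universal}) spanned by the orbit $G\cdot\epsilon$, and then to invoke the fact that a $\Lambda$-tree contains no non-trivial reduced loops. Recall that at the start of Section~\ref{sec:proc} we fixed an embedding $\xi:G\hookrightarrow CDR(\Lambda,Z)$ with regular length function; thus every generator in $\mathcal M$ is an infinite word, every edge of $X$ is a labelled $\Lambda$-interval, and the path-labelling $\mu$ of Subsection~\ref{subsubsec:label} is available. First I would build a labelling map $\Phi:\Sigma\to\Gamma_G$. A point $x$ of $\Sigma$ lies at some horizontal distance $d\in\Lambda$ from the base point along the image of an edge path of $X$ whose label, after substitution of the infinite words, is a reduced $\Lambda$-word $w$ (reduced by Lemma~\ref{le:paths}); set $\Phi(x)=\langle d,w\rangle\in\Gamma_G$, the class of the initial subword of $w$ of length $d$. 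By construction the horizontal length $\|\gamma\|$ of a path in $\Sigma$ equals the $\Lambda$-distance of its $\Phi$-image, so $\Phi$ is distance preserving along horizontal paths.

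Next I would check that $\Phi$ is constant on leaves, so that it descends to $\bar\Phi:T\to\Gamma_G$. Two points $x,x'$ are placed in the same leaf exactly when they sit at a common horizontal distance $d$ on the two sides of a relation loop, that is, on paths whose labels $w,w'$ agree on their initial segment of length $d$; this is precisely the condition $d\le c(w,w')$, which by the definition of the relation ``$\sim$'' on $S_G$ means $\langle d,w\rangle=\langle d,w'\rangle$. Passing to the equivalence closure, a chain of such identifications stays inside a single $\sim$-class because of the length-function axiom (L3): if $d\le c(w,w')$ and $d\le c(w',w'')$ then $c(w,w'')\ge\min\{c(w,w'),c(w',w'')\}\ge d$. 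This is exactly the transitivity verification in the proof of Proposition~\ref{pr:lambda_tree}, so $\bar\Phi$ is well defined.

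The decisive step is to show that $\bar\Phi$ is \emph{injective}, for then $T$ is isometric to the connected subgraph $\bar\Phi(T)$ of the $\Lambda$-tree $\Gamma_G$; since $\bar\Phi(T)$ is a union of geodesics $[\epsilon,g\cdot\epsilon]$ it is the subtree spanned by $G\cdot\epsilon$, hence itself a $\Lambda$-tree by Proposition~\ref{pr:lambda_tree}. Suppose $\bar\Phi(\bar x)=\bar\Phi(\bar x')$; then $\bar x,\bar x'$ have the same horizontal distance $d$ and labels $w,w'$ with $d\le c(w,w')$, i.e. a common prefix of length $d$. One must argue that every such coincidence of prefixes is a consequence of the relations of $\Omega$, and therefore is realized by a \emph{finite chain} of relation-loop identifications in $\Sigma$, so that $\bar x,\bar x'$ are already identified in $T$. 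This is where the hypotheses are used in full: since $G=\langle\mathcal M\mid\Omega\rangle$ and $\Omega$ was assembled from the cancellation diagrams of all defining relators, every overlap and common initial segment among products of generators is a consequence of $\Omega$, while Lemma~\ref{le:paths_2} guarantees that the labels of paths to vertices of $V_G$ exhaust $G$. The hard part will be precisely this converse direction: upgrading ``holds in $\Gamma_G$'' to ``witnessed by leaf-identifications in $\Sigma$''. It rests on the fact that $\Omega$ encodes every cancellation among the generators, and I expect it to be the main obstacle, the previous two steps being essentially bookkeeping with (L3).

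Finally, granting that $\bar\Phi$ is an isometric embedding with tree image, the lemma follows at once. Let $\ell$ be a loop in $\Sigma$ and let $p$ be its image in $T$. Since $\bar\Phi$ is injective it carries reduced paths of $T$ to reduced paths of $\Gamma_G$ and reflects backtracking, so a backtrack-free closed path of $T$ would map to a reduced closed path of the $\Lambda$-tree $\Gamma_G$. But in a $\Lambda$-tree there is a unique reduced path between any two points, whence a reduced closed path is constant. Therefore removing backtracking from $p$ collapses it to a single point of $T$, which is the assertion of Lemma~\ref{le:19.1}.
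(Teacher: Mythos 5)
Your argument funnels everything into the injectivity of $\bar\Phi\colon T\to\Gamma_G$, and that is precisely where it stops: the ``converse direction'' you flag --- that every coincidence of initial segments in $\Gamma_G$ is witnessed by a \emph{finite chain} of leaf identifications in $\Sigma$ --- is the entire content of the lemma, and your sketch offers only the heuristic that ``$\Omega$ encodes every cancellation.'' Worse, the identification of $T$ with a subtree of $\Gamma_G$ is naturally a \emph{consequence} of Lemmas \ref{le:19.1} and \ref{le:19.2}, not a tool for proving them: once one knows that $T$ is a $\Lambda$-tree on which $G$ acts freely with the correct based length function, Proposition \ref{pr:universal} produces the unique $G$-equivariant isometry from $\Gamma_G$ onto the subtree of $T$ spanned by the orbit of the base point. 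Trying to run this identification in the opposite direction is either circular or requires an independent proof of injectivity, which you do not give. And without injectivity (or at least the property that $\bar\Phi$ reflects backtracking) your last step collapses: a backtrack-free closed path in $T$ could map to a \emph{non}-reduced closed path in $\Gamma_G$, which reduces to a point in $\Gamma_G$ while telling you nothing about reducibility in $T$.

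The paper's proof never mentions $\Gamma_G$ and is purely combinatorial, which is exactly how it sidesteps the obstacle you ran into. A loop in $\Sigma$ is a concatenation of horizontal and vertical subpaths. A vertical edge joins points $x\sim x'$ sitting at equal horizontal distance on the two sides of a relation loop of $X$, so $x$ and $x'$ are also joined by a horizontal detour (back along one side to the origin of that relation loop, then forward along the other); since corresponding points of the two sides lie in common leaves, the image of this detour in $T$ has the form $rr^{-1}$, a backtrack. Thus, modulo backtracking in $T$, every loop in $\Sigma$ may be replaced by a horizontal one, i.e.\ a loop in the Cayley graph $X$; such a loop is a product of conjugates of relation loops, and each relation loop again maps to a path $rr^{-1}$ in $T$ because its two sides are identified leaf-by-leaf. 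Removing backtracking therefore collapses the image of the original loop to a point. If you wanted to complete your route instead, the statement you would have to prove --- that the relation-generated leaf equivalence already accounts for \emph{all} equalities of prefixes of the infinite words representing elements of $G$ --- is not bookkeeping with (L3); it is the hard technical core, and the paper's direct argument is designed precisely to avoid having to prove it first.
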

Indeed, a loop in $\Sigma$ is a composition of vertical and horizontal subpaths.  If a vertical path of length one connects two points of $\Sigma$, then they are also connected in $\Sigma$ by a horizontal path that is mapped to a path of the form $rr^{-1}$ in $T$. Then a loop in $\Sigma$ can be transformed into a loop where all paths are horizontal. Therefore any simple loop in $\Sigma$ is mapped into a path with back-tracking in $T$, and if we reduce this loop in $T$ we get a point. This implies that there is a unique reduced path between any two points of $T$. A distance between two points in $T$ is the length of the reduced path between them.
 We extend naturally the
left action of $G$ on $X$  to the action on $T$. The following lemma holds.

\begin{lemma} \cite{KMS:2011(3)}
\label{le:19.2}
\begin{enumerate}
\item[(1)] $T$ is a $\Lambda$-tree.
\item[(2)] The left action of $G$ on $T$ is free.
\end{enumerate}
\end{lemma}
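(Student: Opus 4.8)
The plan is to derive both statements from Lemma \ref{le:19.1}, which already supplies the essential ingredient: between any two points of $T$ there is a unique reduced path, and reduction never increases horizontal length. First I would install the metric on $T$. For $x,y \in T$ let $[x,y]$ be the unique reduced path and set $d(x,y) = \|[x,y]\|$, the sum of the $\Lambda$-lengths of its horizontal intervals. Well-definedness and symmetry are immediate; $d(x,y) \geq 0$ since each horizontal interval has non-negative length, and $d(x,y)=0$ forces the reduced path to be purely vertical, i.e. $x=y$ after leaves are collapsed. For the triangle inequality one concatenates $[x,z]$ with $[z,y]$ and reduces the result to $[x,y]$ by Lemma \ref{le:19.1}; since reduction only deletes back-tracking (of non-negative horizontal length), this yields $d(x,y) \leq d(x,z)+d(z,y)$. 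Thus $(T,d)$ is a $\Lambda$-metric space.

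Next I would verify the three $\Lambda$-tree axioms (T1)--(T3). For (T1) I would parametrize $[x,y]$ by partial horizontal length and check that its consecutive horizontal intervals concatenate additively, so that this map is an isometry of $[0,d(x,y)]_\Lambda$ onto $[x,y]$; hence $[x,y]$ is a segment. Axioms (T2) and (T3) then follow formally from the uniqueness of reduced paths: if two segments meet in a single common endpoint, their union is itself the reduced path between the outer endpoints and so is a segment, while the intersection of two segments issuing from a common point $x$ is the reduced path from $x$ to the median point $Y(\cdot)$, again a segment. This establishes (1).

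For (2) I would tie the metric on $T$ to the length function of $G$. Writing $\bar v_0$ for the image in $T$ of the base vertex $1 \in X = Cay(G,\mathcal M)$, a Cayley-graph path spelling $g$ has horizontal length equal to the sum of the lengths of the generators occurring in it, and its reduction in $T$ realizes exactly the cancellation of the corresponding $\Lambda$-word; hence $d(\bar v_0, g\bar v_0) = |g|$, where $|\cdot|$ is the free regular length function of $G$ induced from $CDR(\Lambda,Z)$, so $l_{\bar v_0}(g)=|g|$ for all $g$. Invoking the translation-length identity $\|g\| = \max\{0,\,l_{\bar v_0}(g^2)-l_{\bar v_0}(g)\}$ together with the freeness axiom (L5), which gives $|g^2|>|g|$ for $g\neq 1$, I obtain $\|g\|>0$; by Corollary \ref{co:axis} this makes $g$ neither elliptic nor an inversion, hence hyperbolic. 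Therefore every non-trivial element acts as a hyperbolic isometry and the action of $G$ on $T$ is free and without inversions.

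The main obstacle will be part (1), and within it the geodesic axiom (T1). One must check carefully that collapsing leaves and concatenating the $\Lambda$-valued horizontal intervals of a reduced path produces a genuine segment, an isometric copy of an interval of $\Lambda$, with no hidden gaps or overlaps introduced by the vertical identifications. Showing that the horizontal coordinate behaves additively along a reduced path, so that partial-length parametrization is an isometry, is the delicate point; once it is secured, the remaining tree axioms and the freeness of the action are comparatively routine.
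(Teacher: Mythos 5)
Your metrization strategy for part (1) is the natural one given the paper's setup, and most of it is sound: subpaths of reduced paths are reduced, so your cumulative-length parametrization does make each $[x,y]$ a segment, and (T2) genuinely does follow, since any backtracking at the junction would force the two segments to share more than their common endpoint. The first real gap is (T3), which you dismiss as "formal from uniqueness of reduced paths". Uniqueness shows that the common part of $[x,y]$ and $[x,z]$ is a downward-closed set of parameters, but it does not show that this set has a maximum, i.e. that the two paths diverge \emph{at a point} of $T$; over a general ordered abelian group $\Lambda$ a bounded downward-closed subset of an interval need not have a supremum. The median $Y(\cdot)$ you invoke is therefore exactly the object whose existence is at stake, and appealing to it is circular. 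What actually has to be proved is that the locus along which two horizontal edges of $X$ are identified by the leaf relation is a closed subinterval (equivalently, that the removal of a backtracking implicit in Lemma \ref{le:19.1} always cancels a maximal common piece that is achieved); this requires the specific structure of the foliation, not just uniqueness of reduced paths.

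The gap in part (2) is more serious. Everything rests on the identity $d(\bar v_0, g\bar v_0)=|g|$, and the half you need, $d(\bar v_0,g\bar v_0)\leqslant |g|$, is precisely the assertion that every cancellation occurring in $CDR(\Lambda,Z)$ among products of the generators is realized by leaf identifications ("its reduction in $T$ realizes exactly the cancellation"). Since the leaf relation is generated only by the relation loops of $\Omega$, this is a substantive completeness-of-folding statement, asserted but not proved; without it, $l_{\bar v_0}$ could be strictly larger than $|\cdot|$, and then (L5) for $|\cdot|$ tells you nothing about $l_{\bar v_0}(g^2)$ versus $l_{\bar v_0}(g)$, so hyperbolicity does not follow. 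Note that freeness does not need this identity at all. Every defining identification comes from a relation $\lambda_{i_1}\cdots\lambda_{i_k}=\lambda_{j_1}\cdots\lambda_{j_m}$ between reduced $\Lambda$-words holding in $G\subset CDR(\Lambda,Z)$; since the corresponding paths in the universal tree $\Gamma_G$ of Section \ref{sec:universal} are geodesics with common endpoints, hence coincide, the assignment sending the point at distance $t$ along the edge $[h,h\mu]$ of $X$ to the point at distance $t$ along $[h\cdot\epsilon,\,h\mu\cdot\epsilon]$ in $\Gamma_G$ descends to a $G$-equivariant map $T\to\Gamma_G$. If $g$ fixed a point of $T$ it would fix a point of $\Gamma_G$, contradicting Proposition \ref{pr:action}; if $g$ were an inversion, then $g^2$ would fix a point of $\Gamma_G$, giving $g^2=1$ and hence $g=1$ by torsion-freeness of $\Lambda$-free groups. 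This route uses only ingredients already in the paper, and it also delivers the true half of your identity, $d(\bar v_0,g\bar v_0)\geqslant |g|$, for free.
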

Similarly, we can construct a $\Lambda$-tree where $G$ acts freely beginning not with the original
generalized equation $\Omega$ but with a generalized equation obtained from $\Omega$ by the application
of the Elimination process. In this case we may have closed sections with some items $h_i$ with
$\gamma(h_i) = 1$. Then to construct the Cayley graph of $G$ in the new generators we cover items $h_i$ with
$\gamma(h_i)= 1$ by bases without doubles (these bases were previously removed at some stages of the
Elimination process as matching pairs).

We begin by considering the union of the  closed sections of the minimal height in the hierarchy
introduced in Remark \ref{fin}. Denote the union of these sections by $\sigma$. The group $H$ of the
generalized equation corresponding to their union is a free product of free groups, free abelian
groups and closed surface groups because for these groups we stop the Elimination process. Notice that for those sections for which $\gamma(h_i) = 1$ for some
maximal height items, these items $h_i$ are also products of some bases because initially every item
is a product of bases. We can assume the following:

\begin{enumerate}
\item For all closed sections of $\sigma$ such that $\gamma(h_i) = 2$ for all items of the maximal
height, the number of bases of maximal height cannot be decreased using entire transformation or
similar transformation applied from the right of the section ({\em right entire transformation}).

\item For all closed sections of $\sigma$ where we apply linear elimination, the number of bases
of maximal height cannot be decreased using the transformations as above as well as transformation
(E2) (transfer) preceded  by creation of necessary boundary connections as in (E5) (denote it
($E2_5$)), (E3) and first two types of linear elimination (D5) (denote it ($D5_{1,2}$)).
\end{enumerate}

Indeed, if we can decrease the number of bases of maximal height using the transformations described
above then we just do this and continue. Since these transformations do not change the total number
of bases we can also assume that they do not decrease the number of bases of second maximal height etc.
We now re-define the lengths of bases belonging to $\sigma$ in ${\mathbb R}^k$. We are going to show
that all components of the length of every base can be made zeros except for the components which
appear to be maximal in the lengths of bases from $\sigma$.

Let $\Omega_\sigma$ be the generalized equation corresponding to the sections from $\sigma$. Let $H$
be the group of the equation $H = \langle \mathcal M_\sigma \mid \Omega_\sigma(M) \rangle$.

Denote by $\Lambda_1$ the minimal convex subgroup of $\Lambda$ containing lengths of all bases in
$\sigma$, and by $\Lambda'$ a maximal convex subgroup of $\Lambda_1$ not containing lengths of maximal
height bases in $\sigma$ (it exists by Zorn's lemma). Then the quotient $\Lambda_1 / \Lambda'$ is a
subgroup of $\mathbb R$. Denote by $\hat\ell$ the length function in $\mathbb R$ on this quotient
induced from $\ell$. We consider elements of $\Lambda'$ as infinitesimals. Denote by $\widehat T$ the
$\mathbb R$-tree constructed from $T$ by identifying points at zero distance (see \cite{Chiswell:2001},
Theorem 2.4.7). Then $H$ acts on $\widehat T$. Denote by $\|\gamma\|_{\mathbb R}$ the induced length
of the path $\gamma$ in $\widehat T$, and by $d_{{\mathbb R}}(\bar x,\bar y)$ the induced distance.

However, the action of $H$ on $\widehat T$ is not free. The action is minimal, that is, there is no
non-empty proper invariant subtree. Notice that the canonical projection $f: T \rightarrow \widehat
T$ preserves alignment, and the pre-image of the convex set is convex. The pre-image of a point in
$\widehat T$ is an infinitesimal subtree of $T$.

\begin{lemma} \cite{KMS:2011(3)}
\label{le:62}
The action of $H$ on $\widehat T$ is superstable: for every non-degenerate arc $J \subset \widehat T$
with non-trivial fixator, and for every non degenerate subarc $S \subset J$, one has $Stab(S) =
Stab(J)$.
\end{lemma}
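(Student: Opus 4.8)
The plan is to prove the two inclusions $Stab(J)\subseteq Stab(S)$ and $Stab(S)\subseteq Stab(J)$ separately. The first is immediate: since $S\subseteq J$, any isometry fixing $J$ pointwise fixes $S$ pointwise. All the content lies in the reverse inclusion. As $Stab(J)$ is nontrivial by hypothesis, I would fix once and for all an element $h\in Stab(J)$ with $h\neq 1$, and reduce the lemma to showing that an arbitrary $1\neq g\in Stab(S)$ also fixes $J$.

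The first real step is to identify the fixed-point sets in $\widehat T$ of such elements. Because $H$ acts freely on the $\Lambda$-tree $T$, every $1\neq g\in H$ is hyperbolic on $T$, with axis $A_g$ and translation length $\|g\|>0$ (Theorem \ref{le:hyperbolic}). The translation length of $g$ on $\widehat T$ is the image of $\|g\|$ under $\Lambda_1\to\Lambda_1/\Lambda'\hookrightarrow\mathbb R$. If $g$ fixes the non-degenerate arc $S$ pointwise then $g$ is elliptic on $\widehat T$, so this image is $0$; that is, $\|g\|\in\Lambda'$ is infinitesimal. Using $d(x,gx)=\|g\|+2\,d(x,A_g)$ (Theorem \ref{le:hyperbolic}), one checks that $\bar x\in\widehat T$ is fixed by $g$ exactly when $d(x,A_g)\in\Lambda'$, i.e. exactly when $\bar x\in f(A_g)$; since $f$ preserves alignment and collapses only infinitesimal subtrees, $f(A_g)$ is a line in $\widehat T$ and $\mathrm{Fix}_{\widehat T}(g)=f(A_g)$. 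The same analysis applied to $h$ gives $\|h\|\in\Lambda'$ and $\mathrm{Fix}_{\widehat T}(h)=f(A_h)\supseteq J\supseteq S$.

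The second step is to transfer this overlap back to $T$ and invoke a Harrison-type theorem. Both $f(A_g)$ and $f(A_h)$ contain the non-degenerate arc $S$, so $\|S\|_{\mathbb R}>0$. If $A_g$ and $A_h$ were disjoint in $T$ they would meet at a single bridge point, and only an infinitesimal portion of $A_g$ could lie within infinitesimal distance of $A_h$, forcing $f(A_g)\cap f(A_h)$ to be a single point, contrary to $S\subseteq f(A_g)\cap f(A_h)$. Hence $A_g\cap A_h$ is a segment of $T$ whose length $\ell_0$ maps onto at least $\|S\|_{\mathbb R}>0$; in particular $\ell_0\notin\Lambda'$, and since $\Lambda'$ is convex, $\ell_0>\|g\|+\|h\|$. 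Conjugating $g$ and $h$ simultaneously by an element carrying a point of $A_g\cap A_h$ to the base point $\varepsilon$ makes both cyclically reduced (so $|g|=\|g\|$, $|h|=\|h\|$), and, after replacing $h$ by $h^{-1}$ if necessary to match orientations, suitable finite powers $g^m,h^n$ share an initial segment of length at least $\|g\|+\|h\|=|g|+|h|$. Thus $c(g^m,h^n)\geq|g|+|h|$ and Lemma \ref{le:LS} (equivalently, the Harrison-type alternative that a two-generator subgroup of a $\Lambda$-free group is free or free abelian) yields $[g,h]=\varepsilon$.

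The final step is purely $\Lambda$-tree geometry: commuting hyperbolic isometries share an axis. From $[g,h]=1$ we get $ghg^{-1}=h$, so by Lemma \ref{le:axis}(1), $gA_h=A_{ghg^{-1}}=A_h$; thus $g$ preserves the line $A_h$, and since $A_g$ is the unique $\langle g\rangle$-invariant line we conclude $A_g=A_h$. Therefore $\mathrm{Fix}_{\widehat T}(g)=f(A_g)=f(A_h)=\mathrm{Fix}_{\widehat T}(h)\supseteq J$, so $g\in Stab(J)$, establishing $Stab(S)\subseteq Stab(J)$ and hence the equality. I expect the main obstacle to be making the third step rigorous: precisely controlling the collapsing map $f$ near the bridge between two lines, so as to deduce a genuine non-infinitesimal overlap of $A_g$ and $A_h$ in $T$ from the shared arc $S$ in $\widehat T$, and then tuning the finite powers so that the common-initial-segment bound needed by Lemma \ref{le:LS} holds despite $\|g\|,\|h\|$ being infinitesimal. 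The alignment-preservation of $f$ and the fact that point-preimages are infinitesimal subtrees are exactly the tools that should close this gap.
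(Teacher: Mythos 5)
Your overall strategy is sound, and it is in fact the same as the paper's: the paper gives no independent argument for this lemma but defers to Fact 5.1 of \cite{Guirardel:2004}, whose proof runs along exactly the lines you describe (fixed-point sets in $\widehat T$ of arc-fixing elements are images $f(A_g)$ of axes of elements with infinitesimal translation length; a shared non-degenerate arc in $\widehat T$ forces a non-infinitesimal overlap of $A_g$ and $A_h$ in $T$; the overlap forces $[g,h]=1$ by freeness; commuting hyperbolic isometries of a tree with free action share an axis, whence $\mathrm{Fix}_{\widehat T}(g)=f(A_g)=f(A_h)=\mathrm{Fix}_{\widehat T}(h)\supseteq J$). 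Your steps 1, 2 and 4 are correct as written.

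The gap is in step 3, exactly where you predicted trouble, and your proposed repair does not work. Lemma \ref{le:LS} requires $c(g^m,h^n)\geq |g|+|h|$ for some finite $m,n>0$, and (with the base point placed on $A_g\cap A_h$, so that $|g|=\|g\|$, $|h|=\|h\|$) one always has $c(g^m,h^n)\leq |g^m| = m\|g\|$. But $\|g\|$ and $\|h\|$ are merely elements of the convex subgroup $\Lambda'$, which can have arbitrary rank, so they need not be Archimedean-equivalent: if $m\|g\| < \|h\|$ for every $m\in\mathbb{N}$ (e.g.\ $\Lambda'=\Z^2$ lexicographically ordered, $\|g\|=(1,0)$, $\|h\|=(0,1)$), then $c(g^m,h^n) < \|h\| \leq |g|+|h|$ for all $m,n$, and the hypothesis of Lemma \ref{le:LS} can never be met, no matter how the powers are tuned. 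The parenthetical fallback (the free/free-abelian alternative for two-generator subgroups) does not close this either, since by itself it does not exclude that $\langle g,h\rangle$ is free of rank two. What is actually needed --- and what the cited proof uses --- is the general pairs-of-isometries criterion, valid over an arbitrary ordered abelian group $\Lambda$ (see the corresponding lemmas on pairs of hyperbolic isometries in \cite{Chiswell:2001}): if two hyperbolic isometries meet coherently along a segment of length at least $\|g\|+\|h\|$, then their commutator is elliptic; freeness of the action on $T$ then gives $[g,h]=1$ directly, with no powers and no cyclic reduction. A second, smaller, defect of step 3: the conjugator ``carrying a point of $A_g\cap A_h$ to the base point $\varepsilon$'' need not exist in $H$, since the orbit of $\varepsilon$ need not meet the axes; the legitimate move is to re-base the length function at such a point (Chiswell's construction), not to conjugate inside $H$. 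With the commutation step replaced by the general criterion, your argument is complete and coincides with the proof the paper cites.
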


The proof is the same as the proof of Fact 5.1 in \cite{Guirardel:2004}.

\begin{prop} \cite{KMS:2011(3)}
\label{quad-main0}
One can define the lengths of bases in $\sigma$ in ${\mathbb R}^k$.
\end{prop}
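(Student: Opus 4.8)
The plan is to argue by induction on the height $k$ of $\Lambda_1$, that is, on the length of its complete chain of convex subgroups $0 = \Lambda_{(0)} < \Lambda_{(1)} < \cdots < \Lambda_{(k)} = \Lambda_1$, where $\Lambda' = \Lambda_{(k-1)}$ is the maximal convex subgroup avoiding the lengths of the maximal-height bases, so that the quotient $\Lambda_1/\Lambda'$ embeds into $\mathbb{R}$ by the structure result (\ref{eq:order-convex}). The idea is to split each base length into a \emph{leading} component, read off from the $\mathbb{R}$-tree $\widehat{T}$ through the induced real length $\|\cdot\|_{\mathbb{R}}$, and an \emph{infinitesimal} component living inside a $\Lambda'$-tree, treating the latter by the inductive hypothesis and then assembling the two into a value in $\mathbb{R} \oplus \mathbb{R}^{k-1} = \mathbb{R}^k$ equipped with the right lexicographic order. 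In the base case $\Lambda_1$ is archimedean, hence embeds into $\mathbb{R}$, and the lengths are already in $\mathbb{R} = \mathbb{R}^1$.

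For the inductive step I would invoke the structure theory of the minimal action of $H$ on $\widehat{T}$. Since $H$ is finitely presented, being a free product of free, free abelian and closed surface groups, and the action is minimal and superstable by Lemma \ref{le:62}, the Rips--Bestvina--Feighn machine of Section \ref{sec:rbf} (Theorem \ref{th:Best_Feighn}; cf. \cite{Guirardel:2004}) decomposes $\widehat{T}$ as a graph of actions whose vertex actions are of simplicial, surface and axial (abelian) type. Superstability is exactly what guarantees that the fixator of a non-degenerate arc of $\widehat{T}$ coincides with the fixator of any subarc, so the arc and point stabilizers are well behaved; under the projection $f : T \to \widehat{T}$ these stabilizers are precisely the subgroups preserving the infinitesimal subtrees $f^{-1}(\text{point})$, which are $\Lambda'$-subtrees of $T$. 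Because $H$ acts freely on $T$ by Lemma \ref{le:19.2} and $\Lambda'$ is convex, any nontrivial element preserving such a fiber must have translation length in $\Lambda'$ and therefore acts freely on that fiber; thus each stabilizer that arises carries a free length function with values in $\Lambda'$, whose height is $k-1$, and the inductive hypothesis applies to it.

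It then remains to assemble the coordinates: the surface and axial vertex actions together with the simplicial edges supply the leading $\mathbb{R}$-component of each maximal-height base (just the value of $\|\cdot\|_{\mathbb{R}}$), while the inductive hypothesis applied to the fiber stabilizers redefines the $\Lambda'$-part of every base length in $\mathbb{R}^{k-1}$, and pairing the two produces an $\mathbb{R}^k$-value for each base. The minimality conditions (1), (2) on the number of maximal-height bases are used here to ensure that the leading components are nondegenerate and that no folding collapses them. The hard part, and the main obstacle, is the consistency check rather than the mere existence of an abstract $\mathbb{R}^k$-free structure: one must verify that the reassigned $\mathbb{R}^k$-lengths still satisfy all basic and boundary equations of $\Omega_\sigma$ and preserve the cancellation table, i.e. that the leading and infinitesimal layers glue into a single solution of the linear system $\Sigma_{complete}$ of Remark \ref{fin} over $\mathbb{R}^k$. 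This is precisely where the right lexicographic order on $\mathbb{R} \oplus \mathbb{R}^{k-1}$ is decisive: the leading $\mathbb{R}$-coordinate dominates, so comparisons and cancellations among bases of maximal height are dictated by $\widehat{T}$, whereas those among infinitesimally close items are dictated by the inductively constructed $\mathbb{R}^{k-1}$-lengths, and the two levels do not interfere. Carrying out this verification yields the required definition of the lengths of all bases in $\sigma$ in $\mathbb{R}^k$.
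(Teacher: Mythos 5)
Your induction is not well-founded, and this is a genuine gap rather than a presentational one. You induct on the height $k$ of $\Lambda_1$, i.e.\ on the length of its complete chain of convex subgroups, but $\Lambda$ is an \emph{arbitrary} ordered abelian group and $\Lambda_1$ --- the minimal convex subgroup containing the lengths of the finitely many bases --- need not have finite height at all: the convex hull of a single element can already contain an infinite chain of convex subgroups (e.g.\ in a Hahn-type product indexed by $\omega+1$, the convex hull of the indicator of the top index is the whole group). The statement that only finitely many archimedean classes are relevant, with the bound $k \leqslant$ number of pairs of bases, is precisely the \emph{conclusion} of Proposition \ref{quad-main0}; it cannot be fed in as the induction parameter. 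Note that Theorem \ref{co:main1} (finite generation of the subgroup generated by $l(G)$) is itself derived from this machinery in the paper, so nothing of that sort is available as a hypothesis. Your second paragraph has the same circularity in miniature: you assert that the fiber stabilizers carry length functions ``of height $k-1$,'' which again presumes the finite chain.

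The paper closes exactly this hole by a different mechanism: finiteness comes from the finiteness of the set of bases $\mathcal M$ together with a ``no intermediate scale'' statement, not from any chain condition on $\Lambda_1$. Concretely, the bases are split into ${\mathcal M}_{\sigma +}$ (non-infinitesimal leading component) and ${\mathcal M}_{\sigma 0}$ (infinitesimals); one shows that the stabilizer of an infinitesimal fiber $f^{-1}(\mathrm{point})$ is generated by bases from ${\mathcal M}_{\sigma 0}$ (if a product involving bases of ${\mathcal M}_{\sigma +}$ were infinitesimal, Lemma \ref{le:19.2} and elementary operations would create a base of that infinitesimal length), and then Lemma \ref{quad}, proved by tracking the entire transformation under the projection to $\Lambda_1/(\Lambda''\cap\Lambda_1)$, shows there is no element of $H$ infinitely larger than all bases of ${\mathcal M}_{\sigma 0}$ yet infinitely smaller than all bases of ${\mathcal M}_{\sigma +}$. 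Given this gap lemma, every component of every base length other than the finitely many maximal components of bases in $\mathcal M$ can be set to zero consistently, which is what places the lengths in $\mathbb{R}^k$. Your appeal to the Rips--Bestvina--Feighn/Guirardel graph-of-actions decomposition of $\widehat T$ (via superstability, Lemma \ref{le:62}) describes the leading-order picture correctly, but it does not produce the gap lemma, and without it the ``assembling'' and ``consistency check'' you defer to the end cannot terminate in finitely many coordinates. The induction that does appear in the paper is on the number of levels of the hierarchy of closed sections from Remark \ref{fin} --- a finite combinatorial quantity --- not on the convex-subgroup structure of $\Lambda$.
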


We will show how to prove the proposition only for the case of a closed section $\sigma_1$ on the
lowest level corresponding to a closed surface group.

\begin{lemma} \cite{KMS:2011(3)}
\label{quad-main}
Let $\sigma_1$ be a closed section on the lowest level corresponding to a closed surface group and
the lengths of the bases satisfy some system of linear equations $\Sigma$. Then one can define the
lengths of bases in $\sigma_1$ in ${\mathbb R}^k$.
\end{lemma}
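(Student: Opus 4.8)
The plan is to exploit the fact, recorded in Theorem~\ref{th:Morgan_Shalen}, that a non-exceptional closed surface group is already $\mathbb{R}$-free, and to arrange the real lengths so that they remain compatible with the integer linear system $\Sigma$. First I would read off the combinatorial data carried by $\sigma_1$: since $\sigma_1$ is a quadratic section, every item is covered by exactly two bases, the bases occur in dual pairs $(\mu,\overline{\mu})$ which the basic equations force to have equal length, and the word spelled out along $\sigma_1$ is (up to the automorphisms applied in the quadratic case) the defining relator of the surface group $S = H_{\sigma_1}$. Thus a length assignment to the bases of $\sigma_1$ is the same datum as a point of the affine set $V$ cut out, inside the space of base lengths, by the basic equations of $\Omega_{\sigma_1}$ together with the prescribed system $\Sigma$; all defining equations of $V$ have integer coefficients.

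Next I would produce a real point of $V$ that realizes a free action. The lengths of the sides of $S$ in a free action on an $\mathbb{R}$-tree are exactly the transverse measures assigned by a measured foliation (equivalently, a system of non-negative weights on a train track satisfying the switch conditions), so the length vectors arising from free $\mathbb{R}$-actions form a rational polyhedral cone $C$ whose defining (in)equalities have integer coefficients and which, by construction, lies in the basic-equation locus. The original $\Lambda$-valued solution already furnishes a point of $V$; composing the length function on $\sigma_1$ with the canonical ordered projection $\Lambda_1 \to \Lambda_1/\Lambda' \hookrightarrow \mathbb{R}$ used in the passage to $\widehat{T}$ shows that $C \cap V$ is non-empty over $\mathbb{R}$. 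By Lemma~\ref{le:62} the induced action of $S$ on $\widehat{T}$ is minimal and superstable, so the structure theory of such surface actions applies and identifies $\widehat{T}$ with the tree dual to a measured foliation; this is what pins down the relevant point inside $C$.

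Finally, because $C \cap V$ is a non-empty rational polyhedron, I would choose a real point of it in general position, so that its coordinates are rationally independent modulo the relations imposed by $\Sigma$ and the associated foliation on $S$ is arational (filling). For a non-exceptional closed surface such foliations are generic, and the dual $\mathbb{R}$-tree of an arational foliation carries a free $S$-action; the resulting translation lengths of the bases are genuine real numbers, and distributing the rationally independent parameters across coordinates yields the desired assignment in $\mathbb{R}^k$. I expect the main obstacle to be exactly this last reconciliation: one must verify that $\Sigma$ does not over-determine the side lengths, i.e.\ that $C \cap V$ still contains a point whose foliation is arational (and whose action is therefore free), rather than collapsing to a degenerate configuration in which some element of $S$ acquires a fixed point. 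Keeping track of the positivity and closing-up conditions of the quadratic section, together with the integrality of $\Sigma$, is precisely what makes such a generic real solution available.
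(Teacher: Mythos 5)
Your proposal takes a genuinely different route from the paper (measured foliations and arationality versus the paper's combinatorial analysis of the elimination process), but it contains a genuine gap --- one you partially acknowledge yourself --- and that gap is precisely the content of the lemma. Your evidence that the free-action cone $C$ meets the integer-defined affine set $V$ is the projection $\Lambda_1 \to \Lambda_1/\Lambda' \hookrightarrow \mathbb{R}$ applied to the original solution. But that projected point is degenerate: as the paper notes explicitly, the induced action of $H$ on $\widehat{T}$ is \emph{not} free, because every base in ${\mathcal M}_{\sigma 0}$ (infinitesimal with respect to $\Lambda'$) is sent to length zero, and these bases generate nontrivial point stabilizers. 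So the projected point lies in the degenerate boundary, not in $C$. Genericity of arational foliations does not repair this: arational points are generic in the full cone, but the slice $V$ cut out by $\Sigma$ together with the combinatorics of the section can be entirely contained in a degenerate locus, exactly because $\Sigma$ and the cancellation structure may \emph{force} some bases to be infinitesimal relative to others. This is also why the lemma concludes with lengths in $\mathbb{R}^k$ with the lexicographic hierarchy, not in $\mathbb{R}$: a single-scale real solution of the kind an arational foliation would provide need not exist. Your closing remark about ``distributing rationally independent parameters across coordinates'' does not supply a mechanism for preserving freeness of the action, and if a genuine all-real free solution existed you would not need $\mathbb{R}^k$ at all.

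For contrast, the paper's proof never tries to produce a free $\mathbb{R}$-action. It splits the bases into ${\mathcal M}_{\sigma +}$ (non-infinitesimal leading component) and ${\mathcal M}_{\sigma 0}$ (infinitesimal), runs the entire transformation, observes that the infinitesimal bases occur only as transfer bases and generate the point stabilizers in $\widehat{T}$, and then proves the key no-gap statement (Lemma \ref{quad}): projecting to $\Lambda_1/(\Lambda'' \cap \Lambda_1)$ preserves the infinite process, and no element of $H$ is infinitely larger than all bases of ${\mathcal M}_{\sigma 0}$ yet infinitely smaller than all bases of ${\mathcal M}_{\sigma +}$. That statement is what licenses setting to zero every component of every base length except the components that occur as maximal components of some base, while keeping the action free; finiteness of ${\mathcal M}$ then bounds $k$ by the number of pairs of bases. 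If you want to rescue your approach, the missing ingredient is an analogue of this no-gap lemma showing that the hierarchy of archimedean classes forced on $\sigma_1$ collapses to finitely many scales compatible with $\Sigma$; without it, the intersection $C \cap V$ may simply contain no free point.
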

\begin{proof} Denote by ${\mathcal M}_{\sigma +}$ the set of bases (on all the steps of the process
of entire transformation applied to the lowest level) with non-zero oldest component and by
${\mathcal M}_{\sigma 0}$ the rest of the bases (infinitesimals). Denote by $\hat\ell$ the projection
of the length function $\ell$ to $\Lambda_1 / \Lambda'$ as before. Then $\lambda \in
{\mathcal M}_{\sigma +}$ if and only if $\hat\ell(\lambda) > 0$. We apply the entire transformation
to $\sigma_1$. If we obtain an overlapping pair or an infinitesimal section, where the process goes
infinitely, we declare it non-active and move to the right. This either decreases, or does not change
the number of bases in the active part. Therefore, we can assume that the process goes infinitely
and the number of bases in ${\mathcal M}_{\sigma +}$ never decreases. Therefore bases from
${\mathcal M}_{\sigma 0}$ are only used as transfer bases.

We will show that the stabilizer of a pre-image of a point, an infinitesimal subtree $T_0$ of $T$ is
generated by some elements in ${\mathcal M}_{\sigma 0}$. An element $h$ from $H$ belongs to such a
stabilizer if $\hat\ell(h) = 0$. If some product of bases not only from ${\mathcal M}_{\sigma 0}$
has infinitesimal length (denote this product by $g$), then by Lemma \ref{le:19.2}, the identity and 
$g$ belong to leaves at the infinitesimal distance $\delta$ in $\Sigma$. Therefore using elementary 
operations we can obtain a base of length $\delta$. Denote by $\Lambda''$ the minimal convex 
subgroup of $\Lambda$ containing all elements of the same height as bases from 
${\mathcal M}_{\sigma 0}$.

This implies the following lemma which we need to finish the proof of Lemma \ref{quad-main}.

\begin{lemma} \cite{KMS:2011(3)}
\label{quad}
Let $\overline\sigma_1$ be the projection of the quadratic section $\sigma_1$ to $\Lambda_1 /
(\Lambda'' \cap \Lambda_1)$. Suppose the process of entire transformation for $\sigma_1$ goes
infinitely and the number of bases in ${\mathcal M}_{\sigma +}$ never decreases. Then the process
of entire transformation for $\overline\sigma_1$ goes infinitely too and the number of bases in
$\mathcal{M}_{\sigma +}$ never decreases.

If $\ell(g) \in \Lambda'$ then $g$ is a product of bases in ${\mathcal M}_{\sigma 0}$ and $\ell(g)
\in \Lambda''$.
\end{lemma}

This lemma implies that there is no element in $H$ infinitely larger than all bases in
$\mathcal{M}_{\sigma 0}$, but infinitely smaller than all bases in $\mathcal{M}_{\sigma +}$.

Therefore, we can make all components of the lengths of bases in $\mathcal{M}$ zeros except for
those which are maximal components of some bases in $\mathcal M$. Since $\mathcal M$ is a finite
set, the number of such components is finite, and the length is defined in $\mathbb{R}^k$ for some
$k$ not larger than the number of pairs of bases.
\end{proof}

Similarly we can prove the Proposition for sections corresponding to the linear case and the case of
an abelian vertex group. Using induction on the number of levels obtained in the Elimination process,
we prove the statement of Theorem \ref{th:main3}.

The points of an $\mathbb{R}^n$-tree, where $G$ acts freely are the leaves in the foliation
corresponding to the new length of bases in $\mathbb{R}^n$. The new lengths of bases are exactly
their Lyndon lengths. $\Box$

{\bf Hint of the  Proof of Theorem \ref{th:main4}}. Notice that in the case when $\widetilde G$ is a
finitely presented group with a free length function in $\Lambda$ (not necessary regular) it can be
embedded in the group with a free regular length function in $\Lambda$. That group can
be embedded in $R(\Lambda', X)$. When we make a generalized equation for $\widetilde G$, we have to
add only a finite number of elements from $R(\Lambda', X)$. We run the elimination process for this
generalized equation as we did in the proof of Theorem \ref{th:main1} and obtain a group $G$ as in
Remark \ref{nonreg1}, where $\widetilde G$ is embedded, and then redefine the length of elements of
$G$ in $\mathbb{R}^n$ as above. Therefore $G$ acts freely and regularly on a $\mathbb{R}^n$-tree.
The theorem is proved.

Moreover, one shows by induction that the length function in $\mathbb{R}^n$ and in $\Lambda$ defined
on $G$ is regular. This proves Theorem \ref{th:main4}.

\subsection{Algorithmic problems for finitely presented $\Lambda$-free groups}
\label{subs:alg_prob_lambda}

The structural results of the previous section give solution to many algorithmic problems on finitely 
presented $\Lambda$-free groups.

\begin{theorem} \cite{KMS:2011(3)}
\label{th:word_conjugacy_lambda}
Let $G$ be a finitely presented $\Lambda$-free group. Then the following algorithmic problems
are decidable in $G$:
\begin{itemize}
\item the Word Problem;
\item the  Conjugacy Problems;
\item the Diophantine Problem (decidability of arbitrary equations in $G$).
\end{itemize}
\end{theorem}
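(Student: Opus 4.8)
The plan is to deduce all three decidability statements from the structural and geometric results already established for finitely presented $\Lambda$-free groups, reserving the full strength of the elimination machinery of Section \ref{sec:proc} for the Diophantine Problem, which is the only genuinely hard case. I would begin with the Word Problem. By Corollary \ref{co:Lambda_Guirardel_3}, $G$ is biautomatic, hence automatic, and automatic groups have solvable word problem via the canonical normal-form procedure attached to the automatic structure (indeed in quadratic time). Equivalently one may invoke Theorem \ref{co:Lambda_Guirardel_2}: $G$ is toral relatively hyperbolic, and such groups are known to have solvable word problem. Either route produces an explicit algorithm deciding whether a word in the generators represents the identity.

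For the Conjugacy Problem, biautomaticity is the efficient tool: biautomatic groups have solvable conjugacy problem (Gersten--Short), so the same biautomatic structure that settles the word problem settles conjugacy. I would present this as the main line, noting as an alternative that the conjugacy problem is decidable in toral relatively hyperbolic groups (their parabolic subgroups being free abelian of finite rank by Theorem \ref{co:main1b}, where conjugacy is immediate), which again applies by Theorem \ref{co:Lambda_Guirardel_2}.

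The main obstacle is the Diophantine Problem, and here the plan is to run the elimination process (the $\Lambda$-machine) of Section \ref{sec:proc} on the generalized equation attached to a given finite system of equations over $G$. Given a system $S = 1$ with constants in $G$, one first passes, if necessary, to the regular case via Theorem \ref{th:main4}, then builds, exactly as in Subsection \ref{subs:constr_ge}, a generalized equation $\Omega$ whose solutions encode the solutions of $S = 1$, and applies the rewriting process of Subsection \ref{se:5.2}. Three points must be verified: (i) that the process is genuinely algorithmic --- here one uses Theorem \ref{co:main1}, which guarantees that the length function of $G$ takes values in a finitely generated ordered abelian group, so that all the length comparisons and the linear systems $\Sigma$ appearing in the machine can be handled effectively (this is exactly what fails for a general $\R$-valued length function, where the machine is not an algorithm); (ii) that the process terminates, which follows from the strict decrease of the Delzant--Potyagailo complexity $c(G)$ under the free and abelian splittings produced in the Linear, Quadratic and General JSJ cases, together with Lemma \ref{3.2} and Proposition \ref{(d)}; and (iii) that the output is a finite Makanin--Razborov-type diagram describing, up to change of variables, the entire solution set, from which nonemptiness --- and hence decidability of an arbitrary equation --- can be read off directly. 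I expect step (ii), the simultaneous termination and effectiveness of the elimination process over a non-Archimedean $\Lambda$, to be the hardest part, since one must control the interaction between the hierarchy of convex subgroups of $\Lambda$ and the complexity-reduction arguments.

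Finally, I would record that the Diophantine Problem also follows, more quickly but less self-containedly, from Theorem \ref{co:Lambda_Guirardel_2} combined with Dahmani's solution of the existential theory of torsion-free toral relatively hyperbolic groups; this furnishes an independent confirmation of the conclusion and a useful cross-check on the elimination-process argument.
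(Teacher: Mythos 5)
Your treatment of the Word and Conjugacy Problems is correct, and it is essentially the paper's own argument dressed differently: the paper deduces both from Theorem \ref{co:Lambda_Guirardel_2} (relative hyperbolicity), citing Bumagin and Osin for conjugacy, while your primary route through biautomaticity (Corollary \ref{co:Lambda_Guirardel_3} plus Gersten--Short) is itself derived in the paper from that same theorem via Rebbechi's result. Either way these two items are fine.

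The problem is the Diophantine Problem, where you invert the actual logic of the paper. What you relegate to a closing ``cross-check'' --- Theorem \ref{co:Lambda_Guirardel_2} combined with Dahmani's decidability of existential theories of toral relatively hyperbolic groups --- \emph{is} the paper's entire proof. Your main line, running the elimination process of Section \ref{sec:proc} as a decision procedure, has a genuine gap that cannot be repaired as sketched. The process there is built from cancellation diagrams of the relators with respect to a fixed embedding $\xi : G \hookrightarrow CDR(\Lambda, Z)$; constructing the generalized equation $\Omega$ requires knowing the infinite words $\xi(x_i)$ and their lengths, and this data is not part of the input of the Diophantine Problem (a finite presentation, a finite system $S=1$, and the promise of $\Lambda$-freeness), nor is it computable from that input. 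Your step (i) leans on Theorem \ref{co:main1} to make the machine effective, but that theorem is a pure existence statement --- the subgroup $\Lambda_0$ generated by $l(G)$ is finitely generated --- and supplies no algorithm for computing or comparing lengths. Indeed the paper is explicit that the machines are algorithms only for $\Lambda = \Z^n$ or $\Z^\infty$; for general $\Lambda$ the elimination process is a structural tool used to prove Theorems \ref{th:main1}, \ref{th:main3} and \ref{th:main4}, not an effective procedure. Moreover, the termination arguments you invoke (Delzant--Potyagailo complexity, Lemma \ref{3.2}, Proposition \ref{(d)}) are established for the single generalized equation attached to $G$ itself, not for a Makanin--Razborov-style process over arbitrary finite systems with constants, which would be a separate and substantial construction the paper never undertakes. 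So steps (i) and (iii) of your plan fail as stated, not merely step (ii); your conclusion survives only because the fallback paragraph coincides with the paper's proof, and that fallback should be promoted to the main (indeed the only) argument.
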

\begin{proof}
By Theorem \ref{co:Lambda_Guirardel_2} a finitely presented $\Lambda$-free group $G$ is hyperbolic 
relative to its non-cyclic abelian subgroups. Decidability of the Conjugacy  Problem for such groups 
is known  - it was done by Bumagin \cite{Bumagin:2004} and Osin \cite{Osin:2006}. Decidability 
of the Diophantine Problems in such groups was proved by Dahmani \cite{Dahmani:2003}.
\end{proof}

Theorem \ref{co:Lambda_Guirardel_2} combined with results of Dahmani and Groves \cite{Dahmani_Groves:2008}
immediately  implies the following two corollaries.

\begin{cor}
\label{co:algorithm_lambda_1}
Let $G$ be a finitely presented  $\Lambda$-free group. Then:
\begin{itemize}
\item $G$ has a non-trivial abelian splitting and one can find such a splitting effectively,
\item $G$ has a non-trivial abelian JSJ-decomposition and one can find such a decomposition
effectively.
\end{itemize}
\end{cor}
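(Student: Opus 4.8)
The plan is to reduce the statement to the algorithmic theory of toral relatively hyperbolic groups and then invoke the results of Dahmani and Groves \cite{Dahmani_Groves:2008}. The starting point is Theorem \ref{co:Lambda_Guirardel_2}, which asserts that a finitely presented $\Lambda$-free group $G$ is hyperbolic relative to its non-cyclic abelian subgroups. To turn this into a usable input I would first record, via Theorem \ref{co:main1b}, that every abelian subgroup of $G$ is free abelian of finite rank and that $G$ has only finitely many conjugacy classes of maximal non-cyclic abelian subgroups. Together with Theorem \ref{co:Lambda_Guirardel_2} this shows precisely that $G$ is \emph{toral relatively hyperbolic}: it is hyperbolic relative to a finite family of free abelian (hence finitely generated, torsion-free abelian) peripheral subgroups, which is exactly the class to which the Dahmani--Groves machinery applies.

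The second step is to make this relatively hyperbolic structure available \emph{effectively}, since the conclusions are algorithmic. The finite presentation of $G$ is given by hypothesis, and the word and conjugacy problems are decidable by Theorem \ref{th:word_conjugacy_lambda}; what remains is to exhibit explicit generating sets for a set of conjugacy-class representatives of the maximal non-cyclic abelian (peripheral) subgroups. I would extract these from the constructive structural decomposition: the elimination process behind Theorem \ref{th:main1} (equivalently Theorem \ref{co:main5}) represents $G$ as built from free groups by finitely many amalgamated products and HNN extensions whose associated subgroups are maximal abelian, free abelian groups of finite rank. Reading off these associated subgroups yields finite generating sets for the peripheral subgroups, so the toral relatively hyperbolic structure of $G$ is produced constructively from the finite presentation.

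With $G$ presented as a toral relatively hyperbolic group together with its peripheral structure, the two conclusions follow directly from \cite{Dahmani_Groves:2008}: their algorithms, given such input, compute a non-trivial splitting of $G$ over an abelian subgroup and, more strongly, construct an abelian JSJ decomposition of $G$. That these outputs are non-trivial for any infinite such $G$ is automatic, since free, free abelian of rank $\geqslant 1$, and closed surface pieces all carry non-trivial abelian splittings (a cyclic group, for instance, splits as an HNN extension over the trivial subgroup).

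I expect the main obstacle to be verifying the effectiveness hypotheses demanded by the Dahmani--Groves algorithms, rather than the mere existence of the decompositions. Concretely, one must check that the data computed in the second step --- the finite presentation, the finite generating sets of the peripheral subgroups, and the decidability of the relevant problems supplied by Theorem \ref{th:word_conjugacy_lambda} --- matches exactly the input format required in \cite{Dahmani_Groves:2008}, and that the constructiveness of the elimination process guarantees the peripheral structure is obtained uniformly and correctly. Once this interface is in place no further work is needed: existence comes from relative hyperbolicity, and effectiveness is inherited from the constructive structure theorem together with the decidability of the word and conjugacy problems.
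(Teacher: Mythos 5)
Your proposal is correct and follows essentially the same route as the paper, which derives the corollary in one line by combining Theorem \ref{co:Lambda_Guirardel_2} (hyperbolicity relative to the non-cyclic abelian subgroups) with the algorithms of Dahmani--Groves \cite{Dahmani_Groves:2008}. Your additional verification that the peripheral structure is toral (via Theorem \ref{co:main1b}) and can be produced effectively from the elimination process is a reasonable elaboration of the interface the paper leaves implicit, not a different argument.
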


\begin{cor}
\label{co:algorithm_lambda_2}
The Isomorphism Problem is decidable in the class of finitely presented groups that act  freely on
some $\Lambda$-tree.
\end{cor}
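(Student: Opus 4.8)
The plan is to recognize each group in the class as a \emph{toral relatively hyperbolic} group and then invoke the Dahmani--Groves solution of the Isomorphism Problem in that class \cite{Dahmani_Groves:2008}. Concretely, suppose we are given finite presentations of two finitely presented groups $G_1$ and $G_2$, each promised to act freely on some $\Lambda$-tree (possibly for different ordered abelian groups $\Lambda$). The first task is to verify that every such group fits the Dahmani--Groves hypotheses, i.e. that it is torsion-free and hyperbolic relative to a finite collection of finitely generated abelian subgroups.

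First I would assemble the required structural facts, all of which are already available in the excerpt. Torsion-freeness is immediate from the basic properties of $\Lambda$-free groups (part (c) of the general theorem in Section \ref{subs:lambda-free}). Relative hyperbolicity with respect to the non-cyclic abelian subgroups is exactly Theorem \ref{co:Lambda_Guirardel_2}. To see that the peripheral structure is of the required \emph{toral} type, I would use Theorem \ref{co:main1b}: part (1) gives that every abelian subgroup is free abelian of finite rank, and part (2) gives that there are only finitely many conjugacy classes of maximal non-cyclic abelian subgroups. Hence the peripheral collection is a finite family of finitely generated free abelian groups, so each $G_i$ is toral relatively hyperbolic.

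The second step is the reduction itself: Dahmani and Groves proved that the Isomorphism Problem is uniformly decidable across the class of toral relatively hyperbolic groups, taking finite presentations as input. Running their algorithm on the two presentations therefore decides whether $G_1 \cong G_2$. Since the inputs are promised to lie in our class, and since membership in our class entails toral relative hyperbolicity by the previous paragraph, the hypotheses of their theorem are met.

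The main obstacle I anticipate is not the logical reduction but its effectivity: the Dahmani--Groves procedure needs the relatively hyperbolic (peripheral) structure to be exhibited, whereas we are handed a bare presentation. To close this gap I would invoke Corollary \ref{co:algorithm_lambda_1}, which provides that a non-trivial abelian splitting, and indeed an abelian JSJ-decomposition, of a finitely presented $\Lambda$-free group can be found effectively; from such data the finite system of maximal non-cyclic abelian peripheral subgroups can be read off and supplied as the input peripheral structure. The only remaining care is uniformity in $\Lambda$: different groups in the class may act on trees over different ordered abelian groups, but Theorem \ref{th:main3} shows each is in fact $\mathbb{R}^n$-free, so the toral relatively hyperbolic structure is intrinsic and independent of the original $\Lambda$, which is all that the Dahmani--Groves machinery requires.
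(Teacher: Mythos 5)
Your proposal is correct and follows essentially the same route as the paper: the paper derives this corollary directly by combining Theorem \ref{co:Lambda_Guirardel_2} (relative hyperbolicity with respect to non-cyclic abelian subgroups) with the Dahmani--Groves solution of the Isomorphism Problem for toral relatively hyperbolic groups \cite{Dahmani_Groves:2008}. The extra details you supply --- torsion-freeness, Theorem \ref{co:main1b} to confirm the toral peripheral structure, and the effectivity of exhibiting it --- are exactly the verifications the paper leaves implicit in its ``immediately implies.''
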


\begin{theorem}
\label{co:membership_lambda}
The Subgroup Membership Problem is decidable in every finitely presented $\Lambda$-free group.
\end{theorem}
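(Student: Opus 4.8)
The plan is to reduce the problem to finitely generated $\R^n$-free groups and then argue by induction on $n$, feeding Guirardel's hierarchical decomposition into the Stallings-folding membership algorithm of Kapovich--Myasnikov--Weidmann (Theorem \ref{th:eff} and Corollary \ref{le:member}). So, given a finite presentation of a $\Lambda$-free group $G$, I would first invoke Theorem \ref{th:main3} to view $G$ as $\R^n$-free for some $n$; since finitely generated $\R^n$-free groups are finitely presented (Corollary \ref{co:Guirardel_1}), there is no loss in working in the class of finitely generated $\R^n$-free groups throughout. The word problem is decidable here by Theorem \ref{th:word_conjugacy_lambda}, so all the bookkeeping (comparing elements, rewriting generators of a subgroup, testing edge-group membership) can actually be carried out. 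The inductive statement I would prove is the \emph{uniform} membership problem, i.e. the existence of an algorithm that, on input a finite set $S\subset G$ together with an element $g\in G$, decides whether $g\in\langle S\rangle$; the uniform form is what the induction consumes at the vertex groups of the next level down.

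For the base case $n\le 1$, Rips' Theorem (Theorem \ref{th:Rips}) writes a finitely generated $\R$-free group as a free product $G_1\ast\cdots\ast G_k$ of finitely generated free abelian groups and closed surface groups. Regarding this as a graph of groups with trivial edge groups, every vertex group is either polycyclic-by-finite (the free abelian factors) or locally quasiconvex word-hyperbolic (surface and free factors), so the hypotheses of Theorem \ref{th:eff} hold and Corollary \ref{le:member} gives solvable uniform membership. The same trivial-edge graph-of-groups picture, via the Kurosh argument of Example \ref{kurosh}, reduces the general case to \emph{freely indecomposable} $G$, which is where the real work lies.

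For the inductive step, let $G$ be freely indecomposable and $\R^n$-free with $n\ge 2$. By Guirardel's Theorem \ref{th:Guirardel}, $G$ is the fundamental group of a finite graph of groups with \emph{cyclic} edge groups and finitely generated $\R^{n-1}$-free vertex groups, and this decomposition can be obtained effectively by the effective abelian JSJ of Corollary \ref{co:algorithm_lambda_1} (the vertex groups are again finitely presented by Corollary \ref{co:Guirardel_1}). Each vertex group is finitely generated $\R^{n-1}$-free, so by the inductive hypothesis it has solvable uniform membership problem. I would then run the folding process on this graph of groups: because the edge groups are cyclic (in particular slender) and each vertex group is equipped with a membership solver, the Stallings-type folding sequence of Kapovich--Myasnikov--Weidmann terminates and outputs a finite subgraph of groups representing $\langle S\rangle$, from which membership of $g$ is read off. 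This completes the induction and hence proves Theorem \ref{co:membership_lambda}.

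The hard part will be that Theorem \ref{th:eff} is stated only for vertex groups that are locally quasiconvex word-hyperbolic or polycyclic-by-finite, a hypothesis that fails for the $\R^{n-1}$-free vertex groups once $n\ge 2$. The crux is therefore to carry out the folding algorithm under the weaker hypothesis that the edge groups are cyclic (or slender) while the vertex groups are only assumed to carry solvers for membership and for the associated auxiliary effective problems the folding moves require, namely computing intersections of subgroups with conjugates of the (cyclic) edge groups and enumerating the relevant double-coset data. The main technical point is that this effective package propagates up the hierarchy: one must verify that the auxiliary solvers for a vertex group built at level $n-1$ are available effectively so that the folding sequence at level $n$ is guaranteed to terminate. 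As an alternative route avoiding folding altogether, one could use that $G$ is toral relatively hyperbolic (Theorem \ref{co:Lambda_Guirardel_2} together with Theorem \ref{co:main1b}) and that every finitely generated subgroup is undistorted (Theorem \ref{co:Lambda_quasi-convex_hierarchy_1}), hence relatively quasiconvex, which reduces membership to membership in the free abelian peripheral subgroups (pure linear algebra); the parallel obstacle there is making the relative-quasiconvexity reduction effective.
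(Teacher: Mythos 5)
Your main route has a genuine gap, and you have in fact named it yourself: Theorem \ref{th:eff} requires the vertex groups to be locally quasiconvex word-hyperbolic or polycyclic-by-finite, and the $\R^{n-1}$-free vertex groups arising at level $n\ge 2$ are in general neither (they may contain non-cyclic free abelian subgroups, so they need not be word-hyperbolic, and they contain non-abelian free subgroups, so they are not polycyclic-by-finite). The replacement you propose --- a folding algorithm whose vertex groups merely carry membership solvers and whose edge groups are cyclic --- is precisely the hard content of such an argument and is not a theorem you can cite; in the Kapovich--Myasnikov--Weidmann machine the delicate point is \emph{termination} of the folding sequence, which is proved there using the algebraic finiteness properties of the vertex groups (local quasiconvexity, the Noetherian property of polycyclic groups), not merely the existence of membership and intersection oracles, so deferring it leaves the induction unproven. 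There is a second gap even before that: the decomposition your induction consumes is Guirardel's (cyclic edge groups, finitely generated $\R^{n-1}$-free vertex groups, Theorem \ref{th:Guirardel}), but the decomposition you can actually compute is the abelian JSJ of Corollary \ref{co:algorithm_lambda_1}; these need not coincide, nothing in the paper makes Theorem \ref{th:Guirardel} effective, and the JSJ vertex groups are not known to be $\R^{n-1}$-free. So the inductive step cannot even be set up effectively.

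The paper's actual proof is two lines and is essentially your ``alternative route'' with all the machinery stripped away: by Theorem \ref{co:Lambda_quasi-convex_hierarchy_1} every finitely generated subgroup $H=\langle S\rangle$ of $G$ is quasi-isometrically embedded in $G$, and the word problem in $G$ is decidable (Theorem \ref{th:word_conjugacy_lambda}). The effectivity obstacle you worry about is a non-issue, because the Subgroup Membership Problem as claimed is the non-uniform one: for each \emph{fixed} finitely generated $H$ an algorithm must exist. Undistortion provides constants $\lambda,\epsilon$ --- which need only exist, not be computable from $S$ --- such that every $g\in H$ is a product of at most $\lambda|g|_G+\epsilon$ elements of $S^{\pm 1}$; the algorithm then enumerates all such products and compares each with $g$ using the word-problem algorithm. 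No relative quasiconvexity, peripheral reduction, or folding is needed. Note that the uniform statement you set out to prove is strictly stronger than what the theorem asserts, and the paper does not prove it.
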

\begin{proof} By Theorem \ref{co:Lambda_quasi-convex_hierarchy_1} every finitely generated subgroup 
of a finitely presented $\Lambda$-tree group $G$  is quasi-isometrically embedded into $G$. Obviously, 
the Membership Problem for every fixed quasi-isometrically embedded subgroup in a finitely generated 
group with decidable Word Problem is decidable.
\end{proof}

%\bibliography{main_bibliography}
\bibliography{../main_bibliography}

\begin{thebibliography}{100}

\bibitem{Alperin_Bass:1987}
R.~{Alperin} and H.~{Bass}.
\newblock Length functions of group actions on {$\Lambda$}-trees.
\newblock In {\em Combinatorial group theory and topology, ed. S. M. Gersten
  and J. R. Stallings)}, volume 111 of {\em Annals of Math. Studies}, pages
  265--178. Princeton University Press, 1987.

\bibitem{Alperin_Moss:1985}
R.~{Alperin} and K.~N. {Moss}.
\newblock {Complete trees for groups with a real-valued length function}.
\newblock {\em J. London Math. Soc.}, 31:55--17, 1985.

\bibitem{Baer:1950}
R.~{Baer}.
\newblock {Free sums of groups and their generalizations III}.
\newblock {\em Amer. J. Math.}, 72:647--670, 1950.

\bibitem{Bass:1991}
H.~{Bass}.
\newblock {Groups acting on non-arhimedean trees}.
\newblock In {\em Arboreal group theory}, volume~19 of {\em MSRI Publications},
  pages 69--170, New York, 1991. Springer-Verlag.

\bibitem{Baumslag:1967}
B.~{Baumslag}.
\newblock {Residually free groups}.
\newblock {\em Proc. London Math. Soc.}, 17(3):402--418, 1967.

\bibitem{Baumslag:1962}
G.~{Baumslag}.
\newblock {On generalized free products}.
\newblock {\em Math. Zeit.}, 7(8):423--438, 1962.

\bibitem{Baumslag_Miasnikov_Remeslennikov:1999}
G.~{Baumslag}, A.~G. {Miasnikov}, and V.~{Remeslennikov}.
\newblock {Algebraic geometry over groups I. Algebraic sets and ideal theory}.
\newblock {\em J. Algebra}, 219:16--79, 1999.

\bibitem{Baumslag_Miasnikov_Remeslennikov:2000}
G.~{Baumslag}, A.~G. {Miasnikov}, and V.~{Remeslennikov}.
\newblock {Discriminating and co-discriminating groups}.
\newblock {\em J. Group Theory}, 3(4):467--479, 2000.

\bibitem{Baumslag_Miasnikov_Remeslennikov:2002}
G.~{Baumslag}, A.~G. {Miasnikov}, and V.~{Remeslennikov}.
\newblock {Discriminating completions of hyperbolic groups}.
\newblock {\em Geometriae Dedicata}, 92:115--143, 2002.

\bibitem{BaumMyasShpil:2002}
G.~{Baumslag}, A.~G. {Myasnikov}, and V.~{Shpilrain}.
\newblock {Open problems in combinatorial group theory. Second Edition}.
\newblock In {\em Combinatorial and geometric group theory}, volume 296 of {\em
  Contemporary Mathematics}, pages 1--38. American Mathematical Society, 2002.

\bibitem{Berestovskii_Plaut:2009}
V.~{Berestovskii} and {Plaut} C.
\newblock {Covering R-trees, R-free groups, and dendrites}.
\newblock Preprint. Available at http://arxiv.org/abs/0904.3767, 2009.

\bibitem{Bestvina:1988}
M.~{Bestvina}.
\newblock {Degeneration of the hyperbolic space}.
\newblock {\em Duke Math. Journal}, 56:143--161, 1988.

\bibitem{Bestvina:1999}
M.~{Bestvina}.
\newblock {$\mathbb{R}$-trees in topology, geometry, and group theory}.
\newblock In {\em Handbook of geometric topology}, pages 55--93, Amsterdam:
  North-Holland, 2001. Elsevier.

\bibitem{Bestvina_Feighn:1995}
M.~{Bestvina} and {Feighn} M.
\newblock {Stable actions of groups on real trees}.
\newblock {\em Invent. Math.}, 121(2):287--321, 1995.

\bibitem{Bridson_Haefliger:1999}
M.~R. {Bridson} and A.~{Haefliger}.
\newblock {\em {Metric spaces of non-positive curvature}}.
\newblock Springer, New York, 1999.

\bibitem{Bumagin:2004}
I.~{Bumagin}.
\newblock {The conjugacy problem for relatively hyperbolic groups}.
\newblock {\em Algebr. Geom. Topol.}, 4:1013--1040, 2004.

\bibitem{Bumagin_Kharlampovich_Myasnikov:2007}
I.~{Bumagin}, O.~{Kharlampovich}, and A.~{Myasnikov}.
\newblock {The isomorphism problem for finitely generated fully residually free
  groups}.
\newblock {\em J. Pure Appl. Alg.}, 208(3):961--977, 2007.

\bibitem{CK2}
M.~{Casals-Ruiz} and I.~{Kazachkov}.
\newblock {On systems of equations over free partially commutative groups}.
\newblock {\em Mem. Amer. Math. Soc.}, 212(999), 2011.

\bibitem{CK1}
M.~{Casals-Ruiz} and I.~{Kazachkov}.
\newblock {On systems of equations over free products of groups}.
\newblock {\em J. Algebra}, 333:368--426, 2011.

\bibitem{Champetier_Guirardel:2005}
C.~{Champetier} and V.~{Guiraredel}.
\newblock {Limit groups as limits of free groups: compactifying the set of free
  groups}.
\newblock {\em Israel Journal of Mathematics}, 146:1--75, 2005.

\bibitem{Chiswell:1976}
I.~{Chiswell}.
\newblock {Abstract length functions in groups}.
\newblock {\em Math. Proc. Cambridge Philos. Soc.}, 80(3):451--463, 1976.

\bibitem{Chiswell:1987}
I.~{Chiswell}.
\newblock {Length functions and pregroups}.
\newblock {\em Proc. Edinburgh Math. Soc.}, 30:57--67, 1987.

\bibitem{Chiswell:1994}
I.~{Chiswell}.
\newblock {Harrison's theorem for $\Lambda$-trees}.
\newblock {\em Quart. J. Math Oxford}, 45(2):423--444, 1994.

\bibitem{Chiswell:2001}
I.~{Chiswell}.
\newblock {\em {Introduction to $\Lambda$-trees}}.
\newblock World Scientific, 2001.

\bibitem{Chiswell:2005}
I.~{Chiswell}.
\newblock {$A$-free groups and tree-free groups}.
\newblock In {\em Algorithms, Languages, Logic}, volume 378 of {\em
  Contemporary Mathematics}, pages 79--86. American Mathematical Society, 2005.

\bibitem{Chiswell:2013}
I.~{Chiswell}.
\newblock {Right orderability and graphs of groups}.
\newblock Preprint, 2012.

\bibitem{Chiswell_Muller:2010}
I.~{Chiswell} and T.~{Muller}.
\newblock {Embedding theorems for tree-free groups}.
\newblock {\em Math. Proc. Camb. Phil. Soc.}, 149:127--146, 2010.

\bibitem{Cohen}
D.~E. {Cohen}.
\newblock {\em Combinatorial group theory: a topological approach}, volume~14
  of {\em London Mathematical Society Student Texts}.
\newblock Cambridge University Press, Cambridge, 1989.

\bibitem{Culler_Morgan:1987}
M.~{Culler} and J.~{Morgan}.
\newblock {Groups actions on R-trees}.
\newblock {\em Proc. London Math.Soc.}, 55:571--604, 1987.

\bibitem{Dahmani:2003}
F.~{Dahmani}.
\newblock {Combination of convergence groups}.
\newblock {\em Geom. Topol.}, 7:933--963, 2003.

\bibitem{Dahmani_Groves:2008}
F.~{Dahmani} and D.~{Groves}.
\newblock {The Isomorphism Problem for Toral Relatively Hyperbolic Groups}.
\newblock {\em Publ. Math., Inst. Hautes Etudes Sci.}, 107(1):211--290, 2008.

\bibitem{DJ:2000}
M.~{Davis} and T.~{Januszkiewicz}.
\newblock {Right-angled Artin groups are commensurable with right-angled
  Coxeter groups}.
\newblock {\em J. Pure. Appl. Alg.}, 153:229--235, 2000.

\bibitem{Delzant_Potyagailo:2001}
T.~{Delzant} and L.~{Potyagailo}.
\newblock {Accessibilit$\acute{e}$ hi$\acute{e}$rarchique des groupes de
  pr$\acute{e}$sentation finie}.
\newblock {\em Topology}, 40(3):617--629, 2001.

\bibitem{DunwoodyDicks:1989}
W.~{Dicks} and M.~J. {Dunwoody}.
\newblock {\em {Groups acting on graphs}}.
\newblock Cambridge University Press, 1989.

\bibitem{DM}
V.~{Diekert} and A.~G. {Myasnikov}.
\newblock {Solving Word Problems in Group Extensions over Infinite Words}.
\newblock Proceedings of DLT, 2011.

\bibitem{Drutu_Sapir:2005}
C.~{Drutu} and M.~{Sapir}.
\newblock {Tree-graded spaces and asymptotic cones of groups}.
\newblock {\em Topology}, 44(5):959--1058, 2005.

\bibitem{Drutu_Sapir:2008}
C.~{Drutu} and M.~{Sapir}.
\newblock {Groups acting on tree-graded spaces and splittings of relatively
  hyperbolic groups}.
\newblock {\em Adv. Math.}, 217(3):1313--1367, 2008.

\bibitem{Duchamp_Krob:1992}
G.~{Duchamp} and D.~{Krob}.
\newblock {The lower central series of the free partially commutative group}.
\newblock {\em Semigroup Forum}, 45(3):385--394, 1992.

\bibitem{Dunwoody:1997}
M.~J. {Dunwoody}.
\newblock {Groups acting on protrees}.
\newblock {\em J. London Math. Soc.}, 56:125--136, 1997.

\bibitem{GLP:1994}
D.~{Gaboriau}, G.~{Levitt}, and F.~{Paulin}.
\newblock {Pseudogroups of isometries of {$\mathbb{R}$} and Rips' Theorem on
  free actions on {$\mathbb{R}$}-trees}.
\newblock {\em Israel. J. Math.}, 87:403--428, 1994.

\bibitem{GaglioneSpellman:1993}
A.~{Gaglione} and D.~{Spellman}.
\newblock {Some model theory of free groups and free algebras}.
\newblock {\em Houston J. Math.}, 19:327--356, 1993.

\bibitem{GKM}
D.~{Gildenhuys}, O.~{Kharlampovich}, and A~{Myasnikov}.
\newblock {CSA-groups and separated free constructions}.
\newblock {\em Bull. Austral. Math. Soc.}, 52(1):63--84, 1995.

\bibitem{Glass:1999}
A.~M.~W. {Glass}.
\newblock {\em {Partially ordered groups}}, volume~7 of {\em Series in
  Algebra}.
\newblock World Scientific, 1999.

\bibitem{Gromov:1987}
M.~{Gromov}.
\newblock {Hyperbolic groups}.
\newblock In S.~M. Gersten, editor, {\em Essays in Group Theory, MSRI
  Publications}, volume~8, pages 75--263. Springer, 1987.

\bibitem{Gr}
M.~{Gromov}.
\newblock {Asymptotic invariants of infinite groups}.
\newblock In {\em Geometric Group Theory II}, volume 182 of {\em LMS lecture
  notes}, pages 290--317. Cambridge Univ. Press, 1993.

\bibitem{Groves:2005(2)}
D.~{Groves}.
\newblock {Limit groups for relatively hyperbolic groups II: Makanin-Razborov
  diagrams}.
\newblock {\em Geom. Top.}, 9:2319--2358, 2005.

\bibitem{Groves:2005(1)}
D.~{Groves}.
\newblock {Limits of (certain) CAT(0) groups, I: Compactification}.
\newblock {\em Alg. Geom. Top.}, 5:1325--1364, 2005.

\bibitem{Groves_Wilton:2009}
D.~{Groves} and H.~{Wilton}.
\newblock {Enumerating limit groups}.
\newblock {\em Groups, Geometry and Dynamics}, 3:389--399, 2009.

\bibitem{Guirardel:2004}
V.~{Guirardel}.
\newblock {Limit groups and groups acting freely on $\mathbb{R}^n$-trees}.
\newblock {\em {Geom. Topol.}}, 8:1427--1470, 2004.

\bibitem{Guirardel:2008}
V.~{Guirardel}.
\newblock {Actions of finitely generated groups on $\mathbb{R}$-trees}.
\newblock {\em Ann. Inst. Fourier (Grenoble)}, 58(1):159--211, 2008.

\bibitem{Harrison:1972}
N.~{Harrison}.
\newblock {Real length functions in groups}.
\newblock {\em Trans. Amer. Math. Soc.}, 174:77--106, 1972.

\bibitem{Hoare:1979}
A.~H.~M. {Hoare}.
\newblock {An embedding for groups with length functions}.
\newblock {\em Mathematika}, 26:99--102, 1979.

\bibitem{Hoare:1988}
A.~H.~M. {Hoare}.
\newblock {Pregroups and length functions}.
\newblock {\em Math. Proc. Cambridge Philos. Soc.}, 104(1):21--30, 1988.

\bibitem{Hsu_Wise:1999}
T.~{Hsu} and D.~{Wise}.
\newblock {On linear and residual properties of graph products}.
\newblock {\em Michigan Math. J.}, 46(2):251--259, 1999.

\bibitem{Humphries:1994}
S.~P. {Humphries}.
\newblock {On representations of Artin groups and the Tits conjecture}.
\newblock {\em J. Algebra}, 169:847--862, 1994.

\bibitem{Kapovich_Myasnikov:2002}
I.~{Kapovich} and A.~G. {Myasnikov}.
\newblock Stallings foldings and subgroups of free groups.
\newblock {\em J. Algebra}, 248:608--668, 2002.

\bibitem{KMW:2005}
I.~{Kapovich}, A.~G. {Myasnikov}, and R.~{Weidmann}.
\newblock Foldings, graphs of groups and the membership problem.
\newblock {\em Internat. J. Algebra and Comput.}, 15(1):95--128, 2005.

\bibitem{Khan_Myasnikov_Serbin:2007}
B.~{Khan}, A.~G. {Myasnikov}, and D.~{Serbin}.
\newblock {On positive theories of groups with regular free length functions}.
\newblock {\em Internat. J. Algebra Comput.}, 17(1):1--26, 2007.

\bibitem{Kharlampovich_Myasnikov:1998(1)}
O.~{Kharlampovich} and A.~{Myasnikov}.
\newblock {Irreducible affine varieties over a free group. I: Irreducibility of
  quadratic equations and Nullstellensatz}.
\newblock {\em J. Algebra}, 200(2):472--516, 1998.

\bibitem{Kharlampovich_Myasnikov:2005(3)}
O.~{Kharlampovich} and A.~{Myasnikov}.
\newblock {Algebraic geometry for free groups: lifting solutions into generic
  points}.
\newblock In {\em Algorithms,Languages, Logic}, volume 378 of {\em Contemporary
  Mathematics}, pages 213--318. American Mathematical Society, 2005.

\bibitem{Kharlampovich_Myasnikov:2005(2)}
O.~{Kharlampovich} and A.~{Myasnikov}.
\newblock {Effective JSJ decompositions}.
\newblock In {\em Algorithms,Languages, Logic}, volume 378 of {\em Contemporary
  Mathematics}, pages 87--212. American Mathematical Society, 2005.

\bibitem{Kharlampovich_Myasnikov:2006}
O.~{Kharlampovich} and A.~{Myasnikov}.
\newblock {Elementary theory of free nonabelian groups}.
\newblock {\em J. Algebra}, 302(2):451--552, 2006.

\bibitem{Kharlampovich_Myasnikov:2010}
O.~{Kharlampovich} and A.~{Myasnikov}.
\newblock {Equations and fully residually free groups}.
\newblock In {\em Combinatorial and Geometric Group Theory. Dortmund and
  Carleton conferences (2007)}, volume 378 of {\em New Trends in Mathematics},
  pages 203--243. Birkhauser, 2010.

\bibitem{Kharlampovich_Myasnikov:1998(2)}
O.~{Kharlampovich} and A.~G. {Myasnikov}.
\newblock {Irreducible affine varieties over a free group. II: Systems in
  triangular quasi-quadratic form and description of residually free groups}.
\newblock {\em J. Algebra}, 200(2):517--570, 1998.

\bibitem{Kharlampovich_Myasnikov_Remeslennikov_Serbin:2006}
O.~{Kharlampovich}, A.~G. {Myasnikov}, V.~N. {Remeslennikov}, and D.~{Serbin}.
\newblock {Subgroups of fully residually free groups: algorithmic problems}.
\newblock In {\em Group theory, Statistics and Cryptography}, volume 360 of
  {\em Contemporary Mathematics}, pages 63--101. American Mathematical Society,
  2004.

\bibitem{KMRS:2008}
O.~{Kharlampovich}, A.~G. {Myasnikov}, V.~N. {Remeslennikov}, and D.~{Serbin}.
\newblock {Exponential extensions of groups}.
\newblock {\em J. Group Theory}, 11(1):119--140, 2008.

\bibitem{KMRS:2012}
O.~{Kharlampovich}, A.~G. {Myasnikov}, V.~N. {Remeslennikov}, and D.~{Serbin}.
\newblock {Groups with free regular length functions in $\mathbb{Z}^n$}.
\newblock {\em Trans. Amer. Math. Soc.}, 364:2847--2882, 2012.

\bibitem{KMS:2011(3)}
O.~{Kharlampovich}, A.~G. {Myasnikov}, and D.~{Serbin}.
\newblock {Groups with regular free length functions in $\Lambda$}.
\newblock Submitted. Available at http://arxiv.org/abs/0911.0209, 2011.

\bibitem{KMS:2011}
O.~{Kharlampovich}, A.~G. {Myasnikov}, and D.~{Serbin}.
\newblock {Infinite words and universal free actions}.
\newblock Submitted. Available at http://arxiv.org/abs/1107.0425, 2011.

\bibitem{KMS:2011(2)}
O.~{Kharlampovich}, A.~G. {Myasnikov}, and D.~{Serbin}.
\newblock {Regular completions of $\mathbb{Z}^n$-free groups}.
\newblock Sumbitted, Available at http://arxiv.org/abs/1208.4640, 2011.

\bibitem{Kopytov_Medvedev:1996}
V.~{Kopytov} and N.~{Medvedev}.
\newblock {\em {Right-ordered groups}}.
\newblock Siberian School of Algebra and Logic. Consultants Bureau., New York,
  1996.

\bibitem{Kurosh:1934}
A.~G. {Kurosh}.
\newblock {Die Untergruppen der freien Produkte von beliebigen Gruppen}.
\newblock {\em Mathematische Annalen}, 109:647--660, 1934.

\bibitem{Kushner_Lipschutz:1988}
H.~{Kushner} and S.~{Lipschutz}.
\newblock {A generalization of Stallings' pregroup}.
\newblock {\em J. Algebra}, 119:163--172, 1988.

\bibitem{Kushner_Lipschutz:1993}
H.~{Kushner} and S.~{Lipschutz}.
\newblock {On embeddable prees}.
\newblock {\em J. Algebra}, 160:1--15, 1993.

\bibitem{Lipschutz:1989}
S.~{Lipschutz}.
\newblock {Note on generalizing pregroups}.
\newblock {\em Publ. Inst. Math. (Beograd)}, 45(59):77--80, 1989.

\bibitem{Lyndon:1960}
R.~{Lyndon}.
\newblock {Groups with parametric exponents}.
\newblock {\em T. Am. Math. Soc.}, 9(6):518--533, 1960.

\bibitem{Lyndon:1963}
R.~{Lyndon}.
\newblock {Length functions in groups}.
\newblock {\em Math. Scand.}, 12:209--234, 1963.

\bibitem{Lyndon_Schupp:2001}
R.~{Lyndon} and P.~{Schupp}.
\newblock {\em Combinatorial Group Theory}.
\newblock Classics in Mathematics. Springer, 2001.

\bibitem{Magnus_Karras_Solitar:1977}
W.~{Magnus}, A.~{Karrass}, and D.~{Solitar}.
\newblock {\em Combinatorial Group Theory}.
\newblock Springer-Verlag, 1977.

\bibitem{Makanin:1982}
G.~S. {Makanin}.
\newblock {Equations in a free group}.
\newblock {\em Akad. Nauk SSSR, Ser. Mat.}, 46:1199--1273, 1982.

\bibitem{Malyutin_Nagnibeda_Serbin:2011}
A.~V. {Malyutin}, T.~{Nagnibeda}, and D.~{Serbin}.
\newblock {Boundaries of $\Z^n$-free groups}.
\newblock Preprint. Available at http://arxiv.org/abs/1211.3226, 2012.

\bibitem{Martino_Rourke:2005}
A.~{Martino} and S.~O. {O Rourke}.
\newblock {Free actions on $\mathbb{Z}^n$-trees: a survey}.
\newblock In {\em Geometric methods in group theory}, volume 372 of {\em
  Contemporary Mathematics}, pages 11--25. American Mathematical Society, 2005.

\bibitem{Morgan:1992}
J.~{Morgan}.
\newblock {$\Lambda$-trees and their applications}.
\newblock {\em Bull. of AMS}, 26(1):87--112, 1992.

\bibitem{Morgan_Shalen:1984}
J.~{Morgan} and P.~{Shalen}.
\newblock {Valuations, trees, and degenerations of hyperbolic structures, I}.
\newblock {\em Ann. of Math.}, 120:401--476, 1984.

\bibitem{Morgan_Shalen:1988}
J.~{Morgan} and P.~{Shalen}.
\newblock {Degenerations of hyperbolic structures, II: Measured laminations in
  3-manifolds}.
\newblock {\em Ann. of Math.}, 127:403--456, 1988.

\bibitem{Morgan_Shalen:1988b}
J.~{Morgan} and P.~{Shalen}.
\newblock {Degenerations of hyperbolic structures, III: Actions of 3-manifold
  groups on trees and Thurston's compactness theorem}.
\newblock {\em Ann. of Math.}, 127:457--519, 1988.

\bibitem{Morgan_Shalen:1991}
J.~{Morgan} and P.~{Shalen}.
\newblock {Free actions of surface groups on $\mathbb{R}$-trees}.
\newblock {\em Topology}, 30(2):143--154, 1991.

\bibitem{Myasnikov_Remeslennikov:1994}
A.~G. {Myasnikov} and V.~{Remeslennikov}.
\newblock {Exponential groups I: foundations of the theory and tensor
  completion}.
\newblock {\em Siberian Math. J.}, 35(5):1106--1118, 1994.

\bibitem{Myasnikov_Remeslennikov:1995}
A.~G. {Myasnikov} and V.~{Remeslennikov}.
\newblock {Length functions on free exponential groups}.
\newblock In {\em Proc. Internat. Conf. on Groups in Analysis and Geometry},
  pages 59--61, Omsk, 1995.

\bibitem{Myasnikov_Remeslennikov:1996}
A.~G. {Myasnikov} and V.~{Remeslennikov}.
\newblock {Exponential groups II: extensions of centralizers and tensor
  completion of CSA-groups}.
\newblock {\em Int. J. Algebr. Comput.}, 6(6):687--711, 1996.

\bibitem{Myasnikov_Remeslennikov:2000}
A.~G. {Myasnikov} and V.~{Remeslennikov}.
\newblock {Algebraic geometry over groups II: Logical foundations}.
\newblock {\em J. Algebra}, 234:225--776, 2000.

\bibitem{Myasnikov_Remeslennikov_Serbin:2005}
A.~G. {Myasnikov}, V.~{Remeslennikov}, and D.~{Serbin}.
\newblock {Regular free length functions on Lyndon's free $\mathbb{Z}[t]$-group
  $F^{\mathbb{Z}[t]}$}.
\newblock In {\em Algorithms, Languages, Logic}, volume 378 of {\em
  Contemporary Mathematics}, pages 37--77. American Mathematical Society, 2005.

\bibitem{Myasnikov_Remeslennikov_Serbin:2006}
A.~G. {Myasnikov}, V.~{Remeslennikov}, and D.~{Serbin}.
\newblock {Fully residually free groups and graphs labeled by infinite words}.
\newblock {\em Int. J. Algebr. Comput.}, 66(4):689--737, 2006.

\bibitem{Nesayef:1983}
F.~H. {Nesayef}.
\newblock {Groups generated by elements of length zero and one}.
\newblock Ph.D. Thesis, University of Birmingham, 1983.

\bibitem{Newmann:1967}
H.~{Newmann}.
\newblock {\em Varieties of groups}.
\newblock Springer, 1967.

\bibitem{Nielsen:1921}
J.~{Nielsen}.
\newblock {Om regning med ikke-kommutative faktorer og dens anvendelse i
  gruppeteorien}.
\newblock {\em Math. Tidsskrift B}, pages 78--94, 1921.

\bibitem{Nikolaev_Serbin:2011}
A.~{Nikolaev} and D.~{Serbin}.
\newblock {Finite index subgroups of fully residually free groups}.
\newblock {\em Internat. J. Algebra Comput.}, 21(4):651--673, 2011.

\bibitem{Nikolaev_Serbin:2011(2)}
A.~{Nikolaev} and D.~{Serbin}.
\newblock {Membership Problem in groups acting freely on $\mathbb{Z}^n$-trees}.
\newblock {\em J. Algebra}, 370:410--444, 2012.

\bibitem{Osin:2006}
D.~{Osin}.
\newblock {\em {Relatively hyperbolic groups: Intrinsic geometry, algebraic
  properties, and algorithmic problems}}, volume 179 of {\em Mem. Amer. Math.
  Soc.}
\newblock AMS Press, Providence, 2006.

\bibitem{Parry:1991}
W.~{Parry}.
\newblock Axioms for translation length functions.
\newblock In {\em Arboreal group theory (Berkeley, CA, 1988)}, volume~19 of
  {\em MSRI Publications}, pages 295--170, New York, 1991. Springer.

\bibitem{Paulin:1988}
F.~{Paulin}.
\newblock {Topologie de Gromov equivariant, structures hyperbolic et arbres
  reels}.
\newblock {\em Invent. Math.}, 94:53--80, 1988.

\bibitem{Paulin:1991}
F.~{Paulin}.
\newblock Outer automorphisms of hyperbolic groups and small actions on
  $\mathbb{R}$-trees.
\newblock In {\em Arboreal group theory}, volume~19 of {\em MSRI Publications},
  pages 331--343, New York, 1991. Springer-Verlag.

\bibitem{Promislow:1985}
D.~{Promislow}.
\newblock {Equivalence classes of length functions on groups}.
\newblock {\em Proc. London Math. Soc.}, 51(3):449--477, 1985.

\bibitem{Razborov:1982}
A.~{Razborov}.
\newblock {On systems of equations in a free group}.
\newblock {\em Math. USSR Izvestiya}, 48(4):779--832, 1982.

\bibitem{Razborov:1985}
A.~{Razborov}.
\newblock {On systems of equations in a free group}.
\newblock {\em Math. USSR Izvestiya}, 25(1):115--162, 1985.

\bibitem{Razborov:1995}
A.~{Razborov}.
\newblock {On systems of equations in a free group}.
\newblock In {\em Combinatorial and geometric group theory, Edinburgh (1993)},
  pages 269--283. Cambridge University Press, 1995.

\bibitem{Rebbechi:2001}
D.~Y. {Rebbechi}.
\newblock {\em {Algorithmic properties of relatively hyperbolic groups}}.
\newblock PhD thesis, Rutgers University, 2001.
\newblock Available at http://arxiv.org/abs/math/0302245.

\bibitem{Remeslennikov:1989}
V.~N. {Remeslennikov}.
\newblock {$\exists$}-free groups.
\newblock {\em Sib. Math. J.}, 30(6):998--1001, 1989.

\bibitem{Rhemtulla:1973}
A.~{Rhemtulla}.
\newblock {Residually $F_p$-groups, for many primes $p$, are orderable}.
\newblock {\em Proc. Amer. Math. Soc.}, 41:31--33, 1973.

\bibitem{Rimlinger:1987(2)}
F.~S. {Rimlinger}.
\newblock {A subgroup theorem for pregroups}.
\newblock In {\em Combinatorial group theory and topology, ed. S. M. Gersten
  and J. R. Stallings}, volume 111 of {\em Annals of Math. Studies}, pages
  163--174. Princeton University Press, 1987.

\bibitem{Rimlinger:1987}
F.~S. {Rimlinger}.
\newblock {\em {Pregroups and Bass-Serre Theory}}, volume~65 of {\em Mem. Amer.
  Math. Soc.}
\newblock AMS Press, Providence, 1987.

\bibitem{RipsSela:1994}
E.~{Rips} and Z.~{Sela}.
\newblock {Structure and rigidity in hyperbolic groups I}.
\newblock {\em Geom. Funct. Anal.}, 4:337--371, 1994.

\bibitem{Rourke:2012}
S.~O. {Rourke}.
\newblock {A tree-free group that is not orderable}.
\newblock Preprint. Available at http://arxiv.org/abs/1212.1509, 2012.

\bibitem{Schreier:1928}
O.~{Schreier}.
\newblock {Die Untergruppen der freien Gruppen}.
\newblock Abhandlungen aus dem Mathematischen Seminar der Universitt Hamburg,
  1928.

\bibitem{Scott_Wall:1979}
P.~{Scott} and T.~{Wall}.
\newblock {Topological methods in group theory}.
\newblock In {\em Homological Group Theory, London Math. Soc. Lecture Notes
  Series}, volume~36, pages 137--203. Cambridge Univ. Press, 1979.

\bibitem{Sela:1997}
Z.~{Sela}.
\newblock {Structure and rigidity in (Gromov) hyperbolic groups and discrete
  groups of rank $1$ Lie groups, II}.
\newblock {\em Geom. Funct. Anal.}, 7:561--593, 1997.

\bibitem{Sela:2001}
Z.~{Sela}.
\newblock {Diophantine geometry over groups I: Makanin-Razborov diagrams}.
\newblock {\em Publications Mathematiques de l'IHES}, 93:31--105, 2001.

\bibitem{Sela:2006}
Z.~{Sela}.
\newblock {Diophantine geometry over groups VI: The elementary theory of a free
  group}.
\newblock {\em GAFA}, 16:707--730, 2006.

\bibitem{Serbin_Ushakov:2011(1)}
D.~{Serbin} and A.~{Ushakov}.
\newblock {Plandowski algorithm for infinite parametric words}.
\newblock Preprint, 2011.

\bibitem{Serre:1980}
J.-P. {Serre}.
\newblock {\em Trees}.
\newblock New York, Springer, 1980.

\bibitem{Shalen:1991}
P.~{Shalen}.
\newblock {Dendrology and its applications}.
\newblock {\em Group Theory from Geometrical viewpoint}.

\bibitem{Shalen:1987}
P.~{Shalen}.
\newblock {Dendrology of groups: an introduction}.
\newblock {\em Essays in Group Theory, MSRI Publ.}, 8:265--319, 1987.

\bibitem{Stallings:1971}
J.~{Stallings}.
\newblock {\em {Group theory and three dimensional manifolds}}.
\newblock Yale Univ. Press, New Haven, London, 1971.

\bibitem{Stallings:1983}
J.~{Stallings}.
\newblock {Topology of finite graphs}.
\newblock {\em Invent. Math.}, 71:551--565, 1983.

\bibitem{Tits:1977}
J.~{Tits}.
\newblock {A "theorem of Lie-Kolchin" for trees}.
\newblock In {\em Contributions to Algebra: a collection of papers dedicated to
  Ellis Kolchin}. Academic Press, New York, 1977.

\bibitem{Urbanski_Zamboni:1993}
M.~{Urbanski} and L.~Q. {Zamboni}.
\newblock {On free actions on $\Lambda$-trees}.
\newblock {\em Math. Proc. Cambridge Philos. Soc.}, 113:535--542, 1993.

\bibitem{VanDerDries_Wilkie:1984}
L.~{van der Dries} and J.~{Wilkie}.
\newblock {On Gromovs theorem concerning groups of polynomial growth and
  elementary logic}.
\newblock {\em J. Algebra}, 89:349--374, 1984.

\bibitem{VanDerWaerden:1948}
B.~L. {van der Waerden}.
\newblock {Free products of groups}.
\newblock {\em Amer. J. Math.}, 70:527--528, 1948.

\bibitem{Wise:2011}
D.~T. {Wise}.
\newblock {The structure of groups with a quasiconvex hierarchy}.
\newblock Preprint. Available at
  https://docs.google.com/file/d/0B45cNx80t5-2NTU0ZTdhMmItZTIxOS00ZGUyLWE0YzItNTEyYWFiMjczZmIz/edit?pli=1.

\bibitem{Zastrow:1998}
A.~{Zastrow}.
\newblock {Construction of an infinite generated group that is not a free
  product of surface groups and abelian groups, but which acts freely on an
  $\mathbb{R}$-tree.}
\newblock {\em Proc. Royal Soc. Edinburgh(A)}, 128:433--444, 1998.

\end{thebibliography}
\end{document}